\theoremstyle{plain}
\newtheorem{theorem}{Theorem}[section]
\newtheorem{lemma}[theorem]{Lemma}
\newtheorem{corollary}[theorem]{Corollary}
\newtheorem{proposition}[theorem]{Proposition}
\theoremstyle{definition}
\newtheorem{definition}[theorem]{Definition}
\newtheorem{example}[theorem]{Example}
\newtheorem{notation}[theorem]{Notation}
\newtheorem{remark}[theorem]{Remark}
\title{A strong characterization of the entries of the Burau matrices of $4$-braids: The Burau representation of the braid group $B_4$ is faithful almost everywhere} 
\author{Amitesh Datta}
\begin{document}
\maketitle
\abstract{We establish strong constraints on the kernel of the (reduced) Burau representation $\beta_4:B_4\to \text{GL}_3\left(\mathbb{Z}\left[q^{\pm 1}\right]\right)$ of the braid group $B_4$. We develop a theory to explicitly determine the entries of the Burau matrices of braids in $B_4$, and this is an important step toward demonstrating that $\beta_4$ is faithful (a longstanding question posed in the 1930s). The theory is based on a novel combinatorial interpretation of $\beta_4\left(g\right)$, in terms of the Garside normal form of $g\in B_4$ and a new product decomposition of positive braids. We develop cancellation results for words in matrix groups to show that if $\sigma$ is a generic positive braid in $B_4$ and if $t\neq 2$ is a prime number, then the leading coefficients in at least one row of the matrix $\beta_4\left(\sigma\right)$ are non-zero modulo $t$. We exploit these cancellation results to deduce that the Burau representation of $B_4$ is faithful almost everywhere.} 

\tableofcontents
\section{Introduction}
\label{intro}

The (reduced) \textit{Burau representation} $\beta_n:B_n\to \text{GL}_{n-1}\left(\mathbb{Z}\left[q^{\pm 1}\right]\right)$, defined by Burau~\cite{burau1935zopfgruppen} in the 1930s, is a classical representation of the braid group $B_n$. The Burau representation is fundamental in low-dimensional topology. For example, the Alexander polynomial of a link $L\subseteq \mathbb{S}^3$ can be defined completely in terms of $\beta_n\left(g\right)$, if $L$ is the braid closure of $g\in B_n$. 

If $B_n$ is defined as the mapping class group of the $n$-times punctured disk $\mathbb{D}_n$, then Burau defined $\beta_n$ as the action of $B_n$ on the first homology of a certain infinite cyclic covering space $\widetilde{\mathbb{D}}_n$ of $\mathbb{D}_n$ viewed as a $\mathbb{Z}\left[q^{\pm 1}\right]$-module. The first homology group $H_1\left(\widetilde{\mathbb{D}}_n\right)$ is a $\mathbb{Z}\left[q^{\pm 1}\right]$-module of rank $n-1$ where the action of $q$ is induced by a generator of the deck transformation group of the covering map $\widetilde{\mathbb{D}}_n\to \mathbb{D}_n$. 

We can also define $\beta_n$ algebraically in terms of invertible $\left(n-1\right)\times \left(n-1\right)$ matrices with entries in $\mathbb{Z}\left[q^{\pm 1}\right]$ assigned to the generators in the Artin presentation of $B_n$, satisfying the relations in this presentation. In this paper, we will use the algebraic interpretation of the Burau representation. 

A representation is \textit{faithful} if it is one-to-one, or equivalently, its kernel is trivial. The values of $n$ for which $\beta_n$ is faithful is a longstanding open question. Note that $\beta_3$ is known to be faithful~\cite{magnus1969theorem}. However, in 1991, Moody~\cite{moody1991burau} proved that $\beta_n$ is not faithful for $n\geq 9$, and subsequently Long and Paton~\cite{long1993burau} in 1993 and Bigelow~\cite{bigelow1999burau} in 1999 proved that $\beta_n$ is not faithful for $n\geq 6$ and $n=5$, respectively. 

Nonetheless, the question as to whether $\beta_4$ is faithful remains an important open problem. A corollary of the faithfulness of $\beta_4$ would be that $B_4$ is a group of $3\times 3$ matrices over the real numbers (we can embed $\text{GL}_3\left(\mathbb{Z}\left[q^{\pm 1}\right]\right)\to \text{GL}_3\left(\mathbb{R}\right)$ by specializing $q$ to be any transcendental number). Moreover, this would imply that $d=3$ is minimal with respect to the property that $B_4$ is a group of $d\times d$ matrices over the real numbers~\cite{button2016minimal}. The faithfulness question of $\beta_4$ is also closely related to the question of whether or not the Jones polynomial of a knot detects the unknot.

In this paper, we develop a new theory to characterize the entries in the Burau matrix $\beta_4\left(g\right)$ of an element $g\in B_4$. If $g$ is expressed as a word in the Artin generators, then the Burau matrix $\beta_4\left(g\right)$ is the product of the Burau matrices of the Artin generators in the word. The entries in the Burau matrices of the Artin generators are $0$, $1$, $\pm q$. In particular, when multiplying these matrices, there is cancellation of signed monomials in $q$, and understanding this cancellation is crucial to the faithfulness question of $\beta_4$.

We introduce a novel geometric interpretation of this cancellation in terms of pairs of paths in a graph bounding simple geometric shapes such as triangles and trapeziums. We demonstrate the existence of numerous non-cancelling terms in the Burau matrix $\beta_4\left(g\right)$ for a generic braid $g\in B_4$ by geometrically constructing paths in a graph with special properties. We use our theory to establish strong constraints on the kernel of $\beta_4$, and specifically characterize a generic family of braids that are not in the kernel of $\beta_4$. A consequence of our main result is that $\beta_4$ is faithful almost everywhere, in the sense that the percentage of braids of word length $L$ (with respect to the standard Artin generating set) that are not in the kernel of $\beta_4$ rapidly converges to $100\%$ as $L\to \infty$. Furthermore, we establish the same results for the reduction of $\beta_4$ modulo every prime not equal to $2$ and $3$, and for the specialization of $\beta_4$ at every rational number $q$ such that the denominator of $q$ has a prime factor distinct from $2$ and $3$.

A common approach that attempts to establish the faithfulness of $\beta_4$ in the literature originated in~\cite{birmanbraidslinks}. The approach reduces the faithfulness of $\beta_4$ to showing that a pair of $3\times 3$ matrices in $\text{GL}_3\left(\mathbb{Z}\left[q^{\pm 1}\right]\right)$ generates a free group. However, attempts based on this approach do not explicitly characterize the entries of Burau matrices of $4$-braids. For example, only the restriction of $\beta_4$ to a very special subgroup of $B_4$ is considered and approaches toward establishing that $\beta_4$ is faithful on this subgroup are based on simply showing that certain matrices are not the identity (e.g., using the ping-pong lemma) rather than explicitly characterizing entries of Burau matrices. The approach also does not characterize a generic family of braid words that are not in the kernel of $\beta_4$. Finally, the approach is based on the special algebraic structure of the braid group $B_4$ and does not have the potential for generalization to studying representations of the braid groups $B_n$ for $n>4$. 

On the other hand, the theory developed in our paper studies the Burau representation in general (going beyond just the faithfulness question) and gives strong characterizations of the entries of Burau matrices of $4$-braids. The theory can also be generalized to study quantum representations of braid groups $B_n$ for $n>4$ (such as the irreducible summands of the Jones representations of the braid groups). An important application of explicitly characterizing the entries of Burau matrices of $4$-braids (besides a deeper understanding of $\beta_4$ itself) is the explicit characterization of classical knot polynomials (such as the Alexander and Jones polynomials) of (generic) $4$-braid closures (in terms of the braid words).

\subsection{The main result}

We briefly review background, notation and terminology before stating the main result. 

\begin{definition} 
\label{buraudef}
The \textit{Artin presentation} of $B_4$ is \[\left\langle \sigma_1,\sigma_2,\sigma_3:\sigma_1\sigma_2\sigma_1=\sigma_2\sigma_1\sigma_2, \sigma_2\sigma_3\sigma_2=\sigma_3\sigma_2\sigma_3, \sigma_1\sigma_3=\sigma_3\sigma_1\right\rangle.\] The \textit{Burau representation} $\beta_4:B_4\to \text{GL}_3\left(\mathbb{Z}\left[q^{\pm 1}\right]\right)$ is defined in terms of the generators in this presentation as follows: \[\sigma_1\to \left( \begin{array}{ccc}
q & 0 & 0 \\
-q & 1 & 0 \\
0 & 0 & 1 \end{array} \right)\] \[\sigma_2\to \left( \begin{array}{ccc}
1 & 1 & 0 \\
0 & q & 0 \\
0 & -q & 1 \end{array} \right)\] \[\sigma_3\to \left( \begin{array}{ccc}
1 & 0 & 0 \\
0 & 1 & 1 \\
0 & 0 & q \end{array} \right)\] 
\end{definition}

Of course, one can check that the matrices assigned to the generators are invertible and satisfy the corresponding relations in the presentation. Note that other definitions of $\beta_4$ in the literature are equivalent to the one above by a change of basis.

We recall that the \textit{Garside element} of $B_4$ is $\Delta=\left(\sigma_1\sigma_2\sigma_3\right)\left(\sigma_1\sigma_2\right)\sigma_1$, and a \textit{positive braid} in $B_4$ is a braid in the submonoid of $B_4$ generated by $\sigma_1, \sigma_2, \sigma_3$. In~\cite{garside1969braid}, Garside proved that every braid $g\in B_4$ can be expressed in the form $g=\Delta^{k}\sigma$ for $k\in \mathbb{Z}$ and $\sigma$ a positive braid in $B_4$. If $k\in \mathbb{Z}$ is chosen to be as large as possible, then this expression is unique and is referred to as the \textit{Garside normal form} of $g$. 

We introduce a condition for positive braids in $B_4$ (Definition~\ref{normal}) before formally stating one of the main results (Theorem~\ref{main}, of which Theorem~\ref{tfirst} is a corollary), for ease of exposition. Firstly, we recall a notion due to Garside~\cite{garside1969braid}, and introduce terminology for this notion in the following Definition~\ref{introminform}. We refer the reader to Subsection~\ref{subsecgar} of Section~\ref{def} for a detailed discussion.

\begin{definition}
\label{introminform}
Let $\sigma$ be a positive braid in $B_4$. We write $\sigma=\prod_{i=1}^{n} \sigma_1^{a_i}\sigma_3^{b_i}\sigma_2^{c_i}$ for a product expansion of $\sigma$ in the submonoid of positive braids (i.e., $a_i,b_i,c_i\geq 0$ for $1\leq i\leq n$). A product expansion of $\sigma$ is in \textit{minimal form} if the sequence of indices of the generators (read from left to right) is lexicographically minimal among all product expansions of $\sigma$ in the submonoid of positive braids. The \textit{minimal form of $\sigma$} is the (unique) product expansion of $\sigma$ in minimal form.
\end{definition}

For example, the minimal form of $\sigma_2\sigma_1\sigma_2$ is $\sigma_1\sigma_2\sigma_1$. The minimal form of $\sigma_3\sigma_1$ is $\sigma_1\sigma_3$. Finally, for a slightly more complex example, the minimal form of $\sigma_1\sigma_3\sigma_2\sigma_1\sigma_2$ is $\sigma_1^2\sigma_3\sigma_2\sigma_1$. 

\begin{definition}
\label{normal}
Let $\sigma$ be a positive braid in $B_4$ and write $\sigma=\prod_{i=1}^{n} \sigma_1^{a_i}\sigma_3^{b_i}\sigma_2^{c_i}$ for the minimal form of $\sigma$ (see preceding Definition~\ref{introminform}). We write that $\sigma$ is a \textit{normal braid} if the following conditions are satisfied for each $p<n$. 

\begin{description}[style=unboxed,leftmargin=0cm]
\item[(i)] If $a_p=0$ and $b_p = 1$, then $c_p\neq 1,2$. If $a_p = 0$, $b_p = 1$, and $a_{p-1}+1\geq c_p$, then $c_{p-1}\neq 1$. 
\item[(ii)] If $b_p>0$, $a_p\geq b_p+1$ and $c_p = 1$, then $a_{p+1}\neq 2$. If $a_p = 1$, $b_p = 1$, and $c_p = 1$, then $a_{p+1}\neq 2$. 
\end{description}
\end{definition}

We view the condition that a positive braid $\sigma$ is a normal braid as a ``local constraint" on the minimal form of $\sigma$. We observe that generic positive braids are normal braids. For example, $\left(a_p,b_p,c_{p-1},c_p\right)\in \mathbb{N}^4$ can be \textit{any} quadruple of nonnegative integers. The condition \textbf{(i)} only excludes a subset of the union $\{0\}\times \{1\}\times \left(\left(\{1\}\times \mathbb{N}\right)\cup \left(\mathbb{N}\times \{1,2\}\right)\right)$, which is a union of three copies of $\mathbb{N}$, in $\mathbb{N}^4$. Of course, this is a codimension three condition in $\mathbb{N}^4$. Similarly, the condition \textbf{(ii)} is a codimension two condition in $\mathbb{N}^4$. In this sense, the percentage of positive braids of length $L$ that are normal braids rapidly converges to $100\%$ as $L\to \infty$.

Furthermore, a general property satisfied by the minimal form of $\sigma$ (that we establish later in Lemma~\ref{minimalform} \textbf{(iv)}) implies that the set of conditions $b_p = 1$, $c_p = 1$ and $a_{p+1}>0$ can only occur \textit{at most once}. Furthermore, if it occurs, then the minimal form of $\sigma$ entirely consists of $\sigma_1$s and $\sigma_2$s until that point (no $\sigma_3$s). In particular, a part of the condition \textbf{(i)} in Definition~\ref{normal} that $c_p\neq 1$, and the part of the condition \textbf{(ii)} in Definition~\ref{normal} in the case $b_p = 1$, are only required for at most one value of $p$ even in non-generic situations, and are thus very weak conditions.

We are prepared to state one of the main results.

\begin{theorem}
\label{tfirst}
Let $g\in B_4$ be a non-identity braid and write $g = \Delta^{k}\sigma$ for the Garside normal form of $g$. If either $k\geq 0$ or $\sigma$ is a normal braid, then $g\not\in \text{ker}\left(\beta_4\right)$. 
\end{theorem}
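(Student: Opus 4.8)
The plan is to split the argument into two regimes according to the sign of $k$, with the determinant serving as the basic numerical invariant in both. First I would record that each generator matrix in Definition~\ref{buraudef} has determinant $q$, so that $\det\beta_4(h) = q^{e(h)}$ for every $h\in B_4$, where $e(h)$ denotes the exponent sum; in particular $e(\Delta)=6$ and $\det\beta_4(\Delta)=q^6$. If $g\in\ker(\beta_4)$ then $\beta_4(g)=I$ forces $q^{e(g)}=1$, hence $e(g)=0$. Writing $g=\Delta^k\sigma$ in Garside normal form and letting $|\sigma|\geq 0$ be the word length of the positive braid $\sigma$, I have $e(g)=6k+|\sigma|$, so any nontrivial kernel element must satisfy $6k+|\sigma|=0$.

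In the case $k\geq 0$ I would argue directly with no further input. If $g\neq 1$ then $g=\Delta^k\sigma$ is a nontrivial positive braid, so $e(g)=6k+|\sigma|>0$ (a sum of nonnegative terms vanishes only when $k=0$ and $\sigma=1$, i.e.\ $g=1$). Hence $\det\beta_4(g)=q^{e(g)}\neq 1$, so $\beta_4(g)\neq I$ and $g\notin\ker(\beta_4)$. This already disposes of the hypothesis $k\geq 0$.

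In the case $k<0$ with $\sigma$ a normal braid, the plan is to force $\beta_4(\sigma)$ to be a monomial matrix and then contradict this via Theorem~\ref{main}. I would first compute $\beta_4(\Delta)=\beta_4(\sigma_1\sigma_2\sigma_3\sigma_1\sigma_2\sigma_1)$ directly and observe that it is a monomial (generalized permutation) matrix, with a single nonzero monomial entry in each row and column, and that $\beta_4(\Delta)^2=q^4 I$. Consequently every positive power $\beta_4(\Delta)^{j}$ is again monomial (it is $q^{2j}I$ when $j$ is even and $q^{2(j-1)}\beta_4(\Delta)$ when $j$ is odd). Suppose for contradiction that $g\in\ker(\beta_4)$; then $I=\beta_4(\Delta)^{k}\beta_4(\sigma)$ gives $\beta_4(\sigma)=\beta_4(\Delta)^{-k}$ with $-k>0$, so $\beta_4(\sigma)$ is monomial and in particular has exactly one nonzero entry in each row. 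Moreover $|\sigma|=-6k>0$, so $\sigma$ is a nontrivial normal braid and Theorem~\ref{main} applies to it. But Theorem~\ref{main} exhibits a row of $\beta_4(\sigma)$ in which at least two entries have non-vanishing leading coefficients — indeed non-vanishing modulo a suitable prime $t\neq 2$, and hence non-vanishing over $\mathbb{Z}$ — and such a row cannot occur in a monomial matrix. This contradiction shows $g\notin\ker(\beta_4)$, completing the reduction to Theorem~\ref{main}.

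The genuine obstacle is Theorem~\ref{main} itself; everything above is bookkeeping by comparison. Controlling the leading (top-degree in $q$) coefficients of $\beta_4(\sigma)$ requires understanding the extensive cancellation of signed monomials that occurs when the generator matrices are multiplied along the minimal form of $\sigma$, and proving that in at least one row these leading terms do not all cancel. This is exactly where the paper's combinatorial and geometric machinery — the product decomposition of positive braids and the interpretation of cancellation via pairs of paths bounding triangles and trapeziums — must be deployed, and it is also where the local ``normal braid'' constraints of Definition~\ref{normal} and the exclusion of the prime $t=2$ (at which the relevant signs collapse and cancellations become uncontrolled) enter in an essential way.
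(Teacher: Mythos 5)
Your proposal is correct and follows essentially the same route as the paper: the paper obtains Theorem~\ref{tfirst} from Theorem~\ref{main}, whose proof likewise pairs the monomial form of $\beta_4\left(\Delta^{k}\right)$ (Lemma~\ref{Garsideaction}) against the existence of a row of $\beta_4\left(\sigma\right)$ with multiple non-vanishing leading coefficients; your determinant/exponent-sum bookkeeping simply replaces the paper's Lemma~\ref{reduction} (which rules out $k>0$ kernel elements via the negative $q$-exponents of $\beta_4\left(\Delta^{-k}\right)$), and your check that $|\sigma|>0$ before invoking the row statement is a nice point the paper glosses over. One correction: the row statement you attribute to Theorem~\ref{main} is not what that theorem says — Theorem~\ref{main} is purely a kernel statement — the result you need is Theorem~\ref{invariantdetect}, so your citation should point there.
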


If we remove the requirement that $\sigma$ is a normal braid (for $k<0$) in the statement of Theorem~\ref{tfirst}, then the statement implies that $\beta_4$ is faithful.

We can also define a pair of \textit{specializations} of the Burau representation. If $t$ is a prime number, then we define $\left(\beta_4\right)_{t}:B_4\to \text{GL}_3\left(\mathbb{Z}/t\mathbb{Z}\left[q^{\pm 1}\right]\right)$ to be the reduction of the Burau representation modulo $t$. If $\frac{a}{b}\in \mathbb{Q}$ is a rational number, then we define $\left.\left(\beta_4\right)\right|_{q = \frac{a}{b}}:B_4\to \text{GL}_3\left(\mathbb{Q}\right)$ to be the specialization of the Burau representation at $q = \frac{a}{b}$. We establish the following more general statement on the kernels of these specializations of the Burau representation, of which Theorem~\ref{tfirst} is an immediate corollary (since $\text{ker}\left(\beta_4\right)\subseteq \text{ker}\left(\left(\beta_4\right)_{t}\right)$ for each prime $t$).

\begin{theorem}
\label{main}
Let $g\in B_4$ be a non-identity braid and write $g=\Delta^{k}\sigma$ for the Garside normal form of $g$. Let $t\neq 2$ be a prime number and let $a,b$ be coprime integers such that $b\neq 0$ has a prime factor distinct from $2$.

If either $k\geq 0$ or $\sigma$ is a normal braid, then $g\not\in \text{ker}\left(\left(\beta_4\right)_{t}\right)$, and in particular, $g\not\in \text{ker}\left(\beta_4\right)$. If $\sigma$ is a normal braid, then $g\not\in \text{ker}\left(\left.\left(\beta_4\right)\right|_{q=\frac{a}{b}}\right)$.
\end{theorem}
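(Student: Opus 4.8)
The plan is to reduce the statement to a property of the positive part $\sigma$ and then to feed in the combinatorial control of the entries of $\beta_4(\sigma)$ supplied by the theory. I would begin with two structural observations. Each of the three generator matrices has determinant $q$, so $\det\beta_4(g)=q^{e(g)}$, where $e(g)$ is the exponent sum of $g$. Hence $g\in\ker(\beta_4)$ forces $q^{e(g)}=1$ and thus $e(g)=0$; the same holds after reduction modulo $t$, and for the specialization at $a/b$ we instead get $(a/b)^{e(g)}=1$, whose $p$-adic valuation $-e(g)\,v_p(b)$ (for a prime $p\neq 2$ dividing $b$) again forces $e(g)=0$. Writing $g=\Delta^k\sigma$ gives $e(g)=6k+|\sigma|$ with $|\sigma|\geq 0$, so $e(g)=0$ forces $k\leq 0$ and $|\sigma|=6|k|$. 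In particular, under the hypothesis $k\geq 0$ we conclude $k=0$ and $|\sigma|=0$, i.e.\ $g$ is the identity, contradicting the hypothesis; this disposes of the case $k\geq 0$ for all three versions at once. It remains to treat $k<0$ with $|\sigma|=6|k|$ under the assumption that $\sigma$ is a normal braid.

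Next I would exploit the center. A direct computation gives $\beta_4(\Delta^2)=q^4 I$, so $\beta_4(\Delta)^{|k|}$ is the scalar $q^{4j}I$ when $|k|=2j$, and $q^{4j}\beta_4(\Delta)$ when $|k|=2j+1$; in either case (using a direct computation of $\beta_4(\Delta)$) its entries are explicit signed monomials in $q$. Because $\beta_4(g)=\beta_4(\Delta)^k\beta_4(\sigma)$, the equation $\beta_4(g)=I$ (respectively, its reduction modulo $t$ or specialization at $a/b$) is equivalent to $\beta_4(\sigma)=\beta_4(\Delta)^{|k|}$, that is, to the coincidence of the Burau matrices of the two positive braids $\sigma$ and $\Delta^{|k|}$, both of length $6|k|$. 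Since $g$ is not the identity, $\sigma\neq\Delta^{|k|}$, and the goal becomes to show that these two Burau matrices cannot be equal.

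The heart of the argument is the combinatorial description of the entries of $\beta_4(\sigma)$ in terms of the minimal form of $\sigma$ (Definition~\ref{introminform}). I would first establish the product-decomposition formula writing each entry of $\beta_4(\sigma)$ as a signed sum of monomials in $q$ indexed by combinatorial data read off from the minimal form, and then interpret the pairwise cancellation of these monomials geometrically, as pairs of paths in a graph bounding triangles and trapeziums. Using this interpretation I would single out a distinguished row of $\beta_4(\sigma)$ and show that the leading (top-degree-in-$q$) coefficients across that row survive all cancellations and are non-zero modulo every odd prime. This is exactly where the conditions of Definition~\ref{normal} enter: the local constraints \textbf{(i)} and \textbf{(ii)} are precisely those excluding the finitely many local configurations in the minimal form that would otherwise force the leading terms of that row to cancel. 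The exclusion of $t=2$ also originates here, since modulo $2$ the signed monomials admit coincidental cancellations that the geometric bookkeeping does not prevent.

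Finally I would compare the distinguished row of $\beta_4(\sigma)$ with the corresponding row of the monomial matrix $\beta_4(\Delta)^{|k|}$. When $|k|$ is even the latter is scalar, so a non-zero off-diagonal leading coefficient in the distinguished row already gives $\beta_4(\sigma)\neq\beta_4(\Delta)^{|k|}$; when $|k|$ is odd the row of $\beta_4(\Delta)^{|k|}$ consists of single monomials, which cannot match a row carrying the computed non-cancelling leading coefficients. This yields the contradiction over $\mathbb{Z}[q^{\pm 1}]$, and, since those leading coefficients are non-zero modulo $t$, also after reduction modulo any odd prime $t$. For the specialization at $a/b$ I would pass to a prime $p\neq 2$ dividing $b$ and argue $p$-adically: after clearing denominators by a power of $b$, the surviving leading term of the relevant entry has $p$-adic valuation $0$ (its coefficient is non-zero modulo $p$, hence a $p$-adic unit), whereas every lower term and the subtracted monomial acquire a factor of $b$ and so have valuation at least $v_p(b)\geq 1$; the specialized entry is therefore a $p$-adic unit and in particular non-zero, so $\left.\beta_4(g)\right|_{q=a/b}\neq I$. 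I expect the main obstacle to be the non-cancellation result of the third paragraph: controlling all the leading coefficients in a single row simultaneously, and proving that the normal-braid conditions are exactly what is needed to forbid their total cancellation, is the substantive content and is where the geometry of paths bounding triangles and trapeziums does the real work.
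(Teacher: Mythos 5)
Your proposal is correct and, at its core, follows the same route as the paper's proof of Theorem~\ref{main}: for $k<0$ you reduce kernel membership to the matrix identity $\beta_4\left(\sigma\right)=\beta_4\left(\Delta^{|k|}\right)$, you use the fact that $\beta_4\left(\Delta^{|k|}\right)$ is a monomial matrix with exactly one non-zero entry in each row (the paper's Lemma~\ref{Garsideaction}), and you refute the identity with the non-cancellation statement that, for a normal braid $\sigma$ and any prime $t\neq 2$, some row of $\beta_4\left(\sigma\right)$ carries at least two entries whose leading coefficients are non-zero modulo $t$ --- which is precisely the paper's Theorem~\ref{invariantdetect}, and which, exactly like the paper's own proof of this statement, you treat as the input supplied by the path/cancellation theory rather than prove inline (your sketch of how it would be established matches the paper's Sections~\ref{pathgraph}--\ref{mains}). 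Your $p$-adic valuation argument for the specialization at $q=\frac{a}{b}$ is the rational roots theorem the paper invokes; one small point to fix is that your claim that ``the subtracted monomial acquires a factor of $b$'' is not automatic --- it can fail when the non-zero monomial entry of $\beta_4\left(\Delta^{|k|}\right)$ has the same $q$-degree as the polynomial entry of $\beta_4\left(\sigma\right)$ --- so you should run the valuation argument at an entry of the distinguished row where $\beta_4\left(\Delta^{|k|}\right)$ vanishes, which exists by your own two-entries-versus-one counting. The one genuine difference from the paper is how you dispose of $k\geq 0$: you use $\det\beta_4\left(\sigma_i\right)=q$, so that any kernel element has exponent sum zero, forcing $k\leq 0$ and $|\sigma|=6|k|$, whereas the paper uses Lemma~\ref{reduction}, ruling out $\beta_4\left(\sigma\right)=\beta_4\left(\Delta^{k}\right)$ for $k<0$ because the latter matrix has negative $q$-exponents. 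Your determinant argument buys a little extra completeness: it settles the sub-case $k=0$ with $\sigma$ not normal (where Theorem~\ref{invariantdetect} does not apply) without any further input, a point the paper's one-line reduction leaves implicit.
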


Theorem~\ref{main} establishes that a linear representation of $B_4$ into a group of $3\times 3$ matrices over the rational numbers is faithful almost everywhere. (The linearity of $B_4$ over the rational numbers is unknown.) Furthermore, the strong constraint on the kernel of the specialization of the Burau representation modulo $t$ in Theorem~\ref{main} is important since it is computationally much simpler to multiply matrices modulo a small prime number. We note that interestingly the Burau representation $\beta_4$ is known not to be faithful modulo two~\cite{cooperlongmod2}.

In fact, we prove a significantly stronger version of Theorem~\ref{main}, where ``normal braid" is replaced by ``weakly normal braid" (but where we also require the prime number $t\neq 3$ and the integer $b$ to have a prime factor distinct from $3$). The precise statement is Theorem~\ref{mainstronger} in Section~\ref{smainstronger}, and is a little more notationally involved. We prove Theorem~\ref{main} in Sections \ref{def} - \ref{mains} and Theorem~\ref{mainstronger} in Section~\ref{smainstronger}. We briefly describe a special case of Theorem~\ref{mainstronger} here without using technical language to communicate its idea. 

For example, consider a case where condition~\textbf{(i)} of being a normal braid fails in Theorem~\ref{main} and $a_p = 0$, $b_p = 1$, and $c_p = 2$. The corresponding part of the minimal form of $\sigma$ reads as $\sigma_2^{c_{p-1}}\sigma_3\sigma_2^2$. In this case, it is still true that $\Delta^{k}\sigma\not\in \text{ker}\left(\beta_4\right)$, except possibly if there is a highly constrained string of small exponents (zero, one or two) with length proportional to $c_{p-1} - 2$ directly after $\sigma_2^{c_{p-1}}\sigma_3\sigma_2^2$. For example, if $c_{p-1}  = 100$, and if $\sigma_3^2\sigma_2^2\sigma_3^2\cdots$ is a string with \textit{exactly} $98$ squares directly after $\sigma_2^{c_{p-1}}\sigma_2\sigma_2^{2}$, then $\sigma$ is not a weakly normal braid. However, if even \textit{one} square in this string is a cube or higher power, even \textit{one} $\sigma_3$ in this string is a $\sigma_1$, even \textit{one} $\sigma_3^2$ in this string is a $\sigma_1^{a_{p'}}\sigma_3^{b_{p'}}$ with $a_{p'}, b_{p'}>0$, or there are more than $98$ squares in the string, then $\sigma$ is a weakly normal braid, and Theorem~\ref{mainstronger} implies that $\Delta^{k}\sigma\not\in \text{ker}\left(\beta_4\right)$.

Theorem~\ref{main} is closely related to the open problem of whether or not the Jones polynomial of a knot detects the unknot. Indeed, the Jones polynomial of a knot would \textit{not} detect the unknot if the Jones representation of $B_4$ (defined by Jones in~\cite{jones1987hecke}) were not faithful (\cite{bigelow2002does} and \cite{itojones}). The Burau representation of $B_4$ is an irreducible summand of the Jones representation of $B_4$, and in particular, the kernel of the Jones representation of $B_4$ is contained in the kernel of the Burau representation of $B_4$~\cite{jones1987hecke}. Thus, Theorem~\ref{main} also gives constraints on the kernel of the Jones representation of $B_4$. Moreover, the methods in this paper can be used to study the Jones polynomial invariant of links realized as closures of geometric $4$-braids. 

\subsection{Outline of the proof and organization of the paper}
\label{subsecoutline}

We will simultaneously outline the proof of Theorem~\ref{main} and the organization of the paper. The main purpose is to give the reader a sense of the fundamental objects and techniques in this paper, and how they fit together to establish the main results. We also motivate the main ideas of the proof.

The faithfulness of the Burau representation $\beta_4:B_4\to \text{GL}_3\left(\mathbb{Z}\left[q^{\pm 1}\right]\right)$ is equivalent to the statement that if $e\neq g\in B_4$, then $I\neq \beta_4\left(g\right)\in \text{GL}_3\left(\mathbb{Z}\left[q^{\pm 1}\right]\right)$ (where $e$ is the identity element in $B_4$ and $I$ is the $3\times 3$ identity matrix). If $g\in B_4$, then $\beta_4\left(g\right)$ is the product of the Burau matrices of the Artin generators in a product expansion of $g$. In particular, the two primary aspects of the faithfulness question are the study of words in the Artin generators (the algebraic structure of $B_4$) and the study of products of Burau matrices of the Artin generators (cancellation of entries in matrix products).

In Section~\ref{def}, we will establish statements concerning the algebraic structure of the braid group $B_4$ in connection to Theorem~\ref{main}. We will also review background and terminology that we will use in the rest of the paper. An element $g\in B_4$ has many different product expansions. The first step is to determine a canonical product expansion of each element $g\in B_4$ (a solution to the word problem in the braid group $B_4$), and reduce the faithfulness question to establishing that $\beta_4\left(g\right)\neq I$, where $\beta_4\left(g\right)$ is evaluated in terms of the canonical product expansion of $g\neq e$.

The Garside normal form furnishes a solution to the word problem in the braid groups. If $g = \Delta^{k}\sigma$ is the Garside normal form of $g$, then we determine a canonical product expansion of $g$ by considering the minimal form of $\sigma$ (a canonical product expansion of $\sigma$). In Subsection~\ref{subsecgar}, we will establish a new characterization of the minimal form of a positive braid. We will show that a product expansion is in minimal form only if it satisfies a specific set of constraints (Lemma~\ref{minimalform}). 

If $g = \Delta^k\sigma$, then the statement that $\beta_4\left(g\right)\neq I$ is equivalent to the statement that $\beta_4\left(\sigma\right)\neq \beta_4\left(\Delta^{-k}\right)$. Furthermore, in Lemma~\ref{Garsideaction}, we compute $\beta_4\left(\Delta^{-k}\right)$ and we observe that there is precisely one non-zero entry in each row of $\beta_4\left(\Delta^{-k}\right)$. In Subsection~\ref{subsecreduction}, we use the Garside normal form and these statements to reduce Theorem~\ref{main} (the main result) to proving Theorem~\ref{invariantdetect}. Theorem~\ref{invariantdetect} states that if $\sigma$ is a normal braid and if $t\neq 2$ is a prime number, then the leading coefficients of multiple entries in at least one row of $\beta_4\left(\sigma\right)$ are non-zero modulo $t$. Lemma~\ref{minimalform} restricts our attention to understanding $\beta_4\left(\sigma\right)$ in terms of a product expansion of $\sigma$ satisfying a specific set of constraints. 

In Section~\ref{pathgraph}, in order to establish Theorem~\ref{invariantdetect}, we will give a new interpretation of $\beta_4\left(\sigma\right)$ for a positive braid $\sigma$ in terms of \textit{(admissible) $\sigma$-paths}. A $\sigma$-path is a path in the straight-line graph on the vertex set $\{1,2,3\}$ that is compatible with the minimal form of $\sigma$ in a certain sense, and with length (number of edges) equal to the length of $\sigma$ (as a positive braid) (Definition~\ref{spath}). We will assign a \textit{weight} to each $\sigma$-path (a signed monomial in $q$ of nonnegative degree). An \textit{$\left(r,s\right)$-type $\sigma$-path} is a $\sigma$-path with initial vertex equal to $r$ and final vertex equal to $s$. We will prove that the $\left(r,s\right)$-entry of $\beta_4\left(\sigma\right)$ is the weighted number of $\left(r,s\right)$-type $\sigma$-paths (Proposition~\ref{m=p}). The notion of $\sigma$-paths furnishes a mental model for understanding matrix multiplication of Burau matrices of Artin generators. The mental model allows one to visualize terms in the entries of $\beta_4\left(\sigma\right)$, and explicitly determine the coefficients of high $q$-degree terms, when thinking in terms of matrix multiplication would be unwieldy.

The rest of the proof will be devoted to constructing $\sigma$-paths with weights contributing non-cancelling terms in multiple entries in a row of $\beta_4\left(\sigma\right)$. In Subsection~\ref{subsecadspath}, we will introduce a local obstruction to a $\sigma$-path admitting this property, belonging to a \textit{distinguished pair} of $\sigma$-paths with cancelling weights (Definition~\ref{dispair}). A beautiful application of the theory of $\sigma$-paths is that the local cancellation admits a concrete geometric interpretation. Indeed, a distinguished pair of $\sigma$-paths locally bound a triangle or a trapezium (examples of distinguished pairs and this local cancellation are illustrated in several figures in Subsection~\ref{subsecadspath}). We refer to \textit{admissible $\sigma$-paths} as those $\sigma$-paths which are not locally obstructed in this manner. We will prove that the $\left(r,s\right)$-entry of $\beta_4\left(\sigma\right)$ is the weighted number of admissible $\left(r,s\right)$-type $\sigma$-paths (Proposition~\ref{m=ap}, which is a refinement of Proposition~\ref{m=p}). The proof of Proposition~\ref{m=ap} requires a careful analysis that distinct distingished pairs do not overlap.

In Subsection~\ref{subsecgb}, we introduce the notion of \textit{good and bad $\sigma$-paths} in order to characterize global cancellation of weights of $\sigma$-paths. The basic idea for establishing Theorem~\ref{invariantdetect} is to construct admissible $\sigma$-paths with maximal $q$-degree, and the maximality will ensure that their weights do not cancel with (have opposite sign to) the weights of other admissible $\sigma$-paths. We study a simple example (Example~\ref{exmain}), where we demonstrate that a generic family of braids is not in the kernel of $\beta_4$. Example~\ref{exmain} quickly illustrates the power of the theory of good and bad $\sigma$-paths for the purpose of constructing maximal $q$-degree $\sigma$-paths and establishing strong constraints on the kernel of $\beta_4$. In general, we will establish that good $\sigma$-paths accumulate $q$-degree quickly whereas bad $\sigma$-paths accumulate $q$-degree slowly (Lemma~\ref{lemmagoodext}). In Subsection~\ref{subsecpathextensions}, we will develop a theory of path extensions based on Lemma~\ref{lemmagoodext}.  

Let $\sigma = \prod_{i=1}^{n} \sigma_1^{a_i}\sigma_3^{b_i}\sigma_2^{c_i}$ be the minimal form of $\sigma$. An \textit{$s$-subproduct} of $\sigma$ is a subproduct of $\sigma$ that starts at the beginning of the minimal form of $\sigma$, and either ends with $\sigma_1^{a_p}\sigma_3^{b_p}$ for some $1\leq p\leq n$ if $s\in \{1,3\}$, or ends with $\sigma_2^{c_p}$ for some $1\leq p\leq n$ if $s=2$. In Section~\ref{mains}, we will establish Theorem~\ref{invariantdetect} by defining the property of \textit{strong $r$-regularity} of $s$-subproducts of $\sigma$, where $r\in \{1,2,3\}$ (Definition~\ref{defrregular}). The rough definition of strong $r$-regularity of an $s$-subproduct $P$ is that a maximal $q$-resistance $P$-path is a good $\left(r,s\right)$-type $P$-path and has sufficiently high $q$-resistance compared to other $P$-paths. We will establish two main statements on strong $r$-regularity. Firstly, strong $r$-regularity is an inductive property of $s$-subproducts of a normal braid $\sigma$. Secondly, if the maximal proper $s$-subproduct of $\sigma$ is strongly $r$-regular, then the leading coefficients of multiple entries in the $r$th row of $\beta_4\left(\sigma\right)$ are non-zero modulo each prime $t\neq 2$. (The intuition is that the $q$-degree of a bad $P$-path cannot ``catch-up" to that of a higher $q$-degree good $P$-path and create cancellation of weights, as we extend both $P$-paths to $\sigma$-paths.)

In Subsection~\ref{subsecconstraint}, we will introduce a new product decomposition of positive braids, the \textit{block-road decomposition} of a positive braid into \textit{blocks} and \textit{roads} (Definition~\ref{blockroad}). The rough definition is that the blocks in $\sigma$ are ``small subproducts" in the minimal form of $\sigma$ containing one side of a braid relation (either $\sigma_1\sigma_2\sigma_1$ or $\sigma_2\sigma_3\sigma_2$) and the roads are subproducts in the minimal form of $\sigma$, maximal with respect to the property of not overlapping with a block. We will observe that the condition of normality on a positive braid $\sigma$ can be rephrased naturally as the condition that every block in the block-road decomposition of $\sigma$ is a \textit{normal block} (Proposition~\ref{pnormalblock}). We will also classify blocks in $\sigma$ into \textit{$2$-blocks} (corresponding to the occurrence of $\sigma_1\sigma_2\sigma_1$ in the minimal form of $\sigma$) and \textit{$3$-blocks} (corresponding to the occurrence of $\sigma_2\sigma_3\sigma_2$ in the minimal form of $\sigma$). 

In Subsection~\ref{subsecstrongregularity}, we will establish the two main statements concerning the property of strong $r$-regularity of $s$-subproducts of a normal braid $\sigma$ with respect to the block-road decomposition of $\sigma$. Let $P\subseteq P'$ be such that $P$ is a strongly $r$-regular $s$-subproduct of $\sigma$ and $P'$ is an $s'$-subproduct of $\sigma$ for some $s,s'\in \{1,2,3\}$. If $P'\setminus P$ is a road, then we will show that the leading coefficients of multiple entries in the $r$th row of $\beta_4\left(P'\right)$ are non-zero modulo each prime $t\neq 2$, and if $P'\neq \sigma$, then $P'$ is strongly $r$-regular (Corollary~\ref{lemmaroadregular}). We will use the theory of path extensions developed in Subsection~\ref{subsecpathextensions} to establish Corollary~\ref{lemmaroadregular}. If $P'\setminus P$ is a normal block, then we will show that the leading coefficients of multiple entries in the $r$th row of $\beta_4\left(P'\right)$ are non-zero modulo each prime $t\neq 2$, and if $P'\neq \sigma$, then $P'$ is strongly $r$-regular (Corollary~\ref{lemmablockstronglyregular}). We will split the proof of Corollary~\ref{lemmablockstronglyregular} according to whether $P'\setminus P$ is a $2$-block or a $3$-block. 

In Section~\ref{smainstronger}, we will establish Theorem~\ref{mainstronger}, which is a stronger version of Theorem~\ref{main}. Theorem~\ref{mainstronger} is the generalization of Theorem~\ref{main} where ``normal braid" in the statement of Theorem~\ref{main} is replaced by ``weakly normal braid".
The paper is organized such that the proof of Theorem~\ref{mainstronger} is broken up into two steps. The first step is establishing Theorem~\ref{main} in Sections~\ref{def} -~\ref{mains} based on the theory of path extensions and the block-road decomposition in Sections~\ref{pathgraph} -~\ref{mains}. The second step is developing the theory further in Section~\ref{smainstronger} in order to establish Theorem~\ref{mainstronger}.

We introduce an elementary example (Example~\ref{exglobalcancellation}) at the beginning of Section~\ref{smainstronger} that captures the main ideas of Theorem~\ref{mainstronger}. In Subsection~\ref{subsecweaklynormalbraidbadsubroad}, we precisely define \textit{weakly normal braids} and state Theorem~\ref{mainstronger}. In Subsection~\ref{subsecpathextensionsII}, we extend the theory of path extensions developed in Subsection~\ref{subsecpathextensions} to study the more general class of weakly normal braids. In Subsection~\ref{subsecproofstrongerversion}, we define the property of \textit{$r$-regularity} for $s$-subproducts (Definition~\ref{defweakrregular}), which is more general than the property of strong $r$-regularity. We will establish two main statements on $r$-regularity for weakly normal braids analogous to the ones established on strong $r$-regularity for normal braids. We will establish that $r$-regularity is an inductive property of $s$-subproducts of a weakly normal braid $\sigma$, and use this to show that the leading coefficients of multiple entries in the $r$th row of $\beta_4\left(\sigma\right)$ are non-zero modulo each prime $t\neq 2,3$. Finally, this concludes the proof of Theorem~\ref{mainstronger}.

\subsection{Further work}
The author is working on refining the techniques in this paper and combining these techniques with other methods to establish that the Burau representation $\beta_4$ is faithful. We note that the techniques developed in this paper can be used to establish alternative constraints on the kernel of the Burau representation, and one such set of alternative constraints is established in the author's PhD thesis~\cite{datta2020burauthesis}.

The techniques in this paper are also broadly applicable to studying quantum representations of braid groups in general, such as the irreducible summands of the Jones representation of the braid group $B_n$, for arbitrary $n$. The author is generalizing the theory in this paper to study these representations. The author is also using the new characterization of Burau matrices of generic $4$-braids established in this paper to derive an explicit formula for the Jones polynomial of generic $4$-braid closures. The aforementioned two results will appear in forthcoming papers.

\subsection{Acknowledgements}
The author would like to express his sincere gratitude to Peter Ozsv{\'a}th and Zolt{\'a}n Szab{\'o} for their careful reading and checking of several parts of this paper, numerous helpful suggestions and clarifications of the exposition, and their encouragement and support during the completion of this work. The author would also like to thank Peter Sarnak for his interest in this work, helpful discussions, and encouragement and observation on applying the techniques in this paper to unsolved problems in other areas of mathematics. Finally, the author would like to thank Sophie Morel for her interest in this work and her support. The main ideas of this work were developed toward the end of the author's PhD in 2019 at Princeton University, and the theoretical foundations already appeared in the author's PhD thesis in 2019~\cite{datta2020burauthesis}.

\section{The Burau representation $\beta_4$ and the Garside normal form}
\label{def}

In Subsection~\ref{subsecgar}, we briefly summarize Garside's fundamental theorem for the braid groups in the case of $B_4$, and we also introduce new definitions and establish new results. Definition~\ref{garter} and Theorem~\ref{Garside} are adopted from Garside's foundational paper~\cite{garside1969braid} in the case of $B_4$. In Lemma~\ref{minimalform}, we establish a new characterization of the minimal form of a positive braid (which is referred to as the ``base of the diagram of the positive braid" in~\cite{garside1969braid}). Definition~\ref{isp} introduces a new product decomposition of the minimal form of a positive braid that will be important in the sequel. In Subsection~\ref{subsecreduction}, we reduce proving Theorem~\ref{main} (the main result of this paper) to proving Theorem~\ref{invariantdetect}. The subsequent Sections~\ref{pathgraph} - \ref{mains} of the paper will be devoted to proving Theorem~\ref{invariantdetect}. 
 
In this paper, the convention is that words in the generators are read from left to right. 

\subsection{The minimal form of a positive braid and the Garside normal form}
\label{subsecgar}

We collect the terminology that we adopt from~\cite{garside1969braid} in the following Definition~\ref{garter} for the convenience of the reader. The notions of a positive braid and the minimal form of a positive braid were defined in the Introduction, but here we also indicate why the minimal form of a positive braid is well-defined (a result in~\cite{garside1969braid}).

\begin{definition}
\label{garter}
A \textit{positive braid} is a braid in the submonoid of $B_4$ generated by $\sigma_1,\sigma_2,\sigma_3$. We will denote the submonoid of positive braids by $B_4^{+}$. A \textit{product expansion} of a positive braid is an expression of the braid as a product of positive powers of the generators. The \textit{length} of a positive braid is the number of generators in a product expansion of the braid. (In~\cite{garside1969braid}, it is proven that the length of a positive braid is independent of the choice of product expansion, and thus it is well-defined.) 

The set of product expansions of a positive braid is a finite set, since all have the same length. We can totally order this set using the dictionary order with respect to the index sequence and we define the \textit{minimal form} of the positive braid to be the product expansion minimal with respect to this order.  
\end{definition}

For example, the minimal form of $\sigma_2\sigma_1\sigma_2$ is $\sigma_1\sigma_2\sigma_1$ and the minimal form of $\sigma_3\sigma_1$ is $\sigma_1\sigma_3$ (consistent with our convention that words in the generators are read from left to right). We establish the following new characterization of the minimal form of a positive braid.

\begin{lemma}
\label{minimalform}
Let $\sigma=\prod_{i=1}^{n} \sigma_1^{a_i}\sigma_3^{b_i}\sigma_2^{c_i}$ be a product expansion of the positive braid $\sigma$ where $a_i+b_i$ is a positive integer for $2\leq i\leq n$ and $c_i$ is a positive integer for $1\leq i\leq n-1$. If the product expansion is in minimal form, then the following conditions are satisfied:
\begin{description}
\item[(i)] If $a_p=1$ and $p>1$, then $b_p>0$, except possibly if $p=n$ and $c_p=0$. 
\item[(ii)] If $c_p=1$, then either $b_p=0$ or $b_{p+1}=0$. If, in addition, $p>1$ and $b_p=0$, then $a_{p+1}=0$. 
\item[(iii)] If $a_p=0$, $b_p=1$ and $p>1$, then $b_{p-1}=0$, except possibly if $p=n$ and $c_p=0$. If, in addition, $p>2$ and $c_{p-2}=1$, then $p=3$ and $b_1=0$, except possibly if $p=n$ and $c_p=0$.
\item[(iv)] If $b_p=1=c_p$, then $b_i=0$ for $i<p$, except possibly if $p=n$.
\end{description}
\end{lemma}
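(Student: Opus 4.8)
The plan is to argue by contraposition: assuming the product expansion $\sigma=\prod_{i=1}^{n}\sigma_1^{a_i}\sigma_3^{b_i}\sigma_2^{c_i}$ is in minimal form, I will establish each of \textbf{(i)}--\textbf{(iv)} by showing that a violation allows me to exhibit a \emph{different} product expansion of the same positive braid whose index sequence is lexicographically strictly smaller, contradicting minimality (Definition~\ref{garter}). The only structural input needed is Garside's basic theory of the positive monoid $B_4^{+}$: it is cancellative, all product expansions of a fixed positive braid have the same length, and two positive words represent the same braid if and only if they are joined by a finite chain of the defining relations $\sigma_1\sigma_2\sigma_1=\sigma_2\sigma_1\sigma_2$, $\sigma_2\sigma_3\sigma_2=\sigma_3\sigma_2\sigma_3$, $\sigma_1\sigma_3=\sigma_3\sigma_1$. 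Hence in each case it suffices to produce the smaller word by \emph{any} such chain; intermediate words may have larger index sequences, since only the endpoints are compared.

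Two elementary moves drive the simplest reductions: read left to right, $\sigma_2\sigma_1\sigma_2\to\sigma_1\sigma_2\sigma_1$, $\sigma_3\sigma_2\sigma_3\to\sigma_2\sigma_3\sigma_2$, and $\sigma_3\sigma_1\to\sigma_1\sigma_3$ each replace a leading index by a strictly smaller one. For \textbf{(i)}, if $a_p=1$, $b_p=0$, $p>1$, and $c_p\ge 1$, then (using $c_{p-1}\ge 1$) the word contains $\ldots\sigma_2\sigma_1\sigma_2\ldots$, and $\sigma_2\sigma_1\sigma_2\to\sigma_1\sigma_2\sigma_1$ lowers that position from $2$ to $1$; the exceptional case $p=n$, $c_p=0$ is exactly where the trailing $\sigma_2$ is absent. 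For the first clause of \textbf{(ii)}, if $c_p=1$ and $b_p,b_{p+1}>0$, commuting $\sigma_1^{a_{p+1}}$ to the right of $\sigma_3^{b_{p+1}}$ produces $\ldots\sigma_3\sigma_2\sigma_3\ldots$, and $\sigma_3\sigma_2\sigma_3\to\sigma_2\sigma_3\sigma_2$ lowers a $3$ to a $2$.

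The remaining clauses need a composite, inductive reduction, whose key device is the following: if $u,v$ satisfy $uvu=vuv$, then for every $a\ge 1$ the word $uv^{a}uv$ is left-divisible by $v$. This follows by induction on $a$: for $a=1$, $uvuv=vuv^2$; for $a\ge 2$, applying $vuv\to uvu$ to the last three letters gives $uv^{a-1}uv\cdot u$, and the inductive hypothesis applies to $uv^{a-1}uv$. Taking $(u,v)=(\sigma_2,\sigma_1)$ proves the second clause of \textbf{(ii)}: from $\ldots\sigma_2^{c_{p-1}}\sigma_1^{a_p}\sigma_2\sigma_1^{a_{p+1}}\ldots$ (where $a_p\ge 1$ since $b_p=0$, $a_{p+1}\ge 1$, $c_{p-1}\ge 1$) the factor $\sigma_2\sigma_1^{a_p}\sigma_2\sigma_1$ is left-divisible by $\sigma_1$, lowering the last $\sigma_2$ of $\sigma_2^{c_{p-1}}$ to $1$. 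Taking $(u,v)=(\sigma_3,\sigma_2)$ proves the first clause of \textbf{(iii)}: from $\ldots\sigma_3^{b_{p-1}}\sigma_2^{c_{p-1}}\sigma_3\sigma_2^{c_p}\ldots$ (with $b_{p-1}\ge 1$, $c_p\ge 1$) the factor $\sigma_3\sigma_2^{c_{p-1}}\sigma_3\sigma_2$ is left-divisible by $\sigma_2$, lowering the last $\sigma_3$ of $\sigma_3^{b_{p-1}}$ from $3$ to $2$. The second clause of \textbf{(iii)} is then obtained by iterating this reduction one block further left across the single $\sigma_2$ furnished by $c_{p-2}=1$, the escape case $p=3$, $b_1=0$ being precisely where no further $\sigma_3$ remains to push against.

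Condition \textbf{(iv)} is the main obstacle, and I would prove it last, exploiting that a minimal form already satisfies \textbf{(i)}--\textbf{(iii)}. Suppose $b_p=c_p=1$ yet $b_{i_0}>0$ for the largest $i_0<p$; then, using the hypothesis $a_i+b_i\ge 1$ together with \textbf{(i)}, every intervening block $i_0<j<p$ has the shape $\sigma_1^{a_j}\sigma_2^{c_j}$ with $a_j\ge 2$ and $c_j\ge 1$, while \textbf{(ii)}--\textbf{(iii)} constrain the exponents $c_j$ and $a_p$ further. I would then run an induction on $p-i_0$ that migrates the single $\sigma_3$ of block $p$ leftward — commuting it past $\sigma_1$'s and using $\sigma_2\sigma_1\sigma_2\leftrightarrow\sigma_1\sigma_2\sigma_1$ to rearrange the interleaved $\sigma_1/\sigma_2$ region — until a relation $\sigma_3\sigma_2\sigma_3\to\sigma_2\sigma_3\sigma_2$ against $\sigma_3^{b_{i_0}}$ lowers a leading $3$ to a $2$; the single powers $b_p=c_p=1$ are exactly what make this propagation go through. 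The genuinely hard part, here and in the second clause of \textbf{(iii)}, is the bookkeeping: at each stage one must verify that the new word is a legitimate relation-chain image of the original (hence equal in $B_4^{+}$) and that the \emph{first} position at which it differs from the original is strictly decreased, all while correctly isolating the boundary exceptions $p=n$, $c_p=0$ where the enabling trailing generator is missing.
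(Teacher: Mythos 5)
Your overall strategy is the same as the paper's: assume minimality and, for each violated condition, exhibit a lexicographically smaller positive word by an explicit chain of Artin relations, with the (correct) understanding that only the endpoints of the chain are compared. Your moves for \textbf{(i)}, both clauses of \textbf{(ii)}, and the first clause of \textbf{(iii)} coincide with the paper's; your left-divisibility lemma --- that $uvu=vuv$ forces $uv^{a}uv$ to be left-divisible by $v$ --- is exactly the paper's ``$a_p$ applications of $\sigma_1\sigma_2\sigma_1=\sigma_2\sigma_1\sigma_2$'' and ``$c_{p-1}$ applications of $\sigma_2\sigma_3\sigma_2=\sigma_3\sigma_2\sigma_3$'' repackaged as a clean induction. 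Your plan for \textbf{(iv)} --- migrate the isolated $\sigma_3$ of block $p$ leftward block by block, then lower a $3$ to a $2$ against $\sigma_3^{b_{i_0}}$ --- is precisely the iteration the paper carries out, split into the two cases $a_r'>0$ (commutation plus $\sigma_1^{a_r'}\sigma_2\sigma_1=\sigma_2\sigma_1\sigma_2^{a_r'}$) and $a_r'=0$ (the identity $\sigma_2^{c_{r-1}'}\sigma_3\sigma_2=\sigma_3\sigma_2\sigma_3^{c_{r-1}'}$), finished by the left-divisibility step against the last $\sigma_3$ of $\sigma_3^{b_j}$; so the bookkeeping you defer is exactly what the paper executes, and it goes through.

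The one concrete gap is in the second clause of \textbf{(iii)}. Your pushing argument (iterating the reduction one block further left across the single $\sigma_2$ furnished by $c_{p-2}=1$) only rules out $b_{p-2}>0$; it does not address the configuration $p>3$ with $b_{p-2}=0$, since once $b_{p-2}=0$ the migrated $\sigma_3$ is stuck behind $\sigma_1^{a_{p-1}}$ with no $\sigma_3$ to push against --- yet the conclusion you must reach is the stronger statement $p=3$. The missing step is an appeal to \textbf{(ii)}: if $p>3$, then block $p-2$ satisfies $c_{p-2}=1$, $b_{p-2}=0$, $p-2>1$, so the second clause of \textbf{(ii)} forces $a_{p-1}=0$, which together with $b_{p-1}=0$ (from the first clause of \textbf{(iii)}) contradicts $a_{p-1}+b_{p-1}>0$. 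This is exactly how the paper closes this case (``Finally, \textbf{(ii)} implies that $p=3$ and $b_1=0$''), and it slots directly into your argument since you establish \textbf{(ii)} beforehand.
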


We include Remark~\ref{exmform} to informally visualize the statement of Lemma~\ref{minimalform} before the proof.

\begin{remark}
\label{exmform}
Lemma~\ref{minimalform} \textbf{(i)} considers the existence of an isolated $\sigma_1$ and states that $\sigma_2\sigma_1\sigma_2$ does not occur in the minimal form of $\sigma$. Lemma~\ref{minimalform} \textbf{(ii)} considers the existence of an isolated $\sigma_2$ and states that $\sigma_3\sigma_2\sigma_1^{a_{p+1}}\sigma_3^{b_{p+1}}$ does not occur if $b_{p+1}>0$ in any case and $\sigma_1^{a_p}\sigma_2\sigma_1^{a_{p+1}}\sigma_3^{b_{p+1}}$ does not occur if $a_{p+1}>0$ unless it is the beginning of the minimal form of $\sigma$. Lemma~\ref{minimalform} \textbf{(iii)} considers the existence of an isolated $\sigma_3$ and states that neither $\sigma_3\sigma_2^{c_{p-1}}\sigma_3\sigma_2^{c_p}$ nor $\sigma_3\sigma_2\sigma_1^{a_{p-1}}\sigma_2^{c_{p-1}}\sigma_3\sigma_2^{c_p}$ occurs. Finally, Lemma~\ref{minimalform} \textbf{(iv)} states that $\sigma_2^{c_{p-1}}\sigma_1^{a_p}\sigma_3\sigma_2$ with $c_p=1$ does not occur ($a_p=0$ is possible) unless it is either the ending of the minimal form of $\sigma$ or there is no $\sigma_3$ preceding it in the minimal form of $\sigma$.
\end{remark}

\begin{proof}[Proof of Lemma~\ref{minimalform}]
In \textbf{(i)}, an application of the braid relation $\sigma_2\sigma_1\sigma_2=\sigma_1\sigma_2\sigma_1$ shows that the product expansion is not in minimal form if $b_p=0$ and $c_{p-1},c_p>0$.

In \textbf{(ii)}, $a_{p+1}$ applications of the commutativity relation $\sigma_1\sigma_3=\sigma_3\sigma_1$ and an application of the braid relation $\sigma_3\sigma_2\sigma_3=\sigma_2\sigma_3\sigma_2$ shows that the product expansion is not in minimal form if $b_p,b_{p+1}>0$. If $p>1$ and $b_p=0$, then $a_p$ applications of the braid relation $\sigma_1\sigma_2\sigma_1=\sigma_2\sigma_1\sigma_2$ shows that the product expansion is not in minimal form since $c_{p-1}>0$. 

In \textbf{(iii)}, $c_{p-1}$ applications of the braid relation $\sigma_2\sigma_3\sigma_2=\sigma_3\sigma_2\sigma_3$ shows that the product expansion is not in minimal form if $b_{p-1},c_p>0$. If $p>2$ and $c_{p-2}=1$, then $c_{p-1}$ applications of the braid relation $\sigma_2\sigma_3\sigma_2=\sigma_3\sigma_2\sigma_3$, $a_{p-1}$ applications of the commutativity relation $\sigma_1\sigma_3=\sigma_3\sigma_1$, and an application of the braid relation $\sigma_2\sigma_3\sigma_2=\sigma_3\sigma_2\sigma_3$ shows that the product expansion is not in minimal form if $b_{p-2},c_p>0$. Finally, \textbf{(ii)} implies that $p=3$ and $b_1=0$. 

In \textbf{(iv)}, if $p<n$, then $a_{p+1}>0$ by \textbf{(ii)}. Let us consider a general product expansion $\prod_{i=1}^{n'} \sigma_1^{a_i'}\sigma_3^{b_i'}\sigma_2^{c_i'}$ of $\sigma$ with $b_r'=1=c_r'$ and $a_{r+1}'>0$. We consider two cases depending on whether $a_r'>0$ or $a_r' = 0$. If $a_r'>0$, then consider $a_r'$ applications of the commutativity relation $\sigma_1\sigma_3=\sigma_3\sigma_1$ and $a_r'$ applications of the braid relation $\sigma_1\sigma_2\sigma_1=\sigma_2\sigma_1\sigma_2$ to obtain a new product expansion $\prod_{i=1}^{n'} \sigma_1^{a_i''}\sigma_3^{b_i''}\sigma_2^{c_i''}$ of $\sigma$ with $b_r''=1=c_r''$, $a_r''=0$, $a_{r+1}''=1>0$, $a_i''=a_i'$, $b_i''=b_i'$, and $c_i''=c_i'$ for $i\leq r-1$. If $r>1$, $a_r'=0$ and $b_{r-1}'=0$, then consider $c_{r-1}'$ applications of the braid relation $\sigma_2\sigma_3\sigma_2=\sigma_3\sigma_2\sigma_3$ to obtain a new product expansion $\prod_{i=1}^{n'-1} \sigma_1^{a_i''}\sigma_3^{b_i''}\sigma_2^{c_i''}$ of $\sigma$ with $b_{r-1}''=1=c_{r-1}''$, $a_{r-1}''=a_{r-1}'>0$, $a_{r}''=a_{r+1}'>0$, $a_i''=a_i'$, $b_i''=b_i'$, and $c_i''=c_i'$ for $i\leq r-2$. 

Let us now consider the original product expansion of $\sigma$. Let $j<p$ be maximal with respect to the property that $b_j>0$, if it exists. A finite number of iterations of the process in the previous paragraph (each iteration depending on whether $a_r'>0$ or $a_r'=0$) shows that there is a product expansion $\prod_{i=1}^{n'} \sigma_1^{a_i'}\sigma_3^{b_i'}\sigma_2^{c_i'}$ of $\sigma$ with $b_{j+1}'=1=c_{j+1}'$, $a_{j+1}'=0$, $a_i'=a_i$, $b_i'=b_i$, and $c_i'=c_i$ for $i\leq j$. Finally, $c_j$ applications of the braid relation $\sigma_2\sigma_3\sigma_2=\sigma_3\sigma_2\sigma_3$ shows that the original product expansion of $\sigma$ is not in minimal form.
\end{proof}

We will observe later (Lemma~\ref{reduction}) that the faithfulness of $\beta_4:B_4\to \text{GL}_3\left(\mathbb{Z}\left[q^{\pm 1}\right]\right)$ is equivalent to the faithfulness of the restriction of $\beta_4$ to the submonoid of positive braids $B_4^{+}$. In general, we can determine the Burau matrix of a positive braid $\sigma$ in terms of a product expansion of $\sigma$ using Definition~\ref{buraudef} (the Burau matrices of the Artin generators). However, each positive braid has a unique minimal form. Therefore, the characterization of minimal product expansions in Lemma~\ref{minimalform} will allow us simply to determine the Burau matrices of minimal product expansions rather than all product expansions. (Lemma~\ref{minimalform} furnishes a solution to the word problem in the monoid $B_4^{+}$ with respect to the Artin presentation.)

\begin{definition}
\label{isp}
Let $\sigma=\prod_{i=1}^{n} \sigma_1^{a_i}\sigma_3^{b_i}\sigma_2^{c_i}$ be the minimal form of the positive braid $\sigma$ where $a_i+b_i$ is a positive integer for $2\leq i\leq n$ and $c_i$ is a positive integer for $1\leq i\leq n-1$. An \textit{isolated $\sigma_2$ subproduct} is a subproduct $\left(\prod_{i=p}^{p'-1} \sigma_1^{a_i}\sigma_3^{b_i}\sigma_2^{c_i}\right)\sigma_1^{a_{p'}}\sigma_3^{b_{p'}}$ of the minimal form of $\sigma$, maximal with respect to the property that $c_i=1$ for all $p\leq i\leq p'-1$.

\begin{description}
\item[(a)] A \textit{type I isolated $\sigma_2$ subproduct} is an isolated $\sigma_2$ subproduct of the form \begin{equation} \tag{I} \sigma_1^{a_p}\sigma_3^{b_p}\sigma_2\sigma_1^{a_{p+1}}\sigma_2\sigma_3^{b_{p+2}}\sigma_2\cdots \end{equation} with $b_p>0$, and a \textit{type II isolated $\sigma_2$ subproduct} is an isolated $\sigma_2$ subproduct of the form \begin{equation} \tag{II} \sigma_1^{a_p}\sigma_2\sigma_3^{b_{p+1}}\sigma_2\sigma_1^{a_{p+2}}\sigma_2\cdots \end{equation} (the existence or nonexistence of a $\sigma_1$ or a $\sigma_3$ between consecutive $\sigma_2$s alternates after the first $\sigma_2$ in both cases).
\item[(b)] Let $p=1$. A \textit{type III isolated $\sigma_2$ subproduct} is \begin{equation} \tag{III} \sigma_1^{a_1}\sigma_2\sigma_1^{a_2}\sigma_3^{b_2}\sigma_2\sigma_1^{a_3}\sigma_2\sigma_3^{b_4}\sigma_2\cdots \end{equation} (the existence or nonexistence of a $\sigma_1$ or a $\sigma_3$ between consecutive $\sigma_2$s alternates after the \textit{second} $\sigma_2$) with $a_2>0$.
\end{description}
Finally, a \textit{non-isolated $\sigma_2$ subproduct} is a subproduct of the minimal form of $\sigma$, maximal with respect to the property that it does not overlap with an isolated $\sigma_2$ subproduct.
\end{definition}

The product decomposition of a positive braid into isolated $\sigma_2$ subproducts and non-isolated $\sigma_2$ subproducts will be convenient in terms of characterizing the minimal form of a positive braid. Lemma~\ref{minimalform} implies certain constraints on isolated $\sigma_2$ subproducts.

\begin{corollary}
We adopt the notation of Definition~\ref{isp}. 
\begin{description}
\item[(a)] If $p>1$, then an isolated $\sigma_2$ subproduct is either a type I isolated $\sigma_2$ subproduct or a type II isolated $\sigma_2$ subproduct. If $p = 1$, then an isolated $\sigma_2$ subproduct is either a type I isolated $\sigma_2$ subproduct, a type II isolated $\sigma_2$ subproduct, or a type III isolated $\sigma_2$ subproduct.
\item[(b)] If $p<i\leq p'$ and $i-p$ is odd, then $a_i\geq 2$ in a type I isolated $\sigma_2$ subproduct, and if $p\leq i\leq p'$ and $i-p$ is even, then $a_i\geq 2$ in a type II isolated $\sigma_2$ subproduct, except possibly if $i=p'=n$ and $c_i=0$. 
\item[(c)] Let $p<i\leq p'$. If $i-p$ is even, then $b_i\geq 2$ in a type I isolated $\sigma_2$ subproduct, and if $i-p$ is odd, then $b_i\geq 2$ in a type II isolated $\sigma_2$ subproduct, except possibly if $i=p'=n$ and $c_i = 0$ in either case, or $i = p+1$ in the second case. 
\item[(d)] Let $2<i\leq p'$. If $i$ is odd, then $a_i\geq 2$ in a type III isolated $\sigma_2$ subproduct, and if $i$ is even, then $b_i\geq 2$ in a type III isolated $\sigma_2$ subproduct, except possibly if $i = p' = n$ and $c_i = 0$.  
\end{description}
\end{corollary}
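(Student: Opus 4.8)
The plan is to derive all four parts from the characterization of the minimal form in Lemma~\ref{minimalform}, using the defining property that $c_i = 1$ for $p \le i \le p'-1$ together with the maximality of the isolated $\sigma_2$ subproduct. The engine of the argument is the observation that within the subproduct Lemma~\ref{minimalform} \textbf{(ii)} forces an \emph{alternation} of the exponents $b_i$: whenever $c_i = 1$ one cannot have both $b_i > 0$ and $b_{i+1} > 0$, and moreover if $i > 1$ and $b_i = 0$ then $a_{i+1} = 0$, whence $b_{i+1} > 0$ (since $a_{i+1} + b_{i+1} > 0$). Thus once $b_p$ is fixed, the pattern of which $b_i$ vanish and which are positive is completely determined for $p < i \le p'$, and correspondingly $a_i = 0$ exactly at the indices where a preceding vanishing $b$ forces $b_i > 0$.

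First I would prove the classification \textbf{(a)}. If $b_p > 0$ we are in case (I). If $b_p = 0$ and $p > 1$, then the second clause of Lemma~\ref{minimalform} \textbf{(ii)} gives $a_{p+1} = 0$, so the subproduct begins $\sigma_1^{a_p}\sigma_2\sigma_3^{b_{p+1}}\sigma_2\cdots$, which is case (II); the hypothesis $i > 1$ in that clause is precisely why case (III) cannot occur for $p > 1$. If $p = 1$ and $b_1 = 0$, the constraint $a_2 = 0$ is no longer available, so we split according to whether $a_2 = 0$ (case (II) with $p=1$) or $a_2 > 0$ (case (III)). Running the alternation forward then produces exactly the displayed alternating shapes.

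With the pattern in hand, the lower bounds \textbf{(b)}, \textbf{(c)}, \textbf{(d)} reduce to excluding the exponent value $1$ at the relevant indices. At an index $i$ with $b_i = 0$ we have $a_i > 0$ (from $a_i + b_i > 0$), and Lemma~\ref{minimalform} \textbf{(i)} forbids $a_i = 1$ when $b_i = 0$ (outside the boundary case $i = n$, $c_i = 0$), giving $a_i \ge 2$; this yields all of \textbf{(b)} and the odd-index half of \textbf{(d)}. At an index $i$ with $b_i > 0$ the alternation forces $a_i = 0$, so if $b_i = 1$ we may apply Lemma~\ref{minimalform} \textbf{(iii)}: its second clause is available because $c_{i-2} = 1$ lies inside the subproduct, and it forces $i = 3$ together with $b_1 = 0$. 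In each relevant case this is contradictory, since $i$ either exceeds $3$, or has the wrong parity, or the conclusion $b_1 = 0$ clashes with $b_p > 0$ when $p = 1$ in type I; hence $b_i \ge 2$, giving \textbf{(c)} for types I and II and the even-index half of \textbf{(d)}. The index shift by one in type III is absorbed by the extra initial $\sigma_2$.

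The main obstacle — and the source of the exceptional clauses in the statement — is the careful bookkeeping at the two boundaries of the subproduct. At the right boundary $i = p' = n$ with $c_i = 0$, the exceptions in Lemma~\ref{minimalform} \textbf{(i)} and \textbf{(iii)} are genuine and must be propagated verbatim. At the left boundary of a type II subproduct the index $i = p+1$ is special: applying the second clause of Lemma~\ref{minimalform} \textbf{(iii)} there would require $c_{i-2} = c_{p-1} = 1$, but maximality of the isolated $\sigma_2$ subproduct forces $c_{p-1} \ge 2$ for $p > 1$, so $b_{p+1} = 1$ cannot be ruled out — precisely the ``$i = p+1$'' exception in \textbf{(c)}. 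Verifying that these are the only positions where the argument degrades, and that the parities in \textbf{(b)}--\textbf{(d)} align with the forced alternation, is the only delicate part of the proof.
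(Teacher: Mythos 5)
Your proof is correct and takes the same route as the paper: the paper's proof of this corollary consists precisely of citing Lemma~\ref{minimalform} \textbf{(ii)} for \textbf{(a)}, \textbf{(i)} for \textbf{(b)}, \textbf{(iii)} for \textbf{(c)}, and \textbf{(i)}+\textbf{(iii)} for \textbf{(d)}, which is exactly the skeleton you flesh out. Your additional detail (the forced alternation from Lemma~\ref{minimalform} \textbf{(ii)}, and the boundary analysis explaining the $i=p'=n$ and $i=p+1$ exceptions via the maximality condition $c_{p-1}\geq 2$) is a faithful expansion of what the paper leaves implicit.
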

\begin{proof}
\begin{description}
\item[(a)] The statement is a consequence of Lemma~\ref{minimalform} \textbf{(ii)}.
\item[(b)] The statement is a consequence of Lemma~\ref{minimalform} \textbf{(i)}
\item[(c)] The statement is a consequence of Lemma~\ref{minimalform} \textbf{(iii)}.
\item[(d)] The statement is a consequence of Lemma~\ref{minimalform} \textbf{(i)} and \textbf{(iii)}.
\end{description}
\end{proof}

We recall \textit{Garside's theorem}~\cite{garside1969braid} in the special case of $B_4$.

\begin{theorem}[{\cite{garside1969braid}}]
\label{Garside}
 The \textit{Garside element} of $B_4$ is $\Delta = \left(\sigma_1\sigma_2\sigma_3\right)\left(\sigma_1\sigma_2\right)\sigma_1$. If $g\in B_4$, then $g$ can be expressed uniquely as a product $\Delta^{k}\sigma$ where $k\in \mathbb{Z}$ and $\sigma$ is a positive braid indivisible by $\Delta$ (i.e., $\sigma\neq \Delta\tau$ for any positive braid $\tau$). We refer to this expression as the \textit{Garside normal form} of $g$.
\end{theorem}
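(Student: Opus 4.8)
The plan is to realize $B_4$ as the group of (left) fractions of the positive monoid $B_4^{+}$ and to build everything around the divisibility theory of $B_4^{+}$. Two structural facts do all the work. First, $B_4^{+}$ is both left and right cancellative, and any two positive braids admitting a common right-multiple admit a \emph{least} common right-multiple (and dually on the left); equivalently, the left-divisors of $\Delta$ form a finite lattice under divisibility whose atoms are $\sigma_1,\sigma_2,\sigma_3$. Second, $\Delta=(\sigma_1\sigma_2\sigma_3)(\sigma_1\sigma_2)\sigma_1$ is the least common right-multiple of $\sigma_1,\sigma_2,\sigma_3$, each $\sigma_i$ both left- and right-divides $\Delta$, and conjugation by $\Delta$ realizes the diagram symmetry
\[
\Delta\,\sigma_i = \sigma_{4-i}\,\Delta \qquad (i=1,2,3),
\]
so that $\Delta^{2}$ is central in $B_4$. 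I would first verify these relations directly from the Artin presentation (a short computation) and then establish the lattice and cancellation statements; the latter is where the real content lies.

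Granting this, existence of the normal form goes as follows. Because $\Delta$ is a common multiple of all generators, an induction on length shows every positive braid right-divides some power $\Delta^{m}$. Common right-multiples therefore always exist, so $B_4^{+}$ satisfies the Ore condition and embeds into its group of fractions; one checks from the presentation that this group of fractions is exactly $B_4$. Given $g\in B_4$, write $g=\alpha^{-1}\beta$ with $\alpha,\beta\in B_4^{+}$. Choosing $m$ with $\Delta^{m}=\alpha\gamma$ for some positive $\gamma$ gives $\alpha^{-1}=\gamma\,\Delta^{-m}$, hence $g=\gamma\,\Delta^{-m}\beta$. Since conjugation by a power of $\Delta$ preserves $B_4^{+}$ (by the displayed symmetry), we may collect the negative $\Delta$-power on the left, obtaining $g=\Delta^{-m}\delta$ for a single positive braid $\delta$. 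Finally, whenever $\Delta$ left-divides $\delta$ we absorb it, replacing $(m,\delta)$ with $(m-1,\Delta^{-1}\delta)$; this strictly decreases the length of $\delta$, so the process halts at a positive braid indivisible by $\Delta$, yielding $g=\Delta^{k}\sigma$ with $\sigma$ indivisible by $\Delta$ (and $k$ maximal).

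For uniqueness, suppose $\Delta^{k}\sigma=\Delta^{k'}\sigma'$ with $\sigma,\sigma'$ both indivisible by $\Delta$ and, say, $k\le k'$. Left-cancelling $\Delta^{k}$ gives $\sigma=\Delta^{k'-k}\sigma'$. If $k'>k$, then $\Delta$ left-divides $\sigma$, contradicting indivisibility; hence $k=k'$, and then left cancellativity forces $\sigma=\sigma'$. The same argument identifies ``$\sigma$ indivisible by $\Delta$'' with ``$k$ chosen as large as possible,'' matching the description in the Introduction.

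The main obstacle is the foundational divisibility theory of $B_4^{+}$: cancellativity together with existence of least common multiples, equivalently the fact that the divisors of $\Delta$ form a lattice with atoms the generators. This is the delicate combinatorial core of Garside's argument — it is what guarantees that $\Delta$ is a genuine lcm rather than merely a common multiple, and that ``$\Delta$ left-divides $\delta$'' is a well-behaved, testable condition. I would either reproduce Garside's original approach via word reversing (iteratively rewriting each $\sigma_i^{-1}\sigma_j$ into positive-times-inverse-positive form and proving that the procedure terminates and is confluent), or invoke the general theory of Garside monoids, which packages cancellativity, the lattice of simple elements, and the group-of-fractions embedding into hypotheses that are directly checkable on the defining relations. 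Everything else — the explicit form of $\Delta$, the symmetry $\Delta\sigma_i=\sigma_{4-i}\Delta$, centrality of $\Delta^{2}$, and the existence and uniqueness bookkeeping above — is then routine.
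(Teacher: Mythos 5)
This statement is quoted from Garside's paper and carries the citation \cite{garside1969braid}; the paper offers no proof of its own, so there is no internal argument to compare yours against. Your sketch follows the standard route, which is essentially Garside's own: the existence and uniqueness bookkeeping (the symmetry $\Delta\sigma_i = \sigma_{4-i}\Delta$, collecting $\Delta$-powers via conjugation, absorbing left-divisors of $\Delta$ until the positive part is indivisible, and cancelling $\Delta^{k}$ to get uniqueness) is all correct, and you rightly identify that the genuine content lies in the cancellativity and lcm-lattice theory of $B_4^{+}$, which you defer to Garside's word-reversing argument or to general Garside-monoid theory. That deferral is appropriate, since filling it in would amount to reproving the main combinatorial results of the cited paper rather than anything this paper establishes.
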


The Garside normal form furnishes a solution to the word problem in the braid groups with respect to the Artin presentation. Indeed, there is an algorithm in~\cite{garside1969braid} to determine the Garside normal form of a braid as the output with a product expansion of the braid in the Artin generators as the input. However, we will not refer to this algorithm in the sequel. The main relevance of Theorem~\ref{Garside} for this paper is that, combined with the notion of the minimal form of a positive braid, there is a canonical product expansion of every braid. We will characterize the Burau matrices of these canonical product expansions (instead of characterizing the Burau matrices of all product expansions, which necessitates doing extra unnecessary work).

We will also require the following statement concerning the images of the powers of the Garside element under the Burau representation. 

\begin{lemma}
\label{Garsideaction}

If $k\in\mathbb{Z}$, then \[ \beta_4\left(\Delta^k\right) =        
\begin{cases}  \left( \begin{array}{ccc}
q^{2k} & 0 & 0 \\
0 & q^{2k} & 0 \\
0 & 0 & q^{2k} \end{array}\right) & \text{if }k\text{ is even} \\
         \left( \begin{array}{ccc}
0 & -q^{2k+1} & 0 \\
-q^{2k-1} & 0 & 0 \\
0 & 0 & -q^{2k} \end{array}\right) & \text{if }k\text{ is odd}.
   \end{cases}
\]
\end{lemma}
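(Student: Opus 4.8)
The plan is to derive the entire two-case formula from a single explicit matrix multiplication together with the centrality of the full twist. Concretely, everything reduces to two facts: the value of $\beta_4\left(\Delta\right)$ and the assertion that $\beta_4\left(\Delta^2\right)$ is a scalar matrix. I would first compute $\beta_4\left(\Delta\right)$ directly from Definition~\ref{buraudef}, multiplying the six generator matrices in the order prescribed by $\Delta=\left(\sigma_1\sigma_2\sigma_3\right)\left(\sigma_1\sigma_2\right)\sigma_1$ under our left-to-right convention. The computation is routine, and the only subtlety is that the mixed degree-two terms cancel in pairs (for instance $q^2-q^2=0$); this cancellation is exactly what forces the product to collapse to a monomial matrix with a single nonzero entry in each row and column, giving the $k=1$ (odd) instance of the stated formula.

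The structural core is the identity $\beta_4\left(\Delta^2\right)=q^4 I$. Conceptually this is automatic: $\Delta^2$ is the full twist and is central in $B_4$, so $\beta_4\left(\Delta^2\right)$ commutes with $\beta_4\left(\sigma_1\right),\beta_4\left(\sigma_2\right),\beta_4\left(\sigma_3\right)$ and hence with the whole image; a Schur-type argument (using irreducibility of the reduced Burau representation) already forces $\beta_4\left(\Delta^2\right)$ to be scalar, and evaluating a single entry identifies the scalar as $q^4$. In practice it is fastest simply to square the monomial matrix from the first step and observe that the off-diagonal entries vanish while each diagonal entry becomes $q^4$. A good independent check on the base computation is the relation $\Delta\sigma_i\Delta^{-1}=\sigma_{4-i}$ in $B_4$: one verifies that conjugation by $\beta_4\left(\Delta\right)$ carries $\beta_4\left(\sigma_i\right)$ to $\beta_4\left(\sigma_{4-i}\right)$, which both confirms the computed matrix and explains conceptually why it must be monomial.

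With $\beta_4\left(\Delta\right)$ known and $\beta_4\left(\Delta^2\right)=q^4 I$ central, the remaining step is purely algebraic and needs no further matrix multiplication. Splitting by parity: for even $k=2m$ one has $\beta_4\left(\Delta^{2m}\right)=\left(q^4 I\right)^m=q^{2k}I$, which is the even case; for odd $k=2m+1$ one has $\beta_4\left(\Delta^{2m+1}\right)=q^{4m}\beta_4\left(\Delta\right)=q^{2k-2}\beta_4\left(\Delta\right)$, and multiplying the base matrix by $q^{2k-2}$ reproduces the odd case verbatim. Negative exponents require no separate argument: from $\beta_4\left(\Delta\right)^2=q^4 I$ we read off $\beta_4\left(\Delta\right)^{-1}=q^{-4}\beta_4\left(\Delta\right)$, so both displayed formulas extend to all $k\in\mathbb{Z}$ (alternatively, one inducts upward and downward from $k=0,1$).

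The main obstacle is concentrated entirely in the first step: it is the sole place where genuine computation occurs, and because any sign error or mistracked $q$-power there propagates into every value of $k$, the bookkeeping of the six-fold product and its degree-two cancellations must be carried out carefully. Once $\beta_4\left(\Delta\right)$ and the scalar $\beta_4\left(\Delta^2\right)=q^4 I$ are pinned down, the rest of the lemma is a formal consequence of $\beta_4$ being a homomorphism and of the full twist being central.
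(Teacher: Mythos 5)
Your strategy is exactly the paper's: the paper's entire proof is the sentence ``The statement follows by induction on $k$ and the computation of $\beta_4\left(\Delta\right)$,'' and your packaging of that induction through $\beta_4\left(\Delta^2\right)=q^4I$, the parity split, and $\beta_4\left(\Delta\right)^{-1}=q^{-4}\beta_4\left(\Delta\right)$ for negative exponents is a correct and slightly more explicit way of carrying it out; the centrality/Schur remark and the conjugation check are harmless additions.

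There is, however, one step that fails as written: the claim that the six-fold product reproduces ``the $k=1$ (odd) instance of the stated formula.'' With the matrices of Definition~\ref{buraudef} and the paper's left-to-right convention (the one encoded by Proposition~\ref{m=p}), the product $\beta_4\left(\sigma_1\right)\beta_4\left(\sigma_2\right)\beta_4\left(\sigma_3\right)\beta_4\left(\sigma_1\right)\beta_4\left(\sigma_2\right)\beta_4\left(\sigma_1\right)$ equals
\[
\left( \begin{array}{ccc}
0 & 0 & q \\
0 & -q^{2} & 0 \\
q^{3} & 0 & 0 \end{array} \right),
\]
a monomial matrix supported on positions $\left(1,3\right)$, $\left(2,2\right)$, $\left(3,1\right)$, whereas the displayed odd-case matrix of Lemma~\ref{Garsideaction} is supported on $\left(1,2\right)$, $\left(2,1\right)$, $\left(3,3\right)$ with all three entries negative. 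Since the generator matrices do satisfy the braid relations, no rewriting of the word for $\Delta$ changes this product; and one can confirm the computed matrix (not the displayed one) is the correct value of $\beta_4\left(\Delta\right)$ because it conjugates $\beta_4\left(\sigma_1\right)$ to $\beta_4\left(\sigma_3\right)$, as the relation $\Delta\sigma_1\Delta^{-1}=\sigma_3$ demands. Your even case survives intact, since the square of this matrix is indeed $q^4I$. So the discrepancy is really an inconsistency between the lemma's displayed odd-case matrix and Definition~\ref{buraudef} itself (harmless for the paper's later use, which only needs that $\beta_4\left(\Delta^k\right)$ is monomial with exactly one nonzero entry in each row and with negative exponents for $k<0$); the flaw in your proposal is not the method but the unverified assertion that the computation ``gives the stated formula.'' An actual writeup must either record the matrix the computation produces or flag the mismatch with the lemma as printed.
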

\begin{proof}
The statement follows by induction on $k$ and the computation of $\beta_4\left(\Delta\right)$. 
\end{proof}

\subsection{Reduction of the main result to Theorem~\ref{invariantdetect}}
\label{subsecreduction}

We now reduce proving Theorem~\ref{main} to proving Theorem~\ref{invariantdetect} below. The reduction is a straightforward corollary of Garside's theorem.

\begin{lemma}
\label{reduction}
Let $t$ be a prime number. If $\Delta^{-k}\sigma$ is the Garside normal form of a braid in $B_4$, then $\Delta^{-k}\sigma\in\text{ker}\left(\left(\beta_4\right)_{t}\right)$ if and only if $\left(\beta_4\right)_{t}\left(\sigma\right)=\left(\beta_4\right)_{t}\left(\Delta^{k}\right)$ and $k\geq 0$.
\end{lemma}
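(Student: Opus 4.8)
The plan is to exploit that $(\beta_4)_t$ is a group homomorphism to turn the kernel condition into a single matrix identity, and then to peel off the claimed condition $k\ge 0$ by a $q$-degree (valuation) comparison. Essentially all of the content of the ``if and only if'' is formal once one sees that positivity of $\sigma$ and negativity of $k$ are incompatible at the level of $q$-degrees.

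First I would use that $(\beta_4)_t\colon B_4\to \mathrm{GL}_3\!\left(\mathbb{Z}/t\mathbb{Z}\left[q^{\pm 1}\right]\right)$ is a homomorphism and that $g=\Delta^{-k}\sigma$ to write
\[
(\beta_4)_t(g)=(\beta_4)_t\!\left(\Delta^{-k}\right)(\beta_4)_t(\sigma)=\left((\beta_4)_t\!\left(\Delta^{k}\right)\right)^{-1}(\beta_4)_t(\sigma).
\]
Hence $g\in\ker\!\left((\beta_4)_t\right)$, i.e.\ $(\beta_4)_t(g)=I$, holds precisely when $(\beta_4)_t(\sigma)=(\beta_4)_t\!\left(\Delta^{k}\right)$. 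This immediately gives the ``if'' direction (the hypothesis $k\ge 0$ is not even needed there), and it reduces the ``only if'' direction to the single remaining implication: the matrix identity $(\beta_4)_t(\sigma)=(\beta_4)_t\!\left(\Delta^{k}\right)$ forces $k\ge 0$.

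I would establish this implication by comparing $q$-degrees. Since $\sigma$ is a positive braid and, by Definition~\ref{buraudef}, every generator matrix has entries in $\{0,1,\pm q\}\subseteq\mathbb{Z}[q]$, the product $\beta_4(\sigma)$ has all entries in $\mathbb{Z}[q]$; reducing modulo $t$, every entry of $(\beta_4)_t(\sigma)$ lies in $\mathbb{Z}/t\mathbb{Z}[q]$, that is, it involves only nonnegative powers of $q$. On the other hand, by Lemma~\ref{Garsideaction}, if $k<0$ then every nonzero entry of $\beta_4\!\left(\Delta^{k}\right)$ is of the form $\pm q^{m}$ with $m\in\{2k-1,2k,2k+1\}$, each of which is strictly negative, and each such leading coefficient is $\pm 1$, which is nonzero modulo every prime $t$ (including $t=2$, where $-1\equiv 1$). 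Thus for $k<0$ the matrix $(\beta_4)_t\!\left(\Delta^{k}\right)$ has a nonzero entry involving a strictly negative power of $q$, so it cannot equal the polynomial matrix $(\beta_4)_t(\sigma)$. Therefore $(\beta_4)_t(\sigma)=(\beta_4)_t\!\left(\Delta^{k}\right)$ forces $k\ge 0$, completing the ``only if'' direction.

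The one genuinely nontrivial step—and hence the main obstacle, though a mild one—is this degree comparison; everything else is the formal homomorphism manipulation. The only subtlety to watch is that the argument must survive reduction modulo $t$, which it does precisely because the relevant top- and bottom-degree coefficients of $\beta_4\!\left(\Delta^{k}\right)$ are $\pm 1$ and so remain nonzero mod $t$, preventing any cancellation of the offending negative-degree monomial. The degenerate case $\sigma=e$ is handled uniformly by the same comparison: then $(\beta_4)_t(\sigma)=I$ has $q$-degree $0$, which again forces $k=0\ge 0$.
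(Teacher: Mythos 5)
Your proof is correct and follows essentially the same route as the paper: the kernel condition is converted to the matrix identity $\left(\beta_4\right)_{t}\left(\sigma\right)=\left(\beta_4\right)_{t}\left(\Delta^{k}\right)$ by the homomorphism property, and $k\geq 0$ is forced because Lemma~\ref{Garsideaction} shows all nonzero entries of $\left(\beta_4\right)_{t}\left(\Delta^{k}\right)$ carry strictly negative powers of $q$ when $k<0$, while $\left(\beta_4\right)_{t}\left(\sigma\right)$ has only nonnegative powers since $\sigma$ is positive. Your explicit observation that the coefficients $\pm 1$ survive reduction modulo every prime $t$ makes precise a point the paper's one-line proof leaves implicit, but it is the same argument.
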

\begin{proof}
We can assume $k\geq 0$ since the exponents of the polynomials in the entries of $\left(\beta_4\right)_{t}\left(\Delta^{k}\right)$ are negative for $k<0$ by Lemma~\ref{Garsideaction}.
\end{proof}

\begin{theorem}
\label{invariantdetect}
Let $t\neq 2$ be a prime number. If $\sigma$ is a normal braid indivisible by $\Delta$, then the leading coefficients of multiple entries in at least one row of $\beta_4\left(\sigma\right)$ are non-zero modulo $t$. 
\end{theorem}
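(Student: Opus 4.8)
The plan is to build the proof on the combinatorial model of $\beta_4(\sigma)$ developed in Sections~\ref{pathgraph}--\ref{mains}, and to engineer, in at least one row, two entries whose top-degree $q$-coefficients survive cancellation modulo $t$. First I would record why ``multiple entries'' is exactly the right target: by Lemma~\ref{Garsideaction} every row of $\beta_4(\Delta^k)$ carries at most one nonzero entry, so as soon as some row of $\beta_4(\sigma)$ has two nonzero leading coefficients we obtain $\beta_4(\sigma)\neq\beta_4(\Delta^k)$ for every $k$, and Lemma~\ref{reduction} then yields the non-membership in the kernel that Theorem~\ref{main} needs. The substance is therefore purely about $\beta_4(\sigma)$, and I would compute its $(r,s)$-entry as the weighted count of $(r,s)$-type $\sigma$-paths via Proposition~\ref{m=p}, immediately refining to admissible $\sigma$-paths through Proposition~\ref{m=ap} so that the ``cheap'' local cancellations---the distinguished pairs bounding a triangle or trapezium of Definition~\ref{dispair}---are already quotiented out.

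With the admissible model in hand, the idea is to exhibit, in one row, two top-$q$-degree admissible paths (or families of them) whose weights do not fully cancel. The engine is the good/bad dichotomy of Subsection~\ref{subsecgb}: by Lemma~\ref{lemmagoodext} good paths accumulate $q$-degree rapidly while bad paths accumulate it slowly, so a path realizing the maximal $q$-resistance is forced to be good and its weight should dominate. I would package the entire argument as strong $r$-regularity (Definition~\ref{defrregular}) of $s$-subproducts: an $s$-subproduct $P$ is strongly $r$-regular when a maximal $q$-resistance $P$-path is good, of type $(r,s)$, and beats every other $P$-path by a quantitative margin. That margin is precisely what prevents a lower-degree bad path from later ``catching up'' under extension. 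The exclusion of $t=2$ enters here: the surviving leading coefficient is a signed integer count that is a unit modulo every odd prime---a $\pm$ power of two in the extreme case---but may vanish modulo two, consistent with the known failure of faithfulness of $\beta_4$ modulo two.

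The proof would then be an induction along the block-road decomposition of Definition~\ref{blockroad}. After establishing strong $r$-regularity for a small initial $s$-subproduct (the base case; here indivisibility of $\sigma$ by $\Delta$ keeps us in the canonical minimal form and rules out a degenerate start), I would propagate it: Corollary~\ref{lemmaroadregular} extends strong $r$-regularity across a road using the path-extension theory of Subsection~\ref{subsecpathextensions}, and Corollary~\ref{lemmablockstronglyregular} extends it across a normal block, splitting into the $2$-block and $3$-block cases. Normality of $\sigma$ is used through Proposition~\ref{pnormalblock}, which recasts Definition~\ref{normal} as the statement that every block is a normal block; the local constraints \textbf{(i)} and \textbf{(ii)} of Definition~\ref{normal} are exactly what rule out the few configurations in which a bad path's $q$-degree would catch the good path or a new top-degree cancellation would be created. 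Applying the second main statement on strong $r$-regularity to the maximal proper $s$-subproduct of $\sigma$ then yields two nonzero leading coefficients in row $r$, completing the proof.

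The hardest part is the block-extension step, Corollary~\ref{lemmablockstronglyregular}. Roads only interleave commuting and ``free'' generators and cannot manufacture new cancellation, so the road step is largely bookkeeping of $q$-resistance; but a block inserts one side of a braid relation, and it is there that distinguished pairs form and that a trailing bad path can threaten the dominance of the good path. The delicate point is quantitative: I would need to track the exact $q$-resistance gained by the good path versus a bad path across each normal block, treating the $2$-block and $3$-block cases separately and paying particular attention to the runs of $c_i=1$ (the isolated $\sigma_2$ subproducts of Definition~\ref{isp}, constrained by Lemma~\ref{minimalform}), since these are where the minimal form is most rigid and where the normality conditions \textbf{(i)}, \textbf{(ii)} are genuinely needed. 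Verifying that the margin is preserved---and that the surviving coefficient stays a unit modulo $t$---through every admissible configuration of a normal block is the crux of the whole argument.
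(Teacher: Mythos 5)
Your proposal follows the paper's own route almost exactly: the admissible-path model (Propositions~\ref{m=p} and~\ref{m=ap}), the good/bad dichotomy and Lemma~\ref{lemmagoodext}, strong $r$-regularity (Definition~\ref{defrregular}) propagated inductively along the block-road decomposition via Corollary~\ref{lemmaroadregular} for roads and Corollary~\ref{lemmablockstronglyregular} for normal blocks, with Proposition~\ref{pnormalblock} translating normality of $\sigma$ into the hypothesis needed at each block step. So the assessment reduces to the one place where your proposal asserts something the paper shows to be false.

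That place is the base case. You claim one can ``establish strong $r$-regularity for a small initial $s$-subproduct,'' with indivisibility by $\Delta$ merely ``ruling out a degenerate start.'' The paper's Lemma~\ref{initial} shows this is not available in general: for braids whose minimal form begins $\sigma_1^{a_1}\sigma_2\sigma_1\sigma_3\sigma_2^2\sigma_1\sigma_3\sigma_2^2\cdots$ (an alternating run of $\sigma_1\sigma_3$ and $\sigma_2^2$ after $\sigma_1^{a_1}\sigma_2$), \emph{no} proper initial subproduct is strongly $r$-regular for any $r$ --- the maximal $q$-resistance paths with initial vertex $1$ or $3$ are forced to switch at every stage and never attain the margin Definition~\ref{defrregular} demands. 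The paper handles this exceptional family with a two-tier base case: case \textbf{(ii)} of Lemma~\ref{initial} establishes only the weaker invariant $\mu_{1,0}^{R}=1$ and $w_{\lambda}^{R}=0=w_{\mu,3}^{R}$ for a suitable $r\in\{1,3\}$, and the $2$-block extension result (Lemma~\ref{l2blockstronglyregular}, hence Corollary~\ref{lemmablockstronglyregular}) carries a matching extra hypothesis precisely so that this weaker invariant can be converted into genuine strong regularity once the first non-initial block is crossed. Indivisibility by $\Delta$ enters not as a genericity convenience but through Proposition~\ref{pblockDelta}: it forbids the coexistence of an initial and a singular block, which is what guarantees that in case \textbf{(ii)} the first non-initial block is not of the form $\sigma_1\sigma_3\sigma_2\sigma_1^{a_{p+1}}$, so the extra hypothesis of Corollary~\ref{lemmablockstronglyregular} applies. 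Without this exceptional-case analysis and the corresponding extra case in the block lemma, your induction has nothing to start from on that family, so the argument as you propose it would fail there.
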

 
Theorem~\ref{main} of the Introduction is a consequence of Theorem~\ref{invariantdetect}.

\begin{proof}[Proof of Theorem~\ref{main}]
Let $\sigma$ be a normal braid indivisible by $\Delta$. Lemma~\ref{reduction} reduces the first statement in Theorem~\ref{main} to establishing that $\left(\left(\beta_4\right)_{t}\right)\left(\sigma\right)\neq \left(\left(\beta_4\right)_{t}\right)\left(\Delta^k\right)$ for every integer $k\geq 0$. The latter statement is a consequence of Lemma~\ref{Garsideaction} and Theorem~\ref{invariantdetect}. 

We recall the \textit{rational roots theorem}, which states that if $\frac{a}{b}\in \mathbb{Q}$ (in lowest terms) is a root of a polynomial $f\left(q\right)$ with integer coefficients, then $b$ divides the leading coefficient of $f$. Theorem~\ref{invariantdetect} and the rational roots theorem imply the second statement in Theorem~\ref{main}. 
\end{proof} 

The following Sections \ref{pathgraph} - \ref{mains} will be devoted to proving Theorem~\ref{invariantdetect}. The outline of the proof is as follows (see Subsection~\ref{subsecoutline} of the Introduction for a more detailed outline). In Section~\ref{pathgraph}, we will introduce a new interpretation of the entries of $\beta_4\left(\sigma\right)$ for a positive braid $\sigma$ as weighted numbers of admissible $\sigma$-paths (Proposition~\ref{m=ap}). Note that admissible $\sigma$-paths are defined concretely in terms of the minimal form of $\sigma$. In Section~\ref{mains}, we will construct admissible $\sigma$-paths with special properties if $\sigma$ is a normal braid indivisible by $\Delta$. The weights of these $\sigma$-paths will contribute the non-zero leading coefficients of the entries in a row of $\beta_4\left(\sigma\right)$ modulo $t$, and this will establish Theorem~\ref{invariantdetect}.

\section{A new interpretation of $\beta_4\left(\sigma\right)$ in terms of the minimal form of a positive braid $\sigma$}
\label{pathgraph}

In this section, we will introduce a novel interpretation of the entries of the Burau matrix $\beta_4\left(\sigma\right)$ of a positive braid $\sigma$ in terms of weighted counts of \textit{$\sigma$-paths}. In Subsection~\ref{subsecspath}, we will define the notion of a $\sigma$-path, which is a path in the three vertex straight-line graph compatible with the minimal form of $\sigma$ in a suitable sense. We will also define the \textit{weight of a $\sigma$-path} as a signed monomial in $q$. We will establish Proposition~\ref{m=p} which states that the entries in the Burau matrix $\beta_4\left(\sigma\right)$ are weighted counts of $\sigma$-paths. 

In Subsection~\ref{subsecadspath}, we will demonstrate that the weights of certain $\sigma$-paths cancel in pairs, and this cancellation admits a beautiful geometric interpretation. Indeed, in Definition~\ref{dispair}, we define five different types of \textit{distinguished pairs} of $\sigma$-paths. We show that the $\sigma$-paths in a distinguished pair bound local triangles or trapeziums. We define \textit{admissible $\sigma$-paths} to be $\sigma$-paths that do not belong to any distinguished pair. In Proposition~\ref{m=ap}, we refine Proposition~\ref{m=p} and establish that the entries of the Burau matrix $\beta_4\left(\sigma\right)$ are weighted counts of \textit{admissible} $\sigma$-paths. The proof requires a careful verification that distinct distinguished pairs do not overlap. 

We will construct admissible $\sigma$-paths by constructing $\sigma$-paths that avoid local triangles and trapeziums. Although the weights of such admissible $\sigma$-paths do not locally cancel, they may \textit{a priori} globally cancel in a different manner. In Subsection~\ref{subsecgb}, we will develop a theory to construct admissible $\sigma$-paths with weights that do not cancel at all. In Definition~\ref{defgoodbadpath}, we introduce the notion of \textit{good and bad paths}. A $\sigma$-path is constructed as the minimal form of $\sigma$ is read from left to right. If we read the minimal form of a positive braid, then good paths accumulate $q$-degree quickly and bad paths accumulate $q$-degree slowly. In Subsection~\ref{subsecpathextensions}, we will establish this formally in a sequence of statements.

The main utility of the theory of $\sigma$-paths is that one can ``see" entries of the Burau matrix $\beta_4\left(\sigma\right)$ quickly (in linear time) even for long positive braids $\sigma$. In particular, admissible $\sigma$-paths provide a powerful mental model for thinking about the Burau matrix $\beta_4\left(\sigma\right)$, and we will illustrate this in Example~\ref{exmain} in Subsection~\ref{subsecgb}. 

In this paper, $\sigma$ will always denote a positive braid. 

\subsection{$\sigma$-paths}
\label{subsecspath}

In this subsection, we introduce the notion of a $\sigma$-path (Definition~\ref{spath}) and establish a new interpretation of the Burau matrix $\beta_4\left(\sigma\right)$ in terms of $\sigma$-paths (Proposition~\ref{m=p}).

\begin{definition}
\label{spath}
Let $G$ be the three vertex straight-line graph with vertices labelled as $\{1,2,3\}$ and the middle (valence two) vertex labelled as $2$. We will denote a path in $G$ by its sequence of vertices $v_0,\dots,v_{l}$. We define an \textit{$\left(r,s\right)$-type $\sigma$-path} to be a path in $G$, $v_0,\dots,v_l$, with the following properties:
\begin{description}
\item[(i)] $l$ is the length of the positive braid $\sigma$.
\item[(ii)] $v_0=r$ and $v_l=s$ (as vertices of $G$).
\item[(iii)] If the $k$th generator in the minimal form of $\sigma$ is $\sigma_i$, then either $v_k=v_{k-1}$ or $v_k=i$ is adjacent to $v_{k-1}$ in $G$; we refer to the latter as a \textit{vertex change} at $\sigma_i$.
\end{description}
We define the \textit{weight of a $\sigma$-path} to be the signed monomial in $q$ determined as follows:
\begin{description}
\item[(i)] The $q$-degree (which we also refer to as the \textit{$q$-resistance of the $\sigma$-path}) is the number of $k$ for which either $v_k=i=v_{k-1}$ and the $k$th generator (in the minimal form of $\sigma$) is $\sigma_i$, or there is an $i+1\to i$ vertex change at the $k$th generator.
\item[(ii)] The sign is $\left(-1\right)^{e}$ where $e$ is the number of $k$ for which there is an $i+1\to i$ vertex change at the $k$th generator. 
\end{description}
\end{definition}

If context makes $\sigma$ clear, then we will sometimes omit reference to $\sigma$ and refer to $\sigma$-paths as simply paths. The entries of the Burau matrix $\beta_4\left(\sigma\right)$ are polynomials in $q$, and in Proposition~\ref{m=p}, we will show that we can determine $\beta_4\left(\sigma\right)$ by counting $\sigma$-paths (with weights). Figure~\ref{fig1} illustrates Definition~\ref{spath}.

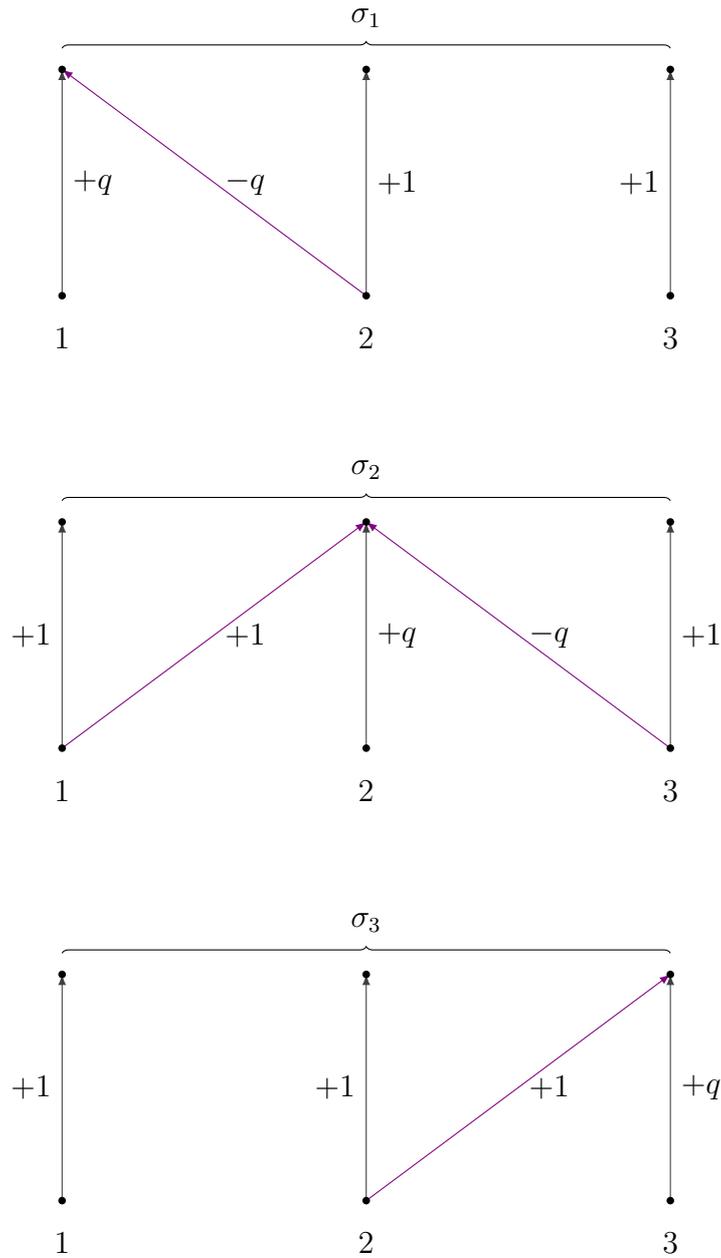
\begin{figure}[H]
\centering
\begin{tikzpicture}[>=latex]
\draw[->][color=darkgray][solid] (0,0) -- (0,3) node[midway, left][color=black]{$+1$};
\draw[->][color=violet][solid] (4,0) -- (8,3) node[midway, above, right][color=black]{$+1$};
\draw[->][color=darkgray][solid] (4,0) -- (4,3) node[midway, left][color=black]{$+1$} ;
\draw[->][color=darkgray][solid] (8,0) -- (8,3) node[midway, right][color=black]{$+q$};
\draw[->][color=darkgray][solid] (0,6) -- (0,9) node[midway, left][color=black]{$+1$};
\draw[->][color=violet][solid] (0,6) -- (4,9) node[midway, below, right][color=black]{$+1$};
\draw[->][color=darkgray][solid] (4,6) -- (4,9) node[midway, right][color=black]{$+q$} ;
\draw[->][color=violet][solid] (8,6) -- (4,9) node[midway, above, right][color=black]{$-q$};
\draw[->][color=darkgray][solid] (8,6) -- (8,9) node[midway, right][color=black]{$+1$};
\draw[->][color=darkgray][solid] (0,12) -- (0,15) node[midway, right][color=black]{$+q$};
\draw[->][color=violet][solid] (4,12) -- (0,15) node[midway, above, right][color=black]{$-q$} ;
\draw[->][color=darkgray][solid] (4,12) -- (4,15) node[midway, right][color=black]{$+1$};
\draw[->][color=darkgray][solid] (8,12) -- (8,15) node[midway, left][color=black]{$+1$} ;
\draw[decoration={brace,raise=8pt},decorate] (0,3) -- (8,3) node[midway,above=12pt][color=black]{$\sigma_3$};
\draw[decoration={brace,raise=8pt},decorate] (0,9) -- (8,9) node[midway,above=12pt][color=black]{$\sigma_2$};
\draw[decoration={brace,raise=8pt},decorate] (0,15) -- (8,15) node[midway,above=12pt][color=black]{$\sigma_1$};
\draw (0,0) node[below=8pt]{$1$};
\draw (4,0) node[below=8pt]{$2$};
\draw (8,0) node[below=8pt]{$3$};
\draw (0,6) node[below=8pt]{$1$};
\draw (4,6) node[below=8pt]{$2$};
\draw (8,6) node[below=8pt]{$3$};
\draw (0,12) node[below=8pt]{$1$};
\draw (4,12) node[below=8pt]{$2$};
\draw (8,12) node[below=8pt]{$3$};
 \foreach \x in {0,4,8} {
        \foreach \y in {0,3} {
            \fill[color=black] (\x,\y) circle (0.05);
        }
    }
     \foreach \x in {0,4,8} {
        \foreach \y in {6,9} {
            \fill[color=black] (\x,\y) circle (0.05);
        }
    }    
     \foreach \x in {0,4,8} {
        \foreach \y in {12,15} {
            \fill[color=black] (\x,\y) circle (0.05);
        }
    }\end{tikzpicture}
\caption{We illustrate Definition~\ref{spath}. We have indicated three pictures corresponding to the three generators $\sigma_1$, $\sigma_2$, $\sigma_3$, labelled at the top of each picture. The vertices of the graph $G$ are labelled at the bottom of each picture. The pictures indicate the possible edges of a $\sigma$-path from $v_{k-1}$ to $v_k$ according to which of $\sigma_1$, $\sigma_2$, $\sigma_3$ is the $k$th generator in the minimal form of $\sigma$. (A specific $\sigma$-path can only follow one edge in the picture corresponding to the $k$th generator in the minimal form of $\sigma$.) The vertical dark gray edges denote the cases $v_{k-1}=v_{k}$ and the diagonal violet edges denote vertex changes ($v_{k-1}\neq v_k$). The labels on the edges denote the multiplicative factor by which the weight of the $\sigma$-path changes, if the $\sigma$-path follows that edge.}
\label{fig1}
\end{figure}

We justify the introduction of $\sigma$-paths in relevance to the Burau representation $\beta_4$.

\begin{proposition}
\label{m=p}
The $\left(r,s\right)$-entry of $\beta_4\left(\sigma\right)$ is the weighted number of the $\left(r,s\right)$-type $\sigma$-paths.\end{proposition}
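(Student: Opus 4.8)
The plan is to recognize the Burau matrix of each Artin generator as a \emph{transfer matrix} on the graph $G$, so that the $(r,s)$-entry of a product of such matrices expands directly as a sum over paths. Concretely, write the minimal form of $\sigma$ as $\sigma = g_1 g_2 \cdots g_l$, where $g_k$ is the $k$th generator and $l$ is the length of $\sigma$, and set $M_k = \beta_4\left(g_k\right)$, so that $\beta_4\left(\sigma\right) = M_1 M_2 \cdots M_l$ by Definition~\ref{buraudef}. The key observation, which I would record first, is that for each $i \in \{1,2,3\}$ the entry $\beta_4\left(\sigma_i\right)_{a,b}$ is \emph{exactly} the multiplicative weight factor attached in Definition~\ref{spath} (and depicted in Figure~\ref{fig1}) to the step from vertex $a$ to vertex $b$ at a generator $\sigma_i$, and is zero precisely when no such step is permitted.

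First I would verify this observation by direct inspection of the three matrices in Definition~\ref{buraudef}. For a generator $\sigma_i$ the only nonzero entries are the diagonal entry $(a,a)$, which is a ``stay'' at vertex $a$ and equals $q$ when $a=i$ and $1$ otherwise, together with the two entries $(i-1,i)$ and $(i+1,i)$ recording the adjacent vertex changes \emph{into} vertex $i$; the change $i-1 \to i$ carries factor $+1$ while the change $i+1 \to i$ carries factor $-q$. (At the endpoints $i=1$ and $i=3$ one of these two adjacent changes is absent, since vertices $0$ and $4$ do not exist, which matches the fact that $\beta_4\left(\sigma_1\right)$ has a nonzero entry only at $(2,1)$ off the diagonal and $\beta_4\left(\sigma_3\right)$ only at $(2,3)$.) These factors match the weight rules of Definition~\ref{spath} exactly: a factor of $q$ arises precisely from a stay at the active vertex $i$ or from an $i+1 \to i$ change, and a sign $-1$ arises precisely from an $i+1 \to i$ change. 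Every remaining entry of $\beta_4\left(\sigma_i\right)$ vanishes, which encodes that the only admissible moves at $\sigma_i$ are a stay or a change into vertex $i$ between adjacent vertices, i.e.\ property \textbf{(iii)} of Definition~\ref{spath}.

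With this in hand I would expand the matrix product entrywise. Writing $v_0 = r$ and $v_l = s$, the $(r,s)$-entry of $\beta_4\left(\sigma\right) = M_1 \cdots M_l$ is
\[ \beta_4\left(\sigma\right)_{r,s} = \sum_{v_1,\dots,v_{l-1}} \prod_{k=1}^{l} \left(M_k\right)_{v_{k-1},v_k}, \]
the sum ranging over all sequences $v_1,\dots,v_{l-1} \in \{1,2,3\}$. By the preliminary observation, a term is nonzero if and only if each factor $\left(M_k\right)_{v_{k-1},v_k}$ is nonzero, that is, if and only if the sequence $v_0,v_1,\dots,v_l$ is an $(r,s)$-type $\sigma$-path. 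For such a path the product of the factors is a signed monomial in $q$ whose exponent counts exactly those steps that are a stay at the active vertex or an $i+1\to i$ change (the $q$-resistance of the path) and whose sign is $\left(-1\right)^e$ with $e$ the number of $i+1 \to i$ changes. This is precisely the weight of the path as defined in Definition~\ref{spath}, so summing over all nonzero terms yields the weighted number of $(r,s)$-type $\sigma$-paths, as claimed.

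I do not expect a genuine obstacle here: the content is a faithful dictionary between matrix multiplication and path counting. The one point requiring care is bookkeeping of conventions. Since words are read from left to right and $\beta_4\left(\sigma\right)$ is the product of the generator matrices in that order, the path must be indexed so that its $k$th step corresponds to the $k$th matrix factor, and the row/column indices $(v_{k-1},v_k)$ must be aligned with the stay/change rules; in particular one must confirm that the \emph{vanishing} entries of each $\beta_4\left(\sigma_i\right)$ correctly exclude the non-paths (so that the nonzero terms of the expansion are in bijection with $\sigma$-paths). Once the preliminary observation is checked against all nine entries of each generator matrix, the remainder is the standard transfer-matrix expansion.
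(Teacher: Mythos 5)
Your proposal is correct and follows essentially the same route as the paper: the paper's proof likewise observes that the $\left(r,s\right)$-entry of $\beta_4\left(\sigma_i\right)$ equals the weight factor attached to the step $r\to s$ at $\sigma_i$ (the arrow label in Figure~\ref{fig1}, or zero when no such step is permitted) and then invokes the definition of matrix multiplication, exactly your transfer-matrix expansion. Your write-up simply spells out the entrywise verification and the expansion $\beta_4\left(\sigma\right)_{r,s} = \sum_{v_1,\dots,v_{l-1}}\prod_{k=1}^{l}\left(M_k\right)_{v_{k-1},v_k}$ that the paper leaves implicit.
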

\begin{proof}
The $\left(r,s\right)$-entry of $\beta_4\left(\sigma_i\right)$ is either equal to the label on the arrow from $r\to s$ in the picture corresponding to $\sigma_i$ in Figure~\ref{fig1} if the arrow exists, or equal to zero if the arrow does not exist. The statement now follows from Definition~\ref{spath} and the definition of matrix multiplication.
\end{proof}
 
If we replace the minimal form of $\sigma$ by any product expansion of $\sigma$ in Definition~\ref{spath}, then Proposition~\ref{m=p} is still true. However, since each positive braid has a unique product expansion in minimal form, we do not require this generality.

\subsection{Admissible $\sigma$-paths}
\label{subsecadspath}

An important point in the statement of Proposition~\ref{m=p} is that some summands in the $\left(r,s\right)$-entry of $\beta_4\left(\sigma\right)$ (corresponding to weights of $\left(r,s\right)$-type $\sigma$-paths) cancel in the weighted sum but what remains is the $\left(r,s\right)$-entry of $\beta_4\left(\sigma\right)$. In other words, there is not a one-to-one correspondence between the (non-cancelling) terms in the $\left(r,s\right)$-entry of $\beta_4\left(\sigma\right)$ and the $\left(r,s\right)$-type $\sigma$-paths. (For example, Lemma~\ref{Garsideaction} implies that $\beta_4\left(\Delta^{2n}\right)$ is a diagonal matrix, although there are a plethora of $\Delta^{2n}$-paths if $n$ is large.)

The beautiful observation is that the weights of certain $\sigma$-paths \textit{cancel in pairs}. Moreover, the cancelling pairs of paths locally bound triangles and trapeziums. In particular, the cancellation of the weights of the paths can be illustrated geometrically.

The next step is to carefully group these $\sigma$-paths with cancelling weights in \textit{distinguished pairs}, such that distinct pairs do not overlap. We will refer to $\sigma$-paths that do not belong to any distinguished pair as \textit{admissible $\sigma$-paths}. Afterwards, in Proposition~\ref{m=ap}, we will refine Proposition~\ref{m=p} by establishing that the entries of the Burau matrix $\beta_4\left(\sigma\right)$ are weighted numbers of admissible $\sigma$-paths.

We now formally state and geometrically illustrate the distinguished pairs of $\sigma$-paths. Note that in the figures below, we will draw paths by composing the pictures for the generators in Figure~\ref{fig1} from bottom to top (also recall that words in the generators are read from left to right). 

\begin{definition}
\label{dispair}
Let $\sigma=\prod_{i=1}^{n} \sigma_1^{a_i}\sigma_3^{b_i}\sigma_2^{c_i}$ be the minimal form of $\sigma$. Let $1\leq p\leq n$ be fixed and let the last $\sigma_2$ in $\sigma_2^{c_p}$ be the $k$th generator in this product expansion (read from left to right). We define the following \textit{distinguished pairs} of $\sigma$-paths and observe that the weights of the $\sigma$-paths in each distinguished pair cancel. See also the figures for accompanying illustrations of special cases of distinguished pairs (the exponents of the generators are specialized to small values to permit an illustration). We will define $\delta$, $\epsilon$, $\zeta$, $\eta$, and $\theta$ distinguished pairs.

\begin{description}
\item[(i)] (\textit{$\delta$ distinguished pairs}) Let $a_{p+1}>0$. If $v_{k-1} = 1 = v_{k+1}$, then the paths $\delta_{k,1}$ and $\delta_{k,2}$ with $v_k = 1$ and $v_k = 2$, respectively, constitute a distinguished pair, unless $c_p=1$ and $v_{k-b_p-2}=2$. The other vertices match for $\delta_{k,1}$ and $\delta_{k,2}$, but are arbitrary, subject only to the conditions of a $\sigma$-path. Note that the weights of $\delta_{k,1}$ and $\delta_{k,2}$ have opposite sign. 

Figure~\ref{fig2} illustrates the local picture of this distinguished pair in the case $c_p\geq 2$. Figure~\ref{fig3} illustrates the local picture of the non-example, where all the conditions of being a distinguished pair in this paragraph are satisfied, except it is the case that $c_p=1$ and $v_{k-b_p-2}=2$. (If $c_p = 1$ and $v_{k-b_p-2} = 2$, then we do not consider $\delta_{k,1}$ and $\delta_{k,2}$ a distinguished pair because the local picture of $\delta_{k,2}$ would have an edge in common with the local picture of an $\eta$ distinguished pair (defined later).) \\

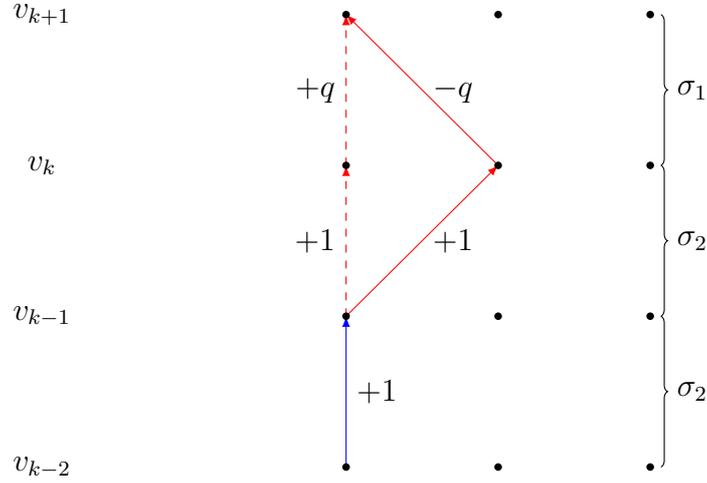
\begin{figure}[H]
\centering
\begin{tikzpicture}[>=latex]
\draw[->][color=blue][solid] (0,0) -- (0,2) node[midway, below, right][color=black]{$+1$};
\draw[->][color=red][solid] (0,2) -- (2,4) node[midway, below, right][color=black]{$+1$};
\draw[->][color=red][solid] (2,4) -- (0,6) node[midway, above, right][color=black]{$-q$} ;
\draw[->][color=red][dashed] (0,2) -- (0,4) node[midway, left][color=black]{$+1$};
\draw[->][color=red][dashed] (0,4) -- (0,6) node[midway, left][color=black]{$+q$} ;
\draw[decoration={brace,mirror,raise=4pt},decorate] (4,0) -- (4,2) node[midway,right=6pt][color=black]{$\sigma_2$};
\draw[decoration={brace,mirror,raise=4pt},decorate] (4,2) -- (4,4) node[midway,right=6pt][color=black]{$\sigma_2$};
\draw[decoration={brace,mirror,raise=4pt},decorate] (4,4) -- (4,6) node[midway,right=6pt][color=black]{$\sigma_1$};
 \foreach \x in {0,2,4} {
        \foreach \y in {0,2,4,6} {
            \fill[color=black] (\x,\y) circle (0.05);
        }
    }
    \node[] at (-4,0) {$v_{k-2}$};
    \node[] at (-4,2) {$v_{k-1}$};
    \node[] at (-4,4) {$v_k$};
    \node[] at (-4,6) {$v_{k+1}$};
\end{tikzpicture}
\caption{The solid and broken red paths (augmented by the blue edge) constitute the local pictures of a $\delta$ distinguished pair $\{\delta_{k,1},\delta_{k,2}\}$. The weights of the local pictures are $\pm q$. In relevance to the text directly above this figure, this is the case $c_p\geq 2$.}
\label{fig2}
\end{figure}

\begin{figure}[H]
\centering
\begin{tikzpicture}[>=latex]
\draw[->][color=blue][solid] (2,0) -- (0,2) node[midway, below, right][color=black]{$-q$};
\draw[->][color=blue][solid] (0,2) -- (0,4) node[midway, below, right][color=black]{$+1$};
\draw[->][color=brown][solid] (0,4) -- (2,6) node[midway, above, right][color=black]{$+1$} ;
\draw[->][color=brown][solid] (2,6) -- (0,8) node[midway, above, right][color=black]{$-q$};
\draw[->][color=brown][dashed] (0,4) -- (0,6) node[midway, left][color=black]{$+1$} ;
\draw[->][color=brown][dashed] (0,6) -- (0,8) node[midway, left][color=black]{$+q$} ;
\draw[decoration={brace,mirror,raise=4pt},decorate] (4,0) -- (4,2) node[midway,right=6pt][color=black]{$\sigma_1$};
\draw[decoration={brace,mirror,raise=4pt},decorate] (4,2) -- (4,4) node[midway,right=6pt][color=black]{$\sigma_3$};
\draw[decoration={brace,mirror,raise=4pt},decorate] (4,4) -- (4,6) node[midway,right=6pt][color=black]{$\sigma_2$};
\draw[decoration={brace,mirror,raise=4pt},decorate] (4,6) -- (4,8) node[midway,right=6pt][color=black]{$\sigma_1$};
 \foreach \x in {0,2,4} {
        \foreach \y in {0,2,4,6,8} {
            \fill[color=black] (\x,\y) circle (0.05);
        }
    }
     \node[] at (-4,0) {$v_{k-3}$};
    \node[] at (-4,2) {$v_{k-2}$};
    \node[] at (-4,4) {$v_{k-1}$};
    \node[] at (-4,6) {$v_{k}$};
    \node[] at (-4,8) {$v_{k+1}$};
    \end{tikzpicture}
\caption{The solid and broken brown paths (augmented by the blue edges) \textit{do not} constitute the local pictures of a distinguished pair, although the weights of the local pictures are $\pm q$. In relevance to the text directly above Figure~\ref{fig2}, this is the case $b_p=1=c_p$. The reason we do not consider this to be a distinguished pair is because the solid brown path (augmented by the blue edges) has an edge in common with an $\eta$-distinguished pair (defined later and illustrated in Figure~\ref{fig6}).}
\label{fig3}
\end{figure}
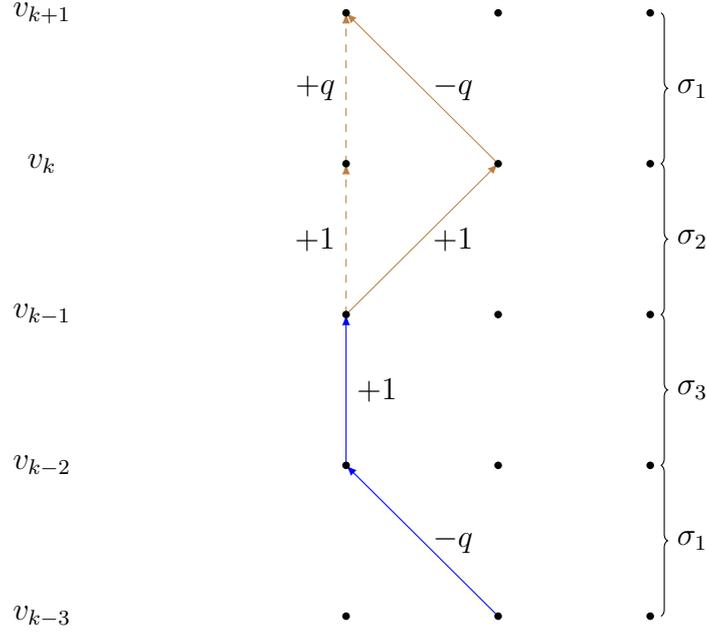

\item[(ii)] (\textit{$\epsilon$ distinguished pairs}) Let $b_{p+1}>0$. If $v_{k-1}=3=v_{k+a_{p+1}+1}$, then the paths $\epsilon_{k,2}$ and $\epsilon_{k,3}$ with $v_{k+j}=2$ and $v_{k+j}=3$ for $0\leq j\leq a_{p+1}$, respectively, constitute a distinguished pair. The other vertices match for $\epsilon_{k,2}$ and $\epsilon_{k,3}$, but are arbitrary, subject only to the conditions of a $\sigma$-path. Note that the weights of $\epsilon_{k,2}$ and $\epsilon_{k,3}$ have opposite sign. Figure~\ref{fig4} illustrates the local picture of this distinguished pair in the case $a_{p+1}=1$. 

\begin{figure}[H]
\centering
\begin{tikzpicture}[>=latex]
\draw[->][color=red][solid] (4,0) -- (2,2) node[midway, below, left][color=black]{$-q$};
\draw[->][color=red][solid] (2,2) -- (2,4) node[midway, above, left][color=black]{$+1$};
\draw[->][color=red][solid] (2,4) -- (4,6) node[midway, above, left][color=black]{$+1$};
\draw[->][color=red][dashed] (4,0) -- (4,2) node[midway, right][color=black]{$+1$};
\draw[->][color=red][dashed] (4,2) -- (4,4) node[midway, right][color=black]{$+1$} ;
\draw[->][color=red][dashed] (4,4) -- (4,6) node[midway, right][color=black]{$+q$} ;
\draw[decoration={brace,raise=4pt},decorate] (0,0) -- (0,2) node[midway,left=6pt][color=black]{$\sigma_2$};
\draw[decoration={brace,raise=4pt},decorate] (0,2) -- (0,4) node[midway,left=6pt][color=black]{$\sigma_1$};
\draw[decoration={brace,raise=4pt},decorate] (0,4) -- (0,6) node[midway,left=6pt][color=black]{$\sigma_3$};

 \foreach \x in {0,2,4} {
        \foreach \y in {0,2,4,6} {
            \fill[color=black] (\x,\y) circle (0.05);
        }
    }
     \node[] at (8,0) {$v_{k-1}$};
    \node[] at (8,2) {$v_{k}$};
    \node[] at (8,4) {$v_{k+1}$};
    \node[] at (8,6) {$v_{k+2}$};
       \end{tikzpicture}
   \caption{The solid and broken red paths constitute the local pictures of an $\epsilon$ distinguished pair $\{\epsilon_{k,2},\epsilon_{k,3}\}$. In relevance to the text directly above this figure, this is the case $a_{p+1}=1$. The weights of the local pictures are $\pm q$ (for all values of $a_{p+1}\geq 0$).}
\label{fig4}
\end{figure}
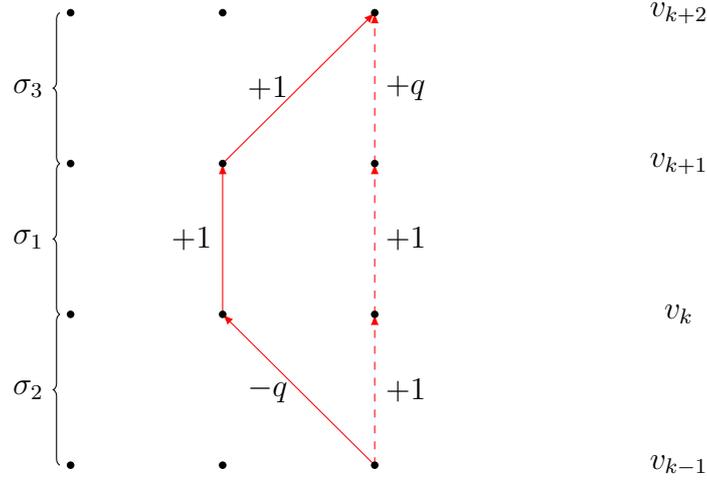

\item[(iii)] (\textit{$\zeta$ distinguished pairs}) Let $c_{p+1}>0$ and $v_{k+j}=2=v_{k+a_{p+1}+b_{p+1}+1}$ for $0\leq j\leq a_{p+1}+b_{p+1}-1$. If $a_{p+1}=1$ and there is a $1\to 2$ vertex change at the $k$th generator, then the paths $\zeta_{k+a_{p+1}+b_{p+1},2}$ and $\zeta_{k+a_{p+1}+b_{p+1},3}$ with $v_{k+a_{p+1}+b_{p+1}}=2$ and $v_{k+a_{p+1}+b_{p+1}}=3$, respectively, constitute a distinguished pair. The other vertices match for $\zeta_{k,1}$ and $\zeta_{k,2}$, but are arbitrary, subject only to the conditions of a $\sigma$-path. Note that the weights of $\zeta_{k+a_{p+1}+b_{p+1},2}$ and $\zeta_{k+a_{p+1}+b_{p+1},3}$ have opposite sign. Figure~\ref{fig5} illustrates the local picture of this distinguished pair in the case $b_{p+1}=2$. (Lemma~\ref{minimalform} \textbf{(i)} implies that $b_{p+1}>0$ and this ensures that a $\zeta$ distinguished pair is well-defined. Furthermore, the condition that there is a $1\to 2$ vertex change at the $k$th generator ensures that the local picture of $\zeta_{k+a_{p+1}+b_{p+1},3}$ does not have an edge in common with the local picture of an $\epsilon$-pair (defined earlier).)

\begin{figure}[H]
\centering
\begin{tikzpicture}[>=latex]
\draw[->][color=blue][solid] (0,0) -- (2,2) node[midway, below, right][color=black]{$+1$};
\draw[->][color=blue][solid] (2,2) -- (2,4) node[midway, above, right][color=black]{$+1$} ;
\draw[->][color=blue][solid] (2,4) -- (2,6) node[midway, above, right][color=black]{$+1$} ;
\draw[->][color=red][dashed] (2,6) -- (2,8) node[midway, left][color=black]{$+1$};
\draw[->][color=red][dashed] (2,8) -- (2,10) node[midway, left][color=black]{$+q$} ;
\draw[->][color=red][solid] (2,6) -- (4,8) node[midway, right][color=black]{$+1$} ;
\draw[->][color=red][solid] (4,8) -- (2,10) node[midway, right][color=black]{$-q$} ;
\draw[decoration={brace,mirror,raise=4pt},decorate] (4,0) -- (4,2) node[midway,right=6pt][color=black]{$\sigma_2$};
\draw[decoration={brace,mirror,raise=4pt},decorate] (4,2) -- (4,4) node[midway,right=6pt][color=black]{$\sigma_1$};
\draw[decoration={brace,mirror,raise=4pt},decorate] (4,4) -- (4,6) node[midway,right=6pt][color=black]{$\sigma_3$};
\draw[decoration={brace,mirror,raise=4pt},decorate] (4,6) -- (4,8) node[midway,right=6pt][color=black]{$\sigma_3$};
\draw[decoration={brace,mirror,raise=4pt},decorate] (4,8) -- (4,10) node[midway,right=6pt][color=black]{$\sigma_2$};
 \foreach \x in {0,2,4} {
        \foreach \y in {0,2,4,6,8,10} {
            \fill[color=black] (\x,\y) circle (0.05);
        }
    }
     \node[] at (-4,0) {$v_{k-1}$};
    \node[] at (-4,2) {$v_{k}$};
    \node[] at (-4,4) {$v_{k+1}$};
    \node[] at (-4,6) {$v_{k+2}$};
    \node[] at (-4,8) {$v_{k+3}$};
    \node[] at (-4,10) {$v_{k+4}$};
    \end{tikzpicture}
\caption{The solid and broken red paths (augmented by the blue edges) constitute the local pictures of a $\zeta$ distinguished pair $\{\zeta_{k+a_{p+1}+b_{p+1},2},\zeta_{k+a_{p+1}+b_{p+1},3}\}$. The weights of the local (red) pictures are $\pm q$. In relevance to the text directly above this figure, this is the case $b_{p+1}=2$.}
\label{fig5}
\end{figure}
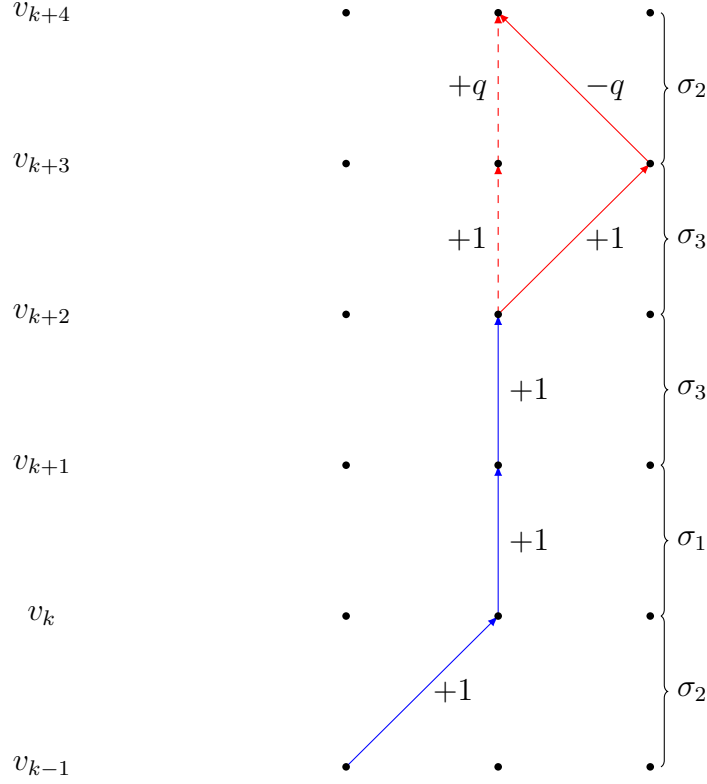

\item[(iv)] (\textit{$\eta$ distinguished pairs}) Let $c_{p+1}>0$ and $v_{k+j}=2=v_{k+a_{p+1}+b_{p+1}+1}$ for $0\leq j\leq a_{p+1}-1$. If either $a_{p+1}=1$ and there is no $1\to 2$ vertex change at the $k$th generator, or $a_{p+1}>1$, then the paths $\eta_{k+a_{p+1},1}$ and $\eta_{k+a_{p+1},2}$ with $v_{k+a_{p+1}+j}=1$ and $v_{k+a_{p+1}+j}=2$ for $0\leq j\leq b_{p+1}$, respectively, constitute a distinguished pair. The other vertices match for $\eta_{k+a_{p+1},1}$ and $\eta_{k+a_{p+1},2}$, but are arbitrary, subject only to the conditions of a $\sigma$-path. Note that the weights of $\eta_{k+a_{p+1},1}$ and $\eta_{k+a_{p+1},2}$ have opposite sign. Figure~\ref{fig6} illustrates the local picture of this distinguished pair in the case $a_{p+1}=1=b_{p+1}$ (and there is no $1\to 2$ vertex change at the $k$th generator). (If $a_{p+1} = 1$ and there is a $1\to 2$ vertex change at the $k$th generator, then we do not consider $\eta_{k+a_{p+1},1}$ and $\eta_{k+a_{p+1},2}$ a distinguished pair because the local picture of $\eta_{k+a_{p+1},1}$ would have an edge in common with the local picture of a $\delta$ distinguished pair (defined earlier).)

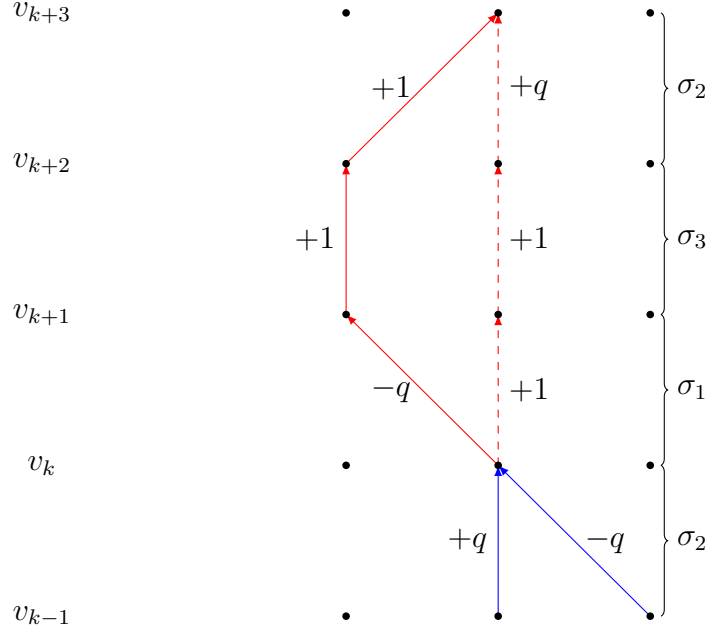
\begin{figure}[H]
\centering
\begin{tikzpicture}[>=latex]
\draw[->][color=blue][solid] (2,0) -- (2,2) node[midway, below, left][color=black]{$+q$};
\draw[->][color=blue][solid] (4,0) -- (2,2) node[midway, below, right][color=black]{$-q$};
\draw[->][color=red][solid] (2,2) -- (0,4) node[midway, above, left][color=black]{$-q$} ;
\draw[->][color=red][dashed] (2,4) -- (2,6) node[midway, right][color=black]{$+1$};
\draw[->][color=red][dashed] (2,6) -- (2,8) node[midway, right][color=black]{$+q$} ;
\draw[->][color=red][dashed] (2,2) -- (2,4) node[midway, right][color=black]{$+1$} ;
\draw[->][color=red][solid] (0,4) -- (0,6) node[midway, left][color=black]{$+1$} ;
\draw[->][color=red][solid] (0,6) -- (2,8) node[midway, left][color=black]{$+1$} ;
\draw[decoration={brace,mirror,raise=4pt},decorate] (4,0) -- (4,2) node[midway,right=6pt][color=black]{$\sigma_2$};
\draw[decoration={brace,mirror,raise=4pt},decorate] (4,2) -- (4,4) node[midway,right=6pt][color=black]{$\sigma_1$};
\draw[decoration={brace,mirror,raise=4pt},decorate] (4,4) -- (4,6) node[midway,right=6pt][color=black]{$\sigma_3$};
\draw[decoration={brace,mirror,raise=4pt},decorate] (4,6) -- (4,8) node[midway,right=6pt][color=black]{$\sigma_2$};
 \foreach \x in {0,2,4} {
        \foreach \y in {0,2,4,6,8} {
            \fill[color=black] (\x,\y) circle (0.05);
        }
    }
     \node[] at (-4,0) {$v_{k-1}$};
    \node[] at (-4,2) {$v_{k}$};
    \node[] at (-4,4) {$v_{k+1}$};
    \node[] at (-4,6) {$v_{k+2}$};
    \node[] at (-4,8) {$v_{k+3}$};
    \end{tikzpicture}
\caption{The solid and broken red paths (augmented by either of the blue edges) constitute the local pictures of a distinguished pair $\{\eta_{k+a_{p+1},1},\eta_{k+a_{p+1},2}\}$. The weights of the local (red) pictures are $\pm q$. In relevance to the text directly above this figure, this is the case $a_{p+1}=1=b_{p+1}$ (and there is no $1\to 2$ vertex change at the $k$th generator).}
\label{fig6}
\end{figure}

\item[(v)] (\textit{$\theta$ distinguished pairs}) Let $a_{p+1} = 0$, $c_{p+1}>0$, and $v_{k+j}=2=v_{k+a_{p+1}+b_{p+1}+1}$ for $0\leq j\leq a_{p+1} + b_{p+1}-1$. If either $b_{p+1}=1$ and there is no $3\to 2$ vertex change at the $k$th generator, or $b_{p+1}>1$, then the paths $\theta_{k+b_{p+1},2}$ and $\theta_{k+b_{p+1},3}$ with $v_{k+a_{p+1}+b_{p+1}}=2$ and $v_{k+a_{p+1}+b_{p+1}}=3$, respectively, constitute a distinguished pair. The other vertices match for $\theta_{k+b_{p+1},2}$ and $\theta_{k+b_{p+1},3}$, but are arbitrary, subject only to the conditions of a $\sigma$-path. Note that the weights of $\theta_{k+b_{p+1},2}$ and $\theta_{k+b_{p+1},3}$ have opposite sign. Figure~\ref{fig7} illustrates the local picture of this distinguished pair in the case $b_{p+1}=1$ (and there is no $3\to 2$ vertex change at the $k$th generator). (The condition that there is no $3\to 2$ vertex change at the $k$th generator is to ensure that the local picture of $\theta_{k+b_{p+1},3}$ does not have an edge in common with the local picture of an $\epsilon$ distinguished pair (defined earlier).)

\begin{figure}[H]
\centering
\begin{tikzpicture}[>=latex]
\draw[->][color=blue][solid] (2,0) -- (2,2) node[midway, right][color=black]{$+q$};
\draw[->][color=blue][solid] (0,0) -- (2,2) node[midway, right][color=black]{$+1$};
\draw[->][color=red][solid] (2,2) -- (4,4) node[midway, above, right][color=black]{$+1$} ;
\draw[->][color=red][solid] (4,4) -- (2,6) node[midway, above, right][color=black]{$-q$} ;
\draw[->][color=red][dashed] (2,2) -- (2,4) node[midway, left][color=black]{$+1$};
\draw[->][color=red][dashed] (2,4) -- (2,6) node[midway, left][color=black]{$+q$} ;
\draw[decoration={brace,mirror,raise=4pt},decorate] (4,0) -- (4,2) node[midway,right=6pt][color=black]{$\sigma_2$};
\draw[decoration={brace,mirror,raise=4pt},decorate] (4,2.) -- (4,4) node[midway,right=6pt][color=black]{$\sigma_3$};
\draw[decoration={brace,mirror,raise=4pt},decorate] (4,4) -- (4,6) node[midway,right=6pt][color=black]{$\sigma_2$};
 \foreach \x in {0,2,4} {
        \foreach \y in {0,2,4,6} {
            \fill[color=black] (\x,\y) circle (0.05);
        }
    }
     \node[] at (-4,0) {$v_{k-1}$};
    \node[] at (-4,2) {$v_{k}$};
    \node[] at (-4,4) {$v_{k+1}$};
    \node[] at (-4,6) {$v_{k+2}$};    
\end{tikzpicture}
\caption{The solid and broken red paths (augmented by either of the blue edges) constitute the local pictures of a distinguished pair $\{\theta_{k+b_{p+1},2},\theta_{k+b_{p+1},3}\}$. The weights of the local (red) pictures are $\pm q$. In relevance to the text directly above this figure, this is the case $b_{p+1}=1$ (and there is no $3\to 2$ vertex change at the $k$th generator).}
\label{fig7}
\end{figure}
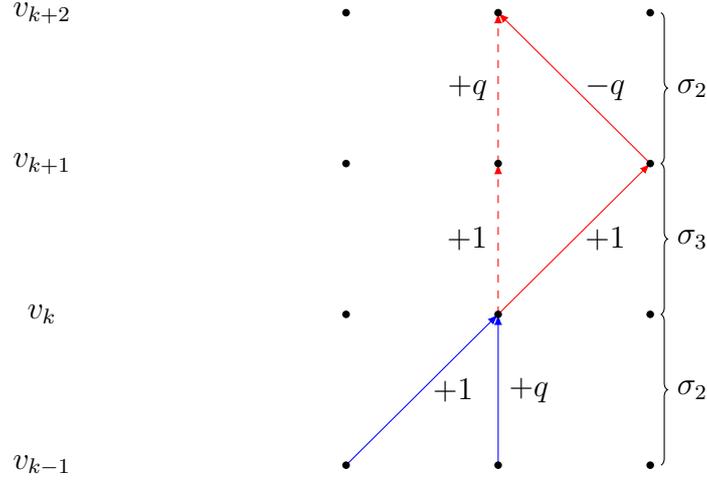

\end{description}

We will refer to $\sigma$-paths that do not belong to a distinguished pair of any of the forms described as \textit{admissible $\sigma$-paths}. 
\end{definition}

The following example elucidates the notion of admissibility. 

\begin{example}
\label{weirdadmissible}
Figure~\ref{fig8} is an illustration of the case where $a_{p+1}=1=b_{p+1}$ and there is no vertex change at the $k$th generator.

\begin{figure}[H]
\centering
\begin{tikzpicture}[>=latex]
\draw[->][color=red][solid] (2,0) -- (0,2) node[midway, left][color=black]{$-q$};
\draw[->][color=red][solid] (0,2) -- (0,4) node[midway, left][color=black]{$+1$};
\draw[->][color=red][solid] (0,4) -- (2,6) node[midway, above, left][color=black]{$+1$} ;
\draw[->][color=green][solid] (4,4) -- (2,6) node[midway, above, right][color=black]{$-q$} ;
\draw[->][color=red][dashed] (2,0) to [bend left] node[midway, above left][color=black]{$+1$} (2,2);
\draw[->][color=red][dashed] (2,2) -- (2,4) node[midway, left][color=black]{$+1$};
\draw[->][color=red][dashed] (2,4) -- (2,6) node[midway, left][color=black]{$+q$} ;
\draw[->][color=green][solid] (2,0) to [bend right] node[midway, above right][color=black]{$+1$} (2,2);
\draw[->][color=green][solid] (2,2) -- (4,4) node[midway, right][color=black]{$+1$};
\draw[decoration={brace,mirror,raise=4pt},decorate] (4,0) -- (4,2) node[midway,right=6pt][color=black]{$\sigma_1$};
\draw[decoration={brace,mirror,raise=4pt},decorate] (4,2.) -- (4,4) node[midway,right=6pt][color=black]{$\sigma_3$};
\draw[decoration={brace,mirror,raise=4pt},decorate] (4,4) -- (4,6) node[midway,right=6pt][color=black]{$\sigma_2$};
 \foreach \x in {0,2,4} {
        \foreach \y in {0,2,4,6} {
            \fill[color=black] (\x,\y) circle (0.05);
        }
    }
     \node[] at (-4,0) {$v_{k}$};
    \node[] at (-4,2) {$v_{k+1}$};
    \node[] at (-4,4) {$v_{k+2}$};
    \node[] at (-4,6) {$v_{k+3}$};
    \end{tikzpicture}
\caption{The solid and broken red paths constitute the local pictures of an $\eta$ distinguished pair (see Figure~\ref{fig6}), provided there is no $1\to 2$ vertex change just beforehand. The local picture of the green path does not belong to a distinguished pair (roughly speaking, it is ``locally admissible" in this case). Of course, if there is a $3\to 2$ vertex change just beforehand, then the extension of the local picture of the green path by one previous step will belong to an $\epsilon$ distinguished pair (see Figure~\ref{fig4}).}
\label{fig8}
\end{figure}
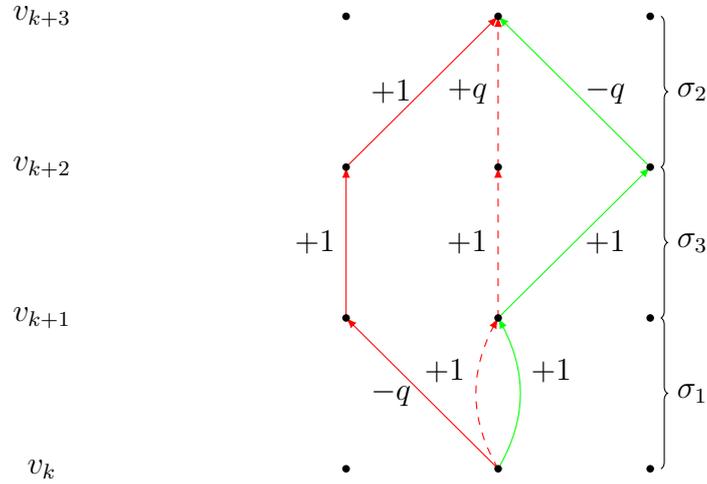

Let us consider $\sigma=\sigma_1\sigma_2\sigma_1$ and Figure~\ref{fig9} below. The $\left(1,1\right)$-type $\sigma$-paths $1,1,1,1$ and $1,1,2,1$ (not depicted in Figure~\ref{fig9}) constitute a $\delta$ distinguished pair (see Figure~\ref{fig2}) and their weights cancel. Furthermore, the $\left(2,1\right)$-type $\sigma$-paths $2,1,2,1$ and $2,2,2,1$ constitute an $\eta$ distinguished pair and their weights cancel (a variation of Figure~\ref{fig6}). On the other hand, $2,1,1,1$ is an admissible $\sigma$-path although its weight has the opposite sign to the weight of $2,1,2,1$ (Figure~\ref{fig2} does not apply since there is a $2\to 1$ vertex change just beforehand; see also Figure~\ref{fig3}). In this case, the weight of $2,1,1,1$ contributes the (unique) non-zero term to the $\left(2,1\right)$ entry of $\beta_4\left(\sigma\right)$. Indeed, it is instructive to verify (the following) Proposition~\ref{m=ap} in this case.

\begin{figure}[H]
\centering
\begin{tikzpicture}[>=latex]
\draw[->][color=green][solid] (2,0) to [bend left] node[midway, left][color=black]{$-q$} (0,2);
\draw[->][color=red][solid] (2,0) to [bend right] node[midway, left][color=black]{$-q$} (0,2);
\draw[->][color=green][solid] (0,2) -- (0,4) node[midway, left][color=black]{$+1$};
\draw[->][color=green][solid] (0,4) -- (0,6) node[midway, above, left][color=black]{$+q$} ;
\draw[->][color=red][dashed] (2,0) to node[midway, above right][color=black]{$+1$} (2,2);
\draw[->][color=red][dashed] (2,2) -- (2,4) node[midway, right][color=black]{$+q$};
\draw[->][color=red][solid] (0,2) -- (2,4) node[midway, right][color=black]{$+1$};
\draw[->][color=red][dashed] (2,4) to node[above left][color=black]{$+1$} (2,6);
\draw[->][color=red][solid] (2,4) to [bend left]  node[midway, left][color=black]{$-q$} (0,6);
\draw[->][color=red][dashed] (2,4) to [bend right] node[above right][color=black]{$+1$} (2,6);
\draw[->][color=red][dashed] (2,4) to [bend right] node[midway, left][color=black]{$-q$} (0,6);
\draw[decoration={brace,mirror,raise=4pt},decorate] (4,0) -- (4,2) node[midway,right=6pt][color=black]{$\sigma_1$};
\draw[decoration={brace,mirror,raise=4pt},decorate] (4,2.) -- (4,4) node[midway,right=6pt][color=black]{$\sigma_2$};
\draw[decoration={brace,mirror,raise=4pt},decorate] (4,4) -- (4,6) node[midway,right=6pt][color=black]{$\sigma_1$};
 \foreach \x in {0,2,4} {
        \foreach \y in {0,2,4,6} {
            \fill[color=black] (\x,\y) circle (0.05);
        }
    }
\end{tikzpicture}
\caption{In this figure, there are two $\eta$ distinguished pairs, each consisting of a solid and a broken red path. The weights in the $\eta$ distinguished pair of $\left(2,1\right)$-type paths are $\pm q^2$ and the weights in the $\eta$ distinguished pair of $\left(2,2\right)$-type paths are $\pm q$. The green $\left(2,1\right)$-type path is admissible and its weight is $-q^2$. However, note that the constant $\left(1,1\right)$-type path (not depicted in this figure) would \textit{not} be admissible since it is part of a $\delta$ distinguished pair (see Figure~\ref{fig2}).}
\label{fig9}
\end{figure}
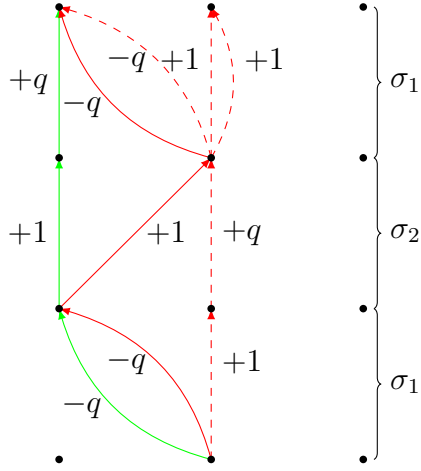

Note that a figure identical to Figure~\ref{fig9}, except translated one unit to the right is applicable for $\sigma_2\sigma_3\sigma_2$. While $\sigma_1\sigma_2\sigma_1$ cannot occur in the minimal form of a positive braid except possibly at the beginning (Lemma~\ref{minimalform} \textbf{(ii)}), $\sigma_2\sigma_3\sigma_2$ could occur in multiple places in the minimal form of a positive braid. We will consider these occurrences in Section~\ref{mains} when we introduce the notion of a $3$-block (see Definition~\ref{blockroad}).
\end{example}

We justify the introduction of admissible $\sigma$-paths in relevance to the Burau representation $\beta_4$.

\begin{proposition}
\label{m=ap}
The $\left(r,s\right)$-entry of $\beta_4\left(\sigma\right)$ is the weighted number of the admissible $\left(r,s\right)$-type $\sigma$-paths.\end{proposition}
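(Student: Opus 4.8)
The plan is to upgrade Proposition~\ref{m=p} by showing that the contributions of the non-admissible $\left(r,s\right)$-type $\sigma$-paths cancel among themselves, so that only the admissible paths survive in the weighted sum. Concretely, by Proposition~\ref{m=p} the $\left(r,s\right)$-entry of $\beta_4\left(\sigma\right)$ equals the weighted sum over \emph{all} $\left(r,s\right)$-type $\sigma$-paths, which splits as the sum over admissible paths plus the sum over non-admissible paths. It therefore suffices to prove that the latter sum vanishes. Since a non-admissible path is by definition one belonging to some distinguished pair, I would exhibit a fixed-point-free, sign-reversing involution on the set of non-admissible $\left(r,s\right)$-type paths whose orbits are exactly the distinguished pairs.

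First I would record that within a single distinguished pair the weights genuinely cancel. For each of the five types ($\delta,\epsilon,\zeta,\eta,\theta$) the two paths agree outside a bounded local window and differ inside it exactly as in Figures~\ref{fig2}, \ref{fig4}, \ref{fig5}, \ref{fig6}, \ref{fig7}; reading off the edge labels from Figure~\ref{fig1}, the two local pictures have weights $+q$ and $-q$ (equal magnitude, opposite sign), while the common part of the two paths contributes a common monomial factor. Hence the two total weights are negatives of each other, as already asserted in Definition~\ref{dispair}. This is routine bookkeeping, one case per type.

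The main obstacle is to show that these pairings fit together into a single well-defined involution --- equivalently (as phrased in the outline) that distinct distinguished pairs do not overlap. The danger is that a non-admissible path $P$ could \emph{a priori} be the partner of two different paths via distinguished pairs localized at two different positions or of two different types, which would make the pairing ambiguous. I would handle this by a case analysis over the unordered pairs of types and their relative positions, showing that the local pictures of two genuine distinguished pairs are edge-disjoint, and that whenever two local windows abut, the exclusion clauses in Definition~\ref{dispair} forbid at least one of them from being a distinguished pair. These clauses are precisely engineered for this: the clause ``unless $c_p=1$ and $v_{k-b_p-2}=2$'' in the $\delta$ case rules out the overlap with an $\eta$-pair illustrated by the non-example in Figure~\ref{fig3}; the vertex-change hypotheses at the $k$th generator in the $\zeta$, $\eta$, and $\theta$ cases rule out shared edges with an $\epsilon$- or $\delta$-pair (see Figure~\ref{fig8}); and the constraints on minimal forms in Lemma~\ref{minimalform} (e.g. the forced value $b_{p+1}>0$ in the $\zeta$ case) guarantee that the configurations are consistent with an actual minimal form. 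I expect the bulk of the work --- and the part most prone to error --- to be the enumeration of all adjacency patterns between windows of the five types and the verification that each problematic adjacency is excluded.

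Once overlap is ruled out, each non-admissible path lies in a unique distinguished pair, so sending each such path to its partner is a genuine fixed-point-free involution; by the second paragraph it reverses weights, whence the weighted sum over non-admissible paths is zero, and combining with Proposition~\ref{m=p} yields Proposition~\ref{m=ap}. Should the literal disjointness formulation prove delicate for far-separated windows, the fallback is to define the involution by flipping at the \emph{leftmost} defect and to prove, using the same exclusion clauses, that flipping there never alters the defect structure strictly to its left, so that the partner has the same leftmost defect and the map is involutive; either way the exclusion analysis above is the crux.
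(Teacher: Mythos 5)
Your overall strategy---reduce to Proposition~\ref{m=p}, cancel the non-admissible paths via a sign-reversing involution whose orbits are distinguished pairs, with the crux being that local pictures of distinct distinguished pairs are edge-disjoint---is exactly the paper's proof, including the type-versus-type case analysis driven by the exclusion clauses of Definition~\ref{dispair} and Lemma~\ref{minimalform}. However, your main-line formulation has a genuine flaw: ruling out overlap does \emph{not} imply that ``each non-admissible path lies in a unique distinguished pair.'' A single $\sigma$-path can contain two flippable configurations at widely separated positions (say a $\delta$-triangle early on and a $\theta$-triangle much later); it then belongs to two distinct, perfectly genuine distinguished pairs with disjoint local pictures. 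Edge-disjointness constrains how pairs containing a common path interact; it is not a uniqueness statement, so ``send each path to its partner'' is ill-defined as written.

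What rescues the argument is precisely your fallback, and it is what the paper actually does: define $\iota\left(X\right)$ by flipping at the \emph{earliest} distinguished pair containing $X$. Edge-disjointness then does the real work: any distinguished pair containing $Y=\iota\left(X\right)$ other than $\{X,Y\}$ has local picture edge-disjoint from that of $\{X,Y\}$, hence uses only edges common to $X$ and $Y$, hence corresponds to a distinguished pair containing $X$ at the same position; so no pair containing $Y$ can be earlier than $\{X,Y\}$, the earliest pair containing $Y$ is again $\{X,Y\}$, and $\iota$ is a well-defined, fixed-point-free, sign-reversing involution. You should therefore promote the fallback to the main argument; with that change your proposal coincides with the paper's proof.
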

\begin{proof}
Let ${\cal P}$ be the set of all $\sigma$-paths belonging to a distinguished pair. We will define an involution $\iota:{\cal P}\to {\cal P}$ with the property that $\sigma$-paths which correspond via $\iota$ have weights that are negatives of each other. The statement follows from the existence of this involution and Proposition~\ref{m=p}. 

Firstly, we define the \textit{local picture} of a distinguished pair as the symmetric difference of the $\sigma$-paths in the distinguished pair (in Figure~\ref{fig2} and each of Figures~\ref{fig4}~-~\ref{fig7} above, the local picture is the union of the solid and dashed red subpaths). Note that the local picture of a distinguished pair is the union of two subpaths with the same initial and final vertex. The geometric shape of the local picture is either a triangle (a possibility for each type of distinguished pair) or a trapezium (only a possibility for $\epsilon$ distinguished pairs and $\eta$ distinguished pairs). An \textit{edge of the local picture} is an edge of its geometric shape in the usual sense. (If the geometric shape of a local picture is a trapezium, then an edge of the local picture can be a sequence of constant edges $v\to v$ of a $\sigma$-path for some $v\in \{1,2,3\}$.) 

We claim that if a $\sigma$-path belongs to two different distinguished pairs, then the corresponding local pictures of the two distinguished pairs do not have an edge in common. If $X\in {\cal P}$, then we define $\iota\left(X\right)$ to be the $\sigma$-path with the property that $\{X,\iota\left(X\right)\}$ is the earliest distinguished pair containing $X$. The fact that $\iota$ is a well-defined involution follows from the claim.
 
Let us prove the claim. In Definition~\ref{dispair}, there are five different types of distinguished pairs, labelled by the Greek letters $\delta,\epsilon,\zeta,\eta$, and $\theta$. Of course, the local pictures of different distinguished pairs in the same type do not have an edge in common, since the corresponding subproducts of generators do not overlap. We will show that the local pictures of distinguished pairs in different types do not have an edge in common.

Firstly, observe that the edges in the local picture of a $\delta$ distinguished pair are $1\to 1$, $1\to 2$ and $2\to 1$, and the edges in the local picture of an $\epsilon$ distinguished pair are $3\to\cdots\to 3$, $3\to 2$, $2\to\cdots\to 2$, and $2\to 3$. In particular, the local pictures of a $\delta$ distinguished pair and an $\epsilon$ distinguished pair do not have an edge in common.

Secondly, observe that the local pictures of the $\zeta$ distinguished pairs, $\eta$ distinguished pairs, and $\theta$ distinguished pairs do not have an edge in common. Indeed, there is an incompatibility in either the corresponding subproducts of generators in the minimal form of $\sigma$, or the conditions on the edge of the $\sigma$-path immediately preceding the local picture. The subproduct of generators corresponding to the local picture of a $\zeta$ distinguished pair is $\sigma_2\sigma_1\sigma_3^{b_{p+1}}\sigma_2$, and in this case there is a $1\to 2$ vertex change at the first $\sigma_2$. The subproduct of generators corresponding to the local picture of an $\eta$ distinguished pair is $\sigma_2\sigma_1^{a_{p+1}}\sigma_3^{b_{p+1}}\sigma_2$, and either $a_{p+1}>1$, or $a_{p+1}=1$ and there is no $1\to 2$ vertex change at the first $\sigma_2$. The subproduct of generators corresponding to the local picture of a $\theta$ distinguished pair is $\sigma_2\sigma_3^{b_{p+1}}\sigma_2$. 

Finally, we will establish that the local picture of either a $\delta$ distinguished pair or an $\epsilon$ distinguished pair does not have an edge in common with the local picture of either a $\zeta$ distinguished pair, $\eta$ distinguished pair, or a $\theta$ distinguished pair. A lot of potential overlaps are ruled out by the conditions on the nature of the vertex change immediately preceding these local pictures in Definition~\ref{dispair}. 

Firstly, the explanation in Definition~\ref{dispair} \textbf{(i)} implies that the bottom edge in the local picture of a $\delta$ distinguished pair and the top edge in the local picture of an $\eta$ distinguished pair are not equal. The explanation in Definition~\ref{dispair} \textbf{(iv)} implies that the top edge in the local picture of a $\delta$ distinguished pair and the bottom edge in the local picture of an $\eta$ distinguished pair are not equal.

Secondly, the explanation in Definition~\ref{dispair} \textbf{(iii)} implies that the bottom edge in the local picture of a $\zeta$ distinguished pair and the top edge in the local picture of an $\epsilon$ distinguished pair are not equal. Lemma~\ref{minimalform} \textbf{(ii)} implies that the top edge in the local picture of a $\zeta$ distinguished pair and the bottom edge in the local picture of an $\epsilon$ distinguished pair are not equal.

Thirdly, the explanation in Definition~\ref{dispair} \textbf{(v)} implies that the bottom edge in the local picture of a $\theta$ distinguished pair and the top edge in the local picture of an $\epsilon$ distinguished pair are not equal. Lemma~\ref{minimalform} \textbf{(ii)} implies that the top edge in the local picture of a $\theta$ distinguished pair and the bottom edge in the local picture of an $\epsilon$ distinguished pair are not equal.

Furthermore, the edges in the local picture of either a $\zeta$ distinguished pair or a $\theta$ distinguished pair are $2\to 2\to 2$, $2\to 3$, and $3\to 2$. Thus the local picture of a $\delta$ distinguished pair and the local picture of either a $\zeta$ distinguished pair or a $\theta$ distinguished pair do not overlap in an edge. However, the edges in the local picture of an $\eta$ distinguished pair are $2\to \cdots\to 2$, $2\to 1$, $1\to\cdots\to 1$, and $1\to 2$, and the local pictures of an $\epsilon$ distinguished pair and an $\eta$ distinguished pair can \textit{partially} overlap in a $2\to \cdots\to 2$ edge. However, they cannot have a $2\to\cdots\to 2$ edge in common since the subproduct of generators corresponding to a $2\to\cdots\to 2$ edge in the local picture of an $\epsilon$ distinguished pair is $\sigma_1^{a_{p+1}}$ and the subproduct of generators corresponding to a $2\to\cdots\to 2$ edge in the local picture of an $\eta$ distinguished pair is $\sigma_1^{a_{p+1}}\sigma_3^{b_{p+1}}\sigma_2$.
\end{proof}

In spite of Proposition~\ref{m=p} and Proposition~\ref{m=ap}, the weight of an admissible $\sigma$-path \textit{does not} necessarily contribute a non-zero term to an entry of $\beta_4\left(\sigma\right)$. Indeed, an admissible $\sigma$-path could still potentially be part of a cancelling pair of a different kind not considered in Definition~\ref{dispair}. The failure of admissibility may be thought of as a \textit{local} obstruction to the weight of a $\sigma$-path contributing a non-zero term to an entry of $\beta_4\left(\sigma\right)$. 

\subsection{Good and bad paths}
\label{subsecgb}
In Section~\ref{mains}, we will construct admissible $\sigma$-paths inductively by extending admissible $P$-paths, where $P$ is a subproduct of the minimal form of $\sigma$. In view of the definition of admissibility and distinguished pairs (Definition~\ref{dispair}), the construction depends on the nature of the final vertex change of the $P$-path. In this subsection, we introduce a categorization of $P$-paths to capture this idea precisely. Firstly, we introduce terminology to characterize the types of subproduct $P$ that we will consider. 

\begin{definition}
\label{defssubproduct}
Let $\sigma=\prod_{i=1}^{n} \sigma_1^{a_i}\sigma_3^{b_i}\sigma_2^{c_i}$ be the minimal form of $\sigma$ where $a_i+b_i$ is a positive integer for $2\leq i\leq n$ and $c_i$ is a positive integer for $1\leq i\leq n-1$. We define an \textit{$s$-subproduct} of $\sigma$ as follows. If $s=2$, then it is of the form $\prod_{i=1}^{p-1} \sigma_1^{a_i}\sigma_3^{b_i}\sigma_2^{c_i}$ for some $1\leq p\leq n+1$, where $p=n+1$ only if $c_n>0$. If $s\in \{1,3\}$, then it is of the form $\left(\prod_{i=1}^{p-1}\sigma_1^{a_i}\sigma_3^{b_i}\sigma_2^{c_i}\right)\sigma_1^{a_{p}}\sigma_3^{b_{p}}$ for some $1\leq p\leq n$. (A $1$-subproduct is the same as a $3$-subproduct.) If $P$ is an $s$-subproduct for $s\in \{1,3\}$, then an $\left(r,s\right)$-type $P$-path refers to either an $\left(r,1\right)$-type $P$-path or an $\left(r,3\right)$-type $P$-path.
\end{definition}

We now define the categorization of $P$-paths into \textit{good}, \textit{bad}, $1$\textit{-switch}, and $3$\textit{-switch} $P$-paths. 

\begin{definition}
\label{defgoodbadpath}
If $P$ is an $s$-subproduct of $\sigma$, then an admissible $\left(r,s\right)$-type $P$-path is \textit{good} if one of the following conditions is satisfied:
\begin{description}
\item[(i)] Let $s\in \{1,3\}$ and $P=\left(\prod_{i=1}^{p-1} \sigma_1^{a_i}\sigma_3^{b_i}\sigma_2^{c_i}\right)\sigma_1^{a_p}\sigma_3^{b_p}$. 

If $s=1$, then the $P$-path does not have a $2\to 1$ vertex change at the last $\sigma_1$ in $\sigma_1^{a_p}$ (if $a_p=0$, then this is vacuously true).

If $s=3$, then either the $P$-path does not have a $2\to 3$ vertex change at the last $\sigma_3$ in $P$ if $a_p=0$, or the $P$-path does not have a $1\to 2$ vertex change at the last $\sigma_2$ in $P$ and a $2\to 3$ vertex change at the last $\sigma_3$ in $\sigma_3^{b_p}$ if $a_p=1$ (if either $b_p=0$ or $a_p>1$, then this is vacuously true).
\item[(ii)] Let $s=2$ and $P=\prod_{i=1}^{p-1} \sigma_1^{a_i}\sigma_3^{b_i}\sigma_2^{c_i}$. 

The $P$-path does not have a $1\to 2$ vertex change at the last $\sigma_2$ in $P$ if $a_p>0$, and the $P$-path does not have a $3\to 2$ vertex change at the last $\sigma_2$ in $P$ if $b_p>0$.
\end{description}
An admissible $P$-path is \textit{bad} if it is not good. (If $P$ is the empty product, then we note that the $P$-path is vacuously good.)

If we wish to be more specific in the case $s=2$, then we will write that a $P$-path is $1\textit{-switch}$ (resp. $3\textit{-switch}$) if it admits a $1\to 2$ (resp. $3\to 2$) vertex change at the last $\sigma_2$ in $P$. 
\end{definition}

The following statement consists of the fundamental properties of extensions of good and bad paths.

\begin{lemma}
\label{lemmagoodext}
Let $P$ be an $s$-subproduct of $\sigma$ and let $P'$ be the $s'$-subproduct of $\sigma$ for some $s'\in \{1,2,3\}$, minimal with respect to the property that $P\subsetneq P'$ (in particular, $\{s,s'\}$ consists of one element in $\{1,3\}$ and one element in $\{2\}$). The following statements concerning extensions of $P$-paths to $P'$-paths are true:

\begin{description}[style=unboxed,leftmargin=0cm]
\item[(i)] An admissible $\left(r,s\right)$-type $P$-path is good if and only if its extension to an $\left(r,s'\right)$-type $P'$-path by any vertex change is admissible. 
\item[(ii)] If $s\in \{1,3\}$, then an admissible $\left(r,s\right)$-type $P$-path is bad if and only if all of its admissible extensions to an $\left(r,2\right)$-type $P'$-path are by a vertex change at the second or later $\sigma_2$ in $P'\setminus P$.
\item[(iii)] An admissible $\left(r,2\right)$-type $P$-path is a $1$-switch $\left(r,2\right)$-type $P$-path if and only if all of its admissible extensions to an $\left(r,s'\right)$-type $P'$-path are by a vertex change at either the second or later $\sigma_1$ in $P'\setminus P$ or every $\sigma_3$ in $P'\setminus P$. 

An admissible $\left(r,2\right)$-type $P$-path is a $3$-switch $\left(r,2\right)$-type $P$-path if and only if all of its admissible extensions to an $\left(r,s'\right)$-type $P'$-path are by a vertex change at either every $\sigma_1$ in $P'\setminus P$ or the second or later $\sigma_3$ in $P'\setminus P$. 
\item[(iv)] Let $P''$ be an $s''$-subproduct for some $s''\in \{1,2,3\}$ such that $P\subseteq P''$. If $t\in \{1,3\}$, then an extension of an admissible $\left(r,t\right)$-type $P$-path to an $\left(r,t\right)$-type $P''$-path with no further vertex change is admissible if and only if there is no $\sigma_t$ in $P''\setminus P$.
\item[(v)] An admissible $\left(r,2\right)$-type $P$-path extends to an admissible $\left(r,2\right)$-type $P'$-path with no further vertex change if and only if $s = 2$.
\end{description}
\end{lemma}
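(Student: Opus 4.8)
The plan is to exploit the \emph{locality} of distinguished pairs established in the proof of Proposition~\ref{m=ap}: every distinguished pair has a bounded local picture occupying a window of generators straddling a single junction $\sigma_2^{c_{p'}}\to\sigma_1^{a_{p'+1}}\sigma_3^{b_{p'+1}}\sigma_2^{c_{p'+1}}$, and distinct pairs never share an edge. Hence, when I extend an admissible $P$-path, inadmissibility can only be created through a local picture whose window is not already contained in $P$. Since $P$ and $P'$ differ by exactly one block ($\sigma_2^{c_p}$ when $s\in\{1,3\}$, and $\sigma_1^{a_p}\sigma_3^{b_p}$ when $s=2$), in each clause I only examine the few distinguished pairs reaching into $P'\setminus P$ and translate their membership conditions into conditions on the final vertex change of the $P$-path; matching these against Definition~\ref{defgoodbadpath} yields the ``iff''s. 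Throughout, the edge-disjointness of Proposition~\ref{m=ap} and the structural constraints of Lemma~\ref{minimalform} rule out all edges of a local picture other than the decisive one, so I only ever need to inspect a single edge.

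For the clauses \textbf{(i)}, \textbf{(ii)}, \textbf{(iii)} the extension is \emph{by} a vertex change. When $s\in\{1,3\}$ (clauses \textbf{(i)}, \textbf{(ii)}) the extension adds $\sigma_2^{c_p}$, so the active local pictures are the $\zeta$-, $\eta$-, and $\theta$-pairs indexed by $p-1$, whose top edge is the first $\sigma_2$ of $\sigma_2^{c_p}$. I would show that changing into vertex $2$ at the \emph{first} $\sigma_2$ reproduces the ``dipping'' member of one of these pairs precisely when the $P$-path is bad, matching the badness clauses of Definition~\ref{defgoodbadpath}\textbf{(i)} ($2\to1$ at the last $\sigma_1$ for $s=1$; $2\to3$ at the last $\sigma_3$, or the paired $1\to2,\,2\to3$ pattern, for $s=3$) to the $\eta$/$\zeta$/$\theta$ conditions. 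Deferring the change to the second-or-later $\sigma_2$ escapes every window, which is \textbf{(ii)}, while a good path may change at the first $\sigma_2$ admissibly, giving \textbf{(i)}. When $s=2$ (the other case of \textbf{(i)}, and all of \textbf{(iii)}) the extension adds $\sigma_1^{a_p}\sigma_3^{b_p}$ and the active pairs are the $\delta$- and $\epsilon$-pairs indexed by $p-1$: a final $1\to2$ switch with $a_p>0$ makes a first-$\sigma_1$ change reproduce $\delta_{k,2}$, and a final $3\to2$ switch with $b_p>0$ makes a first-$\sigma_3$ change reproduce $\epsilon_{k,2}$. This is simultaneously the badness condition (giving \textbf{(i)}) and the obstruction separating $1$-switch, $3$-switch, and good $(r,2)$-paths (giving the two ``iff''s of \textbf{(iii)}).

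For \textbf{(iv)} and \textbf{(v)} the extension makes \emph{no} vertex change, so I would instead locate the distinguished pair whose \emph{constant} member matches the no-change continuation. For \textbf{(iv)} with $t=1$, staying at vertex $1$ across a $\sigma_1$ that begins a block reproduces the constant member $\delta_{k,1}$ (and with $t=3$, staying at $3$ across a $\sigma_3$ reproduces $\epsilon_{k,3}$); conversely, staying at $1$ (resp. $3$) across only $\sigma_2$'s and $\sigma_3$'s (resp. $\sigma_1$'s and $\sigma_2$'s) closes no such window inside $P''$, which is exactly the ``no $\sigma_t$ in $P''\setminus P$'' dichotomy. For \textbf{(v)} with $s\in\{1,3\}$, the no-change continuation at vertex $2$ through $\sigma_2^{c_p}$ completes the stay-at-$2$ member of an $\eta$-, $\theta$-, or $\zeta$-pair indexed by $p-1$ (which one depends on whether $a_p>0$ or $a_p=0$ and on how the $P$-path reached vertex $2$), so it is inadmissible; for $s=2$ the continuation through $\sigma_1^{a_p}\sigma_3^{b_p}$ never reaches a $\sigma_2$ of a following block and so closes no window, so it remains admissible.

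The main obstacle, and where I would concentrate the work, is the \emph{boundary bookkeeping among the exceptional clauses} of Definition~\ref{dispair}. The side conditions there (``unless $c_p=1$ and $v_{k-b_p-2}=2$'' in the $\delta$-pair; the ``$1\to2$ vertex change at the $k$th generator'' clauses distinguishing $\zeta$ from $\eta$; the ``$3\to2$ vertex change'' clause in $\theta$) exist only to route each borderline configuration into exactly one pair, so that the involution $\iota$ of Proposition~\ref{m=ap} is well defined. To obtain clean ``iff''s I must verify that the good/bad/switch trichotomy of Definition~\ref{defgoodbadpath} partitions configurations in lockstep with this routing: for instance a $1$-switch $P$-path with $a_p=1$ staying at vertex $2$ is excluded from the $\eta$-pair by its side condition but is then caught by the $\zeta$-pair, and a $2\to1\to2$ configuration with $c_{p-1}=1$ falls out of the $\delta$-pair only to be caught by the $\eta$-pair. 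Showing that these migrations are exhaustive and mutually exclusive --- so that ``bad'' (resp. the relevant switch type) coincides with ``the early change or the no-change continuation is inadmissible'', with no gap and no double-count --- is the crux; once it is in place, the remaining edges are dispatched verbatim by Proposition~\ref{m=ap} and Lemma~\ref{minimalform}.
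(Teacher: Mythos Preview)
Your proposal is correct and follows essentially the same approach as the paper's proof: a case analysis on $s$ that identifies which type of distinguished pair (among $\delta,\epsilon,\zeta,\eta,\theta$) can obstruct the relevant extension, and then matches the membership condition of that pair against the clauses of Definition~\ref{defgoodbadpath}. The paper's proof is terser and does not dwell on the side-condition bookkeeping you flag as the crux---it simply asserts, for each $s$, that the first-generator extension is admissible iff the path avoids the relevant pair---but the underlying case structure is identical.
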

\begin{proof}
Note that if $s\in \{1,3\}$, then $s'=2$, and if $s=2$, then $s'\in \{1,3\}$. We consider cases according to the value of $s\in \{1,2,3\}$. If $s\in \{1,3\}$, then we write $P=\left(\prod_{i=1}^{p-1} \sigma_1^{a_i}\sigma_3^{b_i}\sigma_2^{c_i}\right)\sigma_1^{a_p}\sigma_3^{b_p}$. If $s=2$, then we write $P=\prod_{i=1}^{p-1} \sigma_1^{a_i}\sigma_3^{b_i}\sigma_2^{c_i}$. Let $W$ denote an admissible $\left(r,s\right)$-type $P$-path.

If $s=1$, then an extension of $W$ by a vertex change at the first $\sigma_2$ in $\sigma_2^{c_p}$ is admissible if and only if $W$ does not belong to the relevant $\eta$ distinguished pair. Of course, this is the case if and only if $W$ does not have a vertex change at the last $\sigma_1$ in $\sigma_1^{a_p}$. An extension of $W$ by a vertex change at the second or later $\sigma_2$ in $\sigma_2^{c_p}$ is always admissible since it does not belong to the relevant $\eta$ distinguished pair. We deduce \textbf{(i)} and \textbf{(ii)} in the case $s = 1$.

If $s=3$, then an extension of $W$ by a vertex change at the first $\sigma_2$ in $\sigma_2^{c_p}$ is admissible if and only if $W$ does not belong to the relevant $\zeta$ distinguished pair or $\theta$ distinguished pair. Of course, $W$ does not belong to a $\zeta$ distinguished pair if and only if either (1) $a_p>1$ or (2) $a_p=1$ and $W$ does not simultaneously have a $1\to 2$ vertex change at the last $\sigma_2$ in $P$, and a $2\to 3$ vertex change at the last $\sigma_3$ in $\sigma_3^{b_p}$. Furthermore, $W$ does not belong to a $\theta$ distinguished pair if and only if either (1) $a_p>0$ or (2) $a_p = 0$ and $W$ does not have a $2\to 3$ vertex change at the last $\sigma_3$ in $\sigma_3^{b_p}$. An extension of $W$ by a vertex change at the second or later $\sigma_2$ in $\sigma_2^{c_p}$ is always admissible since it does not belong to the relevant $\zeta$ distinguished pair or $\theta$ distinguished pair. We deduce \textbf{(i)} and \textbf{(ii)} in the case $s = 3$. 

If $s=2$, then an extension of $W$ by a vertex change at the first $\sigma_1$ in $P'$ is admissible if and only if $W$ does not belong to the relevant $\delta$ distinguished pair. Of course, this is the case if and only if $W$ does not have a $1\to 2$ vertex change at the last $\sigma_2$ in $P$. Similarly, an extension of $W$ by a vertex change at the first $\sigma_3$ in $P'$ is admissible if and only if $W$ does not belong to the relevant $\epsilon$ distinguished pair. Of course, this is the case if and only if $W$ does not have a $3\to 2$ vertex change at the last $\sigma_2$ in $P$. Furthermore, an extension of $W$ by a vertex change at either the second or later $\sigma_1$ in $P'$, or the second or later $\sigma_3$ in $P'$, is admissible since it does not belong to the relevant $\delta$ distinguished pair or $\epsilon$ distinguished pair, respectively. We deduce \textbf{(i)} in the case $s = 2$ and \textbf{(iii)}. We have thus far established statements \textbf{(i)} - \textbf{(iii)}.

The statement \textbf{(iv)} follows since the extension of the $\left(r,t\right)$-type $P$-path $W$ with $t\in \{1,3\}$ to an $\left(r,t\right)$-type $P''$-path with no further vertex change is admissible if and only if it does not belong to either a $\delta$ distinguished pair or an $\epsilon$ distinguished pair. Finally, the statement \textbf{(v)} follows since the only distinguished pairs that can contain an extension of an $\left(r,2\right)$-type $P$-path with no further vertex change are a $\zeta$ distinguished pair, $\eta$ distinguished pair, or $\theta$ distinguished pair. Furthermore, none of these distinguished pairs contain the extension of an admissible $\left(r,2\right)$-type $P$-path to an $\left(r,2\right)$-type $P'$-path if and only if there is no $\sigma_2$ in $P'\setminus P$.
\end{proof}

The theory of good and bad paths (Definition~\ref{defgoodbadpath} and Lemma~\ref{lemmagoodext}) is powerful in terms of constructing admissible $\sigma$-paths with weights contributing non-cancelling terms to the entries of $\beta_4\left(\sigma\right)$. We illustrate its power in a concrete example. 

\begin{example}
\label{exmain}
We will consider a generic family of positive braids $\sigma$, and use the theory of good and bad $P$-paths to demonstrate that the leading coefficient of \textit{every} entry in the matrix $\beta_4\left(\sigma\right)$ is non-zero modulo each prime $t\neq 2$. (The statement is a stronger version of Theorem~\ref{invariantdetect}, but in a generic special case.) The strategy is to construct maximal $q$-resistance $\sigma$-paths. The maximality will ensure that the weights of these $\sigma$-paths do not cancel with the weights of other $\sigma$-paths. The statement will then follow from Proposition~\ref{m=ap}.

In Figure~\ref{fig1}, a constant $i\to i$ edge has weight equal to $q$ if and only if it is the $i\to i$ edge corresponding to the picture of $\sigma_i$ (the other constant edges in the picture of $\sigma_i$ have weight equal to $1$). In particular, the na{\"i}ve method for constructing maximal $q$-resistance $\sigma$-paths is to ensure that the $j$th vertex of the $\sigma$-path matches with the index of the $\left(j+1\right)$st generator in the minimal form of $\sigma$ as much as possible.
 
Let $\sigma = \prod_{i=1}^{n} \sigma_1^{a_i}\sigma_3^{b_i}\sigma_2^{c_i}$ be a positive braid, where $a_i+b_i\geq 2$ and $c_i\geq 2$ for $1\leq i\leq n$. We will show that the na{\"i}ve method for constructing maximal $q$-resistance $\sigma$-paths works perfectly in this case. The condition on the exponents is a condition satisfied by generic positive braids, and also ensures that the product expansion is the minimal form of $\sigma$. (Indeed, the only finite sequence of Artin relations we can apply to $\sigma$ is a finite sequence of commutativity relations $\sigma_1\sigma_3 = \sigma_3\sigma_1$.) If $r,s\in \{1,2,3\}$, then we will construct a family of maximal $q$-resistance $\left(r,s\right)$-type $\sigma$-paths such that their weights contribute the highest $q$-degree term in the $\left(r,s\right)$-entry of $\beta_4\left(\sigma\right)$. The cardinality of the family will be a power of $2$, and thus the leading coefficient of this highest $q$-degree term will be non-zero modulo each prime $t\neq 2$. If $1\leq j\leq n$, then we write $P_j = \prod_{i=1}^{j} \sigma_1^{a_i}\sigma_3^{b_i}\sigma_2^{c_i}$ and $P_j' = \left(\prod_{i=1}^{j-1} \sigma_1^{a_i}\sigma_3^{b_i}\sigma_2^{c_i}\right)\sigma_1^{a_j}\sigma_3^{b_j}$, for ease of notation.

Let $X_1'$ be an $\left(r,\cdot\right)$-type $P_1'$-path ($P_1' = \sigma_1^{a_1}\sigma_3^{b_1}$). If $r = 1$, then $X_1'$ is a constant path where every vertex is $1$. Indeed, in the pictures of $\sigma_1$ and $\sigma_3$ in Figure~\ref{fig1}, there is no vertex change from $1$ to another vertex. Furthermore, $X_1'$ is a good $\left(r,1\right)$-type $P_1'$-path. Lemma~\ref{lemmagoodext} \textbf{(i)} implies that we can extend $X_1'$ to a $P_1$-path by a vertex change at any $\sigma_2$ in $\sigma_2^{c_1}$ ($P_1 = \sigma_1^{a_1}\sigma_3^{b_1}\sigma_2^{c_1}$). If the vertex change is immediate, a $1\to 2$ vertex change at the first $\sigma_2$ in $\sigma_2^{c_1}$, then the $q$-resistance of the resulting $P_1$-path is maximized. Let us denote the resulting $P_1$-path by $X_1$. The hypothesis that $c_1\geq 2$ implies that $X_1$ is a good $\left(r,2\right)$-type $P_1$-path. Furthermore, by construction, the maximal $q$-resistance $\left(r,\cdot \right)$-type $P_1$-path is $X_1$. If $r = 3$, then similar reasoning demonstrates that the maximal $q$-resistance $\left(r,\cdot \right)$-type $P_1$-path is a good $\left(r,2\right)$-type $P_1$-path.

We now potentially have two choices to extend the maximal $q$-resistance $\left(r,2\right)$-type $P_1$-path $X_1$ to a maximal $q$-resistance $\left(r,s\right)$-type $P_2'$-path for some $s\in \{1,3\}$ ($P_2' = \sigma_1^{a_1}\sigma_3^{b_1}\sigma_2^{c_1}\sigma_1^{a_2}\sigma_2^{b_2}$). We can extend $X_1$ either by a $2\to 1$ vertex change at the first $\sigma_1$ in $\sigma_1^{a_2}$ or by a $2\to 3$ vertex change at the first $\sigma_3$ in $\sigma_3^{b_2}$. If either $a_2 = 0$ or $b_2 = 0$, then only one of these extensions is possible, and the hypothesis on the exponents in $\sigma$ implies that either $b_2\geq 2$ or $a_2\geq 2$, respectively. If $a_2 = 0$, then the maximal $q$-resistance $\left(r,\cdot\right)$-type $P_2'$-path is a good $\left(r,3\right)$-type $P_2'$-path. If $b_2 = 0$, then the maximal $q$-resistance $\left(r,\cdot\right)$-type $P_2'$-path is a good $\left(r,1\right)$-type $P_2'$-path. If either $a_2 = 0$ or $b_2 = 0$, then let us denote the resulting maximal $q$-resistance good $\left(r,\cdot\right)$-type $P_2'$-path by $X_2'$. Proposition~\ref{m=ap} implies that if $a_2 = 0$, then the weight of the $\left(r,3\right)$-type $P_2'$-path $X_2'$ contributes the highest $q$-degree term to the $\left(r,3\right)$-entry of $\beta_4\left(P_2'\right)$, and if $b_2 = 0$, then the weight of the $\left(r,1\right)$-type $P_2'$-path $X_2'$ contributes the highest $q$-degree term to the $\left(r,1\right)$-entry of $\beta_4\left(P_2'\right)$. 

If $a_2,b_2>0$, then we denote the maximal $q$-resistance extensions of the $\left(r,2\right)$-type $P_1$-path $X_1$ to an $\left(r,1\right)$-type $P_2'$-path and an $\left(r,3\right)$-type $P_2'$-path by $X_{2,1}'$ and $X_{2,2}'$, respectively. If $a_2>1$, then $X_{2,1}$ and $X_{2,2}$ are good $P_2'$-paths. However, if $a_2 = 1$, then only $X_{2,2}$ is a good $P_2'$-path. If $a_2 > 1$, then Lemma~\ref{lemmagoodext} \textbf{(i)} implies that the maximal $q$-resistance extension of $X_{2,1}'$ to an $\left(r,2\right)$-type $P_2$-path is by a $1\to 2$ vertex change at the first $\sigma_2$ in $\sigma_2^{c_2}$ ($P_2 = \sigma_1^{a_1}\sigma_3^{b_1}\sigma_2^{c_1}\sigma_1^{a_2}\sigma_3^{b_2}\sigma_2^{c_2}$). In general, Lemma~\ref{lemmagoodext} \textbf{(i)} implies that the maximal $q$-resistance extension of $X_{2,2}'$ to an $\left(r,2\right)$-type $P_2$-path is by a $3\to 2$ vertex change at the first $\sigma_2$ in $\sigma_2^{c_2}$. The hypothesis that $c_2\geq 2$ implies that the resulting $\left(r,2\right)$-type $P_2$-path in each case is a good $\left(r,2\right)$-type $P_2$-path.

If $a_2 = b_2>1$, then the $q$-resistances of the two $\left(r,2\right)$-type $P_2$-paths are equal. In Figure~\ref{fig1}, the picture of $\sigma_2$ demonstrates that the weights of the two $\left(r,2\right)$-type $P_2$-paths have the same sign, and thus do not cancel even if their $q$-resistances are equal. 

Proposition~\ref{m=ap} implies that the weights of the $\left(r,1\right)$-type $P_2'$-path $X_{2,1}'$ and the $\left(r,3\right)$-type $P_2'$-path $X_{2,2}'$ contribute the highest $q$-degree terms to the $\left(r,1\right)$-entry and the $\left(r,3\right)$-entry of $\beta_4\left(P_2'\right)$, respectively. Lemma~\ref{lemmagoodext} \textbf{(iv)} implies that the $\left(r,1\right)$-entry and the $\left(r,3\right)$-entry of $\beta_4\left(P_2\right)$ are equal to the $\left(r,1\right)$-entry and the $\left(r,3\right)$-entry of $\beta_4\left(P_2'\right)$, respectively. Similarly, Proposition~\ref{m=ap} implies that a maximal $q$-resistance $\left(r,2\right)$-type $P_2$-path (there could be two if $a_2 = b_2>1$) contributes to the highest $q$-degree term in the $\left(r,2\right)$-entry of $\beta_4\left(P_2\right)$. If $a_2>0$ and $b_2>0$, then we deduce that the absolute value of the leading coefficient of every entry in the $r$th row of $\beta_4\left(P_2\right)$ is a power of $2$.

We can continue this process to construct a family of maximal $q$-resistance $P_n$-paths ($P_n = \sigma$). If at least one $\sigma_1$ and at least one $\sigma_3$ appears in $\sigma$, then the weights of the family of maximal $q$-resistance $\left(r,s\right)$-type $\sigma$-paths contribute the highest $q$-degree term in the $\left(r,s\right)$-entry of $\beta_4\left(\sigma\right)$. The family has cardinality a power of $2$ by construction. (If no $\sigma_1$ appears in $\sigma$, then the $\left(2,1\right)$-entry and the $\left(3,1\right)$-entry of $\beta_4\left(\sigma\right)$ are zero since there is no $2\to 1$ or $3\to 1$ vertex change in the pictures of $\sigma_2$ and $\sigma_3$ in Figure~\ref{fig1}. However, the leading coefficient of every other entry in $\beta_4\left(\sigma\right)$ is a power of $2$ by the same reasoning. A similar remark applies to the case where no $\sigma_3$ appears in $\sigma$, where instead the $\left(1,3\right)$-entry and the $\left(2,3\right)$-entry of $\beta_4\left(\sigma\right)$ are zero.)

We have considered the case $r\in \{1,3\}$. If $r = 2$, then a similar argument to extending $P_1$-paths to $P_2'$-paths in the case $r\in \{1,3\}$, shows that there is a maximal $q$-resistance $\left(r,1\right)$-type $P_1'$-path $X_{1,1}'$ if $a_1>0$, and a maximal $q$-resistance $\left(r,3\right)$-type $P_1'$-path $X_{1,2}'$ if $b_1>0$. If either $a_1 = 0$ or $b_1 = 0$, then only one of these $P_1'$-paths is possible and the hypothesis on the exponents in $\sigma$ implies that it is a good $P_1'$-path. If $a_1,b_1>0$ and $a_1>1$, then both $P_1'$-paths are good. If $a_1 = 1$, then only $X_{1,2}'$ is a good $P_1'$-path. The construction of a family of maximal $q$-resistance $\left(2,s\right)$-type $\sigma$-paths for each $s\in \{1,2,3\}$ (where the cardinality of the family is a power of $2$) is similar to the case $r\in \{1,3\}$ (based on Lemma~\ref{lemmagoodext} \textbf{(i)}).

Therefore, if at least one $\sigma_1$ and one $\sigma_3$ appears in $\sigma$, then the absolute value of the leading coefficient of every entry in $\beta_4\left(\sigma\right)$ is a power of $2$. We observe that the argument is constructive in that we can explicitly determine the highest $q$-degree terms in all the entries of the Burau matrix $\beta_4\left(\sigma\right)$. In fact, the theory of good and bad paths is sufficiently powerful to determine the \textit{$q$-polynomials} in all the entries of the Burau matrix $\beta_4\left(\sigma\right)$. However, in this case, we also need to consider bad paths and apply the other parts of Lemma~\ref{lemmagoodext}, so it is more involved.

Unfortunately, the strategy of constructing maximal $q$-resistance $\sigma$-paths by following the indices of the generators in the minimal form of $\sigma$ fails when there are isolated generators in the minimal form of $\sigma$. The case of isolated generators (which we will refer to as \textit{blocks} in $\sigma$ (Definition~\ref{blockroad})) requires different methods that we will study in Section~\ref{mains} and Section~\ref{smainstronger}. However, the main idea of the following Subsection~\ref{subsecpathextensions} and the first half of Subsection~\ref{subsecstrongregularity} is that we can construct maximal $q$-resistance $\sigma$-paths using the technique illustrated in this example for a generic family of positive braids (which constitutes a special class of what we will later refer to as \textit{roads} in $\sigma$ (Definition~\ref{blockroad})). 
\end{example}

\subsection{Notation for $P$-paths}
\label{subsecnotation}

In this subsection, we introduce the following Notation~\ref{notnumgb} for the weighted numbers of various types (good, bad, $1$-switch, $3$-switch etc.) of $P$-paths that we will use throughout the paper. The weighted number of $P$-paths of a certain type is a polynomial in $q$ and we also introduce notation for the coefficients and the degree of this polynomial for each type. 

\begin{notation}
\label{notnumgb}
We fix $r\in \{1,2,3\}$. 
\begin{description}
\item[(a)] Let $P=\prod_{i=1}^{p-1} \sigma_1^{a_i}\sigma_3^{b_i}\sigma_2^{c_i}$ be a $2$-subproduct of $\sigma$.
\begin{description}
\item[(i)] Let us assume $c_{p-1}\geq 2$. We write $w_{\lambda}^{P}$ for the weighted number of $\left(r,2\right)$-type $P$-paths with no vertex change at the last $\sigma_2$. (If either $a_p = 0$ or $b_p = 0$, then the set of $P$-paths with no vertex change at the last $\sigma_2$ is a proper subset of the set of good $P$-paths. If $a_p,b_p>0$, then the two sets coincide.)

Let us assume $c_{p-1} = 1$. We write $w_{\lambda}^{P}$ for the weighted number of good $\left(r,2\right)$-type $P$-paths. (If $p>2$, then Lemma~\ref{minimalform} \textbf{(ii)} implies that either $a_p = 0$ or $b_p = 0$. If $a_p = 0$, then $w_{\lambda}^{P}$ is the weighted number of $1$-switch $\left(r,2\right)$-type $P$-paths. If $b_p = 0$, then $w_{\lambda}^{P}$ is the weighted number of $3$-switch $\left(r,2\right)$-type $P$-paths.)

We write $w_{\lambda}^{P} =\sum_{i=0}^{d_{\lambda}^{P}} \lambda_{i}^{P}q^{d_{\lambda}^{P}-i}$, where $d_{\lambda}^{P}$ is the degree of $w_{\lambda}^{P}$.

\item[(ii)] Let us assume $c_{p-1}\geq 2$. We write $w^{P}_{\mu,1}$ for the weighted number of $1$-switch $\left(r,2\right)$-type $P$-paths. We write $w^{P}_{\mu,3}$ for the weighted number of $3$-switch $\left(r,2\right)$-type $P$-paths.

We write $w_{\mu,1}^{P} = \sum_{i=0}^{d_{\mu,1}^{P}} \mu_{1,i}^{P}q^{d_{\mu,1}^{P}-i}$ and $w_{\mu,3}^{P} = \sum_{i=0}^{d_{\mu,3}^{P}} \mu_{3,i}^{P}q^{d_{\mu,3}^{P}-i}$, where $d_{\mu,1}^{P}$ and $d_{\mu,3}^{P}$ are the degrees of $w_{\mu,1}^{P}$ and $w_{\mu,3}^{P}$, respectively. 

\item[(iii)] If $a_p = 0$, then we write $w_{\mu}^{P}$ for the weighted number of $3$-switch $\left(r,2\right)$-type $P$-paths, and we write $w_{\nu}^{P}$ for the weighted number of $\left(r,1\right)$-type $P$-paths. If $b_p = 0$, then we write $w_{\mu}^{P}$ for the weighted number of $1$-switch $\left(r,2\right)$-type $P$-paths, and we write $w_{\nu}^{P}$ for the weighted number of $\left(r,3\right)$-type $P$-paths.

We write $w_{\mu}^{P} = \sum_{i=0}^{d_{\mu}^{P}} \mu_{i}^{P}q^{d_{\mu}^{P}-i}$ and $w_{\nu}^{P} = \sum_{i=0}^{d_{\nu}^{P}} \nu_{i}^{P}q^{d_{\nu}^{P}-i}$, where $d_{\mu}^{P}$ and $d_{\nu}^{P}$ are the degrees of $w_{\mu}^{P}$ and $w_{\nu}^{P}$, respectively. 

\item[(iv)] Let us assume $c_{p-1}\geq 2$. We write $w_{\lambda,1}^{P} = w_{\lambda}^{P} + w_{\mu,3}^{P}$ and $w_{\lambda,3}^{P} = w_{\lambda}^{P} + w_{\mu,1}^{P}$. 

We write $w_{\lambda,1}^{P} = \sum_{i=0}^{d_{\lambda,1}^{P}} \lambda_{1,i}^{P}q^{d_{\lambda,1}^{P} - i}$ and $w_{\lambda,3}^{P} = \sum_{i=0}^{d_{\lambda,3}^{P}} \lambda_{3,i}^{P}q^{d_{\lambda,3}^{P} - i}$, where $d_{\lambda,1}^{P}$ and $d_{\lambda,3}^{P}$ are the degrees of $w_{\lambda,1}^{P}$ and $w_{\lambda,3}^{P}$, respectively. (We will only use this notation in Subsection~\ref{subsecproofstrongerversion}.)
\end{description}
\item[(b)] Let $P=\left(\prod_{i=1}^{p-1}\sigma_1^{a_i}\sigma_3^{b_i}\sigma_2^{c_i}\right)\sigma_1^{a_p}\sigma_3^{b_p}$ be an $s$-subproduct of $\sigma$ for $s\in \{1,3\}$.
\begin{description}
\item[(i)] Let us assume $a_p,b_p>0$. We write $w^{P}_{\lambda,1}=\sum_{i=0}^{d_{\lambda,1}^{P}} \lambda_{1,i}^{P}q^{d_{\lambda,1}^{P}-i}$ for the weighted number of good $\left(r,1\right)$-type $P$-paths, where $d_{\lambda,1}^{P}$ is the degree of $w_{\lambda,1}^{P}$. We write $w^{P}_{\lambda,3}=\sum_{i=0}^{d_{\lambda,3}^{P}} \lambda_{3,i}^{P}q^{d_{\lambda,3}^{P}-i}$ for the weighted number of good $\left(r,3\right)$-type $P$-paths, where $d_{\lambda,3}^{P}$ is the degree $w_{\lambda,3}^{P}$. 
\item[(ii)] Let us assume $a_p,b_p>0$. We write $w_{\mu,1}^{P}=\sum_{i=0}^{d_{\mu,1}^{P}} \mu_{1,i}^{P}q^{d_{\mu,1}^{P}-i}$ for the weighted number of bad $\left(r,1\right)$-type $P$-paths, where $d_{\mu,1}^{P}$ is the degree of $w_{\mu,1}^{P}$. We write $w_{\mu,3}^{P}=\sum_{i=0}^{d_{\mu,3}^{P}} \mu_{3,i}^{P}q^{d_{\mu,3}^{P}-i}$ for the weighted number of bad $\left(r,3\right)$-type $P$-paths, where $d_{\mu,3}^{P}$ is the degree of $w_{\mu,3}^{P}$. 
\item[(iii)] Let us assume either $a_p = 0$ or $b_p = 0$. If $a_p = 0$, then we write $w_{\lambda}^{P}$ for the weighted number of good $\left(r,3\right)$-type $P$-paths. If $b_p = 0$, then we write $w_{\lambda}^{P}$ for the weighted number of good $\left(r,1\right)$-type $P$-paths. 

We write $w_{\lambda}^{P} = \sum_{i=0}^{d_{\lambda}^{P}} \lambda_i^{P}q^{d_{\lambda}^{P}-i}$, where $d_{\lambda}^{P}$ is the degree of $w_{\lambda}^{P}$.
\item[(iv)] Let us assume either $a_p = 0$ or $b_p = 0$. If $a_p = 0$, then we write $w_{\mu}^{P}$ for the weighted number of bad $\left(r,3\right)$-type $P$-paths. If $b_p = 0$, then we write $w_{\mu}^{P}$ for the weighted number of bad $\left(r,1\right)$-type $P$-paths. 

We write $w_{\mu}^{P} = \sum_{i=0}^{d_{\mu}^{P}} \mu_i^{P}q^{d_{\mu}^{P}-i}$, where $d_{\mu}^{P}$ is the degree of $w_{\mu}^{P}$. 
\item[(v)] Let us assume either $a_p = 0$ or $b_p = 0$. If $a_p = 0$, then we write $w_{\nu}^{P}$ for the weighted number of $\left(r,1\right)$-type $P$-paths. If $b_p = 0$, then we write $w_{\nu}^{P}$ for the weighted number of $\left(r,3\right)$-type $P$-paths. 

We write $w_{\nu}^{P} = \sum_{i=0}^{d_{\nu}^{P}} \nu_i^{P}q^{d_{\nu}^{P}-i}$, where $d_{\nu}^{P}$ is the degree of $w_{\nu}^{P}$. 
\end{description}
\end{description}
\end{notation}

In this paper, we will show that considering only the leading coefficients and degrees of the polynomials in Notation~\ref{notnumgb} is sufficient to deduce very strong constraints on the kernel of the Burau representation $\beta_4$. The following statement combines Proposition~\ref{m=ap} with Notation~\ref{notnumgb}.

\begin{proposition}
\label{pnotationentries}
We fix $r\in \{1,2,3\}$. If $P$ is an $s$-subproduct of $\sigma$, then the following statements are true concerning the $\left(r,s\right)$-entry of $\beta_4\left(P\right)$.
\begin{description}
\item[(a)] Let $P = \prod_{i=1}^{p-1} \sigma_1^{a_i}\sigma_3^{b_i}\sigma_2^{c_i}$ be a $2$-subproduct of $\sigma$. 
\begin{description}
\item[(i)] If $c_{p-1}\geq 2$, then the $\left(r,2\right)$-entry of $\beta_4\left(P\right)$ is $w_{\lambda}^{P} + w_{\mu,1}^{P} + w_{\mu,3}^{P}$. 
\item[(ii)] If $c_{p-1} = 1$, then the $\left(r,2\right)$-entry of $\beta_4\left(P\right)$ is $w_{\lambda}^{P} + w_{\mu}^{P}$.
\item[(iii)] If $a_p = 0$, then the weighted number of good $\left(r,2\right)$-type $P$-paths is $w_{\lambda,3}^{P} = w_{\lambda}^{P} + w_{\mu,1}^{P}$. If $b_p = 0$, then the weighted number of good $\left(r,2\right)$-type $P$-paths is $w_{\lambda,1}^{P} = w_{\lambda}^{P} + w_{\mu,3}^{P}$.
\item[(iv)] Let us assume that $c_{p-1}\geq 2$. If $a_p = 0$, then $w_{\mu,1}^{P} = w_{\nu}^{P}$ and $w_{\mu,3}^{P} = w_{\mu}^{P}$. If $b_p = 0$, then $w_{\mu,1}^{P} = w_{\mu}^{P}$ and $w_{\mu,3}^{P} = -qw_{\nu}^{P}$. 
\end{description}
\item[(b)] Let $P = \left(\prod_{i=1}^{p-1} \sigma_1^{a_i}\sigma_3^{b_i}\sigma_2^{c_i}\right)\sigma_1^{a_p}\sigma_3^{b_p}$ be an $s$-subproduct of $\sigma$ for $s\in \{1,3\}$. 
\begin{description}
\item[(i)] If $a_p = 0$, then the $\left(r,3\right)$-entry of $\beta_4\left(P\right)$ is equal to $w_{\lambda}^{P} + w_{\mu}^{P}$. If $b_p = 0$, then the $\left(r,1\right)$-entry of $\beta_4\left(P\right)$ is equal to $w_{\lambda}^{P} + w_{\mu}^{P}$. 
\item[(ii)] Let us assume that $a_p = 1$ and $b_p >0$. In this case, $w_{\lambda,1}^{P} = 0$ and the $\left(r,1\right)$-entry of $\beta_4\left(P\right)$ is $w_{\mu,1}^{P}$. Furthermore, the $\left(r,3\right)$-entry of $\beta_4\left(P\right)$ is $w_{\lambda,3}^{P} + w_{\mu,3}^{P}$.
\item[(iii)] Let us assume that $a_p>1$ and $b_p>0$. In this case, the $\left(r,1\right)$-entry of $\beta_4\left(P\right)$ is $w_{\lambda,1}^{P} + w_{\mu,1}^{P}$. Furthermore, $w_{\mu,3}^{P} = 0$ and the $\left(r,3\right)$-entry of $\beta_4\left(P\right)$ is $w_{\lambda,3}^{P}$.
\end{description}
\end{description}
\end{proposition}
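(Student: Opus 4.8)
The plan is to read every statement as a direct corollary of Proposition~\ref{m=ap}, which identifies the $\left(r,s\right)$-entry of $\beta_4\left(P\right)$ with the weighted count of admissible $\left(r,s\right)$-type $P$-paths, together with the observation that Definition~\ref{defgoodbadpath} partitions these admissible paths according to the behaviour of the path at the final generator(s) of $P$. Concretely, when $P$ is a $2$-subproduct (part \textbf{(a)}), $P$ ends in a $\sigma_2$, and every admissible $\left(r,2\right)$-type $P$-path either has no vertex change at the last $\sigma_2$, or ends in a $1\to 2$ change (a $1$-switch), or ends in a $3\to 2$ change (a $3$-switch); these three classes are disjoint and exhaustive. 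When $P$ is an $s$-subproduct with $s\in\{1,3\}$ (part \textbf{(b)}), every admissible $\left(r,s\right)$-type $P$-path is good or bad by definition. First I would record these partitions; then statements \textbf{(a)(i)}, \textbf{(a)(ii)}, \textbf{(b)(i)}, and the good-count decompositions in \textbf{(a)(iii)} follow by summing weights over the relevant classes and comparing with Notation~\ref{notnumgb} and the clause of Definition~\ref{defgoodbadpath} defining ``good'' in each case (for \textbf{(a)(ii)} one first invokes Lemma~\ref{minimalform} \textbf{(ii)} to reduce, when $p>2$, to the situation $a_p=0$ or $b_p=0$, so that ``good'' is exactly ``no change plus the one admissible switch'').

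The second ingredient is a family of bijections that alter only the terminal edge at the last $\sigma_2$, which is where the entries of the generator matrix in Figure~\ref{fig1} enter. For part \textbf{(a)(iv)} with $a_p=0$, a $1$-switch path ($1\to 2$ at the last $\sigma_2$, weight $+1$) corresponds to an $\left(r,1\right)$-type path ($1\to 1$ at the last $\sigma_2$, weight $+1$) with the same prefix; since both terminal edges carry weight $+1$ this is weight-preserving, giving $w_{\mu,1}^P=w_\nu^P$, while $w_{\mu,3}^P=w_\mu^P$ holds by the very definition of $w_\mu^P$. Symmetrically, for $b_p=0$ a $3$-switch path ($3\to 2$, weight $-q$) corresponds to an $\left(r,3\right)$-type path ($3\to 3$, weight $+1$), so the terminal weights differ by the factor $-q$ and $w_{\mu,3}^P=-q\,w_\nu^P$, with $w_{\mu,1}^P=w_\mu^P$ by definition. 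What makes these bijections legitimate is that the terminal edge of a $P$-path lies in no distinguished pair: every distinguished pair of Definition~\ref{dispair} reads forward from a $\sigma_2$-block, and there is no generator of $P$ after its last one, so admissibility of each path is determined entirely by its common prefix.

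For part \textbf{(b)} the clean case is \textbf{(b)(iii)}: when $a_p>1$ the clause of Definition~\ref{defgoodbadpath} \textbf{(i)} for $s=3$ is vacuous, so every admissible $\left(r,3\right)$-type path is good, forcing $w_{\mu,3}^P=0$ and the $\left(r,3\right)$-entry $=w_{\lambda,3}^P$; the $\left(r,1\right)$-entry is then $w_{\lambda,1}^P+w_{\mu,1}^P$ by the good/bad partition. The remaining vanishing statement, $w_{\lambda,1}^P=0$ when $a_p=1$ and $b_p>0$ (part \textbf{(b)(ii)}), is the crux. Here a good $\left(r,1\right)$-type path must avoid the $2\to 1$ change at the unique $\sigma_1$ of $\sigma_1^{a_p}$, hence stays at vertex $1$ across that $\sigma_1$ and, tracing back, across the last $\sigma_2$ of $\sigma_2^{c_{p-1}}$; since $a_p>0$ this produces exactly the configuration $v_{k-1}=1=v_{k+1}$, $v_k=1$ of the member $\delta_{k,1}$ of a $\delta$ distinguished pair, so the path is \emph{not} admissible. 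The only escape is the exception in Definition~\ref{dispair} \textbf{(i)}, which requires $c_{p-1}=1$; but Lemma~\ref{minimalform} \textbf{(ii)} shows that $c_{p-1}=1$ forces $a_p=0$ whenever $p>2$, contradicting $a_p=1$. Thus no admissible good $\left(r,1\right)$-type path exists, $w_{\lambda,1}^P=0$, the $\left(r,1\right)$-entry equals $w_{\mu,1}^P$, and the $\left(r,3\right)$-entry equals $w_{\lambda,3}^P+w_{\mu,3}^P$.

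I expect the main obstacle to be exactly this last point together with its boundary case $p\le 2$, where the $\delta$-exception can genuinely apply (for instance at the very start of the braid, as in $\sigma_1\sigma_2\sigma_1\sigma_3$). The hard part will be checking that such configurations are confined to a single occurrence at the beginning of the minimal form, using Lemma~\ref{minimalform} \textbf{(iv)}, and that there they do not disturb the stated identities; this requires tracking the interaction between the $\delta$ and $\eta$ pairs near the start of $\sigma$ rather than the purely local bookkeeping that suffices elsewhere. The remaining steps are routine once the partition of admissible paths and the terminal-edge weights from Figure~\ref{fig1} are in hand.
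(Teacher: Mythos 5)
Your route is essentially the paper's: everything except \textbf{(a)(iv)} and \textbf{(b)(ii)} is read off from Proposition~\ref{m=ap} together with the partition of admissible paths given by Definition~\ref{defgoodbadpath}, and your terminal-edge bijection for \textbf{(a)(iv)} is the same computation the paper performs by passing to the maximal $s$-subproduct $P_0$ ($s\in\{1,3\}$) contained in $P$ and citing Lemma~\ref{lemmagoodext} \textbf{(i)}, \textbf{(ii)}, \textbf{(iv)}. One imprecision there: your justification that the terminal edge of a $P$-path lies in no distinguished pair (``pairs read forward from a $\sigma_2$-block and nothing follows the last generator'') is not sufficient as stated, because the local pictures of $\zeta$, $\eta$, $\theta$ pairs \emph{end} at the first $\sigma_2$ of the next block; it is the hypothesis $c_{p-1}\geq 2$ of \textbf{(a)(iv)} that prevents the last $\sigma_2$ of $P$ from playing that role, and your argument should invoke it explicitly, as the paper does through Lemma~\ref{lemmagoodext} \textbf{(i)}--\textbf{(ii)}.

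The genuine gap is \textbf{(b)(ii)}. Your $\delta$-pair argument establishes $w_{\lambda,1}^{P}=0$ only for $p>2$, and you explicitly leave $p\leq 2$ unproved, calling it ``the hard part.'' That case cannot be dismissed as routine, because the exceptional configuration you identify really occurs: for $P=\sigma_1^{a_1}\sigma_2\sigma_1\sigma_3^{b_2}$ and $r=2$, the path that makes a $2\to 1$ vertex change at the last $\sigma_1$ of $\sigma_1^{a_1}$ and thereafter never changes vertex is good in the sense of Definition~\ref{defgoodbadpath} \textbf{(i)} (the edge at the unique $\sigma_1$ of $\sigma_1^{a_p}$ is the constant $1\to 1$ edge), and the $\delta$ pair that would render it inadmissible is disqualified by the exception clause of Definition~\ref{dispair} \textbf{(i)} (here $c_1=1$ and $v_{k-b_1-2}=2$), while no $\eta$, $\zeta$, or $\theta$ pair lies inside $P$ at all. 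So the obstruction is not bookkeeping you postponed; it is exactly the point where your argument, run literally on the definitions, stops producing the claimed vanishing, and its weight ($\pm q^2$) does not cancel against anything.

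By contrast, the paper's proof never meets this case, because it quotes Lemma~\ref{lemmagoodext} \textbf{(iv)} --- which asserts, with no case distinction on $p$ and without engaging the $\delta$-exception, that the no-further-vertex-change extension of an $\left(r,1\right)$-type $P_0$-path past a $\sigma_1$ is inadmissible --- and then concludes immediately from Definition~\ref{defgoodbadpath} \textbf{(i)} and Proposition~\ref{m=ap}. A complete write-up must therefore either invoke Lemma~\ref{lemmagoodext} \textbf{(iv)} as a black box, as the paper does, or settle the $p\leq 2$ initial-block configuration (that is, reconcile the claimed vanishing with the exception clause) by a separate argument. Your proposal does neither, so \textbf{(b)(ii)} stands unproved; the rest of the proposition is correctly handled.
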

\begin{proof}
\begin{description}
\item[(a)]
\begin{description}
\item[(i) - (ii)] The statements are a consequence of Proposition~\ref{m=ap}.
\item[(iii)] The statement is a consequence of Definition~\ref{defgoodbadpath} \textbf{(ii)}.
\item[(iv)] Let $P_0$ be the maximal $s$-subproduct contained in $P$ for $s\in \{1,3\}$. Lemma~\ref{lemmagoodext} \textbf{(iv)} implies that there is a one-to-one correspondence between $\left(r,i\right)$-type $P_0$-paths and $\left(r,i\right)$-type $P$-paths such that the weights of corresponding paths are equal. Lemma~\ref{lemmagoodext} \textbf{(i)} - \textbf{(ii)} and the hypothesis that $c_{p-1}\geq 2$ imply that the extension of an admissible $\left(r,i\right)$-type $P_0$-path by an $i\to 2$ vertex change at the last $\sigma_2$ in $\sigma_2^{c_{p-1}}$ is an admissible $i$-switch $\left(r,2\right)$-type $P$-path. The picture of $\sigma_2$ in Figure~\ref{fig1} shows that if $i = 1$, then the weight of the $P$-path is equal to the weight of the $P_0$-path, and if $i = 3$, then the weight of the $P$-path is the product of $-q$ with the weight of the $P_0$-path.
\end{description}
\item[(b)]
\begin{description}
\item[(i)] The statement is a consequence of Proposition~\ref{m=ap}.
\item[(ii)] Let $P_0'$ be the maximal $2$-subproduct contained in $P$. Let $P_0$ be the maximal $s$-subproduct contained in $P_0'$ for $s\in \{1,3\}$. An $\left(r,1\right)$-type $P$-path is either the extension of an $\left(r,2\right)$-type $P_0'$-path by a $2\to 1$ vertex change, or the extension of an $\left(r,1\right)$-type $P_0$-path with no further vertex change. An $\left(r,1\right)$-type $P$-path of the former type is bad since $a_p = 1$. Lemma~\ref{lemmagoodext} \textbf{(iv)} implies that an $\left(r,1\right)$-type $P$-path of the latter type is not admissible. Definition~\ref{defgoodbadpath} \textbf{(i)} implies that $w_{\lambda,1}^{P} = 0$. The remainder of the statement is a consequence of Proposition~\ref{m=ap}.
\item[(iii)] Definition~\ref{defgoodbadpath} \textbf{(i)} implies that $w_{\mu,3}^{P} = 0$. The remainder of the statement is a consequence of Proposition~\ref{m=ap}.
\end{description}
\end{description}
\end{proof}

\subsection{Path extensions I}
\label{subsecpathextensions}

In this subsection, we develop the theory of extensions of $P$-paths. Proposition~\ref{pnotationentries} implies that we can recover the Burau matrix $\beta_4\left(P\right)$ from the weighted numbers of $P$-paths of various types. However, the weighted numbers of $P$-paths of various types contain more information than the Burau matrix $\beta_4\left(P\right)$. 

Lemma~\ref{lemmagoodext} \textbf{(i)} - \textbf{(iii)} imply the fundamental principle that good $P$-paths accumulate $q$-resistance faster than bad $P$-paths. Indeed, we have seen in Example~\ref{exmain} that the strategy for constructing maximal $q$-resistance extensions of a $P$-path is to ensure that the vertices of the extension follow the indices of the corresponding generators as much as possible. Also, there are no restrictions on the admissible extensions of good $P$-paths. 

Let us assume that a maximal $q$-resistance $\left(r,\cdot\right)$-type $P$-path $X$ is a good $\left(r,s\right)$-type $P$-path and has sufficiently high $q$-resistance compared to other $\left(r,\cdot\right)$-type $P$-paths (either bad $P$-paths or $P$-paths that are not $\left(r,s\right)$-type $P$-paths). In the remainder of Section~\ref{pathgraph} and in Section~\ref{mains}, we will show that the $P$-path $X$ extends to maximal $q$-resistance $\left(r,\cdot\right)$-type $P'$-paths, assuming generic conditions on $P'\setminus P$ (corresponding to the hypothesis that $\sigma$ is a normal braid). Furthermore, the weights of the maximal $q$-resistance extensions of $X$ to $\left(r,\cdot\right)$-type $P'$-paths will contribute the highest $q$-degree terms in the $r$th row of $\beta_4\left(P'\right)$.

In Section~\ref{smainstronger}, we will consider the case when a maximal $q$-resistance $P$-path is bad. If a maximal $q$-resistance $\left(r,s\right)$-type $P$-path $Y$ is bad, then consider a maximal $q$-resistance $\left(r,s\right)$-type $P$-path $X$. As both $P$-paths extend along $P'\setminus P$ to $P'$-paths, the good $P$-path $X$ will accumulate $q$-resistance faster than the bad $P$-path $Y$. In particular, the discrepancy in $q$-resistances will reduce as both $P$-paths extend along $P'\setminus P$ to $P'$-paths. If the weight of $X$ has opposite sign to the weight of $Y$ and the discrepancy reduces perfectly to $0$ (i.e., there is a precise relationship between the length of $P'\setminus P$ and the initial discrepancy between the $q$-resistances of $X$ and $Y$), then there can be cancellation of weights of $P'$-paths. However, in order for this situation to occur, the exponents in $P'\setminus P$ must be very small in order for the bad $P$-path $Y$ to only extend to bad $P''$-paths for $P\subseteq P''\subseteq P'$. Indeed, Lemma~\ref{lemmagoodext} \textbf{(i)} - \textbf{(iii)} implies that an extension at the second or later generator is always admissible, so if an exponent is greater than $2$, then the extension will not have a vertex change at the last generator. Furthermore, if an extension of $Y$ to a $P''$-path is good and the length of $P''\setminus P$ is not large enough for the discrepancy between the $q$-resistances of $X$ and $Y$ to reduce perfectly to $0$, then a maximal $q$-resistance $P''$-path is good. In this case, we can apply the reasoning of the previous paragraph to deduce that the maximal $q$-resistance extensions of $Y$ to $\left(r,\cdot\right)$-type $P'$-paths contribute the highest $q$-degree terms in the $r$th row of $\beta_4\left(P'\right)$.

If $P\subseteq P'$, where $P$ is an $s$-subproduct of $\sigma$ and $P'$ is an $s'$-subproduct of $\sigma$ for some $s,s'\in \{1,2,3\}$, then the preliminary work is to determine formulas for the weighted numbers of good and bad $P'$-paths in terms of the weighted numbers of good and bad $P$-paths. We apply Lemma~\ref{lemmagoodext} to do so in the remainder of this section, where $P$ is the maximal proper $s$-subproduct of $P'$ (from which the general case can be inductively established). We derive only formulas for the degrees and leading coefficients of these polynomials in the case where a maximal $q$-resistance $P$-path is good and has sufficiently high $q$-resistance compared to other $P$-paths. We will only need this case to study normal braids. In Subsection~\ref{subsecpathextensionsII}, we will derive more general formulas to study weakly normal braids when a maximal $q$-resistance $P$-path is bad.

Firstly, we parametrize extensions of $P$-paths to $P'$-paths, where $P$ is the maximal proper $2$-subproduct of an $s'$-subproduct $P'$ of $\sigma$, for some $s'\in \{1,3\}$. 

\begin{definition}
\label{defpath21/3}
Let $P = \prod_{i=1}^{p-1} \sigma_1^{a_i}\sigma_3^{b_i}\sigma_2^{c_i}$ be a $2$-subproduct of $\sigma$ and let $P' = \left(\prod_{i=1}^{p-1} \sigma_1^{a_i}\sigma_3^{b_i}\sigma_2^{c_i}\right)\sigma_1^{a_p}\sigma_3^{b_p}$. Let $W$ be an $\left(r,2\right)$-type $P$-path. 
\begin{description}
\item[(a)] Let us assume that either $a_p = 0$ or $b_p = 0$. If $a_p = 0$, then we define $s' = 3$, and if $b_p = 0$, then we define $s' = 1$. If $a_p = 0$, then we define $\phi_p = b_p$, and if $b_p = 0$, then we define $\phi_p = a_p$. If $1\leq \alpha\leq \phi_p$, then we define $W^{\alpha}$ to be the extension of $W$ to an $\left(r,s'\right)$-type $P'$-path by a vertex change at the $\alpha$th generator in $P'\setminus P$. 
\item[(b)] Let us assume that $a_p,b_p>0$. If $1\leq \alpha\leq a_p$, then we define $W^{\alpha,0}$ to be the extension of $W$ to an $\left(r,1\right)$-type $P'$-path by a vertex change at the $\alpha$th $\sigma_1$ in $\sigma_1^{a_p}$. If $1\leq \beta\leq b_p$, then we define $W^{0,\beta}$ to be the extension of $W$ to an $\left(r,3\right)$-type $P'$-path by a vertex change at the $\beta$th $\sigma_3$ in $\sigma_3^{b_p}$.
\end{description}
\end{definition}

We now characterize admissible extensions of $P$-paths to $P'$-paths, and determine a formula for their weights, in the case where either $a_p = 0$ or $b_p = 0$ in Definition~\ref{defpath21/3}.

\begin{proposition}
\label{ppre21/3ext}
Let $P = \prod_{i=1}^{p-1} \sigma_1^{a_i}\sigma_3^{b_i}\sigma_2^{c_i}$ be a $2$-subproduct of $\sigma$ and let $P' = \left(\prod_{i=1}^{p-1} \sigma_1^{a_i}\sigma_3^{b_i}\sigma_2^{c_i}\right)\sigma_1^{a_p}\sigma_3^{b_p}$. Let us assume that either $a_p = 0$ or $b_p = 0$. We adopt the notation of Definition~\ref{defpath21/3} \textbf{(a)}.

If $a_p = 0$, then we define $\psi = 1$ and $s = 1$. If $b_p = 0$, then we define $\psi = 0$ and $s = 3$.
\begin{description}
\item[(i)] An admissible $\left(r,s'\right)$-type $P'$-path is of the form $W^{\alpha}$ for some $\left(r,2\right)$-type $P$-path $W$. The weight of $W^{\alpha}$ is the product of $\left(-1\right)^{1-\psi}q^{\phi_p-\alpha+1-\psi}$ with the weight of $W$. 
\item[(ii)] If $t\in \{2,s\}$, then an admissible $\left(r,t\right)$-type $P'$-path is the extension of an $\left(r,t\right)$-type $P$-path without a vertex change. The weight of the $\left(r,t\right)$-type $P$-path and its extension are equal. 
\item[(iii)] If $X$ is a good $\left(r,2\right)$-type $P$-path, then $X^{1}$ is the maximal $q$-resistance extension of $X$ to an $\left(r,s'\right)$-type $P'$-path. If $Y$ is a bad $\left(r,2\right)$-type $P$-path, then $Y^{2}$ is the maximal $q$-resistance extension of $Y$ to an $\left(r,s'\right)$-type $P'$-path. (If $\phi_p = 1$, then a bad $\left(r,2\right)$-type $P$-path $Y$ does not have an admissible extension to an $\left(r,s'\right)$-type $P'$-path.)
\item[(iv)] The bad $\left(r,s'\right)$-type $P'$-paths are of the form $X^{\phi_p}$ or $Y^{\phi_p}$ for some good $\left(r,2\right)$-type $P$-path $X$ or some bad $\left(r,2\right)$-type $P$-path $Y$.
\end{description}
\end{proposition}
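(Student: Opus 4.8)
The plan is to reduce everything to the single-generator tail $P'\setminus P=\sigma_1^{a_p}\sigma_3^{b_p}$, which under the hypothesis is a string $\sigma_{s'}^{\phi_p}$ of $\phi_p$ identical generators ($s'=3,\ \phi_p=b_p$ if $a_p=0$; $s'=1,\ \phi_p=a_p$ if $b_p=0$). First I would prove the structural half of \textbf{(i)}. In the graph $G$ the vertex $s'$ is adjacent only to $2$, so along $\sigma_{s'}^{\phi_p}$ the sole possible vertex change is $2\to s'$, and once a path reaches $s'$ it can only repeat the constant edge $s'\to s'$. Hence any $\left(r,s'\right)$-type $P'$-path stays at $2$ up to some index, makes a single $2\to s'$ change at the $\alpha$th generator, and stays at $s'$ afterwards; that is, it equals $W^{\alpha}$ for its restriction $W$, which is forced to be an $\left(r,2\right)$-type $P$-path. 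Reading the edge weights off the picture of $\sigma_{s'}$ in Figure~\ref{fig1} (the $2\to 2$ edges contribute $+1$ each, the terminal $s'\to s'$ edges contribute $q$ each, and the change edge contributes $+1$ if $s'=3$ or $-q$ if $s'=1$) produces the multiplicative factor $\left(-1\right)^{1-\psi}q^{\phi_p-\alpha+1-\psi}$, which is exactly the weight formula claimed in \textbf{(i)}.

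For \textbf{(ii)}, to terminate at a vertex $t\in\{2,s\}\neq s'$ the path can make no vertex change along $\sigma_{s'}^{\phi_p}$, since any change would pin it at $s'$; thus it is the no-change extension of an $\left(r,t\right)$-type $P$-path, and all the intervening constant edges have weight $+1$, so the weights agree. Admissibility is then handed to me by Lemma~\ref{lemmagoodext}: part \textbf{(v)} covers $t=2$ (because $P$ is a $2$-subproduct, so $s=2$ in its statement), and part \textbf{(iv)} covers $t=s$, there being no $\sigma_s$ in $P'\setminus P=\sigma_{s'}^{\phi_p}$.

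For \textbf{(iii)} and \textbf{(iv)} I would convert the good/bad dichotomy into a statement about the admissible set of indices $\alpha$. Since the $q$-resistance of $W^{\alpha}$ is $\deg W+\left(\phi_p-\alpha+1-\psi\right)$, strictly decreasing in $\alpha$, the maximal $q$-resistance admissible extension is the one of smallest admissible $\alpha$. For a good $X$, Lemma~\ref{lemmagoodext}~\textbf{(i)} makes every $X^{\alpha}$ admissible, so the minimum is $\alpha=1$, giving $X^{1}$. For a bad $Y$, I would first note via Definition~\ref{defgoodbadpath}~\textbf{(ii)} that under $a_p=0$ (resp.\ $b_p=0$) ``bad'' means precisely that $Y$ is a $3$-switch (resp.\ $1$-switch); the appropriate half of Lemma~\ref{lemmagoodext}~\textbf{(iii)} then says the admissible extensions are exactly those changing vertex at the second or later generator of $P'\setminus P$, so the minimum admissible index is $\alpha=2$, giving $Y^{2}$, with no admissible extension when $\phi_p=1$. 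Finally \textbf{(iv)} is read off Definition~\ref{defgoodbadpath}~\textbf{(i)}: an $\left(r,s'\right)$-type $P'$-path is bad exactly when its change occurs at the last $\sigma_{s'}$, i.e.\ when $\alpha=\phi_p$; combined with \textbf{(i)} this says the bad $P'$-paths are precisely the $W^{\phi_p}$, with $W$ either a good $X$ or a bad $Y$.

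The place to be careful — the main obstacle — is the bookkeeping in \textbf{(iii)}: matching the generic ``bad $\left(r,2\right)$-type'' condition with the correct switch color under the single-generator hypothesis, and then invoking the correct clause of Lemma~\ref{lemmagoodext}~\textbf{(iii)} to identify the smallest admissible index as $2$ (together with the degenerate $\phi_p=1$ case where no admissible extension of a bad path exists). Once this translation is pinned down, every remaining step is a direct reading of Figure~\ref{fig1} and of Definitions~\ref{defpath21/3} and~\ref{defgoodbadpath}.
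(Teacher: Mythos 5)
Your parts \textbf{(ii)}, \textbf{(iii)} and \textbf{(iv)} are correct and essentially identical to the paper's own proof: the same identification of ``bad'' with ``$s'$-switch'' via Definition~\ref{defgoodbadpath} \textbf{(ii)}, the same use of Lemma~\ref{lemmagoodext} \textbf{(i)} and \textbf{(iii)} to locate the minimal admissible index ($\alpha = 1$ for good paths, $\alpha = 2$ for bad ones, none when $\phi_p = 1$), and the same reading of Definition~\ref{defgoodbadpath} \textbf{(i)} to see that bad $\left(r,s'\right)$-type $P'$-paths are exactly those with $\alpha = \phi_p$. Your weight computations are also correct in both cases $\psi = 0$ and $\psi = 1$.

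However, the structural half of your part \textbf{(i)} contains a genuine gap. You assert that \emph{any} $\left(r,s'\right)$-type $P'$-path ``stays at $2$ up to some index, makes a single $2\to s'$ change at the $\alpha$th generator, and stays at $s'$ afterwards.'' This is false: a $P$-path that already ends at vertex $s'$ (an $\left(r,s'\right)$-type $P$-path) extends across the tail $\sigma_{s'}^{\phi_p}$ by constant edges alone, yielding an $\left(r,s'\right)$-type $P'$-path with no vertex change in the tail, which is not of the form $W^{\alpha}$ for any $\left(r,2\right)$-type $P$-path $W$. Your observation that ``once a path reaches $s'$ it can only repeat the constant edge'' does not exclude this case, because the path may reach $s'$ inside $P$, before the tail begins. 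What rescues statement \textbf{(i)} is admissibility: by Lemma~\ref{lemmagoodext} \textbf{(iv)}, the no-change extension of an $\left(r,s'\right)$-type $P$-path is admissible if and only if there is no $\sigma_{s'}$ in $P'\setminus P$, and here $P'\setminus P = \sigma_{s'}^{\phi_p}$ consists entirely of $\sigma_{s'}$s, so all such extensions are inadmissible. This is precisely the step the paper's proof supplies for \textbf{(i)}, and it is the one missing from yours --- notably, you do invoke Lemma~\ref{lemmagoodext} \textbf{(iv)} in part \textbf{(ii)}, but there for the opposite conclusion (no $\sigma_s$ occurs in the tail, so the constant extension \emph{is} admissible), and never for part \textbf{(i)}, where it is actually needed.
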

\begin{proof}
\begin{description}
\item[(i)] In Figure~\ref{fig1}, a vertex change $s\to s'$ cannot occur at $\sigma_{s'}$. The first statement is now a consequence of Lemma~\ref{lemmagoodext} \textbf{(iv)}. The second statement is a direct computation based on the pictures of $\sigma_1$ and $\sigma_3$ in Figure~\ref{fig1}.
\item[(ii)] In Figure~\ref{fig1}, a vertex change to $t$ cannot occur at $\sigma_{s'}$ and the weight of the constant edge $t\to t$ is $1$ in the picture of $\sigma_{s'}$.
\item[(iii)] The first statement is a consequence of \textbf{(i)} and Lemma~\ref{lemmagoodext} \textbf{(i)}. A bad $\left(r,2\right)$-type $P$-path $Y$ is an $s'$-switch $\left(r,2\right)$-type $P$-path since either $a_p = 0$ or $b_p = 0$. The second statement is now a consequence of \textbf{(i)} and Lemma~\ref{lemmagoodext} \textbf{(iii)}.
\item[(iv)] The statement is a consequence of \textbf{(i)}.
\end{description}
\end{proof}

Let $P$ be the maximal proper $2$-subproduct of an $s'$-subproduct $P'$ for some $s'\in \{1,3\}$. Let us assume that a maximal $q$-resistance $\left(r,2\right)$-type $P$-path is a good $\left(r,2\right)$-type $P$-path. In this case, we will determine formulas for the weighted numbers of good and bad $P'$-paths in terms of the weighted numbers of good and bad $P$-paths. Firstly, we do this in the context of Proposition~\ref{ppre21/3ext} (where either $a_p = 0$ or $b_p = 0$).

\begin{proposition}
\label{p21/3ext}
Let $P = \prod_{i=1}^{p-1} \sigma_1^{a_i}\sigma_3^{b_i}\sigma_2^{c_i}$ be a $2$-subproduct of $\sigma$ and let $P' = \left(\prod_{i=1}^{p-1} \sigma_1^{a_i}\sigma_3^{b_i}\sigma_2^{c_i}\right)\sigma_1^{a_p}\sigma_3^{b_p}$. We assume that either $a_p = 0$ or $b_p = 0$. We adopt the notation of Proposition~\ref{ppre21/3ext}.

\begin{description}
\item[(i)] Let us assume that $\phi_p>1$. If $d_{\lambda}^{P}\geq d_{\mu}^{P}$, then $d_{\lambda}^{P'} = d_{\lambda}^{P} + \phi_p - \psi > d_{\mu}^{P'}$ and $\lambda_{0}^{P'} = \left(-1\right)^{1-\psi}\lambda_{0}^{P}$. If $d_{\lambda}^{P}>d_{\mu}^{P}$, then $d_{\mu}^{P'} = d_{\lambda}^{P} + 1 - \psi$ and $\mu_{0}^{P'} = \left(-1\right)^{1-\psi}\lambda_{0}^{P}$. 
\item[(ii)] If $\phi_p = 1$, then $w_{\lambda}^{P'} = 0$ and $w_{\mu}^{P'} = \left(-1\right)^{1-\psi}w_{\lambda}^{P}$. 
\item[(iii)] We have $w_{\nu}^{P'} = w_{\nu}^{P}$. 
\item[(iv)] The $\left(r,2\right)$-entry of $\beta_4\left(P'\right)$ is equal to the $\left(r,2\right)$-entry of $\beta_4\left(P\right)$.
\end{description}
\end{proposition}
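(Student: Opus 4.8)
The entire proposition rests on Proposition~\ref{ppre21/3ext}, which already identifies every admissible $\left(r,s'\right)$-type $P'$-path with a pair $\left(W,\alpha\right)$, where $W$ is an $\left(r,2\right)$-type $P$-path and $1\le\alpha\le\phi_p$, and records the weight of $W^{\alpha}$ as $\left(-1\right)^{1-\psi}q^{\phi_p-\alpha+1-\psi}$ times the weight of $W$. The plan is to fix this dictionary and read off each assertion by summing over $\alpha$. First I would record the good/bad split of the $P'$-paths: by Definition~\ref{defgoodbadpath}~\textbf{(i)} the path $W^{\alpha}$ is good exactly when its unique vertex change occurs before the last generator, i.e. when $\alpha<\phi_p$, and bad when $\alpha=\phi_p$; and by Lemma~\ref{lemmagoodext} a good $\left(r,2\right)$-type $P$-path admits all extensions $W^{\alpha}$, while a bad one (necessarily an $s'$-switch here) admits only those with $\alpha\ge 2$. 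This bookkeeping expresses $w_{\lambda}^{P'}$ and $w_{\mu}^{P'}$ as explicit $q$-geometric sums of the weighted numbers of good and bad $\left(r,2\right)$-type $P$-paths.

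The two clean assertions I would dispatch first are \textbf{(iii)} and \textbf{(iv)}. For \textbf{(iii)}, Proposition~\ref{ppre21/3ext}~\textbf{(ii)} applied with $t=s$ says that an $\left(r,s\right)$-type $P'$-path is precisely a no-vertex-change extension of an $\left(r,s\right)$-type $P$-path of equal weight, giving a weight-preserving bijection and hence $w_{\nu}^{P'}=w_{\nu}^{P}$. For \textbf{(iv)}, the same result with $t=2$ (valid since $2\in\{2,s\}$) together with Lemma~\ref{lemmagoodext}~\textbf{(v)} (an admissible $\left(r,2\right)$-type $P$-path extends with no further vertex change to an admissible $\left(r,2\right)$-type $P'$-path because $s=2$) produces a weight-preserving bijection between the admissible $\left(r,2\right)$-type $P$-paths and the admissible $\left(r,2\right)$-type $P'$-paths; Proposition~\ref{m=ap} then equates the two $\left(r,2\right)$-entries. (Equivalently, $\beta_4\left(\sigma_1\right)$ and $\beta_4\left(\sigma_3\right)$ both fix the vector $\left(0,1,0\right)^{T}$, so $\beta_4\left(\sigma_1^{a_p}\sigma_3^{b_p}\right)$ has second column $\left(0,1,0\right)^{T}$ and right-multiplication by it preserves the second column of $\beta_4\left(P\right)$.)

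For \textbf{(i)} I would extract leading terms from the geometric sums. The maximal-$q$-resistance admissible $\left(r,s'\right)$-type $P'$-path is the $\alpha=1$ extension of the maximal-$q$-resistance good $\left(r,2\right)$-type $P$-path; under the hypothesis $d_{\lambda}^{P}\ge d_{\mu}^{P}$, together with the standing fact that the top-degree good $\left(r,2\right)$-type path is a no-vertex-change path counted by $w_{\lambda}^{P}$, this yields $d_{\lambda}^{P'}=d_{\lambda}^{P}+\phi_p-\psi$ with leading coefficient $\left(-1\right)^{1-\psi}\lambda_{0}^{P}$. The bad $P'$-paths are the $W^{\phi_p}$, carrying the factor $\left(-1\right)^{1-\psi}q^{1-\psi}$, so $w_{\mu}^{P'}$ has degree $d_{\lambda}^{P}+1-\psi$ and leading coefficient $\left(-1\right)^{1-\psi}\lambda_{0}^{P}$ precisely when $d_{\lambda}^{P}>d_{\mu}^{P}$ is strict (so no equal-degree bad-path contribution is added); finally $d_{\lambda}^{P'}>d_{\mu}^{P'}$ since $\phi_p>1$. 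Part \textbf{(ii)} is the degenerate specialization $\phi_p=1$: here no extension is good (as $\alpha<\phi_p$ is impossible), so $w_{\lambda}^{P'}=0$; only good $\left(r,2\right)$-type $P$-paths extend (a bad one would require $\alpha\ge2$), each contributing the single path $W^{1}$, and the weight factor of Proposition~\ref{ppre21/3ext}~\textbf{(i)} collapses the sum to the asserted multiple of $w_{\lambda}^{P}$.

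The main obstacle is the degree bookkeeping in \textbf{(i)}: one must guarantee that the competing contributions of strictly lower weight do not reach the top degree and hence do not perturb the stated leading coefficient $\left(-1\right)^{1-\psi}\lambda_{0}^{P}$. These come both from bad $\left(r,2\right)$-type $P$-paths (degree controlled by $d_{\mu}^{P}$) and, more delicately, from the switch paths that also feed $w_{\lambda}^{P'}$ through $w_{\mu,1}^{P}=w_{\nu}^{P}$ (when $a_p=0$) or $w_{\mu,3}^{P}=-qw_{\nu}^{P}$ (when $b_p=0$), by Proposition~\ref{pnotationentries}~\textbf{(a)(iv)}. This is exactly where the hypothesis $d_{\lambda}^{P}\ge d_{\mu}^{P}$ and the dominance of the no-vertex-change good paths (the reason $w_{\lambda}^{P}$, rather than the full good count, governs the leading term) are indispensable; carrying out the strict degree separations case-by-case in the $a_p=0$ ($\psi=1$) and $b_p=0$ ($\psi=0$) cases, where the sign conventions of Figure~\ref{fig1} differ, is the technical heart of the argument.
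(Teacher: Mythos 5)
Your proposal is correct and follows essentially the same route as the paper's proof: admissible $P'$-paths are parametrized via Proposition~\ref{ppre21/3ext}, the extension $W^{\alpha}$ is good precisely when $\alpha<\phi_p$ and admissible precisely when $W$ is good (or $W$ is bad and $\alpha\geq 2$), and parts \textbf{(iii)} and \textbf{(iv)} follow from Proposition~\ref{ppre21/3ext}~\textbf{(ii)} together with Proposition~\ref{m=ap}, exactly as in the paper. The delicate point you flag at the end --- that the good switch paths counted by $w_{\mu,1}^{P}$ (when $a_p=0$), respectively $w_{\mu,3}^{P}$ (when $b_p=0$), rather than by $w_{\lambda}^{P}$, must not govern the top degree --- is treated no more explicitly in the paper's own proof, which likewise passes from ``maximal $q$-resistance good $\left(r,2\right)$-type $P$-path'' directly to the pair $\left(d_{\lambda}^{P},\lambda_{0}^{P}\right)$; in the applications this is harmless because the proposition is invoked for (strongly) $r$-regular $P$, where the dominance $d_{\lambda}^{P}>\max\{d_{\mu,1}^{P},d_{\mu,3}^{P}\}$ holds.
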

\begin{proof}
\begin{description}
\item[(i)] If $d_{\lambda}^{P}\geq d_{\mu}^{P}$, then Proposition~\ref{ppre21/3ext} \textbf{(i)} and \textbf{(iii)} imply that a maximal $q$-resistance $\left(r,s'\right)$-type $P'$-path is good and of the form $X^{1}$, where $X$ is a maximal $q$-resistance good $\left(r,2\right)$-type $P$-path. (The $\left(r,s'\right)$-type $P'$-path $X^{1}$ is good since $\phi_p>1$.) 

If $d_{\lambda}^{P}>d_{\mu}^{P}$, then Proposition~\ref{ppre21/3ext} \textbf{(i)} and \textbf{(iv)} imply that a maximal $q$-resistance bad $\left(r,s'\right)$-type $P'$-path is of the form $X^{\phi_p}$, where $X$ is a maximal $q$-resistance good $\left(r,2\right)$-type $P$-path.
\item[(ii)] If $\phi_p = 1$, then Proposition~\ref{ppre21/3ext} \textbf{(i)} and \textbf{(iii)} imply that every admissible $\left(r,s'\right)$-type $P'$-path is of the form $X^{1}$, where $X$ is a good $\left(r,2\right)$-type $P$-path. Furthermore, $X^{1}$ is a bad $\left(r,s'\right)$-type $P'$-path.
\item[(iii)] The statement is a consequence of Proposition~\ref{ppre21/3ext} \textbf{(ii)}. 
\item[(iv)] The statement is a consequence of Proposition~\ref{m=ap} and Proposition~\ref{ppre21/3ext} \textbf{(ii)}.
\end{description}
\end{proof}

We now characterize admissible extensions of $P$-paths to $P'$-paths, and determine a formula for their weights, in the case where $a_p,b_p>0$ in Definition~\ref{defpath21/3}.

\begin{proposition}
\label{ppre21/3bothext}
Let $P = \prod_{i=1}^{p-1} \sigma_1^{a_i}\sigma_3^{b_i}\sigma_2^{c_i}$ be a $2$-subproduct of $\sigma$ and let $P' = \left(\prod_{i=1}^{p-1} \sigma_1^{a_i}\sigma_3^{b_i}\sigma_2^{c_i}\right)\sigma_1^{a_p}\sigma_3^{b_p}$. Let us assume that $a_p,b_p>0$.
\begin{description}
\item[(i)] An $\left(r,1\right)$-type $P'$-path is of the form $W^{\alpha,0}$ for some $\left(r,2\right)$-type $P$-path $W$. An $\left(r,3\right)$-type $P'$-path is of the form $W^{0,\beta}$ for some $\left(r,2\right)$-type $P$-path $W$. 
\item[(ii)] The weight of $W^{\alpha,0}$ is the product of $-q^{a_p-\alpha+1}$ with the weight of $W$. The weight of $W^{0,\beta}$ is the product of $q^{b_p-\beta}$ with the weight of $W$.
\item[(iii)] An admissible $\left(r,2\right)$-type $P'$-path is the extension of an $\left(r,2\right)$-type $P$-path without a vertex change. The weight of the $\left(r,2\right)$-type $P$-path and its extension are equal.
\item[(iv)] If $X$ is a good $\left(r,2\right)$-type $P$-path, then $X^{1,0}$ is the maximal $q$-resistance extension of $X$ to an $\left(r,1\right)$-type $P'$-path and $X^{0,1}$ is the maximal $q$-resistance extension of $X$ to an $\left(r,3\right)$-type $P'$-path. If $a_p = 1$, then $X^{1,0}$ is a bad $\left(r,1\right)$-type $P'$-path, but otherwise, $X^{1,0}$ is a good $\left(r,1\right)$-type $P'$-path. In general, $X^{0,1}$ is a good $\left(r,3\right)$-type $P'$-path.
\end{description}
\end{proposition}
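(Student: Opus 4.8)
The plan is to read everything off the local edge data of Figure~\ref{fig1} as an $\left(r,2\right)$-type $P$-path traverses the appended block $P'\setminus P=\sigma_1^{a_p}\sigma_3^{b_p}$, and then to invoke the admissibility and good/bad criteria recorded in Lemma~\ref{lemmagoodext} and Definition~\ref{defgoodbadpath}. The single local fact driving the combinatorics is that at a $\sigma_1$ the only edges entering vertex $1$ are $1\to 1$ and $2\to 1$, the only edge entering vertex $2$ is $2\to 2$, and vertex $3$ is frozen; symmetrically, at a $\sigma_3$ the only edges entering vertex $3$ are $3\to 3$ and $2\to 3$, the only edge entering vertex $2$ is $2\to 2$, and vertex $1$ is frozen. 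Since $W$ is an $\left(r,2\right)$-type $P$-path it ends at vertex $2$, so the whole analysis reduces to tracking where a walk starting at vertex $2$ can be routed through $\sigma_1^{a_p}$ and then through $\sigma_3^{b_p}$.

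For \textbf{(i)} and \textbf{(iii)} I would argue by a ``trapping'' principle. To finish at vertex $1$, the walk must leave vertex $2$ by a $2\to 1$ change, which can only occur at some $\sigma_1$, say the $\alpha$th, after which vertex $1$ is frozen for the remaining $\sigma_1$'s and all of $\sigma_3^{b_p}$; this is exactly $W^{\alpha,0}$. To finish at vertex $3$ the walk must stay at vertex $2$ throughout $\sigma_1^{a_p}$ (any $2\to 1$ change would trap it at $1$), then leave by a $2\to 3$ change at some $\sigma_3$, the $\beta$th, and freeze at $3$; this is $W^{0,\beta}$. To finish at vertex $2$ no change is possible at all, which yields the no-change extension of an $\left(r,2\right)$-type $P$-path asserted in \textbf{(iii)}; its admissibility, and the fact that every admissible $\left(r,2\right)$-type $P'$-path arises this way, is Lemma~\ref{lemmagoodext} \textbf{(v)} with $s=2$. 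I would also note that the only $P'$-paths reaching $1$ or $3$ that are \emph{not} of the form $W^{\alpha,0}$ or $W^{0,\beta}$ are the no-change extensions of $P$-paths already sitting at $1$ or $3$, and these are non-admissible by Lemma~\ref{lemmagoodext} \textbf{(iv)} because $P'\setminus P$ contains both a $\sigma_1$ and a $\sigma_3$; hence the parametrization is exhaustive on admissible paths, matching the convention of Proposition~\ref{ppre21/3ext} \textbf{(i)}.

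For \textbf{(ii)} I would simply multiply the edge labels of Figure~\ref{fig1} along $P'\setminus P$. For $W^{\alpha,0}$ the factors are $\alpha-1$ copies of the weight-$1$ edge $2\to 2$ at a $\sigma_1$, one $2\to 1$ edge of weight $-q$, then $a_p-\alpha$ copies of the weight-$q$ edge $1\to 1$, then $b_p$ copies of the weight-$1$ edge $1\to 1$ at a $\sigma_3$, giving the factor $-q^{a_p-\alpha+1}$; for $W^{0,\beta}$ the walk sits at vertex $2$ through $\sigma_1^{a_p}$ (all weight $1$) and through the first $\beta-1$ of the $\sigma_3$'s (weight $1$), takes the weight-$1$ edge $2\to 3$, and then $b_p-\beta$ copies of the weight-$q$ edge $3\to 3$, giving the factor $q^{b_p-\beta}$. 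The only point to emphasize is that the constant edge carrying weight $q$ at $\sigma_i$ is the $i\to i$ edge, so the $2\to 2$ edges contribute $1$ here, not $q$.

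Finally, for \textbf{(iv)} the weight formula of \textbf{(ii)} shows that the $q$-degree of $W^{\alpha,0}$ (resp.\ $W^{0,\beta}$) is strictly decreasing in $\alpha$ (resp.\ $\beta$), so for a fixed good path $X$ the maximal $q$-resistance extensions are $X^{1,0}$ and $X^{0,1}$; since $X$ is good, Lemma~\ref{lemmagoodext} \textbf{(i)} guarantees both are admissible. The good/bad classification is then a direct check against Definition~\ref{defgoodbadpath} \textbf{(i)}: $X^{1,0}$ performs its $2\to 1$ change at the \emph{first} $\sigma_1$, which coincides with the last $\sigma_1$ precisely when $a_p=1$, so $X^{1,0}$ is bad iff $a_p=1$ and good otherwise; and $X^{0,1}$ performs its $2\to 3$ change at the first $\sigma_3$, so it is automatically good when $a_p>1$ (vacuous clause) or when $b_p>1$ (the change is not at the last $\sigma_3$), and in the remaining case $a_p=b_p=1$ it is still good because $X$, being a good $\left(r,2\right)$-type $P$-path with $a_p>0$, has no $1\to 2$ change at the last $\sigma_2$ of $P$. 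The main obstacle is precisely this good/bad bookkeeping in \textbf{(iv)}: the definition of a good $\left(r,3\right)$-type path carries a delicate two-part clause that is active only when $a_p=1$, and one must correctly match ``the vertex change occurs at the last generator'' against the exponents $a_p,b_p$ in each boundary case; by contrast the combinatorial content of \textbf{(i)}--\textbf{(iii)} is routine once the trapping principle and the weight conventions of Figure~\ref{fig1} are in place.
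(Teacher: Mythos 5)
Your proof is correct and takes essentially the same route as the paper's: the parametrization and weight formulas are read directly off Figure~\ref{fig1}, the non-admissibility of no-change extensions of paths sitting at vertex $1$ or $3$ is handled via Lemma~\ref{lemmagoodext} \textbf{(iv)} (exactly the paper's argument for \textbf{(i)}), and \textbf{(iv)} follows from \textbf{(ii)} together with Lemma~\ref{lemmagoodext} \textbf{(i)} and Definition~\ref{defgoodbadpath}. Your explicit good/bad bookkeeping in \textbf{(iv)} — in particular the case $a_p = 1 = b_p$, where goodness of $X^{0,1}$ rests on the fact that a good $\left(r,2\right)$-type $P$-path with $a_p>0$ has no $1\to 2$ vertex change at the last $\sigma_2$ of $P$ — correctly fills in a verification the paper leaves implicit, but the approach is the same.
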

\begin{proof}
\begin{description}
\item[(i)] In Figure~\ref{fig1}, a vertex change to $s_1$ cannot occur at $\sigma_{s_2}$ if $\{s_1,s_2\} = \{1,3\}$. The statement is now a consequence of Lemma~\ref{lemmagoodext} \textbf{(iv)}.
\item[(ii)] The statement is a direct computation based on the pictures of $\sigma_1$ and $\sigma_3$ in Figure~\ref{fig1}.
\item[(iii)] In Figure~\ref{fig1}, a vertex change to $2$ cannot occur at either $\sigma_{1}$ or $\sigma_{3}$, and the weight of the constant edge $2\to 2$ is $1$ in the pictures of $\sigma_1$ and $\sigma_3$.
\item[(iv)] The statement is a consequence of \textbf{(ii)} and Lemma~\ref{lemmagoodext} \textbf{(i)}.
\end{description}
\end{proof}

If a maximal $q$-resistance $\left(r,2\right)$-type $P$-path is a good $\left(r,2\right)$-type $P$-path in the context of Proposition~\ref{ppre21/3bothext}, then we now determine formulas for the weighted numbers of good and bad $P'$-paths in terms of the weighted numbers of good and bad $P$-paths. 

\begin{proposition}
\label{p21/3bothext}
Let $P = \prod_{i=1}^{p-1} \sigma_1^{a_i}\sigma_3^{b_i}\sigma_2^{c_i}$ be a $2$-subproduct of $\sigma$ and let $P' = \left(\prod_{i=1}^{p-1} \sigma_1^{a_i}\sigma_3^{b_i}\sigma_2^{c_i}\right)\sigma_1^{a_p}\sigma_3^{b_p}$. We assume that $a_p,b_p>0$ and $c_{p-1}\geq 2$. 
\begin{description}
\item[(a)] Let us consider the case $d_{\lambda}^{P}>\max\{d_{\mu,1}^{P},d_{\mu,3}^{P}\}$. 
\begin{description}
\item[(i)] Let us assume that $a_p = 1$. In this case, $d_{\lambda,3}^{P'}>d_{\mu,3}^{P'}$, $d_{\lambda,3}^{P'}+1\geq d_{\mu,1}^{P'}$, and $\lambda_{3,0}^{P'} = \lambda_{0}^{P}$.
\item[(ii)] Let us assume that $a_p > 1$. In this case, $d_{\lambda,1}^{P'}>d_{\mu,1}^{P'}$, $\lambda_{1,0}^{P'} = -\lambda_{0}^{P}$, and $\lambda_{3,0}^{P'} = \lambda_{0}^{P}$.
\end{description}
\item[(b)] The $\left(r,2\right)$-entry of $\beta_4\left(P'\right)$ is equal to the $\left(r,2\right)$-entry of $\beta_4\left(P\right)$.
\end{description}
\end{proposition}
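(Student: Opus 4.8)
The plan is to combine the explicit parametrization of extensions in Proposition~\ref{ppre21/3bothext} with the good/bad dichotomy of Definition~\ref{defgoodbadpath} and the admissibility constraints of Lemma~\ref{lemmagoodext} \textbf{(i)} and \textbf{(iii)}. By Proposition~\ref{ppre21/3bothext} \textbf{(i)}--\textbf{(ii)}, every admissible $\left(r,1\right)$-type $P'$-path is $W^{\alpha,0}$ for some admissible $\left(r,2\right)$-type $P$-path $W$ and some $1\le\alpha\le a_p$, with weight $-q^{a_p-\alpha+1}$ times the weight of $W$, and every admissible $\left(r,3\right)$-type $P'$-path is $W^{0,\beta}$ for some $1\le\beta\le b_p$, with weight $q^{b_p-\beta}$ times the weight of $W$. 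I would read the good/bad classification off Definition~\ref{defgoodbadpath} \textbf{(i)}: $W^{\alpha,0}$ is good exactly when $\alpha<a_p$, while $W^{0,\beta}$ is good whenever $\beta<b_p$ or $a_p>1$, and (when $a_p=1$ and $\beta=b_p$) good exactly when $W$ is not a $1$-switch path. Admissibility of a given extension is dictated by whether $W$ is good, a $1$-switch, or a $3$-switch path: by Lemma~\ref{lemmagoodext} \textbf{(i)} a good $W$ extends admissibly by any vertex change, whereas by Lemma~\ref{lemmagoodext} \textbf{(iii)} a $1$-switch $W$ forbids $\alpha=1$ but allows all $\beta$, and a $3$-switch $W$ forbids $\beta=1$ but allows all $\alpha$. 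Since we are in the regime $d_\lambda^P>\max\{d_{\mu,1}^P,d_{\mu,3}^P\}$, the maximal $q$-resistance among $\left(r,2\right)$-type $P$-paths is uniquely carried by good paths, contributing the top term $\lambda_0^P q^{d_\lambda^P}$.

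For part \textbf{(a)} with $a_p>1$, I would first treat $w_{\lambda,1}^{P'}$. The maximal-degree good $\left(r,1\right)$-type path uses $\alpha=1$ (permissible for good and $3$-switch $W$, hence for the maximal good $W$), giving degree $a_p+d_\lambda^P$ and leading coefficient $-\lambda_0^P$; no competing contribution reaches this degree, since $\alpha\ge2$ loses a power of $q$ and $3$-switch paths have degree strictly below $d_\lambda^P$. The bad $\left(r,1\right)$-type paths are those of the form $W^{a_p,0}$, and their top degree $1+d_\lambda^P$ is attained by the maximal good $W$, so $d_{\lambda,1}^{P'}=a_p+d_\lambda^P>1+d_\lambda^P=d_{\mu,1}^{P'}$, using $a_p\ge2$. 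For $w_{\lambda,3}^{P'}$ I would take $\beta=1$ (permissible for good and $1$-switch $W$), obtaining degree $(b_p-1)+d_\lambda^P$ and leading coefficient $+\lambda_0^P$ with the maximal good $W$ dominating; as $a_p>1$ forces every $\left(r,3\right)$-type path to be good, this yields $\lambda_{3,0}^{P'}=\lambda_0^P$.

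For part \textbf{(a)} with $a_p=1$, the only $\left(r,1\right)$-type path is $W^{1,0}$, which is forced to be bad, so $w_{\lambda,1}^{P'}=0$ and $d_{\mu,1}^{P'}=1+d_\lambda^P$ with the maximal good $W$ again supplying the leading term. For the $\left(r,3\right)$-type paths I would separate $\beta<b_p$ (always good, admissible for good and $1$-switch $W$) from $\beta=b_p$ (good exactly when $W$ is not a $1$-switch); the maximal good contribution comes from $\beta=1$ paired with the maximal good $W$, giving $d_{\lambda,3}^{P'}=(b_p-1)+d_\lambda^P$ and $\lambda_{3,0}^{P'}=\lambda_0^P$, while the bad $\left(r,3\right)$-type paths arise only from $1$-switch $W$ at $\beta=b_p$, whence $d_{\mu,3}^{P'}=d_{\mu,1}^P<d_\lambda^P\le d_{\lambda,3}^{P'}$, establishing $d_{\lambda,3}^{P'}>d_{\mu,3}^{P'}$. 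The slack inequality then follows from $d_{\lambda,3}^{P'}+1=b_p+d_\lambda^P\ge 1+d_\lambda^P=d_{\mu,1}^{P'}$ since $b_p\ge1$. Finally, part \textbf{(b)} is immediate from Proposition~\ref{ppre21/3bothext} \textbf{(iii)} together with Lemma~\ref{lemmagoodext} \textbf{(v)}: the admissible $\left(r,2\right)$-type $P'$-paths are precisely the no-vertex-change extensions of admissible $\left(r,2\right)$-type $P$-paths, and the constant $2\to2$ edges across $\sigma_1^{a_p}\sigma_3^{b_p}$ carry weight $1$ in Figure~\ref{fig1}, so Proposition~\ref{m=ap} yields the equality of $\left(r,2\right)$-entries.

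I expect the main obstacle to be the careful bookkeeping of \emph{which} $\left(r,2\right)$-type $P$-paths $W$ feed each extension (good versus $1$-switch versus $3$-switch), and the verification that at the top $q$-degree exactly one family survives, so that the leading coefficient is genuinely $\pm\lambda_0^P$ rather than a cancelled sum. The delicate points are the two inequalities in the case $a_p=1$: $d_{\lambda,3}^{P'}>d_{\mu,3}^{P'}$ relies on the strict separation $d_{\mu,1}^P<d_\lambda^P$ coming from the hypothesis together with the sharp identity $d_{\mu,3}^{P'}=d_{\mu,1}^P$, so I would take special care to justify that the bad $\left(r,3\right)$-type paths are fed \emph{only} by $1$-switch $P$-paths at $\beta=b_p$.
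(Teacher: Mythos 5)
Your proposal is correct and follows essentially the same route as the paper's proof: Proposition~\ref{ppre21/3bothext} parametrizes the extensions and their weights, the hypothesis $d_{\lambda}^{P}>\max\{d_{\mu,1}^{P},d_{\mu,3}^{P}\}$ forces the dominant contributions to come from $X^{1,0}$ and $X^{0,1}$ for maximal $q$-resistance good $\left(r,2\right)$-type $P$-paths $X$, and part \textbf{(b)} follows from Proposition~\ref{m=ap} together with Proposition~\ref{ppre21/3bothext} \textbf{(iii)}. Your write-up simply makes explicit the bookkeeping (which $W$ feed which extensions, via Definition~\ref{defgoodbadpath} and Lemma~\ref{lemmagoodext} \textbf{(i)} and \textbf{(iii)}, and the identity $d_{\mu,3}^{P'}=d_{\mu,1}^{P}$ when $a_p=1$) that the paper compresses into the word ``immediate.''
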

\begin{proof}
\begin{description}
\item[(a)] Firstly, the hypothesis that $d_{\lambda}^{P}>\max\{d_{\mu,1}^{P},d_{\mu,3}^{P}\}$ and Proposition~\ref{ppre21/3bothext} \textbf{(i)} and \textbf{(iv)} imply that a maximal $q$-resistance $\left(r,1\right)$-type $P'$-path is of the form $X^{1,0}$ and a maximal $q$-resistance $\left(r,3\right)$-type $P'$-path is of the form $X^{0,1}$, where $X$ is a maximal $q$-resistance good $\left(r,2\right)$-type $P$-path. 

\begin{description}
\item[(i)] The statements $d_{\lambda,3}^{P'}>d_{\mu,3}^{P'}$ and $\lambda_{3,0}^{P'} = \lambda_{0}^{P}$ are now immediate. Proposition~\ref{ppre21/3bothext} \textbf{(ii)} implies that $d_{\lambda,3}^{P'}+1\geq d_{\mu,1}^{P'}$.
\item[(ii)] The statement is now immediate.
\end{description}
\item[(b)] The statement is a consequence of Proposition~\ref{m=ap} and Proposition~\ref{ppre21/3bothext} \textbf{(iii)}.
\end{description}
\end{proof}

We parametrize extensions of $P$-paths to $P'$-paths, where $P$ is the maximal proper $s$-subproduct of a $2$-subproduct $P'$ of $\sigma$, for some $s\in \{1,3\}$. 

\begin{definition}
\label{defpath1/32}
Let $P = \left(\prod_{i=1}^{p-1} \sigma_1^{a_i}\sigma_3^{b_i}\sigma_2^{c_i}\right)\sigma_1^{a_p}\sigma_3^{b_p}$ be an $s$-subproduct of $\sigma$ for $s\in \{1,3\}$ and let $P' = \prod_{i=1}^{p} \sigma_1^{a_i}\sigma_3^{b_i}\sigma_2^{c_i}$. Let $W$ be either an $\left(r,1\right)$-type $P$-path or an $\left(r,3\right)$-type $P$-path. 

If $1\leq \alpha\leq c_p$, then we define $W^{\alpha}$ to be the extension of $W$ to an $\left(r,2\right)$-type $P'$-path by a vertex change at the $\alpha$th generator in $P'\setminus P$.
\end{definition}

We now characterize admissible extensions of $P$-paths to $P'$-paths, and determine a formula for their weights, in the context of Definition~\ref{defpath1/32}.

\begin{proposition}
\label{ppre1/32bothext}
Let $P = \left(\prod_{i=1}^{p-1} \sigma_1^{a_i}\sigma_3^{b_i}\sigma_2^{c_i}\right)\sigma_1^{a_p}\sigma_3^{b_p}$ be an $s$-subproduct of $\sigma$ for $s\in \{1,3\}$ and let $P' = \prod_{i=1}^{p} \sigma_1^{a_i}\sigma_3^{b_i}\sigma_2^{c_i}$.

\begin{description}
\item[(i)] An $\left(r,2\right)$-type $P'$-path is of the form $W^{\alpha}$, where $W$ is either an $\left(r,1\right)$-type $P$-path or an $\left(r,3\right)$-type $P$-path.
\item[(ii)] If $W$ is an $\left(r,1\right)$-type $P$-path, then the weight of $W^{\alpha}$ is the product of $q^{c_p-\alpha}$ with the weight of $W$. If $W$ is an $\left(r,3\right)$-type $P$-path, then the weight of $W^{\alpha}$ is the product of $-q^{c_p-\alpha+1}$ with the weight of $W$.
\item[(iii)] If $s\in \{1,3\}$, then an admissible $\left(r,s\right)$-type $P'$-path is the extension of an $\left(r,s\right)$-type $P$-path without a vertex change. The weight of the $\left(r,s\right)$-type $P$-path and its extension are equal. 
\item[(iv)] If $W$ is a good $\left(r,s\right)$-type $P$-path for some $s\in \{1,3\}$, then $W^{1}$ is the maximal $q$-resistance extension of $W$ to an $\left(r,2\right)$-type $P'$-path. If $W$ is a bad $\left(r,s\right)$-type $P$-path for some $s\in \{1,3\}$, then $W^{2}$ is the maximal $q$-resistance extension of $W$ to an $\left(r,2\right)$-type $P'$-path. (If $c_p = 1$, then a bad $\left(r,s\right)$-type $P$-path $W$ does not have an admissible extension to an $\left(r,2\right)$-type $P'$-path.)
\end{description}
\end{proposition}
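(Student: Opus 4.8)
The plan is to follow the template of the preceding Propositions~\ref{ppre21/3ext} and~\ref{ppre21/3bothext}, reading off all structural and weight statements directly from the picture of $\sigma_2$ in Figure~\ref{fig1} and invoking Lemma~\ref{lemmagoodext} for admissibility. The key observation is that in the picture of $\sigma_2$ the only edge leaving the vertex $2$ is the constant edge $2\to 2$ (of weight $q$), while the only edges arriving at $2$ are the vertex changes $1\to 2$ (weight $1$) and $3\to 2$ (weight $-q$). Consequently, along the appended block $P'\setminus P=\sigma_2^{c_p}$, a path that ends at $2$ can make at most one vertex change, and once it reaches $2$ it is pinned there.

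For \textbf{(i)}, I would argue that an $\left(r,2\right)$-type $P'$-path either makes exactly one vertex change in $\sigma_2^{c_p}$ --- necessarily $1\to 2$ or $3\to 2$ at some position $\alpha$, with the path constant at $1$ or at $3$ beforehand --- in which case it is $W^{\alpha}$ for the $\left(r,s\right)$-type $P$-path $W$ (with $s\in\{1,3\}$) obtained by restriction; or it makes no vertex change, in which case it is the constant extension of an $\left(r,2\right)$-type $P$-path. Since $P$ is an $s$-subproduct with $s\in\{1,3\}$ and hence $s\neq 2$, Lemma~\ref{lemmagoodext} \textbf{(v)} shows the latter is inadmissible, so every admissible $\left(r,2\right)$-type $P'$-path has the claimed form. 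For \textbf{(ii)}, the multiplicative weight factor is computed directly: a prefix of $\alpha-1$ constant edges ($1\to 1$ or $3\to 3$, each of weight $1$), a single vertex change (weight $1$ if $W$ ends at $1$, weight $-q$ if $W$ ends at $3$), and a suffix of $c_p-\alpha$ constant edges $2\to 2$ (each of weight $q$); this yields $q^{c_p-\alpha}$ and $-q^{c_p-\alpha+1}$ respectively.

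For \textbf{(iii)}, an $\left(r,s\right)$-type $P'$-path with $s\in\{1,3\}$ must remain at $s$ throughout $\sigma_2^{c_p}$, since the picture of $\sigma_2$ admits no vertex change into $1$ or $3$; thus it is the constant extension of an $\left(r,s\right)$-type $P$-path, and because $P'\setminus P=\sigma_2^{c_p}$ contains no $\sigma_s$, Lemma~\ref{lemmagoodext} \textbf{(iv)} guarantees this extension is admissible, with weight unchanged since the edge $s\to s$ in the picture of $\sigma_2$ has weight $1$. For \textbf{(iv)}, since the exponent $c_p-\alpha$ (resp. $c_p-\alpha+1$) of the weight factor is strictly decreasing in $\alpha$, the maximal $q$-resistance admissible extension occurs at the smallest admissible value of $\alpha$: by Lemma~\ref{lemmagoodext} \textbf{(i)} a good $W$ admits the vertex change at $\alpha=1$, so $W^{1}$ is maximal, whereas by Lemma~\ref{lemmagoodext} \textbf{(ii)} a bad $W$ admits vertex changes only at the second or later $\sigma_2$, so $W^{2}$ is maximal (and no admissible extension exists when $c_p=1$).

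The statements amount to bookkeeping once Figure~\ref{fig1} and Lemma~\ref{lemmagoodext} are in hand; the one genuinely delicate point is \textbf{(i)}, where one must exclude the constant extension of an $\left(r,2\right)$-type $P$-path from the list of admissible $\left(r,2\right)$-type $P'$-paths --- precisely the content of Lemma~\ref{lemmagoodext} \textbf{(v)}, and the analogue of the admissibility exclusions in Propositions~\ref{ppre21/3ext} and~\ref{ppre21/3bothext}. The sign asymmetry between paths ending at $1$ and at $3$ (the $-q$ carried by the $3\to 2$ vertex change) must be tracked carefully throughout but poses no conceptual difficulty.
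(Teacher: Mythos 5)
Your proof is correct and follows essentially the same route as the paper: part \textbf{(i)} via Lemma~\ref{lemmagoodext} \textbf{(v)}, part \textbf{(ii)} by direct computation from the picture of $\sigma_2$ in Figure~\ref{fig1}, part \textbf{(iii)} from the absence of vertex changes into $s$ at $\sigma_2$, and part \textbf{(iv)} by combining the weight formula with Lemma~\ref{lemmagoodext} \textbf{(i)} and \textbf{(ii)}. The only difference is that you make explicit some bookkeeping the paper leaves terse (the pinning of paths at vertex $2$, and the appeal to Lemma~\ref{lemmagoodext} \textbf{(iv)} for the admissibility of the constant extension in \textbf{(iii)}), which is harmless.
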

\begin{proof}
\begin{description}
\item[(i)] The statement is a consequence of Lemma~\ref{lemmagoodext} \textbf{(v)}.
\item[(ii)] The statement is a direct computation based on the picture of $\sigma_2$ in Figure~\ref{fig1}.
\item[(iii)] In Figure~\ref{fig1}, a vertex change to $s$ cannot occur at $\sigma_2$, and the weight of the constant edge $s\to s$ is $1$ in the picture of $\sigma_2$. 
\item[(iv)] The first statement is a consequence of \textbf{(ii)} and Lemma~\ref{lemmagoodext} \textbf{(i)}. The second statement is a consequence of \textbf{(ii)} and Lemma~\ref{lemmagoodext} \textbf{(ii)}.
\end{description}
\end{proof}

Let $P$ be the maximal proper $s$-subproduct of a $2$-subproduct $P'$ for some $s\in \{1,3\}$. Let us assume that a maximal $q$-resistance $\left(r,s\right)$-type $P$-path is a good $\left(r,s\right)$-type $P$-path and has sufficiently high $q$-resistance compared to other (either bad or different type) $P$-paths. In this case, we will determine formulas for the weighted numbers of good and bad $P'$-paths in terms of the weighted numbers of good and bad $P$-paths. Firstly, we do this where either $a_p = 0$ or $b_p = 0$ in the context of Proposition~\ref{ppre1/32bothext}.

\begin{proposition}
\label{p1/32ext}
Let $P = \left(\prod_{i=1}^{p-1} \sigma_1^{a_i}\sigma_3^{b_i}\sigma_2^{c_i}\right)\sigma_1^{a_p}\sigma_3^{b_p}$ be an $s$-subproduct of $\sigma$ for $s\in \{1,3\}$ and let $P' = \prod_{i=1}^{p} \sigma_1^{a_i}\sigma_3^{b_i}\sigma_2^{c_i}$. We assume that either $a_p = 0$ or $b_p = 0$. If $a_p = 0$, then we define $\psi = 1$, and if $b_p = 0$, then we define $\psi = 0$. 

\begin{description}
\item[(a)] Let us assume that $d_{\lambda}^{P}\geq d_{\mu}^{P}$ and $d_{\lambda}^{P}>d_{\nu}^{P} + \left(-1\right)^{\psi}$. 
\begin{description}
\item[(i)] If $c_p\geq 2$, then $d_{\lambda}^{P'}>\max\{d_{\mu,1}^{P'},d_{\mu,3}^{P'}\}$ and $\lambda_{0}^{P'} = \left(-1\right)^{\psi}\lambda_{0}^{P}$.  
\item[(ii)] If $c_p = 1$ and $a_p = 0 = b_{p+1}$, then $d_{\lambda}^{P'}>\max\{d_{\mu}^{P'},d_{\nu}^{P'}\}$ and $\lambda_{0}^{P'} = -\lambda_{0}^{P}$. If $c_p = 1$ and $b_p = 0 = a_{p+1}$, then $d_{\lambda}^{P'}>\max\{d_{\mu}^{P'},d_{\nu}^{P'}-1\}$ and $\lambda_{0}^{P'} = \lambda_{0}^{P}$. 
\end{description}
\item[(b)] If $s\in \{1,3\}$, then the $\left(r,s\right)$-entry of $\beta_4\left(P'\right)$ is equal to the $\left(r,s\right)$-entry of $\beta_4\left(P\right)$.
\end{description}
\end{proposition}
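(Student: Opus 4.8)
The plan is to derive everything from Proposition~\ref{ppre1/32bothext}, which already records the admissible extensions of $P$-paths across the appended block $P'\setminus P=\sigma_2^{c_p}$ together with their weights, and then to do careful bookkeeping of degrees and leading coefficients. The first step is to translate the quantities attached to the $2$-subproduct $P'$ into the language of the extensions $W^\alpha$. Since at a $\sigma_2$ there is no edge leaving vertex $2$, every $(r,2)$-type $P'$-path is $W^\alpha$ for a unique $(r,1)$- or $(r,3)$-type $P$-path $W$ and a unique switching position $1\le\alpha\le c_p$ (Proposition~\ref{ppre1/32bothext}(i)). I would record that $w_\lambda^{P'}$ (no vertex change at the last $\sigma_2$) collects exactly the $W^\alpha$ with $\alpha<c_p$, while the switch counts collect the $W^{c_p}$, and that the weight multipliers are $q^{c_p-\alpha}$ from an $(r,1)$-type $W$ and $-q^{c_p-\alpha+1}$ from an $(r,3)$-type $W$ (Proposition~\ref{ppre1/32bothext}(ii)). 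The key admissibility input is that a good $W$ may switch at $\alpha=1$ whereas a bad $W$ may only switch at $\alpha\ge2$ (Proposition~\ref{ppre1/32bothext}(iv), via Lemma~\ref{lemmagoodext}(i)--(ii)). I would also note that, because $a_p=0$ or $b_p=0$, every $P$-path of the \emph{opposite} end-type to $s$ is automatically good (the relevant clause of Definition~\ref{defgoodbadpath}(i) is vacuous in that direction), so the $w_\nu^P$ paths behave like good paths under extension.

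For part \textbf{(a)(i)} with $c_p\ge2$ the heart is a single degree comparison. The top-degree contribution to $w_\lambda^{P'}$ comes from extending a maximal-$q$-resistance good $(r,s)$-type $P$-path $X$ by $X^1$: this has degree $d_\lambda^P+c_p$ with leading coefficient $-\lambda_0^P$ when $\psi=1$ (end-type $3$, multiplier $-q^{c_p}$), and degree $d_\lambda^P+c_p-1$ with leading coefficient $\lambda_0^P$ when $\psi=0$ (end-type $1$, multiplier $q^{c_p-1}$); in both cases the coefficient is $(-1)^\psi\lambda_0^P$. I would then check that this strictly beats (i) the $w_\nu^P$ extensions $W^1$, whose top degree is smaller precisely by the hypothesis $d_\lambda^P>d_\nu^P+(-1)^\psi$, (ii) the bad same-type extensions $W^\alpha$ with $\alpha\ge2$, dominated since $d_\mu^P\le d_\lambda^P$, and (iii) the switch paths at $\alpha=c_p$, which lose $c_p-1\ge1$ in degree relative to their $\alpha=1$ counterparts and so are dominated using $c_p\ge2$ together with the hypotheses $d_\lambda^P\ge d_\mu^P$ and $d_\lambda^P\ge d_\nu^P$. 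This yields both $d_\lambda^{P'}>\max\{d_{\mu,1}^{P'},d_{\mu,3}^{P'}\}$ and $\lambda_0^{P'}=(-1)^\psi\lambda_0^P$, with no cancellation because the maximal degree is attained uniquely by the $X^1$-family.

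The case $c_p=1$ in part \textbf{(a)(ii)} is where I expect the real friction, and I would treat it as the main obstacle. Here there is a single $\sigma_2$, so \emph{every} $(r,2)$-type $P'$-path switches at that $\sigma_2$, bad $P$-paths have \emph{no} admissible extension (Proposition~\ref{ppre1/32bothext}(iv)), and the identification of which switch paths count as good for $P'$ now depends on the next block via Lemma~\ref{minimalform}(ii): the hypotheses $a_p=0=b_{p+1}$ (resp.\ $b_p=0=a_{p+1}$) force the good $(r,2)$-paths to be exactly the $3$-switch (resp.\ $1$-switch) paths by Definition~\ref{defgoodbadpath}(ii). I would match $w_\mu^{P'}$ and $w_\nu^{P'}$ to the surviving families accordingly (Notation~\ref{notnumgb}(a)(iii)), observe that $w_\nu^{P'}$ is the opposite-end-type count carried over unchanged by the no-switch extension (admissible by Lemma~\ref{lemmagoodext}(iv), since $\sigma_2$ contains no $\sigma_1$ or $\sigma_3$), compute $d_\lambda^{P'}$ and $\lambda_0^{P'}$ from the good extension $X^1$ (multiplier $-q$ when $\psi=1$, giving $\lambda_0^{P'}=-\lambda_0^P$; multiplier $1$ when $\psi=0$, giving $\lambda_0^{P'}=\lambda_0^P$), and verify the stated inequalities $d_\lambda^{P'}>\max\{d_\mu^{P'},d_\nu^{P'}\}$ in the subcase $\psi=1$ and $d_\lambda^{P'}>\max\{d_\mu^{P'},d_\nu^{P'}-1\}$ in the subcase $\psi=0$, in each case reducing the bound to the single hypothesis $d_\lambda^P>d_\nu^P+(-1)^\psi$.

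Finally, part \textbf{(b)} is immediate from the analysis: by Proposition~\ref{ppre1/32bothext}(iii) together with Lemma~\ref{lemmagoodext}(iv), the no-vertex-change extension is a weight-preserving bijection between admissible $(r,s)$-type $P$-paths and admissible $(r,s)$-type $P'$-paths (a path ending at $s\in\{1,3\}$ can never have touched vertex $2$), so Proposition~\ref{m=ap} gives equality of the $(r,s)$-entries of $\beta_4\left(P\right)$ and $\beta_4\left(P'\right)$.
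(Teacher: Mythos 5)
Your proposal is correct and follows essentially the same route as the paper's own proof: both use Proposition~\ref{ppre1/32bothext} together with the hypotheses $d_{\lambda}^{P}\geq d_{\mu}^{P}$ and $d_{\lambda}^{P}>d_{\nu}^{P}+\left(-1\right)^{\psi}$ to identify the maximal $q$-resistance $\left(r,2\right)$-type $P'$-paths as the extensions $X^{1}$ of maximal $q$-resistance good $\left(r,s_1\right)$-type $P$-paths, observe that $X^{1}$ is a good $\left(r,2\right)$-type $P'$-path under the stated conditions on $c_p$, $a_{p+1}$, $b_{p+1}$ (which for $c_p=1$ is exactly your appeal to Definition~\ref{defgoodbadpath} \textbf{(ii)} and Lemma~\ref{minimalform} \textbf{(ii)}), and deduce \textbf{(b)} from Proposition~\ref{m=ap} and Proposition~\ref{ppre1/32bothext} \textbf{(iii)}. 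The only cosmetic difference is that you carry out the degree and leading-coefficient bookkeeping (and the vacuous goodness of the opposite-type paths) explicitly, where the paper leaves it implicit.
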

\begin{proof}
\begin{description}
\item[(a)] If $a_p = 0$, then we define $s_1= 3$ and $s_2 = 1$, and if $b_p = 0$, then we define $s_1 = 1$ and $s_2 = 3$. The hypothesis that $d_{\lambda}^{P}\geq d_{\mu}^{P}$ and Proposition~\ref{ppre1/32bothext} \textbf{(iv)} imply that the maximal $q$-resistance extension of an $\left(r,s_1\right)$-type $P$-path to an $\left(r,2\right)$-type $P'$-path is of the form $X^{1}$, where $X$ is a maximal $q$-resistance good $\left(r,s_1\right)$-type $P$-path. Furthermore, the hypothesis that $d_{\lambda}^{P} > d_{\nu}^{P} + \left(-1\right)^{\psi}$ and Proposition~\ref{ppre1/32bothext} \textbf{(i)} imply that a maximal $q$-resistance $\left(r,2\right)$-type $P'$-path is of the form $X^{1}$, where $X$ is a maximal $q$-resistance good $\left(r,s_1\right)$-type $P$-path. 

We are assuming that either $c_p\geq 2$, or $c_p = 1$ and either $a_p = 0 = b_{p+1}$ or $b_p = 0 = a_{p+1}$, throughout. In this case, $X^{1}$ is a good $\left(r,2\right)$-type $P'$-path. 
 \begin{description}
\item[(i)] The statement is now immediate. 
\item[(ii)] The statement that $d_{\lambda}^{P'}>d_{\mu}^{P'}$ is now immediate. Proposition~\ref{m=ap} and Proposition~\ref{pnotationentries} \textbf{(b)} \textbf{(i)} imply that $w_{\nu}^{P'} = w_{\lambda}^{P} + w_{\mu}^{P}$. If $a_p = 0$, then $X$ is an $\left(r,3\right)$-type $P$-path and Proposition~\ref{ppre1/32bothext} \textbf{(ii)} implies that $d_{\lambda}^{P'}>d_{\nu}^{P'}$ and $\lambda_{0}^{P'} = -\lambda_{0}^{P}$. If $b_p = 0$, then $X$ is an $\left(r,1\right)$-type $P$-path and Proposition~\ref{ppre1/32bothext} \textbf{(ii)} implies that $d_{\lambda}^{P'}>d_{\nu}^{P'}-1$ and $\lambda_{0}^{P'} = \lambda_{0}^{P}$.
\end{description}
\item[(b)] The statement is a consequence of Proposition~\ref{m=ap} and Proposition~\ref{ppre1/32bothext} \textbf{(iii)}. 
\end{description}
\end{proof}

We now establish a version of Proposition~\ref{p1/32ext} in the case where $a_p,b_p>0$ (instead of either $a_p = 0$ or $b_p = 0$).

\begin{proposition}
\label{p1/32bothext}
Let $P = \left(\prod_{i=1}^{p-1} \sigma_1^{a_i}\sigma_3^{b_i}\sigma_2^{c_i}\right)\sigma_1^{a_p}\sigma_3^{b_p}$ be an $s$-subproduct of $\sigma$ for $s\in \{1,3\}$ and let $P' = \prod_{i=1}^{p} \sigma_1^{a_i}\sigma_3^{b_i}\sigma_2^{c_i}$. We assume that $a_p,b_p>0$.
\begin{description}
\item[(a)] Let us consider the case $a_p>1$. 
\begin{description}
\item[(i)] Let us assume that $d_{\lambda,1}^{P}\geq d_{\mu,1}^{P}$ and $c_p\geq 2$. If $d_{\lambda,1}^{P}\neq d_{\lambda,3}^{P} + 1$, then $d_{\lambda}^{P'}>\max\{d_{\mu,1}^{P'},d_{\mu,3}^{P'}\}$. If $d_{\lambda,1}^{P} > d_{\lambda,3}^{P} + 1$, then $\lambda_{0}^{P'} = \lambda_{1,0}^{P}$. If $d_{\lambda,1}^{P} < d_{\lambda,3}^{P} + 1$, then $\lambda_{0}^{P'} = -\lambda_{3,0}^{P}$. If $d_{\lambda,1}^{P} = d_{\lambda,3}^{P} + 1$ and $\lambda_{1,0}^{P} - \lambda_{3,0}^{P}\neq 0$, then $d_{\lambda}^{P'}>\max\{d_{\mu,1}^{P'},d_{\mu,3}^{P'}\}$ and $\lambda_{0}^{P'} = \lambda_{1,0}^{P} - \lambda_{3,0}^{P}$. 
\item[(ii)] If $c_p = 1$, then the weighted number of $\left(r,2\right)$-type $P'$-paths is equal to $w_{\lambda,1}^{P} - qw_{\lambda,3}^{P}$.
\end{description}
\item[(b)] Let us consider the case $a_p = 1$ and $d_{\lambda,3}^{P} + 1\geq d_{\mu,1}^{P}$.
\begin{description}
\item[(i)] Let us assume that $d_{\lambda,3}^{P}\geq d_{\mu,3}^{P}$ and $c_p\geq 2$. In this case, $d_{\lambda}^{P'}>\max\{d_{\mu,1}^{P'},d_{\mu,3}^{P'}\}$ and $\lambda_{0}^{P'} = -\lambda_{3,0}^{P}$. 
\item[(ii)] If $c_p = 1$, then the weighted number of $\left(r,2\right)$-type $P'$-paths is equal to $-qw_{\lambda,3}^{P}$. 
\end{description} 
\item[(c)] If $s\in \{1,3\}$, then the $\left(r,s\right)$-entry of $\beta_4\left(P'\right)$ is equal to the $\left(r,s\right)$-entry of $\beta_4\left(P\right)$. 
\end{description}
\end{proposition}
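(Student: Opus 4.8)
The plan is to reduce everything to the parametrization of $(r,2)$-type $P'$-paths furnished by Proposition~\ref{ppre1/32bothext}. Every such $P'$-path is of the form $W^{\alpha}$ for a unique $(r,1)$- or $(r,3)$-type $P$-path $W$ and $1 \le \alpha \le c_p$, with weight obtained from that of $W$ by multiplying by $q^{c_p - \alpha}$ (if $W$ is $(r,1)$-type) or $-q^{c_p - \alpha + 1}$ (if $W$ is $(r,3)$-type). First I would record the admissibility constraints: by Lemma~\ref{lemmagoodext}~\textbf{(i)}~-~\textbf{(ii)} a good $W$ admits all extensions $W^{\alpha}$, whereas a bad $W$ admits only those with $\alpha \ge 2$ (and none at all when $c_p = 1$, as in Proposition~\ref{ppre1/32bothext}~\textbf{(iv)}). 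Sorting the admissible $W^{\alpha}$ by whether the vertex change occurs before the last $\sigma_2$ ($\alpha \le c_p - 1$, contributing to $w_{\lambda}^{P'}$) or at the last $\sigma_2$ ($\alpha = c_p$, contributing to $w_{\mu,1}^{P'}$ for $(r,1)$-type $W$ and to $w_{\mu,3}^{P'}$ for $(r,3)$-type $W$) reduces each claim to a degree comparison.

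Next I would invoke the structural simplifications of Proposition~\ref{pnotationentries}~\textbf{(b)}. When $a_p > 1$ (part \textbf{(a)}) every $(r,3)$-type $P$-path is good, so $w_{\mu,3}^P = 0$; when $a_p = 1$ (part \textbf{(b)}) there are no good $(r,1)$-type $P$-paths, so $w_{\lambda,1}^P = 0$. With these in hand, the maximal $q$-resistance contribution to $w_{\lambda}^{P'}$ always comes from extending a maximal $q$-resistance \emph{good} $P$-path at $\alpha = 1$: a good $(r,1)$-type $W$ gains resistance $c_p - 1$ and a good $(r,3)$-type $W$ gains resistance $c_p$. Thus the leading degree of $w_\lambda^{P'}$ is $\max\{d_{\lambda,1}^P + c_p - 1,\, d_{\lambda,3}^P + c_p\}$ in part \textbf{(a)} and $d_{\lambda,3}^P + c_p$ in part \textbf{(b)}, with the leading coefficient read off from the weight factors ($+1$ for the $(r,1)$-branch, a sign flip for the $(r,3)$-branch). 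The three subcases of \textbf{(a)(i)} ($d_{\lambda,1}^P$ greater than, less than, or equal to $d_{\lambda,3}^P + 1$) are exactly the three orderings of these two candidate degrees, the tie producing the coefficient $\lambda_{1,0}^P - \lambda_{3,0}^P$.

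To establish the strict dominance $d_\lambda^{P'} > \max\{d_{\mu,1}^{P'}, d_{\mu,3}^{P'}\}$ I would use $c_p \ge 2$ crucially: the $\alpha = c_p$ extensions lose all (or all but one) power of $q$, so in part \textbf{(a)} one computes $d_{\mu,1}^{P'} = \max\{d_{\lambda,1}^P, d_{\mu,1}^P\} = d_{\lambda,1}^P$ and $d_{\mu,3}^{P'} = d_{\lambda,3}^P + 1$, each strictly below $d_\lambda^{P'}$ by at least $c_p - 1 \ge 1$ (and analogously, with $w_{\lambda,1}^P = 0$, in part \textbf{(b)}). The hypotheses $d_{\lambda,1}^P \ge d_{\mu,1}^P$ in part \textbf{(a)}, and $d_{\lambda,3}^P \ge d_{\mu,3}^P$ together with $d_{\lambda,3}^P + 1 \ge d_{\mu,1}^P$ in part \textbf{(b)}, are precisely what guarantees that bad $P$-paths, whose admissible extensions only begin at $\alpha = 2$, cannot overtake the good $\alpha = 1$ extensions: the relevant gaps work out to $d_{\lambda,1}^P - d_{\mu,1}^P + 1 \ge 1$, $d_{\lambda,3}^P - d_{\mu,1}^P + 2 \ge 1$, and $d_{\lambda,3}^P - d_{\mu,3}^P + 1 \ge 1$ respectively. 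The $c_p = 1$ cases \textbf{(a)(ii)} and \textbf{(b)(ii)} are then immediate from the weight formulas of Proposition~\ref{ppre1/32bothext}~\textbf{(ii)}: only $\alpha = 1$ is available and bad $W$ contribute nothing, so the total weighted count is $w_{\lambda,1}^P - q\,w_{\lambda,3}^P$ in part \textbf{(a)} and $-q\,w_{\lambda,3}^P$ in part \textbf{(b)} (using $w_{\lambda,1}^P = 0$).

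Finally, part \textbf{(c)} is the same argument as in Proposition~\ref{p1/32ext}~\textbf{(b)}: by Proposition~\ref{ppre1/32bothext}~\textbf{(iii)} the admissible $(r,s)$-type $P'$-paths for $s \in \{1,3\}$ are exactly the weight-preserving no-further-vertex-change extensions of the admissible $(r,s)$-type $P$-paths, so Proposition~\ref{m=ap} gives equality of the $(r,s)$-entries. The main obstacle I anticipate is the bookkeeping in \textbf{(a)(i)}: cleanly separating the three degree orderings and, in the tie case $d_{\lambda,1}^P = d_{\lambda,3}^P + 1$, correctly identifying that the two leading contributions combine as $\lambda_{1,0}^P - \lambda_{3,0}^P$ (the sign coming from the $(r,3)$-branch weight factor) rather than cancel silently, while still confirming that the $c_p \ge 2$ margin leaves $d_\lambda^{P'}$ strictly above both $\mu$-degrees in every subcase.
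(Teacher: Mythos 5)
Your proposal is correct and follows essentially the same route as the paper's proof: parametrize all $\left(r,2\right)$-type $P'$-paths via Proposition~\ref{ppre1/32bothext}, invoke Proposition~\ref{pnotationentries} \textbf{(b)} \textbf{(ii)}~-~\textbf{(iii)} to kill $w_{\mu,3}^{P}$ (when $a_p>1$) or $w_{\lambda,1}^{P}$ (when $a_p=1$), and then compare the $\alpha=1$ extensions of maximal good paths against the $\alpha\geq 2$ extensions of bad paths and the $\alpha=c_p$ switch extensions, with the tie case $d_{\lambda,1}^{P}=d_{\lambda,3}^{P}+1$ handled by the nonvanishing of $\lambda_{1,0}^{P}-\lambda_{3,0}^{P}$. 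Your explicit degree bookkeeping (e.g., the gaps $d_{\lambda,1}^{P}-d_{\mu,1}^{P}+1\geq 1$, etc.) just spells out what the paper phrases as identifying the maximal $q$-resistance $P'$-paths of the forms $X^{1}$ and $U^{1}$, so there is no substantive difference.
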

\begin{proof}
\begin{description}
\item[(a)] Firstly, Proposition~\ref{pnotationentries} \textbf{(b)} \textbf{(iii)} implies that $w_{\mu,3}^{P} = 0$.
\begin{description}
\item[(i)] The hypothesis that $d_{\lambda,1}^{P}\geq d_{\mu,1}^{P}$ and Proposition~\ref{ppre1/32bothext} \textbf{(iv)} imply that a maximal $q$-resistance extension of an $\left(r,1\right)$-type $P$-path to an $\left(r,2\right)$-type $P'$-path is of the form $X^{1}$, where $X$ is a maximal $q$-resistance good $\left(r,1\right)$-type $P$-path. Proposition~\ref{ppre1/32bothext} \textbf{(iv)} implies that a maximal $q$-resistance extension of an $\left(r,3\right)$-type $P$-path to an $\left(r,2\right)$-type $P'$-path is of the form $U^{1}$, where $U$ is a maximal $q$-resistance good $\left(r,3\right)$-type $P$-path. 

If $d_{\lambda,1}^{P}>d_{\lambda,3}^{P} + 1$, then Proposition~\ref{ppre1/32bothext} \textbf{(i)} and \textbf{(ii)} imply that a maximal $q$-resistance $\left(r,2\right)$-type $P'$-path is of the form $X^{1}$, where $X$ is a maximal $q$-resistance good $\left(r,1\right)$-type $P$-path. If $d_{\lambda,1}^{P}<d_{\lambda,3}^{P} + 1$, then Proposition~\ref{ppre1/32bothext} \textbf{(i)} and \textbf{(ii)} imply that a maximal $q$-resistance $\left(r,2\right)$-type $P'$-path is of the form $U^{1}$, where $U$ is a maximal $q$-resistance good $\left(r,3\right)$-type $P$-path. If $d_{\lambda,1}^{P} = d_{\lambda,3}^{P} + 1$ and $\lambda_{1,0}^{P} - \lambda_{3,0}^{P}\neq 0$, then Proposition~\ref{ppre1/32bothext} \textbf{(i)} and \textbf{(ii)} imply that a maximal $q$-resistance $\left(r,2\right)$-type $P'$-path is either of the form $X^{1}$ or of the form $U^{1}$, where $X$ is a maximal $q$-resistance good $\left(r,1\right)$-type $P$-path and $U$ is a maximal $q$-resistance good $\left(r,3\right)$-type $P$-path. 

If $c_p\geq 2$, then $X^{1}$ and $U^{1}$ are good $\left(r,2\right)$-type $P'$-paths.
\item[(ii)] If $W$ is a bad $\left(r,s\right)$-type $P$-path for some $s\in \{1,3\}$, then Proposition~\ref{ppre1/32bothext} \textbf{(iv)} implies that $W^{1}$ is not admissible.
\end{description}
\item[(b)] Firstly, Proposition~\ref{pnotationentries} \textbf{(b)} \textbf{(ii)} implies that $w_{\lambda,1}^{P} = 0$. 
\begin{description}
\item[(i)] The hypothesis that $d_{\lambda,3}^{P}\geq d_{\mu,3}^{P}$ and Proposition~\ref{ppre1/32bothext} \textbf{(iv)} imply that a maximal $q$-resistance extension of an $\left(r,3\right)$-type $P$-path to an $\left(r,2\right)$-type $P'$-path is of the form $U^{1}$, where $U$ is a maximal $q$-resistance good $\left(r,3\right)$-type $P$-path. Proposition~\ref{ppre1/32bothext} \textbf{(iv)} implies that a maximal $q$-resistance extension of an $\left(r,1\right)$-type $P$-path to an $\left(r,2\right)$-type $P'$-path is of the form $Y^{2}$, where $Y$ is a maximal $q$-resistance bad $\left(r,1\right)$-type $P$-path. The hypothesis that $d_{\lambda,3}^{P}+1\geq d_{\mu,1}^{P}$ and Proposition~\ref{ppre1/32bothext} \textbf{(i)} and \textbf{(ii)} imply that a maximal $q$-resistance $\left(r,2\right)$-type $P'$-path is of the form $U^{1}$, where $U$ is a maximal $q$-resistance good $\left(r,3\right)$-type $P$-path. 

If $c_p\geq 2$, then $U^{1}$ is a good $\left(r,2\right)$-type $P'$-path. 
\item[(ii)] If $W$ is a bad $\left(r,s\right)$-type $P$-path for some $s\in \{1,3\}$, then Proposition~\ref{ppre1/32bothext} \textbf{(iv)} implies that $W^{1}$ is not admissible.
\end{description}
\item[(c)] The statement is a consequence of Proposition~\ref{ppre1/32bothext} \textbf{(iii)}.
\end{description}
\end{proof}

\section{Proof of Theorem~\ref{invariantdetect}}
\label{mains}
In this subsection, we will establish Theorem~\ref{invariantdetect}, which states that if $\sigma$ is a normal braid, then the leading coefficients of multiple entries in a row of $\beta_4\left(\sigma\right)$ are non-zero modulo each prime $t\neq 2$.

The strategy to prove Theorem~\ref{invariantdetect} is as follows. Let $P$ be the maximal proper $s$-subproduct of $\sigma$ for some $s\in \{1,2,3\}$. Let us assume that a maximal $q$-resistance $\left(r,\cdot\right)$-type $P$-path is a good $\left(r,s\right)$-type $P$-path $X$ and has sufficiently high $q$-resistance compared to other $\left(r,\cdot\right)$-type $P$-paths (either bad or of different type). In this case, Lemma~\ref{lemmagoodext} \textbf{(i)} implies that the maximal $q$-resistance extensions of $X$ to $\sigma$-paths are maximal $q$-resistance $\left(r,\cdot\right)$-type $\sigma$-paths. Proposition~\ref{m=ap} implies that the weight of $X$ and its maximal $q$-resistance extensions to $\left(r,\cdot\right)$-type $\sigma$-paths contribute multiple highest $q$-degree terms to the $r$th row of $\beta_4\left(\sigma\right)$. If $P$ is an $s$-subproduct of $\sigma$, then we will precisely formulate this hypothesis on $P$-paths in Definition~\ref{defrregular} and refer to it as \textit{strong $r$-regularity}. If $\sigma$ is a normal braid, then we will use the theory of path extensions developed in Subsection~\ref{subsecpathextensions} to show that strong $r$-regularity is an inductive property of $s$-subproducts of $\sigma$.

In Subsection~\ref{subsecconstraint}, we will introduce the product decomposition of the minimal form of $\sigma$ into \textit{blocks} and \textit{roads} in $\sigma$ (Definition~\ref{blockroad}). We will introduce a classification of the blocks in $\sigma$ (Definition~\ref{blockclass}) into $2$-blocks ($\sigma_1^{a_p}\sigma_3^{b_p}\sigma_2\sigma_1^{a_{p+1}}$) and $3$-blocks ($\sigma_2^{c_{p-1}}\sigma_3\sigma_2^{c_p}$). We will reinterpret the condition that $\sigma$ is a normal braid as the condition that the $2$-blocks and $3$-blocks in $\sigma$ are \textit{normal blocks} (Proposition~\ref{pnormalblock}). 

In Subsection~\ref{subsecstrongregularity}, we will show that if $P\subseteq P'$, where $P$ is a strongly $r$-regular $s$-subproduct of $\sigma$ and $P'$ is an $s'$-subproduct of $\sigma$, then $P'$ is strongly $r$-regular if $P'\setminus P$ is a road (Corollary~\ref{lemmaroadregular}) and $P'\setminus P$ is a normal block (Corollary~\ref{lemmablockstronglyregular}). Finally, we will establish the base case of the induction (Lemma~\ref{initial}) and Theorem~\ref{invariantdetect}.

In the rest of the paper, we will assume that \textit{every path under consideration is admissible} and generally omit the adjective ``admissible" preceding ``$\sigma$-path", unless we wish to emphasize the admissibility (e.g., if it is a property that we need to establish). Furthermore, $\sigma=\prod_{i=1}^{n} \sigma_1^{a_i}\sigma_3^{b_i}\sigma_2^{c_i}$ will denote the minimal form of the positive braid $\sigma$ where $a_i+b_i$ is a positive integer for $2\leq i\leq n$ and $c_i$ is a positive integer for $1\leq i\leq n-1$.

\subsection{The product decomposition of a positive braid into blocks and roads}
\label{subsecconstraint}

In this subsection, we introduce a new decomposition of the minimal form of a positive braid.

\begin{definition}
\label{blockroad}
A \textit{block} in $\sigma$ is an expression either of the form $\sigma_1^{a_p}\sigma_3^{b_p}\sigma_2\sigma_1^{a_{p+1}}$ for $a_p,a_{p+1}>0$ (a $2\textit{-block}$) or of the form $\sigma_2^{c_{p-1}}\sigma_3\sigma_2^{c_p}$ for $c_{p-1},c_p>0$ (a $3\textit{-block}$), in the minimal form of $\sigma$. A \textit{subroad} in $\sigma$ is a subproduct of the minimal form of $\sigma$ that does not overlap with a block. A \textit{road} in $\sigma$ is a subproduct of the minimal form of $\sigma$, maximal with respect to the property of being a subroad.

The \textit{block-road decomposition} of $\sigma$ is the unique product decomposition $\sigma = \prod_{i=1}^{k} B_iR_i$, where $B_i$ is a block and $R_i$ is a road for each $1\leq i\leq k$ (with the possibility that $B_1$ and/or $R_k$ may be empty).
\end{definition}

A side of the equation defining a braid relation corresponds to a block. For example, the occurrence of $\sigma_1\sigma_2\sigma_1$ corresponds to a $2$-block and the occurrence of $\sigma_2\sigma_3\sigma_2$ corresponds to a $3$-block. On the other hand, the only Artin relation that can be applied to a subroad is the commutativity relation $\sigma_1\sigma_3 = \sigma_3\sigma_1$ and thus product expansions of subroads are unique (up to rearrangements of $\sigma_1$s and $\sigma_3$s).
  
We will construct (admissible) $\sigma$-paths by inductively extending $P$-paths, where $P$ is a subproduct of blocks and roads in $\sigma$. The block-road decomposition of $\sigma$ is convenient because the manner in which $P$-paths extend to $P'$-paths is according to whether $P'\setminus P$ is a $2$-block, $3$-block, or a road.

We introduce the following classification of blocks in the minimal form of $\sigma$.

\begin{definition}
\label{blockclass}
The \textit{initial block} in $\sigma$ is $\sigma_1^{a_1}\sigma_2\sigma_1^{a_2}\sigma_3^{b_2}$ for $a_1,a_2>0$, if it exists in the minimal form of $\sigma$. A \textit{singular block} in $\sigma$ is either a $2$-block of the form $\sigma_1^{a_{p}}\sigma_3\sigma_2\sigma_1^{a_{p+1}}$, or a $3$-block of the form $\sigma_2^{c_{p-1}}\sigma_3\sigma_2$ that is not the end of the minimal form of $\sigma$ (i.e., for a singular $3$-block, we require $a_{p+1}>0$). A \textit{generic block} in $\sigma$ is a block that is neither an initial block nor a singular block.
\end{definition}

We constrain the placement of an initial block or a singular block in $\sigma$, and also the exponents of generators immediately preceding $2$-blocks and $3$-blocks.

\begin{proposition}
\label{initialblocksingularblockconstraint}
The following statements concerning blocks in $\sigma$ are true:
\begin{description}
\item[(i)] If $B$ is a $2$-block $\sigma_1^{a_p}\sigma_3^{b_p}\sigma_2\sigma_1^{a_{p+1}}$ with $b_p = 0$, then $p=1$. If, in addition, $b_{p+1} = 0$, then $B$ is the initial block.
\item[(ii)] If $B$ is a singular block, then a $\sigma_3$ cannot precede $B$ in the minimal form of $\sigma$. In particular, if a singular block exists, then it is unique and it is the first non-initial block.
\item[(iii)] Let us assume that $B = \sigma_1^{a_p}\sigma_3^{b_p}\sigma_2\sigma_1^{a_{p+1}}$ is a non-initial $2$-block. In general, $b_{p+1} = 0$. If either $p>2$ or $p=2$ and $b_1 > 0$, then $c_{p-1}\geq 2$.
\item[(iv)] Let us assume that $B = \sigma_2^{c_{p-1}}\sigma_3\sigma_2^{c_p}$ is a $3$-block. In general, $b_{p-1} = 0$. If $p>3$, then $c_{p-2}\geq 2$. 
\end{description}
\end{proposition}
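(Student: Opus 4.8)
The plan is to derive each of the four parts as a direct consequence of the clauses of Lemma~\ref{minimalform}, using part \textbf{(i)} of the proposition itself as the reduction tool whenever a ``non-initial'' or ``initial block'' hypothesis appears. The only genuinely combinatorial work is in part \textbf{(ii)}; the remaining parts are short contradiction arguments that feed the exponent data of a block into exactly one clause of Lemma~\ref{minimalform}.

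For \textbf{(i)}, the block $B = \sigma_1^{a_p}\sigma_3^{b_p}\sigma_2\sigma_1^{a_{p+1}}$ has $c_p = 1$ and $b_p = 0$. First I would invoke the second clause of Lemma~\ref{minimalform} \textbf{(ii)}: if $p > 1$, then $c_p = 1$ together with $b_p = 0$ forces $a_{p+1} = 0$, contradicting $a_{p+1} > 0$ (part of the defining condition of a $2$-block); hence $p = 1$. When $p=1$ and $b_1 = 0$, the minimal form begins with $\sigma_1^{a_1}\sigma_2\sigma_1^{a_2}$ with $a_1,a_2 > 0$, so the initial block of Definition~\ref{blockclass} exists and (absorbing the trailing $\sigma_3^{b_2}$) coincides with $B$; in particular the extra hypothesis $b_{p+1}=0$ makes this identification literal, giving the second assertion.

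For \textbf{(ii)}, the key observation is that a singular block (of either type) has $b_p = 1$ at its $\sigma_3$ and $c_p = 1$, while $a_{p+1} > 0$ forces $p < n$; hence Lemma~\ref{minimalform} \textbf{(iv)} applies without its boundary exception and yields $b_i = 0$ for all $i < p$. This immediately shows no $\sigma_3$ precedes $B$. To upgrade this to ``first non-initial block'', I would classify the (non-overlapping, hence strictly earlier) blocks that could lie before $B$: a $3$-block at an index $j < p$ would require $b_j = 1$, contradicting $b_i = 0$; a $2$-block at an index $j < p$ has $b_j = 0$, so part \textbf{(i)} forces $j = 1$, placing it at the initial configuration. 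Thus the only block preceding a singular block is the initial block. Uniqueness then follows by the same mechanism: a hypothetical second singular block at $p' > p$ would, via Lemma~\ref{minimalform} \textbf{(iv)} applied at $p'$, force $b_p = 0$, contradicting $b_p = 1$.

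For \textbf{(iii)} and \textbf{(iv)} I would argue by contradiction against $c_{p-1} = 1$ (resp. $c_{p-2} = 1$), all other exponents being legitimately defined and positive under the stated index hypotheses. In \textbf{(iii)}, a non-initial $2$-block has $b_p > 0$ (otherwise part \textbf{(i)} places it at $p=1$, i.e.\ the initial configuration), so the first clause of Lemma~\ref{minimalform} \textbf{(ii)} immediately gives $b_{p+1} = 0$; and if $c_{p-1} = 1$ then, since $b_p > 0$, that same clause (at index $p-1$) forces $b_{p-1} = 0$, whereupon its second clause (valid when $p > 2$) forces $a_p = 0$, contradicting $a_p > 0$, while in the case $p = 2$ with $b_1 > 0$ the deduced equality $b_{p-1} = b_1 = 0$ is itself the contradiction. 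In \textbf{(iv)}, the first clause of Lemma~\ref{minimalform} \textbf{(iii)} (applicable since $p \geq 2$ and $c_p > 0$) gives $b_{p-1} = 0$, and if $c_{p-2} = 1$ with $p > 2$ then its second clause forces $p = 3$, contradicting $p > 3$. I expect the main obstacle to be the bookkeeping in part \textbf{(ii)}---carefully ruling out every type of preceding block and correctly identifying the initial-block configuration---rather than any of the one-line contradiction arguments in the other parts.
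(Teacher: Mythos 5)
Your proof is correct and follows essentially the same route as the paper: each part is obtained from the corresponding clause of Lemma~\ref{minimalform} (\textbf{(ii)} for parts \textbf{(i)} and \textbf{(iii)}, \textbf{(iv)} for part \textbf{(ii)}, \textbf{(iii)} for part \textbf{(iv)}), with part \textbf{(ii)}'s ``first non-initial block'' claim reduced to the observation that any non-initial block must contain a $\sigma_3$. Your write-up merely makes explicit the case analysis (the preceding-block classification and the uniqueness argument) that the paper's terse citation-style proof leaves implicit.
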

\begin{proof}
We use the set of constraints on the minimal form of $\sigma$ in Lemma~\ref{minimalform}. 
\begin{description}
\item[(i)] The statement is a consequence of Lemma~\ref{minimalform} \textbf{(ii)}.
\item[(ii)] The first statement is a consequence of Lemma~\ref{minimalform} \textbf{(iv)}. Furthermore, a non-initial block contains a $\sigma_3$. The second statement is a consequence of this and the first statement.
\item[(iii)] The statement is a consequence of Lemma~\ref{minimalform} \textbf{(ii)}. 
\item[(iv)] The statement is a consequence of Lemma~\ref{minimalform} \textbf{(iii)}.
\end{description}
\end{proof}

We can further constrain the existence of a singular block in a positive braid indivisible by $\Delta$.

\begin{proposition}
\label{pblockDelta}
If $\sigma$ is a positive braid indivisible by $\Delta$, then an initial block and a singular block do not simultaneously exist in $\sigma$.
\end{proposition}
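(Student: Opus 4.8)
The plan is to prove the contrapositive: assuming that the minimal form of $\sigma$ contains both an initial block and a singular block, I would show that $\Delta$ left-divides $\sigma$, so that $\sigma$ is divisible by $\Delta$. The main tool is Garside's theory (Theorem~\ref{Garside}): $\Delta$ is the left-lcm of the atoms $\sigma_1,\sigma_2,\sigma_3$, and hence $\Delta$ left-divides a positive braid $g$ if and only if each of $\sigma_1,\sigma_2,\sigma_3$ left-divides $g$ (a positive braid is a common right-multiple of the three atoms exactly when it is a multiple of their lcm). I would also use the identity $\Delta=\sigma_1\sigma_2\sigma_1\sigma_3\sigma_2\sigma_1$, which follows from the given expression $\Delta=(\sigma_1\sigma_2\sigma_3)(\sigma_1\sigma_2)\sigma_1$ and the commutation $\sigma_1\sigma_3=\sigma_3\sigma_1$.

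First I would dispatch $\sigma_1$ and $\sigma_2$. The initial block shows that the minimal form of $\sigma$ begins with $\sigma_1^{a_1}\sigma_2\sigma_1^{a_2}$ with $a_1,a_2>0$; in particular $\sigma_1\mid\sigma$. For $\sigma_2$, the identity $\sigma_1^{a_1}\sigma_2\sigma_1=\sigma_2\sigma_1\sigma_2^{a_1}$ (immediate by induction on $a_1$ from the braid relation) yields $\sigma_1^{a_1}\sigma_2\sigma_1^{a_2}=\sigma_2\sigma_1\sigma_2^{a_1}\sigma_1^{a_2-1}$, using $a_2\geq 1$, so $\sigma_2\mid\sigma$. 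It then remains only to prove $\sigma_3\mid\sigma$, which is the crux. To set this up, I would isolate the leftmost $\sigma_3$ in the minimal form. By Proposition~\ref{initialblocksingularblockconstraint}~\textbf{(ii)} the singular block is the first non-initial block and no $\sigma_3$ precedes it, so writing $\sigma=P\sigma_3 S$ with this $\sigma_3$ the first one, I have $P\in\langle\sigma_1,\sigma_2\rangle^{+}$, while in both the $2$-block case ($\sigma_1^{a_p}\sigma_3\sigma_2\sigma_1^{a_{p+1}}$) and the $3$-block case ($\sigma_2^{c_{p-1}}\sigma_3\sigma_2$) the suffix $S$ begins with $\sigma_2\sigma_1^{a_{p+1}}$ where $a_{p+1}>0$ (the latter being exactly the singularity hypothesis). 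Since $\sigma_1,\sigma_2\mid P$ by the above, the $B_3$-Garside element $\sigma_1\sigma_2\sigma_1$ left-divides $P$; writing $P=\sigma_1\sigma_2\sigma_1\,Q$ with $Q\in\langle\sigma_1,\sigma_2\rangle^{+}$ and using $\Delta=(\sigma_1\sigma_2\sigma_1)(\sigma_3\sigma_2\sigma_1)$, I obtain the clean reduction $\Delta^{-1}\sigma=\sigma_1^{-1}\sigma_2^{-1}\sigma_3^{-1}\,Q\,\sigma_3\,S$.

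The remaining, and hardest, task is to verify that this element is positive. The approach I would take is to push $\sigma_3^{-1}$ to the right through $Q\in\langle\sigma_1,\sigma_2\rangle^{+}$ by means of $\sigma_3^{-1}\sigma_1\sigma_3=\sigma_1$ and $\sigma_3^{-1}\sigma_2\sigma_3=\sigma_2\sigma_3\sigma_2^{-1}$ (so that $\sigma_3^{-1}\sigma_2^{k}\sigma_3=\sigma_2\sigma_3^{k}\sigma_2^{-1}$), and then clear the remaining negative powers $\sigma_1^{-1},\sigma_2^{-1}$ together with the $\sigma_2^{-1}$'s created by this conjugation, absorbing them into the leading $\sigma_2\sigma_1^{a_{p+1}}$ of $S$ via the telescoping relations $\sigma_1^{-1}\sigma_2^{-1}\sigma_1\sigma_2=\sigma_2\sigma_1^{-1}$ and $\sigma_2^{-1}\sigma_1^{k}\sigma_2=\sigma_1\sigma_2^{k}\sigma_1^{-1}$; in the worked cases this collapses $\Delta^{-1}\sigma$ to a manifestly positive alternating word of the shape $\sigma_2\sigma_3^{\ast}\sigma_2^{\ast}\cdots\sigma_2$. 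The main obstacle is to carry out this cancellation in full generality: one must show the negatives always clear, and I would organize the bookkeeping along the block--road decomposition, splitting into the $2$-block and $3$-block cases, where the constraints of Lemma~\ref{minimalform} and Proposition~\ref{initialblocksingularblockconstraint} (no $\sigma_3$ before the singular block, $a_{p+1}>0$, and the interior exponent bounds $c_i\geq 2$) are precisely what force the telescoping to terminate positively. Establishing this positivity rigorously is where essentially all of the work lies.
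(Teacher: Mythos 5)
Your reduction is set up correctly: the constraints you cite do give $\sigma = P\sigma_3 S$ with $P\in\langle\sigma_1,\sigma_2\rangle^{+}$, with $\sigma_1\sigma_2\sigma_1$ a left-divisor of $P$, and with $S$ beginning with $\sigma_2\sigma_1^{a_{p+1}}$, $a_{p+1}>0$. But the proof has a genuine gap exactly where you say ``essentially all of the work lies'': you never establish that $\Delta^{-1}\sigma = \sigma_1^{-1}\sigma_2^{-1}\sigma_3^{-1}Q\sigma_3 S$ is positive. The route you choose --- conjugating $\sigma_3^{-1}$ through $Q$ letter by letter, which turns each $\sigma_2^{k}$ into $\sigma_2\sigma_3^{k}\sigma_2^{-1}$, and then clearing the accumulating $\sigma_1^{-1}$'s and $\sigma_2^{-1}$'s against the leading $\sigma_2\sigma_1^{a_{p+1}}$ of $S$ --- creates a new inverse for every syllable of $Q$ it crosses, and you give no invariant or induction showing that these always cancel for an arbitrary $Q$ subject only to the minimal-form constraints; ``in the worked cases this collapses'' is a plan, not a proof. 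A secondary issue: the lcm characterization ($\Delta$ left-divides $g$ if and only if each of $\sigma_1,\sigma_2,\sigma_3$ does) that you attribute to Theorem~\ref{Garside} is not stated there --- it is in Garside's paper, but the present paper never needs it.

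The missing idea, which is how the paper's proof works, is to move the factor $\sigma_3\sigma_2\sigma_1$ as a single unit rather than moving $\sigma_3$ alone by conjugation. Your own setup supplies this factor: in both the singular $2$-block and singular $3$-block cases, the first $\sigma_3$ is immediately followed by $\sigma_2\sigma_1$, so $\sigma = P\left(\sigma_3\sigma_2\sigma_1\right)S'$ with $P$ a positive word in $\sigma_1,\sigma_2$ only. The identities $\sigma_1\left(\sigma_3\sigma_2\sigma_1\right) = \left(\sigma_3\sigma_2\sigma_1\right)\sigma_2$ and $\sigma_2\left(\sigma_3\sigma_2\sigma_1\right) = \left(\sigma_3\sigma_2\sigma_1\right)\sigma_3$ (each is one commutation plus one braid relation) let this copy slide leftward through any positive word in $\sigma_1,\sigma_2$, shifting indices but never leaving the positive monoid --- no inverses are ever created, so there is nothing to clear. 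Sliding it all the way to the left and writing the start of the initial block as $\sigma_1^{a_1}\sigma_2\sigma_1^{a_2} = \left(\sigma_1\sigma_2\sigma_1\right)\sigma_2^{a_1-1}\sigma_1^{a_2-1}$, as you did, exhibits $\left(\sigma_1\sigma_2\sigma_1\right)\left(\sigma_3\sigma_2\sigma_1\right) = \Delta$ as a prefix of a positive word equal to $\sigma$, the desired contradiction; equivalently, it proves $\sigma_3\mid\sigma$ and closes your lcm argument without any bookkeeping. With that replacement your argument becomes complete, and it is then essentially the paper's proof.
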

\begin{proof}
Let us assume, for a contradiction, that there is an initial block $B$ and a singular block $B'$ in $\sigma$. We can write the beginning of the initial block $B$ as $\sigma_1^{a_1}\sigma_2\sigma_1^{a_2} = \left(\sigma_1\sigma_2\sigma_1\right) \sigma_2^{a_1-1}\sigma_1^{a_2-1}$ by $a_1-1$ applications of the braid relation $\sigma_1\sigma_2\sigma_1 = \sigma_2\sigma_1\sigma_2$. 

If $B'$ is a singular $2$-block, then a copy of $\sigma_3\sigma_2\sigma_1$ appears in $B'$. If $B'$ is a singular $3$-block, then Lemma~\ref{minimalform} \textbf{(ii)} implies that a copy of $\sigma_3\sigma_2\sigma_1$ appears in the minimal form of $\sigma$, such that $\sigma_3\sigma_2$ is the end of $B'$ and $\sigma_1$ appears immediately after $B'$.

We also have $\sigma_i\left(\sigma_3\sigma_2\sigma_1\right) = \left(\sigma_3\sigma_2\sigma_1\right)\sigma_{i+1}$ for $i\in \{1,2\}$ by the application of a braid relation and a commutativity relation. Furthermore, Proposition~\ref{initialblocksingularblockconstraint} \textbf{(ii)} implies that a $\sigma_3$ does not appear in $\sigma$ before the copy of $\sigma_3\sigma_2\sigma_1$. In particular, we can apply a finite sequence of Artin relations to $\sigma$ such that a copy of $\sigma_3\sigma_2\sigma_1$ appears immediately after the copy of $\sigma_1\sigma_2\sigma_1$ at the beginning of the initial block $B$. In other words, there is a copy of $\Delta = \left(\sigma_1\sigma_2\sigma_1\right)\left(\sigma_3\sigma_2\sigma_1\right)$ at the beginning of $\sigma$. However, this contradicts the hypothesis that $\sigma$ is indivisible by $\Delta$.
\end{proof}

We can paraphrase the definition of a normal braid (Definition~\ref{normal}) in the Introduction more naturally using the language of blocks. Firstly, we define the notion of a normal block.

\begin{definition}
\label{dnormalblock}
A non-initial block $B$ in $\sigma$ is a \textit{normal block} if one of the following conditions is satisfied:
\begin{description}
\item[(i)] If $B = \sigma_1^{a_p}\sigma_3^{b_p}\sigma_2\sigma_1^{a_{p+1}}$ is a $2$-block such that $a_p\geq b_p + 1$, then $a_{p+1}\neq 2$. If $a_p = 1 = b_p$, then $a_{p+1}\neq 2$.
\item[(ii)] If $B = \sigma_2^{c_{p-1}}\sigma_3\sigma_2^{c_p}$ is a $3$-block in $\sigma$, then $B$ is not a singular $3$-block and $c_p\neq 2$. If $a_{p-1}+1\geq c_p$, then $c_{p-1}\neq 1$.
\end{description}
An \textit{abnormal block} is a block that is not normal.
\end{definition}

The condition of normality on a block is generically satisfied.

\begin{proposition}
\label{pnormalblock}
A positive braid $\sigma$ is a normal braid if and only if every non-initial block in $\sigma$ is a normal block.
\end{proposition}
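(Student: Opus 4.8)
The plan is to prove both implications simultaneously by matching, clause by clause, the two defining conditions of a normal braid (Definition~\ref{normal}) against the two defining conditions of a normal block (Definition~\ref{dnormalblock}). Fixing the minimal form $\sigma = \prod_{i=1}^{n}\sigma_1^{a_i}\sigma_3^{b_i}\sigma_2^{c_i}$, I would first observe that condition \textbf{(i)} of Definition~\ref{normal} has a nontrivial hypothesis only at indices $p$ with $a_p=0$ and $b_p=1$, whereas condition \textbf{(ii)} is active only at indices $p$ with $c_p=1$; these are precisely the indices at which a $3$-block, respectively a $2$-block, is centered in the block--road decomposition (Definition~\ref{blockroad}). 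The proof then reduces to two local equivalences: condition \textbf{(i)} at such a $p$ is equivalent to condition \textbf{(ii)} of Definition~\ref{dnormalblock} for the corresponding $3$-block, and condition \textbf{(ii)} at such a $p$ is equivalent to condition \textbf{(i)} of Definition~\ref{dnormalblock} for the corresponding $2$-block.

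For the $2$-block correspondence I would take a non-initial $2$-block $B=\sigma_1^{a_p}\sigma_3^{b_p}\sigma_2\sigma_1^{a_{p+1}}$, so that $a_p,a_{p+1}>0$ and $c_p=1$, and first invoke Proposition~\ref{initialblocksingularblockconstraint}\textbf{(i)} to conclude $b_p>0$: a $2$-block with $b_p=0$ forces $p=1$ and hence is (part of) the initial block. Granting $b_p>0$, the two clauses of Definition~\ref{normal}\textbf{(ii)} coincide termwise with the two clauses of Definition~\ref{dnormalblock}\textbf{(i)}. Moreover the exclusion of the initial block is itself forced by the matching rather than an extra stipulation: the initial block has $b_1=0$, at which index both clauses of Definition~\ref{normal}\textbf{(ii)} have false hypotheses, so a normal braid imposes no constraint there, exactly as the normal-block requirement should not. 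Finally I would note that condition \textbf{(ii)} can only be \emph{violated} when $a_{p+1}=2>0$, i.e.\ when an actual $2$-block is present, which is what allows the backward implication to go through.

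For the $3$-block correspondence I would take $B=\sigma_2^{c_{p-1}}\sigma_3\sigma_2^{c_p}$, so that $a_p=0$, $b_p=1$, and $c_{p-1},c_p>0$, and match Definition~\ref{normal}\textbf{(i)} with Definition~\ref{dnormalblock}\textbf{(ii)}. The second clauses are literally identical (``$a_{p-1}+1\geq c_p \Rightarrow c_{p-1}\neq 1$''), so the content lies in the first clauses. The crucial step is to identify the requirement ``$B$ is not a singular $3$-block'' with the exclusion $c_p\neq 1$: by Definition~\ref{blockclass} a $3$-block is singular exactly when $c_p=1$ and $a_{p+1}>0$, and Lemma~\ref{minimalform}\textbf{(ii)} shows that $c_p=1$ together with $b_p=1$ forces $b_{p+1}=0$, so the minimal-form hypothesis $a_{p+1}+b_{p+1}>0$ forces $a_{p+1}>0$; hence $c_p=1$ and singularity are equivalent. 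Combining this with the explicit exclusion $c_p\neq 2$ recovers precisely ``$c_p\neq 1,2$'', the first clause of Definition~\ref{normal}\textbf{(i)}.

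The step I expect to be the main obstacle is the careful bookkeeping of the degenerate and boundary configurations, where an isolated generator is not flanked by $\sigma_2$'s on both sides (for instance an isolated $\sigma_3$ at the very beginning or end of the minimal form, which does not constitute a $3$-block) or where a trailing factor is empty. Controlling these cases is exactly what the structural constraints of Lemma~\ref{minimalform} and the classification of Definition~\ref{blockclass} and Proposition~\ref{initialblocksingularblockconstraint} are for: they guarantee that every index at which a normal-braid condition genuinely bites carries an honest non-initial block, and conversely. Once this dictionary between active condition-indices and non-initial blocks is established, both implications of the stated equivalence follow at once from the clause-by-clause matchings above, with the singular-$3$-block identification via Lemma~\ref{minimalform}\textbf{(ii)} being the one nonformal point.
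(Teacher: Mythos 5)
Your proposal is correct and takes essentially the same approach as the paper: the paper's entire proof is the single assertion that the proposition ``is a reformulation of Definition~\ref{normal} using Definition~\ref{blockroad}, Definition~\ref{blockclass}, and Definition~\ref{dnormalblock}'', and your clause-by-clause matching (identifying the active indices of Definition~\ref{normal} with $2$-blocks and $3$-blocks, forcing $b_p>0$ for non-initial $2$-blocks via Proposition~\ref{initialblocksingularblockconstraint}~\textbf{(i)}, and identifying singularity of a $3$-block with $c_p=1$ via Lemma~\ref{minimalform}~\textbf{(ii)}) is precisely that reformulation made explicit. The boundary configurations you flag as the main obstacle (an isolated $\sigma_3$ opening the minimal form, where condition~\textbf{(i)} of Definition~\ref{normal} applies but no $3$-block exists, or a block occurring at the final index $p=n$, where Definition~\ref{normal} imposes nothing since it only quantifies over $p<n$) are genuine frayed edges of the paper's literal definitions, but they are equally unaddressed by the paper's one-line proof, so your treatment is no less rigorous than the original.
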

\begin{proof}
The statement is a reformulation of Definition~\ref{normal} using Definition~\ref{blockroad}, Definition~\ref{blockclass}, and Definition~\ref{dnormalblock}.
\end{proof}

\subsection{Strong regularity of subproducts of a positive braid}
\label{subsecstrongregularity}

In this subsection, we define and study the notion of strong regularity of subproducts of a positive braid. We will show that if $\sigma$ is a normal braid, then strong regularity is an inductive property for subproducts of $\sigma$. We will use this to establish Theorem~\ref{invariantdetect}. 

\begin{notation}
\label{notssubproduct}
In the remainder of this paper, we will fix the following notation in the definitions, statements and proofs, in order to avoid repetition. In general, $P$ will denote an $s$-subproduct of $\sigma$. If $s=2$, then we denote $P=\prod_{i=1}^{p-1} \sigma_1^{a_i}\sigma_3^{b_i}\sigma_2^{c_i}$ and if $s\in \{1,3\}$, then we denote $P=\left(\prod_{i=1}^{p-1}\sigma_1^{a_i}\sigma_3^{b_i}\sigma_2^{c_i}\right)\sigma_1^{a_{p}}\sigma_3^{b_{p}}$. We will also freely use the notation introduced in Notation~\ref{notnumgb} (for weighted numbers of $P$-paths) in Section~\ref{pathgraph}. 
\end{notation}

The rough definition of strong regularity is that an $s$-subproduct $P$ of $\sigma$ is strongly $r$-regular if a maximal $q$-resistance $P$-path is a good $\left(r,s\right)$-type $P$-path and has sufficiently high $q$-resistance relative to other $P$-paths (that are either bad or have different type). We formally define this as follows.

\begin{definition}
\label{defrregular}
If $P$ is an $s$-subproduct of $\sigma$, then we write that $P$ is \textit{strongly $r$-regular} if one of the following conditions \textbf{(a)} (if $s=2$) or \textbf{(b)} (if $s\in \{1,3\}$) on the weighted numbers of $P$-paths with initial vertex equal to $r$ is satisfied.
\begin{description}
\item[(a)] Let $P$ be a $2$-subproduct. In \textbf{(i)}, we will state the conditions in the case that $c_{p-1}\geq 2$, and in \textbf{(ii)}, we will state the conditions in the case that $c_{p-1}=1$. (The reason for the partition is that Notation~\ref{notnumgb} is different in each case.) 

\begin{description}
\item[(i)] Let us assume that $c_{p-1}\geq 2$. In this case, $\lambda_{0}^{P}$ is a non-zero integer with only $2$ as a prime factor, and $d_{\lambda}^{P}>\max\{d_{\mu,1}^{P},d_{\mu,3}^{P}\}$.
\item[(ii)]  Let us assume that $c_{p-1} = 1$. Firstly, $\lambda_{0}^{P}$ is a non-zero integer with only $2$ as a prime factor, and if either $a_p = 0$ or $b_p = 0$, then $d_{\lambda}^{P}>d_{\mu}^{P}$. Secondly, if $a_p = 0$, then $d_{\nu}^{P}\leq d_{\lambda}^{P}$, and if $b_p = 0$, then $d_{\nu}^{P} + 1\leq d_{\lambda}^{P}$.
\end{description}
\item[(b)] Let $P$ be an $s$-subproduct for $s\in \{1,3\}$. In \textbf{(i)}, we will state the conditions in the case that either $a_p=0$ or $b_p=0$, and in \textbf{(ii)}, we will state the conditions in the case that $a_p,b_p>0$. (The reason for the partition is that Notation~\ref{notnumgb} is different in each case.)

\begin{description}
\item[(i)] Let us assume that either $a_p=0$ or $b_p=0$. Firstly, $\lambda_{0}^{P}$ is a non-zero integer with only $2$ as a prime factor, and $d_{\lambda}^{P}>d_{\mu}^{P}$. Secondly, if $a_p = 0$, then $d_{\nu}^{P} + 1\leq d_{\lambda}^{P}$, and if $b_p = 0$, then $d_{\nu}^{P} + 3\leq d_{\lambda}^{P}$. 
\item[(ii)] Let us assume that $a_p,b_p>0$ throughout. (Note that if $a_p>1$, then $w^{P}_{\mu,3}=0$, and if $a_p = 1$, then $w^{P}_{\lambda,1} = 0$; see Proposition~\ref{pnotationentries} \textbf{(b)} \textbf{(ii)} - \textbf{(iii)}.) The following conditions are satisfied. We will state the conditions separately in the cases $a_p>1$ and $a_p = 1$. In both cases, one condition is that $\lambda_{3,0}^{P}$ is a non-zero integer with only $2$ as a prime factor.

Firstly, we consider the case $a_p>1$. In this case, $\lambda_{1,0}^{P} + \lambda_{3,0}^{P} = 0$ and $d_{\lambda,1}^{P}>d_{\mu,1}^{P}$.

Secondly, we consider the case $a_p = 1$. In this case, $d_{\lambda,3}^{P} + 1\geq d_{\mu,1}^{P}$ and $d_{\lambda,3}^{P} > d_{\mu,3}^{P}$.
\end{description}
\end{description}
If $B$ is a block in $\sigma$ and if $P$ is the $s$-subproduct immediately preceding $B$, then we will write that $B$ is a \textit{strongly $r$-regular block} if $PB$ (which is an $s'$-subproduct for $s'\neq s$) is a strongly $r$-regular $s'$-subproduct.
\end{definition}

Lemma~\ref{minimalform} \textbf{(ii)} implies that in Definition~\ref{defrregular} \textbf{(a)} \textbf{(ii)} ($c_{p-1} = 1$), the only instance where $a_p,b_p>0$ is possible is if $p = 2$ and $b_1 = 0$, in which case $P = \sigma_1^{a_1}\sigma_2$. In particular, except for this special instance, we can consider Definition~\ref{defrregular} \textbf{(a)} \textbf{(ii)} exclusively in the case where either $a_p = 0$ or $b_p = 0$.

Let $\sigma$ be a normal braid and let $P$ be the maximal proper $s$-subproduct of $\sigma$. The motivation for Definition~\ref{defrregular} is that if $P$ is a strongly $r$-regular $s$-subproduct, then we can establish Theorem~\ref{invariantdetect}. The idea is that if $X$ is a maximal $q$-resistance good $\left(r,s\right)$-type $P$-path, then we can extend $X$ by an immediate vertex change to a maximal $q$-resistance $\sigma$-path. The extension is admissible by Lemma~\ref{lemmagoodext} \textbf{(i)} and we will show that it is also a maximal $q$-resistance $\sigma$-path using the hypothesis that $P$ is strongly $r$-regular. The strong $r$-regularity of $P$ implies that the weights of the maximal $q$-resistance good $\left(r,s\right)$-type $P$-paths and their maximal $q$-resistance extensions to $\sigma$-paths will contribute the non-zero (modulo $t$) leading coefficients in the $r$th row of $\beta_4\left(\sigma\right)$.

We now establish these statements rigorously. In Subsection~\ref{subsubsecstrongregularityroads}, we establish Corollary~\ref{lemmaroadregular}. Let $P\subseteq P'$, where $P$ is a strongly $r$-regular $s$-subproduct of $\sigma$ and $P'$ is an $s'$-subproduct of $\sigma$ such that $P'\setminus P$ is a road. Corollary~\ref{lemmaroadregular} states that if $P'\neq \sigma$, then $P'$ is strongly $r$-regular, and in general (including the case $P'=\sigma$), the leading coefficients of multiple entries in the $r$th row of $\beta_4\left(P'\right)$ are non-zero modulo each prime $t\neq 2$. The proof of this will be based on the theory of path extensions that we developed in Subsection~\ref{subsecpathextensions}. 

In Subsection~\ref{subsubsecstrongregularityblocks}, we establish Corollary~\ref{lemmablockstronglyregular}, which is an analog of Corollary~\ref{lemmaroadregular} in the case where $P'\setminus P$ is a normal block. Corollary~\ref{lemmaroadregular} and Corollary~\ref{lemmablockstronglyregular} are the key ingredients in the proof of Theorem~\ref{invariantdetect} and establish that strong regularity is an inductive property with respect to the block-road decomposition of a normal braid $\sigma$. Finally, in Subsection~\ref{subsubsecconclusion}, we establish Lemma~\ref{initial}, the base case of the induction and conclude the proof of Theorem~\ref{invariantdetect}. Lemma~\ref{initial} states that if $\sigma$ is a normal braid, then there is a proper strongly $r$-regular subproduct of $\sigma$ for some $r\in \{1,2,3\}$ except for a very specific case which can be easily handled separately.

\subsubsection{Strong regularity is an inductive property along roads}
\label{subsubsecstrongregularityroads}

We will split the proof of Collary~\ref{lemmaroadregular} into several steps, for ease of readability. 

\begin{lemma}
\label{lemma2road}
Let $P$ be a $2$-subproduct of $\sigma$ and let $P'$ be the minimal $s'$-subproduct containing $P$ for $s'\in \{1,3\}$. Let $t\neq 2$ be a prime number. Let us assume that $P'\setminus P$ is a subroad. 

If $P$ is strongly $r$-regular, and if $P'\neq \sigma$, then $P'$ is strongly $r$-regular. If $P$ is strongly $r$-regular, then the leading coefficients of multiple entries in the $r$th row of $\beta_4\left(P'\right)$ are non-zero modulo $t$. 
\end{lemma}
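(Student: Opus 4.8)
The plan is to realize $P'$ as $P\sigma_1^{a_p}\sigma_3^{b_p}$, so that $P'\setminus P = \sigma_1^{a_p}\sigma_3^{b_p}$, and to feed the strong $r$-regularity data of $P$ directly into the path-extension results of Subsection~\ref{subsecpathextensions}, which were designed for exactly this extension of a $2$-subproduct to the next $s'$-subproduct with $s'\in\{1,3\}$. I would split into the case $a_p,b_p>0$, governed by Proposition~\ref{p21/3bothext}, and the case where exactly one of $a_p,b_p$ vanishes, governed by Proposition~\ref{p21/3ext} (via Proposition~\ref{ppre21/3ext}). The whole argument is a translation of regularity data for $P$ into regularity data for $P'$ and an inspection of leading coefficients.

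First I would record two structural reductions. Strong $r$-regularity of $P$ yields $d_\lambda^P>d_\mu^P$ in the one-exponent-zero case: when $c_{p-1}\geq 2$ this follows by combining Definition~\ref{defrregular}(a)(i) with the identifications $w_{\mu,1}^P = w_\nu^P,\ w_{\mu,3}^P = w_\mu^P$ (if $a_p=0$) or $w_{\mu,1}^P = w_\mu^P,\ w_{\mu,3}^P = -qw_\nu^P$ (if $b_p=0$) from Proposition~\ref{pnotationentries}(a)(iv); when $c_{p-1}=1$ it is immediate from Definition~\ref{defrregular}(a)(ii). These same identifications also pin $d_\nu^P$ below $d_\lambda^P$, which I need for the gap conditions. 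Next, using Lemma~\ref{minimalform}, the degenerate situation $\phi_p=1$ (a single $\sigma_1$ or $\sigma_3$ forming $P'\setminus P$) forces $P'=\sigma$: Lemma~\ref{minimalform}(i) forces $a_p=1,b_p=0$ with $p>1$ to have $p=n,\,c_n=0$, while a subroad consisting of a single $\sigma_3$ with $c_{p-1}>0$ forces $c_p=0$, hence again $p=n$. Thus for the strong-regularity conclusion, where $P'\neq\sigma$, I may assume $\phi_p\geq 2$, so that Proposition~\ref{p21/3ext}(i) rather than (ii) applies; moreover a subroad with $a_p,b_p>0$ forces $c_{p-1}\geq 2$, since the alternative $c_{p-1}=1$ gives $P=\sigma_1^{a_1}\sigma_2$ with $\sigma_1^{a_2}\sigma_3^{b_2}$ inside the initial block (Definition~\ref{blockclass}), contradicting that $P'\setminus P$ is a subroad. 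Hence Proposition~\ref{p21/3bothext} is applicable.

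With these reductions the verification is a direct match to Definition~\ref{defrregular}. In the case $a_p,b_p>0$, Proposition~\ref{p21/3bothext}(a) outputs $\lambda_{3,0}^{P'}=\lambda_0^P$ and, for $a_p>1$, $\lambda_{1,0}^{P'}=-\lambda_0^P$, together with the degree dominances $d_{\lambda,1}^{P'}>d_{\mu,1}^{P'}$ (resp. $d_{\lambda,3}^{P'}>d_{\mu,3}^{P'}$ and $d_{\lambda,3}^{P'}+1\geq d_{\mu,1}^{P'}$); these are exactly the conditions of Definition~\ref{defrregular}(b)(ii), and since $\lambda_0^P$ has only $2$ as a prime factor, so do $\lambda_{1,0}^{P'}$ and $\lambda_{3,0}^{P'}$. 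In the one-exponent-zero case, Proposition~\ref{p21/3ext}(i) gives $d_\lambda^{P'}>d_\mu^{P'}$ with $\lambda_0^{P'}=(-1)^{1-\psi}\lambda_0^P$ again a power of $2$, and Proposition~\ref{p21/3ext}(iii) gives $w_\nu^{P'}=w_\nu^P$; substituting the bounds on $d_\nu^P$ from the previous paragraph into $d_\lambda^{P'}=d_\lambda^P+\phi_p-\psi$ and using $\phi_p\geq 2$ yields the required gaps $d_\nu^{P'}+1\leq d_\lambda^{P'}$ (if $a_p=0$) or $d_\nu^{P'}+3\leq d_\lambda^{P'}$ (if $b_p=0$) of Definition~\ref{defrregular}(b)(i). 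This establishes strong $r$-regularity of $P'$ whenever $P'\neq\sigma$.

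For the statement on the $r$th row of $\beta_4\left(P'\right)$, I would combine two observations. The $\left(r,2\right)$-entry of $\beta_4\left(P'\right)$ equals the $\left(r,2\right)$-entry of $\beta_4\left(P\right)$ by Proposition~\ref{p21/3ext}(iv) (resp. Proposition~\ref{p21/3bothext}(b)), whose leading coefficient is the power of $2$ supplied by the strong regularity of $P$ via Proposition~\ref{pnotationentries}(a); and the $\left(r,s'\right)$-entry(ies) identified by Proposition~\ref{pnotationentries}(b) have as leading coefficients the powers of $2$ computed above. This gives at least two entries in the $r$th row whose leading coefficients have only $2$ as a prime factor, hence are non-zero modulo every prime $t\neq 2$. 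I would note that this persists in the excluded boundary case $\phi_p=1$ with $P'=\sigma$: there Proposition~\ref{p21/3ext}(ii) gives $w_\lambda^{P'}=0$ but $w_\mu^{P'}=(-1)^{1-\psi}w_\lambda^P$, so the $\left(r,s'\right)$-entry still has leading coefficient $(-1)^{1-\psi}\lambda_0^P$, a power of $2$. The main obstacle is not conceptual but the careful bookkeeping of the degree inequalities — translating correctly between the $w_\mu,w_\nu$ conventions for $2$-subproducts and for $s'$-subproducts and discharging the $d_\nu$ gaps — together with the minimal-form case analysis ruling out the degenerate $\phi_p=1$ and $c_{p-1}=1$ configurations.
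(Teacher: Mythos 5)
Your overall route is the same as the paper's: split on whether one of $a_p,b_p$ vanishes, transfer the strong $r$-regularity data of $P$ through Propositions~\ref{p21/3ext} and~\ref{p21/3bothext} (with Proposition~\ref{pnotationentries} translating between the notational conventions for $2$-subproducts and $s'$-subproducts), and obtain the mod-$t$ statement from the invariance of the $\left(r,2\right)$-entry together with the newly computed $\left(r,s'\right)$-entries. Your treatment of the boundary case $\phi_p = 1$ (forcing $P' = \sigma$ and still extracting a power-of-$2$ leading coefficient from Proposition~\ref{p21/3ext} \textbf{(ii)}) and your degree bookkeeping in the one-exponent-zero case both match the paper.

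However, there is one genuine gap: your claim that the subroad hypothesis together with $a_p,b_p>0$ forces $c_{p-1}\geq 2$ is false. Applying Lemma~\ref{minimalform} \textbf{(ii)} at index $p-1$ with $b_p>0$ gives $b_{p-1}=0$, and then forces $a_p=0$ unless $p=2$; so the residual case is $p=2$, $c_1=1$, $b_1=0$, $a_2,b_2>0$. You dispose of it by placing $\sigma_1^{a_2}\sigma_3^{b_2}$ inside the initial block $\sigma_1^{a_1}\sigma_2\sigma_1^{a_2}\sigma_3^{b_2}$ --- but an initial block requires $a_1>0$ (Definition~\ref{blockclass}). If $a_1=0$, then $P=\sigma_2$, there is no initial block, and $P'\setminus P = \sigma_1^{a_2}\sigma_3^{b_2}$ is a legitimate subroad (no $2$-block since $a_1=0$, no $3$-block since $a_2>0$); this occurs, for instance, at the start of $\sigma = \sigma_2\sigma_1^{2}\sigma_3^{2}\sigma_2^{2}\cdots$. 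Your proof then says nothing in this case, because Proposition~\ref{p21/3bothext} is stated only under the hypothesis $c_{p-1}\geq 2$, which is exactly what you were trying to secure. The paper handles this case separately: for $P=\sigma_2$, strong $r$-regularity in the sense of Definition~\ref{defrregular} \textbf{(a)} \textbf{(ii)} together with $a_2,b_2>0$ forces $r=2$, the unique $\left(2,2\right)$-type $P$-path $2,2$ is good and there are no bad $P$-paths, so the extension argument behind Proposition~\ref{p21/3bothext} \textbf{(a)} still applies (its hypothesis $d_{\lambda}^{P}>\max\{d_{\mu,1}^{P},d_{\mu,3}^{P}\}$ holds trivially). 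The gap is therefore reparable, but as written your reduction is incorrect and this exceptional case is left uncovered.
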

\begin{proof}
We denote $P = \prod_{i=1}^{p-1} \sigma_1^{a_i}\sigma_3^{b_i}\sigma_2^{c_i}$ (Notation~\ref{notssubproduct}). If $P$ is strongly $r$-regular (Definition~\ref{defrregular} \textbf{(a)}), then Proposition~\ref{pnotationentries} \textbf{(a)} \textbf{(i)} and \textbf{(ii)} imply that the leading coefficient of the $\left(r,2\right)$-entry of $\beta_4\left(P\right)$ is non-zero modulo $t$. Proposition~\ref{p21/3ext} \textbf{(iv)} and Proposition~\ref{p21/3bothext} \textbf{(b)} imply that the $\left(r,2\right)$-entry of $\beta_4\left(P'\right)$ is equal to the $\left(r,2\right)$-entry of $\beta_4\left(P\right)$. We deduce that the leading coefficient of the $\left(r,2\right)$-entry of $\beta_4\left(P'\right)$ is non-zero modulo $t$.

We will show that the leading coefficient of at least one other entry in the $r$th row of $\beta_4\left(P'\right)$ is non-zero modulo $t$. If $P'\neq \sigma$, then we will also show that $P'$ is strongly $r$-regular. We split into cases according to the values of $a_p$ and $b_p$.

\begin{description}[style=unboxed]
\item[(i)] (We assume that either $a_p = 0$ or $b_p = 0$.) If $a_p = 1$, then Lemma~\ref{minimalform} \textbf{(i)} implies that $P' = \sigma$. If $b_p = 1$, then the hypothesis that $P'\setminus P$ does not overlap with a $3$-block implies that $P' = \sigma$. In particular, if either $a_p = 1$ or $b_p = 1$, then we need only show that the leading coefficients of multiple entries in the $r$th row of $\beta_4\left(P'\right)$ are non-zero modulo $t$. However, the hypothesis that $P$ is strongly $r$-regular (Definition~\ref{defrregular} \textbf{(a)}), Proposition~\ref{pnotationentries} \textbf{(b)} \textbf{(i)}, and Proposition~\ref{p21/3ext} \textbf{(ii)} imply that if $a_p = 0$, then the leading coefficient of the $\left(r,3\right)$-entry of $\beta_4\left(P'\right)$ is non-zero modulo $t$, and if $b_p = 0$, then the leading coefficient of the $\left(r,1\right)$-entry of $\beta_4\left(P'\right)$ is non-zero modulo $t$.

Let us adopt the setup of Proposition~\ref{p21/3ext} and assume that $\phi_p>1$. In this case, the hypothesis that $P$ is strongly $r$-regular (Definition~\ref{defrregular} \textbf{(a)}), Proposition~\ref{pnotationentries} \textbf{(a)} \textbf{(iv)} and Proposition~\ref{p21/3ext} \textbf{(i)} and \textbf{(iii)} imply that $P'$ is strongly $r$-regular (Definition~\ref{defrregular} \textbf{(b)} \textbf{(i)}). Furthermore, Proposition~\ref{pnotationentries} \textbf{(b)} \textbf{(i)} and Proposition~\ref{p21/3ext} \textbf{(i)} imply that if $a_p = 0$, then the leading coefficient of the $\left(r,3\right)$-entry of $\beta_4\left(P'\right)$ is non-zero modulo $t$, and if $b_p = 0$, then the leading coefficient of the $\left(r,1\right)$-entry of $\beta_4\left(P'\right)$ is non-zero modulo $t$.
\item[(ii)] (We assume that $a_p,b_p>0$.) In this case, Lemma~\ref{minimalform} \textbf{(ii)} and the hypothesis imply that either $c_{p-1}\geq 2$, or $p = 2$, $c_1 = 1$ and $a_1 = 0 = b_1$, since $P'\setminus P$ does not overlap with an initial block. If $p = 2$, $c_1 = 1$ and $a_1 = 0 = b_1$, then the hypothesis that $P$ is strongly $r$-regular (Definition~\ref{defrregular} \textbf{(a)} \textbf{(ii)}) and the assumption that $a_2,b_2>0$ imply that $r = 2$ and the unique $P$-path $2,2$ with initial vertex $r = 2$ is a good $\left(r,2\right)$-type $P$-path.

In both cases, the hypothesis that $P$ is strongly $r$-regular (Definition~\ref{defrregular} \textbf{(a)} \textbf{(i)} if $c_{p-1}\geq 2$, and Definition~\ref{defrregular} \textbf{(a)} \textbf{(ii)} if $p = 2$ and $c_1 = 1$), Proposition~\ref{pnotationentries} \textbf{(b)} \textbf{(ii)} and \textbf{(iii)}, and Proposition~\ref{p21/3bothext} \textbf{(a)} imply that the leading coefficient of the $\left(r,3\right)$-entry of $\beta_4\left(P'\right)$ is non-zero modulo $t$. Proposition~\ref{p21/3bothext} \textbf{(a)} also implies that $P'$ is strongly $r$-regular (Definition~\ref{defrregular} \textbf{(b)} \textbf{(ii)}).
\end{description}
Therefore, the statement is established.
\end{proof}

We now establish an analogue of Lemma~\ref{lemma2road}, where $P$ is an $s$-subproduct for $s\in \{1,3\}$, instead of a $2$-subproduct.

\begin{lemma}
\label{lemma13road}
Let $P$ be an $s$-subproduct of $\sigma$ for $s\in \{1,3\}$ and let $P'$ be the minimal $2$-subproduct containing $P$. Let $t\neq 2$ be a prime number. Let us assume that $P'\setminus P$ is a subroad.

If $P$ is strongly $r$-regular, and if $P'\neq \sigma$, then $P'$ is strongly $r$-regular. If $P$ is strongly $r$-regular, then the leading coefficients of multiple entries in the $r$th row of $\beta_4\left(P'\right)$ are non-zero modulo $t$.
\end{lemma}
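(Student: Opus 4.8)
The plan is to run the argument in exact parallel with the proof of Lemma~\ref{lemma2road}, but with the roles of the $2$-subproduct and the $s$-subproduct ($s\in\{1,3\}$) interchanged, so that the relevant path-extension tools are Proposition~\ref{p1/32ext} (case $a_p=0$ or $b_p=0$) and Proposition~\ref{p1/32bothext} (case $a_p,b_p>0$) rather than their $2$-to-$\{1,3\}$ counterparts. Since $P'$ is the minimal $2$-subproduct containing the $s$-subproduct $P$, we have $P'\setminus P=\sigma_2^{c_p}$, a pure power of $\sigma_2$. As in Lemma~\ref{lemma2road}, I would produce two distinct nonzero entries in the $r$th row: first the preserved $(r,s)$-entry with $s\in\{1,3\}$, then the newly created $(r,2)$-entry, and I would extract strong $r$-regularity of $P'$ simultaneously with the latter.

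\textbf{Step 1 (the preserved $(r,s)$-entry).} First I would check that the $(r,s)$-entry of $\beta_4(P)$, for the appropriate $s\in\{1,3\}$, has leading coefficient nonzero modulo $t$. When $a_p,b_p>0$ the relevant entry is the $(r,3)$-entry, which by Proposition~\ref{pnotationentries}~\textbf{(b)}~\textbf{(ii)}--\textbf{(iii)} equals $w_{\lambda,3}^{P}+w_{\mu,3}^{P}$ (with $w_{\mu,3}^{P}=0$ when $a_p>1$); its leading coefficient is $\lambda_{3,0}^{P}$, a nonzero integer whose only prime factor is $2$ by Definition~\ref{defrregular}~\textbf{(b)}~\textbf{(ii)}, hence nonzero modulo $t\neq 2$. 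When $a_p=0$ (resp. $b_p=0$) the relevant entry is the $(r,3)$-entry (resp. $(r,1)$-entry), which by Proposition~\ref{pnotationentries}~\textbf{(b)}~\textbf{(i)} equals $w_{\lambda}^{P}+w_{\mu}^{P}$ with leading coefficient $\lambda_0^{P}$ (using $d_{\lambda}^{P}>d_{\mu}^{P}$ from Definition~\ref{defrregular}~\textbf{(b)}~\textbf{(i)}), again nonzero modulo $t$. Because $P'\setminus P$ consists only of $\sigma_2$'s, Proposition~\ref{p1/32ext}~\textbf{(b)} and Proposition~\ref{p1/32bothext}~\textbf{(c)} show this $(r,s)$-entry of $\beta_4(P')$ equals the corresponding entry of $\beta_4(P)$, so it remains nonzero modulo $t$.

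\textbf{Step 2 (the $(r,2)$-entry and strong regularity of $P'$).} Next I would analyze the $(r,2)$-entry of $\beta_4(P')$ by cases on $a_p,b_p$ and $c_p$, each time verifying that the degree hypotheses demanded by the extension proposition are precisely those packaged into the strong $r$-regularity of $P$. If $a_p=0$ or $b_p=0$, the hypotheses $d_{\lambda}^{P}\geq d_{\mu}^{P}$ and $d_{\lambda}^{P}>d_{\nu}^{P}+(-1)^{\psi}$ of Proposition~\ref{p1/32ext}~\textbf{(a)} follow from Definition~\ref{defrregular}~\textbf{(b)}~\textbf{(i)}; then \textbf{(a)}~\textbf{(i)} (for $c_p\geq 2$) gives $\lambda_0^{P'}=(-1)^{\psi}\lambda_0^{P}$ and $d_{\lambda}^{P'}>\max\{d_{\mu,1}^{P'},d_{\mu,3}^{P'}\}$, while \textbf{(a)}~\textbf{(ii)} (for $c_p=1$) gives the matching inequalities; combining with Proposition~\ref{pnotationentries}~\textbf{(a)} yields a nonzero $(r,2)$-entry and, when $P'\neq\sigma$, strong $r$-regularity of $P'$ via Definition~\ref{defrregular}~\textbf{(a)}. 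If $a_p,b_p>0$, I would invoke Proposition~\ref{p1/32bothext}: for $a_p>1$, the condition $\lambda_{1,0}^{P}+\lambda_{3,0}^{P}=0$ (with both coefficients nonzero) forces $d_{\lambda,1}^{P}=d_{\lambda,3}^{P}$, placing us in the branch $d_{\lambda,1}^{P}<d_{\lambda,3}^{P}+1$, so $\lambda_0^{P'}=-\lambda_{3,0}^{P}$ for $c_p\geq 2$, and for $c_p=1$ the $(r,2)$-weight $w_{\lambda,1}^{P}-qw_{\lambda,3}^{P}$ has leading term coming from $-qw_{\lambda,3}^{P}$ with coefficient $-\lambda_{3,0}^{P}$; for $a_p=1$ the hypotheses of \textbf{(b)} come from Definition~\ref{defrregular}~\textbf{(b)}~\textbf{(ii)} and give leading coefficient $-\lambda_{3,0}^{P}$. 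In every branch the leading coefficient of the new $(r,2)$-entry is $\pm\lambda_0^{P}$ or $\pm\lambda_{3,0}^{P}$, hence a signed power of $2$ and nonzero modulo $t$.

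\textbf{Main obstacle.} The genuinely delicate part is the bookkeeping that reconciles the index shift (the ``$c_{p-1}$'' in Definition~\ref{defrregular} for $P'$ is the $c_p$ here) and matches each output inequality of the extension propositions to the precise degree condition required of $P'$ in Definition~\ref{defrregular}~\textbf{(a)}---in particular tracking $\psi$, the $(-1)^{\psi}$ signs, and the $\pm 1$ shifts in the $d_{\nu}$ conditions in the $c_p=1$ subcases. I would also need Lemma~\ref{minimalform} (notably part~\textbf{(ii)}, which forces $a_{p+1}=0$ or $b_{p+1}=0$ when $c_p=1$) together with the hypothesis that $P'\setminus P$ is a subroad to confirm that only the configurations covered by the subcases of Proposition~\ref{p1/32ext}~\textbf{(a)}~\textbf{(ii)} and Proposition~\ref{p1/32bothext} actually occur; once the configuration is pinned down, the nonvanishing modulo $t$ is immediate.
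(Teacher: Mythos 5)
Your overall route is the same as the paper's: the same split into the cases (either $a_p=0$ or $b_p=0$) versus ($a_p,b_p>0$), the same use of Proposition~\ref{pnotationentries} together with Proposition~\ref{p1/32ext} and Proposition~\ref{p1/32bothext}, and the same two target entries (the preserved $\left(r,s\right)$-entry for $s\in\{1,3\}$ and the new $\left(r,2\right)$-entry). Your Step 1, and the portions of Step 2 covering the cases where either $a_p=0$ or $b_p=0$, and where $a_p=1$ with $b_p>0$, are correct and match the paper's proof.

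However, there is a genuine flaw in your sub-case $a_p>1$. You assert that the strong-regularity condition $\lambda_{1,0}^{P}+\lambda_{3,0}^{P}=0$ ``forces $d_{\lambda,1}^{P}=d_{\lambda,3}^{P}$,'' and you then argue only in the branch $d_{\lambda,1}^{P}<d_{\lambda,3}^{P}+1$ of Proposition~\ref{p1/32bothext} \textbf{(a)} \textbf{(i)}. This is a non sequitur: $\lambda_{1,0}^{P}$ and $\lambda_{3,0}^{P}$ are leading coefficients of two different polynomials $w_{\lambda,1}^{P}$ and $w_{\lambda,3}^{P}$, and a relation between coefficients says nothing about the degrees $d_{\lambda,1}^{P}$ and $d_{\lambda,3}^{P}$; Definition~\ref{defrregular} \textbf{(b)} \textbf{(ii)} imposes no relation between these two degrees (its only degree condition for $a_p>1$ is $d_{\lambda,1}^{P}>d_{\mu,1}^{P}$). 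Consequently your argument, as written, omits the branches $d_{\lambda,1}^{P}>d_{\lambda,3}^{P}+1$ and $d_{\lambda,1}^{P}=d_{\lambda,3}^{P}+1$. The repair is exactly what the paper's appeal to Proposition~\ref{p1/32bothext} \textbf{(a)} \textbf{(i)} accomplishes: since $\lambda_{1,0}^{P}=-\lambda_{3,0}^{P}\neq 0$, one has $\lambda_{1,0}^{P}-\lambda_{3,0}^{P}=-2\lambda_{3,0}^{P}\neq 0$, so all three branches of that proposition yield $d_{\lambda}^{P'}>\max\{d_{\mu,1}^{P'},d_{\mu,3}^{P'}\}$ with $\lambda_{0}^{P'}\in\{\lambda_{1,0}^{P},\,-\lambda_{3,0}^{P},\,-2\lambda_{3,0}^{P}\}$, in every case a nonzero integer whose only prime factor is $2$, hence nonzero modulo $t\neq 2$; the same trichotomy handles $c_p=1$, where the leading coefficient of $w_{\lambda,1}^{P}-qw_{\lambda,3}^{P}$ is again one of these three values rather than necessarily $-\lambda_{3,0}^{P}$. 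A smaller omission: when $a_p,b_p>0$ and $c_p=1$ you compute the $\left(r,2\right)$-entry but never address strong regularity of $P'$; you need the paper's observation that in this configuration a nonempty $\sigma_1^{a_{p+1}}$ would create a $2$-block overlapping the subroad $P'\setminus P$, so in fact $P'=\sigma$ and the strong-regularity claim is vacuous there.
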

\begin{proof}
We denote $P = \left(\prod_{i=1}^{p-1} \sigma_1^{a_i}\sigma_3^{b_i}\sigma_2^{c_i}\right)\sigma_1^{a_p}\sigma_3^{b_p}$ (Notation~\ref{notssubproduct}). We split into cases according to the values of $a_p$ and $b_p$. 

\begin{description}
\item[(i)] (We assume that either $a_p = 0$ or $b_p = 0$) The hypothesis that $P$ is strongly $r$-regular (Definition~\ref{defrregular} \textbf{(b)} \textbf{(i)}) and Proposition~\ref{pnotationentries} \textbf{(b)} \textbf{(i)} imply that if $a_p=0$, then the leading coefficient of the $\left(r,3\right)$-entry of $\beta_4\left(P\right)$ is non-zero modulo $t$, and if $b_p = 0$, then the leading coefficient of the $\left(r,1\right)$-entry of $\beta_4\left(P\right)$ is non-zero modulo $t$. Proposition~\ref{p1/32ext} \textbf{(b)} implies that if $a_p = 0$, then the $\left(r,3\right)$-entry of $\beta_4\left(P'\right)$ is equal to the $\left(r,3\right)$-entry of $\beta_4\left(P\right)$, and if $b_p = 0$, then the $\left(r,1\right)$-entry of $\beta_4\left(P'\right)$ is equal to the $\left(r,1\right)$-entry of $\beta_4\left(P\right)$. We deduce that if $a_p = 0$, then the leading coefficient of the $\left(r,3\right)$-entry of $\beta_4\left(P'\right)$ is non-zero modulo $t$, and if $b_p = 0$, then the leading coefficient of the $\left(r,1\right)$-entry of $\beta_4\left(P'\right)$ is non-zero modulo $t$.

If $c_p = 1$, then Lemma~\ref{minimalform} \textbf{(ii)} and the hypothesis that $P'\setminus P$ does not overlap with an initial block imply that either $a_p = 0 = b_{p+1} $ or $b_p = 0 = a_{p+1}$. In general (for all values of $c_p\geq 1$), the hypothesis that $P$ is strongly $r$-regular (Definition~\ref{defrregular} \textbf{(b)} \textbf{(i)}), Proposition~\ref{pnotationentries} \textbf{(a)} \textbf{(i)} and \textbf{(ii)}, and Proposition~\ref{p1/32ext} \textbf{(a)} imply that the leading coefficient of the $\left(r,2\right)$-entry of $\beta_4\left(P'\right)$ is non-zero modulo $t$. We deduce that the leading coefficients of multiple entries in the $r$th row of $\beta_4\left(P'\right)$ are non-zero modulo $t$. Furthermore, Proposition~\ref{p1/32ext} \textbf{(a)} implies that $P'$ is strongly $r$-regular (Definition~\ref{defrregular} \textbf{(a)}).
\item[(ii)] (We assume that $a_p,b_p>0$) The hypothesis that $P$ is strongly $r$-regular (Definition~\ref{defrregular} \textbf{(b)} \textbf{(ii)}) and Proposition~\ref{pnotationentries} \textbf{(b)} \textbf{(ii)} and \textbf{(iii)} imply that the leading coefficient of the $\left(r,3\right)$-entry of $\beta_4\left(P\right)$ is non-zero modulo $t$. Proposition~\ref{p1/32bothext} \textbf{(c)} implies that the $\left(r,3\right)$-entry of $\beta_4\left(P'\right)$ is equal to the $\left(r,3\right)$-entry of $\beta_4\left(P\right)$. We deduce that the leading coefficient of the $\left(r,3\right)$-entry of $\beta_4\left(P'\right)$ is non-zero modulo $t$.

If $c_p = 1$, then $P' = \sigma$ since $P'\setminus P$ does not overlap with a $2$-block. In this case, we need only show that the leading coefficients of multiple entries in the $r$th row of $\beta_4\left(P'\right)$ are non-zero modulo $t$. However, the hypothesis that $P$ is strongly $r$-regular (Definition~\ref{defrregular} \textbf{(b)} \textbf{(ii)}) and Proposition~\ref{p1/32bothext} \textbf{(a)} \textbf{(ii)} and \textbf{(b)} \textbf{(ii)} imply that the leading coefficient of the $\left(r,2\right)$-entry of $\beta_4\left(P'\right)$ is non-zero modulo $t$. 

If $c_p\geq 2$, then the hypothesis that $P$ is strongly $r$-regular (Definition~\ref{defrregular} \textbf{(b)} \textbf{(ii)}), Proposition~\ref{pnotationentries} \textbf{(a)} \textbf{(i)} and Proposition~\ref{p1/32bothext} \textbf{(a)} \textbf{(i)} and \textbf{(b)} \textbf{(i)} imply that the leading coefficient of the $\left(r,2\right)$-entry of $\beta_4\left(P'\right)$ is non-zero modulo $t$. Proposition~\ref{p1/32bothext} \textbf{(a)} \textbf{(i)} and \textbf{(b)} \textbf{(i)} also imply that $P'$ is strongly $r$-regular (Definition~\ref{defrregular} \textbf{(a)} \textbf{(i)}). 
\end{description}
Therefore, the statement is established.
\end{proof}

We combine the results established thus far (Lemma~\ref{lemma2road} and Lemma~\ref{lemma13road}) to conclude that strong $r$-regularity is preserved along roads, and if the $s$-subproduct preceding the final road in $\sigma$ is strongly $r$-regular, then the leading coefficients of multiple entries in the $r$th row of $\beta_4\left(\sigma\right)$ are non-zero modulo each prime $t\neq 2$.

\begin{corollary}
\label{lemmaroadregular}
Let $P$ be an $s$-subproduct of $\sigma$ and let $P'$ be an $s'$-subproduct of $\sigma$ for some $s,s'\in \{1,2,3\}$ such that $P\subseteq P'$. Let $t\neq 2$ be a prime number. Let us assume that $P'\setminus P$ is a subroad. 

If $P$ is strongly $r$-regular, and if $P'\neq \sigma$, then $P'$ is strongly $r$-regular. If $P$ is strongly $r$-regular, then the leading coefficients of multiple entries in the $r$th row of $\beta_4\left(P'\right)$ are non-zero modulo $t$.
\end{corollary}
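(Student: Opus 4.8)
The plan is to deduce Corollary~\ref{lemmaroadregular} from the two one-step results Lemma~\ref{lemma2road} and Lemma~\ref{lemma13road} by an induction that walks along the road $P'\setminus P$, alternating between $2$-subproducts and $s$-subproducts with $s\in\{1,3\}$. First I would build the canonical chain of intermediate subproducts $P=Q_0\subsetneq Q_1\subsetneq\cdots\subsetneq Q_m=P'$, where each $Q_{i+1}$ is the minimal subproduct of the type opposite to that of $Q_i$ that contains $Q_i$. Since the minimal extension of a $2$-subproduct is a $\{1,3\}$-subproduct (obtained by appending a factor $\sigma_1^{a}\sigma_3^{b}$) and the minimal extension of a $\{1,3\}$-subproduct is a $2$-subproduct (obtained by appending a factor $\sigma_2^{c}$), the types of the $Q_i$ alternate, and each difference $Q_{i+1}\setminus Q_i$ is precisely the kind of one-chunk extension treated by the two lemmas.

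The key preliminary observation, and the one genuine bookkeeping point, is that each chunk $Q_{i+1}\setminus Q_i$ is itself a subroad. This is immediate from the hypothesis: since $Q_{i+1}\setminus Q_i\subseteq P'\setminus P$ and $P'\setminus P$ does not overlap a block, neither does the sub-piece $Q_{i+1}\setminus Q_i$, so it is a subroad by Definition~\ref{blockroad}. This is exactly what licenses me to apply Lemma~\ref{lemma2road} (when $Q_i$ is a $2$-subproduct) or Lemma~\ref{lemma13road} (when $Q_i$ is an $s$-subproduct with $s\in\{1,3\}$) at every step of the chain.

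I would then induct on $i$, with base case the hypothesis that $Q_0=P$ is strongly $r$-regular. For the inductive step, assuming $Q_i$ is strongly $r$-regular, the appropriate lemma yields both conclusions simultaneously: the leading coefficients of multiple entries in the $r$th row of $\beta_4\left(Q_{i+1}\right)$ are non-zero modulo $t$, and, provided $Q_{i+1}\neq\sigma$, the subproduct $Q_{i+1}$ is again strongly $r$-regular. When $P'\neq\sigma$, every $Q_i$ with $i<m$ satisfies $Q_i\subsetneq P'\subseteq\sigma$ and hence $Q_i\neq\sigma$, so strong $r$-regularity propagates uninterrupted along the whole chain up to $Q_m=P'$, giving the first assertion; and in every case the final application of a lemma (at the step $i=m-1\to m$) furnishes the multiple-entries conclusion for $\beta_4\left(P'\right)$, giving the second assertion.

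I do not expect any individual step to present a serious obstacle, since all the substantive cancellation analysis is already packaged inside Lemma~\ref{lemma2road} and Lemma~\ref{lemma13road}; the work here is organizational. The only point requiring genuine care will be the degenerate situation in which the road $P'\setminus P$ is empty (empty roads are permitted by Definition~\ref{blockroad}), so that $P=P'$ and the chain has length $m=0$. In that case the strong-regularity assertion is vacuous, while the multiple-entries assertion must be read off directly from Definition~\ref{defrregular} together with Proposition~\ref{pnotationentries}; this is immediate in the cases where strong $r$-regularity already forces two simultaneously non-vanishing leading coefficients (for example Definition~\ref{defrregular} \textbf{(b)} \textbf{(ii)} with $a_p>1$, where Proposition~\ref{pnotationentries} \textbf{(b)} \textbf{(iii)} shows the $\left(r,1\right)$- and $\left(r,3\right)$-entries are both non-zero modulo $t$), and in the remaining cases the multiple-entries property is inherited from the extension that produced $P$ within the surrounding induction that invokes this corollary.
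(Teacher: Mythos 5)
Your proposal is correct and takes essentially the same route as the paper's own proof: the paper likewise constructs the finite alternating chain $P=P_0\subsetneq P_1\subsetneq\cdots\subsetneq P_k=P'$ of minimal subproducts of alternating type, observes that each difference $P_i\setminus P_{i-1}$ is a subroad, and concludes by repeated application of Lemma~\ref{lemma2road} and Lemma~\ref{lemma13road}. Your additional attention to the degenerate case $P=P'$ is more careful than the paper's (very brief) proof, but it does not change the substance of the argument.
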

\begin{proof}
The statement follows from repeated applications of Lemma~\ref{lemma2road} and Lemma~\ref{lemma13road}. Indeed, there is a finite chain $P=P_0\subsetneq P_1\subsetneq\cdots\subsetneq P_k=P'$ with the property that for $s_0=s$ and $1\leq i\leq k$, $P_i$ is the minimal $s_i$-subproduct containing $P_{i-1}$, for some $s_i\in \{1,2,3\}$ such that $\{s_{i-1},s_i\}$ consists of one element in $\{1,3\}$ and one element in $\{2\}$, and $P_i\setminus P_{i-1}$ is a subroad.
\end{proof}

\subsubsection{Strong regularity is an inductive property around blocks}
\label{subsubsecstrongregularityblocks}

The next step is to establish Corollary~\ref{lemmablockstronglyregular}, which is an analog of Corollary~\ref{lemmaroadregular} in the case where $P'\setminus P$ is a normal block. We will split the proof of Corollary~\ref{lemmablockstronglyregular} into several steps according to whether $P'\setminus P$ is a normal $2$-block or a normal $3$-block, for ease of readability.

Firstly, we parametrize extensions of $P$-paths to $P'$-paths, where $P'\setminus P$ is a $2$-block.

\begin{definition}
\label{def2blockextension}
Let $B = \sigma_1^{a_p}\sigma_3^{b_p}\sigma_2\sigma_1^{a_{p+1}}$ be a $2$-block in $\sigma$. Let $P = \prod_{i=1}^{p-1} \sigma_1^{a_i}\sigma_3^{b_i}\sigma_2^{c_i}$ be the $2$-subproduct of $\sigma$ immediately preceding $B$ and let $P' = PB$. Let $W$ be an $\left(r,2\right)$-type $P$-path.
\begin{description}
\item[(i)] If $1\leq \alpha\leq a_p$ and $1\leq \gamma\leq a_{p+1}$, then we define $W^{\alpha,0,\gamma}$ to be the extension of $W$ to an $\left(r,1\right)$-type $P'$-path by a first vertex change at the $\alpha$th $\sigma_1$ in $\sigma_1^{a_p}$, a second vertex change at $\sigma_2$, and a third vertex change at the $\gamma$th $\sigma_1$ in $\sigma_1^{a_{p+1}}$.
\item[(ii)] We define $W^{a_p,0,0}$ to be the extension of $W$ to an $\left(r,1\right)$-type $P'$-path by a single vertex change at the $a_p$th $\sigma_1$ in $\sigma_1^{a_p}$ (and no further vertex change). (We also consider $W^{a_p,0,0}$ to be of the type $W^{\alpha,0,\gamma}$ with $\alpha = a_p$ and $\gamma = 0$.)
\item[(iii)] If $1\leq \beta\leq b_p$, then we define $W^{0,\beta}$ to be the extension of $W$ to an $\left(r,3\right)$-type $P'$-path by a vertex change at the $\beta$th $\sigma_3$ in $\sigma_3^{b_p}$. 
\item[(iv)] If $1\leq \beta\leq b_p$ and $1\leq \gamma\leq a_{p+1}$, then we define $W^{0,\beta,\gamma}$ to be the extension of $W$ to an $\left(r,1\right)$-type $P'$-path by extending $W^{0,\beta}$ by a first vertex change at $\sigma_2$ and a second vertex change at the $\gamma$th $\sigma_1$ in $\sigma_1^{a_{p+1}}$. 
\end{description}
\end{definition}

We now characterize admissible extensions of $P$-paths to $P'$-paths, and determine a formula for their weights, in the case where $P'\setminus P$ is a $2$-block.

\begin{proposition}
\label{lemma2blockextadmissible}
Let $B = \sigma_1^{a_p}\sigma_3^{b_p}\sigma_2\sigma_1^{a_{p+1}}$ be a $2$-block in $\sigma$. Let $P = \prod_{i=1}^{p-1} \sigma_1^{a_i}\sigma_3^{b_i}\sigma_2^{c_i}$ be the $2$-subproduct of $\sigma$ immediately preceding $B$ and let $P' = PB$. Let $W$ be a $P$-path.

\begin{description}
\item[(i)] If $W$ has an extension to an admissible $P'$-path, then $W$ is an $\left(r,2\right)$-type $P$-path and the extension is of one of the forms in Definition~\ref{def2blockextension}.
\item[(ii)] Let $W$ be an $\left(r,2\right)$-type $P$-path. The weight of $W^{\alpha,0,\gamma}$ is the product of $q^{a_p - \alpha + a_{p+1} - \gamma + 2}$ with the weight of $W$. The weight of $W^{0,\beta}$ is the product of $q^{b_p - \beta}$ with the weight of $W$. The weight of $W^{0,\beta,\gamma}$ is the product of $q^{b_{p}-\beta + a_{p+1} - \gamma+2}$ with the weight of $W$. If either $a_p>1$ or $W$ is not a $1$-switch $\left(r,2\right)$-type $P$-path, then the $\left(r,1\right)$-type $P'$-path $W^{a_p,0,0}$ is admissible and the weight of $W^{a_p,0,0}$ is the product of $-q^{a_{p+1}+1}$ with the weight of $W$.
\item[(iii)] The $\left(r,1\right)$-type $P'$-paths $W^{0,b_p,1}$ and $W^{a_p,0,0}$ constitute a cancelling pair if both are admissible, in the sense that there is a one-to-one correspondence between extensions of $W^{0,b_p,1}$ and $W^{a_p,0,0}$ to $\sigma$-paths such that corresponding extensions have cancelling weights.
\item[(iv)] Let $W$ be a good $\left(r,2\right)$-type $P$-path. The maximal $q$-resistance extension of $W$ to an $\left(r,3\right)$-type $P'$-path is $W^{0,1}$. If $a_p>1$, then the maximal $q$-resistance (non-cancelling) extension of $W$ to an $\left(r,1\right)$-type $P'$-path of the form $W^{\alpha,0,\gamma}$ is $W^{1,0,2}$. If $a_p=1$, then there is no (non-cancelling) admissible extension of $W$ to an $\left(r,1\right)$-type $P'$-path of the form $W^{\alpha,0,\gamma}$. 

If $b_p>1$, then the maximal $q$-resistance extension of $W$ to an $\left(r,1\right)$-type $P'$-path of the form $W^{0,\beta,\gamma}$ is $W^{0,1,1}$. If $b_p = 1$, then the maximal $q$-resistance (non-cancelling) extension of $W$ to an $\left(r,1\right)$-type $P'$-path of the form $W^{0,\beta,\gamma}$ is $W^{0,1,2}$.
\end{description}
\end{proposition}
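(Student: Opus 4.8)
The plan is to prove the four parts in order, since the weight formulas of \textbf{(ii)} feed the cancellation of \textbf{(iii)}, which in turn supplies the word ``non-cancelling'' in \textbf{(iv)}. Throughout I would read $B=\sigma_1^{a_p}\sigma_3^{b_p}\sigma_2\sigma_1^{a_{p+1}}$ from left to right and track a path vertex by vertex using the three local pictures of Figure~\ref{fig1}, exploiting the structural fact that at $\sigma_i$ the only non-constant move is a change \emph{to} vertex $i$. Thus inside $\sigma_1^{a_p}$ a path sitting at $1$ or at $3$ is frozen, while a path at $2$ may change once to $1$ and is then frozen there; the analogous statements hold for $\sigma_3^{b_p}$, for the single block $\sigma_2$, and for $\sigma_1^{a_{p+1}}$. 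This rigidity is what makes the traversal of $B$ enumerable.

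For \textbf{(i)} I would first show $W$ must be an $\left(r,2\right)$-type $P$-path. If $W$ ended at $1$, the extension is frozen at $1$ across $\sigma_1^{a_p}\sigma_3^{b_p}$; since the only edge into $1$ at the last $\sigma_2$ of $P$ is the constant edge $1\to1$, the vertices just before and just after the $\sigma_2\sigma_1$ junction at the $P$--$B$ boundary both equal $1$, placing the extension in the $\delta$ distinguished pair of Definition~\ref{dispair}~\textbf{(i)} (or, in the configuration where that $\delta$ pair is declared void, in the overlapping $\eta$ pair, which obstructs $W$ itself). If $W$ ended at $3$, the extension is frozen at $3$ across $\sigma_1^{a_p}\sigma_3^{b_p}$, and when $b_p>0$ this is exactly the $\epsilon$ pair of Definition~\ref{dispair}~\textbf{(ii)} at the same junction; the residual case $b_p=0$ forces $p=1$ by Proposition~\ref{initialblocksingularblockconstraint}~\textbf{(i)} and is treated as the initial-block boundary case. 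Having fixed the vertex at $2$ at the start of $B$, I would then enumerate the patterns in which the path \emph{leaves} vertex $2$ inside $B$: a change in $\sigma_1^{a_p}$ gives the $W^{\alpha,0,\gamma}$ and $W^{a_p,0,0}$ families, a change in $\sigma_3^{b_p}$ with no return gives $W^{0,\beta}$, and a change in $\sigma_3^{b_p}$ followed by a return $3\to2$ and a change in $\sigma_1^{a_{p+1}}$ gives $W^{0,\beta,\gamma}$, matching Definition~\ref{def2blockextension}. The purely pass-through extension that stays at $2$ through the block's $\sigma_2$ continues the $\left(r,2\right)$-type count and is accounted for by the road machinery rather than here.

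Part \textbf{(ii)} is then bookkeeping against Figure~\ref{fig1}: each constant edge at a matching generator contributes $q$, each downward change ($2\to1$ at $\sigma_1$ or $3\to2$ at $\sigma_2$) contributes $-q$, and every other edge contributes $1$; multiplying along each form yields the stated monomials (the two sign-bearing edges in $W^{0,\beta,\gamma}$ cancel, explaining why its exponent is positive). The one genuinely non-formal point is the admissibility of $W^{a_p,0,0}$: its single change at the \emph{last} $\sigma_1$ of $\sigma_1^{a_p}$ means it enters the block's $\sigma_2$ already at $1$ and stays at $1$, so the vertex two steps before that $\sigma_2$ equals $2$, which triggers precisely the exception clause ``$c_p=1$ and $v_{k-b_p-2}=2$'' that voids the relevant $\delta$ pair; since $W^{a_p,0,0}$ is the constant branch of that voided pair (not the branch overlapping the $\eta$ pair), it survives, and the side condition ($a_p>1$, or $W$ not a $1$-switch) is exactly what simultaneously avoids the $\delta$ pair at the $P$--$B$ junction. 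For \textbf{(iii)} I would read off $\mathrm{wt}\left(W^{0,b_p,1}\right)=q^{a_{p+1}+1}\,\mathrm{wt}(W)$ and $\mathrm{wt}\left(W^{a_p,0,0}\right)=-q^{a_{p+1}+1}\,\mathrm{wt}(W)$, equal $q$-resistance and opposite sign; as both terminate at vertex $1$, any completion to a $\sigma$-path multiplies both by the identical factor, and pairing each completion of one with the same completion of the other gives the required sign-reversing bijection.

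Finally, for \textbf{(iv)} I would maximise the exponents from \textbf{(ii)} and delete the extremal path that fails. The $\left(r,3\right)$-type family is maximised visibly at $W^{0,1}$. Among the $W^{\alpha,0,\gamma}$ the naive maximiser $W^{1,0,1}$ lies in the $\delta$ pair at the block's internal $\sigma_2$ when $a_p>1$ (and when $a_p=1$ the analogous extremiser is routed into an $\eta$ pair at the $P$--$B$ junction, so no $W^{\alpha,0,\gamma}$ survives at all), leaving $W^{1,0,2}$. Among the $W^{0,\beta,\gamma}$ the naive maximiser $W^{0,1,1}$ coincides with $W^{0,b_p,1}$ precisely when $b_p=1$, hence cancels by \textbf{(iii)}, leaving $W^{0,1,2}$, whereas for $b_p>1$ it persists as $W^{0,1,1}$. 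I expect the main obstacle to be \textbf{(i)}: the whole argument turns on correctly matching the two $\sigma_2\sigma_1$ junctions (the $P$--$B$ boundary and the internal one) against the $\delta$, $\epsilon$, and $\eta$ distinguished pairs \emph{together with} their exception clauses, and on verifying that the exception clauses route inadmissibility consistently through the $\eta$ pairs without clashing with the admissibility already imposed on $W$.
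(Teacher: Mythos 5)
Your overall architecture (weight formulas, then cancellation, then maximisation) mirrors the paper's, but two of your key steps are wrong, and both errors come from bypassing Lemma~\ref{lemmagoodext} and arguing directly with the distinguished pairs. In \textbf{(i)}, your treatment of an $\left(r,1\right)$-type $P$-path $W$ fails in the exceptional configuration: when the $\delta$ pair at the $P$--$B$ junction is voided (i.e.\ $c_{p-1}=1$ and the path sits at $2$ just before the last $\sigma_1$ of $\sigma_1^{a_{p-1}}$), you claim the frozen extension, or $W$ itself, lies in ``the overlapping $\eta$ pair''. It does not: that $\eta$ pair consists of the path that changes $1\to 2$ at the last $\sigma_2$ of $P$ (the prefix of $\delta_{k,2}$) together with its frozen-at-$2$ partner; a path that stays at vertex $1$ through that $\sigma_2$ belongs to neither. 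The paper's own Example~\ref{weirdadmissible}/Figure~\ref{fig9} exhibits exactly such a frozen-at-$1$ path ($2,1,1,1$) as admissible. The inadmissibility of the frozen extension across $B$ must instead be extracted from the $\delta$ pair at the block's \emph{internal} junction $\sigma_2\sigma_1^{a_{p+1}}$, where the exception clause cannot fire because the path arrives there frozen at $1$ (so $v_{k'-b_p-2}=1\neq 2$); this is what the paper's citation of Lemma~\ref{lemmagoodext} \textbf{(iv)} packages. You also never rule out the extension of such a $W$ that changes $1\to 2$ at the block's $\sigma_2$ after crossing $\sigma_1^{a_p}\sigma_3^{b_p}$ frozen, nor the pass-through extension of an $\left(r,2\right)$-type $W$ that stays at $2$ through the whole block; deferring the latter to ``the road machinery'' is not available, since \textbf{(i)} asserts that \emph{every} admissible extension has one of the listed forms, so the pass-through must itself be shown inadmissible (Lemma~\ref{lemmagoodext} \textbf{(v)}, equivalently the $\eta$/$\zeta$ pairs at the block's $\sigma_2$).

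In \textbf{(iii)}, your bijection (``any completion multiplies both by the identical factor'') tacitly assumes that two $P'$-paths with the same final vertex have the same set of admissible completions. That is false: admissibility of the next vertex change depends on the recent history of the path, i.e.\ on whether it is good or bad in the sense of Definition~\ref{defgoodbadpath}. The paper's proof needs two ingredients you omit: when $a_{p+1}>1$, both $W^{0,b_p,1}$ and $W^{a_p,0,0}$ are \emph{good} $\left(r,1\right)$-type $P'$-paths, so Lemma~\ref{lemmagoodext} \textbf{(i)} guarantees identical admissible extension behaviour; and when $a_{p+1}=1$ your claim is genuinely false ($W^{0,b_p,1}$ is bad while $W^{a_p,0,0}$ is good, so their completion sets differ), and the statement only survives because Lemma~\ref{minimalform} \textbf{(i)} forces $P'=\sigma$, leaving nothing to complete. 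Parts \textbf{(ii)} and \textbf{(iv)} are essentially right and follow the paper's route (Figure~\ref{fig1} bookkeeping, the Figure~\ref{fig9} exception for $W^{a_p,0,0}$, maximisation of exponents), but \textbf{(iv)} inherits the gap in \textbf{(iii)} through the word ``non-cancelling''.
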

\begin{proof}
Proposition~\ref{initialblocksingularblockconstraint} \textbf{(iii)} implies that $b_{p+1} = 0$.

\begin{description}
\item[(i)] The statement is a consequence of Lemma~\ref{lemmagoodext} \textbf{(iv)} and \textbf{(v)}.
\item[(ii)] The statements on the weights are direct computations based on Figure~\ref{fig1}. The statement on when $W^{a_p,0,0}$ is an admissible $\left(r,1\right)$-type $P'$-path is a variant of Figure~\ref{fig9}. 
\item[(iii)] The previous part \textbf{(ii)} implies that the weights of $X^{0,b_p,1}$ and $X^{a_p,0,0}$ are negatives of each other. If $a_{p+1} = 1$, then Lemma~\ref{minimalform} \textbf{(i)} implies that $P'=\sigma$, in which case the statement is immediate. If $a_{p+1}>1$, then $X^{0,b_p,1}$ and $X^{a_p,0,0}$ are good $\left(r,1\right)$-type $P'$-paths with cancelling weights, if both are admissible. The statement is now a consequence of Lemma~\ref{lemmagoodext} \textbf{(i)}. 
\item[(iv)] The statement is a consequence of \textbf{(ii)}, \textbf{(iii)}, and Lemma~\ref{lemmagoodext} \textbf{(i)} - \textbf{(iii)}.
\end{description}
\end{proof}

Proposition~\ref{lemma2blockextadmissible} is sufficient to establish the analog of Corollary~\ref{lemmaroadregular} if $P'\setminus P$ is a normal $2$-block. However, if $P'\setminus P$ is the first $2$-block in a normal braid $\sigma$ and there is a very specific kind of first road in the block-road decomposition of $\sigma$, then it is possible that $P$ is not strongly $r$-regular. We use part \textbf{(i)} of the following statement to characterize $P'$-paths and ultimately establish the desired conclusion even in this case. We will only use part \textbf{(ii)} of the statement in Section~\ref{smainstronger}, but \textbf{(ii)} is similar to \textbf{(i)} and we include both here. 

\begin{proposition}
\label{lemma2blockextadmissibleextra}
Let us adopt the setup of Proposition~\ref{lemma2blockextadmissible}.
\begin{description}
\item[(i)] Let $W$ be a $1$-switch $\left(r,2\right)$-type $P$-path. The maximal $q$-resistance extension of $W$ to an $\left(r,3\right)$-type $P'$-path is $W^{0,1}$. If $a_p>2$, then the maximal $q$-resistance (non-cancelling) extension of $W$ to an $\left(r,1\right)$-type $P'$-path of the form $W^{\alpha,0,\gamma}$ is $W^{2,0,2}$. If $1\leq a_p\leq 2$, then there is no (non-cancelling) admissible extension of $W$ to an $\left(r,1\right)$-type $P'$-path of the form $W^{\alpha,0,\gamma}$.

If $b_p>1$, then the maximal $q$-resistance extension of $W$ to an $\left(r,1\right)$-type $P'$-path of the form $W^{0,\beta,\gamma}$ is $W^{0,1,1}$. If $a_p>1$ and $b_p = 1$, then the maximal $q$-resistance (non-cancelling) extension of $W$ to an $\left(r,1\right)$-type $P'$-path of the form $W^{0,\beta,\gamma}$ is $W^{0,1,2}$. If $a_p = 1 = b_p$, then there is no admissible extension of $W$ to an $\left(r,1\right)$-type $P'$-path of the form $W^{0,\beta,\gamma}$.
\item[(ii)] Let $W$ be a $3$-switch $\left(r,2\right)$-type $P$-path. If $b_p>1$, then the maximal $q$-resistance extension of $W$ to an $\left(r,3\right)$-type $P'$-path is $W^{0,2}$. If $b_p = 1$, then there is no admissible extension of $W$ to an $\left(r,3\right)$-type $P'$-path. 

If either $a_p>2$ or $b_p>1$, then the maximal $q$-resistance (non-cancelling) extension of $W$ to an $\left(r,1\right)$-type $P'$-path of the form $W^{\alpha,0,\gamma}$ is $W^{1,0,2}$. If $a_p = 2$ and $b_p = 1$, then the maximal $q$-resistance (non-cancelling) extension of $W$ to an $\left(r,1\right)$-type $P'$-path of the form $W^{\alpha,0,\gamma}$ is $W^{1,0,3}$. If $a_p = 1$ and $b_p>1$, then there is no (non-cancelling) admissible extension of $W$ to an $\left(r,1\right)$-type $P'$-path of the form $W^{\alpha,0,\gamma}$. If $a_p = 1 = b_p$, then the only admissible extension of $W$ to an $\left(r,1\right)$-type $P'$-path is $W^{1,0,0}$. 

If $b_p>2$, then the maximal $q$-resistance extension of $W$ to an $\left(r,1\right)$-type $P'$-path of the form $W^{0,\beta,\gamma}$ is $W^{0,2,1}$. If $b_p = 2$, then the maximal $q$-resistance (non-cancelling) extension of $W$ to an $\left(r,1\right)$-type $P'$-path of the form $W^{0,\beta,\gamma}$ is $W^{0,2,2}$. If $b_p = 1$, then there is no admissible extension of $W$ to an $\left(r,3\right)$-type $P'$-path of the form $W^{0,\beta,\gamma}$.
\end{description}
\end{proposition}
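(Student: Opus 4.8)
The plan is to run both parts in close parallel with the proof of Proposition~\ref{lemma2blockextadmissible} \textbf{(iv)}, the only genuinely new input being the restriction that the switch-type of $W$ imposes on the \emph{first} vertex change inside the block. As in that proof I would first invoke Proposition~\ref{initialblocksingularblockconstraint} \textbf{(iii)} to record $b_{p+1}=0$, so that $P'\setminus P=\sigma_1^{a_p}\sigma_3^{b_p}\sigma_2\sigma_1^{a_{p+1}}$ exactly, and I would compute every weight from the formulas of Proposition~\ref{lemma2blockextadmissible} \textbf{(ii)} (equivalently, directly from the pictures in Figure~\ref{fig1}). The extensions are parametrized by Definition~\ref{def2blockextension}.

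First I would pin down which first vertex changes are admissible using Lemma~\ref{lemmagoodext} \textbf{(iii)}. For a $1$-switch $W$ this forbids the change at the first $\sigma_1$ of $\sigma_1^{a_p}$ (it would complete a $\delta$ distinguished pair), so an extension of the form $W^{\alpha,0,\gamma}$ must have $\alpha\geq 2$, while every change in $\sigma_3^{b_p}$ is allowed; for a $3$-switch $W$ the dual holds, forbidding the change at the first $\sigma_3$ (it would complete an $\epsilon$ distinguished pair), so $W^{0,\beta}$ and $W^{0,\beta,\gamma}$ require $\beta\geq 2$ while every change in $\sigma_1^{a_p}$ is allowed. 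Maximizing $q$-resistance by taking the smallest admissible index, and using monotonicity of the weight in that index, immediately yields the $(r,3)$-type statements ($W^{0,1}$ in part \textbf{(i)}; $W^{0,2}$, or no extension when $b_p=1$, in part \textbf{(ii)}). For the $(r,1)$-type extensions I would analyze the local configuration at the block's single $\sigma_2$ (so $c_p=1$) together with the first $\sigma_1$ of $\sigma_1^{a_{p+1}}$: taking $\gamma=1$ places the path in the $\delta$ distinguished pair of Definition~\ref{dispair} \textbf{(i)} against the $\gamma=0$ path, except in the $\delta$-exception case ($c_p=1$ and $v_{k-b_p-2}=2$), which is exactly when the first $2\to 1$ change sits at the last generator of $\sigma_1^{a_p}$ and in which the path instead shares an edge with an $\eta$ distinguished pair as in Figure~\ref{fig3}. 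Away from the boundary this forces $\gamma=2$ and the smallest admissible $\alpha$ (resp.\ $\beta$), giving $W^{2,0,2}$ and $W^{0,1,1}$ in part \textbf{(i)} and $W^{1,0,2}$ and $W^{0,2,1}$ in part \textbf{(ii)}.

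The main obstacle is the small-exponent boundary cases, where the switch-type admissibility constraints force either a local pair (via the $\delta$-exception, or an $\eta$ or $\zeta$ pair) or a member of the global cancelling pair $\{W^{0,b_p,1},W^{a_p,0,0}\}$ of Proposition~\ref{lemma2blockextadmissible} \textbf{(iii)} to collapse, rearranging the cancellation and lowering the surviving maximal $q$-resistance by one. For instance, with $a_p=2$, $b_p=1$ and $W$ a $3$-switch, $W^{0,1,1}$ is inadmissible (its first-$\sigma_3$ change completes an $\epsilon$ pair), so its intended partner $W^{2,0,0}$ is freed and instead cancels the equal-degree, opposite-sign path $W^{1,0,2}$, dropping the surviving maximum to $W^{1,0,3}$; by contrast, with $a_p>1$, $b_p=1$ and $W$ a $1$-switch, the pair $\{W^{0,1,1},W^{a_p,0,0}\}$ is intact and kills $W^{0,1,1}$, leaving $W^{0,1,2}$. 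In the extreme case $a_p=1=b_p$, a $1$-switch $W$ loses its candidate $W^{0,1,\gamma}$ to a $\zeta$ pair (the trailing $1\to 2$ switch of $W$ followed by $\sigma_1\sigma_3\sigma_2$), so no $W^{0,\beta,\gamma}$ extension survives, while a $3$-switch $W$ retains only $W^{1,0,0}$, every $W^{1,0,\gamma}$ with $\gamma\geq 1$ being absorbed into the $\eta$ pair forced by the $\delta$-exception (Figure~\ref{fig3}). I would settle each of these finitely many configurations by drawing the local picture and reading the $q$-degree and sign directly from Figure~\ref{fig1}, in the spirit of Example~\ref{weirdadmissible}, checking in each case both admissibility (membership in a $\delta$, $\epsilon$, $\eta$, or $\zeta$ pair) and the global pairing of equal-degree, opposite-sign survivors.
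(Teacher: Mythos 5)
Your proposal is correct and takes essentially the same approach as the paper: the paper's own proof is a one-line citation of Proposition~\ref{lemma2blockextadmissible} \textbf{(ii)}--\textbf{(iii)} and Lemma~\ref{lemmagoodext} \textbf{(i)}--\textbf{(iii)}, which are exactly the ingredients you deploy. Your case analysis (the switch-type restriction on the first vertex change, the $\delta$/$\epsilon$/$\eta$/$\zeta$ memberships at the small-exponent boundaries, and the re-pairing of $W^{a_p,0,0}$ with $W^{1,0,2}$ once $W^{0,b_p,1}$ becomes inadmissible) supplies precisely the details the paper compresses into that citation.
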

\begin{proof}
\begin{description}
\item[(i)] The statement is a consequence of Proposition~\ref{lemma2blockextadmissible} \textbf{(ii)} and \textbf{(iii)}, and Lemma~\ref{lemmagoodext} \textbf{(i)} - \textbf{(iii)}.
\item[(ii)] The statement is a consequence of Proposition~\ref{lemma2blockextadmissible} \textbf{(ii)} and \textbf{(iii)}, and Lemma~\ref{lemmagoodext} \textbf{(i)} - \textbf{(iii)}.
\end{description}
\end{proof}

We now establish an analog of Corollary~\ref{lemmaroadregular} in the case where $P'\setminus P$ is a normal $2$-block.

\begin{lemma}
\label{l2blockstronglyregular}
Let $B = \sigma_1^{a_p}\sigma_3^{b_p}\sigma_2\sigma_1^{a_{p+1}}$ be a normal $2$-block in $\sigma$. Let $P = \prod_{i=1}^{p-1} \sigma_1^{a_i}\sigma_3^{b_i}\sigma_2^{c_i}$ be the $2$-subproduct of $\sigma$ immediately preceding $B$ and let $P' = PB$. Let $t\neq 2$ be a prime number. Let us assume that either $P$ is strongly $r$-regular, or $\mu_{1,0}^{P}$ is a non-zero integer with only $2$ as a prime factor and $w_{\lambda}^{P} = 0 = w_{\mu,3}^{P}$. If $a_p = 1 = b_p$, then we assume that $P$ is strongly $r$-regular. 

The leading coefficients of multiple entries in the $r$th row of $\beta_4\left(P'\right)$ are non-zero modulo $t$. If $P'\neq \sigma$, then $P'$ is strongly $r$-regular. 
\end{lemma}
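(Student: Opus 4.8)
The plan is to analyze the $P'$-paths by pulling every admissible extension of a $P$-path through the block $B$ back to an $(r,2)$-type $P$-path, using the parametrizations in Proposition~\ref{lemma2blockextadmissible} and Proposition~\ref{lemma2blockextadmissibleextra}. First I would record the structural facts that frame the argument. Proposition~\ref{initialblocksingularblockconstraint} \textbf{(iii)} gives $b_{p+1} = 0$, so $P' = PB$ is a $1$-subproduct ending in $\sigma_1^{a_{p+1}}\sigma_3^{b_{p+1}}$ with $b_{p+1} = 0$; consequently strong $r$-regularity of $P'$ is governed by Definition~\ref{defrregular} \textbf{(b)} \textbf{(i)}, with $w_{\lambda}^{P'}$ counting good $(r,1)$-type paths, $w_{\mu}^{P'}$ bad $(r,1)$-type paths, and $w_{\nu}^{P'}$ the $(r,3)$-type paths, and the inequalities I must produce are $d_{\lambda}^{P'} > d_{\mu}^{P'}$ together with $d_{\nu}^{P'} + 3 \leq d_{\lambda}^{P'}$. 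The crucial preliminary dichotomy is on $a_{p+1}$: if $a_{p+1} = 1$, then Lemma~\ref{minimalform} \textbf{(i)} applied at the index $p+1$ (using $b_{p+1} = 0$) forces $P' = \sigma$, so I only have to exhibit two nonzero leading coefficients and need not verify strong regularity; if $a_{p+1} \geq 2$, then normality forces $a_{p+1} \geq 3$ in the constrained configurations, which is exactly what the gap computation needs.

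Second, I would set up the path bookkeeping. In the case that $P$ is strongly $r$-regular, a maximal $q$-resistance $P$-path is a good $(r,2)$-type path $X$ with $d_{\lambda}^{P} > \max\{d_{\mu,1}^{P}, d_{\mu,3}^{P}\}$ and leading coefficient $\lambda_{0}^{P}$ a power of $2$; in the alternative case the same role is played by a $1$-switch bad path of leading coefficient $\mu_{1,0}^{P}$, and the hypothesis that $a_p = 1 = b_p$ forces the strongly regular case guarantees that Proposition~\ref{lemma2blockextadmissibleextra} \textbf{(i)} actually produces $(r,1)$-type extensions. The good $(r,1)$-type $P'$-paths arrive through two channels — through $\sigma_1^{a_p}$ (the $W^{\alpha,0,\gamma}$ family) and through $\sigma_3^{b_p}$ (the $W^{0,\beta,\gamma}$ family) — while the $(r,3)$-type $P'$-paths are exactly the $W^{0,\beta}$, of maximal degree $d_{\lambda}^{P} + b_p - 1$ and leading coefficient $\pm\lambda_{0}^{P}$ (respectively $\pm\mu_{1,0}^{P}$). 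The cancelling pair $\{W^{a_p,0,0}, W^{0,b_p,1}\}$ of Proposition~\ref{lemma2blockextadmissible} \textbf{(iii)} removes the naive top degree $d_{\lambda}^{P} + a_{p+1} + 1$, and I would simply invoke the non-cancelling maxima recorded in Proposition~\ref{lemma2blockextadmissible} \textbf{(iv)} (and Proposition~\ref{lemma2blockextadmissibleextra} \textbf{(i)} in the alternative case): the surviving good $(r,1)$-type extensions are $X^{1,0,2}$ in the first channel (degree $d_{\lambda}^{P} + a_p + a_{p+1} - 1$, only when $a_p > 1$) and $X^{0,1,1}$ or $X^{0,1,2}$ in the second (degrees $d_{\lambda}^{P} + b_p + a_{p+1}$ if $b_p > 1$, and $d_{\lambda}^{P} + a_{p+1}$ if $b_p = 1$).

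Third comes the case analysis driven by Definition~\ref{dnormalblock} \textbf{(i)}. I would split on $a_p \geq b_p + 1$ versus $a_p \leq b_p$, noting $b_p \geq 1$ always, since a non-initial $2$-block has $b_p > 0$ by Proposition~\ref{initialblocksingularblockconstraint} \textbf{(i)}. When $a_p \geq b_p + 1$ the first channel dominates, tying with the second precisely when $a_p = b_p + 1$ and $b_p > 1$; when $a_p \leq b_p$ the second channel dominates. In each instance I would set $d_{\lambda}^{P'}$ to the winning degree, read off $\lambda_{0}^{P'}$, and verify the two gap inequalities. Because both channel weights carry a $+q^{(\cdot)}$ factor (the sign data of the $\sigma_1$ and $\sigma_2$ pictures in Figure~\ref{fig1}), in the single-channel cases $\lambda_{0}^{P'} = \pm\lambda_{0}^{P}$ and in the tie case $\lambda_{0}^{P'} = 2\lambda_{0}^{P}$, in every case a nonzero power of $2$, hence nonzero modulo $t$. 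The gap $d_{\nu}^{P'} + 3 \leq d_{\lambda}^{P'}$ reduces to inequalities such as $a_p + a_{p+1} - b_p \geq 3$, or $a_{p+1} + 1 \geq 3$, or $a_{p+1} \geq 3$, each of which follows from normality ($a_{p+1} \neq 2$, i.e.\ $a_{p+1} \geq 3$ in the constrained configurations) once the $a_{p+1} = 1$ case has been diverted to $P' = \sigma$; the gap $d_{\lambda}^{P'} > d_{\mu}^{P'}$ follows similarly since the maximal bad $(r,1)$-type extension has degree at most $d_{\lambda}^{P} + \max\{a_p, b_p\} + 1$. Finally, the $(r,1)$-entry of $\beta_4(P')$ is $w_{\lambda}^{P'} + w_{\mu}^{P'}$ and the $(r,3)$-entry is $w_{\nu}^{P'}$ by Proposition~\ref{pnotationentries} \textbf{(b)} \textbf{(i)}, both with leading coefficient a nonzero power of $2$, giving the required multiple nonzero entries in the $r$th row.

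I expect the main obstacle to be twofold. The delicate point in the leading-coefficient computation is the tie case, where I must confirm that the two channels reinforce rather than cancel: this hinges on both extensions multiplying the weight of $X$ by a positive power of $q$, so the contributions add to $2\lambda_{0}^{P}$ rather than annihilating to $0$, and this sign coherence must be tracked through Proposition~\ref{lemma2blockextadmissible} \textbf{(ii)}. The second obstacle is controlling the bad $(r,1)$-type and $(r,3)$-type extensions of the non-dominant $P$-paths (the bad $1$- and $3$-switch paths in the strongly regular case, and the secondary extensions in the alternative case): I must show, via the strict strong-regularity gap $d_{\lambda}^{P} > \max\{d_{\mu,1}^{P}, d_{\mu,3}^{P}\}$ and the degree formulas of Proposition~\ref{lemma2blockextadmissibleextra}, that all of these remain strictly below $d_{\lambda}^{P'}$, so that they neither spoil $d_{\lambda}^{P'} > d_{\mu}^{P'}$ nor contribute at the top degree. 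The alternative hypothesis, where no good $(r,2)$-type $P$-path exists and the $1$-switch bad path must single-handedly generate a strongly $r$-regular $P'$, is the most restrictive regime and is precisely why the hypothesis excludes $a_p = 1 = b_p$ there.
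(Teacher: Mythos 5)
Your proposal is correct and follows essentially the same route as the paper's proof: the same parametrizations (Propositions~\ref{lemma2blockextadmissible} and~\ref{lemma2blockextadmissibleextra}), the same two-channel case analysis on $a_p$ versus $b_p$ with the tie case $a_p = b_p + 1$, $b_p > 1$ yielding leading coefficient $2\lambda_{0}^{P}$, the same use of normality to force $a_{p+1}>2$ and of Lemma~\ref{minimalform} \textbf{(i)} to divert $a_{p+1}=1$ to $P'=\sigma$, and the same treatment of the alternative hypothesis as the strongly regular case with $a_p$ effectively reduced by one via Proposition~\ref{lemma2blockextadmissibleextra} \textbf{(i)}. The only point the paper handles that you gloss over is the degenerate case $c_{p-1}=1$, where Definition~\ref{defrregular} \textbf{(a)} \textbf{(ii)} rather than \textbf{(a)} \textbf{(i)} governs the hypothesis on $P$ (the paper shows this forces $r=2$ with the single good path $2,2$ and $\lambda_{0}^{P}=1$), but your extension argument applies verbatim in that case.
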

\begin{proof}
Firstly, we assume that $P$ is strongly $r$-regular. Proposition~\ref{initialblocksingularblockconstraint} \textbf{(iii)} implies that either $c_{p-1}\geq 2$, or $p = 2$ and $b_1 = 0$. If $p = 2$ and $b_1 = 0$, then the hypothesis that $P$ is strongly $r$-regular (Definition~\ref{defrregular} \textbf{(a)} \textbf{(ii)}) implies that $a_1 = 0$ and $r = 2$. In particular, if $c_{p-1} = 1$, then we deduce that $p = 2$, $a_1 = 0 = b_1$, and $r = 2$. In this case, the unique $P$-path $2,2$ with initial vertex $r = 2$ is a good $\left(r,2\right)$-type $P$-path and $\lambda_{0}^{P} = 1$. If $c_{p-1}\geq 2$, then the hypothesis that $P$ is strongly $r$-regular (Definition~\ref{defrregular} \textbf{(a)} \textbf{(i)}) also implies that a maximal $q$-resistance $\left(r,2\right)$-type $P$-path is a good $\left(r,2\right)$-type $P$-path and $\lambda_{0}^{P}$ is non-zero modulo $t$. If $P'\neq \sigma$, then Lemma~\ref{minimalform} \textbf{(i)} implies that $a_{p+1}>1$. 

Proposition~\ref{lemma2blockextadmissible} \textbf{(i)} and \textbf{(iv)} imply that a maximal $q$-resistance $\left(r,3\right)$-type $P'$-path is of the form $X^{0,1}$, where $X$ is a maximal $q$-resistance good $\left(r,2\right)$-type $P$-path. Proposition~\ref{lemma2blockextadmissible} \textbf{(ii)} implies that $d_{\nu}^{P'} = d_{\lambda}^{P} + b_p - 1$ and $\nu_{0}^{P'} = \lambda_{0}^{P}$. In particular, the leading coefficient of the $\left(r,3\right)$-entry of $\beta_4\left(P'\right)$ is non-zero modulo $t$. We split the remainder of the proof into cases according to the relative values of $a_p$ and $b_p$.

\begin{description}
\item[(a)] (We assume that $a_p>b_p+1$) Proposition~\ref{lemma2blockextadmissible} \textbf{(i)}, \textbf{(ii)} and \textbf{(iv)} imply that a maximal $q$-resistance $\left(r,1\right)$-type $P'$-path is of the form $X^{1,0,2}$, where $X$ is a maximal $q$-resistance good $\left(r,2\right)$-type $P$-path. In particular, the leading coefficient of the $\left(r,1\right)$-entry of $\beta_4\left(P'\right)$ is non-zero modulo $t$. If $P'\neq \sigma$, then $a_{p+1}> 2$ since $B$ is a normal $2$-block. In this case, we deduce that $X^{1,0,2}$ is a good $\left(r,1\right)$-type $P'$-path. Proposition~\ref{lemma2blockextadmissible} \textbf{(ii)} implies that $d_{\lambda}^{P'} = d_{\lambda}^{P} + a_p + a_{p+1} - 1>d_{\mu}^{P'}$ and $\lambda_{0}^{P'} = \lambda_{0}^{P}$. If $P'\neq \sigma$, then we deduce that $P'$ is strongly $r$-regular.

\item[(b)] (We assume that $a_p<b_p+1$ and $b_p>1$) Proposition~\ref{lemma2blockextadmissible} \textbf{(i)}, \textbf{(ii)} and \textbf{(iv)} imply that a maximal $q$-resistance $\left(r,1\right)$-type $P$-path is of the form $X^{0,1,1}$, where $X$ is a maximal $q$-resistance good $\left(r,2\right)$-type $P$-path. In particular, the leading coefficient of the $\left(r,1\right)$-entry of $\beta_4\left(P'\right)$ is non-zero modulo $t$. If $P'\neq \sigma$, then $a_{p+1}>1$ and $X^{0,1,1}$ is a good $\left(r,1\right)$-type $P'$-path. Proposition~\ref{lemma2blockextadmissible} \textbf{(ii)} implies that $d_{\lambda}^{P'} = d_{\lambda}^{P} + b_p + a_{p+1} > d_{\mu}^{P'}$ and $\lambda_{0}^{P'} = \lambda_{0}^{P}$. If $P'\neq \sigma$, then we deduce that $P'$ is strongly $r$-regular. 

\item[(c)] (We assume that $a_p<b_p + 1$ and $b_p = 1$) In this case, $a_p = 1 = b_p$. Proposition~\ref{lemma2blockextadmissible} \textbf{(i)} and \textbf{(iv)} imply that a maximal $q$-resistance (non-cancelling) $\left(r,1\right)$-type $P'$-path is of the form $X^{0,1,2}$, where $X$ is a maximal $q$-resistance good $\left(r,2\right)$-type $P$-path. In particular, the leading coefficient of the $\left(r,1\right)$-entry of $\beta_4\left(P'\right)$ is non-zero modulo $t$. If $P'\neq \sigma$, then the hypothesis implies that $a_{p+1}> 2$ and $X^{0,1,2}$ is a good $\left(r,1\right)$-type $P'$-path. Proposition~\ref{lemma2blockextadmissible} \textbf{(ii)} implies that $d_{\lambda}^{P'} = d_{\lambda}^{P} + 1 + a_{p+1}>d_{\mu}^{P'}$ and $\lambda_{0}^{P'} = \lambda_{0}^{P}$. If $P'\neq \sigma$, then we deduce that $P'$ is strongly $r$-regular.

\item[(d)] (We assume that $a_p = b_p + 1$ and $b_p>1$) Proposition~\ref{lemma2blockextadmissible} \textbf{(i)} and \textbf{(iv)} imply that a maximal $q$-resistance $\left(r,1\right)$-type $P'$-path is either of the form $X^{1,0,2}$ or of the form $X^{0,1,1}$, where $X$ is a maximal $q$-resistance good $\left(r,2\right)$-type $P$-path. Proposition~\ref{lemma2blockextadmissible} \textbf{(ii)} implies that the weights of $X^{1,0,2}$ and $X^{0,1,1}$ have the same sign. In particular, the leading coefficient of the $\left(r,1\right)$-entry of $\beta_4\left(P'\right)$ is non-zero modulo $t$. If $P'\neq \sigma$, then the hypothesis implies that $a_{p+1}>2$ and $X^{1,0,2}$ and $X^{0,1,1}$ are good $\left(r,1\right)$-type $P'$-paths. Proposition~\ref{lemma2blockextadmissible} \textbf{(ii)} implies that $d_{\lambda}^{P'} = d_{\lambda}^{P} + b_p + a_{p+1} > d_{\mu}^{P'}$ and $\lambda_{0}^{P'} = 2\lambda_{0}^{P}$. If $P'\neq \sigma$, then we deduce that $P'$ is strongly $r$-regular.
 
\item[(e)] (We assume that $a_p = b_p + 1$ and $b_p = 1$) In this case, $a_p = 2$ and $b_p = 1$. Proposition~\ref{lemma2blockextadmissible} \textbf{(i)}, \textbf{(ii)} and \textbf{(iv)} imply that a maximal $q$-resistance (non-cancelling) $\left(r,1\right)$-type $P'$-path is of the form $X^{1,0,2}$, where $X$ is a maximal $q$-resistance good $\left(r,2\right)$-type $P$-path. In particular, the leading coefficient of the $\left(r,1\right)$-entry of $\beta_4\left(P'\right)$ is non-zero modulo $t$. If $P'\neq \sigma$, then the hypothesis implies that $a_{p+1}>2$ and $X^{1,0,2}$ is a good $\left(r,1\right)$-type $P'$-path. Proposition~\ref{lemma2blockextadmissible} \textbf{(ii)} implies that $d_{\lambda}^{P'} = d_{\lambda}^{P} + a_{p+1} + 1 > d_{\mu}^{P'}$ and $\lambda_{0}^{P'} = \lambda_{0}^{P}$. If $P'\neq \sigma$, then we deduce that $P'$ is strongly $r$-regular.
\end{description}
If $\mu_{1,0}^{P}$ is a non-zero integer with only $2$ as a prime factor and if $w_{\lambda}^{P} = 0 = w_{\mu,3}^{P}$, then the proof of the statement is similar. We replace all applications of Proposition~\ref{lemma2blockextadmissible} \textbf{(iv)} with applications of Proposition~\ref{lemma2blockextadmissibleextra} \textbf{(i)}, and this case is effectively equivalent to the case where $P$ is strongly $r$-regular with the value of $a_p$ reduced by $1$. (Otherwise, a minor modification is that if $a_p = 2$ and $b_p = 1$, then a maximal $q$-resistance (non-cancelling) $\left(r,1\right)$-type $P'$-path is of the form $Y^{0,1,2}$, where $Y$ is a maximal $q$-resistance $1$-switch $\left(r,2\right)$-type $P$-path.) 
\end{proof}

We needed to consider the extra case where $\mu_{1,0}^{P}$ is a non-zero integer with only $2$ as a prime factor and $w_{\lambda}^{P} = 0 = w_{\mu,3}^{P}$ in the statement of Lemma~\ref{l2blockstronglyregular}, because of the special case mentioned prior to the statement of Proposition~\ref{lemma2blockextadmissibleextra}. We will handle this case in the proof of Lemma~\ref{initial}.

We now parametrize extensions of $P$-paths to $P'$-paths, where $P'\setminus P$ is a $3$-block.

\begin{definition}
\label{def3blockextension}
Let $B = \sigma_2^{c_{p-1}}\sigma_3\sigma_2^{c_p}$ be a $3$-block in $\sigma$ and let $P = \left(\prod_{i=1}^{p-1} \sigma_1^{a_i}\sigma_3^{b_i}\sigma_2^{c_i}\right)\sigma_1^{a_{p-1}}$ be the $s$-subproduct of $\sigma$ immediately preceding $B$ for $s\in \{1,3\}$ (Lemma~\ref{initialblocksingularblockconstraint} \textbf{(iv)} implies that $b_{p-1} = 0$). Let $P'=PB$. 

\begin{description}
\item[(a)] Let $W$ be an $\left(r,1\right)$-type $P$-path.
\begin{description}
\item[(i)] If $1\leq \alpha\leq c_{p-1}$, then we define $W^{\alpha,0}$ to be the extension of $W$ to an $\left(r,3\right)$-type $P'$-path by a first vertex change at the $\alpha$th $\sigma_2$ in $\sigma_2^{c_{p-1}}$ and a second vertex change at $\sigma_3$.
\item[(ii)] If $1\leq \alpha\leq c_{p-1}$ and $1\leq \beta\leq c_p$, then we define $W^{\alpha,\beta}$ to be the extension of $W$ to an $\left(r,2\right)$-type $P'$-path by extending $W^{\alpha,0}$ by a vertex change at the $\beta$th $\sigma_2$ in $\sigma_2^{c_p}$.
\item[(iii)] If $1\leq \gamma\leq c_p$, then we define $W^{\gamma}$ to be the extension of $W$ to an $\left(r,2\right)$-type $P'$-path by a single vertex change at the $\gamma$th $\sigma_2$ in $\sigma_2^{c_p}$.
\end{description}
\item[(b)] Let $W$ be an $\left(r,3\right)$-type $P$-path.
\begin{description}
\item[(i)] If $1\leq \alpha < c_{p-1}$, then we define $W^{\alpha,0}$ to be the extension of $W$ to an $\left(r,3\right)$-type $P'$-path by a first vertex change at the $\alpha$th $\sigma_2$ in $\sigma_2^{c_{p-1}}$ and a second vertex change at $\sigma_3$.
\item[(ii)] If $\alpha = c_{p-1}$, then we define $W^{\alpha,0}$ to be the extension of $W$ to an $\left(r,2\right)$-type $P'$-path by a single vertex change at the $c_{p-1}$th $\sigma_2$ in $\sigma_2^{c_{p-1}}$ (and no further vertex change).
\item[(iii)] If $1\leq \alpha<c_{p-1}$ and $1\leq \beta\leq c_p$, then we define $W^{\alpha,\beta}$ to be the extension of $W$ to an $\left(r,2\right)$-type $P'$-path by extending $W^{\alpha,0}$ by a vertex change at the $\beta$th $\sigma_2$ in $\sigma_2^{c_p}$.
\end{description}
\end{description}
\end{definition}

We now characterize admissible extensions of $P$-paths to $P'$-paths, and determine a formula for their weights, in the case where $P'\setminus P$ is a $3$-block.

\begin{proposition} 
\label{lemma3blockextadmissible}
Let $B = \sigma_2^{c_{p-1}}\sigma_3\sigma_2^{c_p}$ be a $3$-block in $\sigma$ and let $P = \left(\prod_{i=1}^{p-2} \sigma_1^{a_i}\sigma_3^{b_i}\sigma_2^{c_i}\right)\sigma_1^{a_{p-1}}$ be the $s$-subproduct in $\sigma$ immediately preceding $B$ for $s\in \{1,3\}$. Let $P' = PB$. 

\begin{description}
\item[(a)] If a $P$-path $W$ has an admissible extension to a $P'$-path, then $W$ is either an $\left(r,1\right)$-type $P$-path or an $\left(r,3\right)$-type $P$-path. An extension of $W$ to a $P'$-path is of one of the forms in Definition~\ref{def3blockextension}.
\item[(b)] Let $W$ be an $\left(r,1\right)$-type $P$-path.
\begin{description}
\item[(i)] The weight of $W^{\alpha,0}$ is the product of $q^{c_{p-1}-\alpha}$ with the weight of $W$. The weight of $W^{\alpha,\beta}$ is the product of $-q^{c_{p-1}-\alpha+c_p-\beta+1}$ with the weight of $W$. The weight of $W^{\gamma}$ is the product of $q^{c_p-\gamma}$ with the weight of $W$.
\item[(ii)] If $X$ is a good $\left(r,1\right)$-type $P$-path, then $X^{1,0}$ is the maximal $q$-resistance extension of $X$ to an $\left(r,3\right)$-type $P'$-path. If $Y$ is a bad $\left(r,1\right)$-type $P$-path, then $Y^{2,0}$ is the maximal $q$-resistance extension of $Y$ to an $\left(r,3\right)$-type $P'$-path. (If $c_{p-1} = 1$, then a bad $\left(r,1\right)$-type $P$-path $Y$ does not have an admissible extension to an $\left(r,3\right)$-type $P'$-path.) 
\item[(iii)] If $X$ is a good $\left(r,1\right)$-type $P$-path, then $X^{1,2}$ is the maximal $q$-resistance extension of $X$ to an $\left(r,2\right)$-type $P'$-path of the form $X^{\alpha,\beta}$. If $Y$ is a bad $\left(r,1\right)$-type $P$-path, then $Y^{2,2}$ is the maximal $q$-resistance extension of $Y$ to an $\left(r,2\right)$-type $P'$-path of the form $Y^{\alpha,\beta}$. (If $c_{p-1} = 1$, then a bad $\left(r,1\right)$-type $P$-path $Y$ does not have an admissible extension to an $\left(r,2\right)$-type $P'$-path of the form $Y^{\alpha,\beta}$.)
\item[(iv)] The maximal $q$-resistance extension of $W$ to an $\left(r,2\right)$-type $P'$-path of the form $W^{\gamma}$ is $W^{1}$. 
\item[(v)] If $1\leq \gamma\leq c_p - 2$, then the $\left(r,2\right)$-type $P'$-paths $W^{c_{p-1},\gamma+1}$ and $W^{\gamma}$ constitute a cancelling pair if both are admissible, in the sense that there is a one-to-one correspondence between extensions of $W^{c_{p-1},\gamma+1}$ and $W^{\gamma}$ to $\sigma$-paths such that corresponding extensions have cancelling weights.
\end{description}
\item[(c)] Let $W$ be an $\left(r,3\right)$-type $P$-path.
\begin{description}
\item[(i)] The weight of $W^{\alpha,0}$ is the product of $-q^{c_{p-1}-\alpha+1}$ with the weight of $W$. The $\left(r,2\right)$-type $P'$-path $W^{c_{p-1},0}$ is admissible and the weight of $W^{c_{p-1},0}$ is the product of $-q^{c_p+1}$ with the weight of $W$. The weight of $W^{\alpha,\beta}$ is the product of $q^{c_{p-1}-\alpha+c_p-\beta+2}$ with the weight of $W$.
\item[(ii)] If $c_{p-1}>1$, then the maximal $q$-resistance extension of $W$ to an $\left(r,3\right)$-type $P'$-path is $W^{1,0}$. If $c_{p-1} = 1$, then there is no admissible extension of $W$ to an $\left(r,3\right)$-type $P'$-path. 
\item[(iii)] If $c_{p-1}>2$ and $c_p>1$, then the maximal $q$-resistance extension of $W$ to an $\left(r,2\right)$-type $P'$-path is $W^{1,2}$. If $c_{p-1}=2$ and $c_p > 1$, then a maximal $q$-resistance extension of $W$ to an $\left(r,2\right)$-type $P'$-path is either of the form $W^{1,2}$ or of the form $W^{2,0}$ (which have cancelling weights). If either $c_{p-1} = 1$ or $c_p = 1$, then the only admissible extension of $W$ to an $\left(r,2\right)$-type $P'$-path is $W^{c_{p-1},0}$. 
\end{description}
\end{description}
\end{proposition}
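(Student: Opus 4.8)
The plan is to reduce the entire statement to two mechanical ingredients applied within the block: the edge-weight dictionary of Figure~\ref{fig1}, which determines the weight of any extension by a direct monomial count, and Lemma~\ref{lemmagoodext}, which governs exactly when an extension is admissible and whether it is good or bad. Throughout I will use that $P$ ends in $\sigma_1^{a_{p-1}}$ with $b_{p-1}=0$ (Lemma~\ref{initialblocksingularblockconstraint}~\textbf{(iv)}), so that every $P$-path terminates \emph{a priori} at vertex $1$, $2$, or $3$, and that the block reads $\sigma_2^{c_{p-1}}\sigma_3\sigma_2^{c_p}$.

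For part~\textbf{(a)}, I would trace the admissible vertex sequences through $B$ using Figure~\ref{fig1}. The picture of $\sigma_2$ shows that vertex $2$ is absorbing (from $2$ one can only take $2\to 2$), so a path reaching $2$ inside $\sigma_2^{c_{p-1}}$ stays at $2$ until the $\sigma_3$, where it may either remain at $2$ or take $2\to 3$; a symmetric remark holds inside $\sigma_2^{c_p}$. Consequently the only way to produce a vertex change inside $B$ is to enter it at vertex $1$ or $3$, which forces $W$ to be an $(r,1)$- or $(r,3)$-type $P$-path; a $P$-path ending at vertex $2$ is absorbed and its extensions are ruled out as inadmissible by Lemma~\ref{lemmagoodext}~\textbf{(iv)}--\textbf{(v)} together with the $\zeta$, $\eta$, and $\theta$ pairs of Definition~\ref{dispair}. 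Enumerating the remaining cases according to where (if at all) the path leaves vertex $1$ (resp.\ $3$) within $\sigma_2^{c_{p-1}}$, whether it crosses $\sigma_3$, and where it leaves the intermediate vertex within $\sigma_2^{c_p}$ yields precisely the forms $W^{\alpha,0}$, $W^{\alpha,\beta}$, $W^{\gamma}$ (for $(r,1)$-type $W$) and $W^{\alpha,0}$, $W^{\alpha,\beta}$ (for $(r,3)$-type $W$) of Definition~\ref{def3blockextension}. This is the $3$-block analogue of Proposition~\ref{lemma2blockextadmissible}~\textbf{(i)}.

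The weight formulas in~\textbf{(b)}~\textbf{(i)} and~\textbf{(c)}~\textbf{(i)} are then immediate monomial counts from Figure~\ref{fig1}: a constant edge $2\to 2$ at a $\sigma_2$ contributes $q$, an $i+1\to i$ change contributes $-q$ and an $i\to i+1$ change contributes $+1$, while every other edge traversed contributes $1$; so that, for instance, $W^{\alpha,\beta}$ for an $(r,3)$-type $W$ acquires $(-q)(q^{c_{p-1}-\alpha})(-q)(q^{c_p-\beta})=q^{c_{p-1}-\alpha+c_p-\beta+2}$. Since in every formula the exponent of $q$ is strictly decreasing in each of $\alpha$, $\beta$, $\gamma$, maximal $q$-resistance is achieved by making each vertex change as early as the admissibility constraints permit. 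For the good paths this earliest change is at the first available generator by Lemma~\ref{lemmagoodext}~\textbf{(i)}, giving $X^{1,0}$, $X^{1,2}$, $W^{1}$ and $W^{1,0}$; for the bad paths Lemma~\ref{lemmagoodext}~\textbf{(ii)} forbids the change at the first $\sigma_2$, pushing it to the second and giving $Y^{2,0}$, $Y^{2,2}$, together with the degeneracies when $c_{p-1}=1$ or $c_p=1$ where no such extension exists. This establishes~\textbf{(b)}~\textbf{(ii)}--\textbf{(iv)} and~\textbf{(c)}~\textbf{(ii)}.

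The genuinely delicate points are the cancelling-pair statements~\textbf{(b)}~\textbf{(v)} and the second clause of~\textbf{(c)}~\textbf{(iii)}. Here I would first observe from~\textbf{(b)}~\textbf{(i)} and~\textbf{(c)}~\textbf{(i)} that $W^{c_{p-1},\gamma+1}$ and $W^{\gamma}$ (resp., when $c_{p-1}=2$, the candidate maxima $W^{1,2}$ and $W^{2,0}$) carry the \emph{same} power of $q$ but \emph{opposite} signs; the two paths coincide from the point at which both have returned to vertex $2$ onward, so that their symmetric difference is confined to $B$. A one-to-one, weight-negating correspondence between their extensions to $\sigma$-paths is then given by the identity map on this common tail, and Lemma~\ref{lemmagoodext}~\textbf{(i)} guarantees that an extension of one is admissible exactly when the corresponding extension of the other is. The main obstacle is precisely this bookkeeping: one must verify that the local pictures of these larger cancellations do not themselves coincide with a distinguished pair of Definition~\ref{dispair} (so that no path is double-counted against Proposition~\ref{m=ap}), and one must treat separately the small-exponent boundary cases $c_{p-1}\in\{1,2\}$ and $c_p=1$, where the expected maximal extension either carries cancelling weights (the $c_{p-1}=2$ case, with $W^{1,2}$ and $W^{2,0}$) or ceases to exist. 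Collecting the non-cancelling maxima that survive this analysis yields the remaining clauses of~\textbf{(c)}~\textbf{(iii)}.
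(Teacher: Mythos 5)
Your overall strategy --- the edge-weight dictionary of Figure~\ref{fig1} for the monomial counts and Lemma~\ref{lemmagoodext} for admissibility, with the same case split --- is the paper's approach, and your treatments of \textbf{(a)}, of the weight formulas in \textbf{(b)}~\textbf{(i)} and \textbf{(c)}~\textbf{(i)}, and of the cancelling pair in \textbf{(b)}~\textbf{(v)} are sound. However, there is a genuine gap in your maximality arguments for extensions of the form $W^{\alpha,\beta}$, i.e., in \textbf{(b)}~\textbf{(iii)} and the first clause of \textbf{(c)}~\textbf{(iii)}. Your stated principle --- ``for the good paths this earliest change is at the first available generator by Lemma~\ref{lemmagoodext}~\textbf{(i)}'' --- would yield $X^{1,1}$ and $W^{1,1}$, not $X^{1,2}$ and $W^{1,2}$: Lemma~\ref{lemmagoodext}~\textbf{(i)} applied to the good $P$-path governs only the \emph{first} vertex change, at the $\alpha$th $\sigma_2$ of $\sigma_2^{c_{p-1}}$; it says nothing about the return change at the $\beta$th $\sigma_2$ of $\sigma_2^{c_p}$, and in fact $X^{1,1}$ is \emph{not} admissible (it belongs to a $\theta$ distinguished pair). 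The missing idea is that good/bad status must be re-evaluated at the intermediate subproduct $P\sigma_2^{c_{p-1}}\sigma_3$: any path crossing the block's $\sigma_3$ by a $2\to 3$ vertex change has a $2\to 3$ change at the last $\sigma_3$ of a power preceded by no $\sigma_1$s, so by Definition~\ref{defgoodbadpath}~\textbf{(i)} (the $a_p=0$ branch) it is a \emph{bad} $\left(r,3\right)$-type path on that subproduct, \emph{regardless} of whether the original $P$-path was good. Lemma~\ref{lemmagoodext}~\textbf{(ii)} then forces $\beta\geq 2$, which is exactly why the maxima are $X^{1,2}$, $Y^{2,2}$, $W^{1,2}$, why the forms $W^{\alpha,\beta}$ disappear when $c_p=1$ in \textbf{(c)}~\textbf{(iii)}, and why the comparison against $W^{c_{p-1},0}$ (weight $\mp q^{c_p+1}$) comes out as stated only when $c_{p-1}>2$. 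As written, your justification proves a false statement; the two-stage bookkeeping (first index governed by the status of $W$, second index governed by the status of the intermediate extension) is the actual content here, and you only ever apply Lemma~\ref{lemmagoodext}~\textbf{(ii)} to the original $P$-path.

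The same root issue appears, more mildly, in your attribution of \textbf{(b)}~\textbf{(iv)} and \textbf{(c)}~\textbf{(ii)} to ``good paths.'' Statement \textbf{(b)}~\textbf{(iv)} holds for \emph{all} $\left(r,1\right)$-type $W$, including bad ones, precisely because the no-change passage through $\sigma_2^{c_{p-1}}\sigma_3$ (admissible by Lemma~\ref{lemmagoodext}~\textbf{(iv)}, since no $\sigma_1$ occurs there) turns $W$ into a vacuously good path on the intermediate subproduct, after which the $1\to 2$ change at the first $\sigma_2$ of $\sigma_2^{c_p}$ is admissible by Lemma~\ref{lemmagoodext}~\textbf{(i)}; this generality is not cosmetic, since the paper later needs $Y^{1}$ for \emph{bad} $Y$ in the proof of Lemma~\ref{l3blockstronglyregular}~\textbf{(ii)}. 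Similarly, in \textbf{(c)} every $\left(r,3\right)$-type $P$-path is automatically good (because $b_{p-1}=0$ makes Definition~\ref{defgoodbadpath}~\textbf{(i)} vacuous), so the good/bad dichotomy you import from \textbf{(b)} is not the operative mechanism there; what controls \textbf{(c)}~\textbf{(ii)} and \textbf{(c)}~\textbf{(iii)} is the switch status created by a change at the $c_{p-1}$th $\sigma_2$ (Lemma~\ref{lemmagoodext}~\textbf{(iii)}) together with the intermediate badness described above.
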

\begin{proof}
\begin{description}
\item[(a)] The first statement is a consequence of Lemma~\ref{lemmagoodext} \textbf{(v)}. Furthermore, Lemma~\ref{lemmagoodext} \textbf{(iv)} implies that an analogue of the extension $W^{\gamma}$ of an $\left(r,1\right)$-type $P$-path $W$, for an $\left(r,3\right)$-type $P$-path $W$, is not admissible. Lemma~\ref{lemmagoodext} \textbf{(iii)} implies that that an $\left(r,3\right)$-type $P$-path does not extend to an $\left(r,2\right)$-type $P'$-path by a vertex change at the $c_{p-1}$th $\sigma_2$ in $\sigma_2^{c_{p-1}}$ and a further vertex change (a further vertex change, if it exists, would have to occur at $\sigma_3$). The second statement is a consequence of the previous two sentences and Lemma~\ref{lemmagoodext} \textbf{(v)}.  
\item[(b)] 
\begin{description}
\item[(i)] The statement is a direct computation based on the pictures of $\sigma_2$ and $\sigma_3$ in Figure~\ref{fig1}. 
\item[(ii)] The statement is a consequence of \textbf{(b)} \textbf{(i)} and Lemma~\ref{lemmagoodext} \textbf{(i)} and \textbf{(ii)}.
\item[(iii)] The statement is a consequence of \textbf{(b)} \textbf{(i)} and Lemma~\ref{lemmagoodext} \textbf{(i)} and \textbf{(ii)}.
\item[(iv)] The statement is a consequence of \textbf{(b)} \textbf{(i)} and Lemma~\ref{lemmagoodext} \textbf{(i)} and \textbf{(iv)}.
\item[(v)] The part \textbf{(b)} \textbf{(i)} implies that $W^{c_{p-1},\gamma+1}$ and $W^{\gamma}$ are good $\left(r,2\right)$-type $P'$-paths with cancelling weights, if both are admissible. The statement is now a consequence of Lemma~\ref{lemmagoodext} \textbf{(i)}.
\end{description}
\item[(c)] 
\begin{description}
\item[(i)] The statement is a direct computation based on the pictures of $\sigma_2$ and $\sigma_3$ in Figure~\ref{fig1}. The fact that $W^{c_{p-1},0}$ is an admissible $\left(r,2\right)$-type $P'$-path is a variant of Figure~\ref{fig9} (translated one unit to the right).
\item[(ii)] The statement is a consequence of \textbf{(c)} \textbf{(i)} and Lemma~\ref{lemmagoodext} \textbf{(i)} and \textbf{(iii)}. 
\item[(iii)] The statement is a consequence of \textbf{(c)} \textbf{(i)} and Lemma~\ref{lemmagoodext} \textbf{(i)} and \textbf{(iii)}.
\end{description}
\end{description}
\end{proof}

We now establish an analog of Corollary~\ref{l2blockstronglyregular} in the case where $P'\setminus P$ is a normal $3$-block.

\begin{lemma}
\label{l3blockstronglyregular}
Let $B = \sigma_2^{c_{p-1}}\sigma_3\sigma_2^{c_p}$ be a normal $3$-block in $\sigma$. Let $P = \left(\prod_{i=1}^{p-2} \sigma_1^{a_i}\sigma_3^{b_i}\sigma_2^{c_i}\right)\sigma_1^{a_{p-1}}$ be the $s$-subproduct of $\sigma$ immediately preceding $B$ for $s\in \{1,3\}$. Let $P_0 = \prod_{i=1}^{p-2} \sigma_1^{a_i}\sigma_3^{b_i}\sigma_2^{c_i}$ be the maximal proper $2$-subproduct of $P$ and let $P' = PB$. Let $t\neq 2$ be a prime number. 

\begin{description}
\item[(i)] Let us assume that $P$ is strongly $r$-regular. If $c_{p-1}>1$, then $P'$ is strongly $r$-regular and the leading coefficients of multiple entries in the $r$th row of $\beta_4\left(P'\right)$ are non-zero modulo $t$.
\item[(ii)] Let us assume that $\mu_0^{P}$ is a non-zero integer with only $2$ as a prime factor, $d_{\mu}^{P} + a_{p-1}\geq d_{\lambda}^{P} + 1$, and $d_{\nu}^{P}+3\leq d_{\mu}^{P}$. If $P_0$ is strongly $r$-regular, then these conditions on $P$-paths are satisfied. If $c_{p-1} = 1$, then $P'$ is strongly $r$-regular and the leading coefficients of multiple entries in the $r$th row of $\beta_4\left(P'\right)$ are non-zero modulo $t$.
\end{description}
\end{lemma}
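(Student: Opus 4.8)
The plan is to analyze the admissible $(r,2)$-type $P'$-paths by pushing the $(r,1)$-type and $(r,3)$-type $P$-paths through the single-$\sigma_2$ $3$-block $B=\sigma_2\sigma_3\sigma_2^{c_p}$, using Proposition~\ref{lemma3blockextadmissible} as the main tool. First I would record the normality constraint: since $B$ is a normal $3$-block with $c_{p-1}=1$, Definition~\ref{dnormalblock}\,\textbf{(ii)} forces $a_{p-1}+1<c_p$, so $c_p\ge a_{p-1}+2$, and since a normal $3$-block also satisfies $c_p\ne 2$ we get $c_p\ge 3$ in all cases. In particular $P'$ is a $2$-subproduct whose final $\sigma_2$-exponent is $c_p\ge 2$, so strong $r$-regularity of $P'$ is governed by Definition~\ref{defrregular}\,\textbf{(a)}\,\textbf{(i)}: I must produce $\lambda_0^{P'}$ non-zero with only $2$ as a prime factor and $d_\lambda^{P'}>\max\{d_{\mu,1}^{P'},d_{\mu,3}^{P'}\}$. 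By Proposition~\ref{lemma3blockextadmissible}\,\textbf{(a)} every admissible $P'$-path is the extension of an $(r,1)$-type or $(r,3)$-type $P$-path, so it suffices to track these families.

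The crux is an asymmetry special to $c_{p-1}=1$. For a good $(r,1)$-type $P$-path $X$, the extensions of the forms $X^{1,\beta}$ and $X^{\gamma}$ cancel in pairs by Proposition~\ref{lemma3blockextadmissible}\,\textbf{(b)}\,\textbf{(v)} (the pairs $X^{1,\gamma+1}\leftrightarrow X^{\gamma}$ for $1\le\gamma\le c_p-2$), while $X^{1,1}$ is inadmissible; the only survivors have degree at most $d_\lambda^P+1$. By contrast, a bad $(r,1)$-type $P$-path $Y$ cannot extend in the form $Y^{\alpha,\beta}$ when $c_{p-1}=1$ (the parenthetical in Proposition~\ref{lemma3blockextadmissible}\,\textbf{(b)}\,\textbf{(iii)}), so its extensions $Y^{\gamma}$ have no cancelling partner and all survive; the maximal one, $Y^1$, is a good $(r,2)$-type $P'$-path of degree $d_\mu^P+c_p-1$ and weight $q^{c_p-1}$ times that of $Y$. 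I would then compare degrees: the good-path survivors contribute degree $\le d_\lambda^P+1$; the $(r,3)$-type paths contribute only $V^{1,0}$ of degree $d_\nu^P+c_p+1$ (Proposition~\ref{lemma3blockextadmissible}\,\textbf{(c)}); and using $d_\mu^P+a_{p-1}\ge d_\lambda^P+1$, $d_\nu^P+3\le d_\mu^P$, and $c_p\ge a_{p-1}+2$ one checks that $d_\mu^P+c_p-1$ strictly exceeds $d_\lambda^P+1$, $d_\nu^P+c_p+1$, $d_{\mu,1}^{P'}\le\max\{d_\lambda^P,d_\mu^P\}$, and $d_{\mu,3}^{P'}\le d_\lambda^P+1$. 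Hence $d_\lambda^{P'}=d_\mu^P+c_p-1$ with $\lambda_0^{P'}=\mu_0^P$, so $P'$ is strongly $r$-regular; by Proposition~\ref{pnotationentries}\,\textbf{(a)}\,\textbf{(i)} the $(r,2)$-entry of $\beta_4(P')$ has leading coefficient $\mu_0^P\ne 0\pmod t$.

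For the required second non-zero entry I would use the $(r,1)$-entry of $\beta_4(P')$. Since $B$ contains no $\sigma_1$, every $(r,1)$-type $P$-path extends to an $(r,1)$-type $P'$-path by remaining at vertex $1$ with unchanged weight (admissible by Lemma~\ref{lemmagoodext}\,\textbf{(iv)}), and by Proposition~\ref{lemma3blockextadmissible} no $(r,3)$-type $P$-path reaches vertex $1$; thus the $(r,1)$-entry of $\beta_4(P')$ equals $w_\lambda^P+w_\mu^P$, the $(r,1)$-entry of $\beta_4(P)$. To control its leading coefficient I invoke that $P_0$ is strongly $r$-regular: applying Proposition~\ref{p21/3ext} to the extension $P_0\subset P=P_0\sigma_1^{a_{p-1}}$ (the case $b_{p-1}=0$, $\psi=0$) gives $\mu_0^P=-\lambda_0^{P_0}$, together with $w_\lambda^P=0$ when $a_{p-1}=1$, and $\lambda_0^P=-\lambda_0^{P_0}$ with $d_\lambda^P>d_\mu^P$ when $a_{p-1}>1$. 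In either case the leading coefficient of $w_\lambda^P+w_\mu^P$ is $-\lambda_0^{P_0}$, which is non-zero with only $2$ as a prime factor, so the $(r,1)$-entry is non-zero modulo $t$ and multiple entries of the $r$th row are non-zero.

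The main obstacle is the second paragraph: making the cancellation bookkeeping rigorous. The whole argument hinges on the fact that, precisely because $c_{p-1}=1$, the good $(r,1)$-type paths collapse (their high-degree extensions annihilate in the pairs of Proposition~\ref{lemma3blockextadmissible}\,\textbf{(b)}\,\textbf{(v)}, and $X^{1,1}$ is obstructed), whereas the bad $(r,1)$-type paths, which in the normal regime carry the large degree $d_\mu^P$, extend without any partner and therefore dominate. Verifying that no unexpected admissible $(r,2)$-type path of degree $\ge d_\mu^P+c_p-1$ is produced — in particular that the surviving good-path and $(r,3)$-path contributions genuinely sit strictly below this degree under the three hypotheses and the normality bound $c_p\ge a_{p-1}+2$ — is the delicate step; the remainder is degree arithmetic.
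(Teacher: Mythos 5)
Your treatment of the case $c_{p-1}=1$ is correct and is essentially the paper's own argument for part \textbf{(ii)}: the bad $\left(r,1\right)$-type $P$-paths extend only in the form $Y^{\gamma}$ and have no cancelling partners (Proposition~\ref{lemma3blockextadmissible} \textbf{(b)} \textbf{(iii)} and \textbf{(iv)}), so $Y^{1}$ survives with degree $d_{\mu}^{P}+c_p-1$; the good $\left(r,1\right)$-type paths collapse to degree $d_{\lambda}^{P}+1$ via the cancelling pairs of Proposition~\ref{lemma3blockextadmissible} \textbf{(b)} \textbf{(v)}; the $\left(r,3\right)$-type contribution is eliminated by $d_{\nu}^{P}+3\leq d_{\mu}^{P}$; normality ($c_p>a_{p-1}+1$) together with $d_{\mu}^{P}+a_{p-1}\geq d_{\lambda}^{P}+1$ makes $Y^{1}$ dominant, giving $d_{\lambda}^{P'}=d_{\mu}^{P}+c_p-1$ and $\lambda_{0}^{P'}=\mu_{0}^{P}$; and the $\left(r,1\right)$-entry is preserved by Lemma~\ref{lemmagoodext} \textbf{(iv)} and controlled through $P_0$ via Proposition~\ref{p21/3ext}. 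All of this agrees with the paper.

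The genuine gap is that the lemma has two parts and you prove only one. Part \textbf{(i)} --- $P$ strongly $r$-regular and $c_{p-1}>1$ --- is never addressed: your opening sentence already specializes to the ``single-$\sigma_2$'' block $\sigma_2\sigma_3\sigma_2^{c_p}$, and your entire mechanism is specific to $c_{p-1}=1$. When $c_{p-1}>1$ that mechanism fails outright: a bad $\left(r,1\right)$-type path $Y$ then \emph{does} admit extensions of the form $Y^{\alpha,\beta}$ (with $\alpha\geq 2$), so the ``no cancelling partner'' asymmetry disappears, and the maximal $q$-resistance $\left(r,2\right)$-type $P'$-path is instead $X^{1,2}$, the extension of a maximal $q$-resistance \emph{good} $\left(r,1\right)$-type $P$-path, which is a good $P'$-path because normality forces $c_p>2$; one then obtains $d_{\lambda}^{P'}=d_{\lambda}^{P}+c_{p-1}+c_p-2$ and $\lambda_{0}^{P'}=-\lambda_{0}^{P}$, after using $d_{\nu}^{P}+3\leq d_{\lambda}^{P}$ (from strong $r$-regularity of $P$, Definition~\ref{defrregular} \textbf{(b)} \textbf{(i)}) to rule out the $\left(r,3\right)$-type extensions $Z^{1,2}$ and $Z^{2,0}$. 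None of this follows from your case analysis, since there the dominant paths are bad rather than good and the degree and sign formulas differ. A secondary omission: the second sentence of part \textbf{(ii)} (that strong $r$-regularity of $P_0$ implies the three stated conditions) is itself part of the statement; you derive $\mu_{0}^{P}=-\lambda_{0}^{P_0}$ and the relation between $d_{\lambda}^{P}$ and $d_{\mu}^{P}$ from Proposition~\ref{p21/3ext}, but you never verify the third condition $d_{\nu}^{P}+3\leq d_{\mu}^{P}$, which requires $w_{\nu}^{P}=w_{\nu}^{P_0}$ (Proposition~\ref{p21/3ext} \textbf{(iii)}) together with the bound $d_{\lambda}^{P_0}>d_{\mu,3}^{P_0}=d_{\nu}^{P_0}+1$ coming from strong $r$-regularity of $P_0$.
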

\begin{proof}
Firstly, if $P$ is strongly $r$-regular (Definition~\ref{defrregular} \textbf{(b)} \textbf{(i)}), then Proposition~\ref{pnotationentries} \textbf{(b)} \textbf{(i)} implies that the leading coefficient of the $\left(r,1\right)$-entry of $\beta_4\left(P\right)$ is non-zero modulo $t$. Lemma~\ref{lemmagoodext} \textbf{(iv)} implies that the $\left(r,1\right)$-entry of $\beta_4\left(P'\right)$ is equal to the $\left(r,1\right)$-entry of $\beta_4\left(P\right)$. In particular, if $P$ is strongly $r$-regular, then the leading coefficient of the $\left(r,1\right)$-entry of $\beta_4\left(P'\right)$ is non-zero modulo $t$.

We now establish each statement separately.
\begin{description}
\item[(i)] Firstly, we have established that the leading coefficient of the $\left(r,1\right)$-entry of $\beta_4\left(P'\right)$ is non-zero modulo $t$. The hypothesis that $B$ is a normal $3$-block implies that $c_p>2$. If $X$ is a maximal $q$-resistance good $\left(r,1\right)$-type $P$-path, then Proposition~\ref{lemma3blockextadmissible} \textbf{(b)} \textbf{(i)}, \textbf{(iii)} and \textbf{(iv)} and the hypothesis that $c_{p-1}>1$ imply that the maximal $q$-resistance extension of $X$ to an $\left(r,2\right)$-type $P'$-path is $X^{1,2}$. Furthermore, $X^{1,2}$ is a good $\left(r,2\right)$-type $P'$-path since $c_p>2$. 

Let $Z$ be a maximal $q$-resistance $\left(r,3\right)$-type $P$-path. If $c_{p-1}>2$, then Proposition~\ref{lemma3blockextadmissible} \textbf{(c)} \textbf{(iii)} implies that the maximal $q$-resistance extension of $Z$ to an $\left(r,2\right)$-type $P'$-path is $Z^{1,2}$. If $c_{p-1} = 2$, then Proposition~\ref{lemma3blockextadmissible} \textbf{(c)} \textbf{(iii)} implies that a maximal $q$-resistance extension of $Z$ to an $\left(r,2\right)$-type $P'$-path is either of the form $Z^{1,2}$ or of the form $Z^{2,0}$. In both cases, Proposition~\ref{lemma3blockextadmissible} \textbf{(b)} \textbf{(i)} and \textbf{(c)} \textbf{(i)} and the hypothesis that $P$ is strongly $r$-regular (the inequality $d_{\nu}^{P} + 3\leq d_{\lambda}^{P}$ in Definition~\ref{defrregular} \textbf{(b)} \textbf{(i)}) imply that the $q$-resistance of a maximal $q$-resistance extension of $Z$ to an $\left(r,2\right)$-type $P'$-path is strictly less than the $q$-resistance of $X^{1,2}$. 

Proposition~\ref{lemma3blockextadmissible} \textbf{(a)} now implies that a maximal $q$-resistance $\left(r,2\right)$-type $P'$-path is of the form $X^{1,2}$, where $X$ is a maximal $q$-resistance good $\left(r,1\right)$-type $P$-path. In particular, Proposition~\ref{lemma3blockextadmissible} \textbf{(b)} \textbf{(i)} implies that $d_{\lambda}^{P'} = d_{\lambda}^{P} + c_{p-1} + c_p - 2 >\max\{d_{\mu,1}^{P'},d_{\mu,3}^{P'}\}$ and $\lambda_{0}^{P'} = -\lambda_{0}^{P}$. We deduce that $P'$ is strongly $r$-regular and the leading coefficient of the $\left(r,2\right)$-entry of $\beta_4\left(P'\right)$ is non-zero modulo $t$.

\item[(ii)] Firstly, if $P_0$ is strongly $r$-regular, then Proposition~\ref{p21/3ext} \textbf{(i)} and \textbf{(iii)} imply that the conditions on $P$-paths in the first sentence are satisfied. Furthermore, this implies that $P$ is strongly $r$-regular, and we have established that the leading coefficient of the $\left(r,1\right)$-entry of $\beta_4\left(P'\right)$ is non-zero modulo $t$. 

 Let us assume that the conditions on $P$-paths in the first sentence are satisfied. If $Y$ is a maximal $q$-resistance bad $\left(r,1\right)$-type $P$-path, then Proposition~\ref{lemma3blockextadmissible} \textbf{(b)} \textbf{(iii)} and \textbf{(iv)} imply that the maximal $q$-resistance extension of $Y$ to an $\left(r,2\right)$-type $P'$-path is $Y^{1}$. If $X$ is a maximal $q$-resistance good $\left(r,1\right)$-type $P$-path, then Proposition~\ref{lemma3blockextadmissible} \textbf{(b)} \textbf{(v)} implies that the maximal $q$-resistance extensions of $X$ to (non-cancelling) $\left(r,2\right)$-type $P'$-paths are $X^{1,c_p}$ and $X^{c_p-1}$. (Proposition~\ref{lemma3blockextadmissible} \textbf{(b)} \textbf{(i)} implies that $X^{1,c_p}$ and $X^{c_p-1}$ have cancelling weights. However, $X^{1,c_p}$ is a $3$-switch $\left(r,2\right)$-type $P'$-path and $X^{c_p-1}$ is a good $\left(r,2\right)$-type $P'$-path. In particular, they may not constitute a cancelling pair, in the sense that there may not be a one-to-one correspondence between the set of extensions of $X^{1,c_p}$ to a $\sigma$-path and the set of extensions of $X^{c_p-1}$ to a $\sigma$-path.) 

The hypothesis that $B$ is a normal block implies that $c_p>a_{p-1}+1$. Proposition~\ref{lemma3blockextadmissible} \textbf{(b)} \textbf{(i)} implies that the $q$-resistance of $Y^{1}$ is strictly greater than the $q$-resistance of $X^{1,c_p}$ and $X^{c_p -1}$. We deduce that a maximal $q$-resistance extension of an $\left(r,1\right)$-type $P$-path to an $\left(r,2\right)$-type $P'$-path is of the form $Y^{1}$, where $Y$ is a maximal $q$-resistance bad $\left(r,1\right)$-type $P$-path. 

If $Z$ is a maximal $q$-resistance $\left(r,3\right)$-type $P$-path, then Proposition~\ref{lemma3blockextadmissible} \textbf{(c)} \textbf{(iii)} implies that the only extension of $Z$ to an $\left(r,2\right)$-type $P'$-path is $Z^{1,0}$. Proposition~\ref{lemma3blockextadmissible} \textbf{(b)} \textbf{(i)} and \textbf{(c)} \textbf{(i)} and the hypothesis $d_{\nu}^{P} +3\leq d_{\mu}^{P}$ imply that the $q$-resistance of the maximal $q$-resistance extension of $Z$ to an $\left(r,2\right)$-type $P'$-path is strictly less than the $q$-resistance of $Y^{1}$. 

Proposition~\ref{lemma3blockextadmissible} \textbf{(a)} now implies that a maximal $q$-resistance $\left(r,2\right)$-type $P'$-path is of the form $Y^{1}$, where $Y$ is a maximal $q$-resistance bad $\left(r,1\right)$-type $P$-path. Furthermore, $Y^{1}$ is a good $\left(r,2\right)$-type $P'$-path since $c_p>1$. In particular, Proposition~\ref{lemma3blockextadmissible} \textbf{(b)} \textbf{(i)} implies that $d_{\lambda}^{P'} = d_{\mu}^{P} + c_p - 1>\max\{d_{\mu,1}^{P'},d_{\mu,3}^{P'}\}$ and $\lambda_{0}^{P'} = \mu_{0}^{P}$. We deduce that $P'$ is strongly $r$-regular and the leading coefficient of the $\left(r,2\right)$-entry of $\beta_4\left(P'\right)$ is non-zero modulo $t$.
\end{description}
Therefore, the statement is established.
\end{proof}

We combine Lemma~\ref{l2blockstronglyregular} and Lemma~\ref{l3blockstronglyregular}. In particular, we establish an analog of Corollary~\ref{lemmaroadregular} in the case where $P'\setminus P$ is a normal block.

\begin{corollary}
\label{lemmablockstronglyregular}
Let $B$ be a normal block in $\sigma$. Let $P$ be the $s$-subproduct of $\sigma$ immediately preceding $B$ and let $P'=PB$. Let $t\neq 2$ be a prime number. 

If $B$ is a $2$-block, then let us assume that either $P$ is strongly $r$-regular, or $\mu_{1,0}^{P}$ is a non-zero integer with only $2$ as a prime factor and $w_{\lambda}^{P} = 0 = w_{\mu,3}^{P}$. If $B = \sigma_1\sigma_3\sigma_2\sigma_1^{a_{p+1}}$, then we assume that $P$ is strongly $r$-regular. 

If $B$ is a $3$-block, then let us assume that either $P$ is strongly $r$-regular, or $\mu_{0}^{P}$ is a non-zero integer with only $2$ as a prime factor, $d_{\mu}^{P} + a_{p-1}\geq d_{\lambda}^{P} + 1$, and $d_{\nu}^{P}+3\leq d_{\mu}^{P}$. 

The leading coefficients of multiple entries in the $r$th row of $\beta_4\left(P'\right)$ are non-zero modulo $t$. If $P'\neq \sigma$, then $B$ is a strongly $r$-regular block (i.e., $P'$ is strongly $r$-regular).
\end{corollary}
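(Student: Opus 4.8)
The plan is to prove the statement by a single case analysis on whether $B$ is a $2$-block or a $3$-block, reducing it in each case to the work already done. For a $2$-block I would invoke Lemma~\ref{l2blockstronglyregular}, and for a $3$-block Lemma~\ref{l3blockstronglyregular}; the content of the corollary is then entirely the verification that the disjunctive hypothesis imposed on the $s$-subproduct $P$ immediately preceding $B$ is exactly what each lemma consumes, together with reading off the exponent constraints that the normality of $B$ supplies through Definition~\ref{dnormalblock} and Proposition~\ref{pnormalblock}.

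First I would dispose of the $2$-block case. If $B = \sigma_1^{a_p}\sigma_3^{b_p}\sigma_2\sigma_1^{a_{p+1}}$, then the hypotheses of the corollary (either $P$ strongly $r$-regular, or $\mu_{1,0}^{P}$ a non-zero integer with only $2$ as a prime factor together with $w_{\lambda}^{P} = 0 = w_{\mu,3}^{P}$) are verbatim those of Lemma~\ref{l2blockstronglyregular}. The only identification to make is that the extra requirement ``$B = \sigma_1\sigma_3\sigma_2\sigma_1^{a_{p+1}} \Rightarrow P$ strongly $r$-regular'' in the corollary coincides with ``$a_p = 1 = b_p \Rightarrow P$ strongly $r$-regular'' in the lemma, which is immediate since $\sigma_1^{a_p}\sigma_3^{b_p} = \sigma_1\sigma_3$ precisely when $a_p = b_p = 1$. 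Lemma~\ref{l2blockstronglyregular} then yields at once both that the leading coefficients of multiple entries in the $r$th row of $\beta_4\left(P'\right)$ are non-zero modulo $t$ and, when $P' \neq \sigma$, that $P'$ is strongly $r$-regular.

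The $3$-block case $B = \sigma_2^{c_{p-1}}\sigma_3\sigma_2^{c_p}$ requires the further sub-division according to the value of $c_{p-1}$, which is exactly the division between the two parts of Lemma~\ref{l3blockstronglyregular}. When $c_{p-1} > 1$, I would invoke part \textbf{(i)}, which applies under the hypothesis that $P$ is strongly $r$-regular; when $c_{p-1} = 1$, I would invoke part \textbf{(ii)}, which applies under the hypothesis that $\mu_0^{P}$ is a non-zero integer with only $2$ as a prime factor and that the degree inequalities $d_{\mu}^{P} + a_{p-1} \geq d_{\lambda}^{P} + 1$ and $d_{\nu}^{P} + 3 \leq d_{\mu}^{P}$ hold. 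Each of these is one of the two disjuncts in the corollary's hypothesis, so the matching disjunct supplies precisely the input needed for the relevant part of the lemma. It is worth recording, as Lemma~\ref{l3blockstronglyregular}\textbf{(ii)} itself asserts, that the degree conditions follow whenever the maximal proper $2$-subproduct $P_0 \subseteq P$ is strongly $r$-regular; this is how the second disjunct typically arises, since extending a strongly $r$-regular $P_0$ across the intervening $\sigma_1^{a_{p-1}}$ can land $P$ in the ``bad-dominant'' state rather than keeping it strongly $r$-regular (for instance when $a_{p-1} = 1$, where Proposition~\ref{p21/3ext}\textbf{(i)} forces $d_{\lambda}^{P} = d_{\mu}^{P}$).

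The hard part will be the correct pairing of the disjunctive hypothesis with the value of $c_{p-1}$ rather than any new path construction. The one genuinely substantive point to confirm is that the normality of $B$ delivers exactly the exponent constraint each lemma consumes in its case analysis: for a $2$-block, Definition~\ref{dnormalblock}\textbf{(i)} excludes the offending values of $a_{p+1}$ appearing in cases \textbf{(a)}--\textbf{(e)} of Lemma~\ref{l2blockstronglyregular}; for a $3$-block, Definition~\ref{dnormalblock}\textbf{(ii)} forces $c_p \neq 2$ and, crucially in the $c_{p-1} = 1$ sub-case, forces $c_p > a_{p-1} + 1$, which is the inequality driving the domination argument by which the bad path $Y^{1}$ overtakes the good-path extensions $X^{1,c_p}$ and $X^{c_p - 1}$ in Lemma~\ref{l3blockstronglyregular}\textbf{(ii)}. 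Once these constraints are read off, the conclusion of the corollary is an immediate consequence of Lemma~\ref{l2blockstronglyregular} and Lemma~\ref{l3blockstronglyregular}.
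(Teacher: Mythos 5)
Your proposal is correct and takes essentially the same approach as the paper, whose entire proof is the single sentence that the statement ``is equivalent to the combination of Lemma~\ref{l2blockstronglyregular} and Lemma~\ref{l3blockstronglyregular}''; your matching of the disjunctive hypotheses to the $2$-block/$3$-block cases and to the values of $c_{p-1}$, together with reading off the exponent constraints from Definition~\ref{dnormalblock}, is exactly the implicit content of that one-line proof. One negligible slip in an illustrative aside: when $a_{p-1}=1$ the relevant citation is Proposition~\ref{p21/3ext}\textbf{(ii)} (which forces $w_{\lambda}^{P}=0$), not Proposition~\ref{p21/3ext}\textbf{(i)} forcing $d_{\lambda}^{P}=d_{\mu}^{P}$, but this parenthetical plays no role in the argument.
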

\begin{proof}
The statement is equivalent to the combination of Lemma~\ref{l2blockstronglyregular} and Lemma~\ref{l3blockstronglyregular}. 
\end{proof}

\subsubsection{Conclusion of the proof}
\label{subsubsecconclusion}

Finally, we address the base case of the induction and establish that the first non-initial block (if it exists) in a positive braid $\sigma$ indivisible by $\Delta$ is strongly $r$-regular for some $r\in \{1,2,3\}$, or (if there is no non-initial block in $\sigma$) the leading coefficients of multiple entries in a row of $\beta_4\left(\sigma\right)$ are non-zero modulo $t$, for each prime $t\neq 2$. The following terminology will be convenient. 

\begin{definition}
If there is a first non-initial block $B$ in $\sigma$, then let $R$ be the $s$-subproduct of $\sigma$ immediately preceding $B$. If there is no non-initial block in $\sigma$, then let $R = \sigma$.

Let $r\in \{1,2,3\}$ and let $R_0$ be a proper $s_0$-subproduct of $\sigma$. We write that $R_0$ is an \textit{initial $r$-inductive subproduct} of $\sigma$ if the initial block in $\sigma$ (if it exists) is contained in $R_0$ and one of the following conditions is satisfied. If $B$ is a $2$-block, then $R_0\subseteq R$ and $R_0$ is strongly $r$-regular. If $B$ is a $3$-block, then either $R_0\subsetneq R$ and $R_0$ is strongly $r$-regular, or $R_0 = R$ and the conditions on $R$-paths in the first sentence of Lemma~\ref{l3blockstronglyregular} \textbf{(ii)} are satisfied.
\end{definition}

If $R$ is the $s$-subproduct of $\sigma$ immediately preceding the first non-initial block in $\sigma$, then we now establish the main result characterizing $R$-paths. The statement is true for all positive braids $\sigma$ indivisible by $\Delta$ (without the condition that $\sigma$ is a normal braid).

\begin{lemma}
\label{initial}
Let $\sigma$ be a positive braid indivisible by $\Delta$ and let $t\neq 2$ be a prime number. If there is a first non-initial block $B$ in $\sigma$, then let $R$ be the $s$-subproduct of $\sigma$ immediately preceding $B$. If there is no non-initial block in $\sigma$, then let $R = \sigma$.

If $R\neq \sigma$, then one of the following statements is true: 

\begin{description}
\item[(i)] An initial $r$-inductive subproduct of $\sigma$ exists for some $r\in \{1,2,3\}$. 
\item[(ii)] We have $\mu_{1,0}^{R} = 1$ and $w_{\lambda}^{R} = 0 = w_{\mu,3}^{R}$ for some $r\in \{1,3\}$ (where $r$ is the initial vertex of the $R$-paths under consideration).
\end{description}
If \textbf{(ii)} is true, then there is an initial block in $\sigma$ and $B$ is a $2$-block.

If $R = \sigma$, then the leading coefficients of multiple entries in the $r$th row of $\beta_4\left(\sigma\right)$ are non-zero modulo $t$ for some $r\in \{1,2,3\}$.
\end{lemma}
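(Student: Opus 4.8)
The plan is to prove Lemma~\ref{initial} by a careful case analysis of the shape of the initial segment $R$, in each case either exhibiting a strongly $r$-regular subproduct (or the alternative configuration feeding Corollary~\ref{lemmablockstronglyregular}) when $R\neq\sigma$, or directly producing a row of $\beta_4\left(\sigma\right)$ with multiple non-zero leading coefficients modulo $t$ when $R=\sigma$. First I would pin down the possible structure of $R$. Since $B$ is the \emph{first} non-initial block and roads are maximal, the only block that can occur before $B$ is the initial block; hence $R$ is either a single road (when no initial block exists) or the initial block followed by a single road. By Proposition~\ref{pblockDelta}, the indivisibility of $\sigma$ by $\Delta$ forbids an initial block and a singular block from coexisting, and Proposition~\ref{initialblocksingularblockconstraint} \textbf{(ii)} shows that a singular block, if present, is the first non-initial block. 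Thus whenever an initial block is present, $B$ is necessarily a generic block, which constrains how $R$ meets $B$.

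Next, in the case with \emph{no} initial block, $R$ begins the braid as a road, and I would run the construction of Example~\ref{exmain}. Choosing $r$ to be the index of the first generator ($r=1$ if $a_1>0$, else $r=3$), the naive maximal-$q$-resistance path — the constant path at $r$ on $\sigma_1^{a_1}\sigma_3^{b_1}$ followed by an immediate switch at the first $\sigma_2$ — is good and its weight strictly dominates. Using Proposition~\ref{p21/3ext}, Proposition~\ref{p21/3bothext}, Proposition~\ref{p1/32ext}, and Proposition~\ref{p1/32bothext} I would verify that the first $s$-subproduct along this road meeting the degree inequalities of Definition~\ref{defrregular} is strongly $r$-regular, with leading coefficient a power of $2$ (hence non-zero modulo $t\neq 2$). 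This subproduct contains the (nonexistent) initial block vacuously and so is the initial $r$-inductive subproduct of statement \textbf{(i)}; the small-exponent clauses are absorbed by Definition~\ref{defrregular} \textbf{(a)} \textbf{(ii)}.

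Then, in the case with an initial block $\sigma_1^{a_1}\sigma_2\sigma_1^{a_2}\sigma_3^{b_2}$, I would analyze the $R$-paths directly. The braid word $\sigma_1\sigma_2\sigma_1$ at the start removes the $r=1$ good constant path, since it belongs to a $\delta$ distinguished pair (exactly as in Figure~\ref{fig9}), so the analysis splits according to the sizes of $a_1,a_2,b_2$ and the starting vertex. For generic exponents the surviving maximal-$q$-resistance path is again good and yields a strongly $r$-regular initial $r$-inductive subproduct, giving \textbf{(i)}. The sole obstruction is the minimal-exponent configuration in which the good and $3$-switch path counts both vanish while a single bad $1$-switch path of maximal $q$-resistance survives; since the good constant path is suppressed precisely by an initial-block braid relation and the only non-cancelling extension is the bad $1$-switch one, this is exactly $w_\lambda^R=0=w_{\mu,3}^R$ with $\mu_{1,0}^R=1$ for some $r\in\{1,3\}$, i.e. statement \textbf{(ii)}. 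This already establishes that \textbf{(ii)} can occur only with an initial block present; and because $R$ must then be the $2$-subproduct immediately preceding $B$ (so that the notation $w_\lambda^R,w_{\mu,1}^R,w_{\mu,3}^R$ of Notation~\ref{notnumgb} \textbf{(a)} applies), $B$ is forced to be a $2$-block, matching the alternative hypothesis of Lemma~\ref{l2blockstronglyregular} used downstream.

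Finally, for $R=\sigma$ there is no non-initial block, so $\sigma$ is an initial block and/or a single road running to the end. I would take the seed strongly $r$-regular subproduct produced above and propagate it to the end via Corollary~\ref{lemmaroadregular}, concluding that multiple entries of the $r$th row of $\beta_4\left(\sigma\right)$ have non-zero leading coefficients modulo $t$; note the degenerate case \textbf{(ii)} cannot intervene here because it requires $B$ to exist as a $2$-block. The finitely many very short braids (for instance $\sigma=\sigma_1\sigma_2\sigma_1$, whose third row is $\left(q^2,-q,1\right)$) are checked by hand. The main obstacle I anticipate is the base-case bookkeeping at the very start of the braid: here the path counts are small, so locating the first strongly $r$-regular subproduct, selecting the correct starting vertex $r\in\{1,2,3\}$, confirming that leading coefficients are powers of $2$, and isolating exactly the degenerate configuration of \textbf{(ii)} all require a careful, somewhat tedious enumeration rather than the clean inductive machinery available once a seed is in place.
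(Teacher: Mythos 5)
Your overall strategy (case analysis on the initial segment, the path-extension machinery, isolating a degenerate alternating configuration as \textbf{(ii)}) is the same as the paper's, but two of your steps fail as written, and one of them is the crux of the lemma.

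The serious gap is your treatment of $R=\sigma$. You assert that the degenerate case ``cannot intervene here because it requires $B$ to exist as a $2$-block'' and propose to finish by propagating a strongly $r$-regular seed with Corollary~\ref{lemmaroadregular}. This is wrong: statement \textbf{(ii)} is merely not \emph{formulated} for $R=\sigma$, but the underlying configuration --- $\sigma=\sigma_1^{a_1}\sigma_2\sigma_1\sigma_3\sigma_2^2\sigma_1\sigma_3\sigma_2^2\cdots$, an alternating product of $\sigma_1\sigma_3$s and $\sigma_2^2$s after $\sigma_1^{a_1}\sigma_2$, with only the final syllable $\sigma_1^{a_p}\sigma_3^{b_p}\sigma_2^{c_p}$ arbitrary --- can constitute the entire braid while containing no non-initial block. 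In exactly this configuration there is provably no initial $r$-inductive subproduct for any $r$: that is what forces the configuration in the first place, since at each $\sigma_2^2$ the maximal $q$-resistance $\left(1,2\right)$-type and $\left(3,2\right)$-type paths are forced to keep switching, as otherwise a strongly regular subproduct would appear. Hence there is no seed for Corollary~\ref{lemmaroadregular} to propagate. The paper must, and does, treat this case by a direct analysis of the \emph{ending} of the minimal form: if $\sigma$ ends in $\sigma_1^{a_p}\sigma_3^{b_p}\sigma_2^{c_p}$ with $a_p+b_p>2$ when $c_p=1$ (or $a_p+b_p>1$ when $c_p=0$) then both the first and third rows have multiple non-zero leading coefficients, while for the endings $\sigma_1\sigma_3\sigma_2$, $\sigma_1$, or $\sigma_3$ exactly one of those two rows does. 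Nothing in your proposal produces this terminal argument, and no propagation statement can substitute for it.

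Secondary, but also a concrete failure: in your ``no initial block'' case, the recipe ``take $r$ to be the index of the first generator; the constant path followed by an immediate switch at the first $\sigma_2$ is good and strictly dominates'' is false in sub-cases. If $\sigma$ begins with the non-initial $2$-block $\sigma_1^{a_1}\sigma_3^{b_1}\sigma_2\sigma_1^{a_2}$ (so $a_1,b_1,a_2>0$ and $c_1=1$), the immediate $1\to 2$ switch is a $1$-switch, i.e.\ bad, path in the sense of Definition~\ref{defgoodbadpath} (because $a_2>0$), its $q$-resistance equals rather than dominates that of the constant $\left(1,1\right)$-type path, and in fact $w_{\lambda}^{P}=0$ for $r=1$ on $P=\sigma_1^{a_1}\sigma_3^{b_1}\sigma_2$, so strong $1$-regularity is unattainable; the paper instead seeds with $r=3$ on $\sigma_1^{a_1}\sigma_3^{b_1}\sigma_2\sigma_1^{a_2}$. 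Likewise, when the first block is a $3$-block $\sigma_2^{c_1}\sigma_3\sigma_2^{c_2}$, the correct seed is $\sigma_1^{a_1}$ with $r=2$ (using the alternative conditions of Lemma~\ref{l3blockstronglyregular} \textbf{(ii)} when $a_1=1$), not $r=1$. The admissible choice of $r$ is dictated by the shape of the first block, not by the first generator, and this is precisely the case enumeration via Lemma~\ref{minimalform} that the paper's proof carries out and that your sketch replaces with a claim that does not hold.
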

\begin{proof}
Let us assume for a contradiction that the statement is false. Firstly, if $R = \sigma$ and there is an initial $r$-inductive subproduct of $\sigma$, then Corollary~\ref{lemmaroadregular} implies that the leading coefficients of multiple entries in the $r$th row of $\beta_4\left(\sigma\right)$ are non-zero modulo $t$. In particular, the existence of an initial $r$-inductive subproduct of $\sigma$ will be a contradiction in all cases. 

Let $\sigma = \prod_{i=1}^{n} \sigma_1^{a_i}\sigma_3^{b_i}\sigma_2^{c_i}$ be the minimal form of $\sigma$. If $n\leq 2$, then it is straightforward to verify that the leading coefficients of multiple entries in the $r$th row of $\beta_4\left(\sigma\right)$ are non-zero modulo $t$ using the theory of $\sigma$-paths, or even direct matrix multiplication. In particular, if $R = \sigma$, then we assume that $n>2$.

Firstly, we assume that $\sigma_1^{a_1}\sigma_3^{b_1}\sigma_2^{c_1}$ does not overlap with a $3$-block, in which case $P_1 = \sigma_1^{a_1}\sigma_3^{b_1}\sigma_2^{c_1}\subseteq R$. If $c_1\geq 2$, then $P_1$ is an initial $1$-inductive subproduct of $\sigma$, which is a contradiction. If $c_1 = 1$ and $a_2 = 0$, then Lemma~\ref{minimalform} \textbf{(ii)} implies that $b_1 = 0$, in which case also $P_1$ is an initial $1$-inductive subproduct of $\sigma$, which is a contradiction.

If $\sigma_1^{a_1}\sigma_3^{b_1}\sigma_2^{c_1}$ overlaps with a $3$-block, then $a_2 = 0$ and $b_2 = 1$. Furthermore, Proposition~\ref{initialblocksingularblockconstraint} \textbf{(iv)} implies that $b_1 = 0$. In this case, $\sigma_1^{a_1}$ is an initial $2$-inductive subproduct of $\sigma$, which is a contradiction.

We deduce that $c_1 = 1$ and $a_2>0$. If $b_1>0$, then Lemma~\ref{minimalform} \textbf{(ii)} implies that $b_2 = 0$. In this case, $\sigma_1^{a_1}\sigma_3^{b_1}\sigma_2^{c_1}\sigma_1^{a_2}$ is an initial $3$-inductive subproduct of $\sigma$, which is a contradiction. We deduce that $b_1 = 0$, and the beginning of the minimal form of $\sigma$ is $\sigma_1^{a_1}\sigma_2\sigma_1^{a_2}\sigma_3^{b_2}$. 

If $c_2 = 1$, then Lemma~\ref{minimalform} \textbf{(ii)} implies that $b_3 = 0$ and $\sigma_1^{a_1}\sigma_2\sigma_1\sigma_3\sigma_2\sigma_1$ is at the beginning of $\sigma$. However, in this case, Proposition~\ref{pblockDelta} implies that $\sigma$ is divisible by $\Delta$, which is a contradiction. We deduce that $c_2\geq 2$. If $b_2>0$, then $a_2=1=b_2$, since otherwise $\sigma_1^{a_1}\sigma_2\sigma_1^{a_2}\sigma_3^{b_2}\sigma_2^{c_2}$ would be an initial $1$-inductive subproduct of $\sigma$. In particular, the beginning of the minimal form of $\sigma$ is either $\sigma_1^{a_1}\sigma_2\sigma_1\sigma_3$ or $\sigma_1^{a_1}\sigma_2\sigma_1^{a_2}$.

Let us first consider the case where the beginning of the minimal form of $\sigma$ is $\sigma_1^{a_1}\sigma_2\sigma_1\sigma_3$. If $P_2 = \prod_{i=1}^{2} \sigma_1^{a_i}\sigma_3^{b_i}\sigma_2^{c_i}$, then $c_2=2$ and the maximal $q$-resistance $\left(1,2\right)$-type $P_2$-path must have a $3\to 2$ vertex change at the last $\sigma_2$ in $\sigma_2^{c_2}$. Otherwise, $P_2$ would be an initial $1$-inductive subproduct of $\sigma$. Similarly, the maximal $q$-resistance $\left(3,2\right)$-type $P_2$-path must have a $1\to 2$ vertex change at the last $\sigma_2$ in $\sigma_2^{c_2}$. Otherwise, $P_2$ would be an initial $3$-inductive subproduct of $\sigma$.

We repeat the reasoning to deduce that unless $R=\sigma_1^{a_1}\sigma_2\sigma_1\sigma_3\sigma_2^2\sigma_1\sigma_3\sigma_2^2\cdots$ (an alternating product of $\sigma_1\sigma_3$s and $\sigma_2^2$s following $\sigma_1^{a_1}\sigma_2$, but where the exponents at only the end $\sigma_1^{a_p}\sigma_3^{b_p}\sigma_2^{c_p}$ may be arbitrary in the case $R = \sigma$), there is an initial $r$-inductive subproduct of $\sigma$ for some $r\in \{1,3\}$. If $R = \sigma$, then we now show that the leading coefficients of multiple entries in either the first row or the third row of $\beta_4\left(\sigma\right)$ are non-zero modulo $t$ by examination of the possibilities for the end of the minimal form of $\sigma$.

Indeed, if the minimal form of $\sigma$ ends in $\sigma_1^{a_p}\sigma_3^{b_p}\sigma_2^{c_p}$, where  $a_p+b_p>2$ if $c_p = 1$ and $a_p+b_p>1$ if $c_p = 0$, then the leading coefficients of multiple entries in each of the first and the third rows of $\beta_4\left(\sigma\right)$ are non-zero modulo $t$. If the minimal form of $\sigma$ ends in either $\sigma_1\sigma_3\sigma_2$ (with $a_p + b_p = 2$ and $c_p = 1$), or $\sigma_1$ or $\sigma_3$ (with $a_p + b_p = 1$ and $c_p = 0$), then the leading coefficients of multiple entries in precisely one of the first row or the third row of $\beta_4\left(\sigma\right)$ are non-zero modulo $t$.

If $R\neq \sigma$, then Proposition~\ref{initialblocksingularblockconstraint} \textbf{(iv)} implies that the first non-initial block $B$ in $\sigma$ is a $2$-block. If the number of $\sigma_2^2$s in $R$ is even, then we choose $r = 1$, and if the number of $\sigma_2^2s$ in $R$ is odd, then we choose $r = 3$. In this case, $\mu_{1,0}^{R} = 1$ and $w_{\lambda}^{R} = 0 = w_{\mu,3}^{R}$. 

We conclude that the beginning of the minimal form of $\sigma$ is $\sigma_1^{a_1}\sigma_2\sigma_1^{a_2}$. In this case, we observe that $\sigma_1^{a_1}\sigma_2\sigma_1^{a_2}$ is an initial $3$-inductive subproduct of $\sigma$. We have considered all cases and the statement is established.
\end{proof}

We can now establish Theorem~\ref{invariantdetect}, and its consequence Theorem~\ref{main} (the main result).

\begin{proof}[Proof of Theorem~\ref{invariantdetect}]
Lemma~\ref{initial} implies the statement if there is no non-initial block in $\sigma$. If there is a first non-initial block $B$ in $\sigma$, then let $P'$ be the minimal $s'$-subproduct of $\sigma$ containing $B$. If \textbf{(i)} in Lemma~\ref{initial} is true, then Corollary~\ref{lemmaroadregular} and Corollary~\ref{lemmablockstronglyregular} imply that $P'$ is strongly $r$-regular for some $r\in \{1,2,3\}$. If \textbf{(ii)} in Lemma~\ref{initial} is true, then Proposition~\ref{pblockDelta} implies that $B\neq \sigma_1\sigma_3\sigma_2\sigma_1^{a_{p+1}}$ and the hypothesis of Corollary~\ref{lemmablockstronglyregular} is satisfied. In particular, Corollary~\ref{lemmablockstronglyregular} also implies that $P'$ is strongly $r$-regular for some $r\in \{1,3\}$. 

We deduce that $P'$ is strongly $r$-regular for some $r\in \{1,2,3\}$ in all cases. Furthermore, there is a finite chain $P'=P_0\subsetneq P_1\subsetneq\cdots\subsetneq P_k=\sigma$ such that for $1\leq i\leq k$, $P_i\setminus P_{i-1}$ is a road for $i$ odd and $P_i\setminus P_{i-1}$ is a normal block for $i$ even. Therefore, the statement follows from repeated applications of Corollary~\ref{lemmaroadregular} and Corollary~\ref{lemmablockstronglyregular}.
\end{proof}

\section{A stronger version of the main result}
\label{smainstronger}

In this section, we establish Theorem~\ref{mainstronger}, which is a stronger version of Theorem~\ref{main}. Theorem~\ref{main} states that if $g = \Delta^{k}\sigma$ is the Garside normal form of $g\in B_4$ and if $\sigma$ is a normal braid, then $g\not\in \text{ker}\left(\beta_4\right)$. Theorem~\ref{mainstronger} extends Theorem~\ref{main} to the case where $\sigma$ is a weakly normal braid. 

We now roughly define the notion of a weakly normal braid in order to briefly summarize the statement of Theorem~\ref{mainstronger} before the precise definition in Subsection~\ref{subsecweaklynormalbraidbadsubroad}. We also highlight that weakly normal braids are generic even among positive braids that are not normal. The rough definition of a \textit{bad subroad} is a highly constrained subroad $R$ such that the exponents of the generators in $R$ are very small (zero, one or two, with an exception for alternating exponents in isolated $\sigma_2$ subproducts in $R$). The \textit{width of a block} $B$ is a numerical quantity defined in terms of the exponents in $B$, and the \textit{weight of a bad subroad} $R$ is a numerical quantity closely related to the length of $R$. The rough definition of a \textit{weakly normal braid} is a positive braid $\sigma$ such that the maximal bad subroad following each abnormal block $B$ does not have length \textit{exactly} equal to the width of $B$.

If $\sigma$ is a normal braid, then Proposition~\ref{pnormalblock} implies that there is no abnormal block in $\sigma$, and $\sigma$ is a weakly normal braid. Furthermore, if we simply increase the width of an abnormal block $B$ in $\sigma$ and keep the rest of $\sigma$ unchanged, then the probability of existence of a long bad subroad after $B$ very rapidly becomes vanishingly small. In particular, weakly normal braids are generic, even among positive braids that are not normal. 

The proof of Theorem~\ref{mainstronger} is based on a more general version of Theorem~\ref{invariantdetect}. If $\sigma$ is a weakly normal braid, then we will construct $\sigma$-paths with weights that contribute the highest $q$-degree terms in multiple entries in a row of $\beta_4\left(\sigma\right)$.

The organization of this section is as follows. In Subsection~\ref{subsecweaklynormalbraidbadsubroad}, we will precisely define bad subroads, the width of a block, the weight of a bad subroad, and weakly normal braids. We will also precisely state Theorem~\ref{mainstronger}. In Subsection~\ref{subsecpathextensionsII}, we will extend the theory of path extensions developed in Subsection~\ref{subsecpathextensions}. The theory of path extensions developed in Subsection~\ref{subsecpathextensions} considered extensions of $P$-paths to $P'$-paths where $P$ is strongly $r$-regular. In Subsection~\ref{subsecpathextensionsII}, we will develop the theory of path extensions of $P$-paths to $P'$-paths where $P$ is not strongly $r$-regular. Finally, in Subsection~\ref{subsecproofstrongerversion}, we will prove Theorem~\ref{mainstronger}. 

We now highlight the general ideas of the proof of Theorem~\ref{mainstronger} for a specific family of positive braids before the formal proof. We also illustrate the notion of a bad subroad, width of a block, weight of a bad subroad, and the genericity of the condition on a positive braid that it is a weakly normal braid. The main consideration for positive braids that are not normal is that global cancellation of weights of paths can occur (which does not occur for normal positive braids).

\begin{example}
\label{exglobalcancellation}
Let $\sigma = \sigma_1^n\sigma_2\left(\sigma_1^2\sigma_2^2\cdots \sigma_1^2\sigma_2^2\right)\sigma_1$, where there are $m$ $\sigma_1^2$s in the parenthesis. We write $P_1' = \sigma_1^n$ and $P_1 = \sigma_1^n\sigma_2$. If $2\leq i\leq m+1$, then we write $P_i' = \sigma_1^n\sigma_2\left(\sigma_1^2\sigma_2^2\cdots \sigma_1^2\right)$ and $P_i = \sigma_1^n\sigma_2\left(\sigma_1^2\sigma_2^2\cdots \sigma_1^2\sigma_2^2\right)$, where there are $i-1$ $\sigma_1^2$s in each parenthesis. We observe that $\sigma = P_m\sigma_1$ and $P_2' = \sigma_1^{n}\sigma_2\sigma_1^2$ is the initial block in $\sigma$. If $n>1$, then we will see that $P_2'$ is not strongly $2$-regular. However, we will use the theory of good and bad paths to show that the leading coefficients of multiple entries in the $2$nd row of $\beta_4\left(\sigma\right)$ are non-zero modulo each prime $t$, unless the specific equality $m = \frac{n}{2} + 1$ between the length of $\sigma$ and the length of the initial block $P_2'$ holds. Proposition~\ref{pnormalblock} implies that $\sigma$ is a normal braid since there is no non-initial block in $\sigma$. However, we now describe the behavior of $\left(2,\cdot\right)$-type $\sigma$-paths and the initial block $P_2'$, and this behavior will model the behavior of $\sigma$-paths and abnormal blocks in general (where $\sigma$ is not a normal braid).
 
If $X_1$ is a $\left(2,\cdot\right)$-type $P_1$-path, then Lemma~\ref{lemmagoodext} \textbf{(v)} implies that $X_1$ has a $2\to 1$ vertex change at some $\sigma_1$ (in $P_1$). If $1\leq \alpha\leq n-1$, then let $X_{\alpha,1}$ denote the $\left(2,2\right)$-type $P_1$-path with a first vertex change at the $\alpha$th $\sigma_1$ in $\sigma_1^{n}$ and a second vertex change at $\sigma_2$ (in $P_1$). The weight of $X_{\alpha,1}$ is $-q^{n-\alpha+1}$. A $\left(2,1\right)$-type $P_1'$-path with a $2\to 1$ vertex change at the last $\sigma_1$ in $\sigma_1^{n}$ is bad. Lemma~\ref{lemmagoodext} \textbf{(ii)} implies that if a $\left(2,\cdot\right)$-type $P_1$-path has a vertex change at the last $\sigma_1$ in $\sigma_1^{n}$, then it does not have a $1\to 2$ vertex change at $\sigma_2$ (in $P_1$), and it is a good $\left(2,1\right)$-type $P_1$-path (this is illustrated in Figure~\ref{fig9} in Subsection~\ref{subsecadspath}). We denote this good $\left(2,1\right)$-type $P_1$-path by $Y_1$. The weight of $Y_1$ is $-q$.

We observe that $X_{\alpha,1}$ is a bad $\left(2,2\right)$-type $P_1$-path. Lemma~\ref{lemmagoodext} \textbf{(iii)} implies that there is a unique extension of $X_{\alpha,1}$ to an $\left(2,1\right)$-type $P_2'$-path defined by a $2\to 1$ vertex change at the second $\sigma_1$ in $\sigma_1^2$ (at the end of $P_2'$). The resulting $\left(2,1\right)$-type $P_2'$-path is bad and we denote it by $X_{\alpha,2}'$. The weight of $X_{\alpha,2}'$ is $q^{n-\alpha+2}$. On the other hand, there is a unique extension of the good $\left(2,1\right)$-type $P_1$-path $Y_1$ to a $\left(2,\cdot\right)$-type $P_2'$-path (with no further vertex change). The resulting $P_2'$-path is a good $\left(2,1\right)$-type $P_2'$-path which we denote by $Y_2'$. The weight of $Y_2'$ is $-q^3$. 

We deduce that there are $n-1$ bad $\left(2,1\right)$-type $P_2'$-paths $X_{\alpha,2}'$ for $1\leq \alpha\leq n-1$ and a unique good $\left(2,1\right)$-type $P_2'$-path $Y_2'$. The weights of $X_{\alpha,2}'$ and $Y_2'$ have opposite sign. If $\alpha < n-1$, then the $q$-resistance of $X_{\alpha,2}'$ is larger than the $q$-resistance of $Y_2'$. However, if the length of $\sigma$ is sufficiently large relative to $n-\alpha$, and if we consider maximal $q$-resistance extensions of both $P_2'$-paths to $\sigma$-paths, then we will show that the $q$-resistance of $Y_2'$ will eventually ``catch-up" to the $q$-resistance of $X_{\alpha,2}'$. The fundamental principle (based on Lemma~\ref{lemmagoodext}) is that good paths accumulate $q$-resistance faster than bad paths. In particular, we will show that there is a unique extension of $X_{\alpha,2}$ to a $\sigma$-path, and if the length of $\sigma$ is sufficiently large (relative to $n - \alpha$), then its weight will cancel and not contribute to an entry of $\beta_4\left(\sigma\right)$. The phenomenon can be interpreted as a \textit{global cancellation of weights of $\sigma$-paths created by the (initial) block $P_2'$}. The \textit{width of the (initial) block} $P_2'$ is the discrepancy $n-2$ in $q$-resistance between the maximal $q$-resistance good $\left(2,1\right)$-type $P_2'$-path $Y_2'$ and the maximal $q$-resistance bad $\left(2,1\right)$-type $P_2'$-path $X_{1,2}'$. The phenomenon of global cancellation of weights of $\sigma$-paths is more generally created by an abnormal block in a positive braid $\sigma$ and is the main difficulty in removing the condition that $\sigma$ is a normal braid in Theorem~\ref{main} (the main result). However, we can still generically control global cancellation and we now illustrate that in the case of the initial block $P_2'$.

Firstly, repeated applications of Lemma~\ref{lemmagoodext} \textbf{(ii)} and \textbf{(iii)} show that there is a unique extension of $X_{\alpha,2}'$ to a $\left(2,1\right)$-type $P_i'$-path $X_{\alpha,i}'$ and a unique extension of $X_{\alpha,2}'$ to a $\left(2,2\right)$-type $P_i$-path $X_{\alpha,i}$. The extensions are defined by $1\to 2$ vertex changes at every second $\sigma_2$ and $2\to 1$ vertex changes at every second $\sigma_1$. The $P_i'$-path $X_{\alpha,i}'$ and the $P_i$-path $X_{\alpha,i}$ are bad. The weights of $X_{\alpha,i}'$ and $X_{\alpha,i}$ are $\left(-1\right)^{i}q^{n-\alpha+i}$. Secondly, repeated applications of Lemma~\ref{lemmagoodext} \textbf{(i)} show that the maximal $q$-resistance extension of $Y_{2}'$ to a $\left(2,1\right)$-type $P_i'$-path $Y_{i}'$ and to a $\left(2,2\right)$-type $P_i$-path $Y_i$ are by vertex changes at every possible opportunity (at the first $\sigma_1$ or $\sigma_2$ in each square $\sigma_1^2$ or $\sigma_2^2$, respectively). The $P_i'$-path $Y_i'$ and the $P_i$-path $Y_i$ are good. The weight of $Y_{i}'$ is $\left(-1\right)^{i-1}q^{3(i-1)}$ and the weight of $Y_{i}$ is $\left(-1\right)^{i-1}q^{3(i-1) + 1}$.

The maximal $q$-resistance bad $\left(2,2\right)$-type $P_{i}$-path of the form $X_{\alpha,i}$ is $X_{1,i}$ and the maximal $q$-resistance good $\left(2,2\right)$-type $P_{i}$-path is $Y_{i}$. If $i>\frac{n+1}{2}$, then the $q$-resistance of $Y_{i}$ is greater than the $q$-resistance of $X_{1,i}$. In particular, if $m>\frac{n+1}{2}$, then the maximal $q$-resistance $\left(2,2\right)$-type $P_m$-path $Y_{m}$ is good. Lemma~\ref{lemmagoodext} \textbf{(i)} implies that $Y_{m}$ extends to the maximal $q$-resistance $\left(2,1\right)$-type $\sigma$-path by a vertex change at the last $\sigma_1$ in $\sigma$ (recall $\sigma = P_m\sigma_1$). If $m>\frac{n+1}{2}$, then Proposition~\ref{m=ap} implies that the leading coefficients of multiple entries in the $2$nd row of $\beta_4\left(\sigma\right)$ are non-zero (in fact, $\pm 1$). (We can also directly apply Corollary~\ref{lemmaroadregular} since $P_m$ is strongly $2$-regular in this case.)

If $m<\frac{n+1}{2}$, then the maximal $q$-resistance $\left(2,2\right)$-type $P_m$-path $X_{1,m}$ is bad. Proposition~\ref{m=ap} implies that the weight of $X_{1,m}$ contributes the highest $q$-degree term to the $\left(2,2\right)$-entry of $\beta_4\left(\sigma\right)$. However, Lemma~\ref{lemmagoodext} \textbf{(iii)} implies that no $\left(2,2\right)$-type $P_m$-path of the form $X_{\alpha,m}$ extends to a $\sigma$-path, since $X_{\alpha,m}$ is bad and the final generator in $\sigma$ is an isolated $\sigma_1$. We deduce that the maximal $q$-resistance $\left(2,1\right)$-type $\sigma$-path is the extension of the maximal $q$-resistance good $\left(2,2\right)$-type $P_m$-path $Y_{m}$ by a $2\to 1$ vertex change at $\sigma_1$ (at the end of $\sigma$). Proposition~\ref{m=ap} implies that the weight of this $\left(2,1\right)$-type $\sigma$-path contributes the highest $q$-degree term to the $\left(2,1\right)$-entry of $\beta_4\left(\sigma\right)$. We deduce that the leading coefficients of multiple entries in the $2$nd row of $\beta_4\left(\sigma\right)$ are non-zero (in fact, $\pm 1$).

If $m = \frac{n+1}{2}$, then global cancellation occurs, in the sense that the set of $\left(2,2\right)$-type $\sigma$-paths is a disjoint union of cancelling pairs of $\left(2,2\right)$-type $\sigma$-paths. Let us elaborate. If $2\leq i\leq m$, then define $\alpha_i' = n - 2i + 3$ and $\alpha_i = n-2i + 2$. The weights of the $\left(2,1\right)$-type $P_i'$-paths $X_{\alpha_i',i}'$ and $Y_{i}'$ cancel. We can extend $Y_i'$ to a $\left(2,2\right)$-type $\sigma$-path by the same vertex changes as the unique extension of $X_{\alpha_i',i}'$ to a $\left(2,2\right)$-type $\sigma$-path. The unique extension of $X_{\alpha_i',i}'$ to a $\left(2,2\right)$-type $\sigma$-path constitutes a cancelling pair with this extension of $Y_i'$ to a $\left(2,2\right)$-type $\sigma$-path. Similarly, we can extend $Y_i$ to a $\left(2,2\right)$-type $\sigma$-path by the same vertex changes as the unique extension of $X_{\alpha_i,i}$ to a $\left(2,2\right)$-type $\sigma$-path. The unique extension of $X_{\alpha_i,i}$ to a $\left(2,2\right)$-type $\sigma$-path constitutes a cancelling pair with this extension of $Y_i$ to a $\left(2,2\right)$-type $\sigma$-path.

If $m = \frac{n+1}{2}$, then Proposition~\ref{m=ap} implies that the $\left(2,2\right)$-entry of $\beta_4\left(\sigma\right)$ is zero. However, the leading coefficient of the $\left(2,1\right)$-entry of $\beta_4\left(\sigma\right)$ is non-zero (in fact, $\left(-1\right)^{m+1}$) since the maximal $q$-resistance good $\left(2,2\right)$-type $P_m$-path $Y_m$ extends to the maximal $q$-resistance $\left(2,1\right)$-type $\sigma$-path. 

In this context, $R = \sigma_2^2\sigma_1^2\cdots $ is an example of a bad subroad, where we can write $\sigma = P_2'R\sigma_1$ and $P_m = P_2'R$. The main characteristic of the bad subroad $R$ is that a bad $\left(2,1\right)$-type $P_2'$-path extends uniquely to a $\left(2,2\right)$-type $P_m$-path and the resulting $\left(2,2\right)$-type $P_m$-path is bad. In particular, the $q$-resistance of the maximal $q$-resistance good $\left(2,1\right)$-type $P_2'$-path $Y_2'$ can ``catch-up" to the $q$-resistance of the maximal $q$-resistance bad $\left(2,1\right)$-type $P_2'$-path $X_{1,2}'$. If we consider maximal $q$-resistance extensions of $X_{1,2}'$ and $Y_2'$ to $\left(2,2\right)$-type $P_m$-paths, then the \textit{weight of the bad subroad} $R$ is the reduction in the discrepancy between the $q$-resistances of $X_{1,2}'$ and $Y_2'$. The equality $m = \frac{n+1}{2}$ is equivalent to the statement that the weight of the bad subroad $R$ is equal to the width of the (initial) block $P_2'$. We have shown that as long as this equality is not satisfied, global cancellation of weights of $\left(2,2\right)$-type $\sigma$-paths does not occur and the leading coefficients of multiple entries in the $2$nd row of $\beta_4\left(\sigma\right)$ are non-zero. More generally, although ``catch-up" is created by an abnormal block $B$ in an arbitrary positive braid $\sigma$, unless there is a bad subroad of a precise length (in relation to the width of $B$) following $B$, global cancellation does not occur.

Furthermore, if $m = \frac{n+1}{2}$, then we observe that if even a \textit{single} square of $\sigma_1$ or $\sigma_2$ in $P_{m}'$ is a cube or higher exponent (i.e., the length of the maximal bad subroad following the abnormal block $P_2'$ is strictly less than the width of the block $P_2'$), then the leading coefficients of multiple entries in the $2$nd row of $\beta_4\left(\sigma\right)$ are non-zero. Let us assume that the first power of a generator that is not a square is a power of $\sigma_1$ at the end of $P_{i}'$ for $i\leq m$. The maximal $q$-resistance $\left(2,2\right)$-type $P_{i-1}$-path is $X_{1,i-1}$ and the $q$-resistance of $X_{1,i-1}$ is at least two more than the $q$-resistance of $Y_{i-1}$. The maximal $q$-resistance extension of $X_{1,i-1}$ to a $\left(2,1\right)$-type $P_i'$-path is good and it is the maximal $q$-resistance $\left(2,\cdot\right)$-type $P_i'$-path. We deduce that $P_i'$ is strongly $2$-regular. The intuition is that if we consider maximal $q$-resistance extensions of both $\left(2,1\right)$-type $P_2'$-paths $X_{1,2}'$ and $Y_2'$, the $q$-resistance of $Y_2'$ does not catch up to the $q$-resistance of $X_{1,2}'$ before $X_{1,2}'$ extends to a good $P_i'$-path. Similarly, if the first power of a generator that is not a square is a power of $\sigma_2$ at the end of $P_i$ for $i<m$, then $P_{i}$ is strongly $2$-regular. In particular, if at least one square in the bad subroad $\sigma_2^2\sigma_1^2\cdots $ is a cube or higher exponent (not including the $\sigma_2^2$ at the end), then Corollary~\ref{lemmaroadregular} implies that the leading coefficients of multiple entries in the $2$nd row of $\beta_4\left(\sigma\right)$ are non-zero.

Although it is possible for there to be a single non-zero entry in the second row of $\beta_4\left(\sigma\right)$ (if $m = \frac{n+1}{2}$), we observe that $\beta_4\left(\sigma\right)\neq \beta_4\left(\Delta^{k}\right)$ for each integer $k$ irrespective of the values of $m$ and $n$. Indeed, $P_1$ is strongly $3$-regular since the maximal $q$-resistance $\left(3,\cdot \right)$-type $P_1$-path is defined by a $3\to 2$ vertex change at $\sigma_2$ (in $P_1$) and it is a $\left(3,2\right)$-type $P_1$-path. Corollary~\ref{lemmaroadregular} implies that the leading coefficients of multiple entries in the $3$rd row of $\beta_4\left(\sigma\right)$ are non-zero.

We will establish Theorem~\ref{mainstronger} (the main result of this section and the stronger version of Theorem~\ref{main}) by generalizing these techniques and studying the phenomenon of global cancellation and ``catch-up" created by an abnormal block and a bad subroad. If $m\neq \frac{n+1}{2}$ or if a single square of either $\sigma_1$ or $\sigma_2$ in $P_m'$ is a cube or higher exponent (the weight of the maximal bad subroad following the (initial) block $P_2'$ is not equal to the width of the (initial) block $P_2'$), then global cancellation does not occur. More generally, if the weight of the maximal bad subroad following each abnormal block $B$ in an arbitrary positive braid $\sigma$ does not equal the width of $B$, then $\sigma$ is a weakly normal braid and we will use very similar techniques to demonstrate Theorem~\ref{mainstronger}. 
\end{example}

\subsection{Weakly normal braids}
\label{subsecweaklynormalbraidbadsubroad}

In this subsection, we will precisely define weakly normal braids and precisely state the stronger version of the main result (Theorem~\ref{mainstronger}). The first step is to precisely define bad subroads. We begin by introducing the general type of subroads that we will consider.

\begin{definition}
\label{defss'subroad}
Let $R$ be a subroad of $\sigma$. We write that $R$ is an \textit{$\left(s,s'\right)$-subroad} of $\sigma$ if $R = P'\setminus P$, where $P$ is an $s''$-subproduct of $\sigma$ and $P'$ is an $s'$-subproduct of $\sigma$, for some $s''\in \{1,2,3\}$ such that $\{s,s''\}$ contains one element in $\{1,3\}$ and one element in $\{2\}$. (A $\left(1,s'\right)$-subroad is the same as a $\left(3,s'\right)$-subroad, and an $\left(s,1\right)$-subroad is the same as an $\left(s,3\right)$-subroad, similar to Definition~\ref{defssubproduct}.)
\end{definition}

An $\left(s,s'\right)$-subroad is a subroad that begins with $\sigma_s$ (where either $s\in \{1,3\}$ or $s = 2$) and ends with $\sigma_{s'}$ (where either $s'\in \{1,3\}$ or $s' = 2$), and is maximal with respect to this property. We now define elementary subroads, which are the basic building blocks of all $\left(s,s'\right)$-subroads.

\begin{definition}
\label{defelementarysubroads}
Let $R$ be an $\left(s,s'\right)$-subroad of $\sigma$. An \textit{elementary subroad of $R$} is a subroad of $R$ that is either a non-isolated $\sigma_2$ subroad of $R$ (defined in \textbf{(a)}) or an isolated $\sigma_2$ subroad of $R$ (defined in \textbf{(b)}).
\begin{description}
\item[(a)] A \textit{non-isolated $\sigma_2$ subroad of $R$} is a subroad of $R$ that is either an $i$-constant subroad of $R$ for some $i\in \{1,3\}$ (defined in \textbf{(i)}) or an alternating subroad of $R$ (defined in \textbf{(ii)}). We require that a non-isolated $\sigma_2$ subroad of $R$ is an $\left(s_0,s_0'\right)$-subroad for some $s_0'\in \{1,3\}$ unless it is at the end of a non-isolated $\sigma_2$ subproduct of $\sigma$ or it is at the end of $R$.

\begin{description}
\item[(i)] An $i$-\textit{constant subroad of $R$} is an $\left(s_0,s_0'\right)$-subroad of $R$ contained in a non-isolated $\sigma_2$ subproduct of $\sigma$ such that it is maximal with respect to being the product of alternating copies of $\sigma_2^{c_j}$ and $\sigma_i^{\phi_j}$ for a fixed $i\in \{1,3\}$ (where $j$ varies over the product) and there is at least one copy of $\sigma_i^{\phi_j}$. 
\item[(ii)] An \textit{alternating subroad of $R$} is an $\left(s_0,s_0'\right)$-subroad of $R$ contained in a non-isolated $\sigma_2$ subproduct of $\sigma$ such that it is maximal with respect to being the product of alternating copies of $\sigma_2^{c_j}$ and $\sigma_1^{a_j}\sigma_3^{b_j}$ with $a_j,b_j>0$ (where $j$ varies over the product). 
\end{description}
\item[(b)] An \textit{isolated $\sigma_2$ subroad of $R$} is a subroad of $R$ that is either a $1$-initial isolated $\sigma_2$ subroad of $R$ (defined in \textbf{(i)}), a $3$-initial isolated $\sigma_2$ subroad of $R$ (defined in \textbf{(ii)}), or an end isolated $\sigma_2$ subroad of $R$ (defined in \textbf{(iii)}). Let us adopt the setup of Definition~\ref{isp}. 
\begin{description}
\item[(i)] A \textit{$1$-initial isolated $\sigma_2$ subroad of $R$} is an $\left(s_0,s_0'\right)$-subroad of $R$ that is maximal with respect to being either contained in a type II isolated $\sigma_2$ subproduct of $\sigma$, properly contained in a type I isolated $\sigma_2$ subproduct of $\sigma$ with $a_p>0$, or contained in a type III isolated $\sigma_2$ subproduct of $\sigma$.
\item[(ii)] A \textit{$3$-initial isolated $\sigma_2$ subroad of $R$} is an $\left(s_0,s_0'\right)$-subroad of $R$ that is maximal with respect to being contained in a type I isolated $\sigma_2$ subproduct of $\sigma$ with $a_p = 0$.
\item[(iii)] An \textit{end isolated $\sigma_2$ subroad of $R$} is an $\left(s_0,s_0'\right)$-subroad of $R$ that is a type I isolated $\sigma_2$ subproduct of $\sigma$ with $a_p>0$.
\end{description}
\end{description}
\end{definition}

We include Remark~\ref{remarkendisolated} to note the simple characterization of end isolated $\sigma_2$ subroads.

\begin{remark}
\label{remarkendisolated}
Let $\sigma = \prod_{i=1}^{n} \sigma_1^{a_i}\sigma_3^{b_i}\sigma_2^{c_i}$ be the minimal form of $\sigma$. A type I isolated $\sigma_2$ subproduct of $\sigma$ with $a_p>0$ contains a $2$-block unless it is at the end of the minimal form of $\sigma$ and it is equal to $\sigma_1^{a_n}\sigma_3^{b_n}\sigma_2$ (with $c_n = 1$). In particular, an end isolated $\sigma_2$ subroad of $R$ is equal to $\sigma_1^{a_n}\sigma_3^{b_n}\sigma_2$, which justifies the terminology.
\end{remark}

Let $P$ be an $s''$-subproduct of $\sigma$ and let $R$ be the $\left(s,s'\right)$-subroad immediately following $P$. We will find it convenient to extend $P$-paths to $PR$-paths with respect to the decomposition of $R$ into elementary subroads. 

\begin{proposition}
If $R$ is an $\left(s,s'\right)$-subroad, then $R$ can be uniquely expressed as a product $R = \prod_{j=1}^{k} R_j$, where each $R_j$ is an elementary subroad of $R$. 
\end{proposition}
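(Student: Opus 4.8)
The statement asserts that an $\left(s,s'\right)$-subroad $R$ admits a unique product decomposition $R=\prod_{j=1}^k R_j$ into elementary subroads. The plan is to construct the decomposition greedily from the left and argue both existence and uniqueness simultaneously, leaning on the structure imposed by the minimal form (Lemma~\ref{minimalform}) and the product decomposition of the minimal form into isolated and non-isolated $\sigma_2$ subproducts (Definition~\ref{isp}). The key observation is that a subroad $R$, being a subproduct of the minimal form, inherits a partition of its generators according to whether each constituent $\sigma_2^{c_j}$ lies inside an isolated $\sigma_2$ subproduct of $\sigma$ or inside a non-isolated $\sigma_2$ subproduct of $\sigma$. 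Since the isolated/non-isolated $\sigma_2$ subproduct decomposition of $\sigma$ is itself unique (each $\sigma_2^{c_j}$ belongs to exactly one such subproduct), this induces a canonical coarse partition of $R$, and the elementary subroads of $R$ are exactly the finer pieces obtained by further subdividing within each coarse piece.

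First I would fix notation by writing $R$ as a subproduct of the minimal form $\sigma=\prod_{i=1}^n \sigma_1^{a_i}\sigma_3^{b_i}\sigma_2^{c_i}$, so that $R$ consists of a contiguous run of factors $\sigma_1^{a_i}\sigma_3^{b_i}\sigma_2^{c_i}$ (with the first and last possibly truncated, consistent with $R$ beginning at $\sigma_s$ and ending at $\sigma_{s'}$). Each maximal contiguous subproduct of $R$ lying within a single non-isolated $\sigma_2$ subproduct of $\sigma$, respectively within a single isolated $\sigma_2$ subproduct of $\sigma$, is determined canonically by the position within $\sigma$. I would then verify that each such maximal contiguous piece decomposes uniquely into the elementary subroads of Definition~\ref{defelementarysubroads}: for a piece inside a non-isolated $\sigma_2$ subproduct, the $i$-constant subroads (for $i\in\{1,3\}$) and alternating subroads are each defined by a maximality condition (constant index $i$ of the interspersed generators versus both $a_j,b_j>0$), and these maximality conditions partition the piece uniquely because at each step the next factor $\sigma_1^{a_j}\sigma_3^{b_j}$ is either a pure power of a single generator (forcing an $i$-constant continuation) or a genuine product with $a_j,b_j>0$ (forcing an alternating continuation), and the type changes exactly where this dichotomy flips. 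For a piece inside an isolated $\sigma_2$ subproduct, the classification into $1$-initial, $3$-initial, and end isolated $\sigma_2$ subroads is likewise dictated by the type (I, II, III) of the ambient isolated $\sigma_2$ subproduct and the value of $a_p$, which are all canonical data attached to $\sigma$.

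The existence half then follows by concatenating these uniquely determined pieces across the whole of $R$; I would check that the boundary conditions in Definition~\ref{defelementarysubroads} (that a non-isolated $\sigma_2$ subroad is an $\left(s_0,s_0'\right)$-subroad with $s_0'\in\{1,3\}$ unless it terminates at the end of a non-isolated $\sigma_2$ subproduct of $\sigma$ or at the end of $R$) are automatically met, since the partition respects the breakpoints between isolated and non-isolated $\sigma_2$ subproducts of $\sigma$. For uniqueness, I would argue that any elementary-subroad decomposition of $R$ must refine the coarse isolated/non-isolated partition, because no elementary subroad straddles the boundary between an isolated and a non-isolated $\sigma_2$ subproduct of $\sigma$ (each elementary subroad is by definition contained in one or the other type), and within each coarse piece the maximality conditions force the finer breakpoints uniquely.

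The main obstacle I anticipate is the careful bookkeeping at the \emph{boundaries} between consecutive isolated and non-isolated $\sigma_2$ subproducts of $\sigma$, and at the two truncated ends of $R$ where $R$ starts at $\sigma_s$ and ends at $\sigma_{s'}$. In particular I would need to confirm that the endpoint conventions (for instance, an end isolated $\sigma_2$ subroad being exactly $\sigma_1^{a_n}\sigma_3^{b_n}\sigma_2$ as noted in Remark~\ref{remarkendisolated}, and the exceptional clause allowing a non-isolated $\sigma_2$ subroad to end at $\sigma_2$) are consistent and do not create ambiguity or gaps in the decomposition. This requires invoking Lemma~\ref{minimalform} to rule out configurations that would otherwise permit two distinct elementary subroads to share a boundary generator or leave a generator unassigned; once these edge cases are dispatched, both existence and uniqueness are immediate from the canonical nature of the underlying isolated/non-isolated $\sigma_2$ subproduct decomposition of $\sigma$.
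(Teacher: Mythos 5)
Your proposal is correct and takes essentially the same route as the paper: the paper's proof also intersects $R$ with the canonical decomposition of $\sigma$ into isolated and non-isolated $\sigma_2$ subproducts, classifies the isolated intersections as $1$-initial, $3$-initial, or end isolated $\sigma_2$ subroads according to the type (I, II, III) of the ambient subproduct and the value of $a_p$, and appeals to the maximality requirement in Definition~\ref{defelementarysubroads} \textbf{(a)} to get uniqueness of the finer decomposition inside the non-isolated pieces. Your additional attention to the truncated ends of $R$ and the boundary conventions is sound, though the paper dispatches these points implicitly.
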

\begin{proof}
We consider the intersection of $R$ with the product decomposition of $\sigma$ into isolated $\sigma_2$ subproducts and non-isolated $\sigma_2$ subproducts. 

Let us adopt the setup of Definition~\ref{isp}. The intersection of $R$ with either a type II isolated $\sigma_2$ subproduct of $\sigma$, a type I isolated $\sigma_2$ subproduct of $\sigma$ with $a_p>0$ that is not contained in $R$, or a type III isolated $\sigma_2$ subproduct of $\sigma$, is a $1$-initial isolated $\sigma_2$ subroad of $R$. The intersection of $R$ with a type I isolated $\sigma_2$ subproduct of $\sigma$ with $a_p = 0$ is a $3$-initial isolated $\sigma_2$ subroad of $R$. Finally, the intersection of $R$ with a type I isolated $\sigma_2$ subproduct of $\sigma$ with $a_p>0$ that is contained in $R$ is an end isolated $\sigma_2$ subroad of $R$.

The intersection of $R$ with a non-isolated $\sigma_2$ product of $\sigma$ can be uniquely expressed as the product of non-isolated $\sigma_2$ subroads. Indeed, the requirement at the beginning of Definition~\ref{defelementarysubroads} \textbf{(a)} implies that distinct non-isolated $\sigma_2$ subroads do not overlap.
\end{proof}

\subsubsection{The theory of bad subroads}

The next step is to precisely define bad subroads. The rough definition (the formal definition follows) is that an $\left(s,s'\right)$-subroad $R$ is bad if the exponents of generators in $R$ are all equal to $0$, $1$ or $2$ (with an exception for alternating exponents in isolated $\sigma_2$ subproducts of $R$), and there are also further strong constraints on the order in which generators appear. Firstly, we define elementary bad subroads, which are the basic building blocks of all bad subroads.

\begin{definition}
\label{defelementarybadsubroad}
An \textit{elementary bad subroad} is either a non-isolated $\sigma_2$ bad subroad (defined in \textbf{(a)}) or an \textit{isolated $\sigma_2$ bad subroad} (defined in \textbf{(b)}). 
\begin{description}
\item[(a)] A \textit{non-isolated $\sigma_2$ bad subroad} is either an $i$-constant bad subroad (defined in \textbf{(i)}) or an alternating bad subroad (defined in \textbf{(ii)}). 
\begin{description}
\item[(i)] An \textit{$i$-constant bad subroad} is an $i$-constant subroad such that each exponent of a generator is at most $2$. 
\item[(ii)] An \textit{alternating bad subroad} is an alternating subroad such that each exponent of $\sigma_2$ is equal to $2$ and each exponent of $\sigma_1$ and $\sigma_3$ is equal to $1$. 
\end{description}
\item[(b)] An \textit{isolated $\sigma_2$ bad subroad} is either an $i$-initial isolated $\sigma_2$ bad subroad (defined in \textbf{(i)}) or an end isolated $\sigma_2$ bad subroad (defined in \textbf{(ii)}).
\begin{description}
\item[(i)] Let $R$ be an $i$-initial isolated $\sigma_2$ subroad. Let $R$ be an $\left(s,s'\right)$-subroad and let us define $i'$ such that $\{i,i'\} = \{1,3\}$. We write that $R$ is an \textit{$i$-initial isolated $\sigma_2$ bad subroad} if the following conditions are satisfied.  If $s = 2$, then the index of the first generator in $R$ that is not equal to $2$ is $i'$. If $s\in \{1,3\}$, then the index of the first generator in $R$ is $i$. Finally, each (non-zero) exponent of $\sigma_i$ is at most $2$. 
\item[(ii)] An \textit{end isolated $\sigma_2$ bad subroad} is an end isolated $\sigma_2$ subroad such that the exponents of $\sigma_1$ and $\sigma_3$ are both equal to $1$. 
\end{description}
\end{description}
\end{definition}

We include Remark~\ref{remarkexponentone} to note that the ``at most two" condition on the exponents of generators in an elementary bad subroad $R$ is simply ``equal to $2$", except possibly at the beginning or the end of the minimal form of $\sigma$. The statements in Remark~\ref{remarkexponentone} are all a consequence of Lemma~\ref{minimalform} \textbf{(i)} and \textbf{(ii)} and the hypothesis that $R$ is a subroad (i.e., $R$ does not overlap with a block).

\begin{remark}
\label{remarkexponentone}
If it exists, an end isolated $\sigma_2$ bad subroad is unique and is equal to $\sigma_1\sigma_3\sigma_2$. The exponents of the generators in an $i$-constant subroad $R$, except possibly the first and last exponents, are all greater than $1$. If $R$ is at the beginning (resp. end) of the minimal form of $\sigma$ and the first (resp. last) generator is not $\sigma_2$, then the first (resp. last) exponent can equal $1$ (and this is the only case where an exponent in $R$ can equal $1$). In particular, in an $i$-constant bad subroad, all but possibly the first and last exponents are equal to $2$. Similarly, the exponents of $\sigma_2$ in an alternating subroad are all greater than $1$. Finally, in an $i$-initial isolated $\sigma_2$ subroad $R$, the (non-zero) exponents of $\sigma_i$, except possibly the last exponent, are all greater than $1$. If $R$ is at the end of the minimal form of $\sigma$, then the last exponent can equal $1$ (and this is the only case where an exponent of $\sigma_i$ in $R$ can equal $1$). In particular, in an $i$-initial isolated $\sigma_2$ bad subroad, all but possibly the last exponent of $\sigma_i$ are equal to $2$.
\end{remark}

In light of Remark~\ref{remarkexponentone}, we can view elementary bad subroads as elementary subroads where the exponents of the generators are minimized (in the case of isolated $\sigma_2$ subroads, we consider alternating exponents). A generic elementary subroad is not an elementary bad subroad. 

The fundamental characteristic of an elementary bad subroad $P'\setminus P$ is a uniqueness of extension property of bad $P$-paths to $P'$-paths. A special case of this is exhibited in Example~\ref{exglobalcancellation}, where the elementary bad subroad is a $1$-constant bad subroad. We now precisely state and establish this characteristic of elementary bad subroads in full generality.

\begin{proposition}
\label{pelementarybadsubroadimpliesuniqueextension}
Let us adopt the notation of Definition~\ref{defss'subroad} and let $R = P'\setminus P$ be an elementary bad $\left(s,s'\right)$-subroad. Let $W$ be a bad $\left(r,s''\right)$-type $P$-path. If $s''\in \{1,3\}$, then define $i = s''$. If $s'' = 2$, then define $i$ such that $W$ is an $i$-switch bad $P$-path. (Note that $i\in \{1,3\}$.) 

Let $W'$ be an extension of $W$ to an $\left(r,s'''\right)$-type $P'$-path for some $s'''\in \{1,2,3\}$. We uniquely characterize $W'$ according to the type of the elementary bad subroad $R$ and the value of $s'''$, as follows. (We note that in each of the following cases, the final vertex change of the extension may be omitted to ensure that the final vertex of the extension is $s'''$.) Let us define $i'$ such that $\{i,i'\} = \{1,3\}$. 
\begin{description}
\item[(a)] Let $R$ be an $i$-constant bad subroad. The $P'$-path $W'$ is uniquely defined by extending the $P$-path $W$ by $i\to 2$ vertex changes at every second $\sigma_2$ in $R$ and $2\to i$ vertex changes at every second $\sigma_i$ in $R$. 
\item[(b)] Let $R$ be an alternating bad subroad. The $P'$-path $W'$ is uniquely defined by extending the $P$-path $W$ by the following vertex changes. We have a $2\to i'$ vertex change at every odd-numbered $\sigma_1\sigma_3$ in $R$ followed by an $i'\to 2$ vertex change at the second $\sigma_2$ that follows every odd-numbered $\sigma_1\sigma_3$ in $R$. We also have a $2\to i$ vertex change at every even-numbered $\sigma_1\sigma_3$ in $R$ followed by an $i\to 2$ vertex change at the second $\sigma_2$ that follows every even-numbered $\sigma_1\sigma_3$ in $R$. 
\item[(c)] Let $R$ be an $i$-initial isolated $\sigma_2$ bad subroad. The $P'$-path $W'$ is uniquely defined by extending the $P$-path $W$ by $i\to 2$ vertex changes at every second $\sigma_2$ in $R$ and $2\to i$ vertex changes at every second $\sigma_i$ in $R$.
\item[(d)] Let $R$ be an end isolated $\sigma_2$ bad subroad (in this case, $s'' = 2$). The $P'$-path $W'$ is uniquely defined by extending the $\left(r,2\right)$-type $P$-path $W$ by a $2\to i'$ vertex change at the $\sigma_{i'}$ in $R = \sigma_1\sigma_3\sigma_2$.
\end{description}
\end{proposition}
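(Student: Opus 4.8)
The plan is to prove each of the four cases (a)--(d) by establishing the key \textbf{uniqueness of extension} property, which follows by combining the combinatorial structure of elementary bad subroads (all relevant exponents are essentially at most $2$, by Remark~\ref{remarkexponentone}) with Lemma~\ref{lemmagoodext} \textbf{(ii)} and \textbf{(iii)}, the statements that govern how bad and switch paths may be admissibly extended. The fundamental principle is that a bad (or $i$-switch) $P$-path admits an admissible extension across the next generator \emph{only} by a vertex change at the \emph{first} available generator, and a further vertex change is then forced by admissibility whenever the next exponent is $1$. Since an elementary bad subroad is precisely a subroad in which all the controlling exponents have been minimized, at each step there is exactly one admissible choice, and the extension is determined inductively.

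First I would set up the induction. I would process $R$ generator-block by generator-block, tracking at each intermediate $s''$-subproduct $P''$ with $P\subseteq P''\subseteq P'$ whether the current partial extension of $W$ is a bad $P''$-path or an $i$-switch / $i'$-switch $P''$-path. The base case is the hypothesis that $W$ is bad (with the associated index $i\in\{1,3\}$ as defined in the statement). For the inductive step I would invoke Lemma~\ref{lemmagoodext}: \textbf{(ii)} tells me that a bad $\left(r,s\right)$-type path with $s\in\{1,3\}$ can only be admissibly extended to an $\left(r,2\right)$-type path by a vertex change at the \emph{second or later} $\sigma_2$; since in a bad subroad every relevant $\sigma_2$-exponent is $2$ (away from the ends, by Remark~\ref{remarkexponentone}), the unique admissible extension is the one that changes vertex at the second $\sigma_2$, i.e.\ it ``skips'' the first $\sigma_2$. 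Dually, \textbf{(iii)} tells me that an $i$-switch path can only be admissibly extended by a vertex change at the second or later $\sigma_i$ (or at every $\sigma_{i'}$), again forcing the pattern ``change at the second $\sigma_i$'' when the exponent is $2$. This is exactly the ``every second $\sigma_2$ / every second $\sigma_i$'' pattern asserted in cases (a) and (c), and the alternating variant in case (b).

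For case (a), the $i$-constant bad subroad, the argument alternates cleanly: a bad path changes at the second $\sigma_2$ (becoming an $i$-switch path), then the $i$-switch path changes at the second $\sigma_i$ (becoming bad again), and so on; uniqueness at each step is Lemma~\ref{lemmagoodext} \textbf{(ii)}--\textbf{(iii)}. For case (b), the alternating bad subroad, I would track the parity of the $\sigma_1\sigma_3$ blocks: since each $\sigma_2$-exponent is $2$ and each $\sigma_1,\sigma_3$ exponent is $1$, the admissible extension is forced to alternate between $i'$ and $i$ as described, with the second-$\sigma_2$ skipping rule producing the stated vertex changes. Case (c), the $i$-initial isolated $\sigma_2$ bad subroad, is structurally identical to (a) once one checks the index-of-first-generator conditions in Definition~\ref{defelementarybadsubroad} \textbf{(b)} \textbf{(i)} match the forced vertex-change pattern. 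Case (d), the end isolated $\sigma_2$ bad subroad $R=\sigma_1\sigma_3\sigma_2$ with $s''=2$, is the shortest: an $i$-switch $\left(r,2\right)$-type path is extended by the unique admissible $2\to i'$ vertex change at $\sigma_{i'}$ (the other index), which is exactly the content of Lemma~\ref{lemmagoodext} \textbf{(iii)} specialized to a single alternating step.

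The main obstacle I anticipate is the careful bookkeeping at the \emph{boundaries} of the subroad, i.e.\ handling the parenthetical remark ``the final vertex change of the extension may be omitted to ensure that the final vertex of the extension is $s'''$.'' When $s'''\in\{1,3\}$ rather than $2$, the last forced $\sigma_2$-change must be dropped, and I must verify via Lemma~\ref{lemmagoodext} \textbf{(iv)} that the resulting no-further-change extension across any trailing $\sigma_{s'''}$ is still admissible (equivalently, that there is no $\sigma_{s'''}$ obstructing it). I would also need to confirm, using Remark~\ref{remarkexponentone}, that the ``exponent $=1$'' exceptions permitted only at the very beginning or end of the minimal form of $\sigma$ do not break the forcing argument: an exponent-$1$ generator actually \emph{strengthens} uniqueness (there is then no ``second'' generator to change at), so the only subtlety is ensuring the partial path has the correct good/bad/switch status entering such a generator. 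Once these boundary verifications are dispatched, uniqueness and the explicit vertex-change description follow directly from the inductive application of Lemma~\ref{lemmagoodext}.
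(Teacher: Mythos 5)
Your overall skeleton (induct along the elementary subroad and use Lemma~\ref{lemmagoodext} to force the vertex changes) is the same as the paper's, which derives the proposition exactly from Lemma~\ref{lemmagoodext}. However, your central claim --- that ``at each step there is exactly one admissible choice'' --- is false, and this is where the real content of the uniqueness argument lives. Lemma~\ref{lemmagoodext} \textbf{(ii)} (resp.\ \textbf{(iii)}) only restricts the extensions \emph{that end in a vertex change}: at every stage the partial path also admits the extension with \emph{no} vertex change, and by Lemma~\ref{lemmagoodext} \textbf{(iv)} (for a path sitting at a vertex $t\in\{1,3\}$) or \textbf{(v)} (for a path sitting at vertex $2$) that no-change extension is perfectly admissible so long as the segment being crossed contains no $\sigma_t$, resp.\ no $\sigma_2$. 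So locally there are two admissible branches, not one. Uniqueness of the extension to a $P'$-path holds only because every no-change branch dies \emph{downstream}: a path parked at vertex $i$ cannot cross the next power $\sigma_i^{\leq 2}$ at all (it cannot ``change'' to the vertex it already occupies, and staying is inadmissible by Lemma~\ref{lemmagoodext} \textbf{(iv)}), and a path parked at vertex $2$ after the last $1/3$-generator cannot cross the next $\sigma_2$ (Lemma~\ref{lemmagoodext} \textbf{(v)}). A complete proof must run this elimination for every spurious branch; you invoke \textbf{(iv)} only for the trailing-generator boundary issue, so as written the uniqueness assertion is unsupported.

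Relatedly, your claim that case \textbf{(c)} is ``structurally identical'' to case \textbf{(a)} is not correct, and the discrepancy matters. In an $i$-initial isolated $\sigma_2$ bad subroad the $\sigma_2$s are isolated (exponent $1$), so Lemma~\ref{lemmagoodext} \textbf{(ii)} never forces anything at a $\sigma_2$: a bad $\left(r,i\right)$-type path simply cannot change there and passes through, then crosses the (possibly large) $\sigma_{i'}$ power while staying at vertex $i$, after which it is \emph{vacuously good} (the relevant exponent of $\sigma_i$ in the new subproduct is $0$), and it is at the \emph{next} isolated $\sigma_2$ that it changes --- with uniqueness again coming from the death of the no-change branch at the following $\sigma_i^{\leq 2}$ power. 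This is a different mechanism from case \textbf{(a)}, where the $\sigma_2$-exponents equal $2$ and \textbf{(ii)} does the forcing; the good/bad/switch bookkeeping in your induction would have to track this status reset across the $\sigma_{i'}$ powers, which your proposal does not do.
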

\begin{proof}
The statements are consequences of Lemma~\ref{lemmagoodext}. 
\end{proof}

We now characterize the possible values of $s'''$ for which there exists the (unique) extension of $W$ to an $\left(r,s'''\right)$-type $P'$-path $W'$ in the context of Proposition~\ref{pelementarybadsubroadimpliesuniqueextension}.

\begin{corollary}
\label{celementarybadsubroadimpliesuniqueextension}
Let us adopt the setup of Proposition~\ref{pelementarybadsubroadimpliesuniqueextension}. 

\begin{description}
\item[(a)] Let $R$ be an $i$-constant bad subroad. Firstly, there is no extension of $W$ to an $\left(r,i'\right)$-type $P'$-path.
\begin{description}
\item[(i)] If the last exponent in $R$ is greater than $1$, then there is a unique extension of $W$ to an $\left(r,s'\right)$-type $P'$-path $W'$. The $P'$-path $W'$ is bad.
\item[(ii)] If the last exponent in $R$ is equal to $1$ (in which case, the last generator in $R$ is $\sigma_i$), then there is a unique extension of $W$ to a $P'$-path $W'$ and $W'$ is an $\left(r,2\right)$-type $P'$-path.
\end{description}
\item[(b)] If $R$ is an alternating bad subroad, then there is a unique extension of $W$ to an $\left(r,s'\right)$-type $P'$-path $W'$. Let $N$ be the number of $\sigma_1\sigma_3$s in $R$. If $N$ is even, then define $t=i$. If $N$ is odd, then define $t$ such that $\{t,i\} = \{1,3\}$. If $s'\in \{1,3\}$, then $W'$ is a bad $\left(r,t\right)$-type $P'$-path, and if $s' = 2$, then $W$ is a $t$-switch $\left(r,2\right)$-type $P'$-path.
\item[(c)] Let $R$ be an $i$-initial isolated $\sigma_2$ bad subroad. Let $M$ be the number of $\sigma_2$s in $R$. Firstly, there is no extension of $W$ to an $\left(r,i'\right)$-type $P'$-path.
\begin{description}
\item[(i)] Let $s' = 2$. If $M$ is even, then there is a unique extension of $W$ to an $\left(r,s'\right)$-type $P'$-path and it is a bad $\left(r,s'\right)$-type $P'$-path. If $M$ is odd, then there is a unique extension of $W$ to a $P'$-path and it is an $\left(r,i\right)$-type $P'$-path. 
\item[(ii)] Let $s'\in \{1,3\}$. If $M$ is even, then there is a unique extension of $W$ to an $\left(r,s'\right)$-type $P'$-path and it is a bad $\left(r,i\right)$-type $P'$-path. If $M$ is odd, then there is a unique extension of $W$ to a $P'$-path and it is a good $\left(r,i\right)$-type $P'$-path (in this case, the last generator in $R$ is $\sigma_{i'}$).
\end{description}
\item[(d)] If $R$ is an end isolated $\sigma_2$ bad subroad, then there is a unique extension of $W$ to a $P'$-path and it is an $\left(r,i'\right)$-type $P'$-path. 
\end{description}
\end{corollary}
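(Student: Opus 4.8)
The plan is to deduce every assertion directly from Proposition~\ref{pelementarybadsubroadimpliesuniqueextension}, which already pins down the \emph{unique} extension $W'$ of $W$ together with the exact sequence of vertex changes defining it, and then to read off two things: first, the terminal vertex of $W'$, which determines the admissible values of $s'''$ (and in particular rules out the $(r,i')$-type extensions); and second, the good/bad/switch classification of $W'$ via Lemma~\ref{lemmagoodext} and Definition~\ref{defgoodbadpath}. The uniqueness in Proposition~\ref{pelementarybadsubroadimpliesuniqueextension} does the heavy lifting here: once the pattern of vertex changes is forced, the corollary reduces to bookkeeping of where that pattern terminates and whether the final generator of the relevant type carries a vertex change.

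For the two constant cases \textbf{(a)} and \textbf{(c)}, I would first observe that the generators occurring in $R$ are only $\sigma_i$ and $\sigma_2$, so the forced pattern of Proposition~\ref{pelementarybadsubroadimpliesuniqueextension}~\textbf{(a)},~\textbf{(c)} keeps the path among the vertices $i$ and $2$ and never reaches $i'$; this immediately gives the ``no $(r,i')$-type extension'' statements. Then, tracking the ``every second'' alternation from the vertex at which $W$ enters $R$ (recall that $W$ arrives at the junction at vertex $i$ when $s''\in\{1,3\}$, or having just switched from $i$ when $s''=2$), the terminal vertex is governed by the parity of the number $M$ of $\sigma_2$'s in case \textbf{(c)}, and by whether the last exponent equals $1$ or exceeds $1$ in case \textbf{(a)} (using Remark~\ref{remarkexponentone} to know that all interior exponents equal $2$). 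Whether $W'$ is good or bad is then decided by Lemma~\ref{lemmagoodext}~\textbf{(ii)}: a forced change landing on the \emph{first} generator of the terminal group yields a bad path, whereas the absence of a terminal change (the omitted final change allowed in Proposition~\ref{pelementarybadsubroadimpliesuniqueextension}) yields a path whose type is determined by its last vertex, producing the good/$(r,2)$-type conclusions in \textbf{(a)(ii)} and \textbf{(c)}.

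For the alternating case \textbf{(b)}, the forced pattern alternates the ``off vertex'' between $i'$ and $i$ at successive $\sigma_1\sigma_3$ pairs, so the terminal off vertex $t$ is $i$ or $i'$ according to the parity of $N$; the good/bad/switch label again follows from Lemma~\ref{lemmagoodext} applied to the final $\sigma_2^2$ block. The end isolated case \textbf{(d)} is the simplest: by Remark~\ref{remarkendisolated} the subroad is exactly $\sigma_1\sigma_3\sigma_2$, so Proposition~\ref{pelementarybadsubroadimpliesuniqueextension}~\textbf{(d)} forces the single $2\to i'$ change at $\sigma_{i'}$ and the path terminates at $i'$, giving the $(r,i')$-type conclusion at once.

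The main obstacle I anticipate is the parity and boundary bookkeeping in cases \textbf{(a)} and \textbf{(c)}: one must carefully align the ``every second'' counting with the vertex at which $W$ enters $R$, and correctly invoke the caveat in Proposition~\ref{pelementarybadsubroadimpliesuniqueextension} that the final vertex change may be omitted, so that the distinction between a bad terminal path and an $(r,2)$- or good $(r,i)$-type terminal path is drawn in exactly the right subcase. The genericity exception for the first and last exponents noted in Remark~\ref{remarkexponentone} (exponent $1$ permitted only at the ends of the minimal form) must be threaded through these subcases, since it is precisely the last-exponent-equals-one situation that produces the qualitatively different conclusions of \textbf{(a)(ii)} and the odd-$M$ outcomes of \textbf{(c)}.
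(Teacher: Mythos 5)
Your proposal is correct and takes essentially the same route as the paper: the paper's own proof is a one-line citation of Proposition~\ref{pelementarybadsubroadimpliesuniqueextension}, with exactly the bookkeeping you describe (the forced vertex-change pattern, the parity of where it terminates, and the good/bad/switch classification via Lemma~\ref{lemmagoodext} and Definition~\ref{defgoodbadpath}) left implicit. One small slip in your write-up: in case \textbf{(a)} it is a forced change at the \emph{last} (i.e., second, since interior exponents equal $2$) generator of the terminal power that makes $W'$ bad, not the first; your subcase conclusions are nonetheless matched correctly.
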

\begin{proof}
The statements are consequences of the respective statements in Proposition~\ref{pelementarybadsubroadimpliesuniqueextension}.
\end{proof}

We now define bad subroads in full generality in terms of the decomposition of an $\left(s,s'\right)$-subroad into elementary subroads. 

\begin{definition}
\label{defbadsubroad}
Let $R$ be an $\left(s,s'\right)$-subroad and let $R = \prod_{j=1}^{k} R_j$ be the product decomposition of $R$ into elementary subroads. We write that $R$ is a \textit{bad subroad} if the product decomposition satisfies the following properties:
\begin{description}
\item[(i)] The elementary subroad $R_j$ is an elementary bad subroad (in the sense of Definition~\ref{defelementarybadsubroad}) for each $1\leq j\leq k$.
\item[(ii)] An $i$-constant subroad is not followed by an $i'$-constant subroad for $i'\neq i$. 
\item[(iii)] Let $1\leq j\leq k$ be such that an $i$-constant subroad $R_{j}$ is followed by an isolated $\sigma_2$ subroad $R_{j+1}$. If $R_{j+1}$ is an $i'$-initial isolated $\sigma_2$ subroad, then $i'=i$.
\item[(iv)]  Let $1\leq j\leq k$ be such that $R_{j}$ is an alternating subroad (in particular, neither $R_{j-1}$ nor $R_{j+1}$ is an alternating subroad). Let $i$ be the index of the last generator in $R_{j-1}$, and let $i'$ be the index of the first generator in $R_{j+1}$ that is not equal to $2$ (in particular, $\{i,i'\}\subseteq \{1,3\}$). Let $N$ be the number of $\sigma_1\sigma_3$s in $R_{j}$. If $N$ is even, then $i' = i$, and if $N$ is odd, then $i'\neq i$.
\item[(v)] If an isolated $\sigma_2$ subroad is followed by a non-isolated $\sigma_2$ subroad, then the number of $\sigma_2$s in the isolated $\sigma_2$ subroad is even. 
\item[(vi)] Let $1\leq j\leq k$ be such that an isolated $\sigma_2$ subroad $R_{j}$ is followed by a non-isolated $\sigma_2$ subroad $R_{j+1}$. Let us assume that $\{i,i'\} = \{1,3\}$. If the index of the last generator in $R_{j}$ is $i$, then $R_{j+1}$ is not an $i'$-constant subroad. 
\end{description}
\end{definition}

We introduce the following categorization of bad subroads.

\begin{definition}
\label{defiswitchbadsubroad}
Let $R$ be a bad subroad and let $R = \prod_{j=1}^{k} R_j$ be the product decomposition of $R$ into elementary subroads. If $R_1$ is an alternating subroad, then define $N$ to be the number of $\sigma_1\sigma_3$s in $R_1$ and define $j_1 = 2$. If $R_1$ is not an alternating subroad, then define $N = 0$ and define $j_1 = 1$. If $N$ is even, then define $i_0= i$, and if $N$ is odd, then define $i_0$ such that $\{i,i_0\} = \{1,3\}$. We write that $R$ is an \textit{$i$-switch bad subroad} if $R_{j_1}$ is either an $i_0$-constant subroad, an $i_0$-initial isolated $\sigma_2$ subroad, or an end isolated $\sigma_2$ subroad.
\end{definition}

If $R$ is a bad subroad, then $R$ is an $i$-switch bad subroad for some $i\in \{1,3\}$. In Definition~\ref{defbadsubroad}, condition \textbf{(iv)} of a bad subroad is vacuous if $j = 1$. The condition that a bad subroad $R$ is an $i$-switch bad subroad is designed to place a restriction on the beginning of $R$ relative to $i$ that extends condition \textbf{(iv)} and implies the following uniqueness of extension property of paths. If $R = P'\setminus P$ is an $i$-switch bad subroad, and if $W$ and $i$ are as in Proposition~\ref{pelementarybadsubroadimpliesuniqueextension}, then there is at most one extension of the bad $P$-path $W$ to a $P'$-path with a prescribed final vertex. We now characterize the extension.

\begin{proposition}
\label{pbadsubroad}
Let us adopt the notation of Definition~\ref{defss'subroad} and let $R = P'\setminus P$ be an $i$-switch bad subroad. Let us write $R = \prod_{j=1}^{k} R_j$ for the product decomposition of $R$ into elementary subroads. Let $W$ be a bad $\left(r,s''\right)$-type $P$-path. If $s''\in \{1,3\}$, then define $i = s''$. If $s'' = 2$, then define $i$ such that $W$ is an $i$-switch bad $P$-path. (Note that $i\in \{1,3\}$.) 

Let $W'$ be an extension of $W$ to an $\left(r,s'''\right)$-type $P'$-path for some $s'''\in \{1,2,3\}$. If $0\leq j'\leq k$, then let $P_{j'} = P\prod_{j=1}^{j'}R_j$ (in particular, $P_0 = P$ and $P_k = P'$). We uniquely characterize $W'$ according to the value of $s'''$ inductively as follows.

If $0\leq j'\leq k-1$, then let us assume that we have defined an extension of $W$ to an $\left(r,s_{j'}\right)$-type $P_{j'}$-path $W_{j'}$ for some $s_{j'}\in \{1,2,3\}$ such that $P_{j'}$ is an $s_{j'}$-subproduct. We define the $\left(r,s_{j'+1}\right)$-type $P_{j'+1}$-path $W_{j'+1}$ by uniquely extending $W_{j'}$ according to the type of the elementary bad subroad $R_{j'+1}$ as in the statement of Proposition~\ref{pelementarybadsubroadimpliesuniqueextension}. We have $W' = W_k$.
\end{proposition}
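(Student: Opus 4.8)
The plan is to prove the statement by induction on the index $j'$ running over the elementary subroads in the product decomposition $R = \prod_{j=1}^{k} R_j$, maintaining as the inductive invariant that $W_{j'}$ is well-defined and is a \emph{bad} $\left(r,s_{j'}\right)$-type $P_{j'}$-path carrying a well-defined switch index $i_{j'}\in\{1,3\}$ (meaning $i_{j'}=s_{j'}$ if $s_{j'}\in\{1,3\}$, and $W_{j'}$ is an $i_{j'}$-switch $\left(r,2\right)$-type path if $s_{j'}=2$). The purpose of this invariant is that it is exactly the hypothesis needed to invoke Proposition~\ref{pelementarybadsubroadimpliesuniqueextension} on the next elementary bad subroad $R_{j'+1}$: that proposition takes a bad path whose switch index is $i$ and produces a \emph{unique} extension across an elementary bad subroad whose initial index matches $i$. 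Thus the entire content of the statement reduces to verifying that the switch index emerging from $R_{j'}$ always matches the index that $R_{j'+1}$ expects, so that the construction never stalls and never branches.

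First I would handle the base case. If $R_1$ is not an alternating subroad, then $j_1=1$ and the switch index of $W=W_0$ is simply the given $i$; the $i$-switch hypothesis (Definition~\ref{defiswitchbadsubroad}) asserts that $R_1=R_{j_1}$ is an $i$-constant, an $i$-initial isolated $\sigma_2$, or an end isolated $\sigma_2$ subroad, so Proposition~\ref{pelementarybadsubroadimpliesuniqueextension} applies directly. If $R_1$ \emph{is} an alternating subroad, then Proposition~\ref{pelementarybadsubroadimpliesuniqueextension}~\textbf{(b)} shows the extension across $R_1$ is determined regardless of the incoming index, and Corollary~\ref{celementarybadsubroadimpliesuniqueextension}~\textbf{(b)} records that the outgoing switch index is $i$ if $N$ is even and the opposite of $i$ if $N$ is odd; this is precisely $i_0$, and the $i$-switch hypothesis guarantees $R_{j_1}=R_2$ is an $i_0$-constant or $i_0$-initial isolated $\sigma_2$ subroad, so that $i_{j_1}=i_0$ matches.

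For the inductive step I would, at each $j'$, read off the type and outgoing switch index of $W_{j'+1}$ from the appropriate case of Corollary~\ref{celementarybadsubroadimpliesuniqueextension}, and then confirm that this outgoing index is compatible with $R_{j'+2}$ by invoking the matching clause of Definition~\ref{defbadsubroad}. The bookkeeping splits according to the ordered pair of types of $\left(R_{j'+1},R_{j'+2}\right)$: condition~\textbf{(ii)} rules out an $i$-constant subroad feeding an $i'$-constant subroad with $i'\neq i$, so the index is preserved across consecutive constant subroads; condition~\textbf{(iii)} forces an $i$-constant subroad to be followed only by an $i$-initial isolated $\sigma_2$ subroad; condition~\textbf{(iv)} pins down, via the parity of $N$, the index on either side of an alternating subroad; and conditions~\textbf{(v)} and~\textbf{(vi)}, via the parity $M$ of the number of $\sigma_2$'s appearing in Corollary~\ref{celementarybadsubroadimpliesuniqueextension}~\textbf{(c)}, control the transition from an isolated $\sigma_2$ subroad to a non-isolated one. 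In each case the outgoing index of $W_{j'+1}$ coincides with the index expected by $R_{j'+2}$, the invariant is restored, and the extension $W_{j'+1}\to W_{j'+2}$ is again unique by Proposition~\ref{pelementarybadsubroadimpliesuniqueextension}.

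The main obstacle is not any single deep step but the completeness and correctness of this casework: one must check every admissible adjacency of elementary bad subroad types against the corresponding condition in Definition~\ref{defbadsubroad}, keeping the parity-dependent index flips of the alternating and isolated cases straight. A genuine subtlety is that the invariant can legitimately fail at the \emph{final} stage, since by Corollary~\ref{celementarybadsubroadimpliesuniqueextension}~\textbf{(c)} \textbf{(ii)} an $i$-initial isolated $\sigma_2$ subroad with $M$ odd produces a \emph{good} path, and by Corollary~\ref{celementarybadsubroadimpliesuniqueextension}~\textbf{(a)} \textbf{(ii)} a constant subroad whose last exponent equals $1$ terminates in an $\left(r,2\right)$-type path; I would verify, using Remark~\ref{remarkexponentone} and conditions~\textbf{(v)} and~\textbf{(vi)}, that these transitions can only occur when $j'+1=k$, so that no further elementary subroad needs to be processed with an incompatible incoming path. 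Once the induction reaches $j'=k$, the identification $W'=W_k$ is immediate.
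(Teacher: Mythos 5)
Your induction establishes that the prescribed chain $W_0\to W_1\to\cdots\to W_k$ exists and is well-defined: the switch index emerging from each $R_{j'}$ matches what $R_{j'+1}$ expects, so the construction never stalls. But that is only half of what Proposition~\ref{pbadsubroad} asserts. The proposition claims that an \emph{arbitrary} extension $W'$ of $W$ to a $P'$-path coincides with $W_k$, and your invariant does not rule out extensions that deviate from the prescribed chain at an intermediate stage. Concretely, Proposition~\ref{pelementarybadsubroadimpliesuniqueextension} and Corollary~\ref{celementarybadsubroadimpliesuniqueextension} give uniqueness of the extension of $W_{j'}$ across $R_{j'+1}$ \emph{for each fixed final type}, but they do not say the final type is unique: for an $i$-constant bad subroad, $W_{j'}$ may also extend to an $\left(r,t_{j'+1}\right)$-type $P_{j'+1}$-path with $t_{j'+1}\in\{i,2\}$ distinct from $s_{j'+1}$, and similar strays occur in the alternating and isolated cases. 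If such a stray extension could itself be continued across $R_{j'+2}$, and so on to $P'$, the identity $W'=W_k$ would fail. Your index-matching casework through conditions \textbf{(ii)}--\textbf{(vi)} of Definition~\ref{defbadsubroad} says nothing about these paths, since they are not the path your invariant tracks.

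This is exactly the gap the paper's proof closes with a \emph{second} induction hypothesis: if there is an extension of $W$ to an $\left(r,t_{j'}\right)$-type $P_{j'}$-path with $t_{j'}\neq s_{j'}$, then that path admits no admissible extension to a $P_{j'+1}$-path at all. Verifying this is where the adjacency conditions of Definition~\ref{defbadsubroad} are actually spent: for instance, after an $i$-constant factor the next factor is an alternating bad subroad (and the stray has type $2$), an $i$-initial isolated $\sigma_2$ bad subroad, or an end isolated $\sigma_2$ bad subroad (and the stray has type $i$), and in each case Lemma~\ref{lemmagoodext} \textbf{(v)} or \textbf{(iv)} shows the stray cannot be extended across the next factor. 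Note that your ``subtlety'' paragraph addresses a different issue --- the designated path turning good or $\left(r,2\right)$-type at the final factor --- not the strays at intermediate factors. Without this stray-killing step your argument proves existence and well-definedness of $W_k$, but not the uniqueness statement $W'=W_k$ that the proposition is actually making.
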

\begin{proof}
Let $0\leq j'\leq k-1$. We will establish the statement by induction on $j'$ with the following hypotheses. Firstly, we hypothesize that there is a unique extension of $W$ to an $\left(r,s_{j'}\right)$-type $P_{j'}$-path $W_{j'}$ for some $s_{j'}\in \{1,2,3\}$ such that $P_{j'}$ is an $s_{j'}$-subproduct. Secondly, we hypothesize that if there is an extension of $W$ to an $\left(r,t_{j'}\right)$-type $P_{j'}$-path for $t_{j'}\neq s_{j'}$, then there is no (admissible) extension of an $\left(r,t_{j'}\right)$-type $P_{j'}$-path to a $P_{j'+1}$-path. If $j' = 0$, then the two hypotheses are certainly satisfied. Let us assume that the two hypotheses are satisfied for $j'$. If $0\leq j'\leq k-2$, then we will show that the two hypotheses are satisfied for $j'+1$. 

Firstly, Corollary~\ref{celementarybadsubroadimpliesuniqueextension} implies that there is a unique extension of $W_{j'}$ to an $\left(r,s_{j'+1}\right)$-type $P_{j'+1}$-path. Let us assume that there is an extension of $W_{j'}$ to an $\left(r,t_{j'+1}\right)$-type $P_{j'+1}$-path for some $t_{j'+1}\neq s_{j'+1}$. We will show that there is no extension of an $\left(r,t_{j'+1}\right)$-type $P_{j'+1}$-path to a $P_{j'+2}$-path. We consider cases according to the type of the elementary bad subroad $R_{j'+1}$. Note that the hypothesis of either Corollary~\ref{celementarybadsubroadimpliesuniqueextension} \textbf{(a)} \textbf{(i)}, \textbf{(b)} or \textbf{(c)} \textbf{(ii)} is satisfied for the elementary bad subroad $R_{j'+1}$ since $j'\leq k-2$.

If $R_{j'+1}$ is an $i$-constant bad subroad, then Corollary~\ref{celementarybadsubroadimpliesuniqueextension} \textbf{(a)} \textbf{(i)} implies that $t_{j'+1}\in \{i,2\}$. The hypothesis that $R$ is a bad subroad implies that either $R_{j'+2}$ is an alternating bad subroad and $t_{j'+1} = 2$, $R_{j'+2}$ is an $i$-initial isolated $\sigma_2$ bad subroad and $t_{j'+1} = i$, or $R_{j'+2}$ is an end isolated $\sigma_2$ bad subroad and $t_{j'+1}=i$. Lemma~\ref{lemmagoodext} \textbf{(v)} (in the first case) and Lemma~\ref{lemmagoodext} \textbf{(iv)} (in the second and third cases) imply that an $\left(r,t_{j'+1}\right)$-type $P_{j'+1}$-path does not have an extension to a $P_{j'+2}$-path.

If $R_{j'+1}$ is an alternating bad subroad, then Corollary~\ref{celementarybadsubroadimpliesuniqueextension} \textbf{(b)} implies that $t_{j'+1}\in \{i,2\}$ for some $i\in \{1,3\}$. The hypothesis that $R$ is a bad subroad implies that either $R_{j'+2}$ is an $i$-constant bad subroad and $t_{j'+1}=2$, $R_{j'+2}$ is an $i$-initial isolated $\sigma_2$ bad subroad and $t_{j'+1} = i$, or $R_{j'+2}$ is an end isolated $\sigma_2$ bad subroad and $t_{j'+1} = i$. Lemma~\ref{lemmagoodext} \textbf{(v)} (in the first case) and Lemma~\ref{lemmagoodext} \textbf{(iv)} (in the second and third cases) imply that there is no extension of an $\left(r,t_{j'+1}\right)$-type $P_{j'+1}$-path to a $P_{j'+2}$-path.

If $R_{j'+1}$ is an isolated $\sigma_2$ bad subroad, then Corollary~\ref{celementarybadsubroadimpliesuniqueextension} \textbf{(c)} \textbf{(ii)} and the hypothesis that $R$ is a bad subroad imply that $t_{j'+1} = 2$. Lemma~\ref{lemmagoodext} \textbf{(v)} implies that there is no extension of an $\left(r,t_{j'+1}\right)$-type $P_{j'+1}$-path to a $P_{j'+2}$-path. 

We have considered all cases and established that if $0\leq j'\leq k-1$, then the two hypotheses are satisfied for $j'$. Finally, Corollary~\ref{celementarybadsubroadimpliesuniqueextension} and the fact that the two hypotheses are satisfied for $j' = k-1$ imply the conclusion. 
\end{proof}
 
We state the fundamental uniqueness property for path extensions with respect to an $i$-switch bad subroad.

\begin{corollary}
\label{cuniquebadsubroad}
Let us adopt the notation of Definition~\ref{defss'subroad} and let $R = P'\setminus P$ be an $\left(s,s'\right)$-subroad such that the last exponent in $R$ is greater than $1$. Let $W$ be a bad $\left(r,s''\right)$-type $P$-path. If $s''\in \{1,3\}$, then define $i = s''$. If $s'' = 2$, then define $i$ such that $W$ is an $i$-switch bad $P$-path. If $R$ is an $i$-switch bad subroad, then there is a unique extension of $W$ to an $\left(r,s'\right)$-type $P'$-path.
\end{corollary}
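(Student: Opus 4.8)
The plan is to reduce the statement to Proposition~\ref{pbadsubroad}, which already does the heavy lifting, by verifying that the two additional hypotheses here (namely that $R$ is an $\left(s,s'\right)$-subroad with last exponent greater than $1$, and that $R$ is an $i$-switch bad subroad) guarantee that the unique extension produced by Proposition~\ref{pbadsubroad} actually terminates at an $\left(r,s'\right)$-type $P'$-path rather than at a path of some other final type. First I would invoke Proposition~\ref{pbadsubroad} directly: since $R = P'\setminus P$ is an $i$-switch bad subroad and $W$ is a bad $\left(r,s''\right)$-type $P$-path with the associated index $i$, Proposition~\ref{pbadsubroad} produces a unique extension $W' = W_k$ of $W$ to an $\left(r,s_k\right)$-type $P'$-path, where $s_k$ is determined inductively. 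The uniqueness in Proposition~\ref{pbadsubroad} is exactly the uniqueness we want, so the only remaining task is to identify $s_k$ with $s'$.

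The key step is therefore to analyze the final elementary subroad $R_k$ and apply Corollary~\ref{celementarybadsubroadimpliesuniqueextension} to it, using the hypothesis that the last exponent in $R$ is greater than $1$. Because $R$ is an $\left(s,s'\right)$-subroad, the last generator in $R$ is $\sigma_{s'}$ (or the subroad ends with the relevant $\sigma_2$ block when $s'=2$), so $R_k$ is the elementary bad subroad ending at the end of $R$. I would split into cases according to the type of $R_k$. If $R_k$ is an $i$-constant bad subroad, then Corollary~\ref{celementarybadsubroadimpliesuniqueextension} \textbf{(a)} \textbf{(i)} applies (precisely because the last exponent exceeds $1$) and yields a unique extension to an $\left(r,s'\right)$-type $P'$-path. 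If $R_k$ is an alternating bad subroad, then Corollary~\ref{celementarybadsubroadimpliesuniqueextension} \textbf{(b)} gives the unique $\left(r,s'\right)$-type extension. If $R_k$ is an $i$-initial isolated $\sigma_2$ bad subroad, then Corollary~\ref{celementarybadsubroadimpliesuniqueextension} \textbf{(c)} applies, and the requirement on the last exponent together with the $i$-switch condition rules out the degenerate odd-$M$ subcase ending in $\sigma_{i'}$; if $R_k$ is an end isolated $\sigma_2$ bad subroad, then Corollary~\ref{celementarybadsubroadimpliesuniqueextension} \textbf{(d)} applies and $s' \in \{1,3\}$ is forced.

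The main obstacle I anticipate is bookkeeping: I must check that the inductively-tracked intermediate type $s_{j'}$ entering the final step is consistent with the $i$-switch hypothesis, so that the case of Corollary~\ref{celementarybadsubroadimpliesuniqueextension} invoked for $R_k$ has its hypothesis genuinely met and produces final vertex $s'$ rather than, say, an intermediate $i$-type or a vertex that would force a further vertex change. This is guaranteed by the second induction hypothesis in the proof of Proposition~\ref{pbadsubroad}: any $\left(r,t\right)$-type extension with $t \neq s_{j'}$ fails to extend, so the surviving type at each stage is forced, and in particular at the last stage the surviving type is $s'$. The subtlety is confirming that ``last exponent greater than $1$'' excludes exactly the one branch of Corollary~\ref{celementarybadsubroadimpliesuniqueextension} (the cases \textbf{(a)} \textbf{(ii)} and \textbf{(c)} with odd $M$ ending in a single $\sigma_i$ or $\sigma_{i'}$) that would terminate at a $2$-type or good-$i$-type path instead of an $\left(r,s'\right)$-type path. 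Once these routine case checks are in place, the uniqueness follows immediately.

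\begin{proof}
Since $R = P'\setminus P$ is an $i$-switch bad subroad and $W$ is a bad $\left(r,s''\right)$-type $P$-path with associated index $i$, Proposition~\ref{pbadsubroad} implies that there is a unique extension $W' = W_k$ of $W$ to an $\left(r,s_k\right)$-type $P'$-path, where $s_k$ is determined inductively by the type of each elementary bad subroad $R_j$. It remains only to verify that $s_k = s'$. Write $R = \prod_{j=1}^{k} R_j$ for the decomposition of $R$ into elementary subroads, so that $R_k$ is the elementary bad subroad at the end of $R$. Because $R$ is an $\left(s,s'\right)$-subroad, the final generator of $R$ has index $s'$ (when $s'\in\{1,3\}$), or $R$ terminates at the appropriate $\sigma_2$ (when $s'=2$). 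The hypothesis that the last exponent in $R$ is greater than $1$ ensures that the relevant branch of Corollary~\ref{celementarybadsubroadimpliesuniqueextension} applies to $R_k$: if $R_k$ is an $i$-constant bad subroad, we apply \textbf{(a)} \textbf{(i)}; if $R_k$ is an alternating bad subroad, we apply \textbf{(b)}; if $R_k$ is an $i$-initial isolated $\sigma_2$ bad subroad, we apply the even-$M$ case of \textbf{(c)}; and if $R_k$ is an end isolated $\sigma_2$ bad subroad, we apply \textbf{(d)}. In each case the unique extension of $W_{k-1}$ is to an $\left(r,s'\right)$-type $P'$-path, so $s_k = s'$. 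The second induction hypothesis in the proof of Proposition~\ref{pbadsubroad} guarantees that no extension of $W$ to a $P'$-path of a different final type survives, so the extension of $W$ to an $\left(r,s'\right)$-type $P'$-path is unique.
\end{proof}
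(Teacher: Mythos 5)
Your overall strategy---invoke Proposition~\ref{pbadsubroad} for uniqueness and then check that the final type $s_k$ of the inductively constructed extension is $s'$---is exactly the paper's approach (the paper's entire proof is the citation of Proposition~\ref{pbadsubroad}). However, the case analysis you add, which is the only substantive content beyond that citation, mishandles two of the four cases, because it misreads what the hypothesis ``the last exponent in $R$ is greater than $1$'' actually excludes.

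The end isolated $\sigma_2$ case is wrong as written: by Remark~\ref{remarkendisolated} and Definition~\ref{defelementarybadsubroad} \textbf{(b)} \textbf{(ii)}, an end isolated $\sigma_2$ bad subroad equals $\sigma_1\sigma_3\sigma_2$, so it ends with $\sigma_2$ of exponent $1$; hence $s' = 2$ (not $s'\in\{1,3\}$ ``forced'' as you claim), and Corollary~\ref{celementarybadsubroadimpliesuniqueextension} \textbf{(d)} produces an $\left(r,i'\right)$-type extension, which is \emph{not} an $\left(r,s'\right)$-type path. If this case could occur, the corollary would be false; it is precisely the last-exponent hypothesis that excludes it, since the last exponent is $1$. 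The same remark applies to the odd-$M$ subcase of \textbf{(c)} \textbf{(i)}, where $R$ ends with an isolated $\sigma_2$ (exponent $1$): it is excluded by the hypothesis, not handled by it.

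Conversely, the odd-$M$ subcase of \textbf{(c)} \textbf{(ii)}, where $R_k$ ends with a power of $\sigma_{i'}$, is \emph{not} ruled out: Definition~\ref{defelementarybadsubroad} \textbf{(b)} \textbf{(i)} bounds only the exponents of $\sigma_i$ (at most $2$), while exponents of $\sigma_{i'}$ in an $i$-initial isolated $\sigma_2$ bad subroad are unconstrained, so the last exponent can exceed $1$ there; nor does the $i$-switch condition help, since Definition~\ref{defiswitchbadsubroad} constrains the first factor of $R$, not the last. The corollary still holds in this case, but for a reason you do not give: the unique extension is a good $\left(r,i\right)$-type $P'$-path, and since $s'\in\{1,3\}$, the convention of Definitions~\ref{defssubproduct} and~\ref{defss'subroad} (a $1$-subproduct is the same as a $3$-subproduct, and an $\left(r,s'\right)$-type path for $s'\in\{1,3\}$ means final vertex $1$ or $3$) makes this an $\left(r,s'\right)$-type $P'$-path. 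Your reduction is sound, but these two corrections are needed before the case analysis actually establishes $s_k = s'$.
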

\begin{proof}
The statement is a consequence of Proposition~\ref{pbadsubroad}.
\end{proof}

\begin{remark}
In fact, the converse of Corollary~\ref{cuniquebadsubroad} is true. If there is a unique extension of $W$ to an $\left(r,s'\right)$-type $P'$-path, then $R$ is an $i$-switch bad subroad. We do not need the statement in the sequel but we note that it is straightforward to prove. Firstly, we can establish the statement if $R$ is an elementary bad subroad, and secondly, we can establish that the conditions on the product decomposition of $R$ into elementary subroads in Definition~\ref{defbadsubroad} and Definition~\ref{defiswitchbadsubroad} are satisfied. 
\end{remark}

The following terminology for $i$-switch bad subroads will be convenient.

\begin{definition}
Let $R$ be an $i$-switch bad subroad such that the last exponent in $R$ is greater than $1$. Let us adopt the setup of Corollary~\ref{cuniquebadsubroad}. If the unique extension of $W$ to an $\left(r,s'\right)$-type $P'$-path is either an $\left(r,i'\right)$-type $P'$-path, or an $i'$-switch $\left(r,2\right)$-type $P'$-path, then we write that $R$ is an \textit{$i'$-terminal bad subroad}.
\end{definition}

If $R$ is an $i'$-terminal bad subroad, then $i'\in \{1,3\}$. Furthermore, a bad subroad $R$ such that the last exponent in $R$ is greater than $1$ is an $i'$-terminal bad subroad for some $i'\in \{1,3\}$. We can explicitly determine the value of $i'$ such that an $i$-switch bad subroad is an $i'$-terminal bad subroad.

\begin{proposition}
\label{pterminalbadsubroad}
Let $R$ be an $i'$-terminal $i$-switch bad subroad. Let $N$ be the number of $\sigma_1\sigma_3$s in $R$. If $N$ is even, then $i' = i$, and if $N$ is odd, then $i'$ is such that $\{i,i'\} = \{1,3\}$.
\end{proposition}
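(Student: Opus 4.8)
The plan is to track, as the unique extension of the bad $P$-path $W$ is built up through the elementary subroads of $R$ according to Proposition~\ref{pbadsubroad}, an ``active index'' in $\{1,3\}$, and to show that this index flips exactly once for every $\sigma_1\sigma_3$ appearing in $R$. Write $R = \prod_{j=1}^{k} R_j$ for the decomposition of $R$ into elementary subroads, and adopt the notation $P_{j'} = P\prod_{j=1}^{j'} R_j$ and the inductively defined extensions $W_{j'}$ (of $W$ to an $\left(r,s_{j'}\right)$-type $P_{j'}$-path) from Proposition~\ref{pbadsubroad}. For each $0\leq j'\leq k$, I would define the active index $\iota_{j'}\in \{1,3\}$ by setting $\iota_{j'} = s_{j'}$ when $s_{j'}\in \{1,3\}$, and letting $\iota_{j'}$ be the switch-type of $W_{j'}$ when $W_{j'}$ is of $\left(r,2\right)$-type. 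By construction $\iota_0 = i$, and since $R$ is $i'$-terminal, $\iota_k = i'$. Thus the proposition reduces to proving that $\iota_k = i$ precisely when $N$ is even. (Note that the hypothesis that the last exponent in $R$ exceeds $1$, which is built into the notion of an $i'$-terminal bad subroad, guarantees via Corollary~\ref{cuniquebadsubroad} that a unique such extension exists, and also excludes the end isolated $\sigma_2$ bad subroad $\sigma_1\sigma_3\sigma_2$ from occurring as the final elementary subroad $R_k$.)

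First I would establish the per-step claim: for each $1\leq j'\leq k$, writing $N_{j'}$ for the number of $\sigma_1\sigma_3$s in $R_{j'}$, one has $\iota_{j'} = \iota_{j'-1}$ if $N_{j'}$ is even and $\{\iota_{j'-1},\iota_{j'}\} = \{1,3\}$ if $N_{j'}$ is odd. This is a case analysis on the type of the elementary bad subroad $R_{j'}$ (Definition~\ref{defelementarybadsubroad}), reading off the unique extension from Corollary~\ref{celementarybadsubroadimpliesuniqueextension}. If $R_{j'}$ is an $\iota_{j'-1}$-constant bad subroad or an $\iota_{j'-1}$-initial isolated $\sigma_2$ bad subroad, then it contains no adjacent $\sigma_1\sigma_3$ (in the isolated case the only potential such factor would lie inside a $2$-block, which a subroad avoids), so $N_{j'} = 0$; and Corollary~\ref{celementarybadsubroadimpliesuniqueextension} \textbf{(a)} and \textbf{(c)} give that the extension preserves the active index. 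In particular, in the subcase \textbf{(c)} \textbf{(ii)} where the last generator of $R_{j'}$ has index complementary to $\iota_{j'-1}$, the extension is nevertheless of $\left(r,\iota_{j'-1}\right)$-type, so $\iota_{j'} = \iota_{j'-1}$. If $R_{j'}$ is an alternating bad subroad, then $N_{j'}$ is exactly its number of $\sigma_1\sigma_3$s, and Corollary~\ref{celementarybadsubroadimpliesuniqueextension} \textbf{(b)} gives precisely that the active index flips if and only if $N_{j'}$ is odd.

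Telescoping the per-step claim over $1\leq j'\leq k$ then yields that $\iota_k = i$ if and only if $\sum_{j'=1}^{k} N_{j'} = N$ is even, which is the assertion. The step I expect to be the main obstacle is the correct bookkeeping of the active index when $W_{j'}$ is of $\left(r,2\right)$-type: there one must extract the switch-type (rather than a genuine final vertex in $\{1,3\}$) and confirm that it is the index with which the path enters the next elementary subroad in the inductive construction of Proposition~\ref{pbadsubroad}. This consistency is exactly what is encoded in the definition of an $i$-switch bad subroad (Definition~\ref{defiswitchbadsubroad}) together with the compatibility conditions \textbf{(ii)}--\textbf{(vi)} of Definition~\ref{defbadsubroad}, which ensure both that the extension remains unique at each stage and that $\iota_{j'}$ is the correct entering index for $R_{j'+1}$; I would invoke these precisely at the transitions between consecutive elementary subroads to justify that the active index passes unambiguously from one $R_{j'}$ to the next.
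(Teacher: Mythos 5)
Your proposal is correct and takes essentially the same approach as the paper: the paper's proof is a one-line appeal to Proposition~\ref{pbadsubroad}, and your argument is exactly the index-tracking bookkeeping that this citation encapsulates, carried out elementary subroad by elementary subroad via Corollary~\ref{celementarybadsubroadimpliesuniqueextension} and then telescoped. Your handling of the two potential pitfalls (the end isolated $\sigma_2$ subroad $\sigma_1\sigma_3\sigma_2$, which is excluded by the terminality hypothesis, and the passage of the switch-type across boundaries of elementary subroads, which is what Proposition~\ref{pbadsubroad} guarantees) is also consistent with the paper's framework.
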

\begin{proof}
The statement is a consequence of Proposition~\ref{pbadsubroad}.
\end{proof}

We now define the weight of a bad subroad, which we considered in a special case in Example~\ref{exglobalcancellation}.

\begin{definition}
\label{defwidth}
Let $R$ be an $i_0$-switch bad subroad and let $R = \prod_{j=1}^{k} R_j$ be the product decomposition of $R$ into elementary subroads. If $1\leq j\leq k$, then we refer to $R_j$ as a \textit{factor of $R$}. 
\begin{description}
\item[(i)] Let $\omega'\left(R\right)$ be the total number of squares of generators in non-isolated $\sigma_2$ subroads that are factors of $R$. 
\item[(ii)] Let $i\in \{1,3\}$ and define $i'$ to be such that $\{i,i'\} = \{1,3\}$. Let $\omega_{i}''\left(R\right)$ be the total number of $\sigma_{i}^2$s in $i$-initial isolated $\sigma_2$ subroads that are factors of $R$. Let $\omega_i'''\left(R\right)$ be the sum of the exponents of $\sigma_{i'}$s in $i$-initial isolated $\sigma_2$ subroads that are factors of $R$. We define $\omega''\left(R\right) = \omega_1''\left(R\right) + \omega_3''\left(R\right)$ and $\omega'''\left(R\right) = \omega_1'''\left(R\right) + \omega_3'''\left(R\right)$. 
\end{description}
The \textit{weight of $R$} is $\omega\left(R\right) = \omega'\left(R\right) + \omega''\left(R\right) + \omega'''\left(R\right)$.
\end{definition}

Of course, the weight of a bad subroad $R$ can easily be computed by inspection, since it is defined by a simple formula in terms of a product expansion of $R$.

Let $R = P'\setminus P$ be an $i$-switch bad subroad. Let us consider a bad $P$-path $X$ that is either an $i$-switch $\left(r,2\right)$-type $P$-path or an $\left(r,i\right)$-type $P$-path. Let us consider a good $P$-path $Y$. If we consider maximal $q$-resistance extensions of $X$ and $Y$ to $P'$-paths, then we will establish in Lemma~\ref{lweightcompute} (in Subsection~\ref{subsecproofstrongerversion}) that the weight of $R$ measures the amount by which the $q$-resistance of $Y$ catches up to the $q$-resistance of $X$. 

A special case of Lemma~\ref{lweightcompute} is exhibited in Example~\ref{exglobalcancellation}. We recall from Example~\ref{exglobalcancellation} that if the weight of $P'\setminus P$ is not precisely equal to the discrepancy between the $q$-resistance of $X$ and the $q$-resistance of $Y$, then $P'$ is strongly $r$-regular and global cancellation of weights of paths does not occur. We will establish this formally in Subsection~\ref{subsecproofstrongerversion}.

Finally, we define the singular weight of an isolated $\sigma_2$ subroad following a block, where all the (non-zero) exponents of $\sigma_1$ and $\sigma_3$ are $2$. We will only consider the singular weight for such isolated $\sigma_2$ subroads following special blocks (which will include singular blocks).

\begin{definition}
\label{defsingularroadweight}
Let $B$ be a block. The \textit{singular subroad following $B$} is the maximal subroad $R'$ following $B$ and contained in an isolated $\sigma_2$ subproduct such that each non-zero exponent of $\sigma_1$ and $\sigma_3$ in $R'$ is $2$ and $R'$ does not end with $\sigma_2$.

We define the \textit{singular weight} $\omega_{\text{sing}}\left(R'\right)$ of $R'$ according to the parity of the number of $\sigma_2$s in $R'$ and whether $B$ is a $2$-block or a $3$-block. Let $M$ be one more than the number of $\sigma_2$s in $R'$.

\begin{description}
\item[(a)] If $B$ is a $2$-block, then \[ \omega_{\text{sing}}\left(R'\right) =        
\begin{cases} \frac{3M-2}{2}  & \text{if }M\text{ is even} \\
       \frac{3M-5}{2} & \text{if }M\text{ is odd}.
   \end{cases}
\]
\item[(b)] If $B$ is a $3$-block, then \[ \omega_{\text{sing}}\left(R'\right) =        
\begin{cases} \frac{3M}{2}  & \text{if }M\text{ is even} \\
       \frac{3M-1}{2} & \text{if }M\text{ is odd}.
   \end{cases}
\]
\end{description}
\end{definition}

We can easily compute the singular weight of the singular subroad following a block by inspection. 

\begin{remark}
Let $B$ be a block and let $R'$ be the singular subroad following $B$. We note that $R'$ is either followed by an isolated $\sigma_2$ (if $R'$ is not the maximal isolated $\sigma_2$ subproduct following $B$), or followed by a non-isolated $\sigma_2$ (if $R'$ is the maximal isolated $\sigma_2$ subproduct following $B$), unless $\sigma$ ends with $R'$.
\end{remark}

\subsubsection{The statement of the stronger version of the main result}

We now precisely state the definition of weakly normal braids and Theorem~\ref{mainstronger}. Firstly, we define the width of a block, which we considered in a special case in Example~\ref{exglobalcancellation}. If $P$ is the minimal $s$-subproduct of $\sigma$ containing a block $B$, then we will use the width of $B$ to measure the discrepancy between the $q$-resistances of a maximal $q$-resistance good $P$-path and a maximal $q$-resistance bad $P$-path. If $B$ is the first abnormal block in $\sigma$, then the width of $B$ is a precise measurement of this discrepancy (in general, the width of $B$ is a perturbation of this discrepancy).

\begin{definition}
\label{defblockwidth}
Let $B$ be a block in $\sigma$.
\begin{description}
\item[(i)] If $B = \sigma_1^{a_p}\sigma_3^{b_p}\sigma_2\sigma_1^{2}$ is a generic $2$-block, then the \textit{width of $B$} is $a_p - b_p - 1$.
\item[(ii)] If $B = \sigma_1^{a_p}\sigma_3\sigma_2\sigma_1^{2}$ is a singular $2$-block, then the \textit{width of $B$} is $a_p$.
\item[(iii)] If $B = \sigma_2^{c_{p-1}}\sigma_3\sigma_2^{c_p}$ is a $3$-block with $c_{p-1}>1$ and $c_p = 2$, then the \textit{width of $B$} is $c_{p-1} - 1$. 
\item[(iv)] If $B = \sigma_2^{c_{p-1}}\sigma_3\sigma_2^{c_p}$ is a $3$-block with $c_{p-1} = 1$ and $c_p>1$, then the \textit{width of $B$} is $a_{p-1}-c_p$. 
\item[(v)] If $B = \sigma_2^{c_{p-1}}\sigma_3\sigma_2$ is a singular $3$-block, then the \textit{width of $B$} is $c_{p-1} $. 
\end{description}
\end{definition}

The next step is to define weakly normal braids. A positive braid $\sigma$ is normal if there is no abnormal block in $\sigma$. Similarly, we will define a positive braid $\sigma$ to be weakly normal if there is no \textit{terminal abnormal string} in $\sigma$, which is a much more general condition, since terminal abnormal strings are typically very long and highly constrained. Furthermore, the condition that a positive braid is weakly normal is generic even among positive braids that are not normal. 

Firstly, we define the notion of a \textit{bad string}.

\begin{definition}
\label{defbadstring}
Let $S = \prod_{j=1}^{k} B_jR_j$ be a product, where $B_j$ is a block, $R_j$ is an $\left(s_j,s_j'\right)$-road for $1\leq j\leq k-1$, and $R_k$ is an $\left(s_k,s_k'\right)$-subroad. 

If $B_j$ is a $2$-block, then we define $i_j = 1$. If $B_j = \sigma_2^{c_{p_j-1}}\sigma_3\sigma_2^{c_{p_j}}$ is a generic $3$-block such that either $c_{p_j-1}=1$, or $j>1$ and $c_{p_j-1}=2$, then we define $i_j = 1$. Otherwise, if $B_j$ is a $3$-block, then we define $i_j = 3$.

If $B_{j+1}$ is a $3$-block and $i_{j+1} = 1$, then we define $R_{j,0}$ to be the maximal $\left(s_j,2\right)$-subroad such that $R_{j,0}\subseteq R_j$. Otherwise, we define $R_{j,0} = R_j$. 

The product $S = \prod_{j=1}^{k} B_jR_j$ is a \textit{bad string} if $R_{j,0}$ is an $i_j$-switch bad subroad for $1\leq j\leq k$, and $R_{j,0}$ is a $1$-terminal bad subroad when $R_{j,0}\neq R_j$. 
\end{definition}

We introduce terminology for bad strings.

\begin{definition}
\label{deftermbadstring}
Let $S = \prod_{j=1}^{k} B_jR_j$ be a bad string and let us adopt the notation of Definition~\ref{defbadstring}. If $B_{j+1}$ is a $2$-block (in which case $R_j = R_{j,0}$), then we write that $B_j$ is an \textit{$i_j'$-switch $2$-block} if $R_{j,0}$ is an $i_j'$-terminal bad subroad. 

Let $P$ be the minimal $s$-subproduct of $\sigma$ such that $S\subseteq P$. We write that $P$ is an \textit{$i_k'$-switch subproduct} if $R_k$ is an $i_k'$-terminal bad subroad. If $P$ ends with an isolated $\sigma_2$ subproduct with an odd number of $\sigma_2$s, then we write that $P$ is a \textit{$\nu$-switch subproduct}. If $P$ either does not end with an isolated $\sigma_2$ subproduct or ends with an isolated $\sigma_2$ subproduct with an even number of $\sigma_2$s, then we write that $P$ is a \textit{$\mu$-switch subproduct}.
\end{definition}

We now define \textit{abnormal strings}.

\begin{definition}
\label{defabnormalstring}
Let us adopt the notation of Definition~\ref{defbadstring}. We inductively define a special family $S_1,\dots,S_l$ of bad strings in $\sigma$ as follows. Let us assume that we have defined $S_{i'}$ for $i'<i$. Let $B_1$ be the first abnormal block in $\sigma$ after $S_{i-1}$ (if $i=1$, then $B_1$ is the first abnormal block in $\sigma$). We define $S_i = \prod_{j=1}^{k} B_jR_j$ to be the maximal bad string beginning with $B_1$ such that precisely one of the following conditions is satisfied for each $1\leq j\leq k$ and $\omega\left(R_{j,0}\setminus R_j'\right) + 1 < \Omega\left(B_j\right)$ for each $1\leq j\leq k-1$, where we also inductively define $R_j'$ and the \textit{adjusted width} $\Omega\left(B_j\right)$ of a block $B_j$ in $S_i$.

\begin{description}
\item[(a)] We have $j = 1$ and $B_1$ is a generic block. If $B_1 = \sigma_2\sigma_3\sigma_2^{c_{p_1}}$ is a $3$-block, then $a_{p_1-1}>c_{p_1} + 1$. We define $R_1' = \emptyset$ and we define the \textit{adjusted width} $\Omega\left(B_1\right)$ to be the width of $B_1$ (see Definition~\ref{defblockwidth}).
\item[(b)] We have $j = 1$ and $B_1$ is a singular block. We define $R_1'$ to be the singular subroad following $B_1$. In this case, one of the following conditions is satisfied and we define the \textit{adjusted width} $\Omega\left(B_1\right)$ in each case. 

\begin{description}
\item[(i)] The singular subroad $R_1'$ is followed by an isolated $\sigma_2$. The \textit{adjusted width} $\Omega\left(B_1\right)$ is one more than the width of $B_1$.
\item[(ii)] The singular subroad $R_1'$ is followed by a non-isolated $\sigma_2$. The \textit{adjusted width} $\Omega\left(B_1\right)$ is the width of $B_1$. 
\end{description}
\item[(c)] We have $j>1$ and $B_j = \sigma_1^{a_{p_j}}\sigma_3^{b_{p_j}}\sigma_2\sigma_1^2$ is a $2$-block. If $B_j$ is a $3$-switch $2$-block, then $b_{p_j}>2$. We define $R_j' = \emptyset$. In this case, one of the following conditions is satisfied and we define the \textit{adjusted width} $\Omega\left(B_j\right)$ in each case.

\begin{description}
\item[(i)] The block $B_j$ is a $1$-switch $2$-block and $a_{p_j}\geq b_{p_j}+2$. The \textit{adjusted width} $\Omega\left(B_j\right) = a_{p_j}-b_{p_j}-2$ (one less than the width of $B_j$). 
\item[(ii)] The block $B_j$ is a $3$-switch $2$-block and $a_{p_j}\geq b_{p_j}$. The \textit{adjusted width} $\Omega\left(B_j\right) = a_{p_j}-b_{p_j}$ (one more than the width of $B_j$).
\end{description}
\item[(d)] We have $j>1$ and $B_j = \sigma_1^{a_{p_j}}\sigma_3^{b_{p_j}}\sigma_2\sigma_1^2$ is a $3$-switch $2$-block such that $b_{p_j} = 2$. We define $R_j'$ to be the singular subroad following $B_j$. Let $M_j$ be one more than the number of $\sigma_2$s in $R_j'$. Let $\Xi_{j-1} = \Omega\left(B_{j-1}\right) - \omega\left(R_{j-1}\setminus R_{j-1}'\right)$. If $j < k$, then one of the following conditions is satisfied and we define the \textit{adjusted width} $\Omega\left(B_j\right)$ in each case.

\begin{description} 
\item[(i)]  The singular subroad $R_j'$ is followed by an isolated $\sigma_2$, $M_j$ is odd, and $\Xi_{j-1} - 5 > \omega_{\text{sing}}\left(R_j'\right)$. The \textit{adjusted width} $\Omega\left(B_j\right) = a_{p_j}+1$. 
\item[(ii)]  The singular subroad $R_j'$ is followed by an isolated $\sigma_2$, $M_j$ is odd, and $\Xi_{j-1} - 5 < \omega_{\text{sing}}\left(R_j'\right)$. The \textit{adjusted width} $\Omega\left(B_j\right) = a_{p_j} + \Xi_{j-1} - 4 - \omega_{\text{sing}}\left(R_j'\right)$.
\item[(iii)] The singular subroad $R_j'$ is followed by a non-isolated $\sigma_2$, $M_j$ is odd, and $\Xi_{j-1} - 4 > \omega_{\text{sing}}\left(R_j'\right)$. The \textit{adjusted width} $\Omega\left(B_j\right) = a_{p_j}$. 
\item[(iv)] The singular subroad $R_j'$ is followed by a non-isolated $\sigma_2$, $M_j$ is odd, and $\Xi_{j-1}-4<\omega_{\text{sing}}\left(R_j'\right)$. The \textit{adjusted width} $\Omega\left(B_j\right) = a_{p_j} + \Xi_{j-1} -4 - \omega_{\text{sing}}\left(R_j'\right)$. 
\end{description}
\item[(e)] We have $j>1$ and $B_{j} = \sigma_2^{c_{p_j-1}}\sigma_3\sigma_2^{c_{p_j}}$ is a $3$-block. We define $R_j' = \emptyset$. Let $\Xi_{j-1} = \Omega\left(B_{j-1}\right) - \omega\left(R_{j-1,0}\setminus R_{j-1}'\right)$. In this case, one of the following conditions is satisfied and we define the \textit{adjusted width} $\Omega\left(B_j\right)$ in each case. 

\begin{description}
\item[(i)] We have $c_{p_j-1}>2$ and $c_{p_j}=2$. The \textit{adjusted width} $\Omega\left(B_j\right) = c_{p_j-1}-2$ (one less than the width of $B_j$). 
\item[(ii)] We have $j= k$, $c_{p_k - 1} = 1$, and $a_{p_k-1}>c_{p_k} + 2$. The \textit{adjusted width} $\Omega\left(B_k\right) = a_{p_k-1}-c_{p_k} - 1$ (one less than the width of $B_k$). 
\item[(iii)] We have $j = k$, $c_{p_k-1} = 2$, $c_{p_k}>2$, and $\Xi_{k-1}>c_{p_k}+1$. The \textit{adjusted width} $\Omega\left(B_k\right) = \Xi_{k-1} - c_{p_k}$. 
\item[(iv)] We have $j = k$ and $c_{p_k-1} = 2 = c_{p_k}$. The \textit{adjusted width} $\Omega\left(B_k\right) = \Xi_{k-1}-2$. 
\end{description}
\end{description}
An \textit{abnormal string} is a bad string of the form $S_i$ for some $1\leq i\leq l$.
\end{definition}

The abnormal strings in $\sigma$ correspond to potential global cancellation of weights of $\sigma$-paths (see also Example~\ref{exglobalcancellation}). Let us consider the context of Definition~\ref{defabnormalstring}. If $B_j$ is a block in an abnormal string $S$ and if $P_j$ is the minimal $s_j$-subproduct of $\sigma$ containing $B_j$ and $R_j'$, then we will prove in Subsection~\ref{subsecproofstrongerversion} that $\Omega\left(B_j\right)$ (the adjusted width of $B_j$) measures the discrepancy in $q$-resistance between a maximal $q$-resistance bad $P_j$-path and a maximal $q$-resistance good $P_j$-path. In this case, $P_j$ is not strongly $r$-regular for each $1\leq j\leq k$ and this discrepancy is positive. Furthermore, the conditions that $R_{j,0}$ is an $i_j$-switch bad subroad and $\omega\left(R_{j,0}\setminus R_j'\right) + 1 < \Omega\left(B_j\right)$ for $1\leq j\leq k-1$ imply that catchup does not occur along the road $R_j$ for $1\leq j\leq k-1$. 

In analogy with Example~\ref{exglobalcancellation}, catchup and global cancellation can occur at the end of an abnormal string if there is a precise relationship between $\Omega\left(B_k\right)$ (the adjusted width of $B_k$) and $\omega\left(R_k\setminus R_k'\right)$ (the weight of $R_k\setminus R_k'$). The next step is to precisely define these abnormal strings, which we will refer to as terminal abnormal strings. 

Firstly, we need to define the \textit{sign of an abnormal string} (global cancellation can only occur at the end of an abnormal string if the sign is negative). 

\begin{definition}
\label{defsignabnormalstring}
Let $S = \prod_{j=1}^{k} B_jR_j$ be an abnormal string and let us adopt the notation of Definition~\ref{defabnormalstring}. Let $P$ be the minimal $s$-subproduct of $\sigma$ such that $S\subseteq P$. Let $j'\leq k$ be maximal such that $B_{j'}$ is either a generic $2$-block of the form $\sigma_1^{a_{p_{j'}}}\sigma_3^{b_{p_{j'}}}\sigma_2\sigma_1^2$ with $b_{p_{j'}}>2$ if it is a $3$-switch $2$-block, or a generic $3$-block of the form $\sigma_2^{c_{p_{j'}-1}}\sigma_3\sigma_2^{c_{p_{j'}}}$ with $c_{p_{j'}-1}\neq 2$ (if there is no such $j'$, then we define $j' = 0$). The block $B_{j'}$ is \textit{sign-changing} if $i_{j'} = 3$, where $i_{j'}$ is as in Definition~\ref{defbadstring}. If $j'< j\leq k$, then a block $B_j$ is \textit{sign-changing} if it satisfies one of the following properties:

\begin{description}
\item[(i)] The block $B_j$ is a singular block (only possible at most once).
\item[(ii)] The block $B_j$ is a $3$-switch $2$-block of the form $\sigma_1^{a_{p_j}}\sigma_3^2\sigma_2\sigma_1^2$ such that $M_j$ is odd. If $R_j'$ is followed by an isolated $\sigma_2$, then $\Xi_{j-1} - 5 > \omega_{\text{sing}}\left(R_j'\right)$. If $R_j'$ is followed by a non-isolated $\sigma_2$, then $\Xi_{j-1}-4>\omega_{\text{sing}}\left(R_j'\right)$. 
\item[(iii)] We have $j = k$ and the block $B_j$ is a $3$-block of the form $\sigma_2^{2}\sigma_3\sigma_2^{c_{p_j}}$. 
\end{description}

Let $M'$ be the number of sign-changing blocks $B_j$ for $j'\leq j\leq k$. Let $M_1$ be the number of isolated $\sigma_2$s in the union $\bigcup_{j=j'}^{k} R_j\setminus R_j'$. Let $M_2$ be the number of isolated $\sigma_2$s in the union of $R_j'$ over all $j$ such that $B_j$ is a $3$-switch $2$-block of the form $\sigma_1^{a_{p_j}}\sigma_3^2\sigma_2\sigma_1^2$ that does not satisfy \textbf{(ii)}. Finally, let $M = M_1 + M_2$. 

If $P$ is a $\mu$-switch subproduct, then we write that $S$ is \textit{positive} if $\frac{M}{2} + M'$ is even and $S$ is \textit{negative} if $\frac{M}{2} + M'$ is odd. If $P$ is a $\nu$-switch subproduct, then we write that $S$ is \textit{positive} if $\frac{M-3}{2} + M'$ is even and $S$ is \textit{negative} if $\frac{M-3}{2} + M'$ is odd.
\end{definition}

The following terminology for the final block in an abnormal string will be convenient.

\begin{definition}
\label{deffinalblockcancelling}
Let $S = \prod_{j=1}^{k} B_jR_j$ be an abnormal string. We define $B_k$ to be a \textit{long-cancelling block} if the following conditions are satisfied:
\begin{description}
\item[(i)] If $B_k = \sigma_1^{a_{p_k}}\sigma_3^{b_{p_k}}\sigma_2\sigma_1^2$ is a $3$-switch $2$-block with $b_{p_k} = 2$, then $B_k$ satisfies one of the conditions \textbf{(d)} \textbf{(i)} - \textbf{(iv)} in Definition~\ref{defabnormalstring} (and the adjusted width $\Omega\left(B_k\right)$ is accordingly defined).
\item[(ii)] If $B_k$ is a $3$-block, then $i_k = 3$, where $i_k$ is as in Definition~\ref{defbadstring}.
\end{description}
\end{definition}

In the context of Definition~\ref{deffinalblockcancelling}, if $B_k$ is a $2$-block but not a $3$-switch $2$-block $\sigma_1^{a_{p_k}}\sigma_3^2\sigma_2\sigma_1^2$, then $B_k$ is vacuously a long-cancelling block. We now define terminal abnormal strings which constitute the sole cause of global cancellation of weights of $\sigma$-paths. 

\begin{definition}
\label{defterminalabnormalstring}
Let $S = \prod_{j=1}^{k} B_jR_j$ be an abnormal string and let us adopt the notation of Definition~\ref{defabnormalstring}. Let $P$ be the minimal $s$-subproduct of $\sigma$ such that $S\subseteq P$ and let $P'$ be the minimal $s'$-subproduct of $\sigma$ such that $P\subsetneq P'$. We write that $S$ is a \textit{terminal abnormal string} if one of the following conditions is satisfied. 

\begin{description}
\item[(a)] Let us consider the case where $B_k$ is a long-cancelling block (see Definition~\ref{deffinalblockcancelling}), $S$ is negative (see Definition~\ref{defsignabnormalstring}), and $P$ is a $2$-subproduct of $\sigma$ such that $P'\setminus P = \sigma_1^{a_p}\sigma_3^{b_p}$ is a subroad. 

\begin{description}
\item[(i)] Let us assume that either $a_p = 0$ or $b_p = 0$. If $a_p = 0$, then we define $i=3$ and $i' = 1$. If $b_p = 0$, then we define $i = 1$, and $i' = 3$. If $P$ is an $i$-switch subproduct (see Definition~\ref{deftermbadstring}), then $\Omega\left(B_k\right) = \omega\left(R_k\setminus R_k'\right) + 1$. If $P$ is an $i'$-switch subproduct, then $\Omega\left(B_k\right) = \omega\left(R_k\setminus R_k'\right)$.
\item[(ii)] Let us assume that $a_p>1$ and $b_p>0$. If $P$ is a $1$-switch subproduct, then either $\Omega\left(B_k\right) = \omega\left(R_k\setminus R_k'\right)$ and $a_p\leq b_p - 1$, or $\Omega\left(B_k\right) = \omega\left(R_k\setminus R_k'\right) + 1$ and $b_p\leq a_p - 1$. If $P$ is a $3$-switch subproduct, then either $\Omega\left(B_k\right) = \omega\left(R_k\setminus R_k'\right)$ and $b_p\leq a_p - 1$, or $\Omega\left(B_k\right) = \omega\left(R_k\setminus R_k'\right) + 1$ and $a_p\leq b_p - 2$.
\item[(iii)] Let us assume that $a_p = 1$ and $b_p>0$. In this case, $\Omega\left(B_k\right)-\omega\left(R_k\setminus R_k'\right)\in \{0,1\}$. 
\end{description}
\item[(b)] Let us consider the case where $B_k$ is a long-cancelling block, $S$ is negative, and $P$ is an $s$-subproduct of $\sigma$ for $s\in \{1,3\}$ such that $P'\setminus P = \sigma_2^{c_p}$ is a subroad.
\begin{description}
\item[(i)] Let us assume that either $a_p = 0$ or $b_p = 0$. If $P$ is a $\mu$-switch subproduct (see Definition~\ref{deftermbadstring}), then $\Omega\left(B_k\right) = \omega\left(R_k\setminus R_k'\right) + 1$.
\item[(ii)] Let us assume that either $a_p = 0$ or $b_p = 0$. If $P$ is a $\nu$-switch subproduct (see Definition~\ref{deftermbadstring}), then $\Omega\left(B_k\right) = \omega\left(R_k\setminus R_k'\right)$.
\item[(iii)] Let us assume that $a_p,b_p>0$. In this case, $\Omega\left(B_k\right) = \omega\left(R_k\setminus R_k'\right) + 1$.
\end{description}
\item[(c)] Let us consider the case where $B_k$ is a long-cancelling block, $S$ is negative, and $P$ is immediately followed by a $2$-block $B = \sigma_1^{a_{p_j}}\sigma_3^{b_{p_j}}\sigma_2\sigma_1^2$. In this case, $\Omega\left(B_k\right) - \omega\left(R_k\setminus R_{k}'\right)\in \{0,1\}$. 
\item[(d)] Let us consider the case where $B_k$ is a long-cancelling block, $S$ is negative, and $P$ is immediately followed by a $3$-block $B = \sigma_2^{c_{p-1}}\sigma_3\sigma_2^{c_p}$. Let $\Xi_{k-1} = \Omega\left(B_{k-1}\right) - \omega\left(R_{k-1,0}\setminus R_{k-1}'\right)$ and $\Xi_0 = 0$. In this case, one of the following conditions is satisfied.

\begin{description}
\item[(i)] We have $c_{p-1}>2$ and $\Omega\left(B_k\right) - \omega\left(R_k\setminus R_k'\right)\in \{0,1\}$. 
\item[(ii)] We have $c_{p-1} = 1$ and $\Omega\left(B_k\right) - \omega\left(R_k\setminus R_k'\right) = 0$.
\item[(iii)] We have $c_{p-1} = 1$, $\Omega\left(B_k\right) - \omega\left(R_k\setminus R_k'\right)>0$, and $a_{p-1}\in \{c_p,c_p+1,c_p+2\}$. 
\item[(iv)] We have $c_{p-1} = 2$, $c_p>2$, and $\Xi_{k-1}\in \{c_p-1,c_p,c_p+1\}$. 
\item[(v)] We have $c_{p-1} = 2 = c_p$, and $\Omega\left(B_k\right) - \omega\left(R_k\setminus R_k'\right)\in \{0,1\}$. 
\end{description}
\item[(e)] Let us consider a generic $3$-block $B = \sigma_2^{c_{p-1}}\sigma_3\sigma_2^{c_p}$ such that $c_{p-1} = 1$ and $a_{p-1}\in \{c_p-1,c_p,c_p+1\}$. In this case, one of the following conditions is satisfied.
\begin{description}
\item[(i)] A $3$-block of this form exists between the end of $S$ and the beginning of the abnormal string following $S$. 
\item[(ii)] If $S$ is the first abnormal string, then a $3$-block of this form exists before $S$.
\end{description}
\item[(f)] Let us consider the case where $B_k = \sigma_1^{a_{p_k}}\sigma_3^{b_{p_k}}\sigma_2\sigma_1^{2}$ is a $3$-switch $2$-block with $b_{p_k} = 2$. Let $R_k'$ be the singular subroad following $B_k$ and let $M_k$ be one more than the number of $\sigma_2$s in $R_k'$. Let $\Xi_{k-1} = \Omega\left(B_{k-1}\right) - \omega\left(R_{k-1,0}\setminus R_{k-1}'\right)$. In this case, one of the following conditions is satisfied.
\begin{description}
\item[(i)] The singular subroad $R_k'$ is followed by an isolated $\sigma_2$, $M_k$ is odd, and $\Xi_{k-1} - 5 = \omega_{\text{sing}}\left(R_k'\right)$. Furthermore, either $S$ is positive and $M_k\equiv 1\pmod 4$, or $S$ is negative and $M_k\equiv 3\pmod 4$.
\item[(ii)] The singular subroad $R_k'$ is followed by a non-isolated $\sigma_2$, $M_k$ is odd, and $\Xi_{k-1}-4 = \omega_{\text{sing}}\left(R_k'\right)$. Furthermore, either $S$ is positive and $M_k\equiv 1\pmod 4$, or $S$ is negative and $M_k\equiv 3\pmod 4$.
\item[(iii)] The singular subroad $R_k'$ is followed by an isolated $\sigma_2$, $M_k$ is even, and $a_{p_k} + \Xi_{k-1} - 4 = \omega_{\text{sing}}\left(R_k'\right)$. Furthermore, either $S$ is positive and $M_k\equiv 2\pmod 4$, or $S$ is negative and $M_k\equiv 0\pmod 4$.
\item[(iv)] The singular subroad $R_k'$ is followed by a non-isolated $\sigma_2$, $M_k$ is even, and $a_{p_k} + \Xi_{k-1} - 3 = \omega_{\text{sing}}\left(R_k'\right)$. Furthermore, either $S$ is positive and $M_k\equiv 2\pmod 4$, or $S$ is negative and $M_k\equiv 0\pmod 4$.
\end{description}
\item[(g)] Let us consider the case where $B_k$ is a $3$-block and $i_k = 1$. Let $R_k'$ be the singular subroad following $B_k$ and let $M_k$ be one more than the number of $\sigma_2$s in $R_k'$. In this case, one of the following conditions is satisfied.

\begin{description}
\item[(i)] The singular subroad $R_k'$ is followed by an isolated $\sigma_2$ and $\Omega\left(B_k\right) - \omega_{\text{sing}}\left(R_k'\right) = 1$. Furthermore, either $S$ is positive and $M_k\equiv 2,3\pmod 4$, or $S$ is negative and $M_k\equiv 0,1\pmod 4$. 
\item[(ii)] The singular subroad $R_k'$ is followed by a non-isolated $\sigma_2$, $M_k$ is odd, and $\Omega\left(B_k\right) - \omega_{\text{sing}}\left(R_k'\right)= 0$. Furthermore, either $S$ is positive and $M_k\equiv 2,3\pmod 4$, or $S$ is negative and $M_k\equiv 0,1\pmod 4$. 
\item[(iii)] The singular subroad $R_k'$ is followed by a non-isolated $\sigma_2$, $M_k$ is even, and $\Omega\left(B_k\right) - \omega_{\text{sing}}\left(R_k'\right)\geq 0$. 
\item[(iv)] The singular subroad $R_k'$ is immediately followed by a $3$-block (in which case, $R_k'=\emptyset$ and $R_k$ is a single power of $\sigma_1$), or $R_k = \sigma_1^{a_p}\sigma_3^{b_p}$ with $a_p -2 = b_p>0$ (in which case $R_k'=\emptyset$) and $S$ is negative. 
\end{description}
\end{description}
\end{definition}

If $\sigma$ is a normal braid, then we observe that there are no terminal abnormal strings in $\sigma$, simply because there are no abnormal blocks in $\sigma$. We also observe that a terminal abnormal string is highly constrained. For example, the conditions in Definition~\ref{defterminalabnormalstring} for an abnormal string to be terminal require (in particular) the existence of an abnormal block $B$ followed by a bad subroad $R$ such that there is an exact linear relationship between the weight (or singular weight) of $R$ and the adjusted width of $B$. If we increase the adjusted width of $B$ (keeping the rest of $\sigma$ constant), then the existence of a long bad subroad $R$ to imply such an exact relationship is extremely unlikely. Furthermore, a minor perturbation of even one exponent in $\sigma$ would destroy a terminal abnormal string (compare Example~\ref{exglobalcancellation}). We now define the notion of a weakly normal braid.

\begin{definition}
\label{defweaklynormalbraid}
The positive braid $\sigma$ is a \textit{weakly normal braid} if there are no terminal abnormal strings in $\sigma$.
\end{definition}

We can determine if a positive braid is weakly normal by inspection of its minimal form. Indeed, the weight of a bad subroad $R$ (or the singular weight of a singular subroad $R'$) is a straightforward arithmetic expression in terms of a product expansion of $R$ (or $R'$), and the adjusted width of a block is a straightforward arithmetic expression in terms of the exponents in the block. The conditions in Definition~\ref{defterminalabnormalstring} can be readily checked to determine the existence of terminal abnormal strings for a specific positive braid $\sigma$.

\begin{theorem}
\label{strongkernelconstraint}
Let $t\neq 2, 3$ be a prime number. If $\sigma$ is a weakly normal braid indivisible by $\Delta$, then the leading coefficients of multiple entries in at least one row of $\beta_4\left(\sigma\right)$ are non-zero modulo $t$. 
\end{theorem}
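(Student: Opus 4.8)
The plan is to mirror the inductive proof of Theorem~\ref{invariantdetect} exactly, replacing the property of strong $r$-regularity with a more permissive property of $r$-regularity (Definition~\ref{defweakrregular}) that records, in addition, the discrepancy in $q$-resistance between a maximal $q$-resistance bad $P$-path and a maximal $q$-resistance good $P$-path. Concretely, I would allow an $s$-subproduct $P$ to be $r$-regular even when a maximal $q$-resistance $\left(r,\cdot\right)$-type $P$-path is bad, provided $P$ lies inside an abnormal string $S = \prod_{j=1}^{k} B_j R_j$ (Definition~\ref{defabnormalstring}) whose adjusted width $\Omega\left(B_k\right)$ equals the current discrepancy, and the relevant leading coefficient is a nonzero integer divisible by no prime other than $2$ and $3$. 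The portion of the argument where a maximal $q$-resistance path is good is governed exactly as before, by Corollary~\ref{lemmaroadregular} and Corollary~\ref{lemmablockstronglyregular}; the condition of being a normal braid reduces (via Proposition~\ref{pnormalblock}) to the absence of abnormal blocks, so the present theorem genuinely extends Theorem~\ref{invariantdetect}.

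The new input is the extension theory of Subsection~\ref{subsecpathextensionsII}, in the regime where a maximal $q$-resistance $P$-path is bad. Here the key quantitative statement is Lemma~\ref{lweightcompute}: if $R = P'\setminus P$ is an $i$-switch bad subroad, $X$ a maximal $q$-resistance bad path of the relevant switch type, and $Y$ a maximal $q$-resistance good path, then the extensions of $X$ and $Y$ to $P'$-paths (unique, by Corollary~\ref{cuniquebadsubroad}) reduce the discrepancy in their $q$-resistances by exactly $\omega\left(R\right)$. This is the precise form of the ``catch-up'' phenomenon demonstrated in Example~\ref{exglobalcancellation}, and it is the reason the weight of a bad subroad (Definition~\ref{defwidth}) and the width of a block (Definition~\ref{defblockwidth}) are defined as they are. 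Using Lemma~\ref{lemmagoodext} I would verify that, as long as the abnormal-string inequality $\omega\left(R_{j,0}\setminus R_j'\right) + 1 < \Omega\left(B_j\right)$ holds for each $j < k$, catch-up never completes in the interior of the string, so $r$-regularity propagates through $B_1 R_1 \cdots B_{k-1} R_{k-1}$ with the discrepancy decreasing monotonically but remaining strictly positive.

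The main obstacle is the terminal block $B_k$ and the final subroad $R_k$, where catch-up can complete. Global cancellation of all weights of $\left(r,s'\right)$-type $\sigma$-paths requires three things simultaneously: the discrepancy reduces to zero (equal $q$-resistance), the two surviving families of paths have opposite sign, and there is a bijective pairing of their extensions to $\sigma$ with cancelling weights. This is exactly the content of $S$ being a \emph{terminal} abnormal string, where the sign condition is encoded by Definition~\ref{defsignabnormalstring} and the equality of adjusted width and terminating weight by the case analysis (a)--(g) of Definition~\ref{defterminalabnormalstring}, organised according to how $P$ continues (a road into a $1$- or $3$-subproduct, a road into a $2$-subproduct, an immediately following $2$-block or $3$-block, and the singular-block subcases). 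For a weakly normal braid (Definition~\ref{defweaklynormalbraid}) no such terminal abnormal string exists, so in every case at least one of the three conditions fails; I would then check, case by case, that each failure leaves a maximal $q$-resistance path surviving, whence $P'$ is again $r$-regular, or, if $P' = \sigma$, the leading coefficients of multiple entries in the $r$th row of $\beta_4\left(\sigma\right)$ are nonzero modulo $t$. The bookkeeping of the sign and of the $2$-and-$3$-only divisibility of the leading coefficient is the delicate part: equal-$q$-resistance same-sign coincidences of paths can multiply the leading coefficient by $2$ or by $3$ (the factor of $3$ is new to the weakly normal setting and is precisely why $t \neq 3$ must be imposed), and one must confirm that no other prime is ever introduced.

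Finally, I would establish the base case by the analogue of Lemma~\ref{initial} applied to the first abnormal string (using that $\sigma$ is indivisible by $\Delta$ to rule out the degenerate initial configurations, as in Proposition~\ref{pblockDelta}), and then induct along the block-road decomposition, alternating applications of the road and block propagation statements for $r$-regularity, to reach $P' = \sigma$ and conclude that the leading coefficients of multiple entries in at least one row of $\beta_4\left(\sigma\right)$ are nonzero modulo $t$.
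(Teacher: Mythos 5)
Your proposal follows essentially the same route as the paper: the paper establishes Theorem~\ref{strongkernelconstraint} (through the proof of Theorem~\ref{mainstronger}) by exactly this induction on $r$-regularity (Definition~\ref{defweakrregular}) along the block-road decomposition, with Lemma~\ref{lweightcompute} quantifying catch-up along bad subroads, Lemma~\ref{luniqueextensionimpliesbadsubroad}, Lemma~\ref{l2subdisc01}, Lemma~\ref{l1/3subdisc01}, Corollary~\ref{cor2blockregular}, Corollary~\ref{cor3blockregular}, Corollary~\ref{cpseudoregularregular}, and Lemma~\ref{lsingularblockregular} propagating regularity through roads, blocks, pseudo-regular blocks and singular blocks, and the failure of the terminal-abnormal-string conditions (weak normality) preventing global cancellation. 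The only cosmetic slips are that the discrepancy drops by $\omega\left(R\right)+\iota\left(R\right)$ rather than exactly $\omega\left(R\right)$, and Corollary~\ref{cuniquebadsubroad} gives uniqueness only for the bad path's extension, the good path's being the maximal $q$-resistance one.
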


We outline the strategy of the proof of Theorem~\ref{strongkernelconstraint} and we continue the discussion following Definition~\ref{defabnormalstring} on global cancellation of weights of $\sigma$-paths. If global cancellation does not occur (i.e., each abnormal string is not a terminal abnormal string), then a reset occurs at the end of each abnormal string and the minimal $s'$-subproduct of $\sigma$ properly containing each abnormal string is strongly $r$-regular. The process then iterates at the next abnormal block after the abnormal string. If there are no terminal abnormal strings in $\sigma$ (i.e., $\sigma$ is a weakly normal braid), then we can ultimately establish that the leading coefficients of multiple entries in the $r$th row of $\beta_4\left(\sigma\right)$ are non-zero modulo each prime $t\neq 2,3$. 

We now state and prove the stronger version of the main result contingent on Theorem~\ref{strongkernelconstraint}.

\begin{theorem}
\label{mainstronger}
Let $g\in B_4$ be a non-identity braid and write $g=\Delta^{k}\sigma$ for the Garside normal form of $g$. Let $t\neq 2,3$ be a prime number and let $a,b$ be coprime integers such that $b\neq 0$ has a prime factor distinct from $2$ and $3$.

If either $k\geq 0$ or $\sigma$ is a weakly normal braid, then $g\not\in \text{ker}\left(\left(\beta_4\right)_{t}\right)$, and in particular, $g\not\in \text{ker}\left(\beta_4\right)$. If $\sigma$ is a weakly normal braid, then $g\not\in \text{ker}\left(\left.\left(\beta_4\right)\right|_{q=\frac{a}{b}}\right)$.
\end{theorem}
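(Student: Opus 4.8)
The plan is to deduce Theorem~\ref{mainstronger} from Theorem~\ref{strongkernelconstraint} in exactly the way Theorem~\ref{main} was deduced from Theorem~\ref{invariantdetect}, now tracking the additional forbidden prime $3$ and the enlarged class of weakly normal braids. Throughout I would write $g=\Delta^{k}\sigma$ for the Garside normal form, so that $\sigma$ is indivisible by $\Delta$, and I would use the inclusion $\text{ker}\left(\beta_4\right)\subseteq \text{ker}\left(\left(\beta_4\right)_{t}\right)$ so that controlling the reduced kernel immediately controls the integral one.

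First I would dispatch the first statement. When $k\geq 0$ there is nothing new to prove: Theorem~\ref{main} already asserts $g\not\in \text{ker}\left(\left(\beta_4\right)_{t}\right)$ for \emph{every} prime $t\neq 2$, and a prime $t\neq 2,3$ is such a prime, so I would simply cite it. When $\sigma$ is a weakly normal braid, I would invoke Lemma~\ref{reduction} to reduce the claim to the inequality $\left(\beta_4\right)_{t}(\sigma)\neq \left(\beta_4\right)_{t}(\Delta^{m})$ for every integer $m\geq 0$. Since $\sigma$ is weakly normal and indivisible by $\Delta$, Theorem~\ref{strongkernelconstraint} (with the fixed prime $t\neq 2,3$) furnishes a row $r$ of $\beta_4(\sigma)$ in which at least two entries have leading coefficient nonzero modulo $t$; reducing modulo $t$ preserves these leading terms, so row $r$ of $\left(\beta_4\right)_{t}(\sigma)$ has at least two nonzero entries. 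By contrast, Lemma~\ref{Garsideaction} shows that every row of $\left(\beta_4\right)_{t}(\Delta^{m})$ has exactly one nonzero entry. Comparing the number of nonzero entries in row $r$ forces the two matrices to differ, which is the required inequality, and hence $g\not\in \text{ker}\left(\left(\beta_4\right)_{t}\right)$ and $g\not\in \text{ker}\left(\beta_4\right)$.

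For the second statement I would run the same counting argument after specializing $q=\frac{a}{b}$. First I would note that the hypothesis that $b$ has a prime factor distinct from $2$ and $3$ forces $b\neq \pm 1$, hence $a\neq 0$ by coprimality, so $q=\frac{a}{b}\neq 0$ and the specialization indeed lands in $\text{GL}_3\left(\mathbb{Q}\right)$. Because $\beta_4(\Delta^{k})$ is invertible, $g\in \text{ker}\left(\left.(\beta_4)\right|_{q=\frac{a}{b}}\right)$ is equivalent to $\beta_4(\sigma)|_{q=\frac{a}{b}}=\beta_4(\Delta^{-k})|_{q=\frac{a}{b}}$. Let $p\notin\{2,3\}$ be a prime factor of $b$; applying Theorem~\ref{strongkernelconstraint} with $t=p$ yields a row $r$ with at least two entries $f(q)$ whose leading coefficients are nonzero modulo $p$, that is, not divisible by $p$. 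By the rational roots theorem, if $\frac{a}{b}$ (in lowest terms) were a root of such an $f$, then $b$, and hence $p$, would divide the leading coefficient of $f$ — a contradiction; thus $f(\frac{a}{b})\neq 0$. Consequently row $r$ of $\beta_4(\sigma)|_{q=\frac{a}{b}}$ has at least two nonzero entries, whereas row $r$ of $\beta_4(\Delta^{-k})|_{q=\frac{a}{b}}$ has exactly one (Lemma~\ref{Garsideaction}, evaluated at $q\neq 0$), so the two matrices cannot agree, giving $g\not\in \text{ker}\left(\left.(\beta_4)\right|_{q=\frac{a}{b}}\right)$.

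The main obstacle does not lie in this deduction at all: given Theorem~\ref{strongkernelconstraint}, every step above is routine and parallels the proof of Theorem~\ref{main}. The single point requiring care is that Theorem~\ref{strongkernelconstraint} genuinely delivers leading coefficients that are nonzero modulo the relevant prime (equivalently, integers whose only prime factors lie in $\{2,3\}$), since this is precisely what both the modular argument for the first statement and the rational-roots divisibility for the second statement consume — but this property is exactly the content of Theorem~\ref{strongkernelconstraint}. The genuinely hard work — analyzing the global cancellation of weights of $\sigma$-paths created by abnormal blocks and bad subroads, and showing that such cancellation fails precisely when $\sigma$ is weakly normal — is carried out entirely in the proof of Theorem~\ref{strongkernelconstraint} itself, so nothing further is needed here.
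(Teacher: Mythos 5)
Your proposal is correct and takes essentially the same route as the paper: the paper's proof of Theorem~\ref{mainstronger} consists precisely of citing Lemma~\ref{reduction} and Theorem~\ref{strongkernelconstraint} and noting that the deduction is the same as that of Theorem~\ref{main} from Theorem~\ref{invariantdetect} (via Lemma~\ref{Garsideaction} for the one-nonzero-entry-per-row structure of $\beta_4\left(\Delta^{m}\right)$, and the rational roots theorem for the specialization at $q=\frac{a}{b}$), which is exactly the argument you spell out. The only cosmetic difference is that you dispatch the case $k\geq 0$ by citing Theorem~\ref{main} directly rather than rerunning the reduction, which is equally valid.
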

\begin{proof}
The statement is a consequence of Lemma~\ref{reduction} and Theorem~\ref{strongkernelconstraint}. The proof is similar to that of Theorem~\ref{main} immediately following Theorem~\ref{invariantdetect}. 
\end{proof}

The proof we present for Theorem~\ref{strongkernelconstraint} does not work if $t\in \{2,3\}$. We note that interestingly the Burau representation $\beta_4$ is known not to be faithful modulo two and three~\cite{longpatonsmallprime}. 

\subsection{Path extensions II}
\label{subsecpathextensionsII}

In this subsection, we extend the theory of path extensions that we developed in Subsection~\ref{subsecpathextensions} to the case where $P$ is not necessarily strongly $r$-regular in the statements of the results. 

We establish the following addendum to Proposition~\ref{p21/3ext}. 

\begin{proposition}
\label{p21/3extadd}
Let $P = \prod_{i=1}^{p-1} \sigma_1^{a_i}\sigma_3^{b_i}\sigma_2^{c_i}$ be a $2$-subproduct of $\sigma$ and let $P' = \left(\prod_{i=1}^{p-1} \sigma_1^{a_i}\sigma_3^{b_i}\sigma_2^{c_i}\right)\sigma_1^{a_p}\sigma_3^{b_p}$. We assume that either $a_p = 0$ or $b_p = 0$. We adopt the notation of Proposition~\ref{ppre21/3ext}.

Let us assume that $\phi_p>1$. If $d_{\lambda}^{P} + 1 = d_{\mu}^{P}$, then let us also assume that $\lambda_{0}^{P} + \mu_{0}^{P}\neq 0$. 
\begin{description}
\item[(i)] If $d_{\lambda}^{P} + 1 < d_{\mu}^{P}$ and $\phi_p>2$, then $d_{\lambda}^{P'} = d_{\mu}^{P} + \phi_p - 1 - \psi >d_{\mu}^{P'}$ and $\lambda_{0}^{P'} = \left(-1\right)^{1-\psi}\mu_{0}^{P}$. If $d_{\lambda}^{P} + 1 = d_{\mu}^{P}$ and $\phi_p > 2$, then $d_{\lambda}^{P'} = d_{\lambda}^{P} + \phi_p - \psi >d_{\mu}^{P'}$ and $\lambda_{0}^{P'} = \left(-1\right)^{1-\psi}\left(\lambda_{0}^{P} + \mu_{0}^{P}\right)$. 
\item[(ii)] If $d_{\lambda}^{P'}\leq d_{\mu}^{P'}$, then $\phi_p=2$. Furthermore, in this case, $d_{\lambda}^{P'} = d_{\lambda}^{P} + 2 - \psi\leq d_{\mu}^{P} + 1 - \psi = d_{\mu}^{P'}$, $\lambda_{0}^{P'} = \left(-1\right)^{1-\psi}\lambda_{0}^{P}$, and $\mu_{0}^{P'} = \left(-1\right)^{1 - \psi}\mu_{0}^{P}$. 
\end{description}
\end{proposition}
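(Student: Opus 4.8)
The plan is to deduce everything from the weight formula and admissibility characterization already established in Proposition~\ref{ppre21/3ext}, exactly as in the proof of Proposition~\ref{p21/3ext}, but now tracking the contribution of \emph{bad} $P$-paths rather than treating them as negligible. Recall from Proposition~\ref{ppre21/3ext} \textbf{(i)} and \textbf{(iii)} that every admissible $\left(r,s'\right)$-type $P'$-path is of the form $W^{\alpha}$ for a (good or bad) $\left(r,2\right)$-type $P$-path $W$, that the weight of $W^{\alpha}$ is $\left(-1\right)^{1-\psi}q^{\phi_p-\alpha+1-\psi}$ times the weight of $W$, and that $X^{1}$ (for $X$ good) is the maximal $q$-resistance good extension while $Y^{2}$ (for $Y$ bad) is the maximal $q$-resistance bad extension, with $X^{\phi_p}$ and $Y^{\phi_p}$ being bad by Proposition~\ref{ppre21/3ext} \textbf{(iv)}. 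The key numerical point, visible from the weight formula, is that a good $P$-path $X$ gains $q$-degree $\phi_p-\psi$ in passing to its maximal good extension $X^{1}$, whereas a bad $P$-path $Y$ gains only $\phi_p-2+1-\psi = \phi_p-1-\psi$ in passing to its maximal admissible extension $Y^{2}$. Thus in extending, a good $P$-path outpaces a bad $P$-path by exactly one in $q$-degree, which is why the hypotheses are phrased in terms of $d_{\lambda}^{P}$ versus $d_{\mu}^{P}\pm 1$.

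First I would settle part \textbf{(i)}. When $d_{\lambda}^{P}+1<d_{\mu}^{P}$, the bad $P$-paths dominate so strongly that even after losing one unit of $q$-degree relative to the good paths they still dominate: the maximal good $\left(r,s'\right)$-type $P'$-path $X^{1}$ has degree $d_{\lambda}^{P}+\phi_p-\psi$, whereas the maximal-degree (good) $P'$-path coming from bad $P$-paths is $Y^{2}$ with degree $d_{\mu}^{P}+\phi_p-1-\psi > d_{\lambda}^{P}+\phi_p-\psi$, which under $\phi_p>2$ is genuinely good (a vertex change at the second generator with $\phi_p-2\geq 1$ further constant steps). This gives $d_{\lambda}^{P'}=d_{\mu}^{P}+\phi_p-1-\psi$ and $\lambda_0^{P'}=\left(-1\right)^{1-\psi}\mu_0^{P}$, and one checks $d_{\mu}^{P'}$ (coming from the $W^{\phi_p}$ extensions of Proposition~\ref{ppre21/3ext} \textbf{(iv)}) is strictly smaller. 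When $d_{\lambda}^{P}+1=d_{\mu}^{P}$, the maximal good extensions $X^{1}$ and $Y^{2}$ have the \emph{same} degree $d_{\lambda}^{P}+\phi_p-\psi = d_{\mu}^{P}+\phi_p-1-\psi$, so their leading coefficients add, yielding $\lambda_0^{P'}=\left(-1\right)^{1-\psi}\left(\lambda_0^{P}+\mu_0^{P}\right)$; here the extra hypothesis $\lambda_0^{P}+\mu_0^{P}\neq 0$ is exactly what guarantees no cancellation at top degree, so that $d_{\lambda}^{P'}=d_{\lambda}^{P}+\phi_p-\psi$ as claimed.

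Next I would handle part \textbf{(ii)}, the contrapositive-flavored statement identifying when the good count can fail to strictly dominate. Since good extensions always outpace bad ones by one $q$-degree, a maximal good $P'$-path $X^{1}$ or $Y^{2}$ has degree at least $d_{\mu}^{P}+\phi_p-1-\psi$. The only way a \emph{bad} $P'$-path can match or exceed this is through the $W^{\phi_p}$ extensions: the maximal bad $P'$-path has the form $Y^{\phi_p}$ (or $X^{\phi_p}$), which by the weight formula has degree $d_{\mu}^{P}+1-\psi$ (respectively $d_{\lambda}^{P}+1-\psi$). Comparing $d_{\mu}^{P}+\phi_p-1-\psi$ against $d_{\mu}^{P}+1-\psi$ forces $\phi_p-1\leq 1$, i.e.\ $\phi_p\leq 2$; combined with $\phi_p>1$ this pins $\phi_p=2$. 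With $\phi_p=2$, the good extension $X^{1}$ and the bad extension $X^{2}=X^{\phi_p}$ of a single good path $X$ have degrees differing by one, and similarly for $Y$; I would then just read off $d_{\lambda}^{P'}=d_{\lambda}^{P}+2-\psi$, $d_{\mu}^{P'}=d_{\mu}^{P}+1-\psi$, and the leading coefficients $\lambda_0^{P'}=\left(-1\right)^{1-\psi}\lambda_0^{P}$, $\mu_0^{P'}=\left(-1\right)^{1-\psi}\mu_0^{P}$ directly from Proposition~\ref{ppre21/3ext} \textbf{(i)}, using $d_{\lambda}^{P}+1\leq d_{\mu}^{P}$ (from the running hypotheses) to confirm $d_{\lambda}^{P'}\leq d_{\mu}^{P'}$.

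The step I expect to be the main obstacle is the bookkeeping for which extensions are \emph{good} versus \emph{bad} in part \textbf{(i)}: I must confirm that $Y^{2}$ (the degree-maximal admissible extension of a bad $P$-path) is genuinely a good $P'$-path when $\phi_p>2$, so that its leading coefficient really enters $\lambda_0^{P'}$ and not $\mu_0^{P'}$. This is where $\phi_p>2$ (rather than merely $\phi_p>1$) is essential, and it must be cross-checked against Lemma~\ref{lemmagoodext} \textbf{(i)}--\textbf{(iii)} and the definition of goodness (a vertex change at the second-or-later generator leaves at least one further constant step). The degenerate boundary case $\phi_p=2$ is precisely what is quarantined into part \textbf{(ii)}, so the two parts must be shown to partition the possibilities cleanly; verifying that the hypothesis $\lambda_0^{P}+\mu_0^{P}\neq 0$ in the $d_{\lambda}^{P}+1=d_{\mu}^{P}$ subcase is used only to rule out top-degree cancellation (and is vacuous in the strict-inequality subcase) is the final consistency check I would make.
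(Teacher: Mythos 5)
Your proposal is correct and follows essentially the same route as the paper's proof: both parts are deduced from Proposition~\ref{ppre21/3ext} \textbf{(i)}, \textbf{(iii)}, \textbf{(iv)} by comparing the degrees of the extensions $X^{1}$ and $Y^{2}$, noting that $Y^{2}$ is good precisely when $\phi_p>2$, and invoking $\lambda_{0}^{P}+\mu_{0}^{P}\neq 0$ only to rule out top-degree cancellation when $d_{\lambda}^{P}+1=d_{\mu}^{P}$; your degree comparison in part \textbf{(ii)} is exactly the content the paper gets by citing Proposition~\ref{p21/3ext} \textbf{(i)} together with part \textbf{(i)}. The one slip to correct: in part \textbf{(ii)}, the inequality $d_{\lambda}^{P}+1\leq d_{\mu}^{P}$ is \emph{not} among the running hypotheses — it must be deduced from the assumption $d_{\lambda}^{P'}\leq d_{\mu}^{P'}$ (as the paper does via Proposition~\ref{p21/3ext} \textbf{(i)}, or within your own framework, since $d_{\lambda}^{P}\geq d_{\mu}^{P}$ with $\phi_p=2$ would force $d_{\lambda}^{P'}=d_{\lambda}^{P}+2-\psi>\max\{d_{\lambda}^{P},d_{\mu}^{P}\}+1-\psi=d_{\mu}^{P'}$), after which your read-off of the degrees and leading coefficients matches the paper's.
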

\begin{proof}
We establish each statement separately.
\begin{description}
\item[(i)] If $d_{\lambda}^{P} + 1 < d_{\mu}^{P}$, then Proposition~\ref{ppre21/3ext} \textbf{(i)} and \textbf{(iii)} imply that a maximal $q$-resistance $\left(r,s'\right)$-type $P'$-path is of the form $Y^{2}$, where $Y$ is a maximal $q$-resistance bad $\left(r,2\right)$-type $P$-path. If $\phi_p>2$, then $Y^{2}$ is a good $\left(r,s'\right)$-type $P'$-path.

If $d_{\lambda}^{P} + 1 = d_{\mu}^{P}$, then Proposition~\ref{ppre21/3ext} \textbf{(i)} and \textbf{(iii)} imply that a maximal $q$-resistance $\left(r,s'\right)$-type $P'$-path is either of the form $X^{1}$ or of the form $Y^{2}$, where $X$ is a maximal $q$-resistance good $\left(r,2\right)$-type $P$-path and $Y$ is a maximal $q$-resistance bad $\left(r,2\right)$-type $P$-path. The hypothesis that $\lambda_{0}^{P} + \mu_{0}^{P} \neq 0$ implies the statement.
\item[(ii)] If $d_{\lambda}^{P'}\leq d_{\mu}^{P'}$, then Proposition~\ref{p21/3ext} \textbf{(i)} and the previous part \textbf{(i)} imply that $\phi_p = 2$ and $d_{\lambda}^{P}<d_{\mu}^{P}$. In this case, Proposition~\ref{ppre21/3ext} \textbf{(i)}, \textbf{(iii)} and \textbf{(iv)} imply that a maximal $q$-resistance good $\left(r,s'\right)$-type $P'$-path is of the form $X^{1}$, and a maximal $q$-resistance bad $\left(r,s'\right)$-type $P'$-path is of the form $Y^{2}$, where $X$ is a maximal $q$-resistance good $\left(r,2\right)$-type $P$-path and $Y$ is a maximal $q$-resistance bad $\left(r,2\right)$-type $P$-path. 
\end{description}
\end{proof}

We establish the following addendum to Proposition~\ref{ppre21/3bothext}.

\begin{proposition}
\label{ppre21/3bothextadd}
Let $P = \prod_{i=1}^{p-1} \sigma_1^{a_i}\sigma_3^{b_i}\sigma_2^{c_i}$ be a $2$-subproduct of $\sigma$ and let $P' = \left(\prod_{i=1}^{p-1} \sigma_1^{a_i}\sigma_3^{b_i}\sigma_2^{c_i}\right)\sigma_1^{a_p}\sigma_3^{b_p}$. Let us assume that $a_p,b_p>0$.
\begin{description}
\item[(i)] If $Y$ is a $1$-switch $\left(r,2\right)$-type $P$-path, then $Y^{2,0}$ is the maximal $q$-resistance extension of $Y$ to an $\left(r,1\right)$-type $P'$-path and $Y^{0,1}$ is the maximal $q$-resistance extension of $Y$ to an $\left(r,3\right)$-type $P'$-path. (If $a_p = 1$, then a $1$-switch $\left(r,2\right)$-type $P$-path does not have an admissible extension to an $\left(r,1\right)$-type $P'$-path.)
\item[(ii)] If $V$ is a $3$-switch $\left(r,2\right)$-type $P$-path, then $V^{1,0}$ is the maximal $q$-resistance extension of $V$ to an $\left(r,1\right)$-type $P'$-path and $V^{0,2}$ is the maximal $q$-resistance extension of $V$ to an $\left(r,3\right)$-type $P'$-path. (If $b_p = 1$, then a $3$-switch $\left(r,2\right)$-type $P$-path does not have an admissible extension to an $\left(r,3\right)$-type $P'$-path.)
\end{description}
\end{proposition}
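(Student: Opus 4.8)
The plan is to prove Proposition~\ref{ppre21/3bothextadd} by a direct application of the admissibility criteria for path extensions established in Lemma~\ref{lemmagoodext} \textbf{(iii)} together with the weight computations already packaged in Proposition~\ref{ppre21/3bothext}. The two statements \textbf{(i)} and \textbf{(ii)} are entirely symmetric under the involution interchanging $\sigma_1$ with $\sigma_3$ (and hence $a_p$ with $b_p$, and the roles of $1$-switch with $3$-switch), so I would prove \textbf{(i)} in full and then remark that \textbf{(ii)} follows by the same argument after swapping the indices $1$ and $3$. The setting is the $2$-subproduct $P$ extended to the $s$-subproduct $P' = P\sigma_1^{a_p}\sigma_3^{b_p}$ with $a_p,b_p>0$, where Definition~\ref{defpath21/3} \textbf{(b)} has already parametrized the admissible extensions as $W^{\alpha,0}$ (vertex change at the $\alpha$th $\sigma_1$ in $\sigma_1^{a_p}$) and $W^{0,\beta}$ (vertex change at the $\beta$th $\sigma_3$ in $\sigma_3^{b_p}$).

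First I would recall the key fact from Proposition~\ref{ppre21/3bothext} \textbf{(ii)}: the weight of $W^{\alpha,0}$ is $-q^{a_p-\alpha+1}$ times the weight of $W$, so the $q$-resistance of $W^{\alpha,0}$ is strictly decreasing in $\alpha$, and similarly the weight of $W^{0,\beta}$ is $q^{b_p-\beta}$ times the weight of $W$, with $q$-resistance strictly decreasing in $\beta$. Therefore, among \emph{admissible} extensions, the maximal $q$-resistance extension to an $\left(r,1\right)$-type $P'$-path is the one with the smallest admissible value of $\alpha$, and likewise the maximal $q$-resistance extension to an $\left(r,3\right)$-type $P'$-path is the one with the smallest admissible value of $\beta$. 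The entire content of \textbf{(i)} is thus to determine, for a $1$-switch $\left(r,2\right)$-type $P$-path $Y$, which values $\alpha = 1$ and $\beta = 1$ are admissible.

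The heart of the argument is Lemma~\ref{lemmagoodext} \textbf{(iii)}, which states that a $1$-switch $\left(r,2\right)$-type $P$-path admits only those extensions whose vertex change occurs at the \emph{second or later} $\sigma_1$ in $P'\setminus P$, or at \emph{every} $\sigma_3$ in $P'\setminus P$. This immediately gives both parts of \textbf{(i)}: for an extension to an $\left(r,1\right)$-type $P'$-path, the vertex change must be at the $\alpha$th $\sigma_1$ with $\alpha\geq 2$, so the minimal admissible value is $\alpha = 2$, whence $Y^{2,0}$ is the maximal $q$-resistance extension; and if $a_p = 1$ there is no $\sigma_1$ with index $\geq 2$, so no admissible $\left(r,1\right)$-type extension exists, exactly as stated in the parenthetical. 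For the extension to an $\left(r,3\right)$-type $P'$-path, every $\sigma_3$ is an admissible location, so the minimal value $\beta = 1$ is admissible and $Y^{0,1}$ is the maximal $q$-resistance extension. The combination of Lemma~\ref{lemmagoodext} \textbf{(iii)} with the monotonicity of $q$-resistance in $\alpha,\beta$ yields the claim.

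The main obstacle, such as it is, is purely bookkeeping: ensuring that the ``$1$-switch'' hypothesis on $Y$ is correctly matched to the relevant clause of Lemma~\ref{lemmagoodext} \textbf{(iii)} (which is stated in terms of $1$-switch versus $3$-switch paths) and that the numbering convention ``second or later $\sigma_1$'' is faithfully translated into the index inequality $\alpha\geq 2$. I expect no genuine difficulty, since both the admissibility dichotomy and the weight formulas are already in hand; the proof will read essentially as ``The statement is a consequence of Proposition~\ref{ppre21/3bothext} \textbf{(ii)} and Lemma~\ref{lemmagoodext} \textbf{(iii)},'' with \textbf{(ii)} proved identically after exchanging the roles of $\sigma_1$ and $\sigma_3$ and invoking the $3$-switch clause of Lemma~\ref{lemmagoodext} \textbf{(iii)}.
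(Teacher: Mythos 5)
Your proposal is correct and matches the paper's own proof, which consists precisely of citing Lemma~\ref{lemmagoodext} \textbf{(iii)} (for which values of $\alpha$, $\beta$ give admissible extensions of a $1$-switch or $3$-switch path) together with Proposition~\ref{ppre21/3bothext} \textbf{(ii)} (monotonicity of $q$-resistance in $\alpha$ and $\beta$). Your write-up simply makes explicit the bookkeeping that the paper leaves implicit.
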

\begin{proof}
The statements are a consequence of Lemma~\ref{lemmagoodext} \textbf{(iii)} and Proposition~\ref{ppre21/3bothext} \textbf{(ii)}. 
\end{proof}

We establish the following addendum to Proposition~\ref{p21/3bothext}.

\begin{proposition}
\label{p21/3bothextadd}
Let $P = \prod_{i=1}^{p-1} \sigma_1^{a_i}\sigma_3^{b_i}\sigma_2^{c_i}$ be a $2$-subproduct of $\sigma$ and let $P' = \left(\prod_{i=1}^{p-1} \sigma_1^{a_i}\sigma_3^{b_i}\sigma_2^{c_i}\right)\sigma_1^{a_p}\sigma_3^{b_p}$. We assume that $a_p,b_p>0$, $c_{p-1}\geq 2$, and $d_{\lambda}^{P}>\min\{d_{\mu,1}^{P},d_{\mu,3}^{P}\}$. 
\begin{description}
\item[(a)] Let us consider the case $a_p = 1$. Let us also assume that if $d_{\lambda}^{P} = d_{\mu,1}^{P}$, then $\lambda_{0}^{P} + \mu_{1,0}^{P}\neq 0$, and if $d_{\lambda}^{P} + 1 = d_{\mu,3}^{P}$, then $\lambda_{0}^{P} + \mu_{3,0}^{P}\neq 0$.

Firstly, if $b_p > 1$, then $d_{\lambda,3}^{P'}>d_{\mu,3}^{P'}$ and $d_{\lambda,3}^{P'}+1\geq d_{\mu,1}^{P'}$. If $b_p = 1$, then either $d_{\lambda,3}^{P'}>d_{\mu,3}^{P'}$ or $d_{\lambda,3}^{P'}+1\geq d_{\mu,1}^{P'}$. Furthermore, the following statements are true:
\begin{description}
\item[(i)] If $d_{\lambda}^{P}<d_{\mu,1}^{P}$ and $b_p>1$, then $\lambda_{3,0}^{P'} = \mu_{1,0}^{P}$. If $d_{\lambda}^{P} = d_{\mu,1}^{P}$ and $b_p>1$, then $\lambda_{3,0}^{P'} = \lambda_{0}^{P} + \mu_{1,0}^{P}$.
\item[(ii)] If $d_{\lambda}^{P} + 1 < d_{\mu,3}^{P}$ and $b_p>1$, then $\lambda_{3,0}^{P'} = \mu_{3,0}^{P}$. If $d_{\lambda}^{P} + 1 = d_{\mu,3}^{P}$ and $b_p > 1$, then $\lambda_{3,0}^{P'} = \lambda_{0}^{P} + \mu_{3,0}^{P}$. 
\item[(iii)] If $d_{\lambda,3}^{P'}\leq d_{\mu,3}^{P'}$, then $b_p = 1$ and $d_{\lambda}^{P}\leq d_{\mu,1}^{P}$. In this case, $d_{\mu,1}^{P'} = d_{\lambda}^{P} + 1$, $d_{\lambda,3}^{P'} = d_{\lambda}^{P}$, $d_{\mu,3}^{P'} = d_{\mu,1}^{P}$, $\mu_{1,0}^{P'} = -\lambda_{0}^{P}$, $\lambda_{3,0}^{P'} = \lambda_{0}^{P}$, and $\mu_{3,0}^{P'} = \mu_{1,0}^{P}$. 
\item[(iv)] If $d_{\lambda,3}^{P'}+2\leq d_{\mu,1}^{P'}$, then $b_p = 1$ and $d_{\lambda}^{P}< d_{\mu,3}^{P}$. In this case, $d_{\mu,1}^{P'} = d_{\mu,3}^{P} + 1$, $d_{\lambda,3}^{P'} = d_{\lambda}^{P}$, $d_{\mu,3}^{P'} = d_{\mu,1}^{P}$, $\mu_{1,0}^{P'} = -\mu_{3,0}^{P}$, $\lambda_{3,0}^{P'} = \lambda_{0}^{P}$, and $\mu_{3,0}^{P'} = \mu_{1,0}^{P}$. 
\end{description}
\item[(b)] Let us consider the case $a_p>1$. Let us also assume that if $d_{\lambda}^{P} = d_{\mu,3}^{P}$, then $\lambda_{0}^{P} + \mu_{3,0}^{P}\neq 0$, and if $d_{\lambda}^{P} +1 = d_{\mu,1}^{P}$, then $\lambda_{0}^{P} + \mu_{1,0}^{P}\neq 0$. 

Firstly, we have that either $d_{\lambda,1}^{P'}> d_{\mu,1}^{P'}$ or $d_{\lambda,3}^{P'}+1\geq d_{\mu,1}^{P'}$. Furthermore, the following statements are true:
\begin{description}
\item[(i)] Let us assume that $d_{\lambda}^{P} +1 < d_{\mu,3}^{P}$. In this case, $d_{\lambda,1}^{P'}>d_{\mu,1}^{P'}$ and $\lambda_{1,0}^{P'} = -\mu_{3,0}^{P}$. If $b_p>1$, then $\lambda_{3,0}^{P'} = \mu_{3,0}^{P}$. If $b_p = 1$, then $d_{\lambda,1}^{P} > d_{\lambda,3}^{P} + 1$ and $\lambda_{3,0}^{P'} = \lambda_{0}^{P}$.
\item[(ii)] Let us assume that $d_{\lambda}^{P} + 1 = d_{\mu,3}^{P}$ and $\lambda_{0}^{P} + \mu_{3,0}^{P}\neq 0$. In this case, $d_{\lambda,1}^{P'}>d_{\mu,1}^{P'}$ and $\lambda_{1,0}^{P'} = -\mu_{3,0}^{P}$. If $b_p>1$, then $\lambda_{3,0}^{P'} = \lambda_{0}^{P} + \mu_{3,0}^{P}$. If $b_p = 1$, then $\lambda_{3,0}^{P'} = \lambda_{0}^{P}$.
\item[(iii)] Let us assume that $d_{\lambda}^{P} = d_{\mu,3}^{P}$. In this case, $d_{\lambda,1}^{P'}>d_{\mu,1}^{P'}$, $\lambda_{1,0}^{P'} = -\left(\lambda_{0}^{P} + \mu_{3,0}^{P}\right)$, and $\lambda_{3,0}^{P'} = \lambda_{0}^{P}$.
\item[(iv)]  Let us assume that $d_{\lambda}^{P} + 1 < d_{\mu,1}^{P}$. In this case, $\lambda_{3,0}^{P'} = \mu_{1,0}^{P}$. If $a_p > 2$, then $d_{\lambda,1}^{P'}>d_{\mu,1}^{P'}$ and $\lambda_{1,0}^{P'} = -\mu_{1,0}^{P}$. If $a_p = 2$, then $d_{\lambda,1}^{P'} < d_{\mu,1}^{P'}$, $d_{\lambda,3}^{P'} + 1\geq d_{\mu,1}^{P'}$, and $\lambda_{1,0}^{P'} = -\lambda_{0}^{P}$. 
\item[(v)] Let us assume that $d_{\lambda}^{P} + 1 = d_{\mu,1}^{P}$. In this case, $\lambda_{3,0}^{P'} = \mu_{1,0}^{P}$. If $a_p > 2$, then $d_{\lambda,1}^{P'}>d_{\mu,1}^{P'}$ and $\lambda_{1,0}^{P'} = -\left(\lambda_{0}^{P} + \mu_{1,0}^{P}\right)$. If $a_p = 2$, then $d_{\lambda,3}^{P'} + 1\geq d_{\mu,1}^{P'}$ and $\lambda_{1,0}^{P'} = -\lambda_{0}^{P}$. 
\item[(vi)] Let us assume that $d_{\lambda}^{P} = d_{\mu,1}^{P}$ and $\lambda_{0}^{P} + \mu_{1,0}^{P}\neq 0$. In this case, $d_{\lambda,1}^{P'}>d_{\mu,1}^{P'}$, $\lambda_{1,0}^{P'} = -\lambda_{0}^{P}$, and $\lambda_{3,0}^{P'} = \lambda_{0}^{P} + \mu_{1,0}^{P}$. 
\end{description}
\end{description}
\end{proposition}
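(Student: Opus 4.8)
The plan is to prove Proposition~\ref{p21/3bothextadd} by reducing every case to the combinatorial parametrization of admissible extensions already furnished by Proposition~\ref{ppre21/3bothext} and its addendum Proposition~\ref{ppre21/3bothextadd}, together with the weight formulas recorded there. The key structural facts I would use repeatedly are: (1) by Proposition~\ref{ppre21/3bothext} \textbf{(i)}, every $\left(r,1\right)$-type $P'$-path is of the form $W^{\alpha,0}$ and every $\left(r,3\right)$-type $P'$-path is of the form $W^{0,\beta}$ for an $\left(r,2\right)$-type $P$-path $W$; (2) the weight of $W^{\alpha,0}$ is $-q^{a_p-\alpha+1}$ times the weight of $W$, while the weight of $W^{0,\beta}$ is $q^{b_p-\beta}$ times the weight of $W$ (Proposition~\ref{ppre21/3bothext} \textbf{(ii)}); and (3) the good/bad dichotomy governs which extensions are admissible at the first generator versus the second-or-later generators (Lemma~\ref{lemmagoodext} \textbf{(i)}--\textbf{(iii)}, packaged in Proposition~\ref{ppre21/3bothext} \textbf{(iv)} and Proposition~\ref{ppre21/3bothextadd}). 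Since $c_{p-1}\geq 2$, the notation $w_{\mu,1}^{P}$ and $w_{\mu,3}^{P}$ refers to $1$-switch and $3$-switch $\left(r,2\right)$-type $P$-paths respectively, so the hypotheses on $d_{\lambda}^{P}$, $d_{\mu,1}^{P}$, $d_{\mu,3}^{P}$ translate directly into statements about which $P$-paths achieve maximal $q$-resistance among good paths, $1$-switch paths, and $3$-switch paths.

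The overall strategy is bookkeeping of leading $q$-degrees. In part \textbf{(a)} ($a_p=1$), Proposition~\ref{pnotationentries} \textbf{(b)} \textbf{(ii)} gives $w_{\lambda,1}^{P'}=0$, so a good $\left(r,3\right)$-type $P'$-path must come from either a good $\left(r,2\right)$-type $P$-path (via $X^{0,1}$, a good extension by Proposition~\ref{ppre21/3bothext} \textbf{(iv)}) or a $1$-switch $\left(r,2\right)$-type $P$-path (via $Y^{0,1}$, the maximal good extension by Proposition~\ref{ppre21/3bothextadd} \textbf{(i)}), and a bad $\left(r,3\right)$-type $P'$-path comes from a $3$-switch $P$-path via $V^{0,2}$ (Proposition~\ref{ppre21/3bothextadd} \textbf{(ii)}), valid only when $b_p>1$. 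Comparing the degree of $X^{0,1}$ (namely $d_{\lambda}^{P}+b_p-1$) against that of $Y^{0,1}$ (namely $d_{\mu,1}^{P}+b_p-1$) yields the case split on $d_{\lambda}^{P}$ versus $d_{\mu,1}^{P}$ in \textbf{(a)} \textbf{(i)}, where the tie-breaking hypothesis $\lambda_0^P+\mu_{1,0}^P\neq 0$ prevents spurious cancellation of the two leading coefficients. The subtle sub-cases \textbf{(a)} \textbf{(iii)}--\textbf{(iv)} arise precisely when $b_p=1$: then $V^{0,2}$ is inadmissible, so the $\left(r,3\right)$-path supply shrinks and the $\left(r,1\right)$-entry (coming from $X^{1,0}=X^{a_p,0,0}$-type and $Y^{2,0}$ extensions) can dominate; I would read off the four degree equalities and leading coefficients from the explicit weight factors $-q^{a_p-\alpha+1}$ with $a_p=1$. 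Part \textbf{(b)} ($a_p>1$) is handled symmetrically: here $w_{\mu,3}^{P'}=0$ by Proposition~\ref{pnotationentries} \textbf{(b)} \textbf{(iii)}, the good $\left(r,1\right)$-path comes from $X^{1,0}$ (good exactly when $a_p>1$) or from a $3$-switch $V^{1,0}$, the bad $\left(r,1\right)$-path from a $1$-switch $Y^{2,0}$, and the degree comparisons between $d_{\lambda}^P$, $d_{\mu,3}^P$, $d_{\mu,1}^P$ produce the six sub-cases, with the finer split $a_p>2$ versus $a_p=2$ distinguishing whether $Y^{2,0}$ is good (so contributes to $\lambda_1$) or bad.

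Concretely I would proceed case by case exactly as the statement is organized, and in each case: first identify which $P$-path type and which extension parameter ($\alpha=1$ or $\beta=1$ for good, $\alpha=2$ or $\beta=2$ for bad) achieves maximal $q$-resistance in the relevant $P'$-path class using Proposition~\ref{ppre21/3bothext} \textbf{(iv)} and Proposition~\ref{ppre21/3bothextadd}; second, compute the resulting degree via the additive shift in the weight formula; third, compare degrees across classes to determine the dominant contribution and apply the nonvanishing hypotheses ($\lambda_0^P+\mu_{3,0}^P\neq 0$ etc.) to confirm the leading coefficient does not cancel. The main obstacle I anticipate is not any single computation but the sheer case management: ensuring that when two extension families have \emph{equal} maximal degree (the boundary cases $d_{\lambda}^P+1=d_{\mu,3}^P$, $d_{\lambda}^P=d_{\mu,1}^P$, and the analogues), the leading coefficients are correctly summed rather than treated independently, and that the parity of sign factors $-q^{\cdots}$ versus $q^{\cdots}$ is tracked consistently through the $\pm$ in $\lambda_{1,0}^{P'}=-\mu_{3,0}^P$ and $\lambda_{3,0}^{P'}=\lambda_0^P$. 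In particular the inequalities of the form $d_{\lambda,3}^{P'}+1\geq d_{\mu,1}^{P'}$ (rather than strict) require care, since they encode that a bad $\left(r,1\right)$-path may legitimately have degree one higher than the good $\left(r,3\right)$-path without violating regularity; verifying these weak inequalities hold in every branch, especially the $b_p=1$ branches where one extension family disappears, will be where I expect to spend the most attention.
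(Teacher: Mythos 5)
Your overall strategy is the same as the paper's (reduce everything to the parametrization and weight formulas of Proposition~\ref{ppre21/3bothext} and Proposition~\ref{ppre21/3bothextadd}, then do leading-degree bookkeeping with the nonvanishing hypotheses breaking ties), but there is a genuine error in how you classify good versus bad $P'$-paths, and it is fatal to part \textbf{(a)}. You assert that a bad $\left(r,3\right)$-type $P'$-path ``comes from a $3$-switch $P$-path via $V^{0,2}$.'' This inverts Definition~\ref{defgoodbadpath} \textbf{(i)}: for $s=3$ and $a_p=1$, an $\left(r,3\right)$-type $P'$-path is bad precisely when it has \emph{both} a $1\to 2$ vertex change at the last $\sigma_2$ in $P$ \emph{and} a $2\to 3$ vertex change at the last $\sigma_3$ in $\sigma_3^{b_p}$. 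Hence the bad $\left(r,3\right)$-type $P'$-paths are exactly the extensions $Y^{0,b_p}$ of $1$-switch $P$-paths $Y$ (with weight factor $q^{b_p-b_p}=1$, so $w_{\mu,3}^{P'}=w_{\mu,1}^{P}$), while $V^{0,2}$ is \emph{good}, since a $3$-switch path has no $1\to 2$ vertex change at the last $\sigma_2$. This identification $w_{\mu,3}^{P'}=w_{\mu,1}^{P}$ is precisely what the conclusions $d_{\mu,3}^{P'}=d_{\mu,1}^{P}$ and $\mu_{3,0}^{P'}=\mu_{1,0}^{P}$ in \textbf{(a)} \textbf{(iii)}--\textbf{(iv)} encode; under your classification one would get $w_{\mu,3}^{P'}=0$ when $b_p=1$, so these cases become unprovable. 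Worse, in case \textbf{(a)} \textbf{(ii)} (where $d_{\mu,1}^{P}<d_{\lambda}^{P}+1<d_{\mu,3}^{P}$) the maximal $q$-resistance $\left(r,3\right)$-type $P'$-path \emph{is} $V^{0,2}$ and it is good; if you count it as bad, the asserted inequality $d_{\lambda,3}^{P'}>d_{\mu,3}^{P'}$ fails in your bookkeeping, so you would ``disprove'' the statement rather than prove it. The underlying confusion is between two different things: badness of a $P'$-path is defined by the \emph{position of its final vertex change} (at the last generator of the relevant power), whereas Lemma~\ref{lemmagoodext} \textbf{(ii)}--\textbf{(iii)} describe admissible extensions \emph{of} bad or switch paths (which must skip the first generator). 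These are not the same notion.

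The same confusion contaminates your inventory of $\left(r,1\right)$-type paths. By Definition~\ref{defgoodbadpath} \textbf{(i)} the bad $\left(r,1\right)$-type $P'$-paths are all extensions $W^{a_p,0}$ with vertex change at the \emph{last} $\sigma_1$, including $X^{a_p,0}$ and $V^{a_p,0}$, not just $Y^{2,0}$ (which is bad only when $a_p=2$). In part \textbf{(a)}, where $a_p=1$, the path $Y^{2,0}$ you invoke does not even exist, and you omit $V^{1,0}$ entirely; yet $V^{1,0}$ is exactly the path that produces $d_{\mu,1}^{P'}=d_{\mu,3}^{P}+1$ and $\mu_{1,0}^{P'}=-\mu_{3,0}^{P}$ in \textbf{(a)} \textbf{(iv)}, and $X^{1,0}$ produces $\mu_{1,0}^{P'}=-\lambda_{0}^{P}$ in \textbf{(a)} \textbf{(iii)}. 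Once the classification is corrected --- good $\left(r,3\right)$-paths are $X^{0,\beta}$, $Y^{0,\beta}$ with $\beta<b_p$, and $V^{0,\beta}$ with $\beta\geq 2$; bad $\left(r,3\right)$-paths are $Y^{0,b_p}$; bad $\left(r,1\right)$-paths are the $W^{a_p,0}$ --- your degree-comparison scheme does coincide with the paper's proof, which runs the same comparisons of $d_{\lambda}^{P}$, $d_{\mu,1}^{P}$, $d_{\mu,3}^{P}$ case by case via Proposition~\ref{ppre21/3bothext} \textbf{(i)}, \textbf{(ii)}, \textbf{(iv)} and Proposition~\ref{ppre21/3bothextadd}.
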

\begin{proof}
We establish each statement separately.
\begin{description}
\item[(a)] If $b_p>1$, then we will show in \textbf{(i)} - \textbf{(ii)} that $d_{\lambda,3}^{P'} > d_{\mu,3}^{P'}$ and $d_{\lambda,3}^{P'} + 1\geq d_{\mu,1}^{P'}$. In \textbf{(i)}, we will show this if $d_{\lambda}^{P}\leq d_{\mu,1}^{P}$, and in \textbf{(ii)}, we will show this if $d_{\lambda}^{P}\leq d_{\mu,3}^{P}$. If $d_{\lambda}^{P}>\max\{d_{\mu,1}^{P},d_{\mu,3}^{P}\}$, then Proposition~\ref{p21/3bothext} \textbf{(a)} \textbf{(i)} implies that $d_{\lambda,3}^{P'}>d_{\mu,3}^{P'}$ and $d_{\lambda,3}^{P'}+1\geq d_{\mu,1}^{P'}$. If $b_p = 1$, then the proof that either $d_{\lambda,3}^{P'}>d_{\mu,3}^{P'}$ or $d_{\lambda,3}^{P'} + 1\geq d_{\mu,1}^{P'}$ is similar and based on \textbf{(iii)} - \textbf{(iv)}.

\begin{description}
\item[(i)] We assume that $d_{\lambda}^{P}\leq d_{\mu,1}^{P}$ and $b_p>1$. The hypothesis $d_{\lambda}^{P}>\min\{d_{\mu,1}^{P},d_{\mu,3}^{P}\}$ implies that $d_{\mu,3}^{P}<d_{\lambda}^{P}\leq d_{\mu,1}^{P}$.

If $d_{\lambda}^{P}<d_{\mu,1}^{P}$, then Proposition~\ref{ppre21/3bothext} \textbf{(i)} and Proposition~\ref{ppre21/3bothextadd} \textbf{(i)} imply that a maximal $q$-resistance $\left(r,3\right)$-type $P'$-path is of the form $Y^{0,1}$, where $Y$ is a maximal $q$-resistance $1$-switch $\left(r,2\right)$-type $P$-path. The $\left(r,3\right)$-type $P'$-path $Y^{0,1}$ is good since $b_p>1$. Proposition~\ref{ppre21/3bothext} \textbf{(i)} implies that a maximal $q$-resistance bad $\left(r,1\right)$-type $P'$-path is of the form $X^{1,0}$, where $X$ is a maximal $q$-resistance good $\left(r,2\right)$-type $P$-path. Proposition~\ref{ppre21/3bothext} \textbf{(ii)} implies that $d_{\lambda,3}^{P'}>d_{\mu,3}^{P'}$, $d_{\lambda,3}^{P'} + 1\geq d_{\mu,1}^{P'}$, and $\lambda_{3,0}^{P'} = \mu_{1,0}^{P}$. 

If $d_{\lambda}^{P} = d_{\mu,1}^{P}$, then Proposition~\ref{ppre21/3bothext} \textbf{(i)} and \textbf{(iv)} and Proposition~\ref{ppre21/3bothextadd} \textbf{(i)} imply that a maximal $q$-resistance $\left(r,3\right)$-type $P'$-path is either of the form $X^{0,1}$ or $Y^{0,1}$, where $X$ is a maximal $q$-resistance good $\left(r,2\right)$-type $P$-path and $Y$ is a maximal $q$-resistance $1$-switch $\left(r,2\right)$-type $P$-path. Proposition~\ref{ppre21/3bothext} \textbf{(i)} and \textbf{(iv)} and Proposition~\ref{ppre21/3bothextadd} \textbf{(i)} also imply that a maximal $q$-resistance bad $\left(r,1\right)$-type $P'$-path is of the form $X^{1,0}$, where $X$ is a maximal $q$-resistance good $\left(r,2\right)$-type $P$-path. Proposition~\ref{ppre21/3bothext} \textbf{(ii)} and the hypothesis that $\lambda_{0}^{P} + \mu_{1,0}^{P}\neq 0$ imply that $d_{\lambda,3}^{P'}>d_{\mu,3}^{P'}$, $d_{\lambda,3}^{P'}+1\geq d_{\mu,1}^{P'}$, and $\lambda_{3,0}^{P'} = \lambda_{0}^{P} + \mu_{1,0}^{P}$. 
\item[(ii)] We assume that $d_{\lambda}^{P} < d_{\mu,3}^{P}$ and $b_p>1$. The hypothesis $d_{\lambda}^{P}>\min\{d_{\mu,1}^{P},d_{\mu,3}^{P}\}$ implies that $d_{\mu,1}^{P}<d_{\lambda}^{P}< d_{\mu,3}^{P}$. 

Proposition~\ref{ppre21/3bothext} \textbf{(i)} and Proposition~\ref{ppre21/3bothextadd} \textbf{(ii)} imply that a maximal $q$-resistance bad $\left(r,1\right)$-type $P'$-path is of the form $V^{1,0}$, where $V$ is a maximal $q$-resistance $3$-switch $\left(r,2\right)$-type $P$-path. 

If $d_{\lambda}^{P} + 1 < d_{\mu,3}^{P}$, then Proposition~\ref{ppre21/3bothext} \textbf{(i)}, \textbf{(ii)} and \textbf{(iv)} and Proposition~\ref{ppre21/3bothextadd} \textbf{(ii)} imply that a maximal $q$-resistance $\left(r,3\right)$-type $P'$-path is of the form $V^{0,2}$, where $V$ is a maximal $q$-resistance $3$-switch $\left(r,2\right)$-type $P$-path. The $\left(r,3\right)$-type $P'$-path $V^{0,2}$ is good since $V$ is not a $1$-switch $\left(r,2\right)$-type $P$-path. Proposition~\ref{ppre21/3bothext} \textbf{(ii)} implies that $d_{\lambda,3}^{P'}>d_{\mu,3}^{P'}$, $d_{\lambda,3}^{P'} + 1\geq d_{\mu,1}^{P'}$, and $\lambda_{3,0}^{P'} = \mu_{3,0}^{P}$. 

If $d_{\lambda}^{P} +1 = d_{\mu,3}^{P}$, then Proposition~\ref{ppre21/3bothext} \textbf{(i)}, \textbf{(ii)} and \textbf{(iv)} and Proposition~\ref{ppre21/3bothextadd} \textbf{(ii)} imply that a maximal $q$-resistance $\left(r,3\right)$-type $P'$-path is either of the form $X^{0,1}$ or $V^{0,2}$, where $X$ is a maximal $q$-resistance good $\left(r,2\right)$-type $P$-path and $V$ is a maximal $q$-resistance $3$-switch $\left(r,2\right)$-type $P$-path. Proposition~\ref{ppre21/3bothext} \textbf{(ii)} and the hypothesis that $\lambda_{0}^{P} + \mu_{3,0}^{P}\neq 0$ imply that $d_{\lambda,3}^{P'}>d_{\mu,3}^{P'}$, $d_{\lambda,3}^{P'} + 1\geq d_{\mu,1}^{P'}$, and $\lambda_{3,0}^{P'} = \lambda_{0}^{P} + \mu_{3,0}^{P}$. 

Finally, in order to verify that $d_{\lambda,3}^{P'}>d_{\mu,3}^{P'}$ and $d_{\lambda,3}^{P'} + 1\geq d_{\mu,1}^{P'}$ in all cases, we consider the case $d_{\lambda}^{P} = d_{\mu,3}^{P}$. In this case, Proposition~\ref{ppre21/3bothext} \textbf{(i)}, \textbf{(ii)} and \textbf{(iv)} and Proposition~\ref{ppre21/3bothextadd} \textbf{(ii)} imply that a maximal $q$-resistance $\left(r,3\right)$-type $P'$-path is of the form $X^{0,1}$, where $X$ is a maximal $q$-resistance good $\left(r,2\right)$-type $P$-path. Proposition~\ref{ppre21/3bothext} \textbf{(ii)} implies that $d_{\lambda,3}^{P'}>d_{\mu,3}^{P'}$ and $d_{\lambda,3}^{P'} + 1\geq d_{\mu,1}^{P'}$ in all cases.
\item[(iii)] We have established in all cases that if $b_p>1$, then $d_{\lambda,3}^{P'}>d_{\mu,3}^{P'}$ thus far in the proof. The contrapositive is that if $d_{\lambda,3}^{P'}\leq d_{\mu,3}^{P'}$, then $b_p = 1$. 

If $b_p = 1$, then Proposition~\ref{ppre21/3bothext} \textbf{(i)} and \textbf{(iv)} imply that a maximal $q$-resistance good $\left(r,3\right)$-type $P'$-path is of the form $X^{0,1}$, where $X$ is a maximal $q$-resistance good $\left(r,2\right)$-type $P$-path. If $b_p = 1$, then Proposition~\ref{ppre21/3bothext} \textbf{(i)} and Proposition~\ref{ppre21/3bothextadd} \textbf{(i)} imply that a maximal $q$-resistance bad $\left(r,3\right)$-type $P'$-path is of the form $Y^{0,1}$, where $Y$ is a maximal $q$-resistance $1$-switch $\left(r,2\right)$-type $P$-path. If $d_{\lambda,3}^{P'}\leq d_{\mu,3}^{P'}$, then we deduce that $d_{\lambda}^{P}\leq d_{\mu,1}^{P}$. 

In this case, the hypothesis that $d_{\lambda}^{P}>\min\{d_{\mu,1}^{P},d_{\mu,3}^{P}\}$ implies that $d_{\mu,3}^{P}<d_{\lambda}^{P}\leq d_{\mu,1}^{P}$. In particular, Proposition~\ref{ppre21/3bothext} \textbf{(i)} and \textbf{(iv)} and Proposition~\ref{ppre21/3bothextadd} \textbf{(i)} imply that a maximal $q$-resistance bad $\left(r,1\right)$-type $P'$-path is of the form $X^{1,0}$, where $X$ is a maximal $q$-resistance good $\left(r,2\right)$-type $P$-path. Proposition~\ref{ppre21/3bothext} \textbf{(ii)} implies the remainder of the statement.
\item[(iv)] The proof is similar to the proof of \textbf{(iii)}.
\end{description}
\item[(b)] We will establish that either $d_{\lambda,1}^{P'}>d_{\mu,1}^{P'}$ or $d_{\lambda,3}^{P'} + 1\geq d_{\mu,1}^{P'}$. Proposition~\ref{ppre21/3bothext} \textbf{(i)} and \textbf{(iv)} and Proposition~\ref{ppre21/3bothextadd} imply that a maximal $q$-resistance $\left(r,1\right)$-type $P$-path is either of the form $X^{1,0}$, $Y^{2,0}$, or $V^{1,0}$, where $X$ is a maximal $q$-resistance good $\left(r,2\right)$-type $P$-path, $Y$ is a maximal $q$-resistance $1$-switch $\left(r,2\right)$-type $P$-path, and $V$ is a maximal $q$-resistance $3$-switch $\left(r,2\right)$-type $P$-path. Proposition~\ref{ppre21/3bothext} \textbf{(ii)} implies that the $q$-resistance of $X^{1,0}$ is equal to the $q$-resistance of $Y^{2,0}$ if and only if $d_{\lambda}^{P} + 1 = d_{\mu,1}^{P}$, and the $q$-resistance of $X^{1,0}$ is equal to the $q$-resistance of $V^{1,0}$ if and only if $d_{\lambda}^{P} = d_{\mu,3}^{P}$. The $\left(r,1\right)$-type $P'$-paths $X^{1,0}$ and $V^{1,0}$ are good since $a_p>1$, and the $\left(r,1\right)$-type $P'$-path $Y^{2,0}$ is good if and only if $a_p>2$. 

If $d_{\lambda,1}^{P'}\leq d_{\mu,1}^{P'}$, then the hypothesis implies that $a_p = 2$ and $d_{\mu,1}^{P}>\max\{d_{\lambda}^{P},d_{\mu,3}^{P}\}$. Proposition~\ref{ppre21/3bothext} \textbf{(i)} and \textbf{(ii)} and Proposition~\ref{ppre21/3bothextadd} \textbf{(i)} imply that $d_{\lambda,3}^{P'}+1\geq d_{\mu,1}^{P'}$. 

The proofs of statements \textbf{(i)} - \textbf{(vi)} are similar to the proofs of statements \textbf{(a)} \textbf{(i)} - \textbf{(ii)} (based on Proposition~\ref{ppre21/3bothext} and Proposition~\ref{ppre21/3bothextadd}).
\end{description}
\end{proof}

We establish the following addendum to Proposition~\ref{p1/32ext}.

\begin{proposition}
\label{p1/32extadd}
Let $P = \left(\prod_{i=1}^{p-1} \sigma_1^{a_i}\sigma_3^{b_i}\sigma_2^{c_i}\right)\sigma_1^{a_p}\sigma_3^{b_p}$ be an $s$-subproduct of $\sigma$ for $s\in \{1,3\}$ and let $P' = \prod_{i=1}^{p} \sigma_1^{a_i}\sigma_3^{b_i}\sigma_2^{c_i}$. We assume that either $a_p = 0$ or $b_p = 0$. If $a_p = 0$, then we define $\psi = 1$, and if $b_p = 0$, then we define $\psi = 0$. 

\begin{description}
\item[(a)] Let us assume that $d_{\nu}^{P} + \left(-1\right)^{\psi} < d_{\lambda}^{P}< d_{\mu}^{P}$. Let us also assume that if $d_{\lambda}^{P} + 1 = d_{\mu}^{P}$, then $\lambda_{0}^{P} + \mu_{0}^{P}\neq 0$. 

Firstly, if $c_p\geq 2$, then $d_{\lambda}^{P'}>\min\{d_{\mu,1}^{P'},d_{\mu,3}^{P'}\}$. Furthermore, the following statements are true:
\begin{description}
\item[(i)] If $d_{\lambda}^{P} + 1 < d_{\mu}^{P}$ and $c_p>2$, then $d_{\lambda}^{P'} = d_{\mu}^{P} + c_p -2 + \psi >\max\{d_{\mu,1}^{P'},d_{\mu,3}^{P'}\}$ and $\lambda_{0}^{P'} = \left(-1\right)^{\psi}\mu_{0}^{P}$. If $d_{\lambda}^{P} + 1 = d_{\mu}^{P}$ and $c_p>2$, then $d_{\lambda}^{P'} = d_{\lambda}^{P} + c_p - 1 + \psi > d_{\mu}^{P'}$ and $\lambda_{0}^{P'} = \left(-1\right)^{\psi}\left(\lambda_{0}^{P} + \mu_{0}^{P}\right)$. 
\item[(ii)] If $c_p\geq 2$ and $d_{\lambda}^{P'}\leq \max\{d_{\mu,1}^{P'},d_{\mu,3}^{P'}\}$, then $c_p = 2$. Furthermore, in this case, $d_{\lambda}^{P'} = d_{\lambda}^{P} + 1 + \psi$ and $\lambda_{0}^{P'} = \left(-1\right)^{\psi}\lambda_{0}^{P}$. If $a_p = 0$, then $d_{\lambda}^{P'}\leq d_{\mu}^{P} + 1 = d_{\mu,3}^{P'}$, and if $b_p = 0$, then $d_{\lambda}^{P'}\leq d_{\mu}^{P} = d_{\mu,1}^{P'}$.  
\item[(iii)] Let us assume that $c_p=1$, and either $a_p = 0 = b_{p+1}$ or $b_p = 0 = a_{p+1}$. In this case, $d_{\lambda}^{P'} = d_{\lambda}^{P} + \psi > d_{\nu}^{P} + 1 - \psi = d_{\mu}^{P'}$, $d_{\nu}^{P'} = d_{\mu}^{P}$, $\lambda_{0}^{P'} = \left(-1\right)^{\psi}\lambda_{0}^{P}$, and $\nu_{0}^{P'} = \mu_{0}^{P}$. 
\end{description} 
\item[(b)] Let us assume that $d_{\nu}^{P} + \left(-1\right)^{\psi}>d_{\lambda}^{P}>d_{\mu}^{P}$.
\begin{description}
\item[(i)] If $c_p\geq 2$, then $d_{\lambda}^{P'}>\max\{d_{\mu,1}^{P'},d_{\mu,3}^{P'}\}$ and $\lambda_{0}^{P'} = \left(-1\right)^{1 - \psi}\nu_{0}^{P}$. 
\item[(ii)] Let us assume that $c_p = 1$, and either $a_p = 0 = b_{p+1}$ or $b_p = 0 = a_{p+1}$. In this case, $d_{\lambda}^{P'} = d_{\lambda}^{P} + \psi$, $d_{\mu}^{P'} = d_{\nu}^{P} + 1 - \psi$, $d_{\nu}^{P'} = d_{\lambda}^{P}$, $\lambda_{0}^{P'} = \left(-1\right)^{\psi}\lambda_{0}^{P}$, $\mu_{0}^{P'} = \left(-1\right)^{1-\psi}\nu_{0}^{P}$, and $\nu_{0}^{P'} = \lambda_{0}^{P}$.
\end{description}
\item[(c)] Let us assume that $d_{\nu}^{P} + \left(-1\right)^{\psi} = d_{\lambda}^{P} > d_{\mu}^{P}$. Let us also assume that $\lambda_{0}^{P} - \nu_{0}^{P}\neq 0$. 

If $c_p\geq 2$, then $d_{\lambda}^{P'}>\max\{d_{\mu,1}^{P'},d_{\mu,3}^{P'}\}$ and $\lambda_{0}^{P'} = \left(-1\right)^{\psi}\lambda_{0}^{P} + \left(-1\right)^{1-\psi}\nu_{0}^{P}$.
\item[(d)] 
\begin{description}
\item[(i)] Let us assume that $d_{\nu}^{P} + \left(-1\right)^{\psi} > d_{\mu}^{P} - 1$ and $d_{\mu}^{P}>d_{\lambda}^{P}$. 

If $c_p\geq 2$, then $d_{\lambda}^{P'}>\max\{d_{\mu,1}^{P'},d_{\mu,3}^{P'}\}$ and $\lambda_{0}^{P'} = \left(-1\right)^{1-\psi}\nu_{0}^{P}$. 
\item[(ii)] Let us assume that $d_{\mu}^{P} - 1>d_{\nu}^{P} + \left(-1\right)^{\psi}>d_{\lambda}^{P}$. 

If $c_p = 2$, then $\lambda_{0}^{P'} = \left(-1\right)^{1-\psi}\nu_{0}^{P}$ and $d_{\lambda}^{P'}>\min\{d_{\mu,1}^{P'},d_{\mu,3}^{P'}\}$. If $a_p = 0$ and $c_p = 2$, then $d_{\lambda}^{P'} = d_{\nu}^{P} +1 \leq d_{\mu}^{P} + 1 = d_{\mu,3}^{P'}$ and $\mu_{3,0}^{P'} = -\mu_{0}^{P}$. If $b_p = 0$ and $c_p = 2$, then $d_{\lambda}^{P'} = d_{\nu}^{P} + 2\leq d_{\mu}^{P} = d_{\mu,1}^{P'}$ and $\mu_{1,0}^{P'} = \mu_{0}^{P}$. If $c_p>2$, then $d_{\lambda}^{P'}>\max\{d_{\mu,1}^{P'},d_{\mu,3}^{P'}\}$ and $\lambda_{0}^{P'} = \left(-1\right)^{\psi}\mu_{0}^{P}$. 
\end{description}
\end{description}
\end{proposition}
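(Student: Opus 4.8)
The plan is to reduce every case to the weight dictionary furnished by Proposition~\ref{ppre1/32bothext}, exactly as in the proof of Proposition~\ref{p1/32ext}, but now tracking the contributions of \emph{three} competing families of $P$-paths rather than letting the good $(r,s)$-type paths dominate outright. First I would record the three maximal extensions and their effect on degree and leading coefficient. A good $(r,s)$-type $P$-path extends maximally to an $(r,2)$-type $P'$-path via $W^1$, raising the $q$-resistance by $c_p-1+\psi$; a bad $(r,s)$-type $P$-path extends maximally via $W^2$, raising the $q$-resistance by $c_p-2+\psi$; and an $(r,s')$-type $P$-path (counted by $w_\nu^P$, where $\{s,s'\}=\{1,3\}$) extends maximally via $W^1$, raising the $q$-resistance by $c_p-\psi$. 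By Proposition~\ref{ppre1/32bothext} \textbf{(ii)} the first two families carry the common sign factor $(-1)^{\psi}$ while the third carries $(-1)^{1-\psi}$, and by Lemma~\ref{lemmagoodext} \textbf{(i)}--\textbf{(ii)} the extension $W^1$ is good precisely when $c_p\geq 2$ and $W^2$ is good precisely when $c_p>2$. This dictionary, together with Proposition~\ref{ppre1/32bothext} \textbf{(iii)}, is all the input the argument needs.

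With these three quantities in hand, each of the cases \textbf{(a)}--\textbf{(d)} becomes a comparison of the three extended degrees $d_\lambda^P+c_p-1+\psi$, $d_\mu^P+c_p-2+\psi$, and $d_\nu^P+c_p-\psi$, governed by the stated inequalities, and I would proceed case by case. Throughout I would use the elementary identity $(-1)^\psi=1-2\psi$, which converts a hypothesis such as $d_\nu^P+(-1)^\psi<d_\lambda^P$ into the statement that the $\nu$-extension falls strictly below the good-$\lambda$ extension, so that in \textbf{(a)} only $w_\lambda^P$ and $w_\mu^P$ compete. There the split according to $d_\lambda^P+1<d_\mu^P$ versus $d_\lambda^P+1=d_\mu^P$ is exactly the split between the bad extension strictly dominating and the bad and good extensions tying—the gap closes by one because good paths gain $q$-resistance one step faster than bad paths—while the split according to $c_p>2$ versus $c_p=2$ is dictated by Lemma~\ref{lemmagoodext} \textbf{(ii)}, since $W^2$ is good only when $c_p>2$; this last distinction is precisely what produces the regime $d_\lambda^{P'}\leq\max\{d_{\mu,1}^{P'},d_{\mu,3}^{P'}\}$ of \textbf{(a)} \textbf{(ii)}. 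Cases \textbf{(b)}, \textbf{(c)}, and \textbf{(d)} are handled identically, the only change being that the $\nu$-extension is now the dominant (or tying) term; in \textbf{(c)} the tie produces the combined leading coefficient $(-1)^\psi\lambda_0^P+(-1)^{1-\psi}\nu_0^P$, which is nonzero exactly under the hypothesis $\lambda_0^P-\nu_0^P\neq 0$ since $(-1)^{1-\psi}=-(-1)^\psi$, and in \textbf{(d)} \textbf{(ii)} the further refinement by $c_p=2$ versus $c_p>2$ again comes from the good/bad dichotomy of $W^1$ and $W^2$.

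The non-vanishing hypotheses (such as $\lambda_0^P+\mu_0^P\neq 0$ when $d_\lambda^P+1=d_\mu^P$, or $\lambda_0^P-\nu_0^P\neq 0$ when $d_\nu^P+(-1)^\psi=d_\lambda^P$) are invoked exactly in the tie sub-cases to guarantee that the two colliding leading terms do not cancel, so that the asserted leading coefficient of $w_\lambda^{P'}$ is genuinely nonzero. I expect the main obstacle to be the sign-and-degree bookkeeping in these tie and near-tie sub-cases: one must keep track of the sign factor $(-1)^\psi$ versus $(-1)^{1-\psi}$ attached to each family and, more delicately, correctly identify whether the maximal $(r,2)$-type $P'$-path is good or bad, since that is what decides whether its $q$-resistance is recorded as $d_\lambda^{P'}$ or as one of $d_{\mu,1}^{P'},d_{\mu,3}^{P'}$. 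The boundary value $c_p=2$ is the crux here, and I would treat it with special care, applying Lemma~\ref{lemmagoodext} \textbf{(i)}--\textbf{(ii)} to settle the admissibility and goodness of each extension \emph{before} reading off the degree and leading coefficient.
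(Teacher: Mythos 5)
Your overall strategy is the same as the paper's: you set up the same three competing families (maximal good $\left(r,s\right)$-type, maximal bad $\left(r,s\right)$-type, and maximal $\left(r,s'\right)$-type $P$-paths), extract the same weight dictionary from Proposition~\ref{ppre1/32bothext}, and resolve each part by comparing the three shifted degrees $d_{\lambda}^{P}+c_p-1+\psi$, $d_{\mu}^{P}+c_p-2+\psi$, $d_{\nu}^{P}+c_p-\psi$, invoking the non-cancellation hypotheses exactly in the tie sub-cases. Your degree and sign bookkeeping is correct (the conversion $\left(-1\right)^{\psi}=1-2\psi$ does translate each stated inequality into the corresponding dominance of one family), and for every sub-case with $c_p\geq 2$ --- namely \textbf{(a)} \textbf{(i)}--\textbf{(ii)}, \textbf{(b)} \textbf{(i)}, \textbf{(c)}, and \textbf{(d)} --- your plan reproduces the paper's argument.

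The gap is in the $c_p=1$ sub-cases \textbf{(a)} \textbf{(iii)} and \textbf{(b)} \textbf{(ii)}, which your proposal never mentions and on which your dictionary is wrong. You assert that ``$W^{1}$ is good precisely when $c_p\geq 2$,'' but goodness of an $\left(r,2\right)$-type $P'$-path is governed by Definition~\ref{defgoodbadpath} \textbf{(ii)} (Lemma~\ref{lemmagoodext} \textbf{(i)}--\textbf{(ii)} only control admissibility), and it depends on the exponents \emph{following} $P'$: a vertex change at the last $\sigma_2$ is harmless when the corresponding exponent $a_{p+1}$ (resp.\ $b_{p+1}$) vanishes. That is exactly the situation in \textbf{(a)} \textbf{(iii)} and \textbf{(b)} \textbf{(ii)}: there $c_p=1$ together with $b_p=0=a_{p+1}$ (or $a_p=0=b_{p+1}$) makes $X^{1}$ a \emph{good} $P'$-path even though its vertex change is at the last (only) $\sigma_2$, makes $Z^{1}$ a switch (bad) $P'$-path because the complementary exponent is forced positive, and kills all extensions of bad $P$-paths (Lemma~\ref{lemmagoodext} \textbf{(ii)}, equivalently the parenthetical in Proposition~\ref{ppre1/32bothext} \textbf{(iv)}). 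One also needs the no-vertex-change extensions of Proposition~\ref{ppre1/32bothext} \textbf{(iii)} to supply $w_{\nu}^{P'}=w_{\lambda}^{P}+w_{\mu}^{P}$, which is what yields $d_{\nu}^{P'}=d_{\mu}^{P}$, $\nu_{0}^{P'}=\mu_{0}^{P}$ in \textbf{(a)} \textbf{(iii)} and $d_{\nu}^{P'}=d_{\lambda}^{P}$, $\nu_{0}^{P'}=\lambda_{0}^{P}$ in \textbf{(b)} \textbf{(ii)}. Under your rule as written, $X^{1}$ would be misclassified as bad and the formulas $d_{\lambda}^{P'}=d_{\lambda}^{P}+\psi$, $\lambda_{0}^{P'}=\left(-1\right)^{\psi}\lambda_{0}^{P}$ could not be derived; these two sub-cases require the separate argument indicated above, which is how the paper handles them.
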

\begin{proof}
If $a_p = 0$, then we define $s = 3$ and $s' = 1$, and if $b_p = 0$, then we define $s = 1$ and $s' = 3$. We now establish each statement separately. Let $X$ be a maximal $q$-resistance good $\left(r,s\right)$-type $P$-path, let $Y$ be a maximal $q$-resistance bad $\left(r,s\right)$-type $P$-path, and let $Z$ be a maximal $q$-resistance $\left(r,s'\right)$-type $P$-path.

\begin{description}
\item[(a)] The proof is similar to the proof of Proposition~\ref{p21/3extadd}. The main difference is that there are $\left(r,2\right)$-type $P'$-paths of the form $Z^{\alpha}$, where $Z$ is an $\left(r,s'\right)$-type $P$-path. However, Proposition~\ref{ppre1/32bothext} \textbf{(ii)} and the hypothesis $d_{\nu}^{P} + \left(-1\right)^{\psi}<d_{\lambda}^{P}$ imply that the $q$-resistance of $Z^{1}$ is strictly less than the $q$-resistance of $X^{1}$ and the $q$-resistance of $Y^{2}$. If $c_p\geq 2$, then we deduce that $d_{\lambda}^{P'}>\min\{d_{\mu,1}^{P'},d_{\mu,3}^{P'}\}$. 
\begin{description}
\item[(i) - (ii)] The proofs of the two statements are similar to the proof of Proposition~\ref{p21/3extadd} (based on Proposition~\ref{ppre1/32bothext}).
\item[(iii)] Proposition~\ref{ppre1/32bothext} \textbf{(i)} and \textbf{(iv)} imply that a maximal $q$-resistance good $\left(r,2\right)$-type $P'$-path is of the form $X^{1}$, where $X$ is a maximal $q$-resistance good $\left(r,s\right)$-type $P$-path. Proposition~\ref{ppre1/32bothext} \textbf{(i)} and \textbf{(iv)} imply that a maximal $q$-resistance bad $\left(r,2\right)$-type $P'$-path is of the form $Z^{1}$, where $Z$ is a maximal $q$-resistance $\left(r,s'\right)$-type $P$-path. The statement now follows from Proposition~\ref{ppre1/32bothext} \textbf{(ii)} and \textbf{(iii)}.
\end{description}
\item[(b)] 
\begin{description}
\item[(i)] Proposition~\ref{ppre1/32bothext} \textbf{(ii)} and the hypothesis imply that the $q$-resistance of $Z^{1}$ is strictly greater than the $q$-resistance of $X^{1}$ and the $q$-resistance of $Y^{2}$. Furthermore, the $\left(r,2\right)$-type $P'$-path $Z^{1}$ is good since $c_p\geq 2$.
\item[(ii)] The proof is similar to the proof of \textbf{(a)} \textbf{(iii)}.
\end{description}
\item[(c)] Proposition~\ref{ppre1/32bothext} \textbf{(ii)} and the hypothesis imply that a maximal $q$-resistance $\left(r,2\right)$-type $P'$-path is either of the form $Z^{1}$ or of the form $X^{1}$, where $X$ is a maximal $q$-resistance good $\left(r,s\right)$-type $P$-path and $Z$ is a maximal $q$-resistance $\left(r,s'\right)$-type $P$-path. The $\left(r,2\right)$-type $P'$-paths $Z^{1}$ and $X^{1}$ are good since $c_p\geq 2$.
\item[(d)]
\begin{description}
\item[(i)] The proof is the same as the proof of \textbf{(b)} \textbf{(i)}.
\item[(ii)] Proposition~\ref{ppre1/32bothext} \textbf{(ii)} and the hypothesis imply that a maximal $q$-resistance $\left(r,2\right)$-type $P'$-path is of the form $Y^{2}$, where $Y$ is a maximal $q$-resistance bad $\left(r,s\right)$-type $P$-path. If $c_p>2$, then $Y^{2}$ is good. If $c_p = 2$, then Proposition~\ref{ppre1/32bothext} \textbf{(ii)} implies that a maximal $q$-resistance good $\left(r,2\right)$-type $P'$-path is of the form $Z^{1}$, where $Z$ is a maximal $q$-resistance $\left(r,s'\right)$-type $P$-path. 
\end{description}
\end{description}
\end{proof}

We establish the following addendum to Proposition~\ref{p1/32bothext}.

\begin{proposition}
\label{p1/32bothextadd}
Let $P = \left(\prod_{i=1}^{p-1} \sigma_1^{a_i}\sigma_3^{b_i}\sigma_2^{c_i}\right)\sigma_1^{a_p}\sigma_3^{b_p}$ be an $s$-subproduct of $\sigma$ for $s\in \{1,3\}$ and let $P' = \prod_{i=1}^{p} \sigma_1^{a_i}\sigma_3^{b_i}\sigma_2^{c_i}$. We assume that $a_p,b_p>0$.
\begin{description}
\item[(a)] Let us consider the case $a_p>1$. Let us assume that $d_{\lambda,1}^{P}< d_{\mu,1}^{P}$ and $c_p\geq 2$. If $d_{\lambda,3}^{P} + 1\geq d_{\mu,1}^{P}$, then $d_{\lambda}^{P'}>\max\{d_{\mu,1}^{P'},d_{\mu,3}^{P'}\}$ and $\lambda_{0}^{P'} = -\lambda_{3,0}^{P}$.
\item[(b)] Let us consider the case $a_p = 1$ and $d_{\lambda,3}^{P} + 1\geq d_{\mu,1}^{P}$.
\begin{description}
\item[(i)] Let us assume that $d_{\lambda,3}^{P}+1 = d_{\mu,3}^{P}$. If $c_p>2$ and $\lambda_{3,0}^{P} + \mu_{3,0}^{P}\neq 0$, then $d_{\lambda}^{P'}>\max\{d_{\mu,1}^{P'},d_{\mu,3}^{P'}\}$ and $\lambda_{0}^{P'} = -\left(\lambda_{3,0}^{P} + \mu_{3,0}^{P}\right)$. If $c_p = 2$, then $d_{\mu,1}^{P'}<d_{\lambda}^{P'} = d_{\lambda,3}^{P} + 2 = d_{\mu,3}^{P} + 1 = d_{\mu,3}^{P'}$, $\lambda_{0}^{P'} = -\lambda_{3,0}^{P}$, and $\mu_{3,0}^{P'} = -\mu_{3,0}^{P}$.
\item[(ii)] Let us assume that $d_{\lambda,3}^{P} + 1 < d_{\mu,3}^{P}$. If $c_p>2$, then $d_{\lambda}^{P'}>\max\{d_{\mu,1}^{P'},d_{\mu,3}^{P'}\}$ and $\lambda_{0}^{P'} = -\mu_{3,0}^{P}$. If $c_p = 2$, then $d_{\mu,1}^{P'}< d_{\lambda}^{P'} = d_{\lambda,3}^{P} + 2 < d_{\mu,3}^{P} + 1 = d_{\mu,3}^{P'}$, $\lambda_{0}^{P'} = -\lambda_{3,0}^{P}$, and $\mu_{3,0}^{P'} = -\mu_{3,0}^{P}$.
\end{description} 
\item[(c)] Let us consider the case $a_p = 1$ and $d_{\lambda,3}^{P} > d_{\mu,3}^{P}$. 
\begin{description}
\item[(i)] Let us assume that $d_{\lambda,3}^{P} + 2 = d_{\mu,1}^{P}$. If $c_p>2$ and $\mu_{1,0}^{P} - \lambda_{3,0}^{P}\neq 0$, then $d_{\lambda}^{P'}>\max\{d_{\mu,1}^{P'},d_{\mu,3}^{P'}\}$ and $\lambda_{0}^{P'} = \mu_{1,0}^{P} - \lambda_{3,0}^{P}$. If $c_p = 2$, then $d_{\mu,3}^{P'}<d_{\lambda}^{P'} = d_{\lambda,3}^{P} + 2 = d_{\mu,1}^{P} = d_{\mu,1}^{P'}$, $\lambda_{0}^{P'} = -\lambda_{3,0}^{P}$, and $\mu_{1,0}^{P'} = \mu_{1,0}^{P}$.
\item[(ii)] Let us assume that $d_{\lambda,3}^{P} + 2 < d_{\mu,1}^{P}$. If $c_p>2$, then $d_{\lambda}^{P'} > \max\{d_{\mu,1}^{P'},d_{\mu,3}^{P'}\}$ and $\lambda_{0}^{P'} = \mu_{1,0}^{P}$. If $c_p = 2$, then $d_{\mu,3}^{P'} < d_{\lambda}^{P'} = d_{\lambda}^{P} + 2 < d_{\mu,1}^{P} = d_{\mu,1}^{P'}$, $\lambda_{0}^{P'} = -\lambda_{3,0}^{P}$, and $\mu_{1,0}^{P'} = \mu_{1,0}^{P}$.
\end{description}
\end{description}
\end{proposition}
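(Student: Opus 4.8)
The plan is to apply Proposition~\ref{ppre1/32bothext} to enumerate the admissible $\left(r,2\right)$-type $P'$-paths together with their weights, and then read off the leading data of $w_{\lambda}^{P'}$, $w_{\mu,1}^{P'}$, and $w_{\mu,3}^{P'}$ from a finite comparison of candidate maximal $q$-resistance extensions. By Proposition~\ref{ppre1/32bothext} \textbf{(i)}, every $\left(r,2\right)$-type $P'$-path is of the form $W^{\alpha}$ for an $\left(r,1\right)$- or $\left(r,3\right)$-type $P$-path $W$, and by Proposition~\ref{ppre1/32bothext} \textbf{(iv)} the maximal $q$-resistance extension of a good $W$ is $W^{1}$ while that of a bad $W$ is $W^{2}$. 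Recording the weight factors from Proposition~\ref{ppre1/32bothext} \textbf{(ii)} (namely $q^{c_p-\alpha}$ for an $\left(r,1\right)$-type $W$ and $-q^{c_p-\alpha+1}$ for an $\left(r,3\right)$-type $W$), the four candidate top-degree $P'$-paths are $X_1^{1}, X_3^{1}, Y_1^{2}, Y_3^{2}$, where $X_s$ and $Y_s$ denote maximal $q$-resistance good and bad $\left(r,s\right)$-type $P$-paths. Their degrees are $d_{\lambda,1}^{P}+c_p-1$, $d_{\lambda,3}^{P}+c_p$, $d_{\mu,1}^{P}+c_p-2$, and $d_{\mu,3}^{P}+c_p-1$, and their leading coefficients are $\lambda_{1,0}^{P}$, $-\lambda_{3,0}^{P}$, $\mu_{1,0}^{P}$, and $-\mu_{3,0}^{P}$, respectively. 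The one structural fact that drives every sub-case is that $W^{\alpha}$ is a good $\left(r,2\right)$-type $P'$-path when $\alpha<c_p$ and an $s$-switch path (matching the type of $W$) when $\alpha=c_p$; this is exactly the source of the $c_p=2$ versus $c_p>2$ dichotomy throughout the statement.

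First I would handle case \textbf{(a)} ($a_p>1$). Here Proposition~\ref{pnotationentries} \textbf{(b)} \textbf{(iii)} gives $w_{\mu,3}^{P}=0$, so $Y_3$ does not occur and the competition is among $X_1^{1}, X_3^{1}, Y_1^{2}$. Under the hypotheses $d_{\lambda,1}^{P}<d_{\mu,1}^{P}$ and $d_{\lambda,3}^{P}+1\geq d_{\mu,1}^{P}$, the degree of $X_3^{1}$ dominates that of $X_1^{1}$ (since $d_{\lambda,3}^{P}\geq d_{\mu,1}^{P}-1>d_{\lambda,1}^{P}-1$) and that of $Y_1^{2}$ (since $d_{\lambda,3}^{P}+c_p\geq d_{\mu,1}^{P}+c_p-1>d_{\mu,1}^{P}+c_p-2$), so a maximal $q$-resistance $\left(r,2\right)$-type $P'$-path is $X_3^{1}$, which is good because $c_p\geq 2$. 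This yields $d_{\lambda}^{P'}=d_{\lambda,3}^{P}+c_p$ and $\lambda_{0}^{P'}=-\lambda_{3,0}^{P}$. The switch-type $P'$-paths arise from the extensions $W^{c_p}$, which have strictly smaller degree (the $1$-switch paths $X_1^{c_p},Y_1^{c_p}$ have degrees at most $d_{\mu,1}^{P}$ and the $3$-switch path $X_3^{c_p}$ has degree $d_{\lambda,3}^{P}+1$), so $d_{\mu,1}^{P'},d_{\mu,3}^{P'}<d_{\lambda}^{P'}$, establishing $d_{\lambda}^{P'}>\max\{d_{\mu,1}^{P'},d_{\mu,3}^{P'}\}$.

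Next I would treat cases \textbf{(b)} and \textbf{(c)} ($a_p=1$), where Proposition~\ref{pnotationentries} \textbf{(b)} \textbf{(ii)} gives $w_{\lambda,1}^{P}=0$, so $X_1$ drops out and the competition is among $X_3^{1}, Y_1^{2}, Y_3^{2}$. Each listed degree hypothesis pins down which candidates attain the top degree: when the top degrees are unequal a single candidate wins and the conclusion is immediate (e.g.\ $d_{\lambda,3}^{P}+1<d_{\mu,3}^{P}$ in \textbf{(b)} \textbf{(ii)} makes $Y_3^{2}$ the unique winner with leading coefficient $-\mu_{3,0}^{P}$), and when two top degrees coincide (the borderline sub-cases $d_{\lambda,3}^{P}+1=d_{\mu,3}^{P}$ in \textbf{(b)} \textbf{(i)} and $d_{\lambda,3}^{P}+2=d_{\mu,1}^{P}$ in \textbf{(c)} \textbf{(i)}) I would add the two leading coefficients, with the nonvanishing hypotheses $\lambda_{3,0}^{P}+\mu_{3,0}^{P}\neq 0$ and $\mu_{1,0}^{P}-\lambda_{3,0}^{P}\neq 0$ guaranteeing the sum survives. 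The split between $c_p>2$ and $c_p=2$ is governed by the good/switch dichotomy of the opening paragraph: when $c_p=2$ a competitor of the form $Y_s^{2}=Y_s^{c_p}$ becomes an $s$-switch path rather than a good one, so instead of merging with $X_3^{1}$ it contributes to $d_{\mu,1}^{P'}$ or $d_{\mu,3}^{P'}$; this is precisely how the explicit degree equalities (such as $d_{\mu,3}^{P'}=d_{\mu,3}^{P}+1$ and $d_{\mu,1}^{P'}=d_{\mu,1}^{P}$) and the auxiliary leading coefficients $\mu_{1,0}^{P'},\mu_{3,0}^{P'}$ arise.

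The hard part will be the bookkeeping in the borderline degree-coincidence cases: I must track the signs introduced by the $q^{c_p-\alpha}$ versus $-q^{c_p-\alpha+1}$ weight factors so that the combined leading coefficient is exactly the stated signed sum, and I must correctly reclassify the maximal competitor as good or switch-type according to whether its vertex change falls at the first or the last $\sigma_2$ of $\sigma_2^{c_p}$. Once the candidate list and their degree/coefficient data are fixed as above, each sub-case reduces to a routine degree comparison, and the non-cancellation hypotheses are precisely what is needed to conclude $d_{\lambda}^{P'}>\min\{d_{\mu,1}^{P'},d_{\mu,3}^{P'}\}$ (or the sharper stated inequalities) in every instance.
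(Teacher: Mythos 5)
Your proposal is correct and follows essentially the same route as the paper: both enumerate the admissible $\left(r,2\right)$-type $P'$-paths via Proposition~\ref{ppre1/32bothext} \textbf{(i)}, \textbf{(ii)}, \textbf{(iv)}, compare the degrees of the candidates $X_3^{1}$, $Y_1^{2}$, $Y_3^{2}$ (and $X_1^{1}$ in case \textbf{(a)}, where $w_{\mu,3}^{P}=0$), use the non-cancellation hypotheses in the borderline degree-coincidence cases, and derive the $c_p=2$ versus $c_p>2$ dichotomy from the fact that a vertex change at the last $\sigma_2$ produces a switch path rather than a good one. The paper merely compresses cases \textbf{(b)} and \textbf{(c)} by analogy with Proposition~\ref{p1/32extadd} \textbf{(a)}, whereas you carry out the same comparisons explicitly; your candidate degrees, weight factors, and sign bookkeeping all match the paper's conclusions.
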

\begin{proof}
\begin{description}
\item[(a)] Proposition~\ref{ppre1/32bothext} \textbf{(i)}, \textbf{(ii)}, and \textbf{(iv)} and the hypothesis $d_{\lambda,3}^{P}+1\geq d_{\mu,1}^{P}\geq d_{\lambda,1}^{P}$ imply that a maximal $q$-resistance $\left(r,2\right)$-type $P'$-path is of the form $U^{1}$, where $U$ is a maximal $q$-resistance good $\left(r,3\right)$-type $P$-path. Furthermore, $U^{1}$ is a good $\left(r,2\right)$-type $P'$-path since $c_p\geq 2$.
\item[(b)] The proof is similar to the proof of Proposition~\ref{p1/32extadd} \textbf{(a)} \textbf{(i)} - \textbf{(ii)}. Let $Y$ be a maximal $q$-resistance bad $\left(r,1\right)$-type $P$-path, let $U$ be a maximal $q$-resistance good $\left(r,3\right)$-type $P$-path, and let $V$ be a maximal $q$-resistance bad $\left(r,3\right)$-type $P$-path. The hypothesis $d_{\lambda,3}^{P} + 1\geq d_{\mu,1}^{P}$ and Proposition~\ref{ppre1/32bothext} \textbf{(ii)} imply that the $q$-resistance of $Y^{2}$ is strictly less than the $q$-resistance of $U^{1}$ and the $q$-resistance of $V^{2}$. 
\begin{description}
\item[(i) - (ii)] The proofs of the two statements are similar to the proof of Proposition~\ref{p1/32extadd} \textbf{(a)} \textbf{(i)} - \textbf{(ii)} (based on Proposition~\ref{ppre1/32bothext}). 
\end{description}
\item[(c)] The proof is similar to the proof of \textbf{(b)}.
\end{description}
\end{proof}

\subsection{Regularity of subproducts of a positive braid}
\label{subsecproofstrongerversion}

In this subsection, we will establish Theorem~\ref{strongkernelconstraint}. Firstly, we implement a minor modification of the definition of strong regularity (Definition~\ref{defrregular}). We will establish Theorem~\ref{strongkernelconstraint} for $t\neq 2,3$, whereas we established Theorem~\ref{invariantdetect} (the analogue of Theorem~\ref{strongkernelconstraint} in the case where $\sigma$ is a normal braid rather than a weakly normal braid) for $t\neq 2$. The modification accounts for the fact that we now require $t\neq 3$ in addition to $t\neq 2$.

\begin{definition}
\label{defrregularnew}
We modify Definition~\ref{defrregular} as follows. We replace ``$\lambda_{0}^{P}$ is a non-zero integer with only $2$ as a prime factor" with ``$\lambda_{0}^{P}$ is a non-zero integer with at most $2$ and $3$ as prime factors" in \textbf{(a)} \textbf{(i)}, \textbf{(a)} \textbf{(ii)}, and \textbf{(b)} \textbf{(i)}. 

We replace ``$\lambda_{3,0}^{P}$ is a non-zero integer with only $2$ as a prime factor" with ``$\lambda_{3,0}^{P}$ is a non-zero integer with at most $2$ and $3$ as prime factors" in \textbf{(b)} \textbf{(ii)}. We also replace ``$\lambda_{1,0}^{P} + \lambda_{3,0}^{P} = 0$" with ``$\alpha\lambda_{1,0}^{P} + \beta\lambda_{3,0}^{P} = 0$ for some $\alpha,\beta\in \{1,2\}$" in \textbf{(b)} \textbf{(ii)}. 

If $P$ is an $s$-subproduct of $\sigma$, then in the remainder of this paper we write that $P$ is \textit{strongly $r$-regular} if $P$ satisfies this modification of Definition~\ref{defrregular}. 
\end{definition}

Of course, if $P$ is strongly $r$-regular in the sense of Definition~\ref{defrregular}, then $P$ is strongly $r$-regular in the sense of Definition~\ref{defrregularnew}. Furthermore, Corollary~\ref{lemmaroadregular} is still true if ``strongly $r$-regular" is true in the sense of Definition~\ref{defrregularnew} instead of Definition~\ref{defrregular} with the identical proof. We will apply this version of Corollary~\ref{lemmaroadregular} in the sequel.

We now define the property of regularity of subproducts of a positive braid, which is more general than the property of strong regularity (in the sense of Definition~\ref{defrregularnew}). We will show that if $\sigma$ is a weakly normal braid, then regularity is an inductive property for proper subproducts of $\sigma$. We recall that we use Notation~\ref{notssubproduct} in the remainder of this paper.

\begin{definition}
\label{defweakrregular}
If $P$ is an $s$-subproduct of $\sigma$, then we write that $P$ is \textit{$r$-regular} if one of the following conditions \textbf{(a)} (if $s = 2$) or \textbf{(b)} (if $s\in \{1,3\}$) on the weighted numbers of $P$-paths with initial vertex equal to $r$ is satisfied. If one of the following conditions is satisfied, then we also define the \textit{sign of $P$} in some cases and the \textit{discrepancy of $P$} in each case. We denote the discrepancy of $P$ by $\Xi\left(P\right)$. 

\begin{description}
\item[(a)] Let $P$ be a $2$-subproduct. In \textbf{(i)}, we will state the conditions in the case that $c_{p-1}\geq 2$, and in \textbf{(ii)}, we will state the conditions in the case that $c_{p-1} = 1$.

\begin{description}
\item[(i)] Let us assume that $c_{p-1}\geq 2$. In this case, $\lambda_{0}^{P}$ is a non-zero integer with at most $2$ and $3$ as prime factors, and $d_{\lambda}^{P}>\min\{d_{\mu,1}^{P},d_{\mu,3}^{P}\}$. If $d_{\lambda}^{P}\leq d_{\mu,i}^{P}$ for $i\in \{1,3\}$, then $\lambda_{0}^{P}\in \{\pm \mu_{i,0}^{P}\}$ and the \textit{sign of $P$} is the sign of the ratio $\frac{\mu_{i,0}^{P}}{\lambda_{0}^{P}}$. 

The \textit{discrepancy of $P$} is $\Xi\left(P\right) = \max\{d_{\mu,1}^{P},d_{\mu,3}^{P}\} - d_{\lambda}^{P}$. 
\item[(ii)] Let us assume that $c_{p-1} = 1$. If $a_p = 0$, then we define $\psi' = 0$, and if $b_p = 0$, then we define $\psi' = 1$. Firstly, $\lambda_{0}^{P}$ is a non-zero integer with at most $2$ and $3$ as prime factors. Secondly, either $d_{\lambda}^{P}>d_{\mu}^{P}$ or $d_{\nu}^{P} + \psi'\leq d_{\lambda}^{P}$. If $d_{\lambda}^{P}\leq d_{\mu}^{P}$, then $\lambda_{0}^{P}\in \{\pm \mu_{0}^{P}\}$ and the \textit{sign of $P$} is the sign of the ratio $\frac{\mu_{0}^{P}}{\lambda_{0}^{P}}$. If $d_{\nu}^{P} + \psi'>d_{\lambda}^{P}$, then $\lambda_{0}^{P}\in \{\pm \nu_{0}^{P}\}$ and the \textit{sign of $P$} is the sign of the ratio $\left(-1\right)^{1-\psi'}\frac{\nu_{0}^{P}}{\lambda_{0}^{P}}$. 

If $d_{\nu}^{P} + \psi'\leq d_{\lambda}^{P}$, then the \textit{discrepancy of $P$} is $\Xi\left(P\right) = d_{\mu}^{P} - d_{\lambda}^{P}$. If $d_{\nu}^{P}+\psi'>d_{\lambda}^{P}$, then the \textit{discrepancy of $P$} is $\Xi\left(P\right) = d_{\nu}^{P}+\psi'-d_{\lambda}^{P}$. 
\end{description}
\item[(b)] Let $P$ be an $s$-subproduct for $s\in \{1,3\}$. In \textbf{(i)}, we will state the conditions in the case that either $a_p = 0$ or $b_p = 0$, and in \textbf{(ii)}, we will state the conditions in the case that $a_p,b_p>0$. 

\begin{description}
\item[(i)] Let us assume that either $a_p = 0$ or $b_p = 0$. If $a_p = 0$, then we define $\psi' = 0$, and if $b_p = 0$, then we define $\psi' = 1$. Firstly, $\lambda_{0}^{P}$ is a non-zero integer with at most $2$ and $3$ as prime factors. Secondly, either $d_{\lambda}^{P}>d_{\mu}^{P}$ or $d_{\nu}^{P} + 2\psi'+1\leq d_{\lambda}^{P}$. If $d_{\lambda}^{P}\leq d_{\mu}^{P}$, then $\lambda_{0}^{P}\in \{\pm \mu_{0}^{P}\}$ and the \textit{sign of $P$} is the sign of $\frac{\mu_{0}^{P}}{\lambda_{0}^{P}}$. If $d_{\nu}^{P}+2\psi'+1>d_{\lambda}^{P}$, then $\lambda_{0}^{P}\in \{\pm \nu_{0}^{P}\}$ and the \textit{sign of $P$} is the sign of $-\frac{\nu_{0}^{P}}{\lambda_{0}^{P}}$. 

If $d_{\nu}^{P} + 2\psi' + 1\leq d_{\lambda}^{P}$, then the \textit{discrepancy of $P$} is $\Xi\left(P\right) = d_{\mu}^{P} - d_{\lambda}^{P}$. If $d_{\nu}^{P} + 2\psi' + 1 > d_{\lambda}^{P}$, then the \textit{discrepancy of $P$} is $\Xi\left(P\right) = d_{\nu}^{P} + 2\psi' + 1 - d_{\lambda}^{P}$. 
\item[(ii)] Let us assume that $a_p,b_p>0$ throughout. (Note that if $a_p>1$, then $w_{\mu,3}^{P} = 0$, and if $a_p = 1$, then $w_{\lambda,1}^{P} = 0$; see Proposition~\ref{pnotationentries} \textbf{(b)} \textbf{(ii)} - \textbf{(iii)}.) The following conditions are satisfied. We will state the conditions separately in the cases $a_p>1$ and $a_p = 1$. In both cases, one condition is that either $\lambda_{1,0}^{P}$ or $\lambda_{3,0}^{P}$ is a non-zero integer with at most $2$ and $3$ as prime factors.

Firstly, we consider the case $a_p>1$. If $d_{\lambda,1}^{P}<d_{\mu,1}^{P}$, then $d_{\lambda,3}^{P}+1\geq d_{\mu,1}^{P}$ and $\lambda_{3,0}^{P}$ is a non-zero integer with at most $2$ and $3$ as prime factors. If $d_{\lambda,1}^{P}\geq d_{\mu,1}^{P}$, then either $d_{\lambda,1}^{P}> d_{\lambda,3}^{P} + 1$ and $\lambda_{1,0}^{P}$ is a non-zero integer with at most $2$ and $3$ as prime factors, $d_{\lambda,1}^{P} <  d_{\lambda,3}^{P} + 1$ and $\lambda_{3,0}^{P}$ is a non-zero integer with at most $2$ and $3$ as prime factors, or $\alpha\lambda_{1,0}^{P} + \beta\lambda_{3,0}^{P} = 0$ for some $\alpha,\beta\in \{1,2\}$. 

If $\alpha\lambda_{1,0}^{P} + \beta\lambda_{3,0}^{P}\neq 0$ for every $\alpha,\beta\in \{1,2\}$, then the \textit{discrepancy of $P$} is $\Xi\left(P\right) = 0$. If $\alpha\lambda_{1,0}^{P} + \beta\lambda_{3,0}^{P} = 0$ for some $\alpha,\beta\in \{1,2\}$, then the \textit{discrepancy of $P$} is $\Xi\left(P\right) = d_{\mu,1}^{P} - d_{\lambda,1}^{P}$. 
 
Secondly, we consider the case $a_p = 1$. In this case, $d_{\lambda,3}^{P}> \min\{d_{\mu,1}^{P}-2,d_{\mu,3}^{P}\}$. If $d_{\lambda,3}^{P}\leq d_{\mu,1}^{P} - 2$, then $\lambda_{3,0}^{P}\in \{\pm \mu_{1,0}^{P}\}$ and the \textit{sign of $P$} is the sign of $-\frac{\mu_{1,0}^{P}}{\lambda_{3,0}^{P}}$. If $d_{\lambda,3}^{P}\leq d_{\mu,3}^{P}$, then $\lambda_{0}^{P}\in \{\pm \mu_{3,0}^{P}\}$ and the \textit{sign of $P$} is $\frac{\mu_{3,0}^{P}}{\lambda_{3,0}^{P}}$.

The \textit{discrepancy of $P$} is $\Xi\left(P\right) = \max\{d_{\mu,1}^{P}-2,d_{\mu,3}^{P}\} - d_{\lambda,3}^{P}$. 
\end{description}
\end{description}
If the sign of $P$ is positive (resp. negative), then we write that $P$ is \textit{positive} (resp. \textit{negative}). If $B$ is a block in $\sigma$ and if $P$ is the $s$-subproduct immediately preceding $B$, then we will write that \textit{$B$ is an $r$-regular block} if $PB$ (which is an $s'$-subproduct for $s'\neq s$) is an $r$-regular $s'$-subproduct. In this case, the \textit{discrepancy of $B$} is defined to be $\Xi\left(P'\right)$ and we denote it by $\Xi\left(B\right)$.
\end{definition}

\begin{remark}
If $P$ is an $r$-regular subproduct of $\sigma$, then $P$ is strongly $r$-regular if and only if the discrepancy $\Xi\left(P\right) < 0$ (compare Definition~\ref{defrregularnew} with reference to Definition~\ref{defrregular}). 
\end{remark}

We have already studied strongly $r$-regular subproducts of $\sigma$ in Section~\ref{subsecstrongregularity} and we primarily focus here on $r$-regular subproducts $P$ with nonnegative discrepancy $\Xi\left(P\right)\geq 0$. Firstly, we observe that an $r$-regular $s$-subproduct $P$ for $s\in \{1,3\}$ with $a_p>1$ and $b_p>0$ is very close to being strongly $r$-regular.

\begin{proposition}
\label{pregularap>1}
Let $P$ be an $r$-regular $s$-subproduct of $\sigma$ for $s\in \{1,3\}$ and let $P'$ be the minimal $2$-subproduct of $\sigma$ such that $P\subseteq P'$. Let us assume that $a_p>1$ and $b_p>0$. If $t\neq 2,3$ is a prime number, then the leading coefficients of multiple entries in the $r$th row of $\beta_4\left(P'\right)$ are non-zero modulo $t$. If $P'\neq \sigma$, then $P'$ is strongly $r$-regular.
\end{proposition}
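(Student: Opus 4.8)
The plan is to exploit the fact that $P'\setminus P=\sigma_2^{c_p}$ consists only of $\sigma_2$'s, so that two of the three entries in the $r$th row of $\beta_4\left(P'\right)$ are frozen. Since a trailing block of $\sigma_2$'s cannot carry a vertex change into vertex $1$ or $3$, Proposition~\ref{p1/32bothext}~\textbf{(c)} gives that the $\left(r,1\right)$- and $\left(r,3\right)$-entries of $\beta_4\left(P'\right)$ coincide with those of $\beta_4\left(P\right)$; and because $a_p>1$ forces $w_{\mu,3}^{P}=0$ by Proposition~\ref{pnotationentries}~\textbf{(b)}~\textbf{(iii)}, these two entries are $w_{\lambda,1}^{P}+w_{\mu,1}^{P}$ and $w_{\lambda,3}^{P}$. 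All the real work is therefore concentrated in the $\left(r,2\right)$-entry, which is produced by extending $P$-paths across $\sigma_2^{c_p}$.

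First I would isolate the arithmetic input that makes $t\neq 2,3$ indispensable: if $\alpha\lambda_{1,0}^{P}+\beta\lambda_{3,0}^{P}=0$ for some $\alpha,\beta\in\{1,2\}$ and at least one of $\lambda_{1,0}^{P},\lambda_{3,0}^{P}$ is a non-zero integer whose prime factors lie in $\{2,3\}$, then both are such integers, and moreover the combination $\lambda_{1,0}^{P}-\lambda_{3,0}^{P}$ arising from the extension formulas is again a non-zero integer with prime factors in $\{2,3\}$. One checks this by running through the four relations $\lambda_{1,0}^{P}+\lambda_{3,0}^{P}=0$, $\lambda_{1,0}^{P}+2\lambda_{3,0}^{P}=0$, $2\lambda_{1,0}^{P}+\lambda_{3,0}^{P}=0$, $2\lambda_{1,0}^{P}+2\lambda_{3,0}^{P}=0$, which turn $\lambda_{1,0}^{P}-\lambda_{3,0}^{P}$ into $-2\lambda_{3,0}^{P}$, $-3\lambda_{3,0}^{P}$, $3\lambda_{1,0}^{P}$, $-2\lambda_{3,0}^{P}$ respectively, and by noting that $\lambda_{1,0}^{P}-\lambda_{3,0}^{P}=0$ would force $\lambda_{1,0}^{P}=\lambda_{3,0}^{P}=0$. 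This elementary lemma is exactly what lets the leading coefficient of the $\left(r,2\right)$-entry survive reduction modulo $t$ when the two feeding entries have equal degree.

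Then I would compute the $\left(r,2\right)$-entry, splitting on $c_p$. When $c_p\geq 2$ and $d_{\lambda,1}^{P}\geq d_{\mu,1}^{P}$, Proposition~\ref{p1/32bothext}~\textbf{(a)}~\textbf{(i)} yields $\lambda_{0}^{P'}$ equal to $\lambda_{1,0}^{P}$, $-\lambda_{3,0}^{P}$, or $\lambda_{1,0}^{P}-\lambda_{3,0}^{P}$ according to the comparison of $d_{\lambda,1}^{P}$ with $d_{\lambda,3}^{P}+1$; the hypothesis that $P$ is $r$-regular (Definition~\ref{defweakrregular}~\textbf{(b)}~\textbf{(ii)}) pins down which of $\lambda_{1,0}^{P},\lambda_{3,0}^{P}$ is good, and in the borderline case $d_{\lambda,1}^{P}=d_{\lambda,3}^{P}+1$ the $r$-regularity forces a vanishing combination, so the arithmetic lemma applies to $\lambda_{1,0}^{P}-\lambda_{3,0}^{P}$. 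When $c_p\geq 2$ and $d_{\lambda,1}^{P}<d_{\mu,1}^{P}$, $r$-regularity supplies $d_{\lambda,3}^{P}+1\geq d_{\mu,1}^{P}$ with $\lambda_{3,0}^{P}$ good, so Proposition~\ref{p1/32bothextadd}~\textbf{(a)} gives $d_{\lambda}^{P'}>\max\{d_{\mu,1}^{P'},d_{\mu,3}^{P'}\}$ and $\lambda_{0}^{P'}=-\lambda_{3,0}^{P}$. In both subcases $P'$ satisfies Definition~\ref{defrregular}~\textbf{(a)}~\textbf{(i)}, hence is strongly $r$-regular, and the $\left(r,2\right)$-entry is non-zero modulo $t$. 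When $c_p=1$ (so $b_{p+1}=0$ by Lemma~\ref{minimalform}~\textbf{(ii)}), Proposition~\ref{p1/32bothext}~\textbf{(a)}~\textbf{(ii)} gives the $\left(r,2\right)$-entry outright as $w_{\lambda,1}^{P}-qw_{\lambda,3}^{P}$, whose leading coefficient I would analyse by the same degree comparison, and I would verify the strong $r$-regularity of $P'$ against the $c_{p-1}=1$ clause of Definition~\ref{defrregular}~\textbf{(a)}~\textbf{(ii)} using Proposition~\ref{pnotationentries} and the extension formulas for the auxiliary quantities $w_{\mu}^{P'}$ and $w_{\nu}^{P'}$.

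Finally, the ``multiple entries'' assertion comes from pairing the $\left(r,2\right)$-entry with a second surviving entry among the frozen $\left(r,1\right)$- and $\left(r,3\right)$-entries. Whenever $\lambda_{3,0}^{P}$ is good — every case except $d_{\lambda,1}^{P}>d_{\lambda,3}^{P}+1$ together with $d_{\lambda,1}^{P}\geq d_{\mu,1}^{P}$ — the frozen entry $w_{\lambda,3}^{P}$ is a non-zero polynomial with leading coefficient non-zero modulo $t$; in the remaining exceptional configuration $\lambda_{1,0}^{P}$ is good and one reads the second entry off $w_{\lambda,1}^{P}+w_{\mu,1}^{P}$. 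The step I expect to be the main obstacle is precisely the control of this exceptional configuration and of the borderline degree equalities (for instance $d_{\lambda,1}^{P}=d_{\mu,1}^{P}$ with $\lambda_{1,0}^{P}+\mu_{1,0}^{P}=0$), where one must rule out a simultaneous cancellation of leading coefficients in the candidate second entry; this is where the full chain of comparison inequalities packaged into the definition of $r$-regularity, rather than any single one of them, has to be invoked, and where the restriction $t\neq 2,3$ is genuinely used through the arithmetic lemma.
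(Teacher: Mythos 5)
Your proposal is correct and follows essentially the same route as the paper: the paper's entire proof of this proposition is the citation of Proposition~\ref{p1/32bothext} \textbf{(a)} \textbf{(i)} and Proposition~\ref{p1/32bothextadd} \textbf{(a)}, which is exactly the case split you carry out — the frozen $\left(r,1\right)$- and $\left(r,3\right)$-entries via Proposition~\ref{p1/32bothext} \textbf{(c)} and Proposition~\ref{pnotationentries} \textbf{(b)} \textbf{(iii)}, the degree comparisons dictated by Definition~\ref{defweakrregular} \textbf{(b)} \textbf{(ii)}, and the $\alpha\lambda_{1,0}^{P}+\beta\lambda_{3,0}^{P}=0$ arithmetic that is precisely where $t\neq 2,3$ enters. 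The borderline configurations you flag at the end (the $c_p=1$ case, which the paper's citation of the $c_p\geq 2$ statements does not cover, and a possible cancellation $\lambda_{1,0}^{P}+\mu_{1,0}^{P}\equiv 0 \pmod t$ in the candidate second entry when $d_{\lambda,1}^{P}=d_{\mu,1}^{P}$) are left equally implicit in the paper, so your write-up is, if anything, more careful than the paper's own proof.
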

\begin{proof}
The statement is a consequence of Proposition~\ref{p1/32bothext} \textbf{(a)} \textbf{(i)} and Proposition~\ref{p1/32bothextadd} \textbf{(a)}. 
\end{proof}

We introduce the following categorization of $r$-regular subproducts.

\begin{definition}
Let $P$ be an $r$-regular $s$-subproduct of $\sigma$ such that $\Xi\left(P\right)\geq 0$. 
\begin{description}
\item[(a)] Let us consider the case $s = 2$. 
\begin{description}
\item[(i)] Let us assume that $c_{p-1}\geq 2$. If $d_{\lambda}^{P}\leq d_{\mu,i}^{P}$ for $i\in \{1,3\}$, then we write that $P$ is an \textit{$i$-switch $\mu$ $r$-regular subproduct}. 
\item[(ii)] Let us assume that $c_{p-1} = 1$. If $a_p = 0$, then we define $i = 3$ and $i' = 1$, and if $b_p = 0$, then we define $i = 1$ and $i' = 3$. If $d_{\lambda}^{P}\leq d_{\mu}^{P}$, then we write that $P$ is an \textit{$i$-switch $\mu$ $r$-regular subproduct}. If $d_{\nu}^{P} + \psi'>d_{\lambda}^{P}$ (where $\psi'$ is as in Definition~\ref{defweakrregular} \textbf{(a)} \textbf{(ii)}), then we write that $P$ is an \textit{$i'$-switch $\nu$ $r$-regular subproduct}.
\end{description}
\item[(b)] Let us consider the case $s\in \{1,3\}$.
\begin{description}
\item[(i)] Let us assume that either $a_p = 0$ or $b_p = 0$. If $a_p = 0$, then we define $i = 3$ and $i' = 1$, and if $b_p = 0$, then we define $i = 1$ and $i' = 3$. If $d_{\lambda}^{P}\leq d_{\mu}^{P}$, then we write that $P$ is an \textit{$i$-switch $\mu$ $r$-regular subproduct}. If $d_{\nu}^{P} + 2\psi'+1> d_{\lambda}^{P}$ (where $\psi'$ is as in Definition~\ref{defweakrregular} \textbf{(b)} \textbf{(i)}), then we write that $P$ is an \textit{$i'$-switch $\nu$ $r$-regular subproduct}. 
\item[(ii)] Let us assume that $a_p = 1 = b_p$. If $d_{\lambda,3}^{P}\leq d_{\mu,1}^{P}-2$, then we write that $P$ is a \textit{$1$-switch $\mu$ $r$-regular subproduct}. If $d_{\lambda,3}^{P}\leq d_{\mu,3}^{P}$, then we write that $P$ is \textit{$3$-switch $\mu$ $r$-regular subproduct}. 
\end{description}
\end{description}
\end{definition}

\subsubsection{Regularity is generically an inductive property along roads}

Let $P\subseteq P'$, where $P$ is an $s$-subproduct of $\sigma$ and $P'$ is an $s'$-subproduct of $\sigma$. If $P$ is $r$-regular, then we will determine conditions on $P'\setminus P$ that imply that $P'$ is $r$-regular, and determine the change in the discrepancy $\Xi\left(P'\right) - \Xi\left(P\right)$ under these conditions. If $\sigma$ is a weakly normal braid, then these conditions are always satisfied.

Firstly, we define an adjustment to the weight of a bad subroad. If $P'\setminus P$ is a bad subroad, then we will use this in order to establish a formula relating the change in discrepancy to $\omega\left(P'\setminus P\right)$ (the weight of the bad subroad $P'\setminus P$).

\begin{definition}
Let $R$ be an $i_0$-switch bad subroad and let $R = \prod_{j=1}^{k} R_j$ be the product decomposition of $R$ into elementary subroads. We define the \textit{increment of $R$} as follows and denote it by $\iota\left(R\right)$.
\begin{description}
\item[(a)] Let us consider the case where $R_k$ is a non-isolated $\sigma_2$ subroad.
\begin{description}
\item[(i)] If either $R_k$ does not end with $\sigma_1\sigma_3$ or $R_k$ is a $3$-terminal bad subroad, then we define $\iota\left(R\right) = 0$.
\item[(ii)] If $R_k$ ends with $\sigma_1\sigma_3$ and $R_k$ is a $1$-terminal bad subroad, then we define $\iota\left(R\right) = 1$.
\end{description}
\item[(b)] Let us consider the case where $R_k$ is an $i$-initial isolated $\sigma_2$ subroad for $i\in \{1,3\}$, and let $i'$ be such that $\{i,i'\} = \{1,3\}$. 
\begin{description}
\item[(i)] If $R_k$ is an $i$-initial isolated $\sigma_2$ subroad of $R$ that does not end with $\sigma_{i'}$, then we define $\iota\left(R\right) = 0$.
\item[(ii)] If $R_k$ is an $i$-initial isolated $\sigma_2$ subroad of $R$ that ends with $\sigma_{i'}$, then we define $\iota\left(R\right) = -2$.
\end{description}
\end{description}
\end{definition}

We now establish fundamental statements concerning the property of $r$-regularity and bad subroads.

\begin{lemma}
\label{lweightcompute}
Let $P$ be an $i$-switch $\mu$ $r$-regular $s$-subproduct of $\sigma$ such that $\Xi\left(P\right)>0$, and if $i = 1$, then $P$ does not end with $\sigma_1\sigma_3$. Let $P'$ be an $s'$-subproduct of $\sigma$ such that $P\subseteq P'$. Let us assume that $P'\setminus P$ is an $i$-switch bad subroad and $P'\neq \sigma$. Let $M$ be the number of $\sigma_2$s in isolated $\sigma_2$ subroads of $P'\setminus P$ and let $t\neq 2,3$ be a prime number. The following statements are true:

\begin{description}
\item[(a)] Let us consider the case $\omega\left(P'\setminus P\right) + \iota\left(P'\setminus P\right) \leq \Xi\left(P\right)$. Firstly, $P'$ is $r$-regular.  
\begin{description}
\item[(i)] We have $\Xi\left(P'\right) = \Xi\left(P\right) - \omega\left(P'\setminus P\right) - \iota\left(P'\setminus P\right)$.
\item[(ii)] If $P'\setminus P$ is an $i'$-terminal bad subroad, then $P'$ is $i'$-switch $r$-regular.
\item[(iii)] If $M$ is even, then $P'$ is $\mu$-switch $r$-regular. if $M$ is odd, then $P'$ is $\nu$-switch $r$-regular.
\item[(iv)] If $M\equiv 0,3 \pmod 4$, then the sign of $P'$ is equal to the sign of $P$. If $M\equiv 1,2\pmod 4$, then the sign of $P'$ is the opposite of the sign of $P$.
\end{description}
\item[(b)] If $\omega\left(P'\setminus P\right) + \iota\left(P'\setminus P\right)>\Xi\left(P\right)$, then $P'$ is strongly $r$-regular. 
\end{description}
\end{lemma}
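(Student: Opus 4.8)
The plan is to track, as a maximal $q$-resistance good $P$-path and a maximal $q$-resistance bad $P$-path are both extended along $P'\setminus P$, the gap between their $q$-resistances, and to show that this gap shrinks by exactly $\omega\left(P'\setminus P\right)+\iota\left(P'\setminus P\right)$. First I would fix a maximal $q$-resistance good $\left(r,s\right)$-type $P$-path $X$, of degree $d_{\lambda}^{P}$, and a maximal $q$-resistance bad $P$-path $Y$ (either an $i$-switch $\left(r,2\right)$-type $P$-path or an $\left(r,i\right)$-type $P$-path, according to $s$), whose degree exceeds that of $X$ by $\Xi\left(P\right)>0$, since $P$ is $i$-switch $\mu$ $r$-regular. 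Because $P'\neq \sigma$ and $P'\setminus P$ is a subroad, Remark~\ref{remarkexponentone} gives that the last exponent in $P'\setminus P$ is greater than $1$; hence, as $P'\setminus P$ is an $i$-switch bad subroad, Corollary~\ref{cuniquebadsubroad} (via Proposition~\ref{pbadsubroad}) supplies a unique extension of $Y$ to a $P'$-path $Y'$, whereas $X$ is good and extends freely by Lemma~\ref{lemmagoodext} \textbf{(i)}, so I take $X'$ to be a maximal $q$-resistance extension of $X$.

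The core of the argument is an induction over the factorization of $P'\setminus P$ into elementary subroads $\prod_{j=1}^{k}R_j$. At each factor I would apply the explicit weight formulas of Proposition~\ref{pelementarybadsubroadimpliesuniqueextension} to the unique bad extension, and the maximal-$q$-resistance statements of Proposition~\ref{ppre21/3ext}, Proposition~\ref{ppre21/3bothext}, and Proposition~\ref{ppre1/32bothext} (together with their addenda Propositions~\ref{p21/3extadd}, \ref{p21/3bothextadd}, \ref{p1/32extadd}, and \ref{p1/32bothextadd}) to the good extension, thereby computing at each step three data: the decrement in the good-minus-bad degree gap, the sign picked up, and the updated switch type. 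Summing the decrements over the factors recovers the definition of $\omega$ in Definition~\ref{defwidth}, with the correction $\iota$ absorbing the boundary contribution of the final factor (whether it ends in $\sigma_1\sigma_3$ or in a $\sigma_{i'}$); this yields residual gap $\Xi\left(P\right)-\omega\left(P'\setminus P\right)-\iota\left(P'\setminus P\right)$, which is the content of \textbf{(a)} \textbf{(i)}. The remaining claims are then read off: the terminal type of $P'\setminus P$ determines whether the residual bad $P'$-path is $i'$-switch, via Proposition~\ref{pterminalbadsubroad}, giving \textbf{(ii)}; the parity of $M$ decides whether $P'$ ends in a $\mu$- or $\nu$-switch position, giving \textbf{(iii)}; and the accumulated sign, which depends on $M$ modulo $4$ (flipping exactly for $M\equiv 1,2\pmod 4$), gives \textbf{(iv)}.

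Finally, for \textbf{(b)}, if $\omega\left(P'\setminus P\right)+\iota\left(P'\setminus P\right)>\Xi\left(P\right)$ then the good extension $X'$ overtakes $Y'$ in $q$-resistance, so a maximal $q$-resistance $\left(r,\cdot\right)$-type $P'$-path is good; by Lemma~\ref{lemmagoodext} \textbf{(i)} its extensions carry no further obstruction, the discrepancy $\Xi\left(P'\right)$ is negative, and hence $P'$ is strongly $r$-regular in the sense of Definition~\ref{defrregularnew} (cf. the Remark following Definition~\ref{defweakrregular}). The main obstacle will be the bookkeeping in the inductive step: I must verify that the maximal $q$-resistance good extension stays good at every intermediate factor (this uses that the interior exponents of an elementary bad subroad are at least $2$, by Remark~\ref{remarkexponentone}, so that Lemma~\ref{lemmagoodext} \textbf{(i)} keeps applying), that the unique bad extension never acquires higher $q$-resistance than predicted by Proposition~\ref{pelementarybadsubroadimpliesuniqueextension}, and that the sign and switch-type updates remain consistent across the junctions between consecutive factors $R_j$ and $R_{j+1}$ — precisely the compatibility conditions built into Definition~\ref{defbadsubroad} and Definition~\ref{defiswitchbadsubroad}. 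Treating the boundary factors (the first factor, where the $i$-switch hypothesis and the condition that $P$ does not end in $\sigma_1\sigma_3$ when $i=1$ enter, and the last factor, where $\iota$ enters) separately from the generic interior factors will require the most care.
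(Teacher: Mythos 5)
Your proposal is correct and follows essentially the same route as the paper's proof: both factor $P'\setminus P$ into elementary bad subroads and induct over the factors, using Proposition~\ref{pelementarybadsubroadimpliesuniqueextension} for the unique bad extension and Lemma~\ref{lemmagoodext} \textbf{(i)} for the maximal $q$-resistance good extension, tracking at each factor the decrement in the degree gap (with the $\sigma_1\sigma_3$/$\sigma_{i'}$ boundary corrections), the sign, and the switch type. The paper performs exactly the bookkeeping you flag as the main obstacle, splitting by elementary-subroad type ($i_j$-constant, alternating, isolated $\sigma_2$) and recording the discrepancy correction at intermediate subproducts ending in $\sigma_1\sigma_3$, so your plan matches its structure.
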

\begin{proof}
The proof that $P$ is $r$-regular (if $\omega\left(P'\setminus P\right) + \iota\left(P'\setminus P\right)\leq \Xi\left(P\right)$) and strongly $r$-regular (if $\omega\left(P'\setminus P\right) + \iota\left(P'\setminus P\right) > \Xi\left(P\right)$) is based on the theory of path extensions in Subsection~\ref{subsecpathextensions} and Subsection~\ref{subsecpathextensionsII}. The proof is analogous to the proof of Corollary~\ref{lemmaroadregular} in Section~\ref{mains}. 

Let us establish the statements \textbf{(a)} \textbf{(i)} - \textbf{(iv)}. Let $R = P'\setminus P$. If $R$ is an $i$-switch bad subroad, then there is a finite chain $P = P_0\subsetneq P_1\subsetneq \cdots \subsetneq P_k = P'$ with the property that $P_j\setminus P_{j-1}$ is an $i_{j-1}$-switch elementary bad subroad for $1\leq j\leq k$, where $P_j$ is an $s_j$-subproduct of $\sigma$ and $i_0 = i$. We use induction on $j$ to establish that the conclusion is true with $P'$ replaced by $P_j$ and that $P_j$ is an $i_j$-switch $r$-regular subproduct. The conclusion is certainly true with $P'$ replaced by $P_0 = P$ and $P_0 = P$ is an $i_0$-switch $r$-regular subproduct. Let us assume that the conclusion is true with $P'$ replaced by $P_j$ and that $P_j$ is an $i_j$-switch $r$-regular subproduct. If $1\leq j\leq k-1$, then we will show that the conclusion is true with $P'$ replaced by $P_{j+1}$ and that $P_{j+1}$ is an $i_{j+1}$-switch $r$-regular subproduct. If $1\leq j\leq k-1$, then note that $P_j$ is not a $\nu$ $r$-regular subproduct by Definition~\ref{defbadsubroad}.

Let $X_j$ be a maximal $q$-resistance good $\left(r,s_j\right)$-type $P_j$-path. If $s_j\in \{1,3\}$, then let $W_j$ be a maximal $q$-resistance bad $\left(r,i_j\right)$-type $P_j$-path, and if $s_j = 2$, then let $W_j$ be a maximal $q$-resistance $i_j$-switch bad $\left(r,2\right)$-type $P_j$-path. If either $i_j = 3$ or $P_j$ does not end with $\sigma_1\sigma_3$, then $\Xi\left(P_j\right)$ is equal to the difference between the $q$-resistance of $W_j$ and the $q$-resistance of $X_j$. If $i_j = 1$ and $P_j$ ends with $\sigma_1\sigma_3$, then $\Xi\left(P_j\right)$ is equal to two less than this difference. Let $X_j'$ be a maximal $q$-resistance extension of $X_j$ to a $P_{j+1}$-path and let $W_{j}'$ be a maximal $q$-resistance extension of $W_j$ to a $P_{j+1}$-path. Proposition~\ref{pelementarybadsubroadimpliesuniqueextension} characterizes $W_j'$ according to the type of the $i_j$-switch elementary bad subroad $R_{j} = P_{j+1}\setminus P_{j}$. We now characterize $X_j'$, compute the difference between the $q$-resistance of $W_j'$ and the $q$-resistance of $X_j'$, and determine $\Xi\left(P_{j+1}\right)$ based on this. We split into cases according to the type of the $i_j$-switch elementary bad subroad $R_j$.

Let $R_j$ be an $i_j$-constant bad subroad. Lemma~\ref{lemmagoodext} \textbf{(i)} implies that the $P_{j+1}$-path $X_{j}'$ is defined by extending $X_j$ by vertex changes at every opportunity (by $i_j\to 2$ vertex changes at the first $\sigma_2$ in every power of $\sigma_2$ and $2\to i_j$ vertex changes at the first $\sigma_{i_j}$ in every power of $\sigma_{i_j}$). If $i_j = 1$ and $P_{j}$ ends with $\sigma_1\sigma_3$, then the difference $\Xi\left(P_j\right) - \Xi\left(P_{j+1}\right)$ is $\omega\left(R_j\right) - 1$ (one less than the number of squares in $R_j$). If either $i_j = 3$ or $P_j$ does not end with $\sigma_1\sigma_3$, then the difference $\Xi\left(P_j\right) - \Xi\left(P_{j+1}\right)$ is $\omega\left(R_j\right)$ (the number of squares in $R_j$). In both cases, the conclusion is true with $P'$ replaced by $P_{j+1}$ and $P_{j+1}$ is an $i_{j+1}$-switch $r$-regular subproduct (with $i_{j+1} = i_j$).

Let $R_j$ be an alternating bad subroad. Lemma~\ref{lemmagoodext} \textbf{(i)} implies that the $P_{j+1}$-path $X_j'$ is defined by extending $X_j$ by $2\to 3$ vertex changes at every $\sigma_3$ and $3\to 2$ vertex changes at the first $\sigma_2$ in every power of $\sigma_2$. The difference $\Xi\left(P_{j}\right) - \Xi\left(P_{j+1}\right)$ is $\omega\left(R_j\right) + \iota\left(R_j\right)$ (the number of squares in $R_j$ if either $P_{j+1}$ does not end with $\sigma_1\sigma_3$ or $R_j$ is $3$-terminal, or one more than the number of squares in $R_j$ if $P_{j+1}$ ends with $\sigma_1\sigma_3$ and $R_j$ is $1$-terminal). In this case, we deduce that the conclusion is true with $P'$ replaced by $P_{j+1}$ and $P_{j+1}$ is an $i_{j+1}$-switch $r$-regular subproduct (with $i_{j+1} = i_j$ if there is an even number of $\sigma_1\sigma_3$s in $R_j$ and $i_j\neq i_{j+1}\in \{1,3\}$ if there is an odd number of $\sigma_1\sigma_3$s in $R_j$).

If $R_j$ is an $i_j$-initial isolated $\sigma_2$ subroad, then Lemma~\ref{lemmagoodext} \textbf{(i)} implies that the $P_{j+1}$-path $X_j'$ is defined by extending $X_j$ by vertex changes at every opportunity (if $i_j'$ is such that $\{i_j,i_j'\} = \{1,3\}$, then $i_j\to 2$ vertex changes at every odd-numbered $\sigma_2$ in $R_j$, $i_j'\to 2$ vertex changes at every even-numbered $\sigma_2$ in $R_j$, $2\to 1$ vertex changes at the first $\sigma_1$ in every power of $\sigma_1$, and $2\to 3$ vertex changes at the first $\sigma_3$ in every power of $\sigma_3$). If $P_{j+1}$ does not end with $\sigma_{i_j'}$, then the difference $\Xi\left(P_j\right) - \Xi\left(P_{j+1}\right)$ is $\omega\left(R_j\right)$. If $P_{j+1}$ ends with $\sigma_{i_j'}$, then $j+1 = k$ and the difference $\Xi\left(P_j\right)-\Xi\left(P_{j+1}\right)$ is $\omega\left(R_j\right) + \iota\left(R\right)$ (where $\iota\left(R\right) = -2$). In both cases, the conclusion is true with $P'$ replaced by $P_{j+1}$ and $P_{j+1}$ is an $i_{j+1}$-switch $r$-regular subproduct (with $i_{j+1} = i_j$).

We have established the statement in all cases by induction on $j$.
\end{proof}

We now establish a complementary statement to Lemma~\ref{lweightcompute}. Let $P\subseteq P'$, where $P$ is an $s$-subproduct of $\sigma$ and $P'$ is an $s'$-subproduct of $\sigma$. If $P$ is an $i$-switch $r$-regular subproduct and $P'\setminus P$ is not an $i$-switch bad subroad, then we will establish that $P'$ is strongly $r$-regular and the leading coefficients of multiple entries in the $r$th row of $\beta_4\left(P'\right)$ are non-zero modulo each prime $t\neq 2,3$ in generic situations. In analogy with Example~\ref{exglobalcancellation}, the generic situations correpond to the case where catchup does not occur along the maximal $i$-switch bad subroad $R$ following $P$ (i.e., there is not an exact linear relationship between the discrepancy of $P$ and the weight of $R$). If $\sigma$ is a weakly normal braid, then this is always true. 

\begin{lemma}
\label{luniqueextensionimpliesbadsubroad}
Let $P$ be an $i$-switch $r$-regular $s$-subproduct of $\sigma$ such that $\Xi\left(P\right)\geq 0$. Let $P'$ be the minimal $s'$-subproduct of $\sigma$ that properly contains $P$. Let $t\neq 2,3$ be a prime number. Let us consider one of the following conditions \textbf{(a)} (if $s = 2$) or \textbf{(b)} (if $s\in \{1,3\}$) on $P$. 
\begin{description}
\item[(a)] Let $P$ be a $2$-subproduct. 
\begin{description}
\item[(i)] Let us consider the case where $c_{p-1}\geq 2$. In this case, either $a_p = 0$ or $b_p = 0$. If either $a_p = 0$ and $i = 3$, or $b_p = 0$ and $i = 1$, then $\Xi\left(P\right)\neq 1$. If either $a_p = 0$ and $i = 1$, or $b_p = 0$ and $i = 3$, then $\Xi\left(P\right)\neq 0$. 
\item[(ii)] Let us consider the case where $c_{p-1} = 1$. If $P$ is $\mu$ $r$-regular, then $\Xi\left(P\right)\neq 1$.
\item[(iii)] Let us consider the case where $c_{p-1}\geq 2$. In this case $a_p,b_p>0$ and $\Xi\left(P\right)\not\in \{0,1\}$.
\end{description}
\item[(b)] Let $P$ be an $s$-subproduct for $s\in \{1,3\}$.
\begin{description}
\item[(i)] Let us consider the case where either $a_p = 0$ or $b_p = 0$. In this case, $P$ is $\mu$ $r$-regular and $\Xi\left(P\right)\neq 1$.
\item[(ii)] Let us consider the case where either $a_p = 0$ or $b_p = 0$. In this case, $P$ is $\nu$ $r$-regular and $\Xi\left(P\right)\neq 2$.
\item[(iii)] Let us consider the case where $a_p>0$ and $b_p>0$. If $a_p=1$ and $P$ is $1$-switch $r$-regular, then $\Xi\left(P\right)\neq 0$. If $a_p = 1$ and $P$ is $3$-switch $r$-regular, then $\Xi\left(P\right)\neq 1$.
\end{description}
\end{description}
If one of the above conditions is satisfied, and we also require $\Xi\left(P\right)\neq 0$ in \textbf{(a)} \textbf{(i)} - \textbf{(ii)} and \textbf{(b)} \textbf{(i)}, then the leading coefficients of multiple entries in the $r$th row of $\beta_4\left(P'\right)$ are non-zero modulo $t$. 

If one of the above conditions except \textbf{(a)} \textbf{(iii)} is satisfied, $P'\neq \sigma$, and $P'\setminus P$ is not an $i$-switch bad subroad, then $P'$ is strongly $r$-regular. If \textbf{(a)} \textbf{(iii)} is satisfied, then $P'$ is $r$-regular. 
\end{lemma}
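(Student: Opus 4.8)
The plan is to mirror the proof of Corollary~\ref{lemmaroadregular} (via Lemma~\ref{lemma2road} and Lemma~\ref{lemma13road}), but to replace each appeal to the strong-regularity path-extension results of Subsection~\ref{subsecpathextensions} by the corresponding addenda of Subsection~\ref{subsecpathextensionsII}, which are precisely the generalizations that apply when a maximal $q$-resistance $P$-path is bad rather than good. Since $P'$ is the \emph{minimal} $s'$-subproduct properly containing $P$, the difference $P'\setminus P$ is a single block-free step: either $\sigma_1^{a_p}\sigma_3^{b_p}$ (when $s=2$ and $s'\in\{1,3\}$) or $\sigma_2^{c_p}$ (when $s\in\{1,3\}$ and $s'=2$). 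I would therefore split into the six cases \textbf{(a)}~\textbf{(i)}--\textbf{(iii)} and \textbf{(b)}~\textbf{(i)}--\textbf{(iii)} of the statement, matching each to the structural hypotheses ($c_{p-1}\geq 2$ versus $c_{p-1}=1$, and $a_pb_p=0$ versus $a_p,b_p>0$) under which the relevant addendum is stated, and within each case handling the $\mu$-switch branch (a bad path dominates) and the $\nu$-switch branch (a different-type path dominates) using the $d_{\nu}^{P}$ comparisons built into Proposition~\ref{p21/3extadd}, Proposition~\ref{p1/32extadd}, and Proposition~\ref{p1/32bothextadd}.

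For the first conclusion I would produce two entries in the $r$th row of $\beta_4\left(P'\right)$ whose leading coefficients are nonzero integers with only $2$ and $3$ as prime factors; since $t\neq 2,3$, such an integer is nonzero modulo $t$. One such entry is always the entry preserved by the step: each of Proposition~\ref{p21/3ext}~\textbf{(iv)}, Proposition~\ref{p21/3bothext}~\textbf{(b)}, Proposition~\ref{p1/32ext}~\textbf{(b)}, and Proposition~\ref{p1/32bothext}~\textbf{(c)} shows that the $\left(r,s\right)$-entry of $\beta_4\left(P'\right)$ equals the $\left(r,s\right)$-entry of $\beta_4\left(P\right)$, whose leading coefficient is controlled by the $r$-regularity of $P$ through Proposition~\ref{pnotationentries}. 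The second entry is the new $\left(r,s'\right)$-entry, whose leading coefficient I would read off from the degree and leading-coefficient formulas of the relevant addendum, together with Proposition~\ref{m=ap}. The extra hypothesis $\Xi\left(P\right)\neq 0$ imposed in \textbf{(a)}~\textbf{(i)}--\textbf{(ii)} and \textbf{(b)}~\textbf{(i)} is precisely what guarantees that the dominant maximal $q$-resistance path is \emph{strictly} dominant, so that its noncancelling weight survives into the leading term; when $\Xi\left(P\right)=0$ a distinct path of equal $q$-resistance can cancel it, which is the terminal-cancellation phenomenon excluded elsewhere.

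For the second conclusion I would use that $P'\setminus P$ being a single power, \emph{not} an $i$-switch bad subroad, forces either the exponent of the appended generator to exceed $2$ or its switch direction to be incompatible with $i$ (the one-factor converse of Corollary~\ref{cuniquebadsubroad}). In either situation the unique bad extension of a maximal $q$-resistance bad $P$-path fails to persist: the extension instead becomes \emph{good}, since an extension at the second or later generator is always admissible by Lemma~\ref{lemmagoodext}~\textbf{(i)}--\textbf{(iii)}, and because good paths accumulate $q$-resistance faster it strictly dominates. Concretely, the degree inequalities in the addenda collapse to $d_{\lambda}^{P'}>\max\{d_{\mu,1}^{P'},d_{\mu,3}^{P'}\}$ with a leading coefficient of the required arithmetic type, i.e.\ $\Xi\left(P'\right)<0$, which is strong $r$-regularity in the sense of Definition~\ref{defrregularnew}. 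For the third conclusion, case \textbf{(a)}~\textbf{(iii)} ($c_{p-1}\geq 2$, $a_p,b_p>0$, $\Xi\left(P\right)\notin\{0,1\}$), I would apply Proposition~\ref{p21/3bothextadd} directly: its formulas verify every clause of Definition~\ref{defweakrregular}~\textbf{(b)}~\textbf{(ii)}, and the excluded values $0,1$ of $\Xi\left(P\right)$ are exactly those for which the formulas would instead yield a cancellation or an inadmissible tie.

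The hard part will be the boundary subcases in which a maximal good path and a maximal bad (or $\nu$-type) path reach $P$ with $q$-resistances differing by exactly the critical amount, so that their extensions tie and their leading coefficients add. The addenda then require nonvanishing of sums such as $\lambda_0^{P}+\mu_0^{P}$, $\lambda_0^{P}-\nu_0^{P}$, or $\alpha\lambda_{1,0}^{P}+\beta\lambda_{3,0}^{P}$; I must check that the $r$-regularity hypotheses on $P$, which constrain each of these coefficients to be $\pm$ an integer with only $2$ and $3$ as prime factors and with prescribed relative signs, force these sums to be nonzero \emph{and} to remain integers with only $2$ and $3$ as prime factors. This is exactly where $t\neq 3$ enters and where the argument genuinely departs from the $t\neq 2$ analysis of Theorem~\ref{invariantdetect}: a tie of two equal-degree paths can contribute a combined leading coefficient divisible by $3$ (for instance $1+2$), which would vanish modulo $3$, so the mod-$3$ exclusion is essential to keep the surviving coefficient nonzero modulo $t$. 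Systematically matching each exotic subcase of the addenda to the corresponding clause of Definition~\ref{defweakrregular} and confirming the arithmetic type of the leading coefficient throughout is the bulk of the work.
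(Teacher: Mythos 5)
Your proposal is correct and follows essentially the same route as the paper's proof: the mod-$t$ statement is obtained by mirroring Corollary~\ref{lemmaroadregular} with the Subsection~\ref{subsecpathextensionsII} addenda substituted for the strong-regularity extension results, and the regularity conclusions are obtained by the same case analysis on \textbf{(a)}~\textbf{(i)}--\textbf{(iii)} and \textbf{(b)}~\textbf{(i)}--\textbf{(iii)}, using the non-$i$-switch-bad-subroad hypothesis to force either a large exponent or an incompatible switch direction and then invoking Propositions~\ref{p21/3ext}, \ref{p21/3extadd}, \ref{p21/3bothextadd}, \ref{p1/32ext}, \ref{p1/32extadd}, \ref{p1/32bothext}, \ref{p1/32bothextadd}, and \ref{pregularap>1} exactly as the paper does. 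Your identification of the boundary ties (where sums of leading coefficients such as $\lambda_{0}^{P}+\mu_{0}^{P}$ or $\lambda_{1,0}^{P}-\lambda_{3,0}^{P}$ arise, forcing the prime factor $3$ and hence the exclusion $t\neq 2,3$) matches the role these hypotheses play in the addenda and in Definition~\ref{defrregularnew}.
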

\begin{proof}
The proof that the leading coefficients of multiple entries in the $r$th row of $\beta_4\left(P'\right)$ are non-zero modulo $t$ is based on the theory of path extensions in Subsection~\ref{subsecpathextensions} and Subsection~\ref{subsecpathextensionsII}. The proof is analogous to the proof of Corollary~\ref{lemmaroadregular} in Section~\ref{mains}.

Let us establish the statements concerning the strong regularity (or regularity in \textbf{(a)} \textbf{(iii)}) of $P'$. 

\begin{description}
\item[(a)] 
\begin{description}
\item[(i) - (ii)] In both \textbf{(i)} and \textbf{(ii)}, either $a_{p} = 0$ or $b_{p} = 0$ since $P'\setminus P$ is a subroad. If $a_{p} = 0$, then the hypothesis on $P'\setminus P$ implies that either $b_{p}>2$ or $i= 1$. If $b_{p} = 0$, then the hypothesis on $P'\setminus P$ implies that either $a_{p}>2$ or $i= 3$. If one of the conditions is satisfied, then Proposition~\ref{pnotationentries} \textbf{(a)} \textbf{(iii)} - \textbf{(iv)} imply that either $d_{\lambda}^{P}\geq d_{\mu}^{P}$ or $d_{\lambda}^{P} + 1 < d_{\mu}^{P}$. In particular, Proposition~\ref{p21/3ext} \textbf{(i)} and \textbf{(iii)} and Proposition~\ref{p21/3extadd} \textbf{(i)} imply that $P'$ is strongly $r$-regular.  If $c_{p-1} = 1$, then the hypothesis on $P'\setminus P$ implies that $P$ is $\mu$ $r$-regular. If $P$ is $\mu$ $r$-regular and $\Xi\left(P\right) = 0$, then Proposition~\ref{p21/3ext} \textbf{(i)} implies that $P'$ is strongly $r$-regular.
\item[(iii)] The hypothesis on $P'\setminus P$ implies that either $a_{p}>1$ or $b_{p}>1$. If $\Xi\left(P\right)>1$ and $a_{p} = 1$, then Proposition~\ref{p21/3bothextadd} \textbf{(a)} \textbf{(i)} - \textbf{(ii)} imply that $P'$ is strongly $r$-regular. If $\Xi\left(P\right)>1$ and $a_{p}>1$, then Proposition~\ref{p21/3bothextadd} \textbf{(b)} \textbf{(i)} and \textbf{(iv)} imply that $P'$ is $r$-regular. 
\end{description}
\item[(b)] 
\begin{description}
\item[(i)] The hypothesis on $P'\setminus P$ implies that $c_{p}>2$. If $\Xi\left(P\right)>1$, then Proposition~\ref{p1/32extadd} \textbf{(a)} \textbf{(i)} implies that $P'$ is strongly $r$-regular. If $\Xi\left(P\right) = 0$, then Proposition~\ref{p1/32ext} \textbf{(a)} \textbf{(i)} implies that $P'$ is strongly $r$-regular.
\item[(ii)] The hypothesis on $P'\setminus P$ implies that $c_{p}\geq 2$. If $\Xi\left(P\right)> 2$, then Proposition~\ref{p1/32extadd} \textbf{(b)} \textbf{(i)} implies that $P'$ is strongly $r$-regular. If $0\leq \Xi\left(P\right) < 2$, then Proposition~\ref{p1/32ext} \textbf{(a)} \textbf{(i)} implies that $P'$ is strongly $r$-regular.
\item[(iii)] The hypothesis that $P'\setminus P$ is a subroad implies that $c_{p}>2$. Let us consider the case where $a_p = 1$ and $P$ is $3$-switch $r$-regular. If $\Xi\left(P\right) = 0$, then Proposition~\ref{p1/32bothext} \textbf{(b)} \textbf{(i)} implies that $P'$ is strongly $r$-regular. If $\Xi\left(P\right) > 1$, then Proposition~\ref{p1/32bothextadd} \textbf{(b)} \textbf{(ii)} implies that $P'$ is strongly $r$-regular. 

Let us consider the case where $a_p = 1$ and $P$ is $1$-switch $r$-regular. If $\Xi\left(P\right)>0$, then Proposition~\ref{p1/32bothextadd} \textbf{(c)} \textbf{(ii)} implies that $P'$ is strongly $r$-regular.

If $a_p>1$, then Proposition~\ref{pregularap>1} implies that $P'$ is strongly $r$-regular.
\end{description}
\end{description}
\end{proof}

We now consider the small values of $\Xi\left(P\right)$ that are excluded in the conditions of Lemma~\ref{luniqueextensionimpliesbadsubroad}. We split into two statements corresponding to whether $P$ is a $2$-subproduct of $\sigma$ or an $s$-subproduct of $\sigma$ for some $s\in \{1,3\}$. Firstly, we consider the case where $P$ is a $2$-subproduct of $\sigma$.

\begin{lemma}
\label{l2subdisc01}
Let $P$ be an $r$-regular $2$-subproduct of $\sigma$ such that $\Xi\left(P\right)\in \{0,1\}$. Let $P'$ be the minimal $s'$-subproduct of $\sigma$ for $s'\in \{1,3\}$ such that $P\subseteq P'$. Let $t\neq 2,3$ be a prime number. Let us consider one of the following conditions on $P$. 
\begin{description}
\item[(a)] Let us consider the case where either $a_p = 0$ or $b_p = 0$. If $a_p = 0$, then we define $\phi_p = b_p$, $i = 3$, and $i' = 1$. If $b_p = 0$, then we define $\phi_p = a_p$, $i = 1$, and $i' = 3$.
\begin{description}
\item[(i)] The subproduct $P$ is positive $i$-switch $\mu$ $r$-regular and $\Xi\left(P\right) = 1$.
\item[(ii)] The subproduct $P$ is positive $i'$-switch $\mu$ $r$-regular and $\Xi\left(P\right) = 0$.
\end{description}
\item[(b)] Let us consider the case where $a_p>1$ and $b_p>0$. 
\begin{description}
\item[(i)] The subproduct $P$ is positive $r$-regular.
\item[(ii)] The subproduct $P$ is negative $1$-switch $r$-regular. Furthermore, either $\Xi\left(P\right) = 0$ and $a_p>b_p - 1$, or $\Xi\left(P\right) = 1$ and $b_p>a_p - 1$.
\item[(iii)] The subproduct $P$ is negative $3$-switch $r$-regular. Furthermore, either $\Xi\left(P\right) = 0$ and $b_p>a_p - 1$, or $\Xi\left(P\right) = 1$ and $a_p>b_p - 2$.  
\end{description}
\item[(c)] Let us consider the case where $a_p = 1$ and $b_p > 1$. The subproduct $P$ is positive $i$-switch $r$-regular.
\end{description}
If one of the above conditions is satisfied, then the leading coefficients of multiple entries in the $r$th row of $\beta_4\left(P'\right)$ are non-zero modulo $t$. If one of the above conditions is satisfied, and we also require $\phi_p>2$ in \textbf{(a)} \textbf{(i)}, and $P'\neq \sigma$, then $P'$ is strongly $r$-regular.
\end{lemma}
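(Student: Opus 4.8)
The plan is to treat the statement in two parts, exactly as in the proofs of Corollary~\ref{lemmaroadregular} and Lemma~\ref{luniqueextensionimpliesbadsubroad}. The assertion that the leading coefficients of \emph{multiple} entries in the $r$th row of $\beta_4\left(P'\right)$ are non-zero modulo $t$ follows from the theory of path extensions in Subsection~\ref{subsecpathextensions} and Subsection~\ref{subsecpathextensionsII}, in the same way as in Corollary~\ref{lemmaroadregular}: one non-zero entry is the $\left(r,s'\right)$-entry, read off from the degree and leading-coefficient formulas recorded below, and the second is the $\left(r,2\right)$-entry, which by Proposition~\ref{p21/3ext} \textbf{(iv)} and Proposition~\ref{p21/3bothext} \textbf{(b)} is equal to the $\left(r,2\right)$-entry of $\beta_4\left(P\right)$ and hence is non-zero modulo $t$ because $P$ is $r$-regular (its leading coefficient is $\pm\lambda_0^{P}$ or $\pm\mu_{i,0}^{P}$, an integer with only $2$ and $3$ as prime factors, so non-zero modulo $t\neq 2,3$). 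The substantive content is the final clause — that $P'$ is strongly $r$-regular when $P'\neq\sigma$ (and $\phi_p>2$ in \textbf{(a)} \textbf{(i)}) — which I would establish subcase by subcase by computing $d_{\lambda}^{P'}$ and comparing it against $d_{\mu,1}^{P'}$ and $d_{\mu,3}^{P'}$, and checking that the surviving top-degree extension lands on a good $P'$-path.

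For case \textbf{(a)} (either $a_p=0$ or $b_p=0$), the subroad $P'\setminus P=\sigma_1^{a_p}\sigma_3^{b_p}$ has a single nonzero exponent $\phi_p$, so I would apply Proposition~\ref{ppre21/3ext}, Proposition~\ref{p21/3ext}, and its addendum Proposition~\ref{p21/3extadd}. The hypotheses translate degree data into the appropriate clause: in subcase \textbf{(a)} \textbf{(i)} the $i$-switch $\mu$ condition with $\Xi\left(P\right)=1$ reads $d_{\lambda}^{P}+1=d_{\mu}^{P}$, which is precisely the regime of Proposition~\ref{p21/3extadd} \textbf{(i)}, giving $d_{\lambda}^{P'}=d_{\lambda}^{P}+\phi_p-\psi>d_{\mu}^{P'}$ with $\lambda_0^{P'}=\left(-1\right)^{1-\psi}\left(\lambda_0^{P}+\mu_0^{P}\right)$ once $\phi_p>2$; in subcase \textbf{(a)} \textbf{(ii)} the $i'$-switch condition with $\Xi\left(P\right)=0$ puts us in the good-dominant regime of Proposition~\ref{p21/3ext} \textbf{(i)} and \textbf{(iii)}. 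The key point — and where \emph{positivity} is used — is that in the $\mu$-switch situation $\lambda_0^{P}\in\{\pm\mu_0^{P}\}$, and positivity forces $\mu_0^{P}=\lambda_0^{P}$, so $\lambda_0^{P}+\mu_0^{P}=2\lambda_0^{P}$, which is non-zero modulo $t\neq 2$ and still has only $2$ and $3$ as prime factors; thus $P'$ satisfies Definition~\ref{defrregularnew} \textbf{(b)} \textbf{(i)}.

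For cases \textbf{(b)} and \textbf{(c)} ($a_p,b_p>0$) the subroad has both exponents positive, so I would instead use Proposition~\ref{ppre21/3bothext}, Proposition~\ref{p21/3bothext}, and the addendum Proposition~\ref{p21/3bothextadd}. When $a_p>1$ (case \textbf{(b)}) the sign and switch hypotheses pin down the relative order of $d_{\lambda}^{P}$, $d_{\mu,1}^{P}$, $d_{\mu,3}^{P}$ and hence select the relevant clause of Proposition~\ref{p21/3bothextadd} \textbf{(b)}: the positive subcase \textbf{(b)} \textbf{(i)} corresponds to $d_{\lambda}^{P}$ tied with (or above) the bad degrees with non-cancelling leading coefficient $2\lambda_0^{P}$, while the negative subcases \textbf{(b)} \textbf{(ii)}–\textbf{(iii)} correspond to $d_{\lambda}^{P}$ at or below $d_{\mu,1}^{P}$ or $d_{\mu,3}^{P}$, where the exponent inequalities ($a_p>b_p-1$ versus $b_p>a_p-1$, and the shifted variants) decide whether the dominant extension is of the form $X^{1,0}$ or $X^{0,1}$, and therefore which of $d_{\lambda,1}^{P'}>d_{\mu,1}^{P'}$ or $d_{\lambda,3}^{P'}+1\geq d_{\mu,1}^{P'}$ holds. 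Case \textbf{(c)} ($a_p=1$, $b_p>1$) is the $a_p=1$ branch of Proposition~\ref{p21/3bothextadd} \textbf{(a)}, where $b_p>1$ forces $d_{\lambda,3}^{P'}>d_{\mu,3}^{P'}$. In each subcase I would then verify the defining relation $\alpha\lambda_{1,0}^{P'}+\beta\lambda_{3,0}^{P'}=0$ for suitable $\alpha,\beta\in\{1,2\}$ (the $\{1,2\}$ flexibility built into Definition~\ref{defrregularnew} is precisely what absorbs the factor-of-$2$ discrepancies produced by the extra $-q$ factors on the $\sigma_1$ side), and confirm via Lemma~\ref{lemmagoodext} \textbf{(i)} that the surviving extension is good, upgrading the degree comparison to strong $r$-regularity.

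The step I expect to be the main obstacle is the precise sign bookkeeping across the many subcases. Concretely, one must verify in each instance that the \emph{positive}/\emph{negative} classification of $P$ from Definition~\ref{defweakrregular}, combined with the hypothesis that $\lambda_0^{P}$, $\mu_0^{P}$, $\mu_{1,0}^{P}$, $\mu_{3,0}^{P}$ are integers with only $2$ and $3$ as prime factors, forces the relevant combination of leading coefficients (typically $2\lambda_0^{P}$, or a clean single $\pm\mu_{i,0}^{P}$ after a top-degree cancellation) to be non-zero modulo $t$ — this is exactly the non-cancellation input, and it is where $t\neq 2,3$ is genuinely needed. A secondary but genuinely fiddly point is confirming, in every subcase, that the surviving top-degree $P'$-path is good rather than bad, since only then does the discrepancy turn negative and $P'$ qualify as strongly $r$-regular; the sharpness of the exponent thresholds (for example $a_p>b_p-1$ versus $a_p=b_p$, and the role of $\phi_p>2$ in \textbf{(a)} \textbf{(i)}) is what separates the two outcomes and must be matched exactly against the hypotheses of Proposition~\ref{p21/3extadd} and Proposition~\ref{p21/3bothextadd}.
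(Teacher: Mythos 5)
Your choice of tools is the paper's: Proposition~\ref{p21/3extadd} \textbf{(i)} for \textbf{(a)} \textbf{(i)}, the notational conversion of Proposition~\ref{pnotationentries} \textbf{(a)} \textbf{(iii)} - \textbf{(iv)} followed by Proposition~\ref{p21/3ext} \textbf{(i)} for \textbf{(a)} \textbf{(ii)}, Proposition~\ref{p21/3bothextadd} \textbf{(b)} for \textbf{(b)} \textbf{(i)}, and Proposition~\ref{p21/3bothextadd} \textbf{(a)} for \textbf{(c)}. The genuine gap is in the negative subcases \textbf{(b)} \textbf{(ii)} - \textbf{(iii)}, where both of your concrete mechanisms break. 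Your second non-zero entry is supposed to be the $\left(r,2\right)$-entry of $\beta_4\left(P'\right)$, equal to that of $\beta_4\left(P\right)$, with leading coefficient ``$\pm\lambda_{0}^{P}$ or $\pm\mu_{i,0}^{P}$'' by $r$-regularity. But when $P$ is negative with $\Xi\left(P\right) = 0$ we have $d_{\mu,i}^{P} = d_{\lambda}^{P}$ and $\mu_{i,0}^{P} = -\lambda_{0}^{P}$, so in the $\left(r,2\right)$-entry $w_{\lambda}^{P} + w_{\mu,1}^{P} + w_{\mu,3}^{P}$ (Proposition~\ref{pnotationentries} \textbf{(a)} \textbf{(i)}) the top-degree terms cancel exactly; the entry has strictly smaller degree and an uncontrolled leading coefficient. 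Controlling the $\left(r,2\right)$-entry by preservation is a strong-regularity argument (it is how Lemma~\ref{lemma2road} runs); it does not survive the passage to $r$-regular subproducts with zero discrepancy and negative sign, which is precisely the situation that \textbf{(b)} \textbf{(ii)} - \textbf{(iii)} exist to handle.

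The same exact cancellation defeats your plan to verify $\alpha\lambda_{1,0}^{P'} + \beta\lambda_{3,0}^{P'} = 0$ in every subcase. In \textbf{(b)} \textbf{(ii)} with $\Xi\left(P\right) = 0$, the extensions $X^{0,1}$ and $Y^{0,1}$ of a maximal $q$-resistance good path $X$ and a maximal $q$-resistance $1$-switch path $Y$ are both good $\left(r,3\right)$-type $P'$-paths of the same degree $d_{\lambda}^{P}+b_p-1$ with coefficients $\lambda_{0}^{P}$ and $\mu_{1,0}^{P} = -\lambda_{0}^{P}$, so $\lambda_{3,0}^{P'}$ is uncontrolled; symmetrically, when $\Xi\left(P\right) = 1$ and $a_p>2$ the coefficient $\lambda_{1,0}^{P'}$ is destroyed by the cancellation between $X^{1,0}$ and $Y^{2,0}$. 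The relation you want to check therefore involves a quantity you cannot compute. What the inequalities $a_p>b_p-1$, $b_p>a_p-1$, $a_p>b_p-2$ actually purchase is the degree-gap clause of Definition~\ref{defweakrregular} \textbf{(b)} \textbf{(ii)}: as in the paper's proof, one computes directly from Proposition~\ref{ppre21/3bothext} \textbf{(i)} - \textbf{(ii)} and \textbf{(iv)} and Proposition~\ref{ppre21/3bothextadd} that, for instance, $d_{\lambda,1}^{P'} = d_{\lambda}^{P}+a_p>d_{\mu,1}^{P'}$, $\lambda_{1,0}^{P'} = -\lambda_{0}^{P}$, and $d_{\lambda,3}^{P'}<d_{\lambda}^{P}+b_p-1$, whence $a_p>b_p-1$ forces $d_{\lambda,1}^{P'}>d_{\lambda,3}^{P'}+1$. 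Consistently with this, the paper's own proof of \textbf{(b)} \textbf{(ii)} - \textbf{(iii)} concludes only that $P'$ is $r$-regular in these subcases, and the two controlled entries of the $r$th row must be read off from these $P'$-computations rather than from the $\left(r,2\right)$-entry of $\beta_4\left(P\right)$.
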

\begin{proof}
The proof that the leading coefficients of multiple entries in the $r$th row of $\beta_4\left(P'\right)$ are non-zero modulo $t$ is based on the theory of path extensions in Subsection~\ref{subsecpathextensions} and Subsection~\ref{subsecpathextensionsII}. The proof is analogous to the proof of Corollary~\ref{lemmaroadregular} in Section~\ref{mains}. 

Let us establish the statements concerning the strong $r$-regularity of $P'$.
\begin{description}
\item[(a)] 
\begin{description}
\item[(i)] Proposition~\ref{p21/3extadd} \textbf{(i)} implies that $P'$ is strongly $r$-regular.
\item[(ii)] Proposition~\ref{pnotationentries} \textbf{(a)} \textbf{(iii)} - \textbf{(iv)} imply that $d_{\lambda}^{P}>d_{\mu}^{P}$. Proposition~\ref{p21/3ext} \textbf{(i)} implies that $P'$ is strongly $r$-regular.
\end{description}
\item[(b)] 
\begin{description}
\item[(i)] Let $P$ be an $i$-switch $r$-regular subproduct. If $i = 1$ and $\Xi\left(P\right) = 0$, then Proposition~\ref{p21/3bothextadd} \textbf{(b)} \textbf{(vi)} implies that $P'$ is strongly $r$-regular. If $i = 1$ and $\Xi\left(P\right) = 1$, then Proposition~\ref{p21/3bothextadd} \textbf{(b)} \textbf{(v)} implies that $P'$ is strongly $r$-regular. If $i = 3$ and $\Xi\left(P\right) = 0$, then Proposition~\ref{p21/3bothextadd} \textbf{(b)} \textbf{(iii)} implies that $P'$ is strongly $r$-regular. If $i = 3$ and $\Xi\left(P\right) = 1$, then Proposition~\ref{p21/3bothextadd} \textbf{(b)} \textbf{(ii)} implies that $P'$ is strongly $r$-regular.
\item[(ii)] We apply Proposition~\ref{ppre21/3bothext} \textbf{(i)} - \textbf{(ii)} and \textbf{(iv)} and Proposition~\ref{ppre21/3bothextadd} \textbf{(i)} to conclude the following.

If $\Xi\left(P\right) = 0$, then $d_{\lambda,1}^{P'} = d_{\lambda}^{P} + a_p$, $d_{\mu,1}^{P'}<d_{\lambda}^{P} + 1$, $d_{\lambda,3}^{P'} < d_{\lambda}^{P} + b_p - 1$, and $\lambda_{1,0}^{P'} = -\lambda_{0}^{P}$. If $a_p>b_p-1$, then we deduce that $P'$ is $r$-regular.

If $\Xi\left(P\right) = 1$, then $d_{\mu,1}^{P'} = d_{\mu,1}^{P} + 1$, $d_{\lambda,3}^{P'} = d_{\mu,1}^{P} + b_p - 1$, and $\lambda_{3,0}^{P'} = \mu_{1,0}^{P}$. If $a_p>2$, then $d_{\lambda,1}^{P'}<d_{\lambda}^{P} + a_p$, and if $a_p = 2$, then $d_{\lambda,1}^{P'} = d_{\lambda}^{P} + a_p$. If $b_p>a_p-1$, then we deduce that $P'$ is $r$-regular. 
\item[(iii)] We apply Proposition~\ref{ppre21/3bothext} \textbf{(i)} - \textbf{(ii)} and \textbf{(iv)} and Proposition~\ref{ppre21/3bothextadd} \textbf{(ii)} to conclude the following.

If $\Xi\left(P\right) = 0$, then $d_{\lambda,1}^{P'} < d_{\lambda}^{P} + a_p$, $d_{\mu,1}^{P'} < d_{\lambda}^{P} + 1$, $d_{\lambda,3}^{P'} = d_{\lambda}^{P} + b_p - 1$, and $\lambda_{3,0}^{P} = \lambda_{0}^{P}$. If $b_p>a_p - 1$, then we deduce that $P'$ is $r$-regular.

If $\Xi\left(P\right) = 1$, then $d_{\lambda,1}^{P'} = d_{\mu,3}^{P} + a_p$ and $d_{\mu,1}^{P'} = d_{\mu,3}^{P} + 1$. If $b_p>1$, then $d_{\lambda,3}^{P'}<d_{\lambda}^{P} + b_p - 1$, and if $b_p = 1$, then $d_{\lambda,3}^{P'} = d_{\lambda}^{P}$. If $a_p>b_p - 2$, then we deduce that $P'$ is $r$-regular.
\end{description}
\item[(c)] If $i = 1$ and $\Xi\left(P\right)\in \{0,1\}$, then Proposition~\ref{p21/3bothextadd} \textbf{(a)} \textbf{(i)} implies that $P'$ is strongly $r$-regular. If $i = 3$ and $\Xi\left(P\right)\in \{0,1\}$, then Proposition~\ref{p21/3bothextadd} \textbf{(a)} \textbf{(ii)} implies that $P'$ is strongly $r$-regular.
\end{description}
\end{proof}

We now establish an analogue of Lemma~\ref{l2subdisc01} in the case where $P$ is an $s$-subproduct of $\sigma$ for $s\in \{1,3\}$.

\begin{lemma}
\label{l1/3subdisc01}
Let $P$ be an $r$-regular $s$-subproduct of $\sigma$ for $s\in \{1,3\}$. Let $P'$ be the minimal $2$-subproduct of $\sigma$ such that $P\subseteq P'$. Let $t\neq 2,3$ be a prime number. Let us consider one of the following conditions on $P$.
\begin{description}
\item[(a)] Let us consider the case where either $a_p=0$ or $b_p = 0$.
\begin{description}
\item[(i)] The subproduct $P$ is positive $\mu$ $r$-regular and $\Xi\left(P\right) = 1$. 
\item[(ii)] The subproduct $P$ is positive $\nu$ $r$-regular and $\Xi\left(P\right) = 2$.
\end{description}
\item[(b)] Let us consider the case where either $a_p = 1$ and $b_p>0$.
\begin{description}
\item[(i)] The subproduct $P$ is positive $1$-switch $r$-regular and $\Xi\left(P\right) = 0$.
\item[(ii)] The subproduct $P$ is positive $3$-switch $r$-regular and $\Xi\left(P\right) = 1$. 
\end{description}
\end{description}
If one of the above conditions is satisfied, then the leading coefficients of multiple entries in the $r$th row of $\beta_4\left(P'\right)$ are non-zero modulo $t$. If one of the above conditions is satisfied, and we also require $c_p>1$ in \textbf{(a)} \textbf{(ii)} and $c_p>2$ in \textbf{(a)} \textbf{(i)} and \textbf{(b)} \textbf{(i)} - \textbf{(ii)}, then $P'$ is strongly $r$-regular.
\end{lemma}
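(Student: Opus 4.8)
The plan is to follow the template of the proof of Lemma~\ref{l2subdisc01}. The assertion that the leading coefficients of multiple entries in the $r$th row of $\beta_4\left(P'\right)$ are non-zero modulo $t$ rests on the theory of path extensions of Subsection~\ref{subsecpathextensions} and Subsection~\ref{subsecpathextensionsII} and is proved analogously to Corollary~\ref{lemmaroadregular}, so I would not grind it out; here the preserved $\left(r,s\right)$-entry of $\beta_4\left(P'\right)$ (equal to that of $\beta_4\left(P\right)$ by Proposition~\ref{ppre1/32bothext} \textbf{(iii)}, which already carries a non-zero leading coefficient) supplies one of the required non-zero entries. The substantive content is the strong $r$-regularity of $P'$. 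Since $P$ is an $s$-subproduct for $s\in\{1,3\}$ and $P'$ is the minimal $2$-subproduct containing it, the relevant extension results are Proposition~\ref{p1/32extadd} (when $a_p=0$ or $b_p=0$) and Proposition~\ref{p1/32bothextadd} (when $a_p=1$ and $b_p>0$), and the whole task reduces to matching each of the four configurations to exactly one of their subcases.

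For case \textbf{(a)} I would first translate the hypotheses into degree data, writing $\psi'=0$ if $a_p=0$ and $\psi'=1$ if $b_p=0$, and $\psi=1-\psi'$ as in Proposition~\ref{p1/32extadd}. In \textbf{(a)} \textbf{(i)}, $\mu$-regularity means $d_\lambda^P\leq d_\mu^P$, which forces the second disjunct $d_\nu^P+2\psi'+1\leq d_\lambda^P$ of Definition~\ref{defweakrregular} \textbf{(b)} \textbf{(i)}, so that $\Xi\left(P\right)=d_\mu^P-d_\lambda^P=1$, and positivity gives $\lambda_0^P=\mu_0^P$. These are exactly the hypotheses of Proposition~\ref{p1/32extadd} \textbf{(a)} (the degenerate condition $\lambda_0^P+\mu_0^P=2\mu_0^P\neq0$ holds), and part \textbf{(a)} \textbf{(i)} gives $d_\lambda^{P'}>\max\{d_{\mu,1}^{P'},d_{\mu,3}^{P'}\}$ with $\lambda_0^{P'}=\pm2\mu_0^P$ once $c_p>2$. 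In \textbf{(a)} \textbf{(ii)}, $\nu$-regularity forces $d_\lambda^P>d_\mu^P$, and $\Xi\left(P\right)=d_\nu^P+2\psi'+1-d_\lambda^P=2$ yields, after a short computation in each parity of $\psi$, the equality $d_\nu^P+\left(-1\right)^\psi=d_\lambda^P$; this places us exactly in Proposition~\ref{p1/32extadd} \textbf{(c)}, where positivity gives $\lambda_0^P=-\nu_0^P$ so that $\lambda_0^P-\nu_0^P=2\lambda_0^P\neq0$, and part \textbf{(c)} gives strong $r$-regularity with $\lambda_0^{P'}=\pm2\lambda_0^P$ for $c_p\geq2$, i.e. $c_p>1$.

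For case \textbf{(b)}, with $a_p=1$ and $b_p>0$, the key preliminary observation is that the $r$-regularity inequality $d_{\lambda,3}^P>\min\{d_{\mu,1}^P-2,d_{\mu,3}^P\}$ of Definition~\ref{defweakrregular} \textbf{(b)} \textbf{(ii)} automatically pins down the cross-comparison of degrees that selects the correct subcase of Proposition~\ref{p1/32bothextadd}: a $1$-switch $r$-regular $P$ must satisfy $d_{\mu,3}^P<d_{\lambda,3}^P$, and a $3$-switch $r$-regular $P$ must satisfy $d_{\mu,1}^P\leq d_{\lambda,3}^P+1$. In \textbf{(b)} \textbf{(i)}, combining this with $\Xi\left(P\right)=\max\{d_{\mu,1}^P-2,d_{\mu,3}^P\}-d_{\lambda,3}^P=0$ gives $d_{\mu,1}^P=d_{\lambda,3}^P+2$ and $d_{\mu,3}^P<d_{\lambda,3}^P$, which is the hypothesis of Proposition~\ref{p1/32bothextadd} \textbf{(c)} \textbf{(i)}; positivity gives $\mu_{1,0}^P=-\lambda_{3,0}^P$, whence $\mu_{1,0}^P-\lambda_{3,0}^P=-2\lambda_{3,0}^P\neq0$ and that part gives strong $r$-regularity for $c_p>2$. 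In \textbf{(b)} \textbf{(ii)}, $\Xi\left(P\right)=1$ together with the $3$-switch inequality gives $d_{\mu,3}^P=d_{\lambda,3}^P+1$ and $d_{\lambda,3}^P+1\geq d_{\mu,1}^P$, landing in Proposition~\ref{p1/32bothextadd} \textbf{(b)} \textbf{(i)}; positivity gives $\mu_{3,0}^P=\lambda_{3,0}^P$, whence $\lambda_{3,0}^P+\mu_{3,0}^P=2\lambda_{3,0}^P\neq0$, again giving strong $r$-regularity for $c_p>2$.

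I expect the main obstacle to be the organizational bookkeeping of confirming that each of the four configurations lands in exactly one subcase of Proposition~\ref{p1/32extadd} or Proposition~\ref{p1/32bothextadd}, and in particular verifying the non-degeneracy hypotheses those propositions require. This is precisely where the \emph{positive} assumption enters: in every case the relevant sum or difference of leading coefficients equals $\pm2$ times a single leading coefficient, so it is non-zero because $t\neq2$, and the resulting $\lambda_0^{P'}$ still has at most $2$ and $3$ as prime factors, hence is non-zero modulo $t$ since also $t\neq3$. This delivers the non-zero leading coefficient of the $\left(r,2\right)$-entry demanded by Definition~\ref{defrregularnew}, which together with the preserved $\left(r,s\right)$-entry furnishes the required pair of non-zero entries and completes strong $r$-regularity under the stated size conditions on $c_p$.
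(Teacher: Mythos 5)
Your proposal is correct and follows essentially the same route as the paper: the paper's proof defers the ``multiple non-zero entries'' assertion to the path-extension theory (analogously to Corollary~\ref{lemmaroadregular}) exactly as you do, and then cites Proposition~\ref{p1/32extadd} \textbf{(a)} \textbf{(i)} for case \textbf{(a)} \textbf{(i)}, Proposition~\ref{p1/32extadd} \textbf{(c)} for case \textbf{(a)} \textbf{(ii)}, Proposition~\ref{p1/32bothextadd} \textbf{(c)} \textbf{(i)} for case \textbf{(b)} \textbf{(i)}, and Proposition~\ref{p1/32bothextadd} \textbf{(b)} \textbf{(i)} for case \textbf{(b)} \textbf{(ii)} --- precisely your matching. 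Your additional bookkeeping (translating $\mu$/$\nu$/switch regularity and $\Xi\left(P\right)$ into the degree equalities, and using positivity together with $t\neq 2,3$ to verify the non-degeneracy hypotheses $\lambda_{0}^{P}+\mu_{0}^{P}\neq 0$, $\lambda_{0}^{P}-\nu_{0}^{P}\neq 0$, etc.) is accurate and merely makes explicit what the paper leaves implicit.
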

\begin{proof}
The proof that the leading coefficients of multiple entries in the $r$th row of $\beta_4\left(P'\right)$ are non-zero modulo $t$ is based on the theory of path extensions in Subsection~\ref{subsecpathextensions} and Subsection~\ref{subsecpathextensionsII}. The proof is analogous to the proof of Corollary~\ref{lemmaroadregular} in Section~\ref{mains}.

Let us establish the statements concerning the strong $r$-regularity of $P'$.
\begin{description}
\item[(a)]  
\begin{description}
\item[(i)] Proposition~\ref{p1/32extadd} \textbf{(a)} \textbf{(i)} implies that $P'$ is strongly $r$-regular.
\item[(ii)] Proposition~\ref{p1/32extadd} \textbf{(c)} implies that $P'$ is strongly $r$-regular.
\end{description}
\item[(b)] 
\begin{description}
\item[(i)] Proposition~\ref{p1/32bothextadd} \textbf{(c)} \textbf{(i)} implies that $P'$ is strongly $r$-regular.
\item[(ii)] Proposition~\ref{p1/32bothextadd} \textbf{(b)} \textbf{(i)} implies that $P'$ is strongly $r$-regular.
\end{description}
\end{description}
\end{proof}

\subsubsection{Regularity is generically an inductive property around blocks}

If $P$ is the $s$-subproduct immediately preceding a block $B$, and if $P' = PB$, then the next step is to characterize extensions of $P$-paths to $P'$-paths, when $P$ is not necessarily strongly regular or the block $B$ is not necessarily a normal block. 

Firstly, we consider the case of generic $2$-blocks and we establish the following addendum to Lemma~\ref{l2blockstronglyregular}.

\begin{lemma}
\label{lemma2blockweaklyregular}
Let $B = \sigma_1^{a_p}\sigma_3^{b_p}\sigma_2\sigma_1^{a_{p+1}}$ be a generic $2$-block and let $P = \prod_{i=1}^{p-1} \sigma_1^{a_i}\sigma_3^{b_i}\sigma_2^{c_i}$ be the $2$-subproduct immediately preceding $B$. Let $P' = PB$. We assume that $d_{\lambda}^{P}>\min\{d_{\mu,1}^{P},d_{\mu,3}^{P}\}$. 

\begin{description}
\item[(a)] Let us consider the case $a_{p+1} = 2$ and $a_p\geq b_p + 1$. 

Let us assume that $d_{\lambda}^{P}>\max\{d_{\mu,1}^{P},d_{\mu,3}^{P}\}$. In this case, $d_{\lambda}^{P'} = d_{\lambda}^{P} + b_p + 2\leq  d_{\lambda}^{P} + a_p + 1 = d_{\mu}^{P'}$, $d_{\nu}^{P'}+3\leq d_{\lambda}^{P'}$, and $\lambda_{0}^{P'} = \lambda_{0}^{P} = \mu_{0}^{P'}$.
\item[(b)] 
\begin{description}
\item[(i)] Let us consider the case $a_{p+1} = 2$ and $a_p\geq b_p + 2$. 

Let us assume that $d_{\lambda}^{P} + 1 <d_{\mu,1}^{P}$. In this case, $d_{\lambda}^{P'} = d_{\mu,1}^{P} + b_p + 2\leq d_{\mu,1}^{P} + a_p = d_{\mu}^{P'}$, $d_{\nu}^{P'}+3\leq d_{\lambda}^{P'}$, and $\lambda_{0}^{P'} = \mu_{1,0}^{P} = \mu_{0}^{P'}$.
\item[(ii)] Let us consider the case $a_{p+1} = 2$ and $a_p < b_p + 2$.

Let us assume that $d_{\lambda}^{P} +1 < d_{\mu,1}^{P}$. In this case, $d_{\lambda}^{P'}>\max\{d_{\mu}^{P'},d_{\nu}^{P'} + 2\}$ and $\lambda_{0}^{P'} = \mu_{1,0}^{P}$. 
\end{description}
\item[(c)] 
\begin{description}
\item[(i)] Let us consider the case $a_{p+1} = 2$ and $a_p\geq b_p>2$. 

Let us assume that $d_{\lambda}^{P} + 1< d_{\mu,3}^{P}$. In this case, $d_{\lambda}^{P'} = d_{\mu,3}^{P} + b_p+1\leq d_{\mu,3}^{P} + a_p + 1 = d_{\mu}^{P'}$, $d_{\nu}^{P'}+3\leq d_{\lambda}^{P'}$, and $\lambda_{0}^{P'} = \mu_{3,0}^{P} = \mu_{0}^{P'}$.
\item[(ii)] Let us consider the case $a_{p+1} = 2$ and $a_p < b_p > 2$. 

Let us assume that $d_{\lambda}^{P} + 1 < d_{\mu,3}^{P}$. In this case, $d_{\lambda}^{P'} > \max\{d_{\mu}^{P'},d_{\nu}^{P'} + 2\}$ and $\lambda_{0}^{P'} = \mu_{3,0}^{P}$. 
\item[(iii)] Let us consider the case $a_{p+1} = 2$ and $b_p = 2$.

Let us assume that $d_{\lambda}^{P} + 1 < d_{\mu,3}^{P}$. In this case, $d_{\lambda}^{P'} = d_{\lambda}^{P} + 4$, $d_{\mu}^{P'} = d_{\mu,3}^{P} + a_p+1$, $d_{\nu}^{P'} = d_{\mu,3}^{P}$, $\lambda_{0}^{P'} = \lambda_{0}^{P}$, and $\mu_{0}^{P'} = \mu_{3,0}^{P} = \nu_{0}^{P'}$.
\end{description}
\item[(d)] Let us consider the case $a_{p+1} > 2$. 
\begin{description}
\item[(i)] Let us assume that $d_{\lambda}^{P} + 1 < d_{\mu,1}^{P}$. In this case, $d_{\lambda}^{P'}>\max\{d_{\mu}^{P'},d_{\nu}^{P'} + 2\}$. If $a_p\neq b_p + 2$, then $\lambda_{0}^{P'} = \mu_{1,0}^{P}$. If $a_p = b_p + 2$, then $\lambda_{0}^{P'} = 2\mu_{1,0}^{P}$. 
\item[(ii)] Let us assume that $d_{\lambda}^{P} + 1 < d_{\mu,3}^{P}$. In this case, $d_{\lambda}^{P'} > \max\{d_{\mu}^{P'},d_{\nu}^{P'}+2\}$. If either $a_p\neq b_p$ or $b_p = 2$, then $\lambda_{0}^{P'} = \mu_{3,0}^{P}$. If $a_p = b_p$ and $b_p>2$, then $\lambda_{0}^{P'} = 2\mu_{3,0}^{P}$.
\end{description}
\end{description}
Finally, in each of the above cases, if the leading coefficient of the $\left(r,2\right)$-entry of $\beta_4\left(P\right)$ is non-zero modulo a prime $t\neq 2,3$, then the leading coefficients of multiple entries in the $r$th row of $\beta_4\left(P'\right)$ are non-zero modulo $t$. 
\end{lemma}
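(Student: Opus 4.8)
The plan is to take the degree-and-leading-coefficient data of cases \textbf{(a)}--\textbf{(d)} as given and deduce the nonvanishing of two entries in the $r$th row of $\beta_4\left(P'\right)$, namely the $\left(r,1\right)$-entry and the $\left(r,3\right)$-entry. First I would record the shape of $P'$: since $B$ is a $2$-block, Proposition~\ref{initialblocksingularblockconstraint} \textbf{(iii)} gives $b_{p+1} = 0$, so $P'$ is a $1$-subproduct with $a_{p+1}>0$ and $b_{p+1} = 0$. Consequently, by Notation~\ref{notnumgb} \textbf{(b)} together with Proposition~\ref{pnotationentries} \textbf{(b)} \textbf{(i)} and Proposition~\ref{m=ap}, the $\left(r,1\right)$-entry of $\beta_4\left(P'\right)$ equals $w_{\lambda}^{P'} + w_{\mu}^{P'}$ and the $\left(r,3\right)$-entry equals $w_{\nu}^{P'}$. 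The task is therefore to show that both of these polynomials have nonzero leading coefficient modulo $t$. The underlying path-extension computations that establish \textbf{(a)}--\textbf{(d)} themselves proceed exactly as in the proof of Lemma~\ref{l2blockstronglyregular}, via the parametrization of Definition~\ref{def2blockextension} and the admissibility and weight formulas of Proposition~\ref{lemma2blockextadmissible}, Proposition~\ref{lemma2blockextadmissibleextra} and Lemma~\ref{lemmagoodext}.

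The second step is to interpret the hypothesis. By Proposition~\ref{pnotationentries} \textbf{(a)} \textbf{(i)} the $\left(r,2\right)$-entry of $\beta_4\left(P\right)$ is $w_{\lambda}^{P} + w_{\mu,1}^{P} + w_{\mu,3}^{P}$, so its leading coefficient is the coefficient of the top-degree summand. The standing assumption $d_{\lambda}^{P}>\min\{d_{\mu,1}^{P},d_{\mu,3}^{P}\}$ together with the defining inequality of each case pins this down: the dominant summand is $\lambda_{0}^{P}$ in case \textbf{(a)} (where $d_{\lambda}^{P}>\max\{d_{\mu,1}^{P},d_{\mu,3}^{P}\}$), it is $\mu_{1,0}^{P}$ in the cases governed by $d_{\lambda}^{P}+1<d_{\mu,1}^{P}$ (which with $d_{\lambda}^{P}>\min$ forces $d_{\mu,1}^{P}$ to be the strict maximum), and it is $\mu_{3,0}^{P}$ in the cases governed by $d_{\lambda}^{P}+1<d_{\mu,3}^{P}$. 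Thus the hypothesis is precisely the statement that this dominant coefficient, which I call $c$, is a nonzero residue modulo $t$.

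The $\left(r,3\right)$-entry is the cleaner of the two. Every $\left(r,3\right)$-type $P'$-path has the form $W^{0,\beta}$ for an $\left(r,2\right)$-type $P$-path $W$ (Proposition~\ref{lemma2blockextadmissible} \textbf{(i)}), with weight obtained from that of $W$ by the \emph{positive} factor $q^{b_p-\beta}$; the maximal $q$-resistance such path comes from a maximal $q$-resistance $\left(r,2\right)$-type $P$-path with $\beta = 1$ when $W$ is good or a $1$-switch path and $\beta = 2$ when $W$ is a $3$-switch path (Proposition~\ref{lemma2blockextadmissible} \textbf{(iv)} and Proposition~\ref{lemma2blockextadmissibleextra} \textbf{(i)}--\textbf{(ii)}; note $b_p\geq 2$ in the $3$-switch cases since $B$ is generic). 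Because the case inequalities are \emph{strict}, a short degree check shows that the dominant $\left(r,2\right)$-type $P$-path is unambiguously of the type carrying $c$ and strictly beats the other two types after extension, so $\nu_{0}^{P'} = c$ (consistent with the explicit value $\nu_{0}^{P'} = \mu_{3,0}^{P}$ recorded in case \textbf{(c)} \textbf{(iii)}, and with the bounds $d_{\nu}^{P'}+3\le d_{\lambda}^{P'}$ in the other cases). Hence the $\left(r,3\right)$-entry is nonzero modulo $t$, giving the first required entry.

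For the $\left(r,1\right)$-entry $w_{\lambda}^{P'} + w_{\mu}^{P'}$ I would read off $d_{\lambda}^{P'},d_{\mu}^{P'}$ and $\lambda_{0}^{P'},\mu_{0}^{P'}$ from \textbf{(a)}--\textbf{(d)}. When $d_{\lambda}^{P'}\neq d_{\mu}^{P'}$ the leading coefficient is the one in higher degree, and in every case this is $\pm c$ or $\pm 2c$, which is nonzero modulo $t$ since $c\not\equiv 0$ and $t\neq 2$. The main obstacle is the boundary situation $d_{\lambda}^{P'} = d_{\mu}^{P'}$, where the leading coefficient is $\lambda_{0}^{P'}+\mu_{0}^{P'}$ and a priori cancellation could occur; here I would invoke precisely the coincidences the lemma records — for instance $\lambda_{0}^{P'} = \mu_{0}^{P'}$ in case \textbf{(a)} and $\mu_{0}^{P'} = \mu_{3,0}^{P} = \nu_{0}^{P'}$ in case \textbf{(c)} \textbf{(iii)} — which force the two top terms to share a sign, so the sum is $2$ times a nonzero residue and remains nonzero modulo $t$ because $t\neq 2$. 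This is the sole place the congruence hypothesis is used in this lemma; the exclusion $t\neq 3$ is inactive here and is carried only for uniformity with Theorem~\ref{strongkernelconstraint}. With both the $\left(r,1\right)$- and $\left(r,3\right)$-entries of $\beta_4\left(P'\right)$ shown nonzero modulo $t$, multiple entries in the $r$th row of $\beta_4\left(P'\right)$ are nonzero modulo $t$, as required.
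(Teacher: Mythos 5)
Your deduction of the final claim from the tabulated data contains a step that fails in case \textbf{(c)} \textbf{(iii)}. There the recorded data is $d_{\lambda}^{P'} = d_{\lambda}^{P}+4$, $d_{\mu}^{P'} = d_{\mu,3}^{P}+a_p+1$, $\lambda_{0}^{P'} = \lambda_{0}^{P}$, and $\mu_{0}^{P'} = \mu_{3,0}^{P}$, and the boundary situation $d_{\lambda}^{P'} = d_{\mu}^{P'}$ is genuinely attainable: it occurs exactly when $a_p = 1$ (allowed, since a $2$-block only requires $a_p>0$) and $d_{\mu,3}^{P} = d_{\lambda}^{P}+2$, which is compatible with the case hypothesis $d_{\lambda}^{P}+1<d_{\mu,3}^{P}$. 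In that situation the top coefficient of the $\left(r,1\right)$-entry $w_{\lambda}^{P'}+w_{\mu}^{P'}$ is $\lambda_{0}^{P}+\mu_{3,0}^{P}$, and the coincidence you invoke, $\mu_{0}^{P'} = \mu_{3,0}^{P} = \nu_{0}^{P'}$, relates $\mu_{0}^{P'}$ to $\nu_{0}^{P'}$ only; it says nothing about $\lambda_{0}^{P'}$ versus $\mu_{0}^{P'}$, so it does not ``force the two top terms to share a sign.'' Nothing in the hypotheses constrains $\lambda_{0}^{P}$ against $\mu_{3,0}^{P}$ (the congruence hypothesis only gives $\mu_{3,0}^{P}\not\equiv 0\pmod{t}$), and the cancelling configuration $\lambda_{0}^{P} = -\mu_{3,0}^{P}$ actually occurs in the intended application: Corollary~\ref{cor2blockregular} \textbf{(d)} feeds this lemma a negative $3$-switch $r$-regular $P$, for which Definition~\ref{defweakrregular} stipulates precisely $\lambda_{0}^{P}\in\{\pm\mu_{3,0}^{P}\}$ with the negative sign permitted. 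In that sub-case your argument yields only one non-vanishing entry (the $\left(r,3\right)$-entry, whose treatment is correct), so ``multiple entries'' is not established; repairing this requires producing a different second entry or a separate argument, not the sign coincidence you cite.

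Separately, your write-up treats parts \textbf{(a)}--\textbf{(d)} as given, but they constitute the bulk of the statement, and ``the computations proceed exactly as in Lemma~\ref{l2blockstronglyregular}'' is not a proof of them. The paper's proof identifies, case by case, which extensions are maximal: \textbf{(a)} via Proposition~\ref{lemma2blockextadmissible} \textbf{(ii)} and \textbf{(iv)}, \textbf{(b)} and \textbf{(c)} via Proposition~\ref{lemma2blockextadmissible} \textbf{(ii)} together with Proposition~\ref{lemma2blockextadmissibleextra} \textbf{(i)} and \textbf{(ii)} respectively, and \textbf{(d)} by an explicit comparison of the competing extensions of switch paths (for instance $Y^{2,0,2}$ against $Y^{0,1,1}$, with threshold $a_p$ versus $b_p+2$, and $V^{1,0,2}$ against $V^{0,2,1}$, with threshold $a_p$ versus $b_p$), where the equality cases are exactly where two maximal extensions with same-sign weights coexist and produce the doubled coefficients $2\mu_{1,0}^{P}$ and $2\mu_{3,0}^{P}$. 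This doubling is the origin of the hypothesis $t\neq 2$ in the final claim, and it cannot be read off without carrying out that comparison; your proposal uses these facts but does not derive them.
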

\begin{proof}
Firstly, we apply Proposition~\ref{lemma2blockextadmissible} \textbf{(i)} in each case.
\begin{description}
\item[(a)] The statement is a consequence of Proposition~\ref{lemma2blockextadmissible} \textbf{(ii)} and \textbf{(iv)}. 
\item[(b)] The statements are consequences of Proposition~\ref{lemma2blockextadmissible} \textbf{(ii)} and Proposition~\ref{lemma2blockextadmissibleextra} \textbf{(i)}.
\item[(c)] The statements are consequences of Proposition~\ref{lemma2blockextadmissible} \textbf{(ii)} and Proposition~\ref{lemma2blockextadmissibleextra} \textbf{(ii)}.
\item[(d)]
\begin{description}
\item[(i)] If $a_p>b_p + 2$, then Proposition~\ref{lemma2blockextadmissible} \textbf{(ii)} and Proposition~\ref{lemma2blockextadmissibleextra} \textbf{(i)} imply that a maximal $q$-resistance $\left(r,1\right)$-type $P'$-path is of the form $Y^{2,0,2}$, where $Y$ is a maximal $q$-resistance $1$-switch $\left(r,2\right)$-type $P$-path. If $a_p<b_p + 2$, then Proposition~\ref{lemma2blockextadmissible} \textbf{(ii)} and Proposition~\ref{lemma2blockextadmissibleextra} \textbf{(i)} imply that a maximal $q$-resistance $\left(r,1\right)$-type $P'$-path is of the form $Y^{0,1,1}$, where $Y$ is a maximal $q$-resistance $1$-switch $\left(r,2\right)$-type $P$-path. If $a_p = b_p + 2$, then Proposition~\ref{lemma2blockextadmissible} \textbf{(ii)} and Proposition~\ref{lemma2blockextadmissibleextra} \textbf{(i)} imply that a maximal $q$-resistance $\left(r,1\right)$-type $P'$-path is either of the form $Y^{2,0,2}$ or of the form $Y^{0,1,1}$, where $Y$ is a maximal $q$-resistance $1$-switch $\left(r,2\right)$-type $P$-path. The hypothesis that $a_{p+1}>2$ implies that a maximal $q$-resistance $\left(r,1\right)$-type $P'$-path is good in each case.
\item[(ii)] If $b_p>2$, then the proof is similar to the proof of \textbf{(d)} \textbf{(i)}. The main difference is that each instance of $Y$ is replaced by $V$, where $V$ is a maximal $q$-resistance $3$-switch $\left(r,2\right)$-type $P$-path. We also apply Proposition~\ref{lemma2blockextadmissible} \textbf{(ii)} and Proposition~\ref{lemma2blockextadmissibleextra} \textbf{(ii)}. 

If $b_p = 2$ and $a_p>1$, then Proposition~\ref{lemma2blockextadmissible} \textbf{(ii)} and \textbf{(iii)} imply that a maximal $q$-resistance $\left(r,1\right)$-type $P'$-path is of the form $V^{1,0,2}$, where $V$ is a maximal $q$-resistance $3$-switch $\left(r,2\right)$-type $P$-path. If $b_p = 2$ and $a_p = 1$, then Proposition~\ref{lemma2blockextadmissible} \textbf{(ii)} and \textbf{(iii)} imply that a maximal $q$-resistance $\left(r,1\right)$-type $P'$-path is of the form $V^{0,2,2}$, where $V$ is a maximal $q$-resistance $3$-switch $\left(r,2\right)$-type $P$-path. The hypothesis that $a_{p+1}>2$ implies that a maximal $q$-resistance $\left(r,1\right)$-type $P'$-path is good in each case.
\end{description}
\end{description}
Finally, the reasoning also implies that the leading coefficients of the $\left(r,1\right)$-entry and the $\left(r,3\right)$-entry of $\beta_4\left(P'\right)$ are non-zero modulo $t$ in each case.
\end{proof}

We observe that regularity is an inductive property around generic $2$-blocks in all parts of Lemma~\ref{lemma2blockweaklyregular}, except possibly in \textbf{(c)} \textbf{(iii)}. We will address this case later. We now consider the case of generic $3$-blocks and we establish the following addendum to Lemma~\ref{l3blockstronglyregular}.

\begin{lemma}
\label{lemma3blockweaklyregular}
Let $B = \sigma_2^{c_{p-1}}\sigma_3\sigma_2^{c_p}$ be a generic $3$-block in $\sigma$ and let $P = \left(\prod_{i=1}^{p-2} \sigma_1^{a_i}\sigma_3^{b_i}\sigma_2^{c_i}\right)\sigma_1^{a_{p-1}}$ be the $s$-subproduct immediately preceding $B$ for $s\in \{1,3\}$. Let $P' = PB$. We assume that $d_{\nu}^{P} + 3\leq d_{\lambda}^{P}$.

\begin{description}
\item[(a)] Let us consider the case $c_{p-1} > 1$ and $c_p=2$.

Let us assume that $d_{\lambda}^{P}>d_{\mu}^{P}$. In this case, $d_{\lambda}^{P'} = d_{\lambda}^{P} + 1 < d_{\lambda}^{P} + c_{p-1} = d_{\mu,3}^{P'}$, $d_{\mu,1}^{P'} = d_{\lambda}^{P}$, $\lambda_{0}^{P'} = \lambda_{0}^{P}$, and $\mu_{3,0}^{P'} = -\lambda_{0}^{P}$.
\item[(b)] Let us consider the case $c_{p-1} > 2$ and $c_p = 2$. 

Let us assume that $d_{\lambda}^{P} + 1 < d_{\mu}^{P}$. In this case, $d_{\lambda}^{P'} = d_{\mu}^{P} + 1 < d_{\mu}^{P} + c_{p-1} - 1 = d_{\mu,3}^{P'}$, $d_{\mu,1}^{P'} = d_{\mu}^{P}$, $\lambda_{0}^{P'} = \mu_{0}^{P}$, and $\mu_{3,0}^{P'} = -\mu_{0}^{P}$.
\item[(c)] Let us consider the case $c_{p-1} > 2$ and $c_p>2$.

Let us assume that $d_{\lambda}^{P} + 1 < d_{\mu}^{P}$. In this case, $d_{\lambda}^{P'}>\max\{d_{\mu,1}^{P'},d_{\mu,3}^{P'}\}$ and $\lambda_{0}^{P'} = -\mu_{0}^{P}$.
\item[(d)] Let us consider the case $c_{p-1} = 1$ and $c_p > 1$.
\begin{description}
\item[(i)] Let us assume that $d_{\lambda}^{P} + 1< d_{\mu}^{P} + c_p - 1$. In this case, $d_{\lambda}^{P'}>\max\{d_{\mu,1}^{P'},d_{\mu,3}^{P'}\}$ and $\lambda_{0}^{P'} = \mu_{0}^{P}$.
\item[(ii)] Let us assume that $d_{\lambda}^{P} + 1 > d_{\mu}^{P} + c_p - 1$. In this case, $d_{\lambda}^{P'} = d_{\lambda}^{P} + 1$, $d_{\mu,3}^{P'} = d_{\lambda}^{P} + 1$, $d_{\mu,1}^{P'} = d_{\lambda}^{P}$, $\lambda_{0}^{P'} = \lambda_{0}^{P}$, $\mu_{3,0}^{P'} = -\lambda_{0}^{P}$, and $\mu_{1,0}^{P'} = \lambda_{0}^{P}$. Furthermore, $d_{\lambda,1}^{P'} = d_{\mu}^{P} + c_p - 1\leq d_{\lambda}^{P} = d_{\mu,1}^{P'}$ and $\lambda_{1,0}^{P'} = \mu_{0}^{P}$ (see Notation~\ref{notnumgb}). 
\end{description}
\item[(e)] Let us consider the case $c_{p-1} = 2$ and $c_p>2$.
\begin{description}
\item[(i)] Let us assume that $d_{\lambda}^{P} + c_p > d_{\mu}^{P} + 1$. In this case, $d_{\lambda}^{P'}>\max\{d_{\mu,1}^{P'},d_{\mu,3}^{P'}\}$ and $\lambda_{0}^{P'} = \lambda_{0}^{P}$.
\item[(ii)] Let us assume that $d_{\lambda}^{P} + c_p < d_{\mu}^{P} + 1$. In this case, $d_{\lambda}^{P'} = d_{\mu}^{P} + 1$, $d_{\mu,3}^{P'} = d_{\mu}^{P} + 1$, $d_{\mu,1}^{P'} = d_{\mu}^{P}$, $\lambda_{0}^{P'} = \mu_{0}^{P}$, $\mu_{3,0}^{P'} = -\mu_{0}^{P}$, and $\mu_{1,0}^{P'} = \mu_{0}^{P}$. Furthermore, $d_{\lambda,1}^{P'} = d_{\lambda}^{P} + c_p\leq d_{\mu}^{P} = d_{\mu,1}^{P'}$ and $\lambda_{1,0}^{P'} = -\lambda_{0}^{P}$.
\end{description}
\item[(f)] Let us consider the case $c_{p-1} = 2$ and $c_p = 2$.

Let us assume that $d_{\lambda}^{P} + 1 < d_{\mu}^{P}$. In this case, $d_{\lambda}^{P'} = d_{\mu}^{P} + 1$, $d_{\mu,3}^{P'} = d_{\mu}^{P} + 1$, $d_{\mu,1}^{P'} = d_{\mu}^{P}$, $\lambda_{0}^{P'} = \mu_{0}^{P}$, $\mu_{3,0}^{P'} = -\mu_{0}^{P}$, and $\mu_{1,0}^{P'} = \mu_{0}^{P}$. Furthermore, $d_{\lambda,1}^{P'} = d_{\lambda}^{P} + 2\leq d_{\mu}^{P} = d_{\mu,1}^{P'}$ and $\lambda_{1,0}^{P'} = -\lambda_{0}^{P}$.
\end{description}
Finally, in each of the above cases, if the leading coefficient of the $\left(r,1\right)$-entry of $\beta_4\left(P\right)$ is non-zero modulo a prime $t\neq 2,3$, then the leading coefficients of multiple entries in the $r$th row of $\beta_4\left(P'\right)$ are non-zero modulo $t$. 
\end{lemma}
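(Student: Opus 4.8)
The proof runs parallel to that of Lemma~\ref{lemma2blockweaklyregular}, with Proposition~\ref{lemma2blockextadmissible} replaced throughout by Proposition~\ref{lemma3blockextadmissible}, which characterizes the admissible extensions of $P$-paths across the $3$-block $B$ and records their weights as explicit monomials. Since $B$ is a $3$-block, Lemma~\ref{initialblocksingularblockconstraint} \textbf{(iv)} gives $b_{p-1} = 0$, so $P$ ends in $\sigma_1^{a_{p-1}}$ and we are in the setting of Notation~\ref{notnumgb} \textbf{(b)} with the role of $b_p$ played by $b_{p-1} = 0$. Thus $w_{\lambda}^{P}$ and $w_{\mu}^{P}$ are the weighted numbers of good and bad $\left(r,1\right)$-type $P$-paths, and $w_{\nu}^{P}$ is the weighted number of $\left(r,3\right)$-type $P$-paths. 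The standing hypothesis $d_{\nu}^{P} + 3\leq d_{\lambda}^{P}$ guarantees that the $\left(r,3\right)$-type $P$-paths accumulate too little $q$-resistance across $B$ to compete with the $\left(r,1\right)$-type paths, exactly as in the proof of Lemma~\ref{l3blockstronglyregular}.

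First I would record the universal extension data. Let $X$ be a maximal $q$-resistance good $\left(r,1\right)$-type $P$-path and $Y$ a maximal $q$-resistance bad $\left(r,1\right)$-type $P$-path. By Proposition~\ref{lemma3blockextadmissible} \textbf{(b)}, the maximal $q$-resistance extensions of $X$ are $X^{1,0}$ (to an $\left(r,3\right)$-type $P'$-path) and $X^{1,2}$ or $X^{1}$ (to an $\left(r,2\right)$-type $P'$-path), while $Y$ extends maximally by $Y^{2,0}$ and $Y^{1}$; all the corresponding weights are the monomials of Proposition~\ref{lemma3blockextadmissible} \textbf{(b)} \textbf{(i)}. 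I would then treat cases \textbf{(a)}--\textbf{(f)} by the value of the pair $\left(c_{p-1},c_p\right)$: reading off the monomial prefactors in Proposition~\ref{lemma3blockextadmissible} \textbf{(b)} \textbf{(i)} and \textbf{(c)} \textbf{(i)}, one directly computes $d_{\lambda}^{P'}$, $d_{\mu,1}^{P'}$, $d_{\mu,3}^{P'}$ (and $d_{\nu}^{P'}$, $d_{\lambda,1}^{P'}$ where relevant) together with their leading coefficients, and checks the inequalities asserted in the statement. Whenever two of the relevant degrees coincide, the accompanying strict hypotheses relating $d_{\lambda}^{P}$, $d_{\mu}^{P}$ and $d_{\nu}^{P}$ (for instance $d_{\lambda}^{P} > d_{\mu}^{P}$ in \textbf{(a)}, or $d_{\lambda}^{P} + 1 < d_{\mu}^{P}$ in \textbf{(b)}, \textbf{(c)}, \textbf{(f)}) single out a unique surviving leading path, so no unwanted cancellation of leading coefficients occurs.

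The main obstacle is the bookkeeping of cancelling pairs in the non-generic cases where $c_p = 2$ (cases \textbf{(a)}, \textbf{(b)}, \textbf{(c)} \textbf{(iii)}, \textbf{(f)}) or $c_{p-1} = 2$ (cases \textbf{(e)}, \textbf{(f)}), which are precisely the abnormal $3$-blocks capable of producing global cancellation. Here Proposition~\ref{lemma3blockextadmissible} \textbf{(b)} \textbf{(v)} (the cancelling pair $\{W^{c_{p-1},\gamma+1},W^{\gamma}\}$) and \textbf{(c)} \textbf{(iii)} (the coincidence of $W^{1,2}$ and $W^{2,0}$ when $c_{p-1} = 2$) must be invoked with care. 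When $c_p = 2$ the good $\left(r,2\right)$-type extension $X^{1}$ and the $3$-switch extension $X^{1,2}$ have equal $q$-resistance and opposite sign, yet — as noted in the proof of Lemma~\ref{l3blockstronglyregular} \textbf{(ii)} — they need not form a genuine cancelling pair; one must therefore determine which $P'$-paths actually survive and hence the correct values of $d_{\mu,3}^{P'}$ and of the signs $\lambda_{0}^{P'}$, $\mu_{3,0}^{P'}$, $\mu_{1,0}^{P'}$. This is the step that dictates the precise equalities recorded in each case and fixes the $r$-regularity classification, and it is where the argument genuinely departs from the strongly regular situation of Lemma~\ref{l3blockstronglyregular}.

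Finally, for the closing assertion, I would argue as in Lemma~\ref{l3blockstronglyregular}: since $P'\setminus P = B$ contains no $\sigma_1$, Lemma~\ref{lemmagoodext} \textbf{(iv)} shows that the $\left(r,1\right)$-entry of $\beta_4\left(P'\right)$ equals the $\left(r,1\right)$-entry of $\beta_4\left(P\right)$. Hence the hypothesis that the latter leading coefficient is non-zero modulo $t$, together with the non-zero leading coefficient of the $\left(r,2\right)$-entry of $\beta_4\left(P'\right)$ that is pinned down by the case analysis above (via Proposition~\ref{m=ap}), yields multiple non-zero leading coefficients in the $r$th row of $\beta_4\left(P'\right)$ modulo each prime $t\neq 2,3$, completing the proof.
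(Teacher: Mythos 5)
Your skeleton coincides with the paper's: extension data from Proposition~\ref{lemma3blockextadmissible}, a case analysis on $\left(c_{p-1},c_p\right)$, the standing hypothesis $d_{\nu}^{P}+3\leq d_{\lambda}^{P}$ to eliminate $\left(r,3\right)$-type competitors, and Lemma~\ref{lemmagoodext} \textbf{(iv)} to carry the $\left(r,1\right)$-entry across $B$. The genuine gap is in cases \textbf{(d)} \textbf{(ii)}, \textbf{(e)} \textbf{(ii)} and \textbf{(f)}, and it is twofold. First, your assertion that the strict hypotheses always ``single out a unique surviving leading path, so no unwanted cancellation of leading coefficients occurs'' is exactly false there: the statement itself records $d_{\lambda}^{P'} = d_{\mu,3}^{P'}$ together with $\lambda_{0}^{P'} = -\mu_{3,0}^{P'}$, so the top-degree terms of the $\left(r,2\right)$-entry $w_{\lambda}^{P'}+w_{\mu,1}^{P'}+w_{\mu,3}^{P'}$ cancel identically. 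Consequently your closing step, which pairs the $\left(r,1\right)$-entry with ``the non-zero leading coefficient of the $\left(r,2\right)$-entry of $\beta_4\left(P'\right)$,'' produces no second entry in these cases; the paper instead extracts the $\left(r,3\right)$-entry from $\mu_{3,0}^{P'}\neq 0$ via the identity $w_{\mu,3}^{P'} = -qw_{\nu}^{P'}$ of Proposition~\ref{pnotationentries} \textbf{(a)} \textbf{(iv)}. Second, leading-monomial bookkeeping cannot yield the asserted values of $d_{\lambda,1}^{P'}$ and $\lambda_{1,0}^{P'}$, precisely because these are what survives after that cancellation: the paper computes full polynomial identities, e.g.\ in \textbf{(d)} \textbf{(ii)} it derives $w_{\lambda}^{P'} = qw_{\lambda}^{P} + \left(\sum_{i=1}^{c_p-1}q^{i}\right)w_{\mu}^{P} - q^{c_p+1}w_{\nu}^{P}$ and $w_{\mu,3}^{P'} = -qw_{\lambda}^{P}$, and only then evaluates $w_{\lambda,1}^{P'} = w_{\lambda}^{P'}+w_{\mu,3}^{P'}$. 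These sub-cases are not peripheral; they are the ones that later feed the notion of pseudo $r$-regular blocks (Definition~\ref{defpseudoregularblock}), so the lemma loses its purpose if they are not carried out.

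Two smaller inaccuracies. Your ``universal extension data'' omits $Y^{2,2}$, the maximal admissible extension of a bad path of the form $Y^{\alpha,\beta}$; it is $Y^{2,2}$ that determines $d_{\mu,3}^{P'}$ and $\mu_{3,0}^{P'}$ in \textbf{(b)} and $d_{\lambda}^{P'}$ and $\lambda_{0}^{P'}$ in \textbf{(c)}. And your claim that for $c_p = 2$ the paths $X^{1}$ and $X^{1,2}$ have equal $q$-resistance is wrong unless $c_{p-1} = 1$: by Proposition~\ref{lemma3blockextadmissible} \textbf{(b)} \textbf{(i)} their resistances are $d_{\lambda}^{P}+1$ and $d_{\lambda}^{P}+c_{p-1}$, so the coincidence you describe occurs in case \textbf{(d)}, not in \textbf{(a)}, \textbf{(b)} or \textbf{(f)}; indeed in case \textbf{(a)} the asserted inequality $d_{\lambda}^{P'} < d_{\mu,3}^{P'}$ is exactly this non-coincidence.
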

\begin{proof}
Firstly, Proposition~\ref{lemma3blockextadmissible} and the hypothesis $d_{\nu}^{P} + 3\leq d_{\lambda}^{P}$ implies in each case that a maximal $q$-resistance good $P'$-path, a maximal $q$-resistance $3$-switch $P'$-path, and a maximal $q$-resistance $1$-switch $P'$-path are of the form $W^{\alpha,\beta}$ or $W^{\gamma}$, where $W$ is an $\left(r,1\right)$-type $P$-path. 

\begin{description}
\item[(a)] Proposition~\ref{lemma3blockextadmissible} \textbf{(b)} implies that a maximal $q$-resistance good $\left(r,2\right)$-type $P'$-path is of the form $X^{1}$, a maximal $q$-resistance $3$-switch $\left(r,2\right)$-type $P'$-path is of the form $X^{1,2}$, and a maximal $q$-resistance $1$-switch $\left(r,2\right)$-type $P'$-path is of the form $X^{2}$, where $X$ is a maximal $q$-resistance good $\left(r,1\right)$-type $P$-path. 
\item[(b)] Proposition~\ref{lemma3blockextadmissible} \textbf{(b)} implies that a maximal $q$-resistance good $\left(r,2\right)$-type $P'$-path is of the form $Y^{1}$, a maximal $q$-resistance $3$-switch $\left(r,2\right)$-type $P'$-path is of the form $Y^{2,2}$, and a maximal $q$-resistance $1$-switch $\left(r,2\right)$-type $P'$-path is of the form $Y^{2}$, where $Y$ is a maximal $q$-resistance bad $\left(r,1\right)$-type $P$-path.
\item[(c)] Proposition~\ref{lemma3blockextadmissible} \textbf{(b)} implies that a maximal $q$-resistance good $\left(r,2\right)$-type $P'$-path is of the form $Y^{2,2}$, a maximal $q$-resistance $3$-switch $\left(r,2\right)$-type $P'$-path is of the form $Y^{2,c_p}$, and a maximal $q$-resistance $1$-switch $\left(r,2\right)$-type $P'$-path is of the form $Y^{c_p}$, where $Y$ is a maximal $q$-resistance bad $\left(r,1\right)$-type $P$-path.
\item[(d)] 
\begin{description}
\item[(i)] Proposition~\ref{lemma3blockextadmissible} \textbf{(b)} implies that a (non-cancelling) maximal $q$-resistance $\left(r,2\right)$-type $P'$-path is of the form $Y^{1}$, where $Y$ is a maximal $q$-resistance bad $\left(r,1\right)$-type $P$-path. The $P'$-path $Y^{1}$ is good since $c_p>1$.
\item[(ii)] Proposition~\ref{lemma3blockextadmissible} \textbf{(b)} implies that the good (non-cancelling) $\left(r,2\right)$-type $P'$-paths are either of the form $W^{c_p - 1}$, where $W$ is a good $\left(r,1\right)$-type $P$-path, of the form $W^{\gamma}$ for $1\leq \gamma < c_p$, where $W$ is a bad $\left(r,1\right)$-type $P$-path, or of the form $W^{1,0}$, where $W$ is an $\left(r,3\right)$-type $P$-path. Proposition~\ref{lemma3blockextadmissible} \textbf{(b)} also implies that the $3$-switch $\left(r,2\right)$-type $P'$-paths are of the form $W^{1,c_p}$, where $W$ is a good $\left(r,1\right)$-type $P$-path. We deduce $w_{\lambda}^{P'} = qw_{\lambda}^{P} + \left(\sum_{i=1}^{c_p-1} q^i\right)w_{\mu}^{P} -q^{c_p+1}w_{\nu}^{P}$ and $w_{\mu,3}^{P'} = -qw_{\lambda}^{P}$ and the equalities concerning the degrees and leading coefficients of these polynomials follow. Furthermore, $w_{\lambda,1}^{P'} = w_{\lambda}^{P'} + w_{\mu,3}^{P'}$ (Notation~\ref{notnumgb}) and the equalities concerning the degree and leading coefficient of this polynomial follow too.

Finally, Proposition~\ref{lemma3blockextadmissible} \textbf{(b)} implies that a maximal $q$-resistance $1$-switch $\left(r,2\right)$-type $P'$-path is of the form $X^{c_p}$, where $X$ is a maximal $q$-resistance good $\left(r,1\right)$-type $P$-path. 
\end{description}
\item[(e)] 
\begin{description}
\item[(i)] Proposition~\ref{lemma3blockextadmissible} \textbf{(b)} implies that a (non-cancelling) maximal $q$-resistance $\left(r,2\right)$-type $P'$-path is of the form $X^{1,2}$, where $X$ is a maximal $q$-resistance good $\left(r,1\right)$-type $P$-path. The $P'$-path $X^{1,2}$ is good since $c_p>2$.
\item[(ii)] Proposition~\ref{lemma3blockextadmissible} \textbf{(b)} implies that the good (non-cancelling) $\left(r,2\right)$-type $P'$-paths are either of the form $W^{1,\beta}$ for $2\leq \beta < c_p$, where $W$ is either a good $\left(r,1\right)$-type $P$-path or an $\left(r,3\right)$-type $P$-path, of the form $W^{c_p-1}$, where $W$ is an $\left(r,1\right)$-type $P$-path, or of the form $W^{2,0}$, where $W$ is an $\left(r,3\right)$-type $P$-path. Proposition~\ref{lemma3blockextadmissible} \textbf{(b)} also implies that the $3$-switch $\left(r,2\right)$-type $P'$-paths are either of the form $W^{1,c_p}$, where $W$ is either a good $\left(r,1\right)$-type $P$-path or an $\left(r,3\right)$-type $P$-path, or of the form $W^{2,c_p}$, where $W$ is an $\left(r,1\right)$-type $P$-path. We deduce $w_{\lambda}^{P'} = qw_{\mu}^{P} + \left(q - \sum_{i=3}^{c_p} q^i\right)w_{\lambda}^{P} + \left(\sum_{i=4}^{c_p} q^i\right)w_{\nu}^{P}$ and $w_{\mu,3}^{P'} = -qw_{\mu}^{P} + \left(-q-q^2\right)w_{\lambda}^{P} + q^{3}w_{\nu}^{P}$ and the equalities concerning the degrees and leading coefficients of these polynomials follow. Furthermore, $w_{\lambda,1}^{P'} = w_{\lambda}^{P'} + w_{\mu,3}^{P'}$ (Notation~\ref{notnumgb}) and the equalities concerning the degree and leading coefficient of this polynomial follow too. 

Finally, Proposition~\ref{lemma3blockextadmissible} \textbf{(b)} implies that a maximal $q$-resistance $1$-switch $\left(r,2\right)$-type $P'$-path is of the form $Y^{c_p}$, where $Y$ is a maximal $q$-resistance bad $\left(r,1\right)$-type $P$-path. 
\end{description}
\item[(f)] Proposition~\ref{lemma3blockextadmissible} \textbf{(b)} implies that a maximal $q$-resistance good $\left(r,2\right)$-type $P'$-path is of the form $Y^{1}$, a maximal $q$-resistance $3$-switch $\left(r,2\right)$-type $P'$-path is of the form $Y^{2,2}$, and a maximal $q$-resistance $1$-switch $\left(r,2\right)$-type $P'$-path is of the form $Y^{2}$, where $Y$ is a maximal $q$-resistance bad $\left(r,1\right)$-type $P$-path. Proposition~\ref{lemma3blockextadmissible} \textbf{(c)} implies that the weights of the $\left(r,2\right)$-type $P'$-paths $W^{2,0}$ and $W^{1,2}$ have opposite sign, where $W$ is an $\left(r,3\right)$-type $P$-path. Proposition~\ref{lemma3blockextadmissible} also implies that the good or $3$-switch $\left(r,2\right)$-type $P'$-paths are either of the form $W^{2,2}$ or $W^{1}$ (which have weights of opposite sign), where $W$ is an $\left(r,1\right)$-type $P$-path, of the form $W^{1,2}$, where $W$ is a good $\left(r,1\right)$-type $P$-path, or of the form $W^{2,0}$ or $W^{1,2}$ (which have weights of opposite sign), where $W$ is an $\left(r,3\right)$-type $P$-path. We deduce $w_{\lambda,1}^{P'} = w_{\lambda}^{P'} + w_{\mu,3}^{P'} = -q^2w_{\lambda}^{P}$. The equalities concerning the degree and leading coefficient of this polynomial follow.
\end{description}
Finally, Lemma~\ref{lemmagoodext} \textbf{(iv)} implies that the $\left(r,1\right)$-entry of $\beta_4\left(P'\right)$ is equal to the $\left(r,1\right)$-entry of $\beta_4\left(P\right)$. Proposition~\ref{pnotationentries} \textbf{(a)} \textbf{(i)} implies that the leading coefficient of the $\left(r,2\right)$-entry of $\beta_4\left(P'\right)$ is non-zero modulo $t$ in \textbf{(a)}, \textbf{(b)}, \textbf{(c)}, \textbf{(d)} \textbf{(i)}, and \textbf{(e)} \textbf{(i)}. Proposition~\ref{pnotationentries} \textbf{(a)} \textbf{(iv)} implies that the leading coefficient of the $\left(r,3\right)$-entry of $\beta_4\left(P'\right)$ is non-zero modulo $t$ in \textbf{(d)} \textbf{(ii)}, \textbf{(e)} \textbf{(ii)}, and \textbf{(f)}. 
\end{proof}

We observe that regularity is an inductive property around generic $3$-blocks in all parts of Lemma~\ref{lemma3blockweaklyregular}. However, in Lemma~\ref{lemma3blockweaklyregular} \textbf{(d)} \textbf{(ii)}, \textbf{(e)} \textbf{(ii)} and \textbf{(f)}, $P'$ is very close to not being $r$-regular in the sense that $P'$ is negative $3$-switch $r$-regular and $\Xi\left(P'\right) = 0$. In this case, neither the condition \textbf{(a)} \textbf{(i)} in Lemma~\ref{luniqueextensionimpliesbadsubroad} nor the condition \textbf{(a)} \textbf{(ii)} in Lemma~\ref{l2subdisc01} are satisfied if $b_{p+1} = 0$. In particular, if $P''$ is the minimal $s''$-subproduct containing $P'$ for $s''\in \{1,3\}$, then it is not necessarily true that $P''$ is $r$-regular. 

However, the next step is to show that there is generically a strongly $r$-regular subproduct containing $P'$ in the context of Lemma~\ref{lemma2blockweaklyregular} \textbf{(c)} \textbf{(iii)} and Lemma~\ref{lemma3blockweaklyregular} \textbf{(d)} \textbf{(ii)}, \textbf{(e)} \textbf{(ii)}, and \textbf{(f)}. Firstly, we introduce terminology.

\begin{definition}
\label{defpseudoregularblock}
Let $B$ be a block in $\sigma$ and let $P$ be the minimal $s$-subproduct of $\sigma$ containing $B$. 
\begin{description}
\item[(a)] If $B = \sigma_1^{a_p}\sigma_3^{b_p}\sigma_2\sigma_1^2$ is a $2$-block, then we write that $P$ is a \textit{pseudo $r$-regular subproduct} if $d_{\lambda}^{P}\leq d_{\mu}^{P}$, $d_{\mu}^{P} = d_{\nu}^{P} + a_p + 1$, and $\mu_{0}^{P} = \nu_{0}^{P}$ is a non-zero integer with at most $2$ and $3$ as prime factors. In this case, the \textit{pseudo-discrepancy of $P$} is $\Xi_{\text{psuedo}}\left(P\right) = d_{\mu}^{P} - d_{\lambda}^{P}$. We write that $P$ is \textit{positive} if $\lambda_{0}^{P} = \mu_{0}^{P}$ and we write that $P$ is \textit{negative} if $\lambda_{0}^{P} = -\mu_{0}^{P}$. 
\item[(b)] If $B$ is a $3$-block, then we write that $P$ is a \textit{pseudo $r$-regular subproduct} if $d_{\lambda,1}^{P}\leq d_{\mu,1}^{P}$, $d_{\mu,3}^{P} = d_{\mu,1}^{P} + 1$, and $\mu_{3,0}^{P} = -\mu_{1,0}^{P}$ is a non-zero integer with at most $2$ and $3$ as prime factors. In this case, the \textit{pseudo-discrepancy of $P$} is $\Xi_{\text{pseudo}}\left(P\right) = d_{\mu,1}^{P} - d_{\lambda,1}^{P}$. We write that $P$ is \textit{positive} if $\lambda_{1,0}^{P} = \mu_{1,0}^{P}$ and we write that $P$ is \textit{negative} if $\lambda_{1,0}^{P} = -\mu_{1,0}^{P}$.
\end{description}
If $P$ is a pseudo $r$-regular subproduct, then we also write that \textit{$B$ is a pseudo $r$-regular block} and the \textit{pseudo-discrepancy of $B$} is $\Xi_{\text{pseudo}}\left(B\right) = \Xi_{\text{pseudo}}\left(P\right)$. 
\end{definition}

In the context of Lemma~\ref{lemma2blockweaklyregular} \textbf{(c)} \textbf{(iii)}, $B$ is either a pseudo $r$-regular $2$-block or a strongly $r$-regular $2$-block. In the context of Lemma~\ref{lemma3blockweaklyregular} \textbf{(d)} \textbf{(ii)}, \textbf{(e)} \textbf{(ii)}, and \textbf{(f)}, $B$ is a pseudo $r$-regular $3$-block.

We now study pseudo $r$-regular blocks. Firstly, we establish a constraint on the minimal $s'$-subproduct of $\sigma$ for $s'\in \{1,3\}$ containing a pseudo $r$-regular $2$-subproduct $P$. 

\begin{proposition}
\label{pgood3match}
Let $P$ be a pseudo $r$-regular $2$-subproduct of $\sigma$ such that $\Xi_{\text{pseudo}}\left(P\right)> 1$. Let $P'$ be the minimal $s'$-subproduct of $\sigma$ containing $P$ for $s'\in \{1,3\}$. Let $t\neq 2,3$ be a prime number.

The leading coefficients of multiple entries in the $r$th row of $\beta_4\left(P'\right)$ are non-zero modulo $t$. If $a_p>1$ and $b_p>0$, and either $P$ is positive or $a_p-2\neq b_p$, then $P'$ is $r$-regular. If $0\leq a_p\leq 1$ and $b_p>0$, then $P'$ is strongly $r$-regular. 
\end{proposition}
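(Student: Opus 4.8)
The plan is to follow the template of the inductive regularity statements in this section (compare Corollary~\ref{lemmaroadregular} and Lemma~\ref{l2subdisc01}): reduce the claim to bookkeeping of the degrees and leading coefficients of the weighted path-counts for $P' = P\sigma_1^{a_p}\sigma_3^{b_p}$, obtained from those of $P$ through the path-extension calculus of Subsections~\ref{subsecpathextensions} and~\ref{subsecpathextensionsII}. First I would extract the rigid structure forced by the hypothesis that $P$ is a pseudo $r$-regular $2$-subproduct (Definition~\ref{defpseudoregularblock}~\textbf{(b)}). The inequalities $d_{\lambda,1}^{P}\leq d_{\mu,1}^{P} < d_{\mu,3}^{P} = d_{\mu,1}^{P}+1$ force, through $w_{\lambda,1}^{P}=w_\lambda^P+w_{\mu,3}^P$ (Notation~\ref{notnumgb}~\textbf{(a)}~\textbf{(iv)}), a top-degree cancellation: necessarily $d_\lambda^P = d_{\mu,3}^P = d_{\mu,1}^P+1$ with $\lambda_0^P = -\mu_{3,0}^P = \mu_{1,0}^P =: c$, a non-zero integer with at most $2$ and $3$ as prime factors (hence non-zero modulo $t$ for $t\neq 2,3$). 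Consequently $w_{\lambda,3}^P = w_\lambda^P + w_{\mu,1}^P$ has no such cancellation, so $d_{\lambda,3}^P = d_\lambda^P$ and $\lambda_{3,0}^P = c$, while $\Xi_{\text{pseudo}}(P) > 1$ gives $d_{\lambda,1}^P \leq d_{\mu,1}^P - 2$. The sign of $P$ only controls $\lambda_{1,0}^P$, the leading coefficient of the strictly lower-degree surviving part of $w_{\lambda,1}^P$.

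The crucial point is that because of the coincidence $\lambda_0^P + \mu_{3,0}^P = 0$, the clean extension results (Proposition~\ref{p21/3bothext}, Proposition~\ref{p21/3bothextadd}) do not apply, since their hypotheses explicitly exclude this degenerate case. I would therefore argue from the primitive extension descriptions Proposition~\ref{ppre21/3ext}, Proposition~\ref{ppre21/3bothext} and Proposition~\ref{ppre21/3bothextadd}, tracking the cancellation by hand. The mechanism to exploit is that the tail $\sigma_3^{b_p}$ (with $b_p>0$) makes the good-for-$\sigma_3$ combination dominate cleanly: the no-switch paths $X$ extend via $X^{0,1}$ to good $(r,3)$-type $P'$-paths of degree $d_\lambda^P + b_p - 1 = d_{\mu,1}^P + b_p$ and leading coefficient $c$, strictly above the mutually cancelling contributions of the $1$-switch and $3$-switch paths (which meet at degree $d_{\mu,1}^P + b_p - 1$). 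Thus $d_{\lambda,3}^{P'} = d_{\mu,1}^P + b_p$ and $\lambda_{3,0}^{P'} = c$ is non-zero modulo $t$, already pinning down the leading coefficient of the $(r,3)$-entry of $\beta_4(P')$ via Proposition~\ref{pnotationentries}.

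I would then split on $a_p$. For $a_p = 0$ the extension is governed by Proposition~\ref{p21/3ext}~\textbf{(i)}, which applies since $d_\lambda^P \geq d_\mu^P$ holds as an equality; it yields $d_\lambda^{P'} > d_\mu^{P'}$, $\lambda_0^{P'} = c$, and $w_\nu^{P'} = w_\nu^P$, and combined with $w_{\mu,1}^P = w_\nu^P$ (Proposition~\ref{pnotationentries}~\textbf{(a)}~\textbf{(iv)}) and $b_p > 0$ this gives $d_\nu^{P'}+1 \leq d_\lambda^{P'}$, so $P'$ is strongly $r$-regular. For $a_p = 1$ the only $(r,1)$-type $P'$-paths come from $w_{\lambda,1}^P$ turning at the single (final) $\sigma_1$ and are bad, so $w_{\lambda,1}^{P'}=0$; the $(r,3)$ analysis above (with the separate subcases $b_p=1$ and $b_p\geq 2$ for when $Y^{0,1}$ and $V^{0,2}$ cancel) gives $d_{\lambda,3}^{P'}>d_{\mu,3}^{P'}$ and $d_{\lambda,3}^{P'}+1\geq d_{\mu,1}^{P'}$ with $\lambda_{3,0}^{P'}=c$, verifying strong $r$-regularity via Definition~\ref{defrregularnew}. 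For $a_p > 1$ the key non-clean computation is that the $1$-switch paths, which cannot turn at the first $\sigma_1$, become the dominant $(r,1)$-contribution: they extend via $Y^{2,0}$ to degree $d_{\mu,1}^P + a_p - 1$ with leading coefficient $-c$, overtaking the cancelled good/$3$-switch part (here $\Xi_{\text{pseudo}}(P)>1$ is exactly what guarantees strict domination). This gives $d_{\lambda,1}^{P'} = d_{\mu,1}^P + a_p - 1$ with $\lambda_{1,0}^{P'} = -c$ when $a_p>2$, or $d_{\mu,1}^{P'} = d_{\mu,1}^P+1$ with $\mu_{1,0}^{P'}=-c$ when $a_p=2$, together with $w_{\mu,3}^{P'} = 0$; feeding these into Definition~\ref{defweakrregular}~\textbf{(b)}~\textbf{(ii)} (the $a_p>1$ case) establishes $r$-regularity.

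The main obstacle is the boundary case $a_p - 2 = b_p$ (only possible when $a_p > 2$), where $d_{\lambda,1}^{P'} = d_{\lambda,3}^{P'} + 1$: neither strict alternative of Definition~\ref{defweakrregular}~\textbf{(b)}~\textbf{(ii)} applies, and $r$-regularity must instead be established through the relation $\alpha\lambda_{1,0}^{P'} + \beta\lambda_{3,0}^{P'} = 0$ for some $\alpha,\beta\in\{1,2\}$. Checking that such $\alpha,\beta$ exist requires the precise signs of $\lambda_{1,0}^{P'}$ and $\lambda_{3,0}^{P'}$, and this is exactly where the delicate hypothesis ``$P$ positive or $a_p - 2 \neq b_p$" is used: it either removes the coincidence ($a_p-2\neq b_p$) or fixes the sign configuration ($P$ positive) so that the degenerate $r$-regularity relation is solvable. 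Finally, the unconditional claim that the leading coefficients of \emph{multiple} entries in the $r$th row of $\beta_4(P')$ are non-zero modulo $t$ follows by combining the computed nonvanishing of $\lambda_{3,0}^{P'}$ with that of $\lambda_{1,0}^{P'}$ (or $\mu_{1,0}^{P'}$) through Proposition~\ref{pnotationentries}; the restriction $t\neq 2,3$ is what keeps $c$ and the combinations $\pm c,\pm 2c$ it produces non-zero modulo $t$.
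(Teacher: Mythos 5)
Your overall route is the same as the paper's: extract the forced top-degree structure from pseudo $r$-regularity, run the primitive extension calculus (Proposition~\ref{ppre21/3bothext} \textbf{(i)}, \textbf{(ii)}, \textbf{(iv)} together with Proposition~\ref{ppre21/3bothextadd}) by hand rather than the clean statements, split into $a_p = 0$, $a_p = 1$, $a_p>1$, and feed the leading data into Definition~\ref{defweakrregular} and Definition~\ref{defrregularnew}. Your computed quantities agree with the paper's proof: $d_{\lambda,3}^{P'} = d_{\lambda}^{P}+b_p-1$ with $\lambda_{3,0}^{P'} = \lambda_{0}^{P}$; $d_{\mu,1}^{P'} = d_{\mu,1}^{P}+1$; for $a_p>2$, $d_{\lambda,1}^{P'} = d_{\mu,1}^{P}+a_p-1$ with $\lambda_{1,0}^{P'} = -\mu_{1,0}^{P}$; for $a_p=2$, $d_{\lambda,1}^{P'} = d_{\lambda,1}^{P}+2$; for $a_p=1$, $w_{\lambda,1}^{P'}=0$ and $d_{\mu,1}^{P'} = d_{\lambda,1}^{P}+1$. (For $a_p=0$ the paper invokes Lemma~\ref{luniqueextensionimpliesbadsubroad} \textbf{(a)} \textbf{(i)} where you apply Proposition~\ref{p21/3ext} \textbf{(i)} in the equality case $d_{\lambda}^{P}=d_{\mu}^{P}$; these amount to the same argument.)

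The flaw is in your final paragraph, on the boundary case $a_p-2=b_p$. You claim that solvability of $\alpha\lambda_{1,0}^{P'}+\beta\lambda_{3,0}^{P'}=0$ is where the hypothesis ``$P$ positive or $a_p-2\neq b_p$'' is used, the sign of $P$ fixing the sign configuration. This contradicts your own computation. The sign of $P$ is the datum $\lambda_{1,0}^{P}=\pm\mu_{1,0}^{P}$, i.e.\ the leading coefficient of $w_{\lambda,1}^{P}=w_{\lambda}^{P}+w_{\mu,3}^{P}$, which sits in degree $d_{\lambda,1}^{P}\leq d_{\mu,1}^{P}-2$ precisely because $\Xi_{\text{pseudo}}\left(P\right)>1$. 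In the extension, $w_{\lambda,1}^{P}$ enters $w_{\lambda,1}^{P'}$ only through terms of degree at most $d_{\lambda,1}^{P}+a_p$, which is strictly below $d_{\lambda,1}^{P'}=d_{\mu,1}^{P}+a_p-1$, and the top coefficient of $w_{\lambda,3}^{P'}$ is $\lambda_{0}^{P}$, which the pseudo-regularity cancellation pins to $+\mu_{1,0}^{P}$ independently of the sign. So $\left(\lambda_{1,0}^{P'},\lambda_{3,0}^{P'}\right)=\left(-\mu_{1,0}^{P},+\mu_{1,0}^{P}\right)$ whether $P$ is positive or negative, and the degenerate relation holds automatically with $\alpha=\beta=1$. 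Consequently your verification of $r$-regularity still closes in every case the proposition covers (so this is not a fatal gap), but the mechanism you assign to the sign hypothesis is spurious: that hypothesis is not consumed by the top-degree check for $P'$ at all; in the paper it mirrors the exclusion of the terminal abnormal strings of Definition~\ref{defterminalabnormalstring} \textbf{(g)} \textbf{(iv)}, where the sign of $P$ governs cancellations occurring further downstream, in sub-leading coefficients, rather than the leading data computed here.
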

\begin{proof}
Firstly, we consider the case $a_p>1$ and $b_p>0$. Proposition~\ref{ppre21/3bothext} \textbf{(i)} - \textbf{(ii)} and \textbf{(iv)} and Proposition~\ref{ppre21/3bothextadd} imply the following. Firstly, $d_{\lambda,3}^{P'} = d_{\lambda}^{P} + b_p - 1$, $d_{\mu,1}^{P'} = d_{\mu,1}^{P} + 1$, and $\lambda_{3,0}^{P'} = \lambda_{0}^{P}$. If $a_p = 2$, then $d_{\lambda,1}^{P'} = d_{\lambda,1}^{P} + 2\leq d_{\mu,1}^{P'}$. If $a_p>2$, then $d_{\lambda,1}^{P'} = d_{\mu,1}^{P} + a_p - 1>d_{\mu,1}^{P'}$ and $\lambda_{1,0}^{P'} = -\mu_{1,0}^{P}$. In particular, if $a_p>1$ and $b_p>0$, and either $P$ is positive or $a_p-2\neq b_p$, then $P'$ is $r$-regular.

If $a_p = 0$ (and $b_p>0$), then Lemma~\ref{luniqueextensionimpliesbadsubroad} \textbf{(a)} \textbf{(i)} implies that $P'$ is strongly $r$-regular. If $a_p = 1$ and $b_p>0$, then Proposition~\ref{ppre21/3bothext} \textbf{(i)} - \textbf{(ii)} and \textbf{(iv)} and Proposition~\ref{ppre21/3bothextadd} imply that $d_{\lambda,3}^{P'} = d_{\lambda}^{P} + b_p - 1>d_{\mu,3}^{P'} = d_{\mu,1}^{P}$, $d_{\mu,1}^{P'} = d_{\lambda,1}^{P} + 1$, and $\lambda_{3,0}^{P'} = \lambda_{0}^{P}$. In particular, if $a_p = 1$ and $b_p>0$, then $P'$ is strongly $r$-regular.
\end{proof}

In particular, in light of Proposition~\ref{pregularap>1}, we need only consider the case $b_{p+1} = 0$ when studying a pseudo $r$-regular $3$-block $\sigma_2^{c_{p-1}}\sigma_3\sigma_2^{c_p}$. The singular subroad following a pseudo $r$-regular block is particularly relevant (Definition~\ref{defsingularroadweight}). 

\begin{definition}
Let $B$ be a block. Let $P$ be the minimal $s$-subproduct of $\sigma$ containing $B$ and let $R'$ be the singular subroad following $B$. We define the \textit{singular subproduct $P'$ containing $B$} as follows. 
\begin{description}
\item[(i)] If $R'$ is followed by an isolated $\sigma_2$, then $P'$ is the minimal $s'$-subproduct of $\sigma$ for $s'\in \{1,3\}$ such that $R'$ is properly contained in $P'$. (If no such $s'$-subproduct exists, then $P'$ is not defined.)
\item[(ii)] If $R'$ is followed by a non-isolated $\sigma_2$, then $P'$ is the minimal $2$-subproduct of $\sigma$ such that $R'$ is properly contained in $P'$.
\end{description}
If $PR' = \sigma$, then $P'$ is not defined. 
\end{definition}

We now establish that the singular weight of the singular subroad following a pseudo $r$-regular block measures the change in discrepancy between a maximal $q$-resistance good path and a maximal $q$-resistance bad path, if both paths are extended along the singular subroad.

\begin{proposition}
\label{ppseudoregularpathextension}
Let $B$ be a pseudo $r$-regular block and let $P$ be the minimal $s$-subproduct of $\sigma$ containing $B$. Let $R'$ be the singular subroad following $B$. Let $P_0'$ be an $s_0'$-subproduct of $\sigma$ containing $B$ such that $P_0'\setminus P\subseteq R'$. 

If $s_0' = 2$, then let $M$ be the number of $\sigma_2$s in $P_0'\setminus P$, and if $s_0'\in \{1,3\}$, then let $M$ be one more than the number of $\sigma_2$s in $P_0'\setminus P$. Let us assume that $\omega_{\text{sing}}\left(P_0'\setminus P\right)\leq \Xi_{\text{pseudo}}\left(B\right)$. 

\begin{description}
\item[(a)] Let us consider the case where $B = \sigma_1^{a_p}\sigma_3^{b_p}\sigma_2\sigma_1^2$ is a $2$-block. 

\begin{description}
\item[(i)] Let $M$ be even.

If $s_0' = 2$, then $d_{\mu}^{P_0'} - d_{\lambda}^{P_0'} = d_{\mu}^{P} - d_{\lambda}^{P} - \omega_{\text{sing}}\left(P_0'\setminus P\right)$, and $d_{\nu}^{P_0'}\leq \max\{d_{\mu}^{P_0'} -a_p,d_{\lambda}^{P_0'} - 1\}$ with equality if $d_{\mu}^{P_0'} - a_p\neq d_{\lambda}^{P_0'} - 1$.

If $s_0'\in \{1,3\}$, then $d_{\nu}^{P_0'} - d_{\lambda}^{P_0'} = d_{\mu}^{P} - d_{\lambda}^{P} - \omega_{\text{sing}}\left(P_0'\setminus P\right) + 1$ and $d_{\nu}^{P_0'} = d_{\mu}^{P_0'} + a_p$. 
\item[(ii)] Let $M$ be odd and let us assume that $\omega_{\text{sing}}\left(P_0'\setminus P\right) + a_p + 1\leq \Xi_{\text{pseudo}}\left(B\right)$. 

If $s_0' = 2$, then $d_{\mu}^{P_0'} - d_{\lambda}^{P_0'} = d_{\nu}^{P} - d_{\lambda}^{P} - \omega_{\text{sing}}\left(P_0'\setminus P\right)$ and $d_{\nu}^{P_0'} = d_{\mu}^{P_0'} + a_p$.

If $s_0'\in \{1,3\}$, then $d_{\nu}^{P_0'} - d_{\lambda}^{P_0'} = d_{\nu}^{P} - d_{\lambda}^{P} - \omega_{\text{sing}}\left(P_0'\setminus P\right) - 1$ and $d_{\nu}^{P_0'} = d_{\mu}^{P_0'}-a_p-1$.
\end{description}
\item[(b)]  Let us consider the case where $B = \sigma_2^{c_{p-1}}\sigma_3\sigma_2^{c_p}$ is a $3$-block and $b_{p+1} = 0$. 
\begin{description}
\item[(i)] Let $M$ be even. 

If $s_0' = 2$, then $d_{\mu}^{P_0'} - d_{\lambda}^{P_0'} = d_{\mu}^{P} - d_{\lambda,1}^{P} - \omega_{\text{sing}}\left(P_0'\setminus P\right)$ and $d_{\nu}^{P_0'} = d_{\mu}^{P_0'}$.

If $s_0'\in \{1,3\}$, then $d_{\nu}^{P_0'} - d_{\lambda}^{P_0'} = d_{\mu}^{P} - d_{\lambda,1}^{P} - \omega_{\text{sing}}\left(P_0'\setminus P\right) + 1$ and $d_{\nu}^{P_0'} = d_{\mu}^{P_0'}$.

\item[(ii)] Let $M$ be odd. 

If $s_0' = 2$, then $d_{\mu}^{P_0'} - d_{\lambda}^{P_0'} = d_{\nu}^{P} - d_{\lambda,1}^{P} - \omega_{\text{sing}}\left(P_0'\setminus P\right)$, and $d_{\nu}^{P_0'}\leq \max\{d_{\mu}^{P_0'},d_{\lambda}^{P_0'}\}$ with equality if $d_{\mu}^{P_0'}\neq d_{\lambda}^{P_0'}$.

If $s_0'\in \{1,3\}$, then $d_{\nu}^{P_0'} - d_{\lambda}^{P_0'} = d_{\nu}^{P} - d_{\lambda,1}^{P} - \omega_{\text{sing}}\left(P_0'\setminus P\right) -1$ and $d_{\nu}^{P_0'} = d_{\mu}^{P_0'} - 1$.
\end{description}
\end{description}
\end{proposition}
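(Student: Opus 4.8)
The plan is to prove Proposition~\ref{ppseudoregularpathextension} by induction on the length of $P_0'\setminus P$, extending a maximal $q$-resistance good $\left(r,\cdot\right)$-type path, a maximal $q$-resistance bad path, and a maximal $q$-resistance off-type (that is, $\nu$) path simultaneously one elementary subproduct at a time, exactly in the spirit of the proof of Lemma~\ref{lweightcompute}. First I would record the base data supplied by the hypothesis that $B$ is a pseudo $r$-regular block: Definition~\ref{defpseudoregularblock} gives the relations among $d_{\lambda}^{P}$, $d_{\mu}^{P}$, $d_{\nu}^{P}$ (respectively $d_{\lambda,1}^{P}$, $d_{\mu,1}^{P}$, $d_{\mu,3}^{P}$ in the $3$-block case) together with the equalities of the relevant leading coefficients and the sign of $P$. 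Since the singular subroad $R'$ following $B$ is contained in an isolated $\sigma_2$ subproduct in which every non-zero exponent of $\sigma_1$ and $\sigma_3$ equals $2$ and $R'$ does not end in $\sigma_2$, the piece $P_0'\setminus P\subseteq R'$ is an alternating product of single generators $\sigma_2$ and squares $\sigma_1^2$ or $\sigma_3^2$; the number $M$ counts precisely the cells of this alternation.

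Next I would run the induction cell by cell. Crossing a square $\sigma_i^2$ is an extension of a $2$-subproduct to an $s$-subproduct with either $a_p=0$ or $b_p=0$ and $\phi_p=2$, so it is governed by Proposition~\ref{p21/3ext}~\textbf{(i)} together with Proposition~\ref{p21/3extadd}~\textbf{(i)}--\textbf{(ii)}; crossing a single $\sigma_2$ is an extension of an $s$-subproduct ($s\in\{1,3\}$) to a $2$-subproduct with $c_p=1$, governed by Proposition~\ref{p1/32ext}~\textbf{(a)} and the $c_p=1$ branches of Proposition~\ref{p1/32extadd}~\textbf{(a)}~\textbf{(iii)}, \textbf{(b)}~\textbf{(ii)}, \textbf{(d)}~\textbf{(ii)}. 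At each cell I would compute the change in the triple $\left(d_{\lambda},d_{\mu},d_{\nu}\right)$ and in the leading coefficients. The key quantitative point is that each full cell $\sigma_2\sigma_i^2$ contributes $3$ to the degree of the maximal $q$-resistance good path while the dominant bad/off-type path gains strictly less, so the discrepancy falls by a fixed amount per cell; the closed-form $\tfrac{3M\pm c}{2}$ expressions in Definition~\ref{defsingularroadweight} are exactly the running total of this per-cell reduction, the constant offset $c$ being determined by whether the alternation begins after a $2$-block (which terminates in $\sigma_1^2$) or after a $3$-block (which terminates in $\sigma_2^{c_p}$) and by the parity of $M$, i.e.\ by whether $P_0'$ ends at a $\sigma_2$ or at a square. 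Tracking the parity of $M$ simultaneously pins down the switch type of $P_0'$ and hence which of the two displayed sub-cases ($M$ even versus $M$ odd, $s_0'=2$ versus $s_0'\in\{1,3\}$) applies.

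Throughout the induction I would use the standing hypothesis $\omega_{\text{sing}}\left(P_0'\setminus P\right)\le \Xi_{\text{pseudo}}\left(B\right)$ (and in case \textbf{(a)}~\textbf{(ii)} the strengthened bound $\omega_{\text{sing}}\left(P_0'\setminus P\right)+a_p+1\le\Xi_{\text{pseudo}}\left(B\right)$) to guarantee that the discrepancy stays nonnegative, so that at every cell the dominant path remains the bad/off-type path rather than the good path. This is what keeps us in the regime where the ``$d_{\lambda}^{P}\le d_{\mu}^{P}$'' branches of the extension propositions apply, and it is preserved inductively because the hypothesis bounds the total degree the good path can recover. The equalities relating $d_{\nu}^{P_0'}$ to $d_{\mu}^{P_0'}$ (shifted by $a_p$ or $0$ according to the case) are read off from the $\sigma_1$-power bookkeeping in Proposition~\ref{ppre21/3ext}~\textbf{(i)} and the $c_p=1$ weight identities, and I would carry them as an additional inductive invariant alongside the degree and leading-coefficient data.

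The hard part will be the exact arithmetic of the constant offsets and the leading-coefficient propagation through the alternation. Obtaining $\tfrac{3M-2}{2}$ versus $\tfrac{3M-5}{2}$ (and $\tfrac{3M}{2}$ versus $\tfrac{3M-1}{2}$) correctly amounts to a careful analysis of the first and last cells of $R'$, where the pattern either starts immediately after the block's terminal generator or ends just before a square, and where the $\nu$-path can overtake the $\mu$-path depending on $a_p$; the parity split and the two block types each perturb these boundary contributions differently. I expect the interior of the induction to be routine once the base case is fixed, so the main effort lies in isolating the correct boundary behavior and confirming that it matches Definition~\ref{defsingularroadweight} in all four displayed sub-cases.
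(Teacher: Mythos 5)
Your inductive framework has the right spirit, but the tools you propose to run it with do not apply in the regime this proposition lives in, so the induction cannot get off the ground. Pseudo $r$-regularity (Definition~\ref{defpseudoregularblock}) puts the \emph{bad} families on top: in the $2$-block case $d_{\lambda}^{P}\leq d_{\mu}^{P}$ and $d_{\nu}^{P}=d_{\mu}^{P}-a_p-1$, and the hypothesis of part \textbf{(a)}~\textbf{(ii)} forces the ordering $d_{\mu}^{P}>d_{\nu}^{P}\geq d_{\lambda}^{P}$, with the good path at the \emph{bottom}. The very first cell of your induction is a single-$\sigma_2$ crossing ($c_p=1$), and none of your cited branches covers this ordering: Proposition~\ref{p1/32ext}~\textbf{(a)} needs $d_{\lambda}^{P}\geq d_{\mu}^{P}$; Proposition~\ref{p1/32extadd}~\textbf{(a)}~\textbf{(iii)} needs $d_{\nu}^{P}+\left(-1\right)^{\psi}<d_{\lambda}^{P}$; \textbf{(b)}~\textbf{(ii)} needs $d_{\lambda}^{P}>d_{\mu}^{P}$; and \textbf{(d)}, whose hypothesis is the one that actually holds, has no $c_p=1$ conclusion at all. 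Moreover the standing non-cancellation side conditions of those propositions (e.g.\ $\lambda_{0}^{P}+\mu_{0}^{P}\neq 0$ when degrees become adjacent) fail exactly when $P$ is negative, since pseudo-regularity forces all leading coefficients to agree up to sign. The paper's proof avoids all of this by using a different tool that you never invoke: the singular subroad is an isolated $\sigma_2$ bad subroad, so the two bad/off-type families extend \emph{uniquely} along it (Proposition~\ref{pelementarybadsubroadimpliesuniqueextension}~\textbf{(c)}, Corollary~\ref{celementarybadsubroadimpliesuniqueextension}~\textbf{(c)}), with closed-form weight multipliers of the shape $\left(-q\right)^{\frac{M}{2}}$ or $\left(-q\right)^{\frac{M\pm 1}{2}}$ over the whole stretch, while the good path switches at every opportunity by Lemma~\ref{lemmagoodext}~\textbf{(i)} and picks up a factor of the shape $\pm q^{2M-\psi'}$. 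The stated degree identities, the $M$ even/odd dichotomy, and the appearance of $d_{\nu}^{P}$ versus $d_{\mu}^{P}$ as the reference point (the two bad families swap roles after each odd number of $\sigma_2$s) then follow by pure arithmetic, with no per-cell dominance or cancellation hypotheses to verify.

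Separately, your quantitative anchor is wrong, and it is precisely the boundary arithmetic you deferred. Tracing Figure~\ref{fig1}: entering a cell at vertex $1$ and crossing $\sigma_2\sigma_3^2$ gains only $1$ (the switches $1\to 2$ and $2\to 3$ carry weight $+1$), while crossing $\sigma_2\sigma_1^2$ from vertex $3$ gains $3$; the maximal good path therefore gains $2$ per cell on average, not $3$, while the unique bad extension gains $1$ only every second cell (roughly $\tfrac{1}{2}$ per cell). The net reduction in discrepancy is thus $\tfrac{3}{2}$ per cell, which is exactly where the $\tfrac{3M\pm c}{2}$ formulas of Definition~\ref{defsingularroadweight} come from. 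With a ``$3$ per cell'' gain for the good path and ``strictly less'' for the bad path, your running totals cannot reproduce $\omega_{\text{sing}}$, so even granting an inductive framework the conclusion would come out wrong.
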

\begin{proof}
If $s_0' = 2$, then define $i'$ to be the index of the last generator in $P_0'$ that is not equal to $2$ and define $i$ to be such that $\{i,i'\} = \{1,3\}$. If $s_0'\in \{1,3\}$, then define $i$ to be the index of the last generator in $P_0'$ and define $i'$ to be such that $\{i,i'\} = \{1,3\}$. If $i = 1$, then define $\psi' = 1$, and if $i = 3$, then define $\psi' = 0$.

If $B$ is a $2$-block, then let $X$ be a maximal $q$-resistance good $\left(r,1\right)$-type $P$-path. If $B$ is a $3$-block, then let $X$ be a maximal $q$-resistance good $\left(r,2\right)$-type $P$-path. If $s_0' = 2$, then Lemma~\ref{lemmagoodext} \textbf{(i)} implies that the maximal $q$-resistance extensions of $X$ to an $\left(r,i'\right)$-type $P_0'$-path $X_0$ and to an $\left(r,2\right)$-type $P_0'$-path $X_0'$ are defined by extending $X$ by vertex changes at every opportunity. Similarly, if $s_0'\in \{1,3\}$, then Lemma~\ref{lemmagoodext} \textbf{(i)} implies that the maximal $q$-resistance extensions of $X$ to an $\left(r,i'\right)$-type $P_0'$-path $X_0$ and to an $\left(r,i\right)$-type $P_0'$-path $X_0'$ are defined by extending $X$ by vertex changes at every opportunity.

If $s_0' = 2$, then the weight of $X_0$ is the product of $\left(-1\right)^{M}q^{2M-\psi'}$ with the weight of $X$, and the weight of $X_{0}'$ is the product of $\left(-1\right)^{M}q^{2M}$ with the weight of $X$. If $s_0'\in \{1,3\}$, then the weight of $X_0$ is the product of $\left(-1\right)^{M - \psi' - 1}q^{2M-\psi'-2}$ with the weight of $X$, and the weight of $X_0'$ is the product of $\left(-1\right)^{M+\psi' - 1}q^{2M+\psi' - 1}$ with the weight of $X$.

\begin{description}
\item[(a)] Let $Y$ be a maximal $q$-resistance $\left(r,1\right)$-type $P$-path and let $Z$ be a maximal $q$-resistance $\left(r,3\right)$-type $P$-path. Let $Y_0'$ be the maximal $q$-resistance extension of $Y$ to an $\left(r,\cdot\right)$-type $P_0'$-path and let $Z_0'$ be the maximal $q$-resistance extension of $Z$ to an $\left(r,\cdot\right)$-type $P_0'$-path.

\begin{description}
\item[(i)] Corollary~\ref{celementarybadsubroadimpliesuniqueextension} \textbf{(c)} implies that $Z_0'$ is an $\left(r,3\right)$-type $P_0'$-path. If $s_0' = 2$, then Corollary~\ref{celementarybadsubroadimpliesuniqueextension} \textbf{(c)} implies that $Y_0'$ is an $\left(r,2\right)$-type $P_0'$-path. If $s_0'\in \{1,3\}$, then Corollary~\ref{celementarybadsubroadimpliesuniqueextension} \textbf{(c)} implies that $Y_0'$ is an $\left(r,1\right)$-type $P_0'$-path.

Proposition~\ref{pelementarybadsubroadimpliesuniqueextension} \textbf{(c)} implies that the weight of $Y_0'$ is the product of $\left(-q\right)^{\frac{M}{2}-1}$ with the weight of $Y$, and the weight of $Z_0'$ is the product of $\left(-q\right)^{\frac{M}{2}}$ with the weight of $Z$.

\item[(ii)] Corollary~\ref{celementarybadsubroadimpliesuniqueextension} \textbf{(c)} implies that $Y_0'$ is an $\left(r,1\right)$-type $P_0'$-path. If $s_0' = 2$, then Corollary~\ref{celementarybadsubroadimpliesuniqueextension} \textbf{(c)} implies that $Z_0'$ is an $\left(r,2\right)$-type $P_0'$-path. If $s_0'\in \{1,3\}$, then Corollary~\ref{celementarybadsubroadimpliesuniqueextension} \textbf{(c)} implies that $Z_0'$ is an $\left(r,3\right)$-type $P_0'$-path.

Proposition~\ref{pelementarybadsubroadimpliesuniqueextension} \textbf{(c)} implies that the weight of $Y_0'$ is the product of $\left(-q\right)^{\frac{M-1}{2}}$ with the weight of $Y$. If $s_0' = 2$, then Proposition~\ref{pelementarybadsubroadimpliesuniqueextension} \textbf{(c)} implies that the weight of $Z_0'$ is the product of $\left(-q\right)^{\frac{M+1}{2}}$ with the weight of $Z$. If $s_0'\in \{1,3\}$, then Proposition~\ref{pelementarybadsubroadimpliesuniqueextension} \textbf{(c)} implies that the weight of $Z_0'$ is the product of $\left(-q\right)^{\frac{M+1}{2} - 1}$ with the weight of $Z$. 
\end{description}
\item[(b)] If $B$ is a $3$-block, then let $Y$ be a maximal $q$-resistance bad ($1$-switch) $\left(r,2\right)$-type $P$-path and let $Z$ be a maximal $q$-resistance $\left(r,3\right)$-type $P$-path. Let $Y_0'$ be the maximal $q$-resistance extension of $Y$ to an $\left(r,\cdot\right)$-type $P_0'$-path and let $Z_0'$ be the maximal $q$-resistance extension of $Z$ to an $\left(r,\cdot\right)$-type $P_0'$-path.
\begin{description}
\item[(i)] Corollary~\ref{celementarybadsubroadimpliesuniqueextension} \textbf{(c)} implies that $Z_0'$ is an $\left(r,3\right)$-type $P_0'$-path. If $s_0' = 2$, then Corollary~\ref{celementarybadsubroadimpliesuniqueextension} \textbf{(c)} implies that $Y_0'$ is an $\left(r,2\right)$-type $P_0'$-path. If $s_0'\in \{1,3\}$, then Corollary~\ref{celementarybadsubroadimpliesuniqueextension} \textbf{(c)} implies that $Y_0'$ is an $\left(r,1\right)$-type $P_0'$-path.

Proposition~\ref{pelementarybadsubroadimpliesuniqueextension} \textbf{(c)} implies that the weight of $Y_0'$ is the product of $\left(-q\right)^{\frac{M}{2}}$ with the weight of $Y$, and the weight of $Z_0'$ is the product of $\left(-q\right)^{\frac{M}{2}}$ with the weight of $Z$.
\item[(ii)] Corollary~\ref{celementarybadsubroadimpliesuniqueextension} \textbf{(c)} implies that $Y_0'$ is an $\left(r,1\right)$-type $P_0'$-path. If $s_0' = 2$, then Corollary~\ref{celementarybadsubroadimpliesuniqueextension} \textbf{(c)} implies that $Z_0'$ is an $\left(r,2\right)$-type $P_0'$-path. If $s_0'\in \{1,3\}$, then Corollary~\ref{celementarybadsubroadimpliesuniqueextension} \textbf{(c)} implies that $Z_0'$ is an $\left(r,3\right)$-type $P_0'$-path. 

Proposition~\ref{pelementarybadsubroadimpliesuniqueextension} \textbf{(c)} implies that the weight of $Y_0'$ is the product of $\left(-q\right)^{\frac{M+1}{2}}$ with the weight of $Y$. If $s_0' = 2$, then Proposition~\ref{pelementarybadsubroadimpliesuniqueextension} \textbf{(c)} implies that the weight of $Z_0'$ is the product of $\left(-q\right)^{\frac{M+1}{2}}$ with the weight of $Z$. If $s_0'\in \{1,3\}$, then Proposition~\ref{pelementarybadsubroadimpliesuniqueextension} \textbf{(c)} implies that the weight of $Z_0'$ is the product of $\left(-q\right)^{\frac{M+1}{2} - 1}$ with the weight of $Z$.
\end{description}
\end{description}
Therefore, the statement is established.
\end{proof}

If $B$ is a pseudo $r$-regular block, then we establish that there is generically a strongly $r$-regular subproduct $P'$ such that $B\subseteq P'$.

\begin{corollary}
\label{cpseudoregularregular}
Let $B$ be a pseudo $r$-regular block in $\sigma$ and let $P$ be the minimal $s$-subproduct of $\sigma$ containing $B$. Let $R'$ be the singular subroad following $B$ and let $P'$ be the singular subproduct containing $B$. Let $M$ be one more than the number of $\sigma_2$s in $R'$.

If $R'$ is followed by an isolated $\sigma_2$, then we define $\phi_{p'}$ such that $P'\setminus R' = \sigma_2\sigma_i^{\phi_{p'}}$. If $R'$ is followed by a non-isolated $\sigma_2$, then we define $\phi_{p'}$ such that $P'\setminus R' = \sigma_2^{\phi_{p'}}$.
 
\begin{description}
\item[(a)] Let $B$ be a $2$-block.
\begin{description}
\item[(i)] Let us assume that $R'$ is followed by an isolated $\sigma_2$, $M$ is odd, and $\omega_{\text{sing}}\left(R'\right) > \Xi_{\text{pseudo}}\left(B\right) - a_p - 2$. 

If $\Xi_{\text{pseudo}}\left(B\right) +2 - \frac{3\left(M-1\right)}{2} - \phi_{p'} <0$, then $P'$ is strongly $r$-regular. 

If $\Xi_{\text{pseudo}}\left(B\right) +2 - \frac{3\left(M-1\right)}{2} - \phi_{p'}\geq 0$, then $P'$ is $\nu$ $r$-regular and $\Xi\left(P'\right) = \Xi_{\text{pseudo}}\left(B\right) +2 - \frac{3\left(M-1\right)}{2} - \phi_{p'}$. Furthermore, in this case, if $M\equiv 1\pmod 4$, then the sign of $P'$ (as a $\nu$ $r$-regular subproduct) is the opposite of the sign of $P$ (as a pseudo $r$-regular subproduct), and if $M\equiv 3\pmod 4$, then the sign of $P'$ is equal to the sign of $P$. 

\item[(ii)] Let us assume that $R'$ is followed by an isolated $\sigma_2$, $M$ is odd, and $\omega_{\text{sing}}\left(R'\right) < \Xi_{\text{pseudo}}\left(B\right) - a_p - 2$. 

If $a_p - \phi_{p'}+3 < 0$, then $P'$ is strongly $r$-regular. 

If $a_p-\phi_{p'}+3\geq 0$, then $P'$ is positive $\nu$ $r$-regular and $\Xi\left(P'\right) = a_p - \phi_{p'} + 3$.

\item[(iii)] Let us assume that $R'$ is followed by an isolated $\sigma_2$ and $M$ is even. 

If $\omega_{\text{sing}}\left(R'\right)\neq \Xi_{\text{pseudo}}\left(B\right) - 1$, then $P'$ is strongly $r$-regular. 

If either $P$ is positive and $M\equiv 0\pmod 4$, or $P$ is negative and $M\equiv 2 \pmod 4$, then $P'$ is strongly $r$-regular.

\item[(iv)] Let us assume that $R'$ is followed by a non-isolated $\sigma_2$ and $M$ is even. 

If $\omega_{\text{sing}}\left(R'\right)\neq \Xi_{\text{pseudo}}\left(B\right)$, then $P'$ is strongly $r$-regular. 

If either $P$ is positive and $M\equiv 0\pmod 4$, or $P$ is negative and $M\equiv 2\pmod 4$, then $P'$ is strongly $r$-regular.
\item[(v)] Let us assume that $R'$ is followed by a non-isolated $\sigma_2$, $M$ is odd, and $\omega_{\text{sing}}\left(R'\right)< \Xi_{\text{pseudo}}\left(B\right) - a_p - 1$. 

If $\phi_{p'}>2$ and $a_p>1$, then $P'$ is strongly $r$-regular. 

If $\phi_{p'}=2$, then $P'$ is negative $r$-regular with $\Xi\left(P'\right) = a_p - 1$. 
\item[(vi)] Let us assume that $R'$ is followed by a non-isolated $\sigma_2$, $M$ is odd, and $\omega_{\text{sing}}\left(R'\right)> \Xi_{\text{pseudo}}\left(B\right) - a_p - 1$. 

If either $\phi_{p'}>2$ and $\omega_{\text{sing}}\left(R'\right) \neq \Xi_{\text{pseudo}}\left(B\right) -2$, or $\phi_{p'} = 2$ and $\omega_{\text{sing}}\left(R'\right) > \Xi_{\text{pseudo}}\left(B\right) -2$, then $P'$ is strongly $r$-regular. 

If $\phi_{p'} = 2$ and $\omega_{\text{sing}}\left(R'\right)\leq \Xi_{\text{pseudo}}\left(B\right) -2$, then $P'$ is $r$-regular with $\Xi\left(P'\right) = \Xi_{\text{pseudo}}\left(B\right) - \omega_{\text{sing}}\left(R'\right) -2$. 
\end{description}
\item[(b)] Let us consider the case where $B$ is a $3$-block.
\begin{description}
\item[(i)] Let us assume that $R'$ is followed by an isolated $\sigma_2$. 

If $\omega_{\text{sing}}\left(R'\right)\geq \Xi_{\text{pseudo}}\left(B\right)$, then $P'$ is strongly $r$-regular. 

If $\omega_{\text{sing}}\left(R'\right) + 1 < \Xi_{\text{pseudo}}\left(B\right)$ and $\phi_{p'}>3$, then $P'$ is strongly $r$-regular. 

If $\omega_{\text{sing}}\left(R'\right) + 1 < \Xi_{\text{pseudo}}\left(B\right)$ and $\phi_{p'} = 3$, then $P'$ is $\nu$ $r$-regular and $\Xi\left(P'\right) = 1$. 
\item[(ii)] Let us assume that $R'$ is followed by an isolated $\sigma_2$ and $\omega_{\text{sing}}\left(R'\right) + 1 = \Xi_{\text{pseudo}}\left(B\right)$. 

If either $P$ is positive and $M\equiv 0,1\pmod 4$, or $P$ is negative and $M\equiv 2,3 \pmod 4$, then $P'$ is strongly $r$-regular. 
\item[(iii)] Let us assume that $R'$ is followed by a non-isolated $\sigma_2$. 

If $M$ is odd and $\omega_{\text{sing}}\left(R'\right)\neq \Xi_{\text{pseudo}}\left(B\right)$, then $P'$ is strongly $r$-regular. 

If $\omega_{\text{sing}}\left(R'\right)>\Xi_{\text{pseudo}}\left(B\right)$, then $P'$ is strongly $r$-regular. 
\end{description}
\end{description}
Finally, in each of the above cases, if $P'$ is not defined, then the leading coefficients of multiple entries in the $r$th row of $\beta\left(\sigma\right)$ are non-zero modulo each prime $t\neq 2,3$.
\end{corollary}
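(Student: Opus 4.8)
The plan is to reduce the statement to bookkeeping of the two maximal $q$-resistance paths---one good, one bad, with near-equal $q$-resistances---whose comparison is the defining feature of a pseudo $r$-regular block (Definition~\ref{defpseudoregularblock}), and then to apply the path-extension machinery of Subsection~\ref{subsecpathextensions} and Subsection~\ref{subsecpathextensionsII} to the short product $P'\setminus(PR')$ at the very end. The heavy lifting along the singular subroad $R'$ is already supplied by Proposition~\ref{ppseudoregularpathextension}: taking $P_0' = PR'$ there, its formulas express the degrees $d_{\lambda}$, $d_{\mu}$, $d_{\nu}$ (and their leading coefficients) at the end of $R'$ in terms of the pseudo-discrepancy $\Xi_{\text{pseudo}}(B)$, the singular weight $\omega_{\text{sing}}(R')$ (Definition~\ref{defsingularroadweight}), the parity of $M$, and $a_p$. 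The key qualitative input is that $\omega_{\text{sing}}(R')$ measures exactly how much the good path catches up to the bad path as both are extended along $R'$.

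First I would split according to whether $R'$ is followed by an isolated or a non-isolated $\sigma_2$, since this fixes the shape $P'\setminus(PR') = \sigma_2\sigma_i^{\phi_{p'}}$ or $\sigma_2^{\phi_{p'}}$ of the final extension, and hence which proposition applies (Proposition~\ref{p21/3ext} and Proposition~\ref{p21/3extadd} when the penultimate subproduct ends with a power of $\sigma_i$, versus Proposition~\ref{p1/32ext} and Proposition~\ref{p1/32extadd} in the non-isolated case). Feeding the degree data from Proposition~\ref{ppseudoregularpathextension} into these propositions yields $d_{\lambda}^{P'}$, $d_{\mu}^{P'}$, $d_{\nu}^{P'}$ and their leading coefficients. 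Comparing these against Definition~\ref{defweakrregular} then forces the dichotomy: if the good path has strictly overtaken, so that $d_{\lambda}^{P'}$ exceeds the relevant $d_{\mu}$ or $d_{\nu}$, then $P'$ is strongly $r$-regular; otherwise the bad (or $\nu$) path still leads, $P'$ is $r$-regular (respectively $\nu$ $r$-regular), and its discrepancy $\Xi(P')$ equals the residual gap recorded in each case. The numerical thresholds in the statement---comparisons of $\omega_{\text{sing}}(R')$ with $\Xi_{\text{pseudo}}(B)$ shifted by $a_p$, $\phi_{p'}$, or small constants---are precisely the inequalities under which one degree exceeds the other.

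The sign conditions modulo $4$ are the delicate part. As the two paths traverse the alternating squares of $R'$, Proposition~\ref{ppseudoregularpathextension} attaches factors of the form $(-1)^{M/2}$ or $(-1)^{(M\pm 1)/2}$ to their weights, and the final extension contributes one more sign; tracking the product of these against the sign of $P$ (Definition~\ref{defpseudoregularblock}) determines the sign of $P'$, and the congruences $M\equiv 0,1,2,3 \pmod 4$ are exactly the conditions making this product $+1$ or $-1$. In the subcases where the product would force two equal-degree leading coefficients to cancel, one instead verifies that overtaking has already occurred, so $P'$ is strongly $r$-regular---this is how the parity dichotomies in (a)(iii)--(iv) and (b)(ii) are resolved. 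Throughout, the surviving leading coefficient is a non-zero integer with at most $2$ and $3$ as prime factors, inherited from $\lambda_0^{P}$ and $\mu_0^{P}$ via the multiplicative factors, which are powers of $q$ times $\pm 1$ and occasionally $2$.

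Finally, the case where $P'$ is undefined (that is, $PR' = \sigma$ or no further subproduct exists) is handled directly: Proposition~\ref{ppseudoregularpathextension} together with Proposition~\ref{m=ap} already identifies the maximal $q$-resistance paths of each relevant type terminating the braid, and since their weights do not cancel, the leading coefficients of multiple entries in the $r$th row of $\beta_4(\sigma)$ are non-zero modulo $t$ for $t\neq 2,3$. I expect the main obstacle to be the sheer combinatorial case analysis---the interaction of the parity of $M$, the isolated versus non-isolated distinction, the $2$-block versus $3$-block split, and the sign bookkeeping mod $4$---together with checking that every arithmetic threshold in the statement matches the exact degree comparison emerging from the extension formulas. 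No individual step is conceptually hard once Proposition~\ref{ppseudoregularpathextension} is available; the difficulty lies in organizing the cases so that no sign or off-by-one error escapes.
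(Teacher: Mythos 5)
Your proposal is correct and follows essentially the same route as the paper: the paper's own proof is precisely a case-by-case citation of Proposition~\ref{ppseudoregularpathextension} (for the traversal of $R'$) combined with Proposition~\ref{p21/3ext} and Proposition~\ref{p21/3extadd} in the isolated-$\sigma_2$ cases, and Proposition~\ref{p1/32ext} and Proposition~\ref{p1/32extadd} in the non-isolated cases, which is exactly the division of labor you describe. Your additional narrative on the sign bookkeeping modulo $4$ and on the undefined-$P'$ case supplies detail the paper leaves implicit, and it is consistent with Definition~\ref{defpseudoregularblock} and Definition~\ref{defweakrregular}.
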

\begin{proof}
\begin{description}
\item[(a)]
\begin{description}
\item[(i)] The statement is a consequence of Proposition~\ref{ppseudoregularpathextension} \textbf{(a)} \textbf{(ii)} and Proposition~\ref{p21/3ext} \textbf{(i)} and \textbf{(iii)}.
\item[(ii)] The statement is a consequence of Proposition~\ref{ppseudoregularpathextension} \textbf{(a)} \textbf{(ii)}, Proposition~\ref{p21/3ext} \textbf{(iii)}, and Proposition~\ref{p21/3extadd} \textbf{(i)}.
\item[(iii)] The statement is a consequence of Proposition~\ref{ppseudoregularpathextension} \textbf{(a)} \textbf{(i)}, Proposition~\ref{p21/3ext} \textbf{(i)} and \textbf{(iii)}, and Proposition~\ref{p21/3extadd} \textbf{(i)}.
\item[(iv)] The statement is a consequence of Proposition~\ref{ppseudoregularpathextension} \textbf{(a)} \textbf{(i)}, Proposition~\ref{p1/32ext} \textbf{(a)} \textbf{(i)}, and Proposition~\ref{p1/32extadd} \textbf{(b)} \textbf{(i)}, \textbf{(c)} and \textbf{(d)} \textbf{(i)}.
\item[(v)] The statement is a consequence of Proposition~\ref{ppseudoregularpathextension} \textbf{(a)} \textbf{(ii)} and Proposition~\ref{p1/32extadd} \textbf{(d)} \textbf{(ii)}.
\item[(vi)] The statement is a consequence of Proposition~\ref{ppseudoregularpathextension} \textbf{(a)} \textbf{(ii)} and Proposition~\ref{p1/32extadd} \textbf{(a)} \textbf{(i)}.
\end{description}
\item[(b)]
\begin{description}
\item[(i)] The statement is a consequence of Proposition~\ref{ppseudoregularpathextension} \textbf{(b)}, Proposition~\ref{p21/3ext} \textbf{(i)} and \textbf{(iii)}, and Proposition~\ref{p21/3extadd} \textbf{(i)}.
\item[(ii)] The statement is a consequence of Proposition~\ref{ppseudoregularpathextension} \textbf{(b)}, Proposition~\ref{p21/3ext} \textbf{(iii)}, and Proposition~\ref{p21/3extadd} \textbf{(i)}.
\item[(iii)] The statement is a consequence of Proposition~\ref{ppseudoregularpathextension} \textbf{(b)}, Proposition~\ref{p1/32ext} \textbf{(i)}, and Proposition~\ref{p1/32extadd} \textbf{(d)} \textbf{(i)}.
\end{description}
\end{description}
\end{proof}

We now compartmentalize Lemma~\ref{lemma2blockweaklyregular} into a single statement on the property of $r$-regularity around $2$-blocks.

\begin{corollary}
\label{cor2blockregular}
Let $B = \sigma_1^{a_p}\sigma_3^{b_p}\sigma_2\sigma_1^{a_{p+1}}$ be a generic $2$-block and let $P = \prod_{i=1}^{p-1} \sigma_1^{a_i}\sigma_3^{b_i}\sigma_2^{c_i}$ be the $2$-subproduct immediately preceding $B$. Let $P' = PB$.
\begin{description}
\item[(a)] Let us assume that $P$ is strongly $r$-regular. If $a_p\geq b_p + 1$ and $a_{p+1} = 2$, then $P'$ is positive $\mu$-switch $r$-regular and $\Xi\left(P'\right) = a_p - b_p - 1$. If either $a_p < b_p + 1$ or $a_{p+1}>2$, then $P'$ is strongly $r$-regular.
\item[(b)] Let us assume that $P$ is $1$-switch $r$-regular and $\Xi\left(P\right)>1$. If $a_p\geq b_p + 2$ and $a_{p+1} = 2$, then $P'$ is positive $\mu$-switch $r$-regular and $\Xi\left(P'\right) = a_p - b_p - 2$. If either $a_p<b_p + 2$ or $a_{p+1}>2$, then $P'$ is strongly $r$-regular.
\item[(c)] Let us assume that $P$ is $3$-switch $r$-regular, $\Xi\left(P\right)>1$, and $b_p>2$. If $a_p\geq b_p$ and $a_{p+1} = 2$, then $P'$ is positive $\mu$-switch $r$-regular and $\Xi\left(P'\right) = a_p - b_p$. If either $a_p<b_p$ or $a_{p+1}>2$, then $P'$ is strongly $r$-regular.
\item[(d)] Let us assume that $P$ is $3$-switch $r$-regular, $\Xi\left(P\right)>1$, and $b_p = 2$. If $a_{p+1} = 2$, then $P'$ is pseudo $r$-regular, $\Xi_{\text{pseudo}}\left(P'\right) = \Xi\left(P\right) + a_p - 3$, and the sign of $P'$ is equal to the sign of $P$. If $a_{p+1}>2$, then $P'$ is strongly $r$-regular.
\end{description}
Finally, in each of the above cases, the leading coefficients of multiple entries in the $r$th row of $\beta_4\left(P'\right)$ are non-zero modulo each prime $t\neq 2,3$.
\end{corollary}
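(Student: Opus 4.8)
The plan is to obtain Corollary~\ref{cor2blockregular} as a dictionary translation of Lemma~\ref{lemma2blockweaklyregular}, supplemented by Lemma~\ref{l2blockstronglyregular}, into the language of $r$-regularity, sign, and discrepancy. First I would record the structural setup. Proposition~\ref{initialblocksingularblockconstraint}~\textbf{(iii)} gives $b_{p+1}=0$, so $P'=PB$ is an $s'$-subproduct with $s'\in\{1,3\}$ whose final exponents are $a_{p+1}>0$ and $b_{p+1}=0$; the relevant bookkeeping for $P'$ is therefore Definition~\ref{defweakrregular}~\textbf{(b)}~\textbf{(i)} (the $b_p=0$ branch, with $\psi'=1$) for ordinary $r$-regularity and Definition~\ref{defpseudoregularblock}~\textbf{(a)} for pseudo $r$-regularity. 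I would then unwind the hypotheses on $P$ into degree inequalities among $d_\lambda^P,d_{\mu,1}^P,d_{\mu,3}^P$: \emph{$P$ strongly $r$-regular} means $d_\lambda^P>\max\{d_{\mu,1}^P,d_{\mu,3}^P\}$, while \emph{$P$ is $i$-switch $r$-regular with $\Xi(P)>1$} means $d_\lambda^P+1<d_{\mu,i}^P$ together with $d_{\mu,i'}^P<d_\lambda^P$ and $\lambda_0^P\in\{\pm\mu_{i,0}^P\}$, every leading coefficient being a nonzero integer whose only prime factors are $2$ and $3$. These are exactly the per-case hypotheses attached to the parts of Lemma~\ref{lemma2blockweaklyregular}.

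For the $\mu$-switch and pseudo conclusions (all of which occur when $a_{p+1}=2$) I would match each claim to its part of Lemma~\ref{lemma2blockweaklyregular} and read off the output. Case~\textbf{(a)} with $a_p\geq b_p+1$ is Lemma~\ref{lemma2blockweaklyregular}~\textbf{(a)}; case~\textbf{(b)} with $a_p\geq b_p+2$ is \textbf{(b)}~\textbf{(i)}; case~\textbf{(c)} with $a_p\geq b_p$ (and $b_p>2$) is \textbf{(c)}~\textbf{(i)}. In each of these the lemma yields $d_\lambda^{P'}\leq d_\mu^{P'}$, $d_\nu^{P'}+3\leq d_\lambda^{P'}$, and $\lambda_0^{P'}=\mu_0^{P'}$ (the common leading coefficient being $\pm\lambda_0^P$ or $\pm\mu_{i,0}^P$), so Definition~\ref{defweakrregular}~\textbf{(b)}~\textbf{(i)} certifies $P'$ as $\mu$-switch $r$-regular and positive (the ratio $\mu_0^{P'}/\lambda_0^{P'}$ being $1$), with $\Xi(P')=d_\mu^{P'}-d_\lambda^{P'}$ equal to $a_p-b_p-1$, $a_p-b_p-2$, and $a_p-b_p$ respectively. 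For the pseudo-regular conclusion~\textbf{(d)} ($b_p=2$, $a_{p+1}=2$) I would invoke Lemma~\ref{lemma2blockweaklyregular}~\textbf{(c)}~\textbf{(iii)}, whose output $d_\lambda^{P'}=d_\lambda^P+4$, $d_\mu^{P'}=d_{\mu,3}^P+a_p+1$, $d_\nu^{P'}=d_{\mu,3}^P$, $\mu_0^{P'}=\mu_{3,0}^P=\nu_0^{P'}$ verifies the three clauses of Definition~\ref{defpseudoregularblock}~\textbf{(a)} (using $\Xi(P)\geq2$ and $a_p\geq1$ to force $d_\lambda^{P'}\leq d_\mu^{P'}$), whence $\Xi_{\text{pseudo}}(P')=\Xi(P)+a_p-3$ and the sign of $P'$ equals that of $P$ because $\lambda_0^{P'}=\lambda_0^P$ and $\mu_0^{P'}=\mu_{3,0}^P$.

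For the strong-regularity sub-cases I would split on $a_{p+1}$. When $a_{p+1}>2$ the block $B$ is normal, so case~\textbf{(a)} follows from Lemma~\ref{l2blockstronglyregular} (with $P$ strongly $r$-regular), while cases~\textbf{(b)}--\textbf{(d)} follow from Lemma~\ref{lemma2blockweaklyregular}~\textbf{(d)}, which outputs $d_\lambda^{P'}>\max\{d_\mu^{P'},d_\nu^{P'}+2\}$ and hence $\Xi(P')<0$. When $a_{p+1}=2$ with $a_p<b_p+1$ (resp.\ $<b_p+2$, $<b_p$) I would use Lemma~\ref{lemma2blockweaklyregular}~\textbf{(b)}~\textbf{(ii)} / \textbf{(c)}~\textbf{(ii)} in cases~\textbf{(b)},~\textbf{(c)}; for case~\textbf{(a)} with $a_{p+1}=2$ and $a_p\leq b_p$ the crucial observation is that a generic $2$-block automatically has $b_p\geq2$ (being non-initial and non-singular forces $b_p\geq2$ via Proposition~\ref{initialblocksingularblockconstraint}~\textbf{(i)} and Definition~\ref{blockclass}), so $a_p\leq b_p$ makes $B$ a \emph{normal} block and Lemma~\ref{l2blockstronglyregular} again applies. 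Finally, in every case $P$ is $r$-regular, so by Proposition~\ref{pnotationentries}~\textbf{(a)}~\textbf{(i)} the top-degree term of the $(r,2)$-entry of $\beta_4(P)$ is $\lambda_0^P$ or $\mu_{i,0}^P$ and is nonzero modulo $t$; the trailing sentence of Lemma~\ref{lemma2blockweaklyregular} (or of Lemma~\ref{l2blockstronglyregular}) then supplies the ``multiple nonzero entries'' conclusion.

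The main obstacle I anticipate is not a missing case but the bookkeeping itself: one must keep the two notational regimes (the $d_{\mu,1},d_{\mu,3},\lambda_0$ data of a $2$-subproduct versus the $d_\lambda,d_\mu,d_\nu$ data of the resulting $1$-subproduct $P'$) rigorously aligned, and verify that the claimed discrepancies, signs, and ``only $2,3$ as prime factors'' properties all survive the translation. The single most delicate point is recognizing the forced inequality $b_p\geq2$ for generic $2$-blocks, since without it the abnormal block $\sigma_1\sigma_3\sigma_2\sigma_1^2$ (with $b_p=1$) would appear to fall outside both Lemma~\ref{l2blockstronglyregular} (which needs a normal block) and Lemma~\ref{lemma2blockweaklyregular} (which needs $P$ to be $i$-switch rather than strong); the observation that such a block is \emph{singular}, hence excluded from the generic hypothesis, is what closes this apparent gap.
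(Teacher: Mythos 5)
Your proposal is correct and follows essentially the same route as the paper: the paper's proof is exactly the case-by-case dictionary you describe, citing Lemma~\ref{lemma2blockweaklyregular} \textbf{(a)}, \textbf{(b)(i)--(ii)}, \textbf{(c)(i)--(iii)}, \textbf{(d)(i)--(ii)} for the $a_{p+1}=2$ and switch-regular cases, and Lemma~\ref{l2blockstronglyregular} (via normality of $B$) for case \textbf{(a)} when $a_p<b_p+1$ or $a_{p+1}>2$. Your additional verifications — the translation of the hypotheses into degree inequalities, the check of Definition~\ref{defpseudoregularblock}, and the observation that a generic $2$-block forces $b_p\geq 2$ so that $a_p=1=b_p$ cannot obstruct normality — are precisely the details the paper leaves implicit.
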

\begin{proof}
\begin{description}
\item[(a)] If $a_p\geq b_p + 1$ and $a_{p+1} = 2$, then the statement is a consequence of Lemma~\ref{lemma2blockweaklyregular} \textbf{(a)}. If either $a_p<b_p + 1$ or $a_{p+1}>2$, then $B$ is a normal $2$-block and the statement is a consequence of Lemma~\ref{l2blockstronglyregular}.
\item[(b)] If $a_p\geq b_p + 2$ and $a_{p+1} = 2$, then the statement is a consequence of Lemma~\ref{lemma2blockweaklyregular} \textbf{(b)} \textbf{(i)}. If $a_p<b_p + 2$ and $a_{p+1} = 2$, then the statement is a consequence of Lemma~\ref{lemma2blockweaklyregular} \textbf{(b)} \textbf{(ii)}. If $a_{p+1}>2$, then the statement is a consequence of Lemma~\ref{lemma2blockweaklyregular} \textbf{(d)} \textbf{(i)}.
\item[(c)] If $a_p\geq b_p$ and $a_{p+1} = 2$, then the statement is a consequence of Lemma~\ref{lemma2blockweaklyregular} \textbf{(c)} \textbf{(i)}. If $a_p<b_p$ and $a_{p+1} = 2$, then the statement is a consequence of Lemma~\ref{lemma2blockweaklyregular} \textbf{(c)} \textbf{(ii)}. If $a_{p+1}>2$, then the statement is a consequence of Lemma~\ref{lemma2blockweaklyregular} \textbf{(d)} \textbf{(ii)}.
\item[(d)] If $a_{p+1} = 2$, then the statement is a consequence of Lemma~\ref{lemma2blockweaklyregular} \textbf{(c)} \textbf{(iii)}. If $a_{p+1}>2$, then the statement is a consequence of Lemma~\ref{lemma2blockweaklyregular} \textbf{(d)} \textbf{(ii)}.
\end{description}
\end{proof}

We now compartmentalize Lemma~\ref{lemma3blockweaklyregular} into a single statement on the property of $r$-regularity around $3$-blocks.

\begin{corollary}
\label{cor3blockregular}
Let $B = \sigma_2^{c_{p-1}}\sigma_3\sigma_2^{c_p}$ be a generic $3$-block in $\sigma$ and let $P = \left(\prod_{i=1}^{p-2} \sigma_1^{a_i}\sigma_3^{b_i}\sigma_2^{c_i}\right)\sigma_1^{a_{p-1}}$ be the $s$-subproduct immediately preceding $B$ for $s\in \{1,3\}$. Let $P' = PB$ and let $P_0 = \prod_{i=1}^{p-2} \sigma_1^{a_i}\sigma_3^{b_i}\sigma_2^{c_i}$ be the maximal $2$-subproduct contained in $P$.

\begin{description}
\item[(a)] Let us assume that $P$ is strongly $r$-regular and $c_{p-1}>1$. 
\begin{description}
\item[(i)] If $c_p = 2$, then $P'$ is negative $3$-switch $r$-regular and $\Xi\left(P'\right) = c_{p-1} - 1$.
\item[(ii)] If $c_p>2$, then $P'$ is strongly $r$-regular.
\end{description}
\item[(b)] Let us assume that $P$ is $r$-regular, $\Xi\left(P\right)>1$, and $c_{p-1}>2$. 
\begin{description}
\item[(i)] If $c_p = 2$, then $P'$ is negative $3$-switch $r$-regular and $\Xi\left(P'\right) = c_{p-1} - 2$.
\item[(ii)] If $c_p>2$, then $P'$ is strongly $r$-regular.
\end{description}
\item[(c)] Let us assume that $c_{p-1} = 1$. 
\begin{description}
\item[(i)] If $P$ is $r$-regular and $\Xi\left(P\right)>0$, then $P'$ is strongly $r$-regular.
\item[(ii)] Let us consider the case where $P_0$ is strongly $r$-regular. If $a_{p-1}+1<c_p$, then $P'$ is strongly $r$-regular. If $a_{p-1}+1>c_p$, then $P'$ is positive pseudo $r$-regular and $\Xi_{\text{pseudo}}\left(P'\right) = a_{p-1} - c_p$. 
\item[(iii)] Let us consider the case where $P_0$ is $1$-switch $r$-regular and $\Xi\left(P_0\right)>1$. If $a_{p-1}<c_p$, then $P'$ is strongly $r$-regular. If $a_{p-1}>c_p$, then $P'$ is pseudo $r$-regular, $\Xi_{\text{pseudo}}\left(P'\right) = a_{p-1}-c_p-1$, and the sign of $P'$ is equal to the sign of $P$.
\end{description}
\item[(d)] Let us assume that $P$ is $r$-regular, $c_{p-1} = 2$, and $c_p>2$. 
\begin{description}
\item[(i)] If $\Xi\left(P\right)<c_p-1$, then $P'$ is strongly $r$-regular. 
\item[(ii)] If $\Xi\left(P\right)>c_p-1$, then $P'$ is pseudo $r$-regular, $\Xi_{\text{psuedo}}\left(P'\right) = \Xi\left(P\right) - c_p$, and the sign of $P'$ is the opposite of the sign of $P$.
\end{description}
\item[(e)] Let us assume that $c_{p-1} = 2$ and $c_p = 2$. If $P$ is $r$-regular and $\Xi\left(P\right)>1$, then $P'$ is pseudo $r$-regular, $\Xi_{\text{pseudo}}\left(P'\right) = \Xi\left(P\right) - 2$, and the sign of $P'$ is the opposite of the sign of $P$.
\end{description}
Finally, in each of the above cases, the leading coefficients of multiple entries in the $r$th row of $\beta_4\left(P'\right)$ are non-zero modulo each prime $t\neq 2,3$.
\end{corollary}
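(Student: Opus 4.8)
The plan is to prove Corollary~\ref{cor3blockregular} by \emph{compartmentalization}, exactly as in the proof of Corollary~\ref{cor2blockregular}: each case and subcase is matched to the corresponding part of Lemma~\ref{lemma3blockweaklyregular} (for the abnormal and boundary $3$-blocks) or Lemma~\ref{l3blockstronglyregular} (for the normal $3$-blocks), the regularity hypothesis on $P$ is translated into the degree hypothesis required by that part, and the conclusion---the regularity type of $P'$, its discrepancy or pseudo-discrepancy, and its sign---is read off from the degree and leading-coefficient data produced there. Throughout, Proposition~\ref{initialblocksingularblockconstraint}~\textbf{(iv)} tells us that $P$ terminates in $\sigma_1^{a_{p-1}}$ with $b_{p-1}=0$, so that $\lambda$, $\mu$, $\nu$ refer to good $\left(r,1\right)$-type, bad $\left(r,1\right)$-type, and $\left(r,3\right)$-type $P$-paths respectively (Notation~\ref{notnumgb}~\textbf{(b)} with $b_p=0$).

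Concretely, case~\textbf{(a)} invokes Lemma~\ref{lemma3blockweaklyregular}~\textbf{(a)} when $c_p=2$ and, since $P$ is strongly $r$-regular and $B$ is then a normal $3$-block, Lemma~\ref{l3blockstronglyregular}~\textbf{(i)} when $c_p>2$; case~\textbf{(b)} invokes Lemma~\ref{lemma3blockweaklyregular}~\textbf{(b)} when $c_p=2$ and Lemma~\ref{lemma3blockweaklyregular}~\textbf{(c)} when $c_p>2$ (here $B$ is normal but $P$ need not be strongly regular); case~\textbf{(d)} invokes Lemma~\ref{lemma3blockweaklyregular}~\textbf{(e)} and case~\textbf{(e)} invokes Lemma~\ref{lemma3blockweaklyregular}~\textbf{(f)}. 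In each subcase the inequality relating $\Xi\left(P\right)$ to $c_p$ or $c_{p-1}$ is precisely the degree inequality separating the two subcases of the relevant lemma---for instance $\Xi\left(P\right)=d_{\mu}^{P}-d_{\lambda}^{P}<c_p-1$ is equivalent to $d_{\lambda}^{P}+c_p>d_{\mu}^{P}+1$ in Lemma~\ref{lemma3blockweaklyregular}~\textbf{(e)}~\textbf{(i)}---and the output degrees give the claimed discrepancy directly, while the comparison of leading coefficients (whether $\lambda_{0}^{P'}=\pm\mu_{0}^{P}$, and $\mu_{3,0}^{P'}=-\mu_{1,0}^{P'}$ in the pseudo cases, matching Definition~\ref{defpseudoregularblock}~\textbf{(b)}) gives the sign.

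Case~\textbf{(c)} ($c_{p-1}=1$) is the only one where the hypothesis is phrased in terms of the maximal $2$-subproduct $P_0$ rather than $P$. Subcase~\textbf{(c)}~\textbf{(i)} follows from Lemma~\ref{lemma3blockweaklyregular}~\textbf{(d)}~\textbf{(i)}. For subcases~\textbf{(c)}~\textbf{(ii)} and~\textbf{(c)}~\textbf{(iii)} I would first apply the single-generator path-extension results Proposition~\ref{p21/3ext} (when $P_0$ is strongly $r$-regular) or Proposition~\ref{p21/3extadd} (when $P_0$ is $1$-switch $r$-regular of discrepancy $>1$) to pass from the $P_0$-path data to the $P$-path degrees $d_{\lambda}^{P}=d_{\lambda}^{P_0}+a_{p-1}$, $d_{\mu}^{P}=d_{\lambda}^{P_0}+1$ and the associated leading coefficients, and then invoke Lemma~\ref{l3blockstronglyregular}~\textbf{(ii)} in the normal subcase $a_{p-1}+1<c_p$ and Lemma~\ref{lemma3blockweaklyregular}~\textbf{(d)}~\textbf{(ii)} in the abnormal subcase $a_{p-1}+1>c_p$. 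The equivalence $a_{p-1}+1>c_p\iff d_{\lambda}^{P}+1>d_{\mu}^{P}+c_p-1$ selects the latter, whose output degrees satisfy the defining inequalities of a pseudo $r$-regular $3$-block with $\Xi_{\text{pseudo}}\left(P'\right)=\left(d_{\lambda}^{P}-d_{\mu}^{P}\right)-c_p+1=a_{p-1}-c_p$, and the ``positive'' sign follows by tracking $\lambda_{1,0}^{P'}$ and $\mu_{1,0}^{P'}$ through the extension. The final non-vanishing assertion is then uniform: $r$-regularity of $P$ (resp. the hypothesis on $P_0$ together with Proposition~\ref{pnotationentries}~\textbf{(b)}~\textbf{(i)}) makes the leading coefficient of the $\left(r,1\right)$-entry of $\beta_4\left(P\right)$ non-zero modulo $t$, this entry is unchanged under appending $B$ by Lemma~\ref{lemmagoodext}~\textbf{(iv)}, and the ``Finally'' clause of each invoked part of Lemma~\ref{lemma3blockweaklyregular} supplies a second non-zero entry in the $r$th row.

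The step I expect to be the main obstacle is the bookkeeping, not the conceptual structure. Two points require care. First, Lemma~\ref{lemma3blockweaklyregular} carries the standing hypothesis $d_{\nu}^{P}+3\leq d_{\lambda}^{P}$; in cases~\textbf{(a)}, \textbf{(b)}, \textbf{(d)}, \textbf{(e)} this must be extracted from the hypothesis on $P$, which is legitimate precisely because the discrepancy $\Xi\left(P\right)$ there is the $\mu$-based discrepancy $d_{\mu}^{P}-d_{\lambda}^{P}$ of Definition~\ref{defweakrregular}~\textbf{(b)}~\textbf{(i)} (so $P$ is not $\nu$ $r$-regular), and one must confirm that the small values of $\Xi\left(P\right)$ excluded by the stated inequalities are exactly those where this translation could fail. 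Second, the sign assertions (``positive'', ``negative'', and ``equal to'' versus ``opposite of'' the sign of $P$), together with the parity-dependent signs produced by Proposition~\ref{p21/3extadd} in subcase~\textbf{(c)}~\textbf{(iii)}, demand that the factors $\left(-1\right)^{1-\psi}$ in the path-extension formulas be composed correctly with the sign conventions of Definition~\ref{defweakrregular} and Definition~\ref{defpseudoregularblock}; this is the most error-prone part and is where I would concentrate the verification.
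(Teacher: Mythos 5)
Your compartmentalization is essentially the paper's proof, and cases \textbf{(a)}, \textbf{(b)}, \textbf{(d)}, \textbf{(e)} and \textbf{(c)}~\textbf{(i)} are dispatched exactly as the paper does; your route through Lemma~\ref{l3blockstronglyregular}~\textbf{(ii)} for the normal subcase of \textbf{(c)}~\textbf{(ii)} is a harmless variant (the paper uses Lemma~\ref{lemma3blockweaklyregular}~\textbf{(d)}~\textbf{(i)} there, and both apply). The genuine gap is in \textbf{(c)}~\textbf{(iii)}. You treat \textbf{(c)}~\textbf{(ii)} and \textbf{(c)}~\textbf{(iii)} uniformly, with the same degree formulas $d_{\lambda}^{P}=d_{\lambda}^{P_0}+a_{p-1}$, $d_{\mu}^{P}=d_{\lambda}^{P_0}+1$, the same threshold $a_{p-1}+1\lessgtr c_p$, and the same output $\Xi_{\text{pseudo}}\left(P'\right)=a_{p-1}-c_p$. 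Those formulas are correct only when $P_0$ is strongly $r$-regular (Proposition~\ref{p21/3ext}~\textbf{(i)}). When $P_0$ is $1$-switch $r$-regular with $\Xi\left(P_0\right)>1$, the applicable statement is Proposition~\ref{p21/3extadd}~\textbf{(i)}, which instead gives $d_{\lambda}^{P}=d_{\mu}^{P_0}+a_{p-1}-1$, $d_{\mu}^{P}=d_{\mu}^{P_0}+1$, and $\lambda_{0}^{P}=-\mu_{0}^{P_0}=\mu_{0}^{P}$: the maximal $q$-resistance good $P$-path now arises from the maximal bad ($1$-switch) $P_0$-path, at the cost of one unit of degree. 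Hence $d_{\lambda}^{P}-d_{\mu}^{P}=a_{p-1}-2$ rather than $a_{p-1}-1$, the separating inequality for Lemma~\ref{lemma3blockweaklyregular}~\textbf{(d)} becomes $a_{p-1}\lessgtr c_p$ (not $a_{p-1}+1\lessgtr c_p$), and the pseudo-discrepancy becomes $a_{p-1}-c_p-1$; these are exactly the values asserted in \textbf{(c)}~\textbf{(iii)}, so your version would prove statements off by one from the corollary. Your uniform treatment also cannot cover the boundary $a_{p-1}+1=c_p$ of the subcase $a_{p-1}<c_p$: there $B$ fails to be a normal $3$-block (Definition~\ref{dnormalblock}~\textbf{(ii)}), so Lemma~\ref{l3blockstronglyregular} is inapplicable and one must use Lemma~\ref{lemma3blockweaklyregular}~\textbf{(d)}~\textbf{(i)}, whose hypothesis $d_{\lambda}^{P}+1<d_{\mu}^{P}+c_p-1$ does hold under the corrected formulas.

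There is also a missing case split in \textbf{(c)}~\textbf{(iii)}: Proposition~\ref{p21/3extadd}~\textbf{(i)} requires $\phi_p=a_{p-1}>2$, so the case $a_{p-1}=2$ cannot be handled by the path-extension formulas at all. The paper disposes of it separately: when $a_{p-1}=2$, the factor $\sigma_1^{2}$ is a ($1$-switch) bad subroad, Lemma~\ref{lweightcompute}~\textbf{(a)} shows that $P$ is $r$-regular with $\Xi\left(P\right)>0$, and part \textbf{(c)}~\textbf{(i)} (already established) then yields that $P'$ is strongly $r$-regular. Your write-up never confronts $a_{p-1}=2$, so even after correcting the degree formulas your argument for \textbf{(c)}~\textbf{(iii)} remains incomplete; the rest of the proposal is in order.
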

\begin{proof}
\begin{description}
\item[(a)] 
\begin{description}
\item[(i)] The statement is a consequence of Lemma~\ref{lemma3blockweaklyregular} \textbf{(a)}.
\item[(ii)] The statement is a consequence of Lemma~\ref{l3blockstronglyregular} since $B$ is a normal $3$-block.
\end{description}
\item[(b)] 
\begin{description}
\item[(i)] The statement is a consequence of Lemma~\ref{lemma3blockweaklyregular} \textbf{(b)}.
\item[(ii)] The statement is a consequence of Lemma~\ref{lemma3blockweaklyregular} \textbf{(c)}.
\end{description}
\item[(c)] 
\begin{description}
\item[(i)] The statement is a consequence of Lemma~\ref{lemma3blockweaklyregular} \textbf{(d)} \textbf{(i)}.
\item[(ii)] Proposition~\ref{p21/3ext} \textbf{(i)} implies that $d_{\lambda}^{P} = d_{\lambda}^{P_0} + a_{p-1}$, $d_{\mu}^{P} = d_{\lambda}^{P_0} + 1$, and $\lambda_{0}^{P} = -\lambda_{0}^{P_0} = \mu_{0}^{P}$. If $a_{p-1} + 1 < c_p$, then the statement is a consequence of Lemma~\ref{lemma3blockweaklyregular} \textbf{(d)} \textbf{(i)}. If $a_{p-1}+1>c_p$, then the statement is a consequence of Lemma~\ref{lemma3blockweaklyregular} \textbf{(d)} \textbf{(ii)}.
\item[(iii)] If $a_{p-1} = 2$, then Lemma~\ref{lweightcompute} \textbf{(a)} implies that $P$ is $r$-regular and $\Xi\left(P\right) > 0$, in which case \textbf{(c)} \textbf{(i)} implies that $P'$ is strongly $r$-regular. Let us assume that $a_{p-1}>2$. Proposition~\ref{p21/3extadd} \textbf{(i)} implies that $d_{\lambda}^{P} = d_{\mu}^{P_0} + a_{p-1} - 1$, $d_{\mu}^{P} = d_{\mu}^{P_0} + 1$, and $\lambda_{0}^{P} = -\mu_{0}^{P_0} = \mu_{0}^{P}$. If $a_{p-1}<c_p$, then the statement is a consequence of Lemma~\ref{lemma3blockweaklyregular} \textbf{(d)} \textbf{(i)}. If $a_{p-1}>c_p$, then the statement is a consequence of Lemma~\ref{lemma3blockweaklyregular} \textbf{(d)} \textbf{(ii)}.
\end{description}
\item[(d)] 
\begin{description}
\item[(i)] The statement is a consequence of Lemma~\ref{lemma3blockweaklyregular} \textbf{(e)} \textbf{(i)}.
\item[(ii)] The statement is a consequence of Lemma~\ref{lemma3blockweaklyregular} \textbf{(e)} \textbf{(ii)}. 
\end{description}
\item[(e)] The statement is a consequence of Lemma~\ref{lemma3blockweaklyregular} \textbf{(f)}.
\end{description}
\end{proof}

The final step is to address the case of singular blocks. Firstly, we exploit the strong constraints on a singular block in the minimal form to establish that the subproduct immediately preceding a singular block is strongly $r$-regular for some $r\in \{1,2\}$.

\begin{proposition}
\label{psingularblockregular}
Let $B$ be a singular block in $\sigma$. If $R$ is the $s$-subproduct immediately preceding $B$, then $R$ is strongly $r$-regular for some $r\in \{1,2\}$. Furthermore, if $B$ is a $2$-block, then $w_{\mu,3}^{R} = 0$, and if $B$ is a $3$-block, then $w_{\nu}^{R} = 0$.
\end{proposition}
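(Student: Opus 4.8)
The plan is to exploit the strong structural rigidity forced by $B$ being a singular block: I would first reduce $R$ to a product lying in the submonoid generated by $\sigma_1$ and $\sigma_2$ alone, and then read off both the vanishing statements and strong regularity from the behaviour of $\sigma$-paths that can never visit the vertex $3$. Concretely, since $B$ is a singular block, Proposition~\ref{initialblocksingularblockconstraint} \textbf{(ii)} shows that $B$ is the first non-initial block in $\sigma$ and that no $\sigma_3$ precedes $B$; hence $R$ is a subproduct of the minimal form involving only $\sigma_1$ and $\sigma_2$ (every $b_i = 0$ for the terms constituting $R$). We may assume $\sigma$ is indivisible by $\Delta$, as in Theorem~\ref{strongkernelconstraint}; then Proposition~\ref{pblockDelta} forbids an initial block from coexisting with $B$, so $R$ contains no block at all and is a single road in the letters $\sigma_1,\sigma_2$.

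The two ``furthermore'' assertions are then immediate. In the graph $G$ of Definition~\ref{spath}, the only edge entering the vertex $3$ from a different vertex is the $2 \to 3$ change, which occurs exclusively at a $\sigma_3$. As $R$ has no $\sigma_3$, every $R$-path with initial vertex $r \in \{1,2\}$ stays inside $\{1,2\}$ for its entire length. Consequently there is no $3$-switch $(r,2)$-type $R$-path (such a path would have to sit at $3$ immediately before the final $\sigma_2$) and no $(r,3)$-type $R$-path; by Proposition~\ref{m=ap} together with Notation~\ref{notnumgb}, the corresponding weighted counts vanish, that is, $w_{\mu,3}^{R} = 0$ when $B$ is a $2$-block and $w_{\nu}^{R} = 0$ when $B$ is a $3$-block.

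For the strong regularity statement, $R$ is precisely the $s$-subproduct immediately preceding the first non-initial block of $\sigma$, so Lemma~\ref{initial} applies to it. Its alternative \textbf{(ii)} is excluded, since that alternative forces the existence of an initial block, which we have ruled out; hence alternative \textbf{(i)} holds and there is an initial $r$-inductive subproduct $R_0 \subseteq R$. I would then trace the branches of Lemma~\ref{initial} under the standing hypothesis that $R$ has no $\sigma_3$ and no initial block: the branches producing $r = 3$, together with the branch in which $R_0 = R$ satisfies only the weakened conditions of Lemma~\ref{l3blockstronglyregular} \textbf{(ii)}, all require either a $\sigma_3$ inside $R$ or an initial block, so none of them can occur. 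What survives gives $r \in \{1,2\}$ with $R_0$ genuinely strongly $r$-regular, and since $R \setminus R_0$ is a subroad and $R \neq \sigma$, Corollary~\ref{lemmaroadregular} upgrades this to strong $r$-regularity of $R$ itself.

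The main obstacle is this last case analysis: I must verify that the two degenerate configurations permitted by the absence of $\sigma_3$ are themselves strongly regular, namely $R = \sigma_1^{a_1}$ preceding a singular $3$-block $\sigma_2\sigma_3\sigma_2$, and $R = \sigma_1^{a_1}\sigma_2$ (the unique $c_{p-1}=1$ instance with $a_p,b_p>0$ noted after Definition~\ref{defrregular}) preceding a singular $2$-block. Both are settled by a direct computation of $R$-paths: for $R = \sigma_1^{a_1}$ the unique $(1,1)$-type path is the constant good path of weight $q^{a_1}$, which yields strong $1$-regularity; for $R = \sigma_1^{a_1}\sigma_2$ the constant-at-$2$ path is the only good $(2,2)$-type path, and since both $a_p>0$ and $b_p>0$ the defining inequalities of Definition~\ref{defrregular} \textbf{(a)} \textbf{(ii)} collapse to the single requirement that $\lambda_0^{R}=1$ have no odd prime factor, so $R$ is strongly $2$-regular. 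Confirming, via the exponent constraints of Lemma~\ref{minimalform} \textbf{(i)} and \textbf{(ii)} applied inside the road $R$ (which force $a_i\geq 2$ and $c_i\geq 2$ away from the first term), that these are exactly the products escaping the generic argument is where the care is required.
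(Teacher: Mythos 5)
Your structural reductions match the paper's: Proposition~\ref{pblockDelta} rules out an initial block, Proposition~\ref{initialblocksingularblockconstraint} \textbf{(ii)} rules out any $\sigma_3$ before $B$, so $R$ is a road in $\sigma_1,\sigma_2$ only, and the two vanishing statements follow exactly as you say, since no path starting at $r\in\{1,2\}$ can reach the vertex $3$. Where you diverge is the regularity argument: the paper does not route through Lemma~\ref{initial} at all. It simply observes that the first generator of $\sigma$ is $\sigma_1$ or $\sigma_2$, takes $\sigma_1^{a_1}$ (strongly $1$-regular) or $\sigma_2^{c_1}$ (strongly $2$-regular) as the seed, and applies Corollary~\ref{lemmaroadregular}. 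Your detour through Lemma~\ref{initial} forces you to re-trace its case analysis and to re-handle its weakened branch by hand, and that is where an error enters.

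The genuine gap is in your second degenerate case, $R = \sigma_1^{a_1}\sigma_2$ preceding a singular $2$-block. You claim the constant-at-$2$ path is a good $(2,2)$-type $R$-path with $\lambda_0^{R} = 1$, giving strong $2$-regularity. This is false whenever $a_1 > 0$: the constant path and the path with a $2\to 1$ vertex change at the last $\sigma_1$ followed by a $1\to 2$ change at $\sigma_2$ constitute an $\eta$ distinguished pair (this is exactly the configuration of Figure~\ref{fig9} and Example~\ref{weirdadmissible}), so both are inadmissible and $w_{\lambda}^{R} = 0$; the paper even states this explicitly in the proof of Lemma~\ref{l2blockstronglyregular}, where strong $r$-regularity of $\sigma_1^{a_1}\sigma_2$ is shown to force $a_1 = 0$ and $r = 2$. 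The correct way to dispose of this configuration is not a path computation but an observation you already have available: if $a_1 > 0$, then $\sigma_1^{a_1}\sigma_2\sigma_1^{a_p}$ (with $a_p > 0$ coming from the singular $2$-block) is an initial block, contradicting Proposition~\ref{pblockDelta}; hence only $R = \sigma_2$ can occur, and there your computation is valid. Relatedly, your blanket claim that every branch of Lemma~\ref{initial} producing $r=3$ requires a $\sigma_3$ inside $R$ or an initial block overlooks the branch with $b_1>0$, $c_1=1$, $a_2>0$, where the $\sigma_3$ sits inside $B$ itself (a singular $2$-block at the very start of $\sigma$, with $R$ empty); that case is harmless but must be named. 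So your conclusion stands and most of the argument is sound, but the admissibility bookkeeping — the $\eta$-pair mechanism this machinery is built on — is mishandled in precisely the case you singled out as requiring care.
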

\begin{proof}
Proposition~\ref{pblockDelta} implies that there is no initial block in $\sigma$, and in particular, $R$ is a road. Proposition~\ref{initialblocksingularblockconstraint} \textbf{(ii)} implies that the first generator in $\sigma$ is either $\sigma_1$ or $\sigma_2$. If the former, then $\sigma_1^{a_1}$ is a strongly $1$-regular subproduct of $\sigma$. If the latter, then $\sigma_2^{c_1}$ is a strongly $2$-regular subproduct of $\sigma$. Corollary~\ref{lemmaroadregular} implies in both cases that $R$ is strongly $r$-regular for some $r\in \{1,2\}$. Finally, Proposition~\ref{initialblocksingularblockconstraint} \textbf{(ii)} also implies the second statement.
\end{proof}

We now establish regularity results in the case where a singular block exists. The situation is very similar to the case of pseudo $r$-regular blocks, and we study path extensions along the singular subroad following a singular block.

\begin{lemma}
\label{lsingularblockregular}
Let $B$ be a singular block in $\sigma$. Let $R$ be the road immediately preceding $B$ and let $P$ be the minimal $s$-subproduct of $\sigma$ containing $B$. Let $P'$ be the singular subproduct containing $B$ and let $R'' = P'\setminus P$.

If $B = \sigma_1^{a_p}\sigma_3\sigma_2\sigma_1^2$ is a $2$-block, then we define $i = 1$ and $\iota_p = a_p$. If $B = \sigma_2^{c_{p-1}}\sigma_3\sigma_2$ is a $3$-block, then we define $i = 3$ and $\iota_p = c_{p-1}$. We define $i'$ to be such that $\{i,i'\} = \{1,3\}$. The following statements are true for some $r\in \{1,2\}$.

\begin{description}
\item[(a)] If $R''$ is not an $i$-switch bad subroad, then $P'$ is strongly $r$-regular.
\item[(b)] Let us assume that $R''$ is an $i$-switch bad subroad and let $\phi_{p'}$ be the last exponent in $R''$.
\begin{description}
\item[(i)] If $R''$ ends with $\sigma_2^{\phi_{p'}}$, then $P'$ is negative $r$-regular and $\Xi\left(P'\right) = \iota_p - 1$.
\item[(ii)] If $R''$ ends with $\sigma_{i'}^{\phi_{p'}}$, then $P'$ is positive $\nu$ $r$-regular and $\Xi\left(P'\right) = \iota_p + 3 - \phi_{p'}$.
\end{description}
\end{description}
Finally, if $P'$ is not defined, then the leading coefficients of multiple entries in the $r$th row of $\beta_4\left(\sigma\right)$ are non-zero modulo each prime $t\neq 2$. 
\end{lemma}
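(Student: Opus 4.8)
The plan is to follow the template of the proof of Corollary~\ref{cpseudoregularregular}, treating the minimal $s$-subproduct $P = RB$ as an analogue of a pseudo $r$-regular block and then extending along the singular subroad $R'' = P'\setminus P$. The value of $r\in\{1,2\}$ is the one supplied by Proposition~\ref{psingularblockregular}, which also guarantees that $R$ is strongly $r$-regular, that $w_{\mu,3}^R = 0$ when $B$ is a $2$-block, and that $w_{\nu}^R = 0$ when $B$ is a $3$-block; moreover the road $R$ reaches back to the start of $\sigma$ since Proposition~\ref{pblockDelta} rules out an earlier initial block. These vanishing statements are exactly what make the singular case cleaner than the generic pseudo-regular case: they force the maximal $q$-resistance bad $P$-paths to be produced in a single, explicit way by the block $B$ alone.

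First I would analyze the extension of $R$-paths through the singular block $B$ to $P$-paths. For $B = \sigma_1^{a_p}\sigma_3\sigma_2\sigma_1^2$ I would apply Proposition~\ref{lemma2blockextadmissible} together with Proposition~\ref{lemma2blockextadmissibleextra} (the latter is needed precisely because $b_p = 1$, so the naive extension along $\sigma_1^{a_p}\sigma_3\sigma_2$ abuts an $\eta$-type cancellation of the kind illustrated in Figure~\ref{fig3}), and for $B = \sigma_2^{c_{p-1}}\sigma_3\sigma_2$ (with $c_p = 1$) I would apply Proposition~\ref{lemma3blockextadmissible}. Using the strong $r$-regularity of $R$ and the vanishing of $w_{\mu,3}^R$ (resp.\ $w_{\nu}^R$), the goal of this step is to show that $P$ is a pseudo $r$-regular block in the sense of Definition~\ref{defpseudoregularblock}, with the discrepancy between a maximal $q$-resistance good and a maximal $q$-resistance bad $P$-path governed by the width $\iota_p$ ($=a_p$ or $c_{p-1}$). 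The required sign and degree relations follow from the explicit weights in Proposition~\ref{lemma2blockextadmissible} \textbf{(ii)} (resp.\ Proposition~\ref{lemma3blockextadmissible} \textbf{(b)}--\textbf{(c)}) read off Figure~\ref{fig1}.

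Next I would extend along the singular subroad $R'' = P'\setminus P$. This is exactly the pseudo-regular setting, so I would invoke Proposition~\ref{ppseudoregularpathextension} to track how the good/bad discrepancy changes, together with Proposition~\ref{pelementarybadsubroadimpliesuniqueextension} and Corollary~\ref{celementarybadsubroadimpliesuniqueextension}, which pin down the unique extension of the bad $P$-path along $R''$ and its terminal type. Splitting according to whether $R''$ is an $i$-switch bad subroad then yields the trichotomy. If it is not (part \textbf{(a)}), the good path's $q$-resistance overtakes the bad path's and a reset occurs, giving $P'$ strongly $r$-regular, by the reasoning behind Lemma~\ref{lweightcompute} \textbf{(b)} and Corollary~\ref{cpseudoregularregular}. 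If it is an $i$-switch bad subroad ending in $\sigma_2^{\phi_{p'}}$ (part \textbf{(b)} \textbf{(i)}) or in $\sigma_{i'}^{\phi_{p'}}$ (part \textbf{(b)} \textbf{(ii)}), no reset occurs, and I would read off the surviving discrepancies $\Xi(P') = \iota_p - 1$ and $\Xi(P') = \iota_p + 3 - \phi_{p'}$, along with the stated sign and switch type, from the weight formulas. The simplification relative to Corollary~\ref{cpseudoregularregular} comes from $R$ being the first road: the bad $P$-path is created solely by $B$, so the singular weight $\omega_{\text{sing}}(R'')$ cancels against the pseudo-discrepancy cleanly and leaves these simple residuals.

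Finally, for the terminal clause I would take $PR' = \sigma$, so that $P'$ is undefined. Here the good $(r,s)$-type $P$-path extends, with no competing bad path of equal $q$-resistance, to a maximal $q$-resistance family of $\sigma$-paths reaching the end; by Proposition~\ref{m=ap} their weights contribute the leading coefficients of multiple entries of the $r$th row of $\beta_4\left(\sigma\right)$. Since every choice along this family is a binary vertex-change choice of the type seen in Example~\ref{exmain}, the family has cardinality a power of $2$, so these leading coefficients are powers of $2$ and hence non-zero modulo every prime $t\neq 2$; this is why the conclusion in this case strengthens from $t\neq 2,3$ to $t\neq 2$. The main obstacle throughout is the bookkeeping in the middle step: verifying that $P$ is genuinely pseudo $r$-regular with the correct sign, checking that the awkward exponents $b_p = 1$ (resp.\ $c_p = 1$) introduce no unexpected local cancellation, and then tracking the parity of the number of $\sigma_2$'s in $R''$ so that the sign of $P'$ and, in the terminal case, the power-of-$2$ (rather than merely $\{2,3\}$-smooth) leading coefficient come out exactly as claimed.
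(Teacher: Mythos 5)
Your skeleton matches the paper's (Proposition~\ref{psingularblockregular} to fix $r$ and get strong $r$-regularity of $R$ with the vanishing statements, then push paths through $B$, then along $R''$, then the case split), but the load-bearing middle step is wrong. You claim the goal is to show that $P = RB$ is pseudo $r$-regular in the sense of Definition~\ref{defpseudoregularblock}. It is not, and cannot be: the paper's own computation at this point shows $w_{\lambda}^{P} = 0$, i.e.\ there are \emph{no} good $P$-paths at all. (For the singular $2$-block $\sigma_1^{a_p}\sigma_3\sigma_2\sigma_1^2$, every would-be good $\left(r,1\right)$-type extension $W^{\alpha,0,1}$ with a change at the $\sigma_2$ preceded by a $2\to 1$ change is inadmissible by Lemma~\ref{lemmagoodext} \textbf{(iii)}, and the two admissible good extensions $W^{0,1,1}$ and $W^{a_p,0,0}$ cancel in pairs by Proposition~\ref{lemma2blockextadmissible} \textbf{(iii)}; the vanishing $w_{\mu,3}^{R}=0$ kills the rest. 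The $3$-block case is analogous.) With $w_{\lambda}^{P}=0$, the sign of $P$ and the pseudo-discrepancy $\Xi_{\text{pseudo}}\left(P\right) = d_{\mu}^{P}-d_{\lambda}^{P}$ are simply undefined, so Proposition~\ref{ppseudoregularpathextension} (whose formulas all reference a maximal good path $X$ and $d_{\lambda}^{P_0'}$) and the case analysis of Corollary~\ref{cpseudoregularregular} (whose branches compare $\omega_{\text{sing}}$ against $\Xi_{\text{pseudo}}$ and split on the sign of $P$) cannot be invoked. This is precisely why the paper treats singular blocks separately: it tracks only the $\mu$- and $\nu$-paths along the singular subroad via the uniqueness statement Proposition~\ref{pelementarybadsubroadimpliesuniqueextension} \textbf{(c)} (which also yields $w_{\lambda}^{P_0'}=0$ all along $R'$, with the roles of $\mu$ and $\nu$ swapping with the parity of the number of $\sigma_2$s), and good paths only reappear at the last factor of $R''$, where Propositions~\ref{p21/3ext}, \ref{p21/3extadd} and \ref{p1/32extadd} produce the trichotomy and the formulas $\Xi\left(P'\right) = \iota_p - 1$ and $\iota_p + 3 - \phi_{p'}$, which notably do not involve $\omega_{\text{sing}}$ at all.

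The same error propagates into your treatment of the terminal clause. You argue that a good $\left(r,s\right)$-type $P$-path extends through a family of binary vertex-change choices of cardinality a power of $2$; but there is no good $P$-path to extend. The correct mechanism, and the one the paper cites, is the opposite of a branching family: by Proposition~\ref{pelementarybadsubroadimpliesuniqueextension} \textbf{(c)} the surviving $\mu$- and $\nu$-paths extend \emph{uniquely} to the end of $\sigma$, multiplying their weights by monomials $\pm q^{k}$, so the leading coefficients are $\pm\lambda_{0}^{R}$; since $R$ is the first road of $\sigma$ (no initial block exists by Proposition~\ref{pblockDelta}), $\lambda_{0}^{R}$ is a power of $2$, which is why the conclusion holds modulo every prime $t\neq 2$ rather than only $t\neq 2,3$. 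Your final answer is right, but for the wrong reason.
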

\begin{proof}
Firstly, Proposition~\ref{psingularblockregular} implies that $R$ is strongly $r$-regular for some $r\in \{1,2\}$. If $B = \sigma_1^{a_p}\sigma_3\sigma_2\sigma_1^{2}$ is a singular $2$-block, then Proposition~\ref{lemma2blockextadmissible} implies that $w_{\lambda}^{P} = 0$, $d_{\mu}^{P} = d_{\lambda}^{R} + a_p + 1$, $d_{\nu}^{P} = d_{\lambda}^{R}$, and $\mu_{0}^{P} = \lambda_{0}^{R} = \nu_{0}^{P}$. If $B = \sigma_2^{c_{p-1}}\sigma_3\sigma_2$ is a singular $3$-block, then Proposition~\ref{lemma3blockextadmissible} \textbf{(a)} and \textbf{(b)} imply that $w_{\lambda}^{P} = 0$, $d_{\mu}^{P} = d_{\lambda}^{R}$, $d_{\nu}^{P} = d_{\lambda}^{R} + c_{p-1} - 1$, and $\mu_{0}^{P} = \lambda_{0}^{R} = \nu_{0}^{P}$. 

We also observe that $R'$ is simultaneously a $1$-switch elementary bad subroad and a $3$-switch elementary bad subroad. If $B$ is a singular $2$-block, then we define $\iota_{p}' = a_p$, and if $B$ is a singular $3$-block, then we define $\iota_p' = c_{p-1} -1$.  Let $P_0'$ be an $s_0'$-subproduct of $\sigma$ such that $P_0'\setminus P\subseteq R'$. Firstly, Proposition~\ref{pelementarybadsubroadimpliesuniqueextension} \textbf{(c)} implies that $w_{\lambda}^{P_0'} = 0$. 

If $P_0'$ ends with $\sigma_i^2\sigma_2$, then Proposition~\ref{pelementarybadsubroadimpliesuniqueextension} \textbf{(c)} implies that $d_{\nu}^{P_0'} - d_{\mu}^{P_0'} = \iota_p'$, and $\nu_{0}^{P_0'} = \pm \lambda_{0}^{R}$. If $P_0'$ ends with $\sigma_{i'}^2\sigma_2$, then Proposition~\ref{pelementarybadsubroadimpliesuniqueextension} \textbf{(c)} implies that $d_{\mu}^{P_0'}-d_{\nu}^{P_0'} = \iota_p'$, and $\mu_{0}^{P_0'} = \pm \lambda_{0}^{R}$. 

If $P_0'$ ends with $\sigma_i^2$, then Proposition~\ref{pelementarybadsubroadimpliesuniqueextension} \textbf{(c)} implies that $d_{\mu}^{P_0'} - d_{\nu}^{P_0'} = \iota_p'+1$ if $B$ is a $2$-block and $d_{\mu}^{P_0'}-d_{\nu}^{P_0'} = \iota_p'$ if $B$ is a $3$-block, and $\mu_{0}^{P_0'} = \pm \lambda_{0}^{R}$ in either case. If $P_0'$ ends with $\sigma_{i'}^2$, then Proposition~\ref{pelementarybadsubroadimpliesuniqueextension} \textbf{(c)} implies that $d_{\nu}^{P_0'} - d_{\mu}^{P_0'} = \iota_p'$ if $B$ is a $2$-block and $d_{\nu}^{P_0'} - d_{\mu}^{P_0'} = \iota_p'-1$ if $B$ is a $3$-block, and $\nu_{0}^{P_0'} = \pm \lambda_{0}^{R}$ in either case.

We now apply these statements.

\begin{description}
\item[(a)] In this case, either $R''$ ends with $\sigma_{i}^{\phi_{p'}}$, $R''$ ends with $\sigma_{i'}^2\sigma_2^{\phi_{p'}}$ with $\phi_{p'}\geq 2$, or $R''$ ends with $\sigma_{i}^2\sigma_2^{\phi_{p'}}$ with $\phi_{p'}>2$.

If $R''$ ends with $\sigma_{i}^{\phi_{p'}}$ (in which case $R'$ ends with $\sigma_i^2$), then Proposition~\ref{p21/3ext} \textbf{(iii)} and Proposition~\ref{p21/3extadd} \textbf{(i)} imply that $P'$ is strongly $r$-regular. If $R''$ ends with $\sigma_{i'}^2\sigma_2^{\phi_{p'}}$ with $\phi_{p'}\geq 2$ (in which case $R'$ ends with $\sigma_{i'}^2$), then Proposition~\ref{p1/32extadd} \textbf{(d)} \textbf{(i)} implies that $P'$ is strongly $r$-regular. If $R''$ ends with $\sigma_i^2\sigma_2^{\phi_{p'}}$ with $\phi_{p'}>2$ (in which case $R'$ ends with $\sigma_i^2$), then Proposition~\ref{p1/32extadd} \textbf{(d)} \textbf{(ii)} implies that $P'$ is strongly $r$-regular.
\item[(b)] 
\begin{description}
\item[(i)] If $R''$ ends with $\sigma_2^{\phi_{p'}}$, then $R'$ ends with $\sigma_i^2$ and $\phi_{p'} = 2$.  The statement is a consequence of Proposition~\ref{p1/32extadd} \textbf{(d)} \textbf{(ii)}.
\item[(ii)] If $R''$ ends with $\sigma_{i'}^{\phi_{p'}}$, then $R'$ ends with $\sigma_i^2$ and $\phi_{p'}>2$. The statement is a consequence of Proposition~\ref{p21/3ext} \textbf{(iii)} and Proposition~\ref{p21/3extadd} \textbf{(i)}.
\end{description}
\end{description}
If $P'$ is not defined, then Proposition~\ref{pelementarybadsubroadimpliesuniqueextension} \textbf{(c)} implies that the leading coefficients of multiple entries in the $r$th row of $\beta_4\left(\sigma\right)$ are non-zero modulo each prime $t\neq 2$. 
\end{proof}

\subsubsection{Conclusion of the proof}

Finally, we observe that the stronger version of the main result is a consequence of the theory developed in this subsection.

\begin{proof}[Proof of Theorem~\ref{mainstronger}]
The statement is a consequence of Proposition~\ref{pregularap>1}, Lemma~\ref{lweightcompute}, Lemma~\ref{luniqueextensionimpliesbadsubroad}, Lemma~\ref{l2subdisc01}, Lemma~\ref{l1/3subdisc01}, Proposition~\ref{pgood3match}, Corollary~\ref{cpseudoregularregular}, Corollary~\ref{cor2blockregular}, Corollary~\ref{cor3blockregular}, and Lemma~\ref{lsingularblockregular}.
\end{proof}

\bibliography{References}
\bibliographystyle{plain}
\end{document}